%% file: new_main.tex
\documentclass{article}
\title{Bondal--Orlov reconstruction for tame stacks \\ with trivial generic stabilizer}
\author{Daigo Ito and Noah Olander}
\date{}
\input{style}

\begin{document}
\maketitle

\begin{abstract}
    We generalize the Bondal--Orlov Reconstruction Theorem to smooth, proper, tame algebraic stacks with generically trivial stabilizer, whose canonical bundles are \emph{nowhere torsion}: They do not become trivial after pullback along any finite morphism from a projective curve. Our main tools are coherent Tannaka duality as proved by Lurie and by Hall and Rydh, and Grothendieck duality for proper tame stacks as developed by Hall and Priver. We additionally use the notion of nowhere torsion line bundle on a scheme to prove a version of the Bondal--Orlov Reconstruction Theorem for finite type, separated, Gorenstein schemes which are not necessarily proper, which generalizes work of Ballard, Favero, Ito, and Matsui.
\end{abstract}

\tableofcontents
\section{Introduction}

Let $k$ be a field and $X$ a finite type, separated scheme over $k$. Many open problems in algebraic geometry attempt to determine how much information about $X$ can we recover from $D^b_{\operatorname{Coh}}(X)$, its bounded derived category of coherent sheaves, viewed as a $k$-linear triangulated category. A major early step in this program was the following result of Bondal and Orlov.

\begin{theorem}[\cite{bondal_orlov_2001}]
    Assume $X$ is a smooth, projective variety over $k$ with canonical bundle $\omega_X$. Suppose either $\omega_X$ or $\omega_X^{-1}$ is ample. If $Y$ is a smooth 
    variety over $k$ and $\Phi : D^b_{\operatorname{Coh}}(X) \cong D^b_{\operatorname{Coh}}(Y)$ is a $k$-linear, exact equivalence of triangulated categories, then there are a $k$-isomorphism $f : X \to Y$, a line bundle $\ecal{L}$ on $Y$, and an integer $n$ such that $\Phi$ is isomorphic to the functor
    $
  K \mapsto Rf_*(K) \otimes^{\mathbb{L}}\ecal{L}[n].  
    $
\end{theorem}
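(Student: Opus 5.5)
The plan is to follow the classical Bondal--Orlov strategy: characterize intrinsically, in terms of the triangulated structure and the Serre functor, the point objects and the invertible objects of $D^b_{\operatorname{Coh}}(X)$. Any exact $k$-linear equivalence $\Phi$ must preserve these classes, and this will recover $X$ as the variety of the ring they determine. Since $X$ is smooth and projective, $S_X = -\otimes \omega_X[\dim X]$ is a Serre functor on $D^b_{\operatorname{Coh}}(X)$. Serre functors are unique up to canonical isomorphism and commute with equivalences, so $\Phi \circ S_X \cong S_Y \circ \Phi$. This needs $Y$ to be proper as well; we get that first by noting that $D^b_{\operatorname{Coh}}(Y)$ has finite-dimensional Hom spaces, because $D^b_{\operatorname{Coh}}(X)$ does, and this forces $Y$ proper. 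With the Serre functor we define:
\begin{enumerate}
\item a \emph{point object} $P$ of codimension $s$: $S(P)\cong P[s]$, $\operatorname{Hom}^{<0}(P,P)=0$, and $\operatorname{Hom}^0(P,P)$ is a field;
\item an \emph{invertible object} $L$: for every point object $P$ there is an $n$ with $\operatorname{Hom}^i(L,P)=0$ for $i\neq n$ and $\operatorname{Hom}^n(L,P)$ one-dimensional over $\operatorname{End}(P)$.
\end{enumerate}

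The first key step is to show that when $\omega_X^{\pm1}$ is ample, the point objects of $D^b_{\operatorname{Coh}}(X)$ are exactly the shifts $\mathcal{O}_x[r]$ of skyscrapers at closed points. Skyscrapers clearly qualify, with $s=\dim X$. For the converse, take a point object $P$ with cohomology sheaves $\mathcal{H}^j$. From $P\otimes\omega_X\cong P[s-\dim X]$, comparing cohomology degrees forces $s=\dim X$ and $\mathcal{H}^j\otimes\omega_X\cong\mathcal{H}^j$. Ampleness of $\omega_X^{\pm1}$ then forces every $\mathcal{H}^j$ to have zero-dimensional support: on a positive-dimensional support $Z$, $\omega|_Z$ is ample, and $\mathcal{F}\otimes\omega^{m}\cong\mathcal{F}$ for all $m$ contradicts Hilbert polynomial growth. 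Next I use $\operatorname{Hom}^{<0}(P,P)=0$ together with $\operatorname{End}(P)$ being a field. Take the spectral sequence, or the maps from the top to the bottom nonzero cohomology sheaf, and show that $P$ is a shift of a single sheaf supported at one point; it is connected because a decomposable object has a non-field endomorphism ring. A sheaf of finite length supported at $x$ with endomorphism ring a field must then be $\mathcal{O}_x$, since otherwise there is a nilpotent endomorphism through the socle. I expect this step to be the main obstacle, in particular ruling out complexes with several cohomology sheaves via negative Exts.

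Second, with the point objects known, the invertible objects of $D^b_{\operatorname{Coh}}(X)$ are exactly the shifts of line bundles $\mathcal{L}[r]$. Line bundles clearly satisfy the condition. Conversely, the condition says that at each point the derived fibre of $L$ is one-dimensional in a single degree. By Nakayama and the local-freeness criterion on a regular scheme, $L$ is a locally shifted line bundle, and connectedness of $X$ makes the shift constant.

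Third, the reconstruction. $\Phi$ carries point objects to point objects and invertible objects to invertible objects. Since $\omega_Y^{\pm1}$ need not be ample, we do not yet know that the point objects of $Y$ are just skyscrapers. But every skyscraper on $Y$ is a point object, and the point objects of $X$ form a single family, so one can argue as Bondal--Orlov do. After composing $\Phi$ with a shift and a twist, assume $\Phi(\mathcal{O}_X)=\mathcal{O}_Y$. Then $\Phi(\omega_X^{m})=\omega_Y^{m}$, because $\Phi$ commutes with Serre functors, and $\Phi$ induces isomorphisms of graded rings $\bigoplus_m H^0(\omega_X^{\otimes m})\cong\bigoplus_m H^0(\omega_Y^{\otimes m})$, compatible with multiplication via composition of morphisms. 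Suppose some skyscraper $\mathcal{O}_y$ is not in the image of the point objects of $X$. Then $\Phi^{-1}(\mathcal{O}_y)$ is orthogonal to all $\mathcal{O}_x[r]$, hence zero, which is a contradiction. So the point objects of $Y$ are exactly the skyscrapers, and the same ring argument applies to $Y$.

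The anticanonical ring is then isomorphic, and $\omega_Y^{\pm1}$ is ample, being detected by the family of point objects through the canonical ring. Taking Proj gives $f:X\cong Y$. Finally, $\Phi\circ f_*^{-1}$ sends each skyscraper to a skyscraper at the same point and sends $\mathcal{O}$ to $\mathcal{O}$. By the standard argument, an autoequivalence that preserves the ample sequence $\omega^m$ and the maps between them is isomorphic to the identity on that sequence, hence on the whole category, since ample sequences generate. This yields the required form $Rf_*(-)\otimes\mathcal{L}[n]$.
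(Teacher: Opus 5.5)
There is a genuine gap in your third step. Your argument for ``the point objects of $Y$ are exactly the skyscrapers'' runs in the wrong direction.

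Its hypothesis can never hold. You have already noted that every $\mathcal{O}_y$ is a point object of $D^b_{\operatorname{Coh}}(Y)$, so $\Phi^{-1}(\mathcal{O}_y)$ is a point object of $X$ and hence $\Phi^{-1}(\mathcal{O}_y)\cong\mathcal{O}_{x_y}[m_y]$. Every skyscraper is therefore trivially in the image.

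What you actually need is the converse: every point object $\Phi(\mathcal{O}_{x_0})$ of $Y$ is a shifted skyscraper, i.e.\ the injective map $y\mapsto x_y$ is surjective. ``Every $\mathcal{O}_y$ is hit'' does not give this. The orthogonality has to be run against the skyscrapers of $Y$:
\begin{itemize}
\item Suppose $x_0\neq x_y$ for every closed $y$. Then $\operatorname{Hom}(\Phi(\mathcal{O}_{x_0}),\mathcal{O}_y[i])\cong\operatorname{Hom}(\mathcal{O}_{x_0},\mathcal{O}_{x_y}[m_y+i])=0$ for all $y$ and $i$.
\item Skyscrapers detect nonzero objects of $D^b_{\operatorname{Coh}}(Y)$, since the top cohomology sheaf surjects onto some $\mathcal{O}_y$. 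So $\Phi(\mathcal{O}_{x_0})=0$, a contradiction.
\item Hence $x_0=x_y$ for some $y$, and $\Phi(\mathcal{O}_{x_0})\cong\mathcal{O}_y[-m_y]$.
\end{itemize}
This is the spanning-class argument at the start of the proof of Proposition~\ref{prop-pointstopoints}. The repair is not cosmetic. The resulting bijection of closed points is what your transfer of ampleness to $\omega_Y^{\pm1}$, the compatibility of $f$ with points, and your final step all rest on.

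Several further steps are asserted rather than argued.
\begin{itemize}
\item You need $\dim X=\dim Y$, from comparing codimensions of point objects, before writing $\Phi(\omega_X^{m})\cong\omega_Y^{m}$.
\item Ampleness of $\omega_Y^{\pm1}$ needs an extra fact: on a smooth variety, the non-vanishing loci of sections of line bundles form a basis of the Zariski topology. Then the topology generated by the loci $U_s$, for $s$ a morphism between invertible objects, is intrinsic, and the point bijection is a homeomorphism. The $Y_s$ with $s$ in the (anti)canonical ring then form a basis, and ampleness follows from EGA~II~4.5.2.
\item In the last step, ``ample sequences generate'' does not by itself extend a natural isomorphism from $\{\omega^m\}$ to all of $D^b_{\operatorname{Coh}}$. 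You need Bondal--Orlov's ample-sequence lemma. You must also take $f$ to be Proj of the ring isomorphism induced by $\Phi$, so that the normalized autoequivalence is the identity on every $\operatorname{Hom}(\omega^a,\omega^b)$.
\end{itemize}

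For comparison, the paper does not reprove this theorem. It obtains it as the special case of Theorem~\ref{theorem-reconstruction-for-schemes} applied to $\Phi^{-1}$, since (anti-)ample line bundles are nowhere torsion and smooth schemes are Gorenstein. That proof runs as follows:
\begin{itemize}
\item nowhere torsion and Lemma~\ref{lemma-itsasheaf} show that $\Phi^{-1}$ sends skyscrapers of $Y$ to shifted sheaves supported at single points of $X$;
\item the shift is uniform because $\Phi^{-1}(\mathcal{O}_Y)$ is a shifted vector bundle;
\item the shifted equivalence then preserves $\operatorname{Coh}$;
\item Gabriel's theorem and Canonaco--Stellari finish the argument.
\end{itemize}
It never uses invertible objects, canonical rings, Proj, or ampleness of $\omega_Y$, which is why it extends to nowhere torsion canonical bundles and non-proper Gorenstein schemes. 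Your route needs ampleness essentially, but in exchange it replaces those two inputs by Proj and the ample-sequence lemma.
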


This result has been generalized many times in the literature. 
In \cite[Corollary 6.3]{ballard2011derived}, it is shown that ``smooth'' can be replaced with ``Gorenstein,'' but $Y$ is assumed projective. In \cite[Theorem 3.15]{FAVERO20121955}, a version is proved where $X$ need not be proper, but the restriction of $\omega_X$ to every complete closed subvariety $Z \subset X$ is assumed to be either ample or anti-ample, and $Y$ is assumed to be divisorial. Finally, in \cite{ito2025polarizations} (which builds on the work \cite{ito2024new}), a version is proved in which $X$ is proper and Gorenstein and $\omega_X$ is a  ``$\otimes$-generating'' line bundle, and $Y$ is allowed to be any variety over $k$. Here a line bundle $\ecal{L}$ on $X$ is said to be $\otimes$-generating if its tensor powers compactly generate the derived category $D_{qc}(X)$ of $X$. In the work \cite{itoolander2025derived} of the authors of this paper, this condition is elucidated: If $X$ is proper, $\ecal{L}$ is $\otimes$-generating if and only if for every closed subvariety $Z \subset X$, either $\ecal{L}|_Z$ or $\ecal{L}^{-1}|_Z$ is big. In a slightly different direction, there is also a generalization to orbifolds by Kawamata \cite{kawamata2002francia}, which we will discuss in greater detail below. 

The goal of this paper is to generalize all these results, starting with schemes and later working with stacks. We say a line bundle $\ecal{L}$ on $X$ is \emph{nowhere torsion} if whenever $\ecal{F}$ is a coherent sheaf on $X$ whose support is proper and $\ecal{F} \otimes \ecal{L} \cong \ecal{F}$, then the support of $\ecal{F}$ has dimension zero. 

\begin{theorem}(See Theorem \ref{theorem-reconstruction-for-schemes})
\label{thm-reconstructionforschemesintroversion}
    Suppose $X, Y$ are finite type, separated, connected, Gorenstein schemes over a field $k$. Suppose $\omega_X$ is a nowhere torsion line bundle. Let $\Phi: D^b_{\operatorname{Coh}}(X) \to D^b_{\operatorname{Coh}}(Y)$ be a $k$-linear, exact equivalence of categories. Then there exist a $k$-isomorphism $f : X \to Y$, a line bundle $\ecal{L}$ on $Y$, and an integer $n$ such that $\Phi(K) \cong Rf_*(K) \otimes^{\mathbb{L}}\ecal{L}[n]$ for every $K \in D^b_{\operatorname{Coh}}(X)$. If additionally, $\dim X> 0$ and $X$ is proper  over $k$, then there is an isomorphism of functors $\Phi\cong Rf_*(-)\otimes^{\mathbb{L}}\ecal{L}[n]$.
\end{theorem}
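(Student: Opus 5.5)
We follow the Bondal--Orlov strategy of characterizing point objects intrinsically, adapted to the non-proper Gorenstein setting where only objects with proper support have finite-dimensional $\operatorname{Hom}$-spaces.

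\textbf{Intrinsic Serre functor.} On the full subcategory $D^b_{\operatorname{Coh}}(X)_{\mathrm{ps}}$ of objects with proper support, Grothendieck duality makes $S_X = -\otimes \omega_X[\dim]$ (taken componentwise on a possibly non-equidimensional $X$) a relative Serre functor, since $X$ is Gorenstein. That is, $\operatorname{Hom}(A,B)^\vee \cong \operatorname{Hom}(B, S_X A)$ whenever $A$ has proper support and $B \in D^b_{\operatorname{Coh}}(X)$. We characterize the proper-support objects categorically, for example as those $P$ with $\bigoplus_i \operatorname{Hom}(P,K[i])$ finite-dimensional for all $K$. Then $\Phi$ restricts to an equivalence of these subcategories and intertwines the Serre functors: $\Phi\circ S_X \cong S_Y \circ \Phi$ on proper-support objects.

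\textbf{Point objects.} Call $P$ a point object if it has proper support, $S(P) \cong P[d]$ for some $d$, $\operatorname{Hom}^{<0}(P,P)=0$, and $\operatorname{Hom}^0(P,P)$ is a local ring (field up to nilpotents). We show the point objects of $D^b_{\operatorname{Coh}}(X)$ are exactly the shifts of $\mathcal{O}_Z$-modules supported at a closed point. Sheaves at closed points clearly qualify, since $\omega_X$ is trivial near a point. For the converse, take a point object $P$ and consider the top nonzero cohomology sheaf $\mathcal{H}^q(P)$, whose support $Z$ is proper. From $P\otimes\omega_X[\dim]\cong P[d]$ we get $d=\dim$ locally, so $\mathcal{H}^q(P)\otimes\omega_X\cong \mathcal{H}^q(P)$. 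Nowhere torsion then forces $\dim \operatorname{Supp}\mathcal{H}^q(P)=0$, and the same holds for every cohomology sheaf. Finally, the Bondal--Orlov argument using $\operatorname{Hom}^{<0}(P,P)=0$ and locality shows that $P$ is a single shifted sheaf on one connected zero-dimensional support, hence supported at one closed point. \emph{This step is the main obstacle}, because nowhere torsion replaces ampleness. We must check that a coherent sheaf isomorphic to its $\omega$-twist forces zero-dimensional support, with no auxiliary ample line bundle available, and this is exactly what the definition provides. Since point objects are defined intrinsically, $\Phi$ sends them to point objects of $Y$. We then run the argument on $Y$, using that $\omega_Y$ is also nowhere torsion, which must be transported intrinsically via the Serre functor. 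Alternatively we define invertible objects intrinsically as those $L$ with $\operatorname{Hom}^\bullet(L,P)$ one-dimensional for every point object $P$ with simple endomorphisms.

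\textbf{Reconstruction and the functor.} Invertible objects are shifts of line bundles; this uses connectedness and the fact that perfect complexes are detected by point objects. After composing $\Phi$ with a shift and a twist, we may assume $\Phi(\mathcal{O}_X)=\mathcal{O}_Y$. Because $S$ is preserved, $\Phi(\omega_X^{\otimes n})=\omega_Y^{\otimes n}$. We then reconstruct $X$ as a ringed space from simple point objects, i.e.\ skyscrapers $k(x)$. Open sets come from the supports of $\omega^{n}$-sections, or more robustly from the support theory of $D^{\mathrm{perf}}$ (Balmer). Here we use that $\Phi$ preserves perfect objects, intrinsically characterized as the homologically finite ones, and that the tensor structure is recovered from $\mathcal{O}_X$ together with $\operatorname{Hom}$'s into points. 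This yields $f:X\cong Y$ with $\Phi(k(x))\cong k(f(x))[n]$.

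\textbf{Comparison on objects.} The composite $\Psi=f^*\circ\Phi$ fixes skyscrapers up to shift. Hence $\Psi(\mathcal{F})$ is a sheaf for every coherent $\mathcal{F}$, being detected by $\operatorname{Hom}$'s to points, and $\Psi(\mathcal{O}_X)$ is a line bundle $\mathcal{L}$. Applying the Bondal--Orlov sheaf argument degree by degree gives $\Psi(K)\cong K\otimes\mathcal{L}$ objectwise.

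\textbf{Natural isomorphism.} When $X$ is proper and $\dim X>0$, coherent Tannaka duality or the Orlov representability theorem (Fourier--Mukai kernels, which exist for proper Gorenstein $X$ via Lurie / Hall--Rydh) upgrades the objectwise isomorphism to an isomorphism of functors. In this case the kernel is a sheaf on the graph, which we identify with $\mathcal{L}$. Positive dimension is needed to rule out zero-dimensional counterexamples with non-standard autoequivalences.
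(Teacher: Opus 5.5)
\textbf{The construction of $f$ is the gap.} Recovering the topology from the loci $X_s$, $s\in\Gamma(X,\omega_X^{\otimes n})$, is exactly where Bondal--Orlov use ampleness. A nowhere torsion canonical bundle can have essentially no sections: for the blow-up of $\mathbb{P}^2$ in nine very general points of a cubic, every $\omega_X^{\otimes n}$ with $n\neq 0$ has at most a one-dimensional space of sections, so these loci are nowhere near a basis. Balmer's reconstruction needs the tensor product on $\perf(X)$, which an exact equivalence need not respect. ``Recovering the tensor structure from $\ecal{O}_X$ and Homs into points'' is the whole problem, not a step. Also, $\Phi(\omega_X^{\otimes n})\cong\omega_Y^{\otimes n}$ does not follow from Serre functors unless $X$ is proper, since otherwise $\ecal{O}_X\notin\perf_{prop}(X)$.

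The missing idea is already latent in your comparison paragraph, but it must come \emph{before} any reconstruction. Once $\Phi$ sends a family of point-supported sheaves (one at each closed point) to point-supported sheaves with a uniform shift $n$, the degree of the top cohomology sheaf is preserved, since it is detected by Homs into such a family (Lemma \ref{lemma-recognizingvectorbundles}(i)). The truncation argument of Proposition \ref{prop-pointstopoints} then shows that $\Phi[-n]$ restricts to an equivalence $\operatorname{Coh}(X)\simeq\operatorname{Coh}(Y)$. Gabriel's theorem yields $f$ and $\ecal{L}$ together with a \emph{natural} isomorphism on $\operatorname{Coh}$. That natural isomorphism, not an objectwise one, is what the Canonaco--Stellari almost-ample-set criterion extends to $\Phi\cong Rf_*(-)\otimes^{\mathbb{L}}\ecal{L}[n]$ when $X$ is proper of positive dimension. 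No kernel is needed, and your sources would not supply one: Orlov's theorem is for smooth projective varieties, and coherent Tannaka duality requires a monoidal functor, which you do not have.

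\textbf{The hypothesis is one-sided, and singular points need perfect probes.} Only $\omega_X$ is assumed nowhere torsion. You give no way to transport this to $\omega_Y$, since ``zero-dimensional support'' is not something you have characterized categorically, so you cannot classify point objects on $Y$. The paper never needs to. For each closed point $y\in Y$ it takes a perfect coherent sheaf $\ecal{F}_y$ supported at $y$ with local endomorphism ring: the Koszul quotient of $\ecal{O}_{Y,y}$ by a system of parameters, which is a regular sequence because Gorenstein rings are Cohen--Macaulay (Lemma \ref{lemma-perfectcoherentsheaf}). This sheaf lies in $\perf_{prop}(Y)$ and is trivially $\omega_Y$-invariant, so $\Phi^{-1}(\ecal{F}_y)$ is $\omega_X$-invariant, and nowhere torsion is used on $X$ alone. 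The rest then follows:
\begin{itemize}
\item Lemma \ref{lemma-itsasheaf} makes $\Phi^{-1}(\ecal{F}_y)$ a shifted sheaf at one point.
\item Orthogonality plus a spanning argument give a bijection of closed points.
\item Lemma \ref{lemma-recognizingvectorbundles}(iii), applied to the perfect complex $\Phi^{-1}(\ecal{O}_Y)$, gives the uniform shift.
\end{itemize}

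These probes also repair your treatment of the singular locus, where $k(x)$ is not perfect. Your duality formula fails for non-perfect $A$: at a node, $\operatorname{Ext}^{-j}(k(x),k(x))=0$ but $\operatorname{Hom}(k(x)[-j],S_X k(x))=\operatorname{Ext}^{1+j}(k(x),k(x))\neq 0$ for $j>0$. Moreover, your categorical characterization actually cuts out $\perf_{prop}$. So ``simple point objects $k(x)$'' never see singular points, and invertible objects defined by testing against them need not be shifted line bundles there.
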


Our methods are very similar to Favero's from \cite{FAVERO20121955}. We expect this result to be unsurprising, perhaps even folklore to experts, and similar conditions occur in \cite[Lemma 2.2]{Krahblowups} and \cite[Proposition 3.4, Corollary 3.5]{kawatani2010group}. However, we still hope people find it useful. We also provide in \S\ref{subsection-examples} examples of varieties to which our result applies but previous versions do not.

The bulk of the paper is devoted to extending the Bondal--Orlov Reconstruction Theorem to certain algebraic stacks. Let $\mathcal{X}$ be a smooth, separated, tame algebraic stack over $k$ with trivial generic stabilizer. When $k$ has characteristic zero, this is equivalent to asking that $\mathcal{X}$ be a smooth, separated, Deligne--Mumford stack with trivial generic stabilizer (to many, this would simply be called an orbifold).  Such stacks, as well as their derived categories, arise naturally even in the study of quasi-projective varieties. 

\begin{example}
    Suppose $k = \mathbb{C}$ and $G \subset \operatorname{SL}_3(\mathbb{C})$ is a finite subgroup. Let $G$ act on $\mathbb{A}^3_{\mathbb{C}}$ through the standard representation. Set $\mathcal{X} = [\mathbb{A}^3_{\mathbb{C}}/G]$ and let $X = \mathbb{A}^3_{\mathbb{C}}/G$ be its coarse space. By \cite{BridgelandKingReid2001}, there is a projective crepant resolution $Y \to X$ of $X$ together with an exact equivalence of categories
$
D^b_{\operatorname{Coh}}(\mathcal{X}) \cong D^b_{\operatorname{Coh}}(Y).
$

Going one step further, if $G$ is abelian, then it is proved in \cite{CrawIshii2004} that for \emph{any} projective crepant resolution $Y' \to X$, the variety $Y'$ is related to $Y$ by a sequence of flops, and there is an exact, $k$-linear equivalence of categories 
$
D^b_{\operatorname{Coh}}(Y) \cong D^b_{\operatorname{Coh}}(Y').
$
Note that there can be many non-isomorphic $Y'$, for example when 
$$
\mathbb{Z}/6 \cong G = \{\begin{pmatrix}
    \zeta & 0 & 0 \\
    0 & \zeta^2 & 0 \\
    0 & 0 & \zeta^3
\end{pmatrix} : \zeta \in \mu_6(\mathbb{C})\},
$$
there are 5 distinct projective crepant resolutions of $X$, see \cite{cacciatori2009dbranes}. 

We can also view the coarse space morphism $\mathcal{X} \to X$ as a crepant resolution of singularities since it is proper and an isomorphism over an open whose complement has codimension $2$, and $\mathcal{X}$ is smooth over $\mathbb{C}$. Among all resolutions of singularities of $X$, this one is distinguished: It is the canonical stack morphism associated to $X$, see \cite[Proposition 2.8]{Vistoli1989} for the general construction. 

There are similar crepant resolutions when $G = (\bb Z/2\bb Z)^2$ acts on $\bb A^3$, as described in \cite{donagi2017global}*{\S5.1}, and on $(\bb P^1)^3$, see Example \ref{example-multiple-crepant}. We point this out because the  results below require properness.
\end{example}

It is natural to ask whether $\mathcal{X}$ above can also be characterized uniquely among all smooth tame stacks with equivalent derived category. Our main result says that in the proper case and under an assumption on the canonical bundle, this holds. 

\begin{theorem}(See Theorem \ref{theorem-bo-tame-stack})
\label{thm-bo-tamestack-introversion}
    Let $\mathcal{X}, \mathcal{Y}$ be smooth, proper, connected tame stacks over $k$ of positive dimension with trivial generic stabilizers. Suppose $D^b_{\operatorname{Coh}}(\mathcal{X}) \cong D^b_{\operatorname{Coh}}(\mathcal{Y})$ as $k$-linear triangulated categories. If the canonical bundles $\omega_{\mathcal{X}}$ and $\omega_{\mathcal{Y}}$ are nowhere torsion, then $\mathcal{X} \cong \mathcal{Y}$.
\end{theorem}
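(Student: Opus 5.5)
We follow the Bondal--Orlov strategy: first recover the Serre functor and the point-like objects intrinsically from $D^b_{\operatorname{Coh}}(\mathcal{X})$, then reconstruct the symmetric monoidal structure on a suitable subcategory, and finally apply coherent Tannaka duality (Lurie, Hall--Rydh). We use throughout that the equivalence $\Phi$ is $k$-linear.

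\textbf{Step 1: Serre functor.} By Grothendieck duality for proper tame stacks (Hall--Priver), $D^b_{\operatorname{Coh}}(\mathcal{X})$ has the Serre functor $S_{\mathcal{X}} = (-)\otimes\omega_{\mathcal{X}}[d]$ with $d=\dim\mathcal{X}$. The same holds for $\mathcal{Y}$, and $\Phi$ intertwines $S_{\mathcal{X}}$ and $S_{\mathcal{Y}}$.

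\textbf{Step 2: point-like objects.} Call $P$ point-like if $S(P)\cong P[s]$ for some $s$, $\operatorname{Hom}(P,P[i])=0$ for $i<0$, and $\operatorname{End}(P)$ is a division ring (or local). We want to show these are, up to shift, exactly the objects $\mathcal{O}$-module-supported at a single point, i.e.\ skyscraper-type objects $x_*V$ for a residual gerbe $x$ and an irreducible representation $V$.
\begin{itemize}
\item \emph{Nowhere torsion kills positive-dimensional support.} Let $P$ be point-like. Comparing cohomology sheaves in $P\otimes\omega[d]\cong P[s]$ gives $s=d$ and $\mathcal{H}^i(P)\otimes\omega\cong\mathcal{H}^i(P)$. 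Since $\mathcal{X}$ is proper, every coherent sheaf has proper support, so nowhere torsion forces $\operatorname{Supp}\mathcal{H}^i(P)$ to be zero-dimensional. (Nowhere torsion for stacks is phrased via finite maps from curves; we pass to this sheaf form through the coarse space, taking a curve in the support and a suitable cover.)
\item \emph{Connectedness of support.} The usual argument with $\operatorname{Hom}$-vanishing in negative degrees and locality of $\operatorname{End}$ shows the support is a single point and $P$ is a shifted sheaf.
\item \emph{Using the stabilizer.} At a point with nontrivial linearly reductive stabilizer, the sheaves supported there are representations, and the simple ones are $x_*V$.
\end{itemize}
This yields point-like objects for all irreducible $V$, not only for $V$ trivial. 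The first main difficulty is therefore to single out the \emph{structure sheaves} $\mathcal{O}_x$ of the residual gerbes intrinsically.

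\textbf{Step 3: invertible objects.} Define invertible objects as those $L$ with, for every point-like $P$, $\operatorname{Hom}^\bullet(L,P)$ concentrated in a single degree and of the minimal dimension attained. With trivial generic stabilizer, $\mathcal{X}$ has a dense open substack that is a scheme, where only the trivial $V$ occurs. We use this to normalize:
\begin{itemize}
\item fix an invertible object $\mathcal{O}$;
\item declare $P$ to be a \emph{point} if it is point-like and $\operatorname{Hom}^\bullet(\mathcal{O},P)\neq0$, which is $\operatorname{Hom}(\mathcal{O},x_*V)=V^{G_x}$, nonzero exactly when $V$ is trivial.
\end{itemize}
We then show the invertible objects are exactly the shifted line bundles. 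Given a complex $L$ whose Homs into all these points are one-dimensional in one degree, a Nakayama-type argument on an atlas makes $L$ a shifted line bundle. A continuity argument, using connectedness of $\mathcal{X}$ and a dense open scheme locus where $\mathcal{O}_x$ are genuine points, fixes the shift. Replacing $\Phi$ by $\Phi$ composed with a twist and shift, we may assume $\Phi(\mathcal{O}_{\mathcal{X}})\cong\mathcal{O}_{\mathcal{Y}}$.

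\textbf{Step 4: monoidal reconstruction and Tannaka duality.} $\Phi$ now sends $\omega^{\otimes n}$ to $\omega_{\mathcal{Y}}^{\otimes n}$, since these are $S^n(\mathcal{O})[-nd]$. More generally it matches the full subcategory of line bundles together with its tensor structure, because twisting by $L$ is recovered as an autoequivalence preserving points. The hardest part is that, unlike the ample case, $\omega$ alone does not generate, so one cannot form a graded ring $\bigoplus H^0(\omega^n)$. Instead we aim for a symmetric monoidal equivalence of abelian categories $\operatorname{Coh}(\mathcal{X})\simeq\operatorname{Coh}(\mathcal{Y})$, or of the perfect complexes, and then invoke coherent Tannaka duality to get $\mathcal{X}\cong\mathcal{Y}$. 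The plan is as follows.
\begin{itemize}
\item Recover the heart $\operatorname{Coh}(\mathcal{X})$ as those objects $F$ with $\operatorname{Hom}(F,P[i])=0$ for $i<0$ and with $\operatorname{Hom}(Q,F[i])$ for large $n$ controlled against the point-like objects $P$, taken for all irreducible $V$; these detect the standard t-structure.
\item Recover the tensor product by a Fourier--Mukai argument: the equivalence is a Fourier--Mukai transform with kernel on $\mathcal{X}\times\mathcal{Y}$, which holds for tame stacks by existence of kernels in the perfect-stack setting. Since it sends each point object $\mathcal{O}_x$ to a point object $\mathcal{O}_{f(x)}$, the kernel is supported on the graph of a bijection $f$ on points.
\item Show, as in Bondal--Orlov, that the kernel is a line bundle on a closed substack $\Gamma$ that is flat and of degree one over both factors. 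Gerbe stabilizers are matched because $\operatorname{End}$ and $\operatorname{Ext}^1$ of $\mathcal{O}_x$ record the automorphism group data.
\end{itemize}
The main obstacle is this last step: showing $\Gamma\to\mathcal{X}$ is an isomorphism of stacks rather than of coarse spaces. Here we would use that $\Phi$ preserves the skyscrapers $x_*V$ for all $V$, which identifies $\operatorname{Rep}(G_x)\simeq\operatorname{Rep}(G_{f(x)})$ monoidally, so $\Phi$ is then $Rf_*\otimes\mathcal{L}$. Tannaka duality packages this into the isomorphism $\mathcal{X}\cong\mathcal{Y}$.
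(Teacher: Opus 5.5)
There is a genuine gap, and it underlies Steps 2--4 at once. You assume that point-like objects with zero-dimensional support are exactly the shifted skyscrapers $x_*V$, and that your intrinsically defined ``invertible'' objects are exactly the shifted line bundles. Under the hypotheses of the theorem both claims are false, and no sharper intrinsic definition can rescue them.

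Take $\mathcal{X}=\mathcal{P}(1,1,2)$, whose canonical bundle is nowhere torsion, and the spherical twist $T=T_{\kappa(x,\chi_0)}$ at its stacky point (Example~\ref{example-spherical-twist-on-p112}). Being an autoequivalence, $T$ preserves every class of objects defined from the $k$-linear triangulated structure. Yet $T(\kappa(x,\chi_1))$ is a length-three sheaf at $x$ (an extension of $\kappa(x,\chi_0)^{\oplus 2}$ by $\kappa(x,\chi_1)$). Also $T(\mathcal{O}_{\mathcal{X}})$ has cohomology $\mathcal{O}_{\mathcal{X}}$ in degree $0$ and $\kappa(x,\chi_0)$ in degree $1$. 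So neither the skyscrapers, nor the shifted line bundles, nor the heart $\operatorname{Coh}(\mathcal{X})$ is intrinsic.

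Consequently the end point of your Step 4 is false in this setting: a kernel that is a line bundle on a graph, $\Phi$ preserving all $x_*V$, and $\Phi\cong Rf_*(-)\otimes\mathcal{L}$. This is exactly why Theorem~\ref{theorem-bo-tame-stack} only makes $\Phi$ standard away from codimension two, and why $\operatorname{Loc}(\mathcal{X})$ appears.

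The classification of point-like objects at a point (Proposition~\ref{prop-classifypointlikecycliccase}) genuinely needs $i_x^*\omega_{\mathcal{X}}$ to $\otimes$-generate $\operatorname{Coh}(\mathcal{G}_x)$. That is what lets a simple quotient of the top cohomology be matched with a simple subobject of the bottom one after twisting by a power of $\omega_{\mathcal{X}}$. At the $A_1$ point above, $i_x^*\omega_{\mathcal{X}}$ is trivial.

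Two smaller problems compound this:
\begin{itemize}
\item Your condition $S(P)\cong P[s]$ already excludes $\kappa(x,\xi)$ whenever $i_x^*\omega_{\mathcal{X}}$ is a nontrivial character. You must allow $S^N(P)\cong P[Nd]$ for some $N>0$, cf.\ Remark~\ref{remark-point-like}.
\item $\operatorname{Ext}^\bullet(\mathcal{O}_x,\mathcal{O}_x)$ does not determine $G_x$: at every $A_n$ point it is $k,0,k$.
\end{itemize}

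The missing idea is to use point-like objects only for \emph{support} control, do the actual reconstruction on a big open, and then extend. The argument runs in four stages.
\begin{enumerate}
\item \emph{Support control.} Nowhere torsion forces a point-like object to be supported at one closed point (Lemma~\ref{lemma-pointlikeandnowheretorsion}). The skyscrapers at a fixed point are $\operatorname{Ext}^1$-connected, because the normal bundle of the residual gerbe is faithful; this is where trivial generic stabilizer enters. Hence all $\Phi(\kappa(x,\xi))$ with $x$ fixed live at one point, which gives a support-preserving bijection of closed subsets.
\item \emph{Big open.} Pass to an open $\mathcal{U}$ whose complement has codimension $\geq 2$, whose coarse space is a scheme, and which is locally a root stack along a regular divisor. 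Then $\omega_{\mathcal{X}}|_{\mathcal{U}}$ is point-wise $\otimes$-generating (Proposition~\ref{prop-structureawayfromcodim2}). Only there do skyscrapers go to skyscrapers (after a uniform shift). This gives an equivalence $\operatorname{Coh}\mathcal{U}\to\operatorname{Coh}\mathcal{V}$ commuting with $-\otimes\omega$, since $\Phi$ commutes with Serre functors.
\item \emph{Tannaka duality on the big open.} Your worry that $\omega$ does not generate is answered locally. On opens of $\mathcal{U}$ with affine coarse space, every coherent sheaf is a quotient of sums of positive powers of $\omega$. This alone upgrades commutation with $-\otimes\omega$ to a symmetric monoidal structure (Proposition~\ref{prop-itsmonoidal}). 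Hall--Rydh coherent Tannaka duality then yields $\mathcal{V}\cong\mathcal{U}$.
\item \emph{Extension across codimension two.} This step has no counterpart in your plan: the isomorphism must be extended over the locus where $\Phi$ need not be standard. The reduced support of the Fourier--Mukai kernel is finite and birational over both normal coarse spaces. Hence it gives $Y\cong X$ compatibly with $\mathcal{V}\cong\mathcal{U}$ (Proposition~\ref{prop-isoofcoarsespaces}). Purity (Proposition~\ref{prop-extendingmorphisms}) then extends $\mathcal{V}\cong\mathcal{U}$ to $\mathcal{Y}\cong\mathcal{X}$.
\end{enumerate}
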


``Nowhere torsion'' here is defined exactly as in the case of schemes. 
This result can be interpreted as saying that among all Fourier--Mukai partners of an $\mathcal{X}$ as in the theorem, 
there is at most one having nowhere torsion canonical bundle. If it exists, this one should be viewed as special. Our result strengthens \cite[Theorem 6.1]{kawamata2002francia}, which proves that the coarse spaces of $\mathcal{X}, \mathcal{Y}$ are isomorphic under some additional hypotheses, including that $k = \mathbb{C}$, the stabilizer groups of $\mathcal{X}, \mathcal{Y}$ are cyclic with character groups generated by the canonical bundles, and the coarse spaces have ample or anti-ample canonical sheaves. 

\begin{examples} \ 
\begin{enumerate}
     \item The stacky weighted projective space $\mathcal{P}(1,1,2)$ has the Hirzebruch surface $\mathbb{F}_2$ as a Fourier--Mukai partner, see \cite{auroux2008mirror}*{Theorem 2.9} and Example \ref{example-f2}. We will see that the first has good canonical bundle, but the second does not: The canonical bundle is trivial on the unique $-2$ curve.
     \item In Example \ref{example-multiple-crepant}, based on \cite{donagi2017global}, we observe that a quotient stack $[(\bb P^1)^3/(\bb Z/2\bb Z)^2]$, which has nowhere torsion canonical bundle, has at least two distinct, non-stacky Fourier--Mukai partners, given as distinct crepant resolutions of $(\bb P^1)^3/(\bb Z/2\bb Z)^2$ which are related by flops. 
\end{enumerate}
\end{examples}

For more examples of stacks to which Theorem \ref{thm-bo-tamestack-introversion} applies, see Section \ref{subsection-stackyexamples}.


Importantly, the classical Bondal--Orlov Reconstruction Theorem also computes the group of autoequivalences of the derived category of a smooth projective variety with ample or anti-ample canonical bundle. In the setting of Theorem \ref{thm-bo-tamestack-introversion} we are also able to do this to some extent, but the answer is more complicated. Let $\operatorname{Std}(\mathcal{X}) \subset \operatorname{Aut} D^b_{\operatorname{Coh}}(\mathcal{X})$ denote the subgroup of standard autoequivalences, 
 that is, the subgroup generated by pushforwards by automorphisms, tensoring with a line bundle, and shifting by an integer. Thus $\operatorname{Std}(\mathcal{X}) \cong (\operatorname{Pic}(\mathcal{X}) \rtimes \operatorname{Aut}(\mathcal{X})) \times \mathbb{Z}[1]$.

\begin{theorem}(See Construction \ref{construction-localsubgroup})
\label{theorem-autoequivsintroversion}
    Let $\cal X$ be as in Theorem \ref{thm-bo-tamestack-introversion}. There is a split surjective homomorphism $\operatorname{Aut} D^b_{\operatorname{Coh}}(\mathcal{X}) \twoheadrightarrow \operatorname{Std}(\mathcal{X})$. Denote its kernel $\operatorname{Loc}(\mathcal{X})$, i.e.,
    $$
    \operatorname{Aut} D^b_{\operatorname{Coh}}(\mathcal{X}) = \operatorname{Loc}(\mathcal{X}) \rtimes \operatorname{Std}(\mathcal{X}).
    $$ 
    Furthermore, the subgroup $\operatorname{Loc}(\mathcal{X})$ is determined locally: 
    There is a natural injective homomorphism 
    \[
    \operatorname{Loc}(\mathcal{X}) \hookrightarrow \prod _{x \in |\mathcal{X}| \text{ closed}}  \operatorname{Aut} D^b_{\operatorname{Coh},x}(\mathcal{X}). \qedhere 
    \]
    %
\end{theorem}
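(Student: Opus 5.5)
The plan is to copy the classical Bondal--Orlov argument for autoequivalences, with ``point objects'' replaced by the subcategories supported at closed points. I will lean on the machinery behind Theorem \ref{thm-bo-tamestack-introversion}: coherent Tannaka duality (Lurie, Hall--Rydh) and Grothendieck duality for proper tame stacks (Hall--Priver). Write $S = -\otimes\omega_{\mathcal{X}}[\dim \mathcal{X}]$ for the Serre functor of $D^b_{\operatorname{Coh}}(\mathcal{X})$, which Hall--Priver duality provides.

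\textbf{Step 1: an autoequivalence $\Phi$ preserves the pointwise subcategories.} The first step is intrinsic. Call $P\in D^b_{\operatorname{Coh}}(\mathcal{X})$ a point-like object if $S(P)\cong P[\dim\mathcal{X}]$, i.e.\ $P\otimes\omega_{\mathcal{X}}\cong P$. Applying the nowhere torsion hypothesis to the cohomology sheaves shows that such $P$ have zero-dimensional support. Conversely, every object supported at a closed point $x$ is point-like: the residual gerbe at $x$ is a quotient by a linearly reductive group, so some power of $\omega_{\mathcal{X}}$ trivializes near $x$. This needs some care, and I would use the finite-order twist there. I then plan to characterize the thick subcategories $D^b_{\operatorname{Coh},x}(\mathcal{X})$ as the indecomposable blocks of the category of objects with zero-dimensional support: objects supported at distinct points are Hom-orthogonal, and those at a single point are connected. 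Since $\Phi$ commutes with $S$, it permutes these blocks. This gives a bijection $\phi$ on closed points of $|\mathcal{X}|$ and equivalences $\Phi_x : D^b_{\operatorname{Coh},x}(\mathcal{X})\to D^b_{\operatorname{Coh},\phi(x)}(\mathcal{X})$.

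\textbf{Step 2: the homomorphism to $\operatorname{Std}(\mathcal{X})$.} Following the proof of Theorem \ref{thm-bo-tamestack-introversion}, I would define invertible objects as those $L$ with $\operatorname{Hom}(L,P)$ of the shape one expects from a shifted line bundle at every point block; the precise shape is dictated by the stabilizer representation. The proof of that theorem shows these are exactly the shifted line bundles $\mathcal{L}[n]$. The next task is to recover $\mathcal{X}$ from the tensor structure. The $\omega$-powers together with all line bundles, modulo shifts, reconstruct $\operatorname{Coh}(\mathcal{X})$ as the heart, via the $t$-structure determined by the point blocks. Coherent Tannaka duality then reconstructs $\mathcal{X}$. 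Composing $\Phi$ with a shift and a line bundle twist normalizes it so that $\Phi(\mathcal{O})\cong\mathcal{O}$. After that, $\Phi$ preserves the heart and the symmetric monoidal structure on $\operatorname{Coh}(\mathcal{X})$, up to the data of a line bundle, and Tannaka duality yields an automorphism $f$. Sending $\Phi\mapsto (f,\mathcal{L},n)$ gives the map, which I then check is a homomorphism. It is split by the inclusion $\operatorname{Std}(\mathcal{X})\subset\operatorname{Aut}$, since a standard autoequivalence visibly maps to itself.

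\textbf{Step 3: locality of the kernel.} Restricting $\Phi$ to $D^b_{\operatorname{Coh},x}$ defines $\operatorname{Loc}(\mathcal{X})\to\prod_x\operatorname{Aut}D^b_{\operatorname{Coh},x}(\mathcal{X})$. This lands in the product because, on the kernel, $f=\mathrm{id}$ forces $\phi=\mathrm{id}$. For injectivity, suppose $\Phi$ is in the kernel and restricts to the identity at each $x$. I will show $\Phi\cong\mathrm{id}$ using the Fourier--Mukai kernel $K$ on $\mathcal{X}\times\mathcal{X}$, which exists by the representability results of Hall--Priver and Ben-Zvi--Nadler type results. The kernel of $\mathrm{id}$ is $\Delta_*\mathcal{O}$. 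Testing on objects supported at each point shows that $K$ agrees with $\Delta_*\mathcal{O}$ at every closed point, and support considerations give $K\cong\Delta_*\mathcal{M}$ with $\mathcal{M}$ a line bundle. The normalization $\Phi(\mathcal{O})=\mathcal{O}$ then gives $\mathcal{M}\cong\mathcal{O}$.

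\textbf{Main obstacle.} I expect Step 2 to be the hardest, specifically making the association $\Phi\mapsto(f,\mathcal{L},n)$ canonical so that it is a group homomorphism. On stacks, line bundles are not determined by their values on the coarse space, and point blocks are not single objects. My first attempt would be to run Tannaka duality on the graded ring $\bigoplus_m\operatorname{Hom}(\mathcal{O},\omega^m)$ together with the point blocks, and extract $f$ from the induced autoequivalence of the heart.
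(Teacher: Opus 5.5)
\textbf{Step 2 fails.} You claim that after a shift and a line-bundle twist one has $\Phi(\mathcal{O}_{\mathcal{X}})\cong\mathcal{O}_{\mathcal{X}}$, and that $\Phi$ then preserves $\operatorname{Coh}(\mathcal{X})$ and its tensor structure, so that Tannaka duality produces $f$. This is false on all of $\mathcal{X}$. If it held, you would have shown $\operatorname{Loc}(\mathcal{X})=1$ for every $\mathcal{X}$ in the theorem.

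The paper's Example \ref{example-spherical-twist-on-p112} is a counterexample. On $\mathcal{P}(1,1,2)$ the spherical twist $T=T_{\kappa(x,\chi_0)}$ at the stacky point is the identity away from $x$ and preserves $D^b_{\operatorname{Coh},\{x\}}(\mathcal{X})$. Yet $T(\mathcal{O})$ has $H^0=\mathcal{O}$ and $H^1=\kappa(x,\chi_0)$, and $T(\kappa(x,\chi_1))$ is a sheaf of length $3$. For the same reason your intrinsic ``invertible objects'', defined via Homs into point blocks, are not preserved. At points where $i_x^*\omega_{\mathcal{X}}$ is not $\otimes$-generating, a block autoequivalence $\Phi_x$ can send generalized point sheaves to point-like objects that are not of the form $\kappa(x,\xi)[n]$.

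\textbf{The missing idea} is to prove standardness only on the open $\mathcal{U}$ of Proposition \ref{prop-structureawayfromcodim2}. Its complement has codimension $\ge 2$, its coarse space is a scheme, and $\omega_{\mathcal{X}}|_{\mathcal{U}}$ is point-wise $\otimes$-generating. On $\mathcal{U}$ the argument runs as follows.
\begin{itemize}
\item Proposition \ref{prop-classifypointlikecycliccase} applies. It needs the full definition of point-like, including conditions (ii) and (iii), not just $S$-periodicity.
\item This gives a uniform shift and preservation of $\operatorname{Coh}$.
\item Proposition \ref{prop-itsmonoidal} gives a monoidal structure, locally where $\omega$ generates $\operatorname{Coh}$.
\item Hall--Rydh then gives $f:\mathcal{V}\to\mathcal{U}$.
\end{itemize}
Next, the support of the Fourier--Mukai kernel, together with normality of the coarse spaces, gives an isomorphism of coarse spaces. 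Proposition \ref{prop-extendingmorphisms} and Lemma \ref{lemma-hartogs} then extend $f$ and $\mathcal{L}$ across codimension $2$. The triple $(f,\mathcal{L},n)$ is unique subject to the two conditions of Theorem \ref{theorem-bo-tame-stack}: agreement with $f^*(-)\otimes\mathcal{L}[n]$ on coherent sheaves over a big open, and $\operatorname{Supp}F(M)=f^{-1}(x)$. That uniqueness is what makes $\Phi\mapsto(f,\mathcal{L},n)$ a homomorphism, so it resolves the canonicity issue you flagged.

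\textbf{Step 1.} Your claim that the objects supported at one point form a single indecomposable block is where trivial generic stabilizer is used. It makes the normal bundle of the residual gerbe faithful, so the point sheaves are $\operatorname{Ext}^1$-connected (Proposition \ref{prop-faithfulnormalbundle}, Lemma \ref{lemma-genericallyaschemeextconnected}). Without it the blocks split, e.g.\ on $C\times B\mu_2$. With this justification added, your block argument is a reasonable alternative to the paper's point-like route to condition (i).

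\textbf{Step 3.} The diagonal of a stack is not a closed immersion, and you give no argument that a kernel fixing point objects has the form $\Delta_*\mathcal{M}$. You also rely on the normalization $\Phi(\mathcal{O})\cong\mathcal{O}$ from the flawed Step 2. The paper avoids kernels here (Lemma \ref{lemma-local-rigid}). A local autoequivalence fixing all generalized point sheaves preserves $\operatorname{Coh}$ and $\operatorname{Ref}$, and is the identity over $\mathcal{U}$. Hence it is the identity on $\operatorname{Ref}(\mathcal{X})\simeq\operatorname{Ref}(\mathcal{U})$, then on $\operatorname{Coh}(\mathcal{X})$ via reflexive resolutions (Lemma \ref{lemma-resolutionbyreflexives}). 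Finally, it is the identity on $D^b_{\operatorname{Coh}}(\mathcal{X})$ by Canonaco--Stellari.
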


We call $\operatorname{Loc}(\mathcal{X}) \subset \operatorname{Aut} D^b_{\operatorname{Coh}}(\mathcal{X})$ the subgroup of \emph{local auto-equivalences}, and we characterize its elements in 
Construction \ref{construction-localsubgroup}. 
When $\cal{X} = X$ is a scheme, we have $\operatorname{Loc}(X) = 1$ by Theorem \ref{thm-reconstructionforschemesintroversion}, but it can be more complicated when $\mathcal{X}$ is a stack. For example, there is a spherical object in $D^b_{\operatorname{Coh}}(\mathcal{P}(1,1,2))$ whose support is equal to the unique stacky point. The associated spherical twist is a non-trivial element of $\operatorname{Loc}(\mathcal{X})$. See Example \ref{example-spherical-twist-on-p112}. Nevertheless, due to the local nature of the group $\operatorname{Loc}(\mathcal{X})$, we are sometimes able to compute it explicitly. We carry this out in \ref{subsection-stackysurfaceexamples} for some classes of stacky surfaces, relying heavily on the work \cite{ishii2005autoequivalences}, which uses the derived McKay correspondence and canonical stacks to compute the group of autoequivalences of the derived category of the minimal resolution of a projective surface with $A_n$-singularities. The work loc. cit. illustrates the point that even a smooth projective variety may have a special stacky Fourier--Mukai partner which we can exploit to learn things about the original variety.

For particularly well-behaved stacks $\mathcal{X}$, we are able to prove a strong form of reconstruction, namely that every auto-equivalence of $D^b_{\operatorname{Coh}}(\mathcal{X})$ is standard and every Fourier--Mukai partner $\mathcal{Y}$ of $\mathcal{X}$ is isomorphic to $\mathcal{X}$ (with no assumption on $\omega_\mathcal{Y}$). Let us say a line bundle $\ecal{L}$ on $\mathcal{X}$ is \emph{point-wise $\otimes$-generating} if for every closed point $x$ of $\mathcal{X}$ with residual gerbe $i_x : \mathcal{G}_x \to \mathcal{X}$, every coherent sheaf on $\mathcal{G}_x$ is a quotient of a finite direct sum of tensor powers of $i_x^*\ecal{L}$.

\begin{theorem}
\label{thm-canonicalgeneratesintroversion}
    Let $\mathcal{X}$ be as in Theorem \ref{thm-bo-tamestack-introversion}. Assume additionally that the coarse space of $\mathcal{X}$ is a scheme, and that $\omega_{\mathcal{X}}$ is locally tensor generating. Let $\mathcal{Y}$ be a smooth proper connected tame stack over $k$ with trivial generic stabilizer. Suppose $\Phi : D^b_{\operatorname{Coh}}(\mathcal{X}) \to D^b_{\operatorname{Coh}}(\mathcal{Y})$ is a $k$-linear, exact equivalence. Then there are a $k$-isomorphism $f : \mathcal{X} \to \mathcal{Y}$, a line bundle $\ecal{L}$ on $\mathcal{Y}$, and an integer $n$ such that $\Phi \cong
    Rf_*(-) \otimes^{\mathbb{L}}\ecal{L}[n].
    $
\end{theorem}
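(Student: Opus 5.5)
We prove Theorem \ref{thm-canonicalgeneratesintroversion} by reducing to the nowhere-torsion reconstruction of Theorem \ref{thm-bo-tamestack-introversion} and the autoequivalence description of Theorem \ref{theorem-autoequivsintroversion}, and then using the point-wise $\otimes$-generation of $\omega_{\mathcal{X}}$ to show that the local part is trivial.

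The first step is to check that point-wise $\otimes$-generation of $\omega_{\mathcal{X}}$ implies that $\omega_{\mathcal{X}}$ is nowhere torsion, given that the coarse space $X$ is a scheme. Suppose $\ecal{F}$ has proper support of positive dimension and $\ecal{F}\otimes\omega_{\mathcal{X}}\cong\ecal{F}$. I would pull back along a finite morphism from a projective curve $C\to\mathcal{X}$ (through a twisted curve if necessary) and compute degrees on the coarse space. Locally tensor generating forces $\omega_{\mathcal{X}}$ to be, up to a power, the pullback of a line bundle $M$ on $X$. Then I would use that on the coarse curve $M$ has nonzero degree, which is the essential input from the hypothesis on the coarse space.

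The second step treats the Fourier--Mukai partner $\mathcal{Y}$, where the real work lies, since nothing is assumed about $\omega_{\mathcal{Y}}$. Under $\Phi$, Serre functors correspond, so $\Phi\circ(-\otimes\omega_{\mathcal{X}}[d]) \cong (-\otimes\omega_{\mathcal{Y}}[d])\circ\Phi$. Point objects can then be characterized intrinsically, following Bondal--Orlov: an object $P$ is a point object if $S(P)\cong P[d]$, $\operatorname{Hom}^{<0}(P,P)=0$, and $\operatorname{Hom}(P,P)$ is local. In the stacky setting I would instead use skyscrapers of residual gerbes twisted by representations, i.e.\ objects supported at a single point. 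For $\mathcal{X}$, nowhere torsion of $\omega_{\mathcal{X}}$ shows that every $P$ with $P\otimes\omega_{\mathcal{X}}\cong P$ has zero-dimensional support. Transporting this, every such object on $\mathcal{Y}$ also has zero-dimensional support.

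One must then deduce that $\omega_{\mathcal{Y}}$ is nowhere torsion. A sheaf $\ecal{G}$ on $\mathcal{Y}$ with positive-dimensional support and $\ecal{G}\otimes\omega_{\mathcal{Y}}\cong\ecal{G}$ would be $\omega$-invariant, hence correspond to an $\omega_{\mathcal{X}}$-invariant object. The aim is to derive a contradiction from the fact that the support of $\Phi^{-1}\ecal{G}$ is zero-dimensional while supports are compatible in dimension. I expect this to be the main obstacle. The approach I would try first is to compare via the Fourier--Mukai kernel, supported on $\mathcal{X}\times\mathcal{Y}$, and use the point-wise generation to show that the kernel is supported on the graph of a morphism. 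Concretely, $\Phi$ sends each point-gerbe object $\ecal{O}_{\mathcal{G}_x}\otimes V$ to an object supported at a single point $y$. Because every sheaf on $\mathcal{G}_x$ is a quotient of sums of powers of $i_x^*\omega_{\mathcal{X}}$, and these powers correspond to powers of $\omega_{\mathcal{Y}}$ restricted to $\mathcal{G}_y$ (up to shift), the full subcategory $D^b_{\operatorname{Coh},x}(\mathcal{X})$ is generated by the images of a single simple object under the Serre functor. This makes $\Phi$ restricted to it standard.

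With $\mathcal{X}\cong\mathcal{Y}$ obtained via Theorem \ref{thm-bo-tamestack-introversion}, we compose with the inverse of this isomorphism and reduce to an autoequivalence. By Theorem \ref{theorem-autoequivsintroversion}, after composing with a standard autoequivalence it lies in $\operatorname{Loc}(\mathcal{X})$, and it embeds into $\prod_x\operatorname{Aut}D^b_{\operatorname{Coh},x}(\mathcal{X})$. At each point it commutes with $-\otimes\omega_{\mathcal{X}}$ and fixes $\ecal{O}_{\mathcal{G}_x}$. Point-wise generation then says that every simple object on $\mathcal{G}_x$ is a twist of $\ecal{O}_{\mathcal{G}_x}$ by a power of $\omega$. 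Hence the local autoequivalence fixes all simples, and the standard t-structure argument shows it is isomorphic to the identity, which forces $\operatorname{Loc}(\mathcal{X})=1$.
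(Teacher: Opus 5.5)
The decisive step of your argument is asserted rather than proved. In Step 2 you say that $\Phi$ sends each point-gerbe object $\kappa(x,\xi)$ to an object supported at a single closed point $y$, and that the powers of $i_x^*\omega_{\mathcal{X}}$ ``correspond to powers of $\omega_{\mathcal{Y}}$ restricted to $\mathcal{G}_y$''. Both claims presuppose the conclusion you are after.

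A priori you only know that $\Phi(\kappa(x,\xi))$ is point-like on $\mathcal{Y}$. Lemma \ref{lemma-pointlikeandnowheretorsion} turns ``point-like'' into ``supported at one point'' only on a stack whose canonical bundle is nowhere torsion, i.e.\ on $\mathcal{X}$, not on $\mathcal{Y}$. Likewise, the compatibility of supports with dimension (Lemma \ref{lemma-supports}) needs condition (i) for both $\Phi$ and $\Phi^{-1}$. So the deduction that $\omega_{\mathcal{Y}}$ is nowhere torsion is circular as written. This is not automatic: take the equivalence of Example \ref{example-f2}, viewed as going from $\mathcal{P}(1,1,2)$ to $\mathbb{F}_2$. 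The images of the point sheaves at the stacky point generate $D^b_{\operatorname{Coh},C}(\mathbb{F}_2)$, so some of them have positive-dimensional support.

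The missing idea is to run the argument from $\mathcal{Y}$ to $\mathcal{X}$.
\begin{itemize}
\item First check $\dim\mathcal{Y}=\dim\mathcal{X}$ (Lemma \ref{lemma-samedimesionforstacks}); your Serre-functor identity tacitly assumes it.
\item Then $\Phi^{-1}\kappa(y,\eta)$ is point-like on $\mathcal{X}$. By Lemma \ref{lemma-pointlikeandnowheretorsion} it is supported at one closed point, and by Proposition \ref{prop-classifypointlikecycliccase} it is isomorphic to some $\kappa(x,\xi)[n]$. By $\operatorname{Ext}^1$-connectedness (Lemma \ref{lemma-genericallyaschemeextconnected}), $x$ depends only on $y$.
\item The generalized point sheaves of $\mathcal{Y}$ form a spanning class (\S\ref{subsection-generalizedpoints}). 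Hence every closed point $x$ of $\mathcal{X}$ is reached: $\Phi(\kappa(x,\xi))\cong\kappa(y,\eta)[-n]$ for at least one $\xi$.
\item Point-wise $\otimes$-generation writes every $\kappa(x,\xi')$ as $\kappa(x,\xi)\otimes\omega_{\mathcal{X}}^{\otimes i}$. Compatibility with Serre functors then gives $\Phi(\kappa(x,\xi'))\cong\kappa(y,\eta)\otimes\omega_{\mathcal{Y}}^{\otimes i}[-n]$.
\end{itemize}
This is where the hypothesis is really used. It yields condition (ii) up to shift for both $\Phi$ and $\Phi^{-1}$, without any information about $\omega_{\mathcal{Y}}$.

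Your Step 1 is false. Point-wise $\otimes$-generation together with a scheme coarse space does not imply nowhere torsion: on an abelian variety all residual gerbes are points, so $\omega=\mathcal{O}$ is point-wise $\otimes$-generating. Nowhere torsion of $\omega_{\mathcal{X}}$ is part of ``$\mathcal{X}$ as in Theorem \ref{thm-bo-tamestack-introversion}''; without it the statement fails for abelian varieties. So simply drop Step 1. The scheme hypothesis on the coarse space serves a different purpose in the paper: it lets one cover $\mathcal{X}$ by opens with affine coarse space, so that Lemma \ref{lemma-pointwiseonditionfortensorgeneration} and Proposition \ref{prop-itsmonoidal} apply before invoking Hall--Rydh Tannaka duality.

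Once the gap is filled, your detour does go through:
\begin{itemize}
\item nowhere torsion of $\omega_{\mathcal{Y}}$ follows from Lemma \ref{lemma-supports};
\item Theorem \ref{theorem-bo-tame-stack} then gives $\mathcal{X}\cong\mathcal{Y}$;
\item a local autoequivalence sends each $\kappa(x,\xi)$ to some $\kappa(x,\xi')[n_x]$, and $n_x=0$ by the vector-bundle argument on the image of $\mathcal{O}$, as in the proof of Theorem \ref{theorem-localautoforan};
\item so it is trivial by Lemma \ref{lemma-local-rigid}.
\end{itemize}
Your claim that it ``fixes $\mathcal{O}_{\mathcal{G}_x}$'' is unjustified, but also unnecessary. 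The paper avoids the detour entirely. After making the shift uniform, it passes from condition (ii) to $\operatorname{Coh}\mathcal{X}\simeq\operatorname{Coh}\mathcal{Y}$ (Lemma \ref{lemma-cohtocoh}), upgrades this to a symmetric monoidal equivalence, obtains $\mathcal{X}\cong\mathcal{Y}$ by Tannaka duality, and extends from $\operatorname{Coh}$ to $D^b_{\operatorname{Coh}}$ by Canonaco--Stellari.
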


Our main tools for proving this result are coherent Tannaka duality, see \cite{Hall-Rydh-coherent} or \cite{lurie2004tannaka}, and Grothendieck duality for proper tame stacks, as developed in \cite{hall2024generalizedbondalorlovfaithfulnesscriterion}. To prove Theorems \ref{thm-bo-tamestack-introversion} and \ref{theorem-autoequivsintroversion}, we show that our stacks $\mathcal{X}, \mathcal{Y}$ have large opens satisfying the hypotheses of Theorem \ref{thm-canonicalgeneratesintroversion}. We argue that these opens are isomorphic using coherent Tannaka duality, and argue separately that $\mathcal{X}$ and $\mathcal{Y}$ have isomorphic coarse moduli spaces using a Fourier--Mukai kernel, which exists by \cite{peng2024equivalences}. Combining these arguments, we are able to show $\mathcal{X} \cong \mathcal{Y}$, and with more care, we are able to obtain results about autoequivalences.

\section{Nowhere torsion line bundles on schemes}

In this part of the paper, we prove Theorem \ref{thm-reconstructionforschemesintroversion}. In Section \ref{subsection-nowheretorsiononschemes}, we give the definition and some basic properties of nowhere torsion line bundles on schemes. In Section \ref{subsection-reconstructionforschemes}, we give the proof of our reconstruction result. Finally, in Section \ref{subsection-examples}, we give examples of nowhere torsion line bundles on varieties, paying particular attention to varieties with nowhere torsion canonical bundles, as these give applications of our reconstruction theorem. See the introduction to that section for more details.

\subsection{Definition and basic properties}
\label{subsection-nowheretorsiononschemes}

Let $X$ be a finite type, separated scheme over a field $k$. Let $\ecal{L}$ be a line bundle on $X$.

\begin{definition}
    We shall say $\ecal{L}$ is \emph{nowhere torsion} if for every coherent sheaf $\ecal{F}$ on $X$ whose support is proper over $k$, $\ecal{F} \otimes \ecal{L} \cong \ecal{F}$ implies the support of $\ecal{F}$ is a finite set of closed points. We shall sometimes say $\ecal{L}$ is \emph{somewhere torsion} if it is not nowhere torsion.
\end{definition}

\begin{example}
    An (anti-)ample line bundle on $X$ is nowhere torsion. More generally, a $\otimes$-generating line bundle on $X$ is nowhere torsion (see \cite{itoolander2025derived} and Example \ref{example-tensorgeneratingisnowheretorsion}). 
\end{example}

The name is justified by Proposition \ref{prop-justificationofname} below.

\begin{lemma}
\label{lemma-nowhere torsionfinitesurj}
Let $f : Y \to X$ be a finite surjective morphism of (finite type, separated) schemes. Then $f^*\ecal{L}$ is nowhere torsion if and only if $\ecal{L}$ is nowhere torsion.
\end{lemma}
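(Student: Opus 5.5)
We prove the two implications separately, using the projection formula for the finite morphism $f$ in one direction and a pushforward/pullback construction in the other.

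\emph{If $f^*\ecal{L}$ is nowhere torsion, then $\ecal{L}$ is nowhere torsion.} Let $\ecal{F}$ be coherent on $X$ with proper support of positive dimension, and suppose $\ecal{F}\otimes\ecal{L}\cong\ecal{F}$. We would like to produce a coherent sheaf on $Y$ contradicting the hypothesis on $f^*\ecal{L}$. Taking $f^*\ecal{F}$ works:
\begin{itemize}
\item $f^*\ecal{F}\otimes f^*\ecal{L}\cong f^*(\ecal{F}\otimes\ecal{L})\cong f^*\ecal{F}$.
\item Its support is $f^{-1}(\operatorname{Supp}\ecal{F})$, since $f^*$ of a coherent sheaf has support equal to the preimage of the support (Nakayama on fibers).
\item This support is proper over $k$, because $f$ is finite and hence proper.
\item It has positive dimension, because $f$ is surjective and finite, so it preserves dimension of closed subsets.
\end{itemize}
Hence $f^*\ecal{L}$ is somewhere torsion, which is the contrapositive.

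\emph{If $\ecal{L}$ is nowhere torsion, then $f^*\ecal{L}$ is nowhere torsion.} Let $\ecal{G}$ be coherent on $Y$ with proper, positive-dimensional support and $\ecal{G}\otimes f^*\ecal{L}\cong\ecal{G}$. Set $\ecal{F}=f_*\ecal{G}$. Then:
\begin{itemize}
\item $\ecal{F}$ is coherent, since $f$ is finite.
\item By the projection formula, $\ecal{F}\otimes\ecal{L}\cong f_*(\ecal{G}\otimes f^*\ecal{L})\cong f_*\ecal{G}=\ecal{F}$.
\item $\operatorname{Supp}\ecal{F}=f(\operatorname{Supp}\ecal{G})$. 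This holds because $f$ is affine, so $(f_*\ecal{G})_x=0$ exactly when $\ecal{G}$ vanishes on the finite fiber over $x$.
\item The image of a proper closed subset under $f$ is closed and proper over $k$. Its dimension equals $\dim\operatorname{Supp}\ecal{G}>0$, since finite maps preserve dimension.
\end{itemize}
So $\ecal{L}$ is somewhere torsion. Surjectivity is not even needed in this direction.

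The only step requiring care is the support computations and the dimension preservation under finite morphisms; both are standard. I expect no serious obstacle, the projection formula being the key input.
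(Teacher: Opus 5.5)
Your proof is correct and matches the paper's argument essentially step for step. In one direction you pull back the witness sheaf $\ecal{F}$ along $f$, and in the other you push forward the witness $\ecal{G}$ and apply the projection formula, with the same support and dimension bookkeeping as the paper. Your remark that surjectivity of $f$ is needed only in the pullback direction is also accurate.
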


\begin{proof}
    Suppose $\ecal{L}$ is somewhere torsion. Then there is a coherent sheaf $\ecal{F}$ on $X$ with positive-dimensional, proper support such that $\ecal{F} \otimes \ecal{L} \cong \ecal{F}$. Then $f^*\ecal{F}$ has positive-dimensional, proper support (equal to the preimage of the support of $\ecal{F}$) and $f^*\ecal{F} \otimes f^*\ecal{L} \cong f^*\ecal{F}$. Hence $f^*\ecal{L}$ is somewhere torsion. 

    Conversely, suppose $f^*\ecal{L}$ is somewhere torsion. Then there is a coherent sheaf $\ecal{G}$ on $Y$ with positive-dimensional, proper support such that $\ecal{G} \otimes f^*\ecal{L} \cong \ecal{G}$. But then $f_*\ecal{G}$ has positive-dimensional, proper support (equal to the scheme-theoretic image of the support of $\ecal{G}$) and by the projection formula, $f_*\ecal{G} \otimes \ecal{L} \cong f_*\ecal{G}$. 
\end{proof}

Recall that $\ecal{L}$ is said to be \emph{torsion} if it is torsion as an element of the group $\operatorname{Pic}(X)$.

\begin{prop}
\label{prop-justificationofname}
   The following are equivalent:
    \begin{enumerate}
    \item $\ecal{L}$ is somewhere torsion.
    \item There is a complete curve $C \subset X$ such that, if $f : C^\nu \to X$ is the composition of the normalization $C^\nu \to C$ and the inclusion $C \subset X$, then $f^*\ecal{L}$ is a torsion line bundle on $C^\nu$.
    \item There is a finite morphism $f : C \to X$ where $C$ is a normal projective curve over $k$ and such that $f^*\ecal{L}$ is a torsion line bundle on $C$.
    \item There is a finite morphism $f : C \to X$ where $C$ is a projective scheme of dimension $1$ over $k$ and such that $f^*\ecal{L}\cong \ecal{O}_C$.
    \item There is a finite morphism $f : C \to X$ where $C$ is a normal projective curve over $k$ and such that $f^*\ecal{L}\cong \ecal{O}_C$. \qedhere
    \end{enumerate} 
\end{prop}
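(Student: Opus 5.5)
I will prove the equivalences via a cycle of implications. The cheap ones are:
\begin{itemize}
\item (2) $\Rightarrow$ (3), since $C^\nu$ is a normal projective curve (a complete curve over a field is projective) and $C^\nu \to X$ is finite.
\item (3) $\Rightarrow$ (5): if $f^*\ecal{L}^{\otimes n} \cong \ecal{O}_C$, take the cyclic cover $C' \to C$ obtained by extracting an $n$-th root of a trivializing section. Then normalize a component of $C'$ dominating $C$ to get a normal projective curve $C''$, finite over $C$, on which $f^*\ecal{L}$ becomes trivial.
\item (5) $\Rightarrow$ (4) is trivial.
\end{itemize}
The cyclic cover in (3) $\Rightarrow$ (5) is $\underline{\operatorname{Spec}}_C \bigoplus_{i=0}^{n-1} f^*\ecal{L}^{-i}$ with algebra structure given by the isomorphism $f^*\ecal{L}^{-n}\cong\ecal{O}_C$. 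It is finite and surjective over $C$, and the pullback of $f^*\ecal{L}$ to it acquires a tautological nowhere vanishing section. Inseparability when $p \mid n$ is harmless, since we only need finiteness, not étaleness. Alternatively, I could avoid covers and use the pushforward trick in (4) $\Rightarrow$ (1) directly from (3): take $\ecal{F} = f_*(\bigoplus_{i=0}^{n-1} f^*\ecal{L}^{i})$, which satisfies $\ecal{F}\otimes\ecal{L}\cong\ecal{F}$ by the projection formula.

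For (4) $\Rightarrow$ (1), set $\ecal{F} = f_*\ecal{O}_C$. It is coherent since $f$ is finite. Its support is the image of $C$, which is proper and one-dimensional because $f$ is finite and $C$ is projective of dimension $1$. By the projection formula, $\ecal{F}\otimes\ecal{L} \cong f_*(f^*\ecal{L}) \cong \ecal{F}$. This is the same argument as in Lemma \ref{lemma-nowhere torsionfinitesurj}.

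The main step is (1) $\Rightarrow$ (2). Let $\ecal{F}$ be coherent with proper support of positive dimension and $\ecal{F}\otimes\ecal{L}\cong\ecal{F}$. I would pick a one-dimensional integral closed subscheme $C \subset \operatorname{Supp}\ecal{F}$, which is complete. Then I would restrict to reduce to a coherent sheaf on an integral curve with $\ecal{G}\otimes\ecal{L}\cong\ecal{G}$ and $\ecal{G}$ of positive generic rank.

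The natural first attempt is $\ecal{G} = (\ecal{F}|_C)/\mathrm{torsion}$, but this fails if $C$ is not an irreducible component of the support. So I would instead choose $C$ to be an irreducible component of $\operatorname{Supp}\ecal{F}$ of positive dimension, first cutting down the dimension as follows.

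\emph{Cutting down.} The statement concerns restrictions to curves, so I would reduce via Lemma \ref{lemma-nowhere torsionfinitesurj} and induction on $\dim \operatorname{Supp}\ecal{F}$. Let $Z$ be an irreducible component of the support of maximal dimension $d$, with reduced structure. Replace $\ecal{F}$ by $\ecal{F}|_Z$ modulo torsion; the isomorphism $\ecal{F}\otimes \ecal{L}\cong \ecal{F}$ restricts and preserves the torsion subsheaf, since the torsion subsheaf is intrinsic. The sheaf is then torsion-free of positive rank $r$ on the integral proper scheme $Z$. Taking determinants (on an open where it is locally free, or better after normalizing and passing to the reflexive hull, or via the determinant of a finite locally free resolution on a resolution-free curve) gives $\ecal{L}^{r}|_Z$ trivial on a big open.

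The cleanest route is to first reach dimension one: choose a curve $C'$ through a general point of $Z$ such that $\ecal{F}|_{C'}$ modulo torsion still has rank $r$, for instance a general complete intersection curve after a finite cover, using Chow's lemma. Then pull back to the normalization $C^\nu$. There the sheaf $\ecal{G}$ modulo torsion is locally free of rank $r$ with $\ecal{G}\otimes f^*\ecal{L}\cong\ecal{G}$, and taking $\det$ gives $f^*\ecal{L}^{\otimes r}\cong\ecal{O}$, so $f^*\ecal{L}$ is torsion.

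The expected obstacle is producing the curve $C'$ inside $Z$ on which the torsion-free quotient keeps positive rank. I would handle it by taking $C'$ through a point of the open set where $\ecal{F}|_Z$ is locally free, for example via a general curve on a projective modification of $Z$ obtained from Chow's lemma, combined with Lemma \ref{lemma-nowhere torsionfinitesurj}.
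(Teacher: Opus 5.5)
Your implications (2)$\Rightarrow$(3)$\Rightarrow$(5)$\Rightarrow$(4)$\Rightarrow$(1) are fine. They are essentially the paper's: the paper runs (iii)$\Rightarrow$(iv)$\Rightarrow$(v)$\Rightarrow$(i) with the same $\mu_r$-torsor, the normalization of a reduced component, and $f_*\ecal{O}_C$ with the projection formula, and the order does not matter. The real difference is in (1)$\Rightarrow$(2), and there your reason for abandoning the direct approach is mistaken. For any morphism $f\colon Y\to X$ and coherent $\ecal{F}$ one has $\operatorname{Supp}(f^*\ecal{F})=f^{-1}(\operatorname{Supp}\ecal{F})$. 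This holds because $f^*\ecal{F}\otimes\kappa(y)=(\ecal{F}\otimes\kappa(f(y)))\otimes_{\kappa(f(y))}\kappa(y)$, combined with Nakayama.

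So let $C\subset\operatorname{Supp}\ecal{F}$ be \emph{any} complete integral curve; it need not be a component of the support. Then $f^*\ecal{F}$ on $C^\nu$ is nonzero at the generic point, so its torsion-free part $\ecal{E}$ has positive rank $r$. The isomorphism $f^*\ecal{F}\otimes f^*\ecal{L}\cong f^*\ecal{F}$ induces $\ecal{E}\otimes f^*\ecal{L}\cong\ecal{E}$, and taking determinants gives $f^*\ecal{L}^{\otimes r}\cong\ecal{O}_{C^\nu}$. This is exactly the paper's proof. Such a $C$ exists simply because a proper scheme of positive dimension has one-dimensional irreducible closed subsets. The same remark shows that $(\ecal{F}|_C)/\mathrm{torsion}$ has positive rank, so your ``natural first attempt'' does not fail.

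Your replacement argument is also valid: pass to a top-dimensional component $Z$, take $\ecal{G}=\ecal{F}|_Z/\mathrm{torsion}$, pick a complete curve through a point of the locally free locus of $\ecal{G}$, pull back, and take determinants. But it gains nothing over the direct argument, and it handles the self-imposed ``obstacle'' with a sketchy step:
\begin{enumerate}
\item The determinant on $Z$ and the ``big open'' are never used.
\item Lemma \ref{lemma-nowhere torsionfinitesurj} does not apply to a Chow modification, which is not finite.
\item ``A general complete intersection after a finite cover'' is not needed.
\end{enumerate}
If you keep this route, the curve can be produced elementarily. A closed point $z$ of the locally free locus satisfies $\dim\ecal{O}_{Z,z}=\dim Z$. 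A saturated chain of primes in $\ecal{O}_{Z,z}$ then gives a one-dimensional irreducible closed $C'\subset Z$ through $z$. This $C'$ is complete, and its generic point lies in the locally free locus, so the pullback has rank $r$. The cleaner fix, though, is to drop the detour entirely and argue directly with an arbitrary curve in the support.
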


In particular, a necessary condition for $\ecal{L}$ to be somewhere torsion is that there exists a complete curve $C \subset X$ such that $\operatorname{deg}(\ecal{L}|_{C}) = 0$. Note however that this is not a sufficient condition, and also that being a nowhere torsion line bundle is not a numerical property: Any non-torsion line bundle of degree zero on a smooth projective curve $C/k$ is nowhere torsion of degree zero. 

\begin{proof}
    (i) $\implies$ (ii): If $\ecal{L}$ is somewhere torsion, then there exists some coherent sheaf $\ecal{F}$ on $X$ with proper support of positive dimension such that $\ecal{F} \otimes \ecal{L} \cong \ecal{F}$. Choose a complete curve $C \subset X$ contained in the support of $\ecal{F}$, and let $f : C^\nu \to X$ be as in the statement. Then $f^*\ecal{F}$ is a coherent sheaf on the normal curve $C^\nu$ so we can write $f^*\ecal{F}  = \ecal{E} \oplus \ecal{E}'$ where $\ecal{E}$ is a vector bundle and $\ecal{E}'$ is torsion. We also have $f^*\ecal{F}\otimes f^*\ecal{L} \cong f^*\ecal{F}$ and from this we see that 
    $$\ecal{E} \otimes f^*\ecal{L} \cong \ecal{E}$$
    as both are isomorphic to $f^*\ecal{F}/torsion$. Also, $\ecal{E}$ is not zero since we chose $C \subset \operatorname{Supp}(\ecal{F})$. Let $r > 0$ be its rank. Taking determinants gives $\operatorname{det}(\ecal{E}) \otimes f^*\ecal{L}^{\otimes r} \cong \operatorname{det}(\ecal{E})$, hence $f^*\ecal{L}^{\otimes r} \cong \ecal{O}_{C^\nu}$, as needed.

    Clearly (ii) $\implies$ (iii). Let us show (iii) $\implies$ (iv). Let $f : C \to X$ be a finite morphism with $C$ a normal projective curve and such that $f^*\ecal{L}$ is torsion, say of order $r>0$. Let $D \to C$ be the corresponding $\mu_r$-torsor, given by
    $$
    D = \underline{\operatorname{Spec}}_{C}(\ecal{O}_C \oplus \ecal{L} \oplus \cdots \oplus \ecal{L}^{\otimes r-1}).
    $$
    Then $D \to C$ is finite and the pullback $(D \to X)^*\ecal{L}$ is trivial, so (iv) holds.
    
    (iv) $\implies$ (v): If $f : C \to X$ is as in the statement of (iv), then consider the composition $C' \to C \to X$ where $C' \to C$ is the normalization of a reduced irreducible component of $C$.

    (v) $\implies$ (i): Let $f : C \to X$ be as in the statement of (v). Set $\ecal{F} = f_*\ecal{O}_C$. Then by the projection formula,
    $$\ecal{F} \otimes \ecal{L} \cong f_*f^*\ecal{L} \cong f_*\ecal{O}_C \cong \ecal{F},
    $$
    and the support of $\ecal{F}$ is proper of dimension $1$.
\end{proof}

\begin{corollary}
\label{corollary-powersofnowheretorsiononscheme}
    Let $0 \neq n \in \mathbb{Z}$. Then $\ecal{L}$ is nowhere torsion if and only if $\ecal{L}^{\otimes n}$ is nowhere torsion. 
\end{corollary}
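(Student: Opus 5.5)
The plan is to use criterion (iii) of Proposition \ref{prop-justificationofname}, which is well suited to powers: it asks only that the pullback be a torsion element of the Picard group of a normal projective curve. By that proposition, $\ecal{L}$ is somewhere torsion if and only if there exist a normal projective curve $C$ and a finite morphism $f : C \to X$ with $f^*\ecal{L}$ torsion in $\operatorname{Pic}(C)$. The same equivalence holds with $\ecal{L}^{\otimes n}$ in place of $\ecal{L}$. So it suffices to show that, for any such $f$, the bundle $f^*\ecal{L}$ is torsion if and only if $f^*(\ecal{L}^{\otimes n}) \cong (f^*\ecal{L})^{\otimes n}$ is torsion.

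This last equivalence is a statement about the abelian group $\operatorname{Pic}(C)$. If $f^*\ecal{L}$ has finite order $r$, then $(f^*\ecal{L})^{\otimes n}$ is killed by $r$. Conversely, if $(f^*\ecal{L})^{\otimes n}$ is killed by $m>0$, then $f^*\ecal{L}$ is killed by $m|n|$, and $m|n| > 0$ because $n \neq 0$. Hence the two conditions agree curve by curve.

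Combining the two steps, $\ecal{L}$ is somewhere torsion if and only if $\ecal{L}^{\otimes n}$ is, which is the contrapositive of the claim. There is no real obstacle. The only point needing care is to use formulation (iii) (or (ii)), with ``torsion'' rather than ``trivial'', since pulling back to the $\mu_r$-cover as in (iv) is already built into the proposition. The hypothesis $n \neq 0$ is used exactly once, to ensure $m|n|$ is a positive exponent.
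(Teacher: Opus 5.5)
Your proposal is correct and is the same argument as the paper's one-line proof: you apply criterion (iii) of Proposition \ref{prop-justificationofname} and use that an element of $\operatorname{Pic}(C)$ is torsion if and only if its $n$th power is, for $n \neq 0$. You simply write out the elementary group-theoretic step that the paper calls ``clear.''
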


\begin{proof}
    This is clear from condition (iii) above.
\end{proof}

\begin{example} \label{example-tensorgeneratingisnowheretorsion} Recall that we say a line bundle $\ecal L$ on $X$ is \textit{$\otimes$-generating} if the tensor powers classically generate (i.e. split generate) $\perf X$. That is, $$\perf X = \bra{\ecal L^{\otimes n} \mid n \in \bb Z}.$$ For example, any (anti-)ample line bundle is $\otimes$-generating. Since $\otimes$-generation is preserved under affine pullback and on a normal complete curve, $\otimes$-generation is equivalent to having non-zero degree \cite{itoolander2025derived}*{Proposition 5.1}, we see that any $\otimes$-generating line bundle is nowhere torsion. 
    On the other hand, a line bundle of degree zero on a smooth projective curve which is not torsion is nowhere torsion, but not $\otimes$-generating. 
\end{example}

For a proper variety, it follows from the main result of \cite{itoolander2025derived} that a nef and $\otimes$-generating line bundle is ample. On the other hand, any non-torsion degree $0$ line bundle on an elliptic curve is nef and nowhere torsion, but not ample. For nowhere torsion line bundles, we instead have the following result. 

\begin{lemma}
Suppose $\ecal{L}$ is nowhere torsion and semi-ample, then for any integral proper curve $C \subset X$,  $\deg(\ecal{L}|_C) > 0$. If $X$ is moreover proper, then $\ecal L$ is strictly nef and hence ample. 
\end{lemma}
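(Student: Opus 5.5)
Since $\ecal{L}$ is semi-ample, some power $\ecal{L}^{\otimes m}$ with $m>0$ is globally generated. By Corollary \ref{corollary-powersofnowheretorsiononscheme}, $\ecal{L}^{\otimes m}$ is still nowhere torsion, and degrees scale by $m$. So we may replace $\ecal{L}$ by $\ecal{L}^{\otimes m}$ and assume $\ecal{L}$ is globally generated.

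Now let $C \subset X$ be an integral proper curve. Then $\ecal{L}|_C$ is globally generated, hence $\deg(\ecal{L}|_C) \geq 0$. Equality is the case to rule out. Let $\nu : C^\nu \to C$ be the normalization and put $f : C^\nu \to X$. Then $f^*\ecal{L}$ is globally generated on the normal projective curve $C^\nu$, and its degree equals $\deg(\ecal{L}|_C)$, since degree is preserved under the birational finite map $\nu$. Suppose this degree is $0$. Choose a section $s \in H^0(C^\nu, f^*\ecal{L})$ that does not vanish at some point; one exists by global generation. Since $f^*\ecal{L}$ has degree $0$ and $s\neq 0$, the zero divisor of $s$ is effective of degree $0$, hence empty. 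Thus $s$ trivializes $f^*\ecal{L}$, so $f^*\ecal{L} \cong \ecal{O}_{C^\nu}$.

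To get this over $k$ rather than an algebraic closure, I would use the standard fact that on an integral proper curve a nonzero section of a line bundle of degree $0$ is nowhere vanishing. This holds because the degree of the zero scheme is a sum of positive residue degrees. The map $f$ is finite, being the normalization of a closed curve followed by a closed immersion, so condition (v) of Proposition \ref{prop-justificationofname} holds. That makes $\ecal{L}$ somewhere torsion, a contradiction. Hence $\deg(\ecal{L}|_C) > 0$.

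If $X$ is moreover proper, every integral curve in $X$ is proper, so $\ecal{L}$ has positive degree on all curves, i.e., it is strictly nef. For ampleness, use the morphism $\phi : X \to \mathbb{P}^N$ defined by the globally generated $\ecal{L}^{\otimes m}$, with $\ecal{L}^{\otimes m} = \phi^*\ecal{O}(1)$. If $\phi$ contracted a curve $C$, then $\ecal{L}^{\otimes m}|_C$ would be a pullback of $\ecal{O}(1)$ from a point, hence of degree $0$. This contradicts the first part, so $\phi$ is quasi-finite. Since $X$ is proper, $\phi$ is then finite. The pullback of an ample bundle under a finite morphism is ample, so $\ecal{L}^{\otimes m}$, and hence $\ecal{L}$, is ample.

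The main obstacle I expect is the degree-zero, globally generated case over a non-closed field: one has to verify that the nonvanishing section really gives a trivialization over $k$. The degree argument above handles this without base change.
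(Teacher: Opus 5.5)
Your proof is correct and follows the paper's argument: when the degree on $C$ is zero, a section of a globally generated power of $\ecal{L}$ that does not vanish identically on $C$ is nowhere vanishing on $C$, so that power is trivial on $C$, contradicting nowhere torsion. Your detour through $C^\nu$ and condition (v) of Proposition \ref{prop-justificationofname} is harmless but unnecessary, since the same degree argument already trivializes $\ecal{L}^{\otimes m}|_C$. For ampleness the paper just cites the lemma of Campana et al.\ that strictly nef plus semi-ample implies ample; your argument (the morphism defined by $\ecal{L}^{\otimes m}$ contracts no curve, so it is quasi-finite and proper, hence finite) is the standard proof of that lemma and needs only properness, as the paper also notes.
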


\begin{proof}
Suppose $C \subset X$ is an integral proper curve with $\operatorname{deg}(\ecal{L}|_{C}) = 0$. Then there is $n > 0$ and $s \in \Gamma(X, \ecal{L}^{\otimes n})$ such that $s$ doesn't vanish identically on $C$. Then since $\ecal{L}$ has degree zero on $C$ we must have $C \subset X_s$. Then $\ecal{L}^{\otimes n}|_{C} \cong \ecal{O}_C$, which contradicts $\ecal L$ being nowhere torsion. Therefore, if $X$ is proper, then the semi-ample line bundle $\ecal L$ is strictly nef by definition and hence ample for example by \cite{campana2008strictly}*{Lemma 1.4} (whose proof does not need projectivity).
\end{proof}
\begin{example}
    We give an example of a line bundle $\ecal L$ on a non-proper variety that is nowhere torsion and semi-ample, but not ample. Let $Y$ be any variety whose structure sheaf is $\otimes$-generating but not ample (e.g. see \cite{itoolander2025derived}*{Lemma 5.30}). Note that $Y$ does not contain any proper integral curve $C$ since otherwise the structure sheaf of such a curve is $\otimes$-generating, which is absurd. Let $X = Y \times \bb P^1$ with projections $p: X \to \bb P^1$ and $q: X \to Y$ and set $\ecal L = p^*\ecal O_{\bb P^1}(1)$. First, note that $\ecal L$ is semi-ample as it is a pull-back of an ample line bundle, and $\ecal L$ is not ample since $\ecal L|_{Y\times \{x\}} \cong \ecal O_Y$ is not ample for any closed point $x \in \bb P^1$ by supposition. To see that $\ecal L$ is nowhere torsion, note that if $C \subset X$ is a proper integral curve, then $C \subset X \overset{q}{\to} Y$ needs to be constant, as $Y$ contains no proper curve, and hence $C$ is a fiber of the projection $q: X \to Y$. In particular, $\ecal L|_C \cong \ecal O_{\bb P^1}(1)$ is non-torsion. 
\end{example}

\begin{remark}
    By the abundance conjecture, if $X$ is a Gorenstein projective variety with canonical singularities, then we expect that if the canonical bundle $\omega_X$ is nowhere torsion and nef, then it is ample. On the other hand, see Example \ref{example-controlled canonical} for examples of normal projective Gorenstein varieties with canonical bundle being (i) nef and nowhere torsion but not strictly nef and (ii) strictly nef but not ample. Note that they do not contradict the abundance conjecture as these examples do not have canonical singularities. 
\end{remark}

\begin{lemma}
    Let $X$ and $Y$ be proper schemes over a field. Suppose that $\ecal L$ is a nef and nowhere torsion line bundle on $X$ and $\ecal M$ is a strictly nef line bundle on $Y$. Then, $\ecal L \boxtimes \ecal M$ is nowhere torsion. 
\end{lemma}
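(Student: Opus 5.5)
We use the curve criterion of Proposition \ref{prop-justificationofname}. Suppose, for contradiction, that $\ecal L \boxtimes \ecal M$ is somewhere torsion. By condition (v) there is a normal projective curve $C$ over $k$ and a finite morphism $f : C \to X \times Y$ with $f^*(\ecal L \boxtimes \ecal M) \cong \ecal O_C$. Let $p, q$ be the two projections, and set $g = p \circ f$ and $h = q \circ f$. Then
$$
g^*\ecal L \otimes h^*\ecal M \cong \ecal O_C ,
$$
so taking degrees on $C$ gives $\deg(g^*\ecal L) + \deg(h^*\ecal M) = 0$.

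Both terms are nonnegative. Nefness is preserved under pullback along proper maps, so $\deg g^*\ecal L \ge 0$ because $\ecal L$ is nef. Likewise $\deg h^*\ecal M \ge 0$ because $\ecal M$ is strictly nef, hence nef. Therefore both degrees vanish.

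Now suppose $h$ is nonconstant. Then $C$ surjects finitely onto the integral proper curve $D = h(C) \subset Y$, and the projection formula gives
$$
\deg h^*\ecal M = \deg(C/D)\cdot \deg(\ecal M|_D) > 0
$$
by strict nefness. This is a contradiction, so $h$ is constant with image a closed point $y \in Y$. Since $C$ is connected and reduced and $h$ factors through $\operatorname{Spec}\kappa(y)$, we get $h^*\ecal M \cong \ecal O_C$. Hence $g^*\ecal L \cong \ecal O_C$.

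It remains to check that $g$ is finite, so that condition (v) applies to $\ecal L$. The morphism $f$ factors through the closed subscheme $X \times_k \operatorname{Spec}\kappa(y) \subset X \times Y$. The projection $X_{\kappa(y)} \to X$ is finite because $\kappa(y)/k$ is a finite extension. So $g$ is a composition of finite morphisms, namely $C \to X_{\kappa(y)}$ (finite, being $f$ followed by a closed immersion) and $X_{\kappa(y)} \to X$. (Alternatively, $g$ is proper and quasi-finite because $f$ is and $h$ is constant.)

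Then condition (v) of Proposition \ref{prop-justificationofname} says $\ecal L$ is somewhere torsion, contradicting the hypothesis.

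The only delicate points are the bookkeeping in the last step (finiteness of $g$) and the triviality of $h^*\ecal M$ for constant $h$ when $\kappa(y) \ne k$. Both are handled by factoring through $\operatorname{Spec}\kappa(y)$.
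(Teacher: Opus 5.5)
Your proof is correct and follows the paper's argument. Like the paper, you split according to whether the $Y$-component of $C \to X \times Y$ is constant: in the nonconstant case, nefness of $\ecal L$ together with strict nefness of $\ecal M$ forces positive degree, and in the constant case the pullback reduces to the pullback of $\ecal L$, where nowhere torsion applies. The differences are cosmetic: you use the trivial-pullback criterion (v) by contradiction rather than the torsion criterion (iii), and you explicitly check that the $X$-component is finite, which the paper leaves implicit.
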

\begin{proof}
Take a finite morphism $(f,g): C \to X \times Y$ from a proper normal integral curve. If $g$ is constant, then $(f,g)^*(\ecal L \boxtimes \ecal M) \cong f^* \ecal L$ is not torsion since $\ecal L$ is nowhere torsion. If $g$ is nonconstant, then 
\[
\deg (f,g)^*(\ecal L \boxtimes \ecal M) = \deg f^* \ecal L + \deg g^* \ecal M > 0,
\]
as $\ecal L$ is nef and $\ecal M$ is strictly nef, and in particular $(f,g)^*(\ecal L \boxtimes \ecal M)$ is not torsion. 
\end{proof}
\begin{example}
    Let $\ecal L$ be a non-torsion line bundle of degree $0$ on an elliptic curve $E$. In particular, $\ecal L$ is nef and nowhere torsion. Hence, $\ecal L \boxtimes \ecal O_{\bb P^1}(1)$ is nowhere torsion on $E \times \bb P^1$. On the other hand, while $\ecal L\inv$ is also nef and nowhere torsion on $E$, $\ecal L \boxtimes \ecal L \inv$ on $E \times E$ restricts trivially to the diagonal $\Delta \subset E\times E$. 
\end{example}

\subsection{Reconstruction for schemes with nowhere torsion canonical bundle}
\label{subsection-reconstructionforschemes}

The main goal of this section is to prove Theorem \ref{theorem-reconstruction-for-schemes}, which is a generalization of \cite[Theorem 3.15]{FAVERO20121955} and is also our main reason for studying nowhere torsion line bundles. We follow the same proof strategy as the one in loc. cit. Let $X$ be a finite type, separated scheme over a field $k$. Let $\perf_{prop}(X) \subset \perf (X)$ be the full triangulated subcategory consisting of objects whose support is proper over $k$. 
The following is shown for varieties in \cite[Theorem 3.12]{FAVERO20121955}.
\begin{lemma} \label{lemma-characterizepropersupport}
    We have 
    \begin{align*}\perf_{prop}(X) &= \{K \in \perf (X) : \forall L \in \perf(X) ,  \operatorname{dim}_k \operatorname{Hom}(K, L)< \infty \} \\& = \{K \in \perf (X) : \forall L \in \perf(X) ,  \operatorname{dim}_k \operatorname{Hom}(L, K)< \infty \}  \qedhere 
    \end{align*}
\end{lemma}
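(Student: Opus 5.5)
We must show that $K \in \perf(X)$ has proper support if and only if $\operatorname{Hom}(K,L)$ is finite-dimensional for all $L\in\perf(X)$, and similarly with $\operatorname{Hom}(L,K)$. The plan is to prove the inclusion $\subseteq$ directly from properness, to prove $\supseteq$ by contraposition, and to get the second description from the first by duality.

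\textbf{Inclusion $\subseteq$.} Let $K$ have proper support $Z$. Then $R\mathcal{H}om(K,L) \cong K^\vee \otimes L$ is a perfect complex, so it has coherent cohomology. Its support lies in $Z$, so it is set-theoretically supported on $Z$. Choose a closed subscheme structure $Z'$ on $Z$ (a suitable infinitesimal thickening) such that each cohomology sheaf $\mathcal{H}^i$ is an $\ecal{O}_{Z'}$-module; this can be done by a standard dévissage on bounded complexes. Then $R\Gamma(X, K^\vee\otimes L)$ is computed on the proper scheme $Z'$. By the hypercohomology spectral sequence, which has only finitely many nonzero terms, and by finiteness of coherent cohomology on proper schemes, $\operatorname{Hom}(K,L) = H^0 R\Gamma(X, K^\vee\otimes L)$ is finite-dimensional. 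The same argument applies to $\operatorname{Hom}(L,K) = H^0R\Gamma(L^\vee\otimes K)$.

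\textbf{Second description from the first.} Since $K \mapsto K^\vee$ is an anti-equivalence of $\perf(X)$ preserving supports and $\operatorname{Hom}(L,K)\cong\operatorname{Hom}(K^\vee,L^\vee)$, the two right-hand sets correspond under duality. It therefore suffices to prove $\supseteq$ for the first description.

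\textbf{Inclusion $\supseteq$.} Suppose $Z=\operatorname{Supp}K$ is not proper over $k$. I would construct a perfect $L$ with $\operatorname{Hom}(K,L)$ infinite-dimensional. A first attempt is $L=K$: then $\operatorname{Hom}(K,K)$ contains the image of $\Gamma(X,\ecal{O}_X)\to\operatorname{End}(K)$. This image is infinite-dimensional when $Z$ has an affine non-proper piece, but not in general, since $X$ need not be quasi-affine. The more robust route is as follows.
\begin{enumerate}
\item Let $m$ be the top degree with $\mathcal{H}^m(K)\ne 0$, and pick an irreducible component $W$ of $\operatorname{Supp}\mathcal{H}^m(K)$.
\item Since $X$ is separated of finite type and $Z=\bigcup_j\operatorname{Supp}\mathcal{H}^j(K)$ is not proper, reduce to a degree and component where properness fails.
\item Use a Nagata compactification $X\subset\bar X$ and choose a point of $\bar W\setminus W$.
\item Pick an affine open $U\subset X$ meeting $W$ with $\bar W\cap U$ non-proper, and use Koszul or perfect complexes supported on closed subsets to produce maps $K\to L$ from sections of $\ecal{O}$ over an affine neighbourhood that are unbounded near infinity.
\end{enumerate}
Concretely, one can use Thomason's theorem that $\perf$ of a quasi-compact separated scheme has a generator together with $\operatorname{Hom}(K, G[n])$ for a compact generator $G$. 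By adjunction, $\operatorname{Hom}(K,\bigoplus G[n])$ detects $\operatorname{Hom}$ into $j_*$-type objects. Approximating $j_*\ecal{O}_U$ by perfect complexes, via the colimit over Koszul complexes, then yields infinitely many independent maps coming from $\Gamma(U\cap W,\ecal{O})$, which is infinite-dimensional because $W\cap U$ is an affine non-proper, hence positive-dimensional non-finite, scheme.

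\textbf{Main obstacle.} The hard step is the last one: making the approximation precise so that infinitely many linearly independent morphisms land in a single perfect $L$. I would first try the case $X$ quasi-projective and $L=K\otimes\ecal{A}^{\otimes n}$ with $\ecal{A}$ ample, as in Favero, and then reduce the general case to it via Chow's lemma or a finite affine cover, restricting to a quasi-projective open meeting the non-proper part of $Z$.
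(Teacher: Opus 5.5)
Your proof of $\subseteq$ is fine. Deducing the second description from the first via the support-preserving anti-equivalence $K\mapsto K^\vee$, with $\operatorname{Hom}(L,K)\cong\operatorname{Hom}(K^\vee,L^\vee)$, is also correct, and slightly cleaner than redoing the argument with $L^\vee\otimes^{\mathbb{L}}K$ as the paper does.

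The gap is the converse inclusion, which is the real content of the lemma. As you acknowledge, you never exhibit a single perfect $L$ with $\operatorname{Hom}(K,L)$ infinite-dimensional, and the routes you sketch do not close this.
\begin{itemize}
\item Working on an affine \emph{open} $U$ gives sections of $\ecal{O}$ over $U\cap W$, but these do not give morphisms $K\to L$ for any perfect $L$ on $X$. A perfect complex on $U$ supported on $W\cap U$ extends to $X$ only with support the closure $W$, which need not be affine, and you lose control of its global sections.
\item Approximating $j_*\ecal{O}_U$ by a colimit of perfect $L_n$ only gives $\operatorname{Hom}(K,\operatorname{colim}L_n)=\operatorname{colim}_n\operatorname{Hom}(K,L_n)$. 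This can be infinite-dimensional while every term is finite-dimensional.
\item The fallback $L=K\otimes\ecal{A}^{\otimes n}$ fails as stated. For $X=\mathbb{P}^2\setminus\{p\}$ and $K=\ecal{O}_X$, Hartogs gives $\operatorname{Hom}(K,K\otimes\ecal{O}(n))=H^0(\mathbb{P}^2,\ecal{O}(n))$, which is finite-dimensional for every $n$; the non-properness only shows up in $H^1$.
\end{itemize}

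The missing idea is to find a positive-dimensional \emph{affine} piece of $\operatorname{Supp}K$ that is \emph{closed in $X$}, namely an irreducible affine curve $C\subset\operatorname{Supp}K$. The paper gets this by applying Lemma~\ref{lemma-existsanaffinecurve} to the reduced scheme $\operatorname{Supp}K$.
\begin{itemize}
\item In the quasi-projective case, take a curve in a projective closure of $\operatorname{Supp}K$ that passes through a boundary point and meets $\operatorname{Supp}K$. Intersect it back with $\operatorname{Supp}K$ and use that a curve over a field is either affine or projective.
\item In general, reduce to this case by Chow's lemma. The image of an affine curve under the resulting finite map is again a closed affine curve.
\end{itemize}
Your steps (i)--(iii) were heading this way; step (iv) should be replaced by this construction.

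With $C$ in hand, take $L$ perfect with support exactly $C$, for instance a compact generator of $D_{qc,C}(X)$. Then:
\begin{itemize}
\item $K^\vee\otimes^{\mathbb{L}}L$ is perfect with support $\operatorname{Supp}K\cap C=C$.
\item All its cohomology sheaves are coherent sheaves on the affine curve $C$, so they have no higher cohomology. Hence $\operatorname{Hom}(K,L[i])=H^0(X,\ecal{H}^i(K^\vee\otimes^{\mathbb{L}}L))$.
\item Since $C$ is irreducible, some $i$ has $\operatorname{Supp}\ecal{H}^i=C$. For that $i$ the group is infinite-dimensional by Lemma~\ref{lemma-cohomologywithaffinesupport}: filter the sheaf by pushforwards of nonzero ideal sheaves on integral closed subschemes, and use that $\Gamma(C,\ecal{O}_C)$ is infinite-dimensional while $\ecal{O}_C/\ecal{I}$ has finite-dimensional global sections.
\end{itemize}
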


We will need two additional lemmas to prove this. 

\begin{lemma}
\label{lemma-existsanaffinecurve}
    Suppose $X$ is not proper (and hence positive-dimensional) over $k$. Then $X$ contains a closed subscheme $C \subset X$ which is an affine curve. 
\end{lemma}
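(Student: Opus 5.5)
We want to show: if $X$ is a finite type, separated $k$-scheme that is not proper, then $X$ contains a closed subscheme $C \subset X$ which is an affine curve. The plan is to reduce to an irreducible, reduced, non-proper closed subscheme, compactify it, and cut out a curve that meets the boundary.

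\emph{Reduction to an integral variety.} Since $X$ is not proper, some irreducible component $Z$ of $X$, with its reduced structure, is not proper over $k$. Indeed, $X$ is proper exactly when $X_{red}$ is, and a finite union of proper closed subschemes is proper (the disjoint union of the components maps finitely and surjectively onto $X_{red}$). A closed subscheme of $Z$ is closed in $X$, so we may replace $X$ by $Z$ and assume $X$ is integral, separated, of finite type and not proper. Then $\dim X \geq 1$, since a zero-dimensional finite type scheme is finite, hence proper.

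\emph{Compactify and choose a curve meeting the boundary.}
\begin{enumerate}
\item By Nagata's compactification theorem, choose an open immersion $X \subset \overline{X}$ with $\overline{X}$ proper and integral. Replacing $\overline{X}$ by the closure of $X$ keeps it integral.
\item Since $X$ is not proper, the boundary $B = \overline{X} \setminus X$ is a nonempty closed subset.
\item Pick a closed point $b \in B$ and a closed point $x \in X$. Choose an integral closed curve $\overline{C} \subset \overline{X}$ passing through both $x$ and $b$. Such a curve exists in any irreducible variety, for instance by Chow's lemma combined with repeated hyperplane sections through both points. Alternatively, use the standard fact that any two points of an irreducible variety lie on an irreducible curve (Mumford, Abelian Varieties, \S6 lemma).
\item Set $C = \overline{C} \cap X$, an open subscheme of $\overline{C}$. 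It is closed in $X$, being the intersection of the closed set $\overline{C}$ with $X$, and we give it the reduced structure. It is nonempty because it contains $x$, so it is a one-dimensional integral scheme. It is a proper open subset of $\overline{C}$ because $b \notin X$.
\end{enumerate}

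\emph{Conclude affineness.} A one-dimensional integral separated scheme of finite type over $k$ that is not proper is affine. Since $C$ is a dense open subset of the proper integral curve $\overline{C}$ missing the closed point $b$, the following argument works. Pass to the normalization $\overline{C}^\nu$, which is projective. The preimage of $C$ misses a nonempty finite set of points, so it is affine, because the complement of a nonempty finite set in a normal projective curve is affine (take an ample divisor supported on that set). The normalization map $C^\nu \to C$ is finite and surjective, so Chevalley's theorem gives that $C$ is affine.

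The main obstacle is step 3, producing an irreducible curve through a point of $X$ and a boundary point. The cited fact on connecting two points by an irreducible curve handles it. If one prefers to avoid it, one can instead take $\overline{C}$ to be any irreducible component, not contained in $B$, of a complete-intersection curve through $b$ in a projective model from Chow's lemma, pushed forward to $\overline{X}$.
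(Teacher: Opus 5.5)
Your proof is correct. Its core step is exactly the paper's argument in the quasi-projective case: take a curve in a compactification through a boundary point that meets $X$, intersect it with $X$, and use that a non-proper integral curve is affine. You differ in how the general case is handled. The paper never uses Nagata. It first treats quasi-projective $X$ inside a projective closure. For arbitrary $X$ it then uses Chow's lemma to get a proper birational $Y \to X$ with $Y$ quasi-projective, picks an affine curve $D \subset Y$, and pushes it forward along the proper quasi-finite (hence finite) map $D \to X$; affineness of the image comes from Chevalley's theorem.

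Your route is shorter and uniform, but it has costs. Nagata's compactification theorem is much deeper than Chow's lemma. It also forces your curve-selection step into a proper but possibly non-projective $\overline{X}$, where the connecting-curve lemma is usually proved via Chow's lemma anyway. In exchange, your normalization-plus-Chevalley argument makes the affineness of $C$ self-contained, whereas the paper cites Stacks Project Tag 0A28 for it.

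One point to tighten: Mumford's lemma is stated over an algebraically closed field, but $k$ here is arbitrary. There are two easy fixes.
\begin{enumerate}
\item Base change. A component of $\overline{X}_{\bar k}$ surjects onto $\overline{X}$, since the map is integral and flat. Lift $x$ and $b$ into that component, connect the lifts by a curve there, and take its image, which is closed, irreducible and one-dimensional.
\item Weaken the requirement. You only need an integral curve through $b$ not contained in $B$, and this exists over any field. Take $0 \neq f$ in the ideal of $B$ in $A = \mathcal{O}_{\overline{X},b}$, and let $\mathfrak{p} \subset A$ be maximal among primes not containing $f$. Then $\dim A/\mathfrak{p} = 1$: otherwise $A/\mathfrak{p}$ would have infinitely many height-one primes, all containing $\bar f$, hence all minimal over $\bar f$, which is impossible. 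The closure of $\mathfrak{p}$ in $\overline{X}$ is then the required curve.
\end{enumerate}
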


\begin{proof}
    If $X$ is quasi-projective over $k$, then $X \subsetneq \overline{X}$ is a dense open subscheme of a projective scheme $\overline{X}$ over $k$. Choose a closed point $y \in \overline{X}\setminus X$ and a curve $\overline{C} \subset \overline{X}$ containing $y$ which intersects $X$ non-trivially, and set $C = \overline{C} \cap X$. This works since a curve over a field is always either affine or projective (e.g., \cite[\href{https://stacks.math.columbia.edu/tag/0A28}{Tag 0A28}]{stacks-project}). Note moreover that the proof shows that if $U \subset X$ is a dense open, then we are able to choose $C$ so that $C \cap U \neq \emptyset$.

    In general, choose a proper, birational morphism $Y \to X$ with $Y$ quasi-projective. Then $Y$ cannot be proper since $X$ would be as well, being separated and the image of a proper scheme over $k$. Thus by the previous paragraph, there exists a closed subscheme $D \subset Y$ with $D$ an affine curve over $k$ and such that $D$ is not contained in the exceptional locus of $Y \to X$. 
    The composition $D \to X$ is proper and quasi-finite since $D$ is a curve, hence finite. The image $C$ of $D$ in $X$ is then a closed subscheme which is a curve. It is an affine curve being the image of an affine scheme under a finite morphism. 
\end{proof}

\begin{lemma}\label{lemma-cohomologywithaffinesupport}
    Let $\ecal{F} \in \operatorname{Coh}(X)$ have support equal to an affine curve $C$. Then $\operatorname{dim}_kH^0(X, \ecal{F}) = \infty$ and $H^q(X, \mathcal{F}) = 0$ for $q>0$.
\end{lemma}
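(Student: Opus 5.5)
We reduce everything to the affine scheme $C$, regarded as a closed subscheme of $X$. Let $\ecal{I}$ be the ideal sheaf of the reduced induced structure on $C$ (or of any closed subscheme structure with underlying set $C$). Since $\ecal{F}$ is coherent with support equal to $C$, some power $\ecal{I}^n$ annihilates $\ecal{F}$. Hence $\ecal{F} = i_*\ecal{G}$ for a coherent sheaf $\ecal{G}$ on the closed subscheme $C_n = V(\ecal{I}^n)$, where $i : C_n \to X$ is the closed immersion. The scheme $C_n$ has underlying space $C$, and a scheme whose reduction is affine is itself affine (Chevalley's theorem in the nilpotent-thickening form, or \cite[\href{https://stacks.math.columbia.edu/tag/06AD}{Tag 06AD}]{stacks-project}). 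So $C_n$ is affine and of finite type over $k$.

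The vanishing is now formal. Since $i$ is a closed immersion, it is affine, so $i_*$ is exact and $H^q(X, i_*\ecal{G}) = H^q(C_n, \ecal{G})$. The latter vanishes for $q > 0$ because $C_n$ is affine and $\ecal{G}$ is quasi-coherent.

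For the infinite dimensionality, note that $H^0(X,\ecal{F}) = \Gamma(C_n, \ecal{G}) = M$ is a finitely generated module over $A = \Gamma(C_n, \ecal{O}_{C_n})$, with support equal to all of $\operatorname{Spec} A$, which is one-dimensional. Suppose $\dim_k M < \infty$. Then $A/\operatorname{Ann}(M)$ embeds into $\operatorname{End}_k(M)$, so it is a finite-dimensional $k$-algebra and hence has dimension zero. But $\operatorname{Supp}(M) = V(\operatorname{Ann}(M))$ is one-dimensional, a contradiction. Equivalently, choose a generic point $\eta$ of a one-dimensional component of $C$; then $M_\eta \neq 0$, and $M$ surjects onto a nonzero module over $A/\mathfrak{p}_\eta$, a one-dimensional domain of finite type over $k$, which is infinite dimensional.

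The only step with any subtlety is the claim that $C_n$ is affine, which is where the standard result on thickenings of affine schemes is needed. Everything else is routine commutative algebra. Alternatively, one can avoid thickenings by filtering $\ecal{F}$ by $\ecal{I}^j\ecal{F}$, whose graded pieces are sheaves on the reduced affine curve $C$. Vanishing of their higher cohomology gives $H^q(X,\ecal{F}) = 0$ by dévissage, together with surjectivity of $H^0(X,\ecal{F}) \to H^0(X, \ecal{F}/\ecal{I}\ecal{F})$. The infinite dimensionality then follows from the graded piece $\ecal{F}/\ecal{I}\ecal{F}$, whose support is still all of $C$ by Nakayama's lemma.
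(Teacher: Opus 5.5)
Your proof is correct, but it is organized differently from the paper's. The paper never passes to a single thickening. It uses the Stacks Project filtration (Tag 01YF) of $\ecal{F}$, whose graded pieces are $i_{p,*}\ecal{I}_p$ with $\ecal{I}_p$ a nonzero ideal sheaf on an integral closed subscheme $Z_p$. Each $Z_p$ is a reduced closed subscheme of the affine curve $C$, hence affine for free, so the vanishing follows by induction along the filtration. Infinite dimensionality is then read off from a piece with $Z_{p_0}=C$: $H^0(C,\ecal{I}_{p_0})$ has finite codimension in the infinite-dimensional $H^0(C,\ecal{O}_C)$, and the filtration is exact on $H^0$ by the vanishing.

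You instead concentrate the geometry into one standard fact: the thickening $C_n=V(\ecal{I}^n)$ of the affine scheme $C$ is affine. After that, the vanishing is Serre's affine vanishing. The infinite dimensionality becomes the clean observation that a finitely generated $A$-module of finite $k$-dimension has zero-dimensional support $V(\operatorname{Ann}M)$.

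What each buys: your route is shorter and slightly more general, since it only needs $\dim\operatorname{Supp}\ecal{F}\geq 1$, not that $C$ is integral. The paper's route avoids the thickening lemma, whose usual proof via Serre's criterion is itself a d\'evissage of the paper's kind. Indeed, your closing alternative, filtering by $\ecal{I}^j\ecal{F}$, is essentially the paper's argument with a coarser filtration.

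One small precision in your ``equivalently'' aside: a nonzero module over $A/\mathfrak{p}_\eta$ need not be infinite-dimensional (think of a residue field at a closed point). You should take specifically $M/\mathfrak{p}_\eta M$, which is nonzero at $\eta$ by Nakayama and therefore contains a copy of $A/\mathfrak{p}_\eta$.
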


\begin{proof}
    The coherent sheaf $\ecal{F}$ has a finite filtration $0 = \ecal{F}_0 \subset \ecal{F}_1 \subset \cdots \subset \ecal{F}_n = \ecal{F}$ such that for every $p$ there is a closed immersion $i_p : Z_p \to X$ with $Z_p$ integral and a non-zero coherent sheaf of ideals $\ecal{I}_p \subset \ecal{O}_{Z_p}$ such that $\ecal{F}_{p}/\ecal{F}_{p-1} \cong i_{p, *}\ecal{I}_p$ (\cite[\href{https://stacks.math.columbia.edu/tag/01YF}{Tag 01YF}]{stacks-project}). Then $\operatorname{Supp}\ecal{F} = \bigcup_{p = 1}^{n} Z_p$ hence $Z_p$ are closed subsets of $C$ and therefore affine for all $p$, and there exists a $p_0$ such that $Z_{p_0} = C$. By induction on $p$, one sees that $H^q(X, \ecal{F}_p) = 0$ for all $q > 0$. For $p = n$, this proves the vanishing statement. It also follows from this that there is a filtration
    $$
    0 = H^0(X, \ecal{F}_0) \subset H^0(X, \ecal{F}_1) \subset \cdots \subset H^0(X, \ecal{F}_n) = H^0(X, \ecal{F})
    $$
    with $H^0(X, \ecal{F}_p/ \ecal{F}_{p-1}) \cong H^0(Z_p, \ecal{I}_p)$. It will therefore be enough to show that $H^0(C, \ecal{I}_{p_0})$ is infinite-dimensional over $k$. We have an exact sequence
    $$
    0 \to H^0(C, \ecal{I}_{p_0}) \to H^0(C, \ecal{O}_C) \to H^0(C, \ecal{O}_C/\ecal{I}_{p_0}) \to 0
    $$
    since $C$ is affine. The last term is finite-dimensional since $\ecal{I}_{p_0} \subset \ecal{O}_C$ is a non-zero coherent ideal sheaf on a(n integral) curve so it defines a closed subscheme which is finite over $k$. The middle term is infinite-dimensional since $C$ is an affine curve. Thus we conclude.
\end{proof}

\begin{proof}[Proof of Lemma \ref{lemma-characterizepropersupport}]
    If $K \in \perf_{prop}(X)$, then $\operatorname{Hom}(K, L) = H^0(X, K^\vee \otimes^{\mathbb{L}} L)$ and the support of $K^\vee \otimes^{\mathbb{L}} L$ is proper over $k$ so this is finite-dimensional over $k$. The same proof shows $\operatorname{Hom}(L, K)$ 
    is finite-dimensional over $k$. Conversely, suppose $K \in \perf(X)$ has support which is not proper over $k$. Applying Lemma
\ref{lemma-existsanaffinecurve} to the support of $K$, we see that there is a closed subset $C \subset \operatorname{Supp}K$ such that $C$ is an affine curve over $k$. The category $D_{qc, C}(X)$ has a compact generator which is a perfect complex $L$ on $X$ with support equal to $C$ 
by \cite{rouquier2008dimensions}*{Theorem 6.8}. 
Then $K^\vee \otimes^{\mathbb{L}} L$ and $L^\vee \otimes ^{\mathbb{L}} K$ are both perfect complexes on $X$ with support equal to $C$, and so there are integers $i, j$ such that the coherent sheaves $H^i(K^\vee \otimes^{\mathbb{L}} L)$ and $H^j(L^\vee \otimes ^{\mathbb{L}} K)$ have support equal to $C$. But then 
$$
\operatorname{Hom}(K, L[i]) = H^i(X, K^{\vee} \otimes ^{\mathbb{L}}L) = H^0(X, H^i(K^\vee \otimes ^{\mathbb{L}}L)) 
$$
which is infinite-dimensional over $k$ by Lemma \ref{lemma-cohomologywithaffinesupport}. For the second equality, we use the vanishing statement in Lemma \ref{lemma-cohomologywithaffinesupport}. 
Similarly, 
$$
\operatorname{Hom}(L[-j], K) = H^0(X, H^j(L^\vee \otimes^{\mathbb{L}}K))
$$
is infinite-dimensional over $k$.
\end{proof}

The $k$-scheme $X$ has a dualizing complex $\omega_X^\bullet \in D^b_{\operatorname{Coh}}(X)$ which can be defined as $p^!\ecal{O}_{\operatorname{Spec}k}$ where $p : X \to \operatorname{Spec}k$ is the structure morphism, see \cite[\href{https://stacks.math.columbia.edu/tag/0AU3}{Tag 0AU3}]{stacks-project}. When $X$ is Cohen--Macaulay of pure dimension $d$ we have $\omega_X^\bullet = \omega_X[d]$ for some $\omega_X \in \operatorname{Coh}(X)$ called the dualizing sheaf on $X$. When $X$ is furthermore Gorenstein, $\omega_X$ is an invertible $\ecal{O}_X$-module.

\begin{lemma}
\label{lemma-serrefunctorpropersupport}
    Suppose $X$ is Gorenstein. Then $\perf_{prop}(X) \subset \perf(X)$ has a Serre functor which is isomorphic to $-\otimes ^{\mathbb{L}} \omega_X^\bullet$.
\end{lemma}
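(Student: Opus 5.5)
We need to prove that for $K \in \perf_{prop}(X)$ and $L \in \perf(X)$ there is a bifunctorial isomorphism
$$\operatorname{Hom}(K, L) \cong \operatorname{Hom}(L, K\otimes^{\mathbb{L}}\omega_X^\bullet)^\vee.$$
We also need that $K\otimes^{\mathbb{L}}\omega_X^\bullet$ lies again in $\perf_{prop}(X)$. Both claims can be checked with $L$ ranging over all of $\perf(X)$, so we get a slightly stronger, relative Serre duality.

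The second claim is the easy one. Since $X$ is Gorenstein, $\omega_X^\bullet$ is locally a shift of an invertible sheaf on each connected component, so it is perfect. Hence $K\otimes^{\mathbb{L}}\omega_X^\bullet$ is perfect, and its support equals $\operatorname{Supp} K$, which is proper. The functor is an autoequivalence, with inverse $-\otimes^{\mathbb{L}}(\omega_X^\bullet)^\vee$.

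For the duality, the plan is to reduce to Grothendieck duality for a proper scheme.

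\textbf{Step 1: reduce to a proper scheme.} Set $Z = \operatorname{Supp}K$, which is proper over $k$. Write $\operatorname{Hom}(L, K\otimes\omega_X^\bullet) = H^0(X, M\otimes\omega_X^\bullet)$ with $M = L^\vee\otimes^{\mathbb{L}} K$. Then $M$ is perfect with support in $Z$, and $\operatorname{Hom}(K,L) = H^0(X, M^\vee)$. So it suffices to produce a natural perfect pairing
$$H^0(X, M^\vee)\times H^0(X, M\otimes\omega_X^\bullet)\to k$$
for perfect $M$ supported on a proper closed subset $Z$. Naturality in $K$ and $L$ will follow from naturality in $M$.

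By Nagata compactification (as used implicitly in the definition $\omega_X^\bullet = p^!\mathcal{O}$), choose an open immersion $j: X \to \overline{X}$ with $\overline{X}$ proper. Because $Z$ is proper, it is closed in $\overline{X}$. Hence $Rj_*M = j_!M$ is a perfect complex on $\overline{X}$ supported on $Z$: locally near $\overline{X}\setminus X$ it is zero, and on $X$ it is $M$. The same reasoning applies to $M^\vee$ and to $M\otimes\omega_X^\bullet$. Moreover $\omega_X^\bullet = j^*\omega_{\overline{X}}^\bullet$.

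\textbf{Step 2: apply duality on $\overline{X}$.} Let $N = Rj_*M$. Grothendieck duality for the proper morphism $\overline{p}:\overline{X}\to\operatorname{Spec}k$ gives
$$R\Gamma(\overline{X}, N^\vee) \cong R\operatorname{Hom}(N,\mathcal{O}) \cong R\operatorname{Hom}(R\Gamma(\overline{X}, N\otimes\omega^\bullet_{\overline{X}}), k).$$
This uses $R\operatorname{Hom}_{\overline{X}}(N\otimes\omega^\bullet, \omega^\bullet)\cong R\Gamma(N^\vee)$, valid because $N$ is perfect and $\omega^\bullet$ is a dualizing complex. We can also see it directly as $R\operatorname{Hom}(N\otimes\omega,\omega)=R\Gamma(N^\vee\otimes R\mathcal{H}om(\omega,\omega))$ with $R\mathcal{H}om(\omega,\omega)=\mathcal{O}$.

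Since $N$, $N^\vee$ and $N\otimes\omega^\bullet_{\overline X}$ are supported in $Z\subset X$, their global sections agree with those computed on $X$. Concretely, $R\Gamma(\overline{X}, Rj_*(-)) = R\Gamma(X,-)$ and $j_*$ commutes with these operations for complexes supported on $Z$. Taking $H^0$ yields the desired isomorphism. It is functorial in $M$, and composing the Yoneda-type identifications gives bifunctoriality.

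\textbf{Main obstacle.} The delicate point is checking that the resulting isomorphism is natural and compatible with the triangulated structure, which is what a Serre functor requires. A related subtlety is that the pairing does not depend on the choice of compactification. I expect this to follow because every identification is canonical: $j^*\omega^\bullet_{\overline{X}} = \omega^\bullet_X$ canonically, by the construction of $p^!$, and the trace map $R\Gamma(\overline{X},\omega^\bullet)\to k$ restricts compatibly to cohomology with supports in $Z$.

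If this becomes messy, an alternative is to work directly with local cohomology $R\Gamma_Z$ on $X$ and the trace map $R\Gamma_c(X,\omega_X^\bullet)\to k$, for which the duality $R\operatorname{Hom}(M, \omega_X^\bullet)\cong R\Gamma_c(X,M)^\vee$ for $M$ with proper support is standard (Stacks Project, duality for compact supports).
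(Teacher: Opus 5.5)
Your argument is correct, but it takes a more hands-on route than the paper. The paper cites the compactly supported duality $\operatorname{Hom}_X(K,\omega_X^\bullet)\cong\operatorname{Hom}_k(R\Gamma(X,K),k)$ for $K\in D^b_{\operatorname{Coh}}(X)$ with proper support. This comes from the adjunction $Rp_!\dashv p^!$ together with $Rp_!K=Rp_*K$ for such $K$ (Stacks Project, Tags 0G51 and 0G4Z), and it is exactly your fallback. The paper then concludes in one line:
$$\operatorname{Hom}(K,L\otimes\omega_X^\bullet)=\operatorname{Hom}(R\mathcal{H}om(L,K),\omega_X^\bullet)=\operatorname{Hom}(R\operatorname{Hom}(L,K),k)=\operatorname{Hom}(L,K)^*.$$

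Your Steps 1--2 instead re-derive that duality, for perfect complexes only, from the proper adjunction $R\overline{p}_*\dashv\overline{p}^!$ on a Nagata compactification. The observations this rests on are all correct:
\begin{itemize}
\item $\operatorname{Supp}M$ stays closed in $\overline{X}$.
\item Hence $Rj_*$ is extension by zero on objects supported there, and it commutes with duals and with $-\otimes\omega^\bullet$ on such objects.
\item $j^*\omega^\bullet_{\overline{X}}=\omega^\bullet_X$ by the very definition of $p^!$.
\item $R\mathcal{H}om(\omega^\bullet_{\overline{X}},\omega^\bullet_{\overline{X}})=\mathcal{O}_{\overline{X}}$ holds even though $\overline{X}$ need not be Gorenstein. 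You were right not to assume $\omega^\bullet_{\overline{X}}$ is perfect.
\end{itemize}

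Your route avoids the pro-object machinery behind $Rp_!$, at the cost of these extra identifications. The paper's version is shorter and holds for all of $D^b_{\operatorname{Coh}}$ with proper support, although only perfect objects are needed here. Both arguments give the slightly stronger statement with $L$ ranging over all of $\perf(X)$, since only the proper support of $R\mathcal{H}om(L,K)$ is used.

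Finally, the ``main obstacle'' you flag is not actually an obstacle. A Serre functor only requires the existence of some bifunctorial isomorphism, and exactness is automatic; in any case $-\otimes^{\mathbb{L}}\omega_X^\bullet$ is visibly exact. So it does not matter whether your isomorphism depends on the chosen compactification.
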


\begin{proof}
    The key point is that for $K \in D^b_{\operatorname{Coh}}(X)$ with support proper over $k$, there are natural-in-$K$ isomorphisms
    $$
    \operatorname{Hom}_X(K, \omega_X^\bullet) = \operatorname{Hom}_k(R\Gamma(X, K), k).
    $$
    This follows from the adjunction between $Rp_!$ and $p^!$, see \cite[\href{https://stacks.math.columbia.edu/tag/0G51}{Tag 0G51}]{stacks-project}, and the fact that $Rp_!K = Rp_*K = R\Gamma(X, K)$ for $K$ with proper support over $k$, which follows immediately from the definitions, see \cite[\href{https://stacks.math.columbia.edu/tag/0G4Z}{Tag 0G4Z}]{stacks-project}. Then since $X$ is Gorenstein, the functor $-\otimes^{\mathbb{L}} \omega_X^\bullet$ preserves $\perf_{prop}(X)$, and we have for $K, L$ in $\perf_{prop}(X)$,
    \[
    \operatorname{Hom}(K, L \otimes^{\mathbb{L}}\omega_X^\bullet) = \operatorname{Hom}(R \mathcal{H}om(L, K), \omega_X^\bullet) = \operatorname{Hom}(R\operatorname{Hom}(L, K) , k) = \operatorname{Hom}(L, K)^*.\qedhere
    \]
\end{proof}

\begin{lemma}
\label{lemma-perfectcoherentsheaf}
    Let $Y$ be a Noetherian scheme. Let $y \in Y$ be a closed point such that $\ecal{O}_{Y,y}$ is Cohen--Macaulay. Then there exists a coherent sheaf $\ecal{F}$ on $Y$ whose support is equal to $\{y\}$ and such that $\ecal{F}$ is quasi-isomorphic to a perfect complex on $Y$. Furthermore, we may choose $\ecal{F}$ so that the ring 
    $\operatorname{Hom}(\ecal{F}, \ecal{F})$ is commutative and has exactly two idempotents. 
\end{lemma}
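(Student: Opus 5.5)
The plan is to take a Koszul-type construction on a regular sequence. Since $\ecal{O}_{Y,y}$ is Cohen--Macaulay of dimension $d$, choose a system of parameters $f_1,\dots,f_d \in \mathfrak{m}_y$; it is automatically a regular sequence. First choose an affine open neighbourhood $U = \operatorname{Spec} A$ of $y$ on which the $f_i$ are defined. Then shrink $U$ so that $V(f_1,\dots,f_d)\cap U = \{y\}$. This is possible because $V(f_1,\dots,f_d)$ has $y$ as an isolated point, since $A_{\mathfrak m}/(f)$ is Artinian. Shrink further so that the Koszul complex $K(f_1,\dots,f_d)$ on $U$ is exact in nonzero degrees. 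Its homology is coherent and vanishes at $y$ because the sequence is regular there. It is supported on $\{y\}$ since it is killed by the $f_i$, so it vanishes on a neighbourhood of $y$, and by the support condition it vanishes on all of $U$. Hence $\ecal{F}_U := \ecal{O}_U/(f_1,\dots,f_d)$ is a coherent sheaf on $U$, supported at the closed point $y$, and quasi-isomorphic to the Koszul complex, which is a perfect complex on $U$.

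Next we extend to $Y$. Let $j : U \to Y$ be the inclusion. The sheaf $\ecal{F}_U$ has support $\{y\}$, which is closed in $Y$ since $y$ is a closed point. So $\ecal{F} := j_*\ecal{F}_U$ is coherent: it is the pushforward of a module of finite length along the closed immersion $\operatorname{Spec}\ecal{O}_{Y,y}/\mathfrak m^N \to Y$ for large $N$. Perfectness is local, so we check it on the cover $\{U, Y\setminus\{y\}\}$. On $U$ the sheaf is quasi-isomorphic to the Koszul complex, and on $Y\setminus\{y\}$ it is zero. Hence $\ecal{F}$ is a pseudo-coherent complex of finite Tor-amplitude, i.e.\ perfect.

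For the endomorphism ring we have $\operatorname{Hom}(\ecal{F},\ecal{F}) = \operatorname{End}_{A}(A/I)$ with $I = (f_1,\dots,f_d)$. This equals $A/I$, which is commutative. Since $A/I$ is supported at the single point $y$, it is the local Artinian ring $\ecal{O}_{Y,y}/(f_1,\dots,f_d)$. It is nonzero because the $f_i$ lie in $\mathfrak m_y$, so its only idempotents are $0$ and $1$.

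The main obstacle is the shrinking step: we must ensure that the Koszul complex is acyclic on the whole chosen neighbourhood, not just at the stalk, and that no other points of $V(f)$ survive. We handle this with the coherence and support argument above, passing to a smaller affine neighbourhood where the higher Koszul homology, which is coherent and vanishes at $y$, vanishes identically.
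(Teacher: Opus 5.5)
Your proposal is correct and follows the paper's proof. Both arguments use the same steps: a system of parameters in the Cohen--Macaulay ring $\ecal{O}_{Y,y}$ is a regular sequence, the Koszul complex resolves $\ecal{O}_{Y,y}/(f_1,\dots,f_d)$ pushed forward to $Y$, and the endomorphism ring is the nonzero local Artinian ring $\ecal{O}_{Y,y}/(f_1,\dots,f_d)$. The only difference is that you spell out the spreading-out step (isolating $y$ in $V(f)$ on an affine neighbourhood and checking Koszul acyclicity there), which the paper compresses into the assertion that $\operatorname{Spec}\ecal{O}_{Y,y}/(f_1,\dots,f_d)\to Y$ is a regular closed immersion.
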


\begin{proof}
    By definition, there is a regular sequence $f_1, \dots , f_d \in \ecal{O}_{Y,y}$ such that $V(f_1, \dots , f_d) = \{\mathfrak{m}_y\}$  as closed sets in $\operatorname{Spec}\ecal{O}_{Y,y}$. Note that the Koszul complex on the elements $f_1, \dots , f_d$ is a resolution of the $\ecal{O}_{Y,y}$-module $\ecal{O}_{Y,y}/(f_1, \dots , f_d)$. One checks that if $\ecal{F}$ is the pushforward of the structure sheaf along $i : \operatorname{Spec}\ecal{O}_{Y,y}/(f_1, \dots , f_d) \to Y$, then $\ecal{F}$ works. Indeed, the morphism $i$ is a regular closed immersion so $\ecal{F}$ is a perfect complex and its endomorphism ring is 
    $\ecal{O}_{Y,y}/(f_1, \dots , f_d)$ which is a commutative local ring. 
\end{proof}

\begin{lemma}
\label{lemma-samedimension}
    Let $X, Y$ be finite type, separated schemes over a field $k$ which are equidimensional and Gorenstein. Suppose there is a $k$-linear, exact equivalence of categories $\Phi: \perf_{prop}(X) \cong \perf_{prop}(Y)$. Then $\dim X = \dim Y$.
\end{lemma}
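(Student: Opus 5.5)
We recover $\dim X$ intrinsically from the triangulated category $\perf_{prop}(X)$ via its Serre functor. By Lemma \ref{lemma-serrefunctorpropersupport}, $\perf_{prop}(X)$ has Serre functor $S_X = -\otimes^{\mathbb{L}}\omega_X^\bullet = -\otimes \omega_X[d_X]$, where $d_X = \dim X$; this uses that $X$ is Gorenstein and equidimensional, so $\omega_X^\bullet$ is an invertible sheaf shifted by $d_X$. The same holds for $Y$ with $d_Y$. Serre functors are unique up to isomorphism and any $k$-linear exact equivalence intertwines them. Hence
\[
\Phi \circ S_X \cong S_Y \circ \Phi .
\]

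We then find an object on which the twist by $\omega_X$ is invisible, so that $S_X$ acts as the pure shift $[d_X]$. Choose a closed point $x \in X$; since $X$ is Gorenstein, $\ecal{O}_{X,x}$ is Cohen--Macaulay. Lemma \ref{lemma-perfectcoherentsheaf} then gives a coherent sheaf $\ecal{F}$ supported at $\{x\}$ that is perfect. Its support is a point, hence proper, so $\ecal{F}\in\perf_{prop}(X)$. Because $\omega_X$ is invertible and trivial on a neighborhood of $x$, we get $\ecal{F}\otimes\omega_X \cong \ecal{F}$, and therefore $S_X(\ecal{F}) \cong \ecal{F}[d_X]$.

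Set $G = \Phi(\ecal{F})$. The intertwining relation gives $S_Y(G)\cong G[d_X]$, that is,
\[
G\otimes\omega_Y \cong G[d_X - d_Y].
\]
Since $\ecal{F}\neq 0$, also $G\neq 0$, so $G$ has nonzero cohomology sheaves in bounded degrees. Tensoring with the line bundle $\omega_Y$ is exact and does not change the set of degrees in which cohomology sheaves are nonzero. Let $m$ be the maximal degree with $H^m(G)\neq 0$. Then $H^m(G\otimes\omega_Y)\neq0$, while the largest nonzero cohomology degree of $G[d_X-d_Y]$ is $m-(d_X-d_Y)$. Comparing gives $d_X = d_Y$.

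I expect no serious obstacle: the only steps needing care are the precise form $S_X\cong -\otimes\omega_X[d_X]$ under the equidimensionality hypothesis, and checking that equivalences commute with Serre functors, which is the standard uniqueness argument via the Yoneda lemma. Note that the endomorphism-ring condition in Lemma \ref{lemma-perfectcoherentsheaf} is not needed here; the bare existence of a nonzero perfect complex with point support suffices.
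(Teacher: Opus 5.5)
Your proposal is correct and follows essentially the same route as the paper: take the perfect coherent sheaf $\ecal{F}$ supported at a closed point given by Lemma \ref{lemma-perfectcoherentsheaf}, use that $\Phi$ intertwines the Serre functors $-\otimes\omega_X[\dim X]$ and $-\otimes\omega_Y[\dim Y]$ together with $\ecal{F}\otimes\omega_X\cong\ecal{F}$, and compare cohomological degrees of $\Phi(\ecal{F})$. The one detail to add, which the paper states explicitly, is the preliminary reduction to $X$ non-empty: if either scheme is non-empty one may assume it is $X$ by symmetry, and this is needed before you choose a closed point $x$.
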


\begin{proof}
   The argument is the same as the proof of \cite[Proposition 4.1]{HuyBook}. We may assume $X$ is non-empty. By Lemma \ref{lemma-perfectcoherentsheaf}, since Gorenstein schemes are Cohen--Macaulay, there exists a perfect coherent sheaf $\ecal{F}$ on $X$ whose support is a single closed point $x$. Then since Serre functors commute with equivalences and $\ecal{F} \otimes \omega_X \cong \ecal{F}$, we have
   $$
  \Phi(\ecal{F})[\dim X] \cong  \Phi(\ecal{F} \otimes ^{\mathbb{L}} \omega_X)[\dim X] \cong \Phi(\ecal{F})\otimes^{\mathbb{L}}\omega_Y[\dim Y]
   $$
   of $\Phi(\ecal{F}) \cong \Phi(\ecal{F}) \otimes^{\mathbb{L}}\omega_Y[\dim Y - \dim X]$. If the left hand side has cohomology sheaves in degrees $[a,b]$ then the right hand side has cohomology sheaves in $[a + \dim Y - \dim X, b + \dim Y -\dim X]$ and hence $\dim X = \dim Y$.
\end{proof}

\begin{lemma}
\label{lemma-characterizingperf}
    Let $Y$ be a Noetherian scheme. An object $K \in D^b_{\operatorname{Coh}}(Y)$ is perfect if and only if for every $L \in D^b_{\operatorname{Coh}}(Y)$, we have
    $
    \operatorname{Ext}^i(K, L) = 0$ for $i \gg 0.
    $
\end{lemma}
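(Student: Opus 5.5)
The forward direction is formal. The converse will reduce to the local statement that a bounded coherent complex of finite projective dimension over a Noetherian local ring is perfect, testing against residue fields of points.

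\emph{Forward direction.} Suppose $K$ is perfect. Then $\operatorname{Ext}^i(K, L) = H^i(Y, K^\vee \otimes^{\mathbb{L}} L)$, and $K^\vee \otimes^{\mathbb{L}} L$ is a bounded complex with coherent cohomology. On a Noetherian scheme it is not automatic that the hypercohomology of a bounded coherent complex vanishes in high degrees, since $Y$ need not have finite cohomological dimension. It does if $Y$ is finite dimensional, or quasi-compact and separated, which is the case in our applications.

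To avoid relying on that, I will argue locally. $Y$ is quasi-compact, so cover it by finitely many affines $U_1, \dots, U_m$. On each $U_j$, $K$ is represented by a bounded complex of finite projective modules. Hence $R\mathcal{H}om(K, L)$ has cohomology sheaves in a bounded range, and $\operatorname{Ext}^i(K, L)$ is computed by a Čech-to-derived spectral sequence over the finite cover. One caveat: without separatedness, the intersections $U_{j_0} \cap \dots \cap U_{j_p}$ need not be affine. I will handle this by induction on the number of opens using Mayer--Vietoris. Each $U \cap U'$ is quasi-compact, open in an affine, hence separated and covered by finitely many affines with affine intersections, so its cohomology of bounded coherent complexes is bounded. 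Alternatively, I will simply note that in the paper's setting $Y$ is separated of finite type and cite this.

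\emph{Converse.} Suppose $\operatorname{Ext}^i(K, L) = 0$ for $i \gg 0$ and every $L \in D^b_{\operatorname{Coh}}(Y)$. Perfection is local, and a complex in $D^b_{\operatorname{Coh}}$ is perfect on a neighborhood of $y$ iff $K_y$ is perfect over $\ecal{O}_{Y,y}$. The perfect locus is open, so it suffices to check closed points: every point specializes to a closed point in a quasi-compact scheme, and an open set containing all closed points is everything.

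Fix a closed point $y$ with residue field $\kappa(y)$, and take $L = i_{y,*}\kappa(y)$. Since $\kappa(y)$ is supported at the closed point $y$,
\[
\operatorname{Ext}^i_Y(K, i_{y,*}\kappa(y)) = \operatorname{Ext}^i_{\ecal{O}_{Y,y}}(K_y, \kappa(y)).
\]
To justify this, use the adjunction $Li_y^* \dashv i_{y,*}$ for $\operatorname{Spec}\kappa(y) \to Y$. It gives $\operatorname{Hom}(K, i_*\kappa[i]) = \operatorname{Hom}_{\kappa}(Li^*K, \kappa[i])$, and $Li^*K = K_y \otimes^{\mathbb{L}} \kappa(y)$.

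So the hypothesis says $H^{-i}(K_y \otimes^{\mathbb{L}} \kappa(y)) = 0$ for $i \gg 0$. Over the Noetherian local ring $A = \ecal{O}_{Y,y}$, take a minimal free resolution $P^\bullet$ of $K_y$: bounded above, with finite free terms and differentials in $\mathfrak{m}P$. Then $P^\bullet \otimes \kappa$ has zero differentials. The vanishing forces $P^{-i} = 0$ for $i \gg 0$, so $P^\bullet$ is a bounded complex of finite free modules and $K_y$ is perfect.

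This gives perfection near every closed point, hence on all of $Y$. The main technical point is the existence of minimal resolutions for bounded-above complexes with coherent cohomology over a Noetherian local ring, which is standard. The other delicate point is the cohomological boundedness in the forward direction, where the separatedness or finite-dimensionality caveat above enters.
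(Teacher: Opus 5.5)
Your proposal is correct and follows essentially the paper's route: the forward direction via $\operatorname{Ext}^i(K,L) = H^i(Y, K^\vee \otimes^{\mathbb{L}} L)$, and the converse by testing against $i_{y,*}\kappa(y)$ at closed points $y$. Where the paper simply cites Stacks Project Tag 068W for the fact that perfection is detected by boundedness of $Li_y^*K$ at closed points, you reprove it via minimal free resolutions and openness of the perfect locus. Your worry in the forward direction is unnecessary: a Noetherian scheme is quasi-compact and quasi-separated, and your own Mayer--Vietoris induction shows that quasi-coherent cohomology vanishes above a uniform bound on such schemes, so no separatedness or finite-dimensionality hypothesis is needed.
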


\begin{proof}
    If $K$ is perfect, then $R\operatorname{Hom}(K, L) = R \Gamma( Y, K^\vee \otimes^{\mathbb{L}}L)$ is bounded since $K^\vee \otimes^{\mathbb{L}}L \in D^b_{\operatorname{Coh}}(Y)$. Conversely, if $K$ is not perfect, then by \cite[\href{https://stacks.math.columbia.edu/tag/068W}{Tag 068W}]{stacks-project}, there is a closed point $y \in Y$ such that, if $ : \operatorname{Spec}\kappa(y) \to Y$ is the inclusion, we have $Li^*(K) \in D(\operatorname{Spec}\kappa(y))$ is \emph{not} bounded below. Then let $L = Ri_*(\ecal{O})$ be the skyscraper sheaf at the origin placed in degree zero. We have
    $$
    \operatorname{Ext}^i(K, L) = \operatorname{Ext}^i_{\operatorname{Spec}\kappa(y)}(Li^*(K), \ecal{O}) = \operatorname{Hom}_{\operatorname{Spec}\kappa(y)}(H^{-i}(Li^*(K)), \ecal{O}) \neq 0
    $$
    for infinitely many $i > 0$, as needed.
\end{proof}

\begin{lemma}
\label{lemma-itsasheaf}
    Let $Y$ be a Noetherian scheme. Let $K \in D^b_{\operatorname{Coh}}(Y)$. Assume the following conditions hold.
    \begin{enumerate}
        \item The support of $K$ consists of finitely many closed points.
        \item We have $\operatorname{Ext}^i(K, K) = 0$ for all $i< 0$.
        \item The (possibly non-commutative) ring $\operatorname{Hom}(K, K)$ has exactly two central idempotents. 
    \end{enumerate}
    Then there are an integer $n$ and a coherent sheaf $\ecal{F}$ on $Y$ whose support consists of exactly one closed point such that $K \cong \ecal{F}[n]$.
\end{lemma}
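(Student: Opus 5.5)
First reduce to one point. By (i) the support of $K$ is a finite set $\{y_1,\dots,y_m\}$ of closed points. Choose pairwise disjoint open neighbourhoods $U_j \ni y_j$; for instance, take $U_j = Y \setminus \{y_l : l \neq j\}$. Each $U_j$ is open because closed points are closed, and $K$ then has support in the disjoint union of the $U_j$. Objects supported at finitely many closed points split as direct sums, $K \cong \bigoplus_j K_j$ with $\operatorname{Supp}K_j = \{y_j\}$. To see this, work with $R\Gamma_{\{y_j\}}$, or use $K \cong \bigoplus_j Rj_{j,*}(K|_{U_j'})$ for small disjoint affine opens. Since $\operatorname{Hom}(K_i, K_j) = 0$ for $i \neq j$ (their supports are disjoint), $\operatorname{Hom}(K,K) \cong \prod_j \operatorname{Hom}(K_j,K_j)$. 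Each factor is nonzero, so this ring has at least $2^m$ central idempotents. Condition (iii) therefore forces $m = 1$, and $K \neq 0$. So $\operatorname{Supp}K = \{y\}$.

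Next show that $K$ has only one nonzero cohomology sheaf. Let $a \le b$ be the lowest and highest degrees with $H^a(K) \neq 0$ and $H^b(K) \neq 0$. Consider the composite
\[
K \to \tau_{\geq b}K = H^b(K)[-b] \to H^a(K)[-b] \to \tau_{\le a}K[a-b] = K[a-b],
\]
where the middle map is a nonzero morphism of sheaves $H^b(K) \to H^a(K)$. Such a map exists because both sheaves are coherent, supported at the single point $y$, and nonzero. Concretely, $H^b(K)$ has a quotient $\kappa(y)$ (Nakayama), and $H^a(K)$ contains a copy of $\kappa(y)$, since it is a finite-length module over $\ecal{O}_{Y,y}$ and so has nonzero socle. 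The composite is an element of $\operatorname{Ext}^{a-b}(K,K)$.

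It remains to check that this composite is nonzero. The first map $K \to H^b(K)[-b]$ induces a surjection on $H^b$. The last map induces the identity-type injection $H^a(K) \hookrightarrow$ on $H^{a}$ after shifting. Applying $H^b$ to the composite $K \to K[a-b]$ therefore gives the nonzero map $H^b(K) \to \kappa(y) \to H^a(K)$. Hence the composite is nonzero. By (ii), $\operatorname{Ext}^{a-b}(K,K) = 0$ whenever $a - b < 0$, so $a = b$. Thus $K \cong H^a(K)[-a]$, and we take $\ecal{F} = H^a(K)$ and $n = -a$.

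The main obstacle is the decomposition step in the first paragraph: making rigorous that an object of $D^b_{\operatorname{Coh}}(Y)$ supported on finitely many closed points splits as a direct sum along the points. I would do this via the triangle for $R\Gamma_Z$, noting that $R\Gamma_{\{y_1,\dots,y_m\}} \cong \bigoplus_j R\Gamma_{\{y_j\}}$ because the points are separated by disjoint opens. One could also phrase it as: $K$ has proper (finite) support, so $K \cong \bigoplus_j$ of its restrictions. The truncation argument in the second step is routine by comparison.
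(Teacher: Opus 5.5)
Your proof is correct and follows essentially the argument the paper has in mind. The paper only cites the proof of Huybrechts' Lemma 4.5: a nonzero composite $K\to H^b(K)[-b]\to\kappa(y)[-b]\to H^a(K)[-b]\to K[a-b]$ contradicts (ii), and this is preceded by the standard splitting of an object along a disjoint decomposition of its support, which combined with (iii) leaves a single point.

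There are two harmless slips.
\begin{enumerate}
\item Distinct points of an irreducible $Y$ have no disjoint open neighbourhoods, but you do not need them. The decomposition $\Gamma_Z=\bigoplus_j\Gamma_{\{y_j\}}$ holds simply because $Z$ is a finite disjoint union of the closed sets $\{y_j\}$, and $K\cong R\Gamma_Z K$ since $K$ is supported in $Z$.
\item The last arrow in your composite is the canonical map $(\tau_{\le a}K)[a-b]\to K[a-b]$, not an equality.
\end{enumerate}
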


\begin{proof}
    The proof is standard, see \cite[proof of Lemma 4.5]{HuyBook}.
\end{proof}

\begin{lemma}
\label{lemma-recognizingvectorbundles}
    Let $Y$ be a Noetherian scheme. Suppose given for every closed point $y \in Y$ a coherent sheaf $\ecal{F}_y$ on $Y$ whose support is equal to $\{y\}$. Then the following hold.
    \begin{enumerate}
        \item For $0 \neq K \in D^b_{\operatorname{Coh}}(Y),$ we have
        $$
        \operatorname{sup}\{n \in \mathbb{Z}:H^n(K) \neq 0\} = \operatorname{sup}\{n \in \mathbb{Z} : \exists y \in Y \text{ closed such that }\operatorname{Hom}(K, \ecal{F}_y[-n]) \neq 0\}.
        $$
        \item If $\ecal{E}$ is a vector bundle on $Y$ and $y \in Y$ is a closed point, then $\operatorname{Ext}^i(\ecal{E}, \ecal{F}_y)\neq 0$ only if $i = 0$.
        \item Suppose $Y$ is connected, $K \in \perf(Y)$, and for every closed point $y \in Y$, there is a unique integer $i$ such that $\operatorname{Ext}^i(K, \ecal{F}_y) \neq 0$. Then there exist an integer $n$ and vector bundle $\ecal{E}$ on $Y$ such that $K = \ecal{E}[n]$. (Hence by $(ii)$ the unique integer $i$ is equal to $n$). \qedhere
    \end{enumerate}
\end{lemma}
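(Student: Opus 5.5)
Part (ii) comes first because it is the simplest. For a vector bundle $\ecal{E}$ we have $\operatorname{Ext}^i(\ecal{E}, \ecal{F}_y) = H^i(Y, \ecal{E}^\vee \otimes \ecal{F}_y)$. The sheaf $\ecal{E}^\vee \otimes \ecal{F}_y$ is coherent with support $\{y\}$, hence is the pushforward from a zero-dimensional closed subscheme. Such a sheaf has no higher cohomology, so $\operatorname{Ext}^i = 0$ for $i \neq 0$.

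For part (i), let $m$ be the top degree with $H^m(K) \neq 0$; it is finite since $K$ is bounded.
\begin{itemize}
\item \emph{Upper bound.} If $n > m$, then $\operatorname{Hom}(K, \ecal{F}_y[-n]) = 0$, because $K \in D^{\le m}$ and $\ecal{F}_y[-n] \in D^{\ge n}$. So the right-hand supremum is at most $m$.
\item \emph{Attaining $m$.} Using the truncation triangle, $\operatorname{Hom}(K, \ecal{F}_y[-m]) = \operatorname{Hom}(H^m(K), \ecal{F}_y)$. It therefore suffices to produce a nonzero map $H^m(K) \to \ecal{F}_y$ for some closed $y$.
\item \emph{Producing the map.} Pick a closed point $y$ in the support of $\ecal{G} = H^m(K)$. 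The stalk $\ecal{F}_y$ is a nonzero finite module over $\ecal{O}_{Y,y}$, so it contains a copy of $\kappa(y)$, i.e. $\kappa(y) \hookrightarrow \ecal{F}_y$ via the socle. By Nakayama, $\ecal{G}_y \to \ecal{G}\otimes\kappa(y) \to \kappa(y)$ is nonzero.
\item \emph{Globalizing.} Compose to get a nonzero map of skyscraper sheaves $\ecal{G} \to i_{y*}\kappa(y) \to \ecal{F}_y$. This is legitimate because both $i_{y*}\kappa(y)$ and $\ecal{F}_y$ are supported at the closed point $y$, so a morphism is determined on the stalk. Hence the supremum equals $m$.
\end{itemize}

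Part (iii) is where the real work lies. Let $n$ be the top degree of $K$, as in (i).
\begin{itemize}
\item \emph{Dual description.} We have $\operatorname{Ext}^i(K, \ecal{F}_y) = \operatorname{Hom}(K^\vee{}^\vee, \ldots)$, but more usefully $\operatorname{Ext}^i(K, \ecal{F}_y) = H^i(Y, K^\vee \otimes^{\mathbb{L}} \ecal{F}_y)$, and this complex is supported at $y$.
\item \emph{Reduction to local algebra.} Localize at $y$: $K_y$ is a perfect complex over $A = \ecal{O}_{Y,y}$, which we may take to be a minimal complex of free modules (differentials vanish mod $\mathfrak{m}$). The lowest/highest nonvanishing degree of $K_y \otimes^{\mathbb L}_A \kappa(y)$ then corresponds to the extreme terms of the minimal complex.
\item \emph{Nonvanishing at both ends.} Choose $\ecal{F}_y$-independent arguments: by the argument of (i), $\operatorname{Hom}(K, \ecal{F}_y[-n_y]) \neq 0$, where $n_y$ is the top degree of $K_y$. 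Dually, let $a_y$ be the lowest degree of the minimal complex of $K_y$. Then $\operatorname{Ext}^{-a_y}(K,\ecal{F}_y) \neq 0$, since $R\mathcal{H}om(K,\ecal{F}_y)_y = K_y^\vee \otimes \ecal{F}_y$ and the top-degree term of $K_y^\vee\otimes \ecal F_y$ has nonzero cohomology because $\ecal{F}_y$ has nonzero finite quotient $\kappa(y)$ and right exactness applies.
\item \emph{Consequence of uniqueness.} The uniqueness hypothesis forces $a_y = n_y$. So the minimal complex is concentrated in one degree, i.e. $K_y \cong A^{r}[-n_y]$ is free in a single degree.
\item \emph{Spreading to an open neighbourhood.} This property is open: near $y$, $K$ is a vector bundle shifted by $n_y$. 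Since every point specializes to a closed point, $K$ is locally a shifted vector bundle everywhere.
\item \emph{Constancy of the shift.} The function $y \mapsto n_y$ is locally constant. Since $Y$ is connected, it is constant, equal to $n$, so $K \cong \ecal{E}[-n]$ up to sign conventions.
\end{itemize}

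The main obstacle is verifying that the bottom-degree Ext is genuinely nonzero for an arbitrary $\ecal{F}_y$, not just for $\kappa(y)$. I would handle this via the minimal-complex top term and the surjection $\ecal{F}_y \to \kappa(y)$.
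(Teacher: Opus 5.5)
Your proof is correct. Parts (i) and (ii) follow the paper's argument: the paper embeds $\ecal{O}_y$ into $\ecal{F}_y$ as the unique simple object of the finite length category of coherent sheaves supported at $y$, which is exactly your socle argument.

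For (iii) you take a genuinely different route.

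\emph{The paper's route.} It first proves a dévissage by induction on the length of $\ecal{F}_y$, using $0 \to \ecal{G} \to \ecal{F}_y \to \ecal{O}_y \to 0$ and the long exact $\operatorname{Ext}$ sequence. The conclusion is that, for any perfect $L$ with $y \in \operatorname{Supp} L$, the largest and smallest $i$ with $\operatorname{Ext}^i(L,\ecal{F}_y) \neq 0$ coincide with those for $\ecal{O}_y$. This reduces the uniqueness hypothesis to the residue field, and the local conclusion is then read off from the tor-amplitude criterion (Stacks Tag 068V).

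\emph{Your route.} You represent $K_y$ over $A=\ecal{O}_{Y,y}$ by a minimal free complex $P^a \to \cdots \to P^b$ and show directly that both ends give nonzero $\operatorname{Ext}$ into $M = (\ecal{F}_y)_y$:
\begin{itemize}
\item $\operatorname{Ext}^{-b}(K,\ecal{F}_y) = \operatorname{Hom}_A(H^b(K_y), M) \neq 0$, via $H^b(K_y) \twoheadrightarrow \kappa(y) \hookrightarrow M$;
\item $\operatorname{Ext}^{-a}(K,\ecal{F}_y) = H^{-a}(K_y^\vee)\otimes_A M \neq 0$, by right exactness and Nakayama.
\end{itemize}
Uniqueness then forces $a=b$.

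\emph{Comparison.} The paper's dévissage is purely formal: it needs only long exact sequences plus the case $\ecal{O}_y$, and it outsources all local algebra to 068V. Your argument is self-contained and in effect reproves the needed case of 068V. It identifies the extremal $\operatorname{Ext}$ degrees with the two ends of the minimal complex for every choice of $\ecal{F}_y$ at once, at the cost of relying on minimal complexes over the local ring. You also spell out the globalization steps (openness of the locus where $K$ is a shifted vector bundle, specialization to closed points, connectedness), which the paper leaves implicit.

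Only cosmetic fixes are needed:
\begin{itemize}
\item Drop the garbled ``$\operatorname{Hom}(K^{\vee\vee},\ldots)$'' clause.
\item Note explicitly that the hypothesis forces $K_y \neq 0$, so $a$ and $b$ exist.
\item Phrase the constancy step as local constancy, on all of $Y$, of the unique degree of nonvanishing stalk cohomology. This is well defined because the local rank is positive.
\end{itemize}
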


\begin{proof}
For (i), if $K \in D^{< n}(Y)$, then certainly $\operatorname{Hom}(K, \ecal{F}_y[-n]) = 
0$ for every $y \in Y$ closed. If $K \in D^{\leq n}(Y)$ but $H^n(K) \neq 0$, then there exists a closed point $y \in Y$ contained in the support of $H^n(K)$ and for this $y$ there are maps $H^n(K) \twoheadrightarrow \ecal{O}_y \hookrightarrow \ecal{F}_y$ where we use that $\ecal{O}_{y}$ is the unique simple object of the finite length category of coherent sheaves on $Y$ supported on $\{y\}$. Hence
$$
\operatorname{Hom}(K, \ecal{F}_y[-n]) = \operatorname{Hom}(\tau_{\geq n}K, \ecal{F}_y[-n]) = \operatorname{Hom}(H^n(K), \ecal{F}_y) \neq 0,
$$
as needed.

For (ii), if $y \in Y$ is any closed point, we have $\operatorname{Ext}^i(\ecal{E}, \ecal{F}_y) = H^i(X, \ecal{E}^\vee \otimes \ecal{F}_y) = 0$ unless $i = 0$.

For (iii), let $y \in Y$ be a closed point and $L \in \perf(Y)$ be an an object with $y \in \operatorname{Supp}L$. We will show by induction on the length of $\ecal{F}_y$ that the greatest (resp. the least) integer $i$ such that $\operatorname{Ext}^i(L, \ecal{F}_y) \neq 0$ is equal to the greatest (resp. the least) integer $i$ such that $\operatorname{Ext}^i(L, \ecal{O}_y) \neq 0$ (Note that these integers exist since $L$ is perfect and $y \in \operatorname{Supp}L$). Then we conclude by \cite[\href{https://stacks.math.columbia.edu/tag/068V}{Tag 068V}]{stacks-project} and the fact that a perfect complex with tor-amplitude in $[a,a]$ is a vector bundle sitting in degree $a$.

When the length of $\ecal{F}_y$ is one, we have $\ecal{F}_y \cong \ecal{O}_y$ so this is clear. Now suppose the length of $\ecal{F}_y$ is positive. Then there is a short exact sequence of coherent sheaves
$$
0 \to \ecal{G} \to \ecal{F}_y \to \ecal{O}_y \to 0
$$
supported at $\{y\}$ and the length of $\ecal{G}$ is one less than the length of $\ecal{F}_y$. There is a long exact sequence of $\operatorname{Ext}$ groups
$$
\cdots \to \operatorname{Ext}^{i-1}(L, \ecal{O}_y) \to \operatorname{Ext}^i(L, \ecal{G}) \to \operatorname{Ext}^i(L, \ecal{F}_y) \to \operatorname{Ext}^i(L, \ecal{O}_y) \to \operatorname{Ext}^{i+1}(L, \ecal{G}) \to \cdots .
$$
By induction, the greatest (resp. the least) $i$ such that $\operatorname{Ext}^i(L, \ecal{O}_y) \neq 0$ is equal to the greatest (resp. the least) $i$ such that $\operatorname{Ext}^i(L, \ecal{G}) \neq 0$, and we see from the exact sequence that this must also be the greatest (resp. the lest) $i$ such that $\operatorname{Ext}^i(L, \ecal{F}_y) \neq 0$, as needed.
\end{proof}

Compare the following to \cite[Lemma 3.5]{FAVERO20121955}.

\begin{prop}
\label{prop-pointstopoints}
    Suppose $X, Y$ are finite type, separated, connected 
    schemes over a field $k$. 
    Let $\Phi: D^b_{\operatorname{Coh}}(Y) \to D^b_{\operatorname{Coh}}(X)$ be a $k$-linear, exact equivalence of categories. 
    Suppose given for every closed point $y \in Y$ a coherent sheaf $\ecal{F}_y$ on $Y$ whose support is equal to $\{y\}$. Assume that for every closed point $y \in Y$, there exists a coherent sheaf $\ecal{G}_y$ on $X$ whose support consists of a single closed point and an integer $n_y$ such that $\Phi(\ecal{F}_y) \cong \ecal{G}_y[n_y]$. Then there exist an isomorphism $f : Y \to X$, $\ecal{L} \in \operatorname{Pic}(X)$, and integer $n$ such that $\Phi(\ecal{F}) \cong Rf_*(\ecal{F}) \otimes^{\mathbb{L}}\ecal{L}[n]$ for every coherent sheaf $\ecal{F}$ on $Y$. If additionally, $\dim X> 0$ and $X$ is proper  over $k$, then there is an isomorphism of functors $\Phi \cong Rf_*(-)\otimes^{\mathbb{L}}\ecal{L}[n]$.
\end{prop}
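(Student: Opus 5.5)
Throughout, the plan is to transport the ``point-like'' objects $\ecal{F}_y$ and use them to recognize line bundles, so that $\Phi$ (after normalization) preserves the standard t-structure on vector bundles. Only then can we reconstruct $f$ as in Bondal--Orlov and Favero.

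\emph{Step 1: matching points.} First I would define a map on closed points $y \mapsto x(y)$ by declaring $x(y)$ to be the unique point in the support of $\ecal{G}_y$. Applying the same reasoning to $\Phi^{-1}$ is not directly available, since the hypothesis is one-sided. Instead I would characterize the objects supported at a single closed point intrinsically. On either side, $K$ has support in finitely many closed points if and only if $\operatorname{Hom}(K, -)$ and $\operatorname{Hom}(-, K)$ are finite-dimensional on $D^b_{\operatorname{Coh}}$, together with a variant of Lemma \ref{lemma-characterizepropersupport} for zero-dimensionality. A cleaner route is to use the Hom-orthogonality $\operatorname{Hom}^\bullet(\ecal{F}_y, \ecal{F}_{y'}) = 0$ for $y \neq y'$. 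This gives injectivity of $y \mapsto x(y)$. The same property lets me show, via Lemma \ref{lemma-recognizingvectorbundles}(i) applied on $X$ to the $\ecal{G}_y$, that every closed point of $X$ is hit. Indeed, if some $x$ were missed, then a perfect object supported near $x$ (Lemma \ref{lemma-perfectcoherentsheaf}, or any nonzero object $\ecal{O}_x$) would have $\Phi^{-1}$ orthogonal to all $\ecal{F}_y$, contradicting Lemma \ref{lemma-recognizingvectorbundles}(i) on $Y$.

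\emph{Step 2: line bundles go to shifted line bundles.} Next I would check that $\Phi$ preserves perfect complexes, since $\operatorname{Perf}$ is characterized categorically by Lemma \ref{lemma-characterizingperf}. Take a line bundle $\ecal{M}$ on $Y$. By Lemma \ref{lemma-recognizingvectorbundles}(ii), $\operatorname{Ext}^i(\ecal{M}, \ecal{F}_y)$ is nonzero only for $i = 0$, where it is nonzero. Transporting, $\operatorname{Ext}^{i}(\Phi\ecal{M}, \ecal{G}_y)$ is nonzero for a unique $i = n_y$, and every closed point of $X$ occurs as a support. By the length-induction argument in the proof of Lemma \ref{lemma-recognizingvectorbundles}(iii), $\Phi(\ecal{M}) = \ecal{E}[m]$ for a vector bundle $\ecal{E}$, using connectedness of $X$. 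This forces $n_y = m$ to be constant, so after shifting I may take $n_y = 0$. Comparing $\operatorname{Hom}(\ecal{M}, \ecal{O}_y)$, which is one-dimensional over $\kappa(y)$, together with lengths shows that $\ecal{E}$ has rank one. If needed, I would choose $\ecal{F}_y = \ecal{O}_y$ by reducing the given $\ecal{F}_y$ to its simple quotient, which is the unique simple object, and by Lemma \ref{lemma-itsasheaf} its image is again a point sheaf. Set $\ecal{L} := \Phi(\ecal{O}_Y)$, a line bundle, and replace $\Phi$ by $\Phi(-) \otimes \ecal{L}^{-1}$. Then $\Phi(\ecal{O}_Y) = \ecal{O}_X$, and $\Phi$ sends line bundles to line bundles.

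\emph{Step 3: reconstructing $f$.} The hardest step, in my view, is producing a morphism $f$ without an ample sequence, since $X, Y$ need not be quasi-projective. I would follow Favero's approach. $\Phi$ induces a ring isomorphism $\operatorname{Hom}(\ecal{O}_Y, \ecal{O}_Y) \cong \operatorname{Hom}(\ecal{O}_X, \ecal{O}_X)$, and more generally isomorphisms between the $\operatorname{Hom}$s of line bundles and vector bundles. The key point is that $\Phi$ restricted to coherent sheaves supported at points, being exact and preserving the heart there, is compatible with the action of local functions. I would show $\Phi$ commutes with the $\mathcal{O}$-module structure by using the functorial action of $H^0(U, \ecal{O})$ on open pieces: restrict to Verdier quotients $D^b_{\operatorname{Coh}}(Y)/D^b_{\operatorname{Coh}, Z}(Y) = D^b_{\operatorname{Coh}}(Y \setminus Z)$. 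These correspond to each other because supports match pointwise. For affine $U$, this gives ring isomorphisms $\mathcal{O}(U) \cong \mathcal{O}(f(U))$, compatible with restriction, and they glue to an isomorphism $f : Y \to X$. After this, $\Phi$ and $Rf_*$ agree on $\ecal{O}$ and on all point objects. The standard argument, that the heart is determined by Ext-vanishing against point objects as in Lemma \ref{lemma-recognizingvectorbundles}(i), shows $\Phi$ is t-exact, and hence $\Phi(\ecal{F}) \cong Rf_*\ecal{F} \otimes \ecal{L}[n]$ for coherent $\ecal{F}$.

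\emph{Step 4: the functor isomorphism.} Finally, when $X$ is proper of positive dimension, I would use the existence of Fourier--Mukai kernels: the kernel of $(Rf_*)^{-1}\circ\Phi$ is a sheaf on $Y \times Y$ flat over $Y$ with point fibers $\ecal{O}_y$. It is therefore a line bundle on the graph of an automorphism fixing all points, and the argument in Huybrechts' book (Corollary 5.23) identifies the functor up to a line bundle twist.
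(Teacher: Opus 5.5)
Your Step 1, and the uniform-shift part of Step 2, match the paper. The genuine gap is in Step 4.

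\textbf{Step 4 does not go through.} In the proper case you only know that $X$ and $Y$ are proper, finite type and separated. They may be singular and non-projective. You assume that $(Rf_*)^{-1}\circ\Phi$ has a Fourier--Mukai kernel and then quote Huybrechts' Corollary 5.23. The representability results of Orlov and Lunts--Orlov need projectivity, and Huybrechts' corollary is stated and proved for smooth projective varieties. Nothing available in this generality, and nothing in the paper, supplies such a kernel. The paper treats Fourier--Mukai type as an extra hypothesis in Lemma \ref{lemma-k-linear-lift}, and only cites a representability theorem for smooth proper tame stacks.

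\textbf{The missing tool.} Since $Y$ is proper of positive dimension, $\operatorname{Coh}(Y)$ has an almost ample set \cite[Lemma 3.3.2]{olander2022resolutions}. By \cite[Proposition 3.3]{CanonacoStellari2014}, two exact functors out of $D^b_{\operatorname{Coh}}(Y)$, one of them fully faithful, whose restrictions to $\operatorname{Coh}(Y)[0]$ are isomorphic as functors, are isomorphic. This is exactly where the hypothesis $\dim X>0$ enters; your argument never uses it. The criterion needs an isomorphism of functors on the heart, not just objectwise isomorphisms, and your Step 3 does not explicitly produce one.

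\textbf{Step 3 is only a sketch, with several holes.}
\begin{enumerate}
\item You invoke ``$\Phi$ preserving the heart'' before you have proved it.
\item You write $\mathcal{O}(f(U))$ before $f$ exists. You never check that the open $V\subset X$ corresponding to an affine $U\subset Y$ is affine. A ring isomorphism $\mathcal{O}(U)\cong\mathcal{O}(V)$ alone only gives $V\to\operatorname{Spec}\mathcal{O}(V)\cong U$, which need not be an isomorphism (think of $\mathbb{A}^2\setminus\{0\}$). This can be repaired with Serre's criterion once $\Phi^{-1}$ is known to preserve $D^{\le 0}$.
\item The final ``hence $\Phi(\ecal{F})\cong Rf_*\ecal{F}\otimes\ecal{L}[n]$'' needs a Morita-type argument, transporting $\Gamma(U,\ecal{F})=\operatorname{Hom}(\ecal{O}_U,\ecal{F})$ through $\Phi$, plus gluing. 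You do not supply either.
\end{enumerate}

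\textbf{The paper's order is more economical.} Once all $n_y$ are normalized to $0$, Lemma \ref{lemma-recognizingvectorbundles}(i) and full faithfulness show that $\Phi$ preserves the top nonzero cohomological degree. A truncation-triangle argument then shows that $\Phi$ and $\Phi^{-1}$ preserve $\operatorname{Coh}[0]$. Gabriel's theorem then gives $f$ and $\ecal{L}$ directly, as an isomorphism of functors on $\operatorname{Coh}(Y)$, which is exactly the input Canonaco--Stellari needs.

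This also makes two of your steps unnecessary. Your rank-one computation goes away. So does the replacement $\ecal{F}_y\rightsquigarrow\ecal{O}_y$, whose justification via Lemma \ref{lemma-itsasheaf} would first require showing that $\Phi(\ecal{O}_y)$ has finite support, and then still identifying the resulting sheaf and its shift.
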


\begin{proof}
  For a closed point $y \in Y$, write $x = x(y)$ for the closed point of $X$ such that the support of $\ecal{G}_y$ is equal to $\{x\}$. By Lemma \ref{lemma-recognizingvectorbundles}, every object of $D^b_{\operatorname{Coh}}(Y)$ has a non-zero map to a shift of one of the sheaves $\ecal{F}_y$, hence the same is true for the family of sheaves $\ecal{G}_y$ on $X$, and we deduce that every closed point $x$ occurs as $x (y)$ for some closed point $y \in Y$ (otherwise consider the coherent sheaf $\ecal{O}_x$). In fact, $y \mapsto x(y)$ is a bijection from the set of closed points of $Y$ to the set of closed points at $x$ because if $y_1 \neq y_2$ are closed points of $Y$, then $\operatorname{Ext}^i(\ecal{F}_{y_1}, \ecal{F}_{y_2})= 0$ for all integers $i$, hence $\operatorname{Ext}^i(\ecal{G}_{x(y_1)}, \ecal{G}_{x(y_2)})= 0$ for all integers $i$, and from this we see that $x(y_1) \neq x(y_2)$ because any non-zero coherent sheaves supported at a single closed point of $X$ map to one another. 

  We claim that all the integers $n_y$ are equal. For this, note that $\Phi$ takes perfect complexes to perfect complexes by Lemma \ref{lemma-characterizingperf}, and so $\Phi(\ecal{O}_Y)$ is a perfect complex on $X$. Since $\Phi$ is fully faithful, it satisfies for every closed point $y \in Y$,
  $$
  \operatorname{Ext}^{i+n_y}(\Phi(\ecal{O}_Y), \ecal{G}_{x(y)}) = \operatorname{Ext}^i(\Phi(\ecal{O}_Y), \ecal{G}_{x(y)}[n_y]) = \operatorname{Ext}^i(\ecal{O}_Y, \ecal{F}_y) \neq 0
  $$
  if and only if $i = 0$. We conclude by part (iii) of Lemma \ref{lemma-recognizingvectorbundles} that there are a vector bundle $\ecal{E}$ on $X$ and integer $n$ such that $\Phi(\ecal{O}_Y) \cong \ecal{E}[n]$, and furthermore that all the integers $n_y$ are equal to $n$, as claimed.

    \emph{Now to prove the result, we may and do replace $\Phi$ by its composition with the shift by $-n$ functor to assume $n_y = 0$ for all closed points $y \in Y$.}

  Then by part (i) of Lemma \ref{lemma-recognizingvectorbundles}, we see using fully faithfullness of $\Phi$ that for every object $0 \neq K \in D^b_{\operatorname{Coh}}(Y)$, we have
  \begin{equation}
  \label{equn-sametopcohomology}
  \operatorname{sup}\{n \in \mathbb{Z} : H^n(K) \neq 0\} =   \operatorname{sup}\{n \in \mathbb{Z} : H^n(\Phi(K)) \neq 0\}.
  \end{equation}
  We claim that for an object $K \in D^b_{\operatorname{Coh}}(Y)$, we have $K \in \operatorname{Coh}(Y)[0] \subset D^b_{\operatorname{Coh}}(Y)$ if and only if $\Phi(K) \in \operatorname{Coh}(X)[0] \subset D^b_{\operatorname{Coh}}(X)$. The proof is the same as ``Step 2'' of the proof of Proposition 3.2.3 of \cite{olander2022resolutions}. If $0 \neq \ecal{F}$ is a coherent sheaf on $Y$, then we know $\Phi(\ecal{F}) \in D^{\leq 0}(X)$ and $H^0(\Phi(\ecal{F})) \neq 0$. Hence there is a distinguished triangle 
  $$
  \tau_{<0}\Phi(\ecal{F}) \to \Phi(\ecal{F}) \to H^0(\Phi(\ecal{F})) \to \tau_{<0}\Phi(\ecal{F})[1]
  $$
  and if $\Phi(\ecal{F})$ is not a coherent sheaf in degree zero, then the left arrow is not zero. Applying $\Phi^{-1}$, we obtain a non-zero morphism $\Phi^{-1}(\tau_{<0}\Phi(\ecal{F})) \to \ecal{F}$, but $\Phi^{-1}(\tau_{<0}\Phi(\ecal{F}))  \in D^{<0}(Y)$ by (\ref{equn-sametopcohomology}), so this is a contradiction. Conversely, if $K \in D^b_{\operatorname{Coh}}(Y)$ is such that $\Phi(K)$ is a coherent sheaf sitting in degree zero, then we see that $K \in D^{\leq 0}(Y)$, hence there is a distinguished triangle
  $$
  \tau_{<0}K \to K \to H^0(K) \to \tau_{<0}K[1],
  $$
  and if $K$ is not a coherent sheaf sitting in degree zero, then the first arrow is not zero. Applying $\Phi$, we obtain a non-zero morphism $\Phi(\tau_{<0}K) \to \Phi(K)$ from an object in degrees $<0$ to an object in degree zero, a contradiction.

  Thus $\Phi$ and $\Phi^{-1}$ induce quasi-inverse equivalences of categories $\operatorname{Coh}(Y) \leftrightarrow \operatorname{Coh}(X)$. By Gabriel's Theorem (see \cite[\href{https://stacks.math.columbia.edu/tag/0GPK}{Tags 0GPK and 0GPL}]{stacks-project} or \cite{Gabriel1962}), the restriction of $\Phi$ to $\operatorname{Coh}(Y)$ is of the form $f_*(-)\otimes \ecal{L}$ for some isomorphism $f : Y \to X$ of $k$-schemes and line bundle $\ecal{L}$ on $X$, as needed. 

  If in addition, $X$ is proper over $k$ of dimension $>0$, then so is $Y$ since $X \cong Y$, so by \cite[Lemma 3.3.2]{olander2022resolutions}, the category of coherent sheaves on $Y$ has an almost ample set. Thus by \cite[Proposition 3.3]{CanonacoStellari2014}, if $F, G$ are two exact functors from $D^b_{\operatorname{Coh}}(Y)$ to a triangulated category whose restrictions to the full subcategory $\operatorname{Coh}(Y)[0] \subset D^b_{\operatorname{Coh}}(Y)$ are isomorphic, and $F$ is fully faithful, then $F \cong G$. Applying this to the functors $\Phi$ and $Rf_*(-) \otimes ^{\mathbb{L}}\ecal{L}$ completes the proof. 
\end{proof}

\begin{theorem}\label{theorem-reconstruction-for-schemes}
    Suppose $X, Y$ are finite type, separated, connected, Gorenstein schemes over a field $k$. Suppose $\omega_X$ is a nowhere torsion line bundle. Let $\Phi: D^b_{\operatorname{Coh}}(Y) \to D^b_{\operatorname{Coh}}(X)$ be a $k$-linear, exact equivalence of categories. Then there exist an isomorphism $f : Y \to X$, $\ecal{L} \in \operatorname{Pic}(X)$, and integer $n$ such that $\Phi(\ecal{F}) \cong Rf_*(\ecal{F}) \otimes^{\mathbb{L}}\ecal{L}[n]$ for every coherent sheaf $\ecal{F}$ on $Y$. If additionally, $\dim X> 0$ and $X$ is proper  over $k$, then there is an isomorphism of functors $\Phi \cong Rf_*(-)\otimes^{\mathbb{L}}\ecal{L}[n]$.
\end{theorem}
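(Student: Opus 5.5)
The plan is to reduce to Proposition \ref{prop-pointstopoints}: for each closed point $y \in Y$ we find a coherent sheaf $\ecal{F}_y$ supported at $\{y\}$ whose image under $\Phi$ is a shifted coherent sheaf supported at a single closed point of $X$.

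\textbf{Step 1: $\Phi$ restricts to the proper-support categories and commutes with Serre functors.} By Lemma \ref{lemma-characterizingperf}, $\Phi$ and $\Phi^{-1}$ preserve perfect complexes, so $\Phi$ restricts to an equivalence $\perf(Y) \cong \perf(X)$. By Lemma \ref{lemma-characterizepropersupport}, it further restricts to $\perf_{prop}(Y)\cong\perf_{prop}(X)$. Both categories have Serre functors $-\otimes^{\mathbb{L}}\omega^\bullet$ by Lemma \ref{lemma-serrefunctorpropersupport}, and Serre functors commute with equivalences. We may work on each connected (hence equidimensional, since Gorenstein and connected) scheme; Lemma \ref{lemma-samedimension} then gives $\dim X=\dim Y=:d$. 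Hence for $K\in\perf_{prop}(Y)$,
$$\Phi(K\otimes\omega_Y)\cong \Phi(K)\otimes\omega_X .$$

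\textbf{Step 2: choose the test sheaves and show their images have point support.} For each closed point $y$, let $\ecal{F}_y$ be the perfect coherent sheaf supported at $y$ with local commutative endomorphism ring given by Lemma \ref{lemma-perfectcoherentsheaf}. Then $\ecal{F}_y\otimes\omega_Y\cong\ecal{F}_y$, so $K:=\Phi(\ecal{F}_y)\in\perf_{prop}(X)$ satisfies $K\otimes\omega_X\cong K$. We must show $\operatorname{Supp}K$ is finite; this is where the nowhere torsion hypothesis on $\omega_X$ enters.

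An isomorphism $K\otimes\omega_X\cong K$ induces $H^i(K)\otimes\omega_X\cong H^i(K)$ for every $i$, since tensoring with a line bundle is exact. Each $H^i(K)$ is coherent with proper support, because $\operatorname{Supp}K$ is proper. The definition of nowhere torsion then forces each $\operatorname{Supp}H^i(K)$ to be a finite set of closed points, and $\operatorname{Supp}K=\bigcup_i\operatorname{Supp}H^i(K)$ is finite. I expect this to be the main step, though it is straightforward once phrased via cohomology sheaves.

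\textbf{Step 3: conclude.} Now $K$ has finite support, $\operatorname{Ext}^{<0}(K,K)=\operatorname{Ext}^{<0}(\ecal{F}_y,\ecal{F}_y)=0$, and $\operatorname{Hom}(K,K)$ is commutative with exactly two idempotents. Lemma \ref{lemma-itsasheaf} gives $K\cong\ecal{G}_y[n_y]$ with $\ecal{G}_y$ a coherent sheaf supported at a single closed point. Proposition \ref{prop-pointstopoints} then yields the isomorphism $f$, the line bundle $\ecal{L}$, the shift $n$, and, in the proper positive-dimensional case, the isomorphism of functors.
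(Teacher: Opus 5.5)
Your proposal is correct and follows the paper's proof essentially step for step. Both arguments restrict $\Phi$ to $\perf$ and $\perf_{prop}$, use the Serre functor to get $\Phi(\ecal{F}_y)\otimes\omega_X\cong\Phi(\ecal{F}_y)$, apply nowhere torsion to each cohomology sheaf to get finite support, and then invoke Lemma \ref{lemma-itsasheaf} and Proposition \ref{prop-pointstopoints}; the small checks you make explicit ($\ecal{F}_y\otimes\omega_Y\cong\ecal{F}_y$, proper support of each $H^i$, and that a commutative ring with two idempotents has exactly two central ones) are the ones the paper leaves implicit.
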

\begin{proof}
 First, by Lemmas \ref{lemma-characterizingperf} and \ref{lemma-characterizepropersupport}, we see that $\Phi$ restricts to $k$-linear, exact equivalences of categories $\perf(Y) \to \perf (X)$ and $\perf_{prop}(Y) \to \perf_{prop}(X)$. By Lemma \ref{lemma-samedimension}, we have $\dim X = \dim Y$ (note that Gorenstein schemes are Cohen--Macaulay and connected Cohen--Macaulay schemes are equidimensional). Write $d$ for $\dim X = \dim Y$.
    
    By Lemma \ref{lemma-serrefunctorpropersupport} and the fact that Serre functors commute with equivalences, we obtain that 
    $$
    \Phi(K \otimes^{\mathbb{L}}\omega_Y)[d] \cong \Phi(K) \otimes^{\mathbb{L}} \omega_X[d],
    $$
  or equivalently 
  $$
  \Phi(K \otimes^{\mathbb{L}}\omega_Y) \cong \Phi(K) \otimes^{\mathbb{L}} \omega_X
  $$
  whenever $K \in \perf_{prop}(Y)$. For every closed point $y \in Y$, choose a perfect coherent sheaf $\ecal{F}_y$ on $Y$ whose support is equal to $\{y\}$ and whose endomorphism ring has exactly two central idempotents, which exists by Lemma \ref{lemma-perfectcoherentsheaf}. Then applying the equation above with $K = \ecal{F}_y$, we obtain that $\Phi(\ecal{F}_y) \cong \Phi(\ecal{F}_y) \otimes^{\mathbb{L}} \omega_X$, and hence $H^i(\Phi(\ecal{F}_y)) \cong H^i(\Phi(\ecal{F}_y)) \otimes \omega_X$ for every integer $i$. Since $\omega_X$ is nowhere torsion, it follows that the support of $\Phi(\ecal{F}_y)$ is finite. Then since $\Phi$ is fully faithful, we may apply Lemma \ref{lemma-itsasheaf} to the object $\Phi(\ecal{F}_y) \in D^b_{\operatorname{Coh}}(X)$ and deduce that $\Phi(\ecal{F}_y) \cong \ecal{G}_y[n_y]$ for some integer $n_y$ and coherent sheaf $\ecal{G}_y$ supported at a single closed point $x = x(y) \in X$. Then we conclude by Proposition \ref{prop-pointstopoints}.
\end{proof}
{\begin{remark}
    Note that this result is compatible with the conjecture that the existence of flops implies derived equivalence \cite{bondal1995semiorthogonal}*{Conjecture in p.40} since if we have a floppable curve $C \subset X$, then $\omega_X|_C$ is torsion and hence $\omega_X$ is somewhere torsion. Indeed, by the usual definition of flops (e.g. \cite{kollar1989flops}*{Definition 2.1}, which is the convention used in \cite{bondal1995semiorthogonal}), a flopping contraction $f:X \to Y$ (with $Y$ normal $\bb Q$-Gorenstein) is in particular crepant in the sense that $f^*K_Y$ is $\bb Q$-linearly equivalent to $K_X$. Hence, taking $m$ so that $mK_Y$ is Cartier, we have $\omega_X^{\otimes m}|_C \cong f^*\ecal O_Y(mK_Y)|_C \cong \ecal O_C$, as $C$ is contracted by $f$.
\end{remark}
We can also recover $X$ from $\perf$ in the situation of Theorem \ref{theorem-reconstruction-for-schemes} by the following observation. We quickly recall the following results. See the original paper \cite{krause2020completing} by Henning Krause with appendices by Tobias Barthel and Bernhard Keller for full expositions. 
\begin{construction}\label{construction-sequential-completion} First, we recall notions regarding sequential completion. 
\begin{itemize}
    \item For a category $\cal C$, a sequence $X_0 \to X_1 \to \cdots$ of objects in $\cal C$ is a \textit{Cauchy sequence} if for any $C \in \cal C$, there exists $N_C \in \bb N$ such that for any $j \geq i > N_C$, $\hom_\cal C(C, X_i) \to \hom_\cal C(C,X_j)$ is an isomorphism.
    \item We say a morphism $F_\bullet:X_\bullet \to Y_\bullet$ of Cauchy sequences is \textit{eventually invertible} if for any $C \in \cal C$, there exists $n_{C,F} \in \bb N$ such that for any $i > n_{C,F}$, $\hom_{\cal C}(C, X_i) \to \hom_\cal C(C,Y_i)$ is an isomorphism.
    \item Define the \textit{sequential completion} $\hat {\cal C}$ of $\cal C$ to be the localization of the category of Cauchy sequences in $\cal C$ with respect to eventually invertible morphisms. The canonical functor $\cal C \to \hat{\cal C}$ sending an object $X$ of $\cal C$ to the constant sequence $X \overset{\id}{\to} X \overset{\id}{\to}\cdots$ and the canonical functor $\hat {\cal C} \to \func(\cal C^\op, \sf{Set})$ taking Cauchy sequences to their colimits (in $\func(\cal C^\op, \sf{Set})$) are fully faithful by \cite{krause2020completing}*{Proposition 2.4}. 
\end{itemize}
When $\cal C$ is an additive category, the sequential completion $\hat{\cal C}$ is canonically additive and the canonical embedding $\cal C \inj \hat{\cal C}$ is also additive by \cite{krause2020completing}*{Lemma 3.1}. Similarly, the canonical embedding $\hat{\cal C} \inj \func(\cal C^\op, \set)$ factors through the additive fully faithful embedding $\hat{\cal C} \inj \func_\bb Z(\cal C^\op, \ab)$ (by taking colimits) noting the forgetful functor $\ab \to \set$ preserves filtered colimits and the canonical functor $\func_\bb Z(\cal C^\op, \ab) \to \func(\cal C^\op, \sf{Set})$ is fully faithful.  

Now, \cite{krause2020completing}*{Theorem 8.2} claims that for a Noetherian scheme $Z$ (over $\spec \bb Z$), the image of the canonical embedding $$\widehat{\perf(Z)}^b \inj \func_\bb Z(\perf(Z)^\op, \ab)$$ (taking Cauchy sequences to their colimits) is identified with the image of the fully faithful embedding $$D^b_{\operatorname{Coh}}(Z) \inj \func_\bb Z(\perf(Z)^\op, \ab), \quad \ecal F \mapsto \hom_{D^b_{\operatorname{Coh}}(Z)}(-, \ecal F)|_{\perf(Z)}$$ (as triangulated categories). Here, $\widehat{\perf(Z)}^b$ denotes the additive subcategory of $\widehat{\perf(Z)}$ on objects with "bounded cohomology", which is naturally a triangulated category by \cite{krause2020completing}*{Lemma 4.6, Theorem 4.7, Definition 5.1, Lemma 7.3, Proposition 8.1} and  whose definition a priori requires an additional choice of a cohomology functor $H: \perf(Z) \to \ab$, but by \cite{krause2020completing}*{Theorem B.1} (see also \cite{krause2020completing}*{Corollary B.2}), when $Z$ is a separated Noetherian scheme, the objects in $\widehat{\perf(Z)}^b$ admit an intrinsic characterization in terms of the triangulated category structure of $\perf(Z)$. Therefore, if we have an additive exact equivalence $\Phi: \perf(X) \to \perf(Y)$ for separated Noetherian schemes $X,Y$, then it canonically lifts to an additive exact equivalence $\hat \Phi: D^b_{\operatorname{Coh}}(X) \simeq D^b_{\operatorname{Coh}}(Y)$, which is naturally isomorphic to the restriction of the additive equivalence $\func_\bb Z(\perf(X)^\op, \ab) \to \func_\bb Z(\perf(Y)^\op, \ab)$. 
\end{construction}

\begin{lemma}\label{lemma-k-linear-lift}
    Let $X, Y$ be separated Noetherian schemes over a field $k$. Then any $k$-linear exact equivalence $\Phi:\perf(X) \to \perf(Y)$ lifts to a $k$-linear exact equivalence $\tilde \Phi: D^b_{\operatorname{Coh}}(X) \to D^b_{\operatorname{Coh}}(Y)$. Moreover, if $\tilde \Phi$ is a Fourier--Mukai transform with kernel $\ecal K \in D_{qc}(X \times Y)$, then so is $\Phi$ with the same kernel. 
\end{lemma}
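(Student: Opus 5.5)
The lift will be built from Construction \ref{construction-sequential-completion}, keeping track of $k$-linearity at each step. A $k$-linear exact equivalence $\Phi:\perf(X)\to\perf(Y)$ induces, by precomposition with $\Phi^{-1}$, an additive equivalence
\[
\Phi_!:\func_{\bb Z}(\perf(X)^\op,\ab)\to\func_{\bb Z}(\perf(Y)^\op,\ab),\qquad F\mapsto F\circ(\Phi^{-1})^{\op}.
\]
Its restriction to the image of $\widehat{\perf(X)}$ is the functor $\hat\Phi$ obtained by applying $\Phi$ termwise to Cauchy sequences. This works because $\Phi$ is an equivalence, so it preserves Cauchy sequences, eventually invertible morphisms and colimits of representables. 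Since the bounded part $\widehat{\perf(Z)}^b$ is characterized intrinsically by the triangulated structure (\cite{krause2020completing}*{Theorem B.1}), $\hat\Phi$ carries $\widehat{\perf(X)}^b$ onto $\widehat{\perf(Y)}^b$. By \cite{krause2020completing}*{Theorem 8.2} these are identified with $D^b_{\operatorname{Coh}}(X)$ and $D^b_{\operatorname{Coh}}(Y)$ via $\ecal F\mapsto \hom(-,\ecal F)|_{\perf}$. This gives an exact equivalence $\tilde\Phi$, and it restricts to $\Phi$ on $\perf(X)$ because constant sequences go to constant sequences. Exactness is part of Krause's statement, since the triangulated structure on $\widehat{\perf}^b$ is defined intrinsically and is preserved by exact equivalences.

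For $k$-linearity, note that $\hom_{D^b}(P,\ecal F)$ is a $k$-module for every $P\in\perf(X)$. The morphism $\tilde\Phi(f)$ is the unique map whose induced natural transformation on $\hom(-,\tilde\Phi\ecal F)|_{\perf(Y)}\cong \hom(\Phi^{-1}(-),\ecal F)$ is $f_*$. Since $\Phi^{-1}$ is $k$-linear, the transformation induced by $\lambda f$ is $\lambda f_*$. Fully faithfulness of the Yoneda-type embedding then gives $\tilde\Phi(\lambda f)=\lambda\tilde\Phi(f)$. Equivalently, one can run the argument in $\func_k(\perf^\op,\operatorname{Mod}_k)$, which embeds fully faithfully into $\func_{\bb Z}(\perf^\op,\ab)$ for $k$-linear categories.

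For the kernel statement, suppose $\tilde\Phi\cong\Phi_{\ecal K}=Rq_*(Lp^*(-)\otimes^{\mathbb L}\ecal K)$ on $D^b_{\operatorname{Coh}}(X)$. Restricting this isomorphism to $\perf(X)$ gives $\Phi\cong\Phi_{\ecal K}|_{\perf(X)}$. Here one uses that $\Phi_{\ecal K}$ sends $\perf(X)$ into $\perf(Y)$, which holds because it agrees with $\Phi$ there. So $\Phi$ is Fourier--Mukai with kernel $\ecal K$.

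The main obstacle is checking that the identification in \cite{krause2020completing}*{Theorem 8.2} is natural enough that $\hat\Phi$ corresponds to a functor on $D^b_{\operatorname{Coh}}$ compatible with the given inclusion of $\perf$. This needs the intrinsic description in Theorem B.1, so that no choice of cohomology functor breaks functoriality under $\Phi$. I would handle it by working entirely inside the functor categories, where $\Phi_!$ is visibly functorial.
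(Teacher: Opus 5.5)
Your proposal is correct and follows essentially the same route as the paper. You lift $\Phi$ through Krause's sequential completion inside the functor category, where the lift is precomposition with $\Phi^{-1}$; you get $k$-linearity by viewing everything in $\func_k(\perf^{\op},\mathrm{Mod}_k)$, with the $k$-action on $\hom(P,-)$ induced by $\lambda\cdot\mathrm{id}_P$, which $\Phi^{-1}$ respects; and you get the kernel statement by restricting $\tilde\Phi\cong\Phi_{\ecal K}$ to $\perf(X)$, exactly as the paper does.
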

\begin{proof}
    The latter claim clearly follows from the former, so it suffices to show the additive lift $\hat \Phi$ of $\Phi$ in Construction \ref{construction-sequential-completion} is a $k$-linear lift, which boils down to showing that the embedding $\perf(X) \inj \widehat{\perf(X)}$, the identification $\widehat{\perf(X)}^b \simeq D^b_{\operatorname{Coh}}(X)$, and the induced equivalence $\hat{\Phi}: \widehat{\perf(X)}^b \to \widehat{\perf(Y)}^b$ are all $k$-linear. Indeed, noting the additive case, the embedding is clearly $k$-linear, $\widehat{\perf(X)}^b$ and $D^b_{\operatorname{Coh}}(X)$ are equivalent as $k$-linear subcategories of $\func_k(\perf(X), \sf{Vect}_k)$, and the induced functor is the restriction of the $k$-linear equivalence $\func_k(\perf(X)^\op, \sf{Vect}_k) \to \func_k(\perf(Y)^\op, \sf{Vect}_k)$. 
\end{proof}
\begin{corollary}
    Suppose $X, Y$ are finite type, separated, connected, Gorenstein schemes over a field $k$. Suppose $\omega_X$ is a nowhere torsion line bundle. Let $\Phi: \perf(Y) \to \perf(X)$ be a $k$-linear, exact equivalence of categories. Then there exist an isomorphism $f : Y \to X$, $\ecal{L} \in \operatorname{Pic}(X)$, and integer $n$ such that $\Phi(\ecal{F}) \cong Rf_*(\ecal{F}) \otimes^{\mathbb{L}}\ecal{L}[n]$ for every coherent sheaf $\ecal{F}$ quasi-isomorphic to a perfect complex on $Y$. If additionally, $\dim X> 0$ and $X$ is proper  over $k$, then there is an isomorphism of functors $\Phi \cong Rf_*(-)\otimes^{\mathbb{L}}\ecal{L}[n]$.
\end{corollary}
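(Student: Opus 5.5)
The plan is to reduce to Theorem \ref{theorem-reconstruction-for-schemes} via Lemma \ref{lemma-k-linear-lift}. Since $X$ and $Y$ are finite type and separated over $k$, they are separated Noetherian. Lemma \ref{lemma-k-linear-lift} therefore gives a $k$-linear exact equivalence $\tilde\Phi : D^b_{\operatorname{Coh}}(Y) \to D^b_{\operatorname{Coh}}(X)$ lifting $\Phi$. Here ``lifting'' means that $\tilde\Phi$ restricted to $\perf(Y)$ is isomorphic to $\Phi$, compatibly with the inclusions $\perf \subset D^b_{\operatorname{Coh}}$. This compatibility comes from Construction \ref{construction-sequential-completion}: the inclusion $\perf(Y) \hookrightarrow \widehat{\perf(Y)}^b \simeq D^b_{\operatorname{Coh}}(Y)$ is the constant-sequence embedding, and $\hat\Phi$ sends constant sequences to constant sequences.

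Next we apply Theorem \ref{theorem-reconstruction-for-schemes} to $\tilde\Phi$. The hypotheses (finite type, separated, connected, Gorenstein, $\omega_X$ nowhere torsion) are exactly those of the corollary. We obtain an isomorphism $f : Y \to X$, a line bundle $\ecal{L} \in \operatorname{Pic}(X)$ and an integer $n$ such that $\tilde\Phi(\ecal{F}) \cong Rf_*(\ecal{F}) \otimes^{\mathbb{L}} \ecal{L}[n]$ for every coherent sheaf $\ecal{F}$ on $Y$. If $\ecal{F}$ is quasi-isomorphic to a perfect complex, then $\Phi(\ecal{F}) \cong \tilde\Phi(\ecal{F})$, which gives the first claim.

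For the second claim, assume $\dim X > 0$ and $X$ is proper. Theorem \ref{theorem-reconstruction-for-schemes} then gives a natural isomorphism $\tilde\Phi \cong Rf_*(-)\otimes^{\mathbb{L}}\ecal{L}[n]$ on all of $D^b_{\operatorname{Coh}}(Y)$. Restricting it along $\perf(Y) \subset D^b_{\operatorname{Coh}}(Y)$ and composing with the natural isomorphism $\Phi \cong \tilde\Phi|_{\perf(Y)}$ gives $\Phi \cong Rf_*(-)\otimes^{\mathbb{L}}\ecal{L}[n]$ as functors on $\perf(Y)$. Note that $Rf_*(-) \otimes^{\mathbb{L}} \ecal{L}[n]$ preserves perfect complexes, since $f$ is an isomorphism.

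The main point to check is that the lift is a genuine extension, with a natural isomorphism $\tilde\Phi|_{\perf(Y)} \cong \Phi$ rather than only an objectwise agreement. I would verify this from the construction. For $P \in \perf(Y)$, the object $\tilde\Phi(P)$ corresponds to the functor $\operatorname{Hom}(\Phi^{-1}(-), P)$ on $\perf(X)$. This functor is isomorphic to $\operatorname{Hom}(-, \Phi(P))$, naturally in $P$ because $\Phi$ is fully faithful. Yoneda then gives the natural isomorphism.
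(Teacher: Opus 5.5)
Your proposal is correct and matches the paper's proof, which says the corollary follows immediately from Lemma \ref{lemma-k-linear-lift} and Theorem \ref{theorem-reconstruction-for-schemes}: lift $\Phi$ to $\tilde\Phi$ on $D^b_{\operatorname{Coh}}$, apply the theorem, and restrict to $\perf(Y)$. Your Yoneda check that $\tilde\Phi|_{\perf(Y)} \cong \Phi$ naturally makes explicit what the paper leaves implicit in the word ``lift.''
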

\begin{proof}
    Immediately follows from Theorem \ref{theorem-reconstruction-for-schemes} and Lemma \ref{lemma-k-linear-lift}. 
\end{proof}

\subsection{Examples} 
\label{subsection-examples} In this section, we work over an algebraically closed field $k$ of arbitrary characteristic, though we sometimes assume $k$ has characteristic zero. We give some examples of nowhere torsion line bundles on varieties over $k$. Especially interesting to us are examples of Gorenstein varieties with nowhere torsion canonical bundle. 
Two of these sets of examples were essentially already known: The blow up of $\mathbb{P}^2_k$ in $n$ very general points on a smooth cubic $E \subset \mathbb{P}^2_k$ has nowhere torsion canonical bundle (see \ref{subsubsection-blowups}), as does a $\mathbb{P}^1$-bundle $\mathbb{P}_E(\ecal{E})$ over an elliptic curve $E$ as long as $\ecal{E} \not \cong \ecal{L} \oplus \ecal{M}$ for line bundles $\ecal{L}, \ecal{M}$ such that $\ecal{L}^{-1} \otimes \ecal{M}$ is torsion (see \ref{subsubsection-p1bundles}). These results were first obtained in \cite{Krahblowups} and \cite{uehara2017fourier}, respectively, but they did not fit into the framework of any of the previously known versions of the Bondal--Orlov Reconstruction Theorem, so the authors had to give their own arguments that such varieties could be reconstructed from their derived categories. We are additionally able to extend Uehara's result to some abelian varieties, giving some new varieties which can be reconstructed from their derived categories. 

We also give new examples of singular projective Gorenstein varieties which can be reconstructed from their derived categories. In \ref{subsubsection-abundance}, we give examples of Gorenstein varieties whose canonical bundles are nowhere torsion and nef but not semi-ample. Our examples are all singular, and indeed, the abundance conjecture predicts that any such variety must be singular. We actually construct many such examples -- one for each pair $(B, \ecal{L})$ where $B$ is simple abelian variety over $k$ and $\ecal{L}$ is an element of $\operatorname{Pic}^0(B)$ which is not torsion, see Example \ref{example-controlled canonical}. Bootstrapping, we show in Lemma \ref{lemma-blowupofminimalnowhere torsion} that a blowup of any of these varieties $X$ in a finite set of non-singular points also has nowhere torsion canonical bundle, thus producing yet more examples. 

Finally, one might ask if any proper variety over $k$ which admits a big line bundle also admits a nowhere torsion line bundle (or even a $\otimes$-generating line bundle). In \ref{subsubsection-yesbigbutnonowheretorsion}, we use Ferrand's pinching construction \cite{Ferrand-Conducteur} to show this need not be the case.

\subsubsection{Blow-ups} 
\label{subsubsection-blowups}

We first ask for which blowups $X$ of $\mathbb{P}^2$ in finitely many points is it true that $\omega_X$ is nowhere torsion? 

\begin{example}
\begin{enumerate}
    \item If $X$ is the blow up of $\mathbb{P}^2$ in $\leq 8$ general points, then $\omega_X^{-1}$ is ample, hence $\omega_X$ is nowhere torsion.
    \item If $X$ is the blow up of $\mathbb{P}^2$ in $n$ points, where either (a) all the points lie on a line and $n > 3$ or (b) all the points lie on a conic and $n > 6$, then $\omega_X^{-1}$ has negative degree on the strict transform of the line (resp. the conic), but $\omega_X^{-1}$ is big and $\otimes$-generating, hence nowhere torsion, see \cite[Theorem 1.5]{itoolander2025derived}.
    \item  It is proved in \cite[Lemma 3.2]{Krahblowups} that if $k = \mathbb{C}$ and $X$ is the blow up of $\mathbb{P}^2$ in $9$ very general points, then $\omega_X$ is nowhere torsion. Here one can see that neither $\omega_X$ nor $\omega_X^{-1}$ is big, hence $\omega_X$ is not $\otimes$-generating. The authors contemplate in \cite[Remark 3.3]{Krahblowups} whether it could be true that $\omega_X$ is nowhere torsion when $X$ is a blow up of $\mathbb{P}^2$ in $n >9$ very general points as well. 
    \item There exist blow ups $X$ of $\mathbb{P}^2$ in $9$ points for which $\omega_X$ is somewhere torsion. Let $E \subset \mathbb{P}^2$ be a smooth cubic and choose points $p_1, \dots , p_9$ on $E$ such that $3h-\sum_{i = 1}^9 p_i$ is a torsion element of the class group of $E$, where $h$ is the class of $\ecal{O}_E(1)$. Then if $X$ is the blow up of $\mathbb{P}^2$ in $p_1, \dots , p_9$, and $\widetilde{E} \subset X$ is the strict transform of $E$, then $\omega_X|_{\widetilde{E}} = \ecal{O}_E(-3h+\sum p_i)$ is torsion in the Picard group of $E$. \qedhere
\end{enumerate}
\end{example}

We prove a very slight generalization of \cite[Lemma 3.2]{Krahblowups} here, using the same argument.

\begin{lemma}
    Let $E \subset \mathbb{P}^2$ be a smooth cubic. Let $\pi : X \to \mathbb{P}^2$ be the blow up in very general points $p_1, \dots, p_n$ on $E$ where $n$ is any positive integer. Then $\omega_X$ is nowhere torsion, so $X$ is determined by $D^b_{\operatorname{Coh}}(X)$.
\end{lemma}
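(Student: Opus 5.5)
Via Proposition \ref{prop-justificationofname}(ii), it suffices to show that for every integral curve $C \subset X$ with normalization $f : C^\nu \to X$, the line bundle $f^*\omega_X$ is not torsion. Write $\omega_X = \pi^*\ecal{O}(-3) \otimes \ecal{O}_X(\sum E_i)$ and let $\widetilde{E}$ be the strict transform of $E$. The argument splits according to whether $C$ equals $\widetilde{E}$.

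\textbf{Case $C \neq \widetilde{E}$.} Here $-K_X \sim \widetilde{E}$, since $E$ is a cubic passing once through each $p_i$. Hence $\deg(\omega_X|_C) = -\widetilde{E}\cdot C$.
\begin{itemize}
\item If $C$ is not contained in $\widetilde{E}$ and meets it, then $\widetilde{E}\cdot C>0$, so the degree is nonzero and $f^*\omega_X$ is not torsion.
\item If $\widetilde{E}\cdot C = 0$, write $C \sim d h - \sum m_i E_i$ with $d \ge 0$. Then $3d = \sum m_i$.
\begin{itemize}
\item If $d=0$, then $C$ is some exceptional curve $E_i$. But $\widetilde{E}\cdot E_i = 1$, a contradiction.
\item If $d>0$, then $C$ is disjoint from $\widetilde{E}$, so $\pi(C)\cap E \subset \{p_i\}$. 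Restricting $\ecal{O}_X(C)|_{\widetilde{E}} \cong \ecal{O}_{\widetilde{E}}$ and identifying $\widetilde{E}\cong E$ gives the relation $\ecal{O}_E(d h) \cong \ecal{O}_E(\sum m_i p_i)$ in $\operatorname{Pic}(E)$, with $m_i \ge 0$ not all zero.
\end{itemize}
\end{itemize}
Fix a flex point $o$ of $E$ as origin of the group law, so that $h \sim 3o$. The relation becomes a nontrivial $\mathbb{Z}$-linear relation $\sum m_i p_i = 0$ in the group $E(k)$. For very general points (in the sense of lying outside countably many proper closed subsets of $E^n$), no such relation holds: each fixed coefficient vector $(m_i)\neq 0$ cuts out a proper closed subset of $E^n$, and there are countably many vectors. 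This rules out the case $d>0$.

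\textbf{Case $C = \widetilde{E}$.} Then $\omega_X|_{\widetilde{E}} \cong \ecal{O}_E(-3h+\sum p_i)$, and torsion would mean $m(\sum p_i - 9o) = 0$ in $E(k)$ for some $m>0$. This is again a nontrivial relation (the coefficient of $p_1$ is $m$), hence it is excluded for very general points by the same countability argument.

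Combining the cases, $\omega_X$ is nowhere torsion. Theorem \ref{theorem-reconstruction-for-schemes} applies since $X$ is smooth, hence Gorenstein, which gives the reconstruction statement.

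The main obstacle is making "very general" precise when $k$ is countable. In that case the countable-union argument fails, and one should interpret "very general" as lying outside the countably many closed subsets, which exists e.g. when $k$ is uncountable (as in \cite{Krahblowups} with $k = \mathbb{C}$). Alternatively, one may choose points of $E$ that are $\mathbb{Z}$-linearly independent together with $o$ in $E(k)$, which suffices for all the arguments above.
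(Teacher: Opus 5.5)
Your proof is correct and follows essentially the same route as the paper: a curve $C\neq\widetilde E$ has $\omega_X\cdot C=-\widetilde E\cdot C<0$ unless it is disjoint from $\widetilde E$, in which case you get a relation $dh\sim\sum m_ip_i$ on $E$ that very generality excludes, and $\omega_X|_{\widetilde E}\cong\ecal{O}_E(-3h+\sum p_i)$ is shown non-torsion by the same countability argument. Your explicit handling of the exceptional curves ($d=0$) and your use of the group law with a flex origin just make the paper's argument more transparent; one small correction is that for $n\neq 9$ the class $\sum p_i-9o$ is not in $\operatorname{Pic}^0(E)$, so you should conclude non-torsion directly from $\deg\omega_X|_{\widetilde E}=n-9\neq 0$ rather than from a relation in $E(k)$.
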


\begin{proof}
    Note that $E$ has genus $1$ by the adjunction formula, and the strict transform $\widetilde{E}$ of $E$ is an anti-canonical divisor on $X$. Let $C \subset X$ be a curve such that $(\omega_X \cdot C) = 0 = (\widetilde{E} \cdot C)$. Assume $C \neq \widetilde{E}$. Then $C$ and $\widetilde{E}$ are disjoint, hence $\pi(C) \cap E \subset \{p_1, \dots , p_n\}$. This gives a relation $d \cdot h = \sum a_i p_i$ in the class group of $E$ where $d, a_i$ are non-negative integers with $d > 0$ and $h$ is the class of $\ecal{O}_E(1)$. The set of tuples $(q_1, \dots , q_n) \in E^n$ such that there exist integers $d, a_i \geq 0$ with $d>0$ such that $d h = \sum a_i q_i$ in the class group of $E$ is a countable union (indexed by tuples $(d, a_1, \dots , a_n) \in \mathbb{Z}_{>0} \times \mathbb{Z}_{\geq 0}^n$ say) of proper closed subsets of $E^n$. Since $(p_1, \dots , p_n)$ are very general, there is no such relation, so we have a contradiction. 

    We have shown that for every curve $C \subset X$ such that $C \neq \widetilde{E}$, we have $(\omega_X \cdot C) = - (\widetilde{E} \cdot C) < 0$. It remains to show $\omega_X|_{\widetilde{E}}$ is not torsion. If $n \neq 9$ there is nothing to show since $(\omega_X \cdot \widetilde{E}) = - (\omega_X^2) \neq 0$. If $n = 9$ (the case handled by Hu and Krah), we have $\omega_X |_{\widetilde{E}} \cong \ecal{O}_E(-3h+\sum p_i)$. Since $p_1, \dots , p_n$ are very general, there are only countably many torsion points on an elliptic curve, and the fibers of $E^n \to \operatorname{Pic}^n_E$ are proper closed subsets of $E^n$, this is not a torsion line bundle on $E$.
\end{proof}



In \cite{itoolander2025derived}*{Theorem 5.26}, we proved that the blow up of a smooth proper variety with ample canonical bundle in a finite set of points has $\otimes$-generating canonical bundle, and is therefore determined by its derived category. The following generalizes this.

\begin{lemma}
\label{lemma-blowupofminimalnowhere torsion}
    Assume $X$ is a proper Gorenstein variety over a field. If $\omega_X$ is nef and nowhere torsion, and $\pi : Y \to X$ is a blowup of $X$ in a finite set of 
    regular points, then $\omega_Y$ is nowhere torsion. In particular, $Y$ is determined by its derived category. 
\end{lemma}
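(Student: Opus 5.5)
We will verify condition (iii) of Proposition \ref{prop-justificationofname} fails for $\omega_Y$: for every finite morphism $g : C \to Y$ from a normal projective curve, we must show $g^*\omega_Y$ is not torsion. Let $x_1, \dots, x_m$ be the blown-up points, $E_i = \pi^{-1}(x_i)$ the exceptional divisors, and $n = \dim X$. Since the $x_i$ are regular points, $Y$ is Gorenstein (it is isomorphic to $X$ away from the $E_i$, and regular near the $E_i$). The standard formula $\omega_Y \cong \pi^*\omega_X \otimes \ecal{O}_Y((n-1)E)$, where $E = \sum E_i$, holds, since near each $E_i$ we are blowing up a regular point. (If $n = 1$ the blowup is an isomorphism and there is nothing to prove, so assume $n \geq 2$.) Given $g : C \to Y$ finite with $C$ integral, let $D = g(C)$, an integral complete curve in $Y$. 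We split into cases according to the position of $D$ relative to $E$.

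\emph{Case 1: $D \subset E_i$ for some $i$.} Then $E_i \cong \mathbb{P}^{n-1}_{\kappa(x_i)}$, $\pi^*\omega_X|_{E_i}$ is trivial, and $\ecal{O}_Y(E_i)|_{E_i} \cong \ecal{O}(-1)$, while $E_j \cap E_i = \emptyset$ for $j \neq i$. Hence $\omega_Y|_{E_i} \cong \ecal{O}_{\mathbb{P}^{n-1}}(-(n-1))$, which has negative degree on $D$. So $\deg g^*\omega_Y < 0$, and $g^*\omega_Y$ is not torsion.

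\emph{Case 2: $D \not\subset E$ and $D \cap E = \emptyset$.} Then $\ecal{O}_Y(E)|_D$ is trivial, and $\pi \circ g : C \to X$ is finite (since $\pi$ is an isomorphism near $D$ and $\pi \circ g$ is nonconstant, hence quasi-finite and proper). Thus $g^*\omega_Y \cong (\pi\circ g)^*\omega_X$ is not torsion, because $\omega_X$ is nowhere torsion.

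\emph{Case 3: $D \not\subset E$ but $D \cap E \neq \emptyset$.} Then $\pi \circ g$ is again finite, so $\deg (\pi\circ g)^*\omega_X \geq 0$ by nefness. Moreover, $E$ is an effective Cartier divisor that does not contain $D$ and meets it, so $\deg g^*\ecal{O}_Y(E) > 0$. Hence $\deg g^*\omega_Y > 0$, and $g^*\omega_Y$ is not torsion.

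The \emph{main obstacle} is the bookkeeping in the canonical bundle formula. We need $\omega_Y \cong \pi^*\omega_X((n-1)E)$ in the Gorenstein setting with $X$ possibly singular away from the $x_i$, and we need it over a possibly non-closed field, where the $E_i$ are projective spaces over the residue fields $\kappa(x_i)$. We would justify the formula locally: away from the $E_i$ it is trivially true, and near each $x_i$ we may restrict to a regular open neighbourhood, where it is the classical formula. The two descriptions glue because both sides are line bundles and agree on the overlap, where $\pi$ is an isomorphism. Degrees are then taken over $k$, so the sign arguments in Cases 1 and 3 are unaffected by residue field extensions. The final clause ("determined by its derived category") then follows from Theorem \ref{theorem-reconstruction-for-schemes}, since $Y$ is a proper, connected, Gorenstein variety.
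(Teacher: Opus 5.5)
Your proposal is correct and follows the paper's proof essentially step for step. Both use the formula $\omega_Y \cong \pi^*\omega_X \otimes \ecal{O}_Y((n-1)E)$ and the same trichotomy: a curve inside some $E_i$ has negative degree, a curve meeting $E$ but not contained in it has positive degree by nefness, and a curve disjoint from $E$ reduces to $\omega_X$ being nowhere torsion. The only cosmetic difference is that you test finite maps from normal curves via condition (iii) of Proposition \ref{prop-justificationofname}, while the paper restricts to curves $C \subset Y$.
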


\begin{remark}
    The abundance conjecture predicts that a smooth proper variety with nef canonical bundle has semi-ample canonical bundle, and a semi-ample and nowhere torsion line bundle on a proper variety is ample. However, when $X$ is allowed to be Gorenstein but singular, it is possible for $\omega_X$ to not be ample, and in these cases Lemma \ref{lemma-blowupofminimalnowhere torsion} gives new examples of varieties which are determined by their derived categories. See Example \ref{example-controlled canonical} below.
\end{remark}

\begin{proof}
    Let $C \subset Y$ be a curve. Suppose $\omega_Y|_{C}$ has degree zero. We have a formula $\omega_Y = \pi^*\omega_X(\sum (\operatorname{dim}(X)-1) E_i)$ where $E_i$ are the exceptional divisors of the blowup. Thus if $C \subset E_i$ for some $i$ then $\omega_Y|_{C}$ is anti-ample since $\omega_Y|_{E_i} = \ecal{O}_{E_i}(1-\operatorname{dim}(X))$ is, a contradiction. Hence $C \to X$ is birational onto its image. If $C \cap E_i \neq \emptyset$ for some $i$ then $C \cdot E_i > 0$ and from this we see $\omega_Y|_{C}$ has positive degree, a contradiction. Hence $C$ is disjoint from the $E_i$ and maps isomorphically to its image in $X$. But then we see $\omega_Y|_{C} = \omega_X|_{C}$ and since $\omega_X$ is nowhere torsion we see $\omega_Y|_{C}$ is not torsion. We have proven that $\omega_Y$ is nowhere torsion. 
\end{proof}
We may drop the nefness assumption if we blow up points more carefully.  
\begin{lemma}
    Assume $X$ is a Gorenstein proper variety over a field. If $\omega_X$ is nowhere torsion and $\pi: Y \to X$ is a blowup in regular closed points $p_1, \dots , p_n$ which are not in the stable base locus of $\omega_X$, then $\omega_Y$ is nowhere torsion. 
\end{lemma}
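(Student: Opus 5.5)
We use criterion (v) of Proposition \ref{prop-justificationofname}. Suppose $\omega_Y$ is somewhere torsion, so there is a finite morphism $g : C \to Y$ from a normal projective curve with $g^*\omega_Y \cong \ecal{O}_C$; replacing $C$ by a component, we may take $C$ integral. Let $E_1, \dots, E_n$ be the exceptional divisors. We write $\omega_Y \cong \pi^*\omega_X \otimes \ecal{O}_Y(\sum (\dim X - 1)E_i)$, which holds because the $p_i$ are regular points and $\omega_X$ is invertible near them. The argument splits according to how $g(C)$ meets the exceptional locus, as in Lemma \ref{lemma-blowupofminimalnowhere torsion}. The difference is that nefness of $\omega_X$ is replaced by the existence of sections of some power $\omega_X^{\otimes m}$ that do not vanish at the $p_i$.

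\emph{Case 1: $g(C) \subset E_i$ for some $i$.} Then $g^*\omega_Y$ is the pullback of $\omega_Y|_{E_i} \cong \ecal{O}_{E_i}(1-\dim X)$ along a finite map, so it has negative degree. This is a contradiction. (If $\dim X = 1$, the blowup is an isomorphism and there is nothing to prove.)

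\emph{Case 2: $g(C)$ is not contained in any $E_i$ and meets some $E_j$.} Since the $p_i$ are outside the stable base locus, we choose $m>0$ and a section $s \in \Gamma(X, \omega_X^{\otimes m})$ with $s(p_i) \neq 0$ for all $i$. A single section suffices: the finitely many $p_i$ avoid the base locus of $\omega_X^{\otimes m}$ for $m$ sufficiently divisible, and a general linear combination of sections is then nonzero at every $p_i$. The pullback $\pi^*s$ is a section of $\pi^*\omega_X^{\otimes m}$ that does not vanish on any $E_i$, so it does not vanish identically on $g(C)$. Hence $\deg g^*\pi^*\omega_X^{\otimes m} \geq 0$. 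Moreover $\deg g^*\ecal{O}(E_j) > 0$, since $g(C)$ meets $E_j$ without lying in it, and $\deg g^*\ecal{O}(E_i) \geq 0$ for every $i$ by the same reasoning. Therefore $\deg g^*\omega_Y^{\otimes m} > 0$, contradicting $g^*\omega_Y \cong \ecal{O}_C$.

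\emph{Case 3: $g(C)$ is disjoint from all $E_i$.} Then $g^*\omega_Y \cong (\pi\circ g)^*\omega_X$, because $\ecal{O}(E_i)$ is trivial away from $E_i$. The composite $\pi \circ g : C \to X$ is finite, since it is finite over $X\setminus\{p_i\}$ where $\pi$ is an isomorphism and its image misses the $p_i$. So $(\pi\circ g)^*\omega_X \cong \ecal{O}_C$ would make $\omega_X$ somewhere torsion by (v), a contradiction.

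The main point needing care is Case 2, namely producing a single section of $\omega_X^{\otimes m}$ nonvanishing at all the $p_i$ and checking that its pullback does not vanish identically on $g(C)$. The remaining steps are the same degree bookkeeping used in Lemma \ref{lemma-blowupofminimalnowhere torsion}.
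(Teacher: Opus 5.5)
Your proof is correct and is essentially the paper's argument: the same three-way case split (curve contained in some $E_i$, disjoint from all $E_i$, or meeting some $E_j$ without lying in it), with the third case handled by a section of $\omega_X^{\otimes m}$ not vanishing at $p_j$; the only cosmetic difference is that you work with criterion (v) instead of an integral curve and its normalization. One small simplification: you only need $s(p_j)\neq 0$ for the index $j$ with $g(C)\cap E_j\neq\emptyset$, as in the paper, so you can drop the ``general linear combination'' step, which tacitly uses that the base field is infinite.
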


Recall that the stable base locus of a line bundle $\ecal{L}$ on a scheme $X$ is the closed complement of the open set
$$
\bigcup _{n > 0, s \in \Gamma(X, \ecal{L}^{\otimes n})}X_s.
$$

\begin{proof}
    Again we have $\omega_Y = \pi^*\omega_X \otimes \ecal{O}((\operatorname{dim}(X) - 1)\cdot \sum E_i)$ where $E_i = \pi^{-1}(p_i)$. Suppose there is an integral curve $C \subset Y$ with normalization $C^\nu$ such that $(C^\nu \to Y)^*\omega_Y$ is torsion. The same argument as in Lemma \ref{lemma-blowupofminimalnowhere torsion} shows $C$ cannot be disjoint from all the $E_i$, and also $C$ cannot be contained in any $E_i$. Therefore, $C \cdot E_i \geq 0$ for all $i$ and $C \cdot E_j > 0$ for some $j$. By assumption, there is an integer $r > 0$ and a section $s \in \Gamma(X, \omega_X^{\otimes r})$ such that $p_j \in X_s$. Then $\operatorname{deg}(\pi^*\omega_X^{\otimes r}|_ C)\geq 0$ since $\pi^*s$ is not identically zero on $C$. This implies $\operatorname{deg}(\omega_X|_{C}) \geq 0$. Putting all this together yields $\operatorname{deg}(\omega_Y |_{C}) > 0$, a contradiction.
\end{proof}

\subsubsection{$\bb P^1$-bundles over abelian varieties} 
\label{subsubsection-p1bundles}


The following generalizes \cite{itoolander2025derived}*{\href{https://arxiv.org/pdf/2507.17681v2}{Example 5.15}} for elliptic curves to (simple) abelian varieties. 
Recall the following computations from \cite{itoolander2025derived}*{\S5.4}. 
\begin{construction}\label{construction-p1bundleoverabelian}
    Let $A$ be a proper Gorenstein variety with trivial canonical bundle $\omega_A \cong \ecal O_A$ and let $\ecal L$ be a line bundle on $A$. Let $\pi:X = \bb P_A(\ecal O_A \oplus \ecal L) \to A$ be the projective bundle. Since $\omega_A \cong \ecal O_A$, we have
    \[
    \omega_X \cong \ecal O_X(-2) \otimes \pi^* \ecal L. 
    \]
    Let $s_\infty \in H^0(X, \ecal O_X(1))$ be the section corresponding to $\ecal O_X =  \pi^* \ecal O_A \to \pi^*(\ecal O_A \oplus \ecal L) \to \ecal O_X(1),$
    and let $s_0 \in H^0(X, \ecal O_X(1) \otimes \pi^*\ecal L\inv)$ be the section corresponding to $ \pi^*\ecal L \to \pi^*(\ecal O_A \oplus \ecal L) \to \ecal O_X(1).$
    By \cite{itoolander2025derived}*{Lemma 5.27}, the zero scheme $Z(s_0)$ of $s_0$ agrees with the section 
    \[
    Z(s_0) = S_0 \subset X
    \]
    corresponding to the quotient $\ecal O_A \oplus \ecal L \to \ecal O_A$, and similarly, we have
    \[
    Z(s_\infty) = S_\infty \subset X 
    \]
    where $S_\infty$ corresponds to $\ecal O_A \oplus \ecal L \surj \ecal L$. Therefore, we have $s_0 \otimes s_\infty \in H^0(X, \omega_X\inv)$ with $Z(s_0\otimes s_\infty) = S_0 \sqcup S_\infty$. Moreover, we have
    \[
    X_{s_0\otimes s_\infty} = X \setminus (S_0 \sqcup S_\infty) \cong \operatorname{\cal S \it{pec}}_A ( \oplus_{n \in \bb Z} \ecal L^{\otimes n}) \overset{p}{\to} A
    \]
    is the punctured total space of $\ecal L$ over $A$. In particular, we have $ \ecal O_{X_{s_0\otimes s_\infty}} \cong p^* \ecal L.$
\end{construction}
\begin{prop}\label{prop-main-projective-bundle-over-abelian}
    Use the notation of Construction \ref{construction-p1bundleoverabelian}. 
    \begin{enumerate} 
        \item If $\omega_X$ is nowhere torsion, then $\ecal L$ is not torsion. 
        \item If $A$ is a simple abelian variety and if either $\ecal L \in \pic^0(A)$ or the Picard number of $A$ is $1$, then we have that $\omega_X$ is nowhere torsion $\iff$ $\ecal L$ is not torsion. \qedhere
    \end{enumerate}
\end{prop}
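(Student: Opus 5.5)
Both parts will be checked against criterion (iii) of Proposition \ref{prop-justificationofname}: $\omega_X$ is somewhere torsion exactly when some finite morphism $f : C \to X$ from a normal projective curve has $f^*\omega_X$ torsion. The formula $\omega_X \cong \ecal O_X(-2)\otimes \pi^*\ecal L$ from Construction \ref{construction-p1bundleoverabelian} gives two natural families of curves to test: curves inside the sections $S_0, S_\infty$, and curves inside the open $U = X_{s_0\otimes s_\infty}$.

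For (i), we restrict to the section $S_0 \cong A$. Since $s_\infty$ does not vanish on $S_0$ (as $S_0 \cap S_\infty = \emptyset$), we get $\ecal O_X(1)|_{S_0} \cong \ecal O_A$. Hence $\omega_X|_{S_0} \cong \ecal L$. Now suppose $\ecal L$ is torsion. Take any curve in $A$ (assuming $\dim A>0$) and its normalization $C \to S_0 \subset X$. The pullback of $\omega_X$ to $C$ is then torsion, so $\omega_X$ is somewhere torsion. This is the contrapositive of (i). The same computation on $S_\infty$, where $\ecal O_X(1)|_{S_\infty}\cong \ecal L$, gives $\omega_X|_{S_\infty}\cong \ecal L^{-1}$.

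For (ii), assume $\ecal L$ is not torsion and let $f : C \to X$ be finite from a normal projective curve. We split into three cases.

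\emph{Case 1: $f(C) \subset S_0$ or $f(C)\subset S_\infty$.} Then $f^*\omega_X \cong g^*\ecal L^{\pm1}$, where $g = \pi\circ f : C \to A$ is finite. We must show $g^*\ecal L$ is not torsion. Suppose it were torsion. The image $g(C)$ generates an abelian subvariety of $A$, which equals $A$ by simplicity, so the sum map $C^N \to A$ is surjective for some $N$. Pulling back along this surjective map, the pullback of $\ecal L$ becomes torsion, since a line bundle in $\operatorname{Pic}^0$ pulls back under $c\mapsto \sum c_i$ to the tensor product of its pullbacks along the factors, up to translation. We then need to conclude that $\ecal L$ itself is torsion. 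If $\ecal L \in \operatorname{Pic}^0(A)$, this should follow because the pullback $\operatorname{Pic}^0(A) \to \operatorname{Pic}^0(C)$ along a map whose image generates $A$ has finite kernel; this is dual to the surjectivity of the Albanese map $\operatorname{Jac}(C)\to A$. If instead the Picard number is $1$ and $\ecal L \notin \operatorname{Pic}^0$, then $\ecal L$ is numerically a nonzero multiple of an ample class. Since $g$ is finite, $\deg g^*\ecal L \neq 0$, so $g^*\ecal L$ is not torsion.

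\emph{Case 2: $f(C)$ meets $S_0 \sqcup S_\infty$ but is not contained in it.} Then $f^*(s_0\otimes s_\infty)$ is a nonzero section of $f^*\omega_X^{-1}$ with zeros, so $\deg f^*\omega_X^{-1}>0$. Thus $f^*\omega_X$ is not torsion.

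\emph{Case 3: $f(C) \subset U$.} Construction \ref{construction-p1bundleoverabelian} gives $\omega_X|_U \cong \ecal O_U$, so $f^*\omega_X$ is trivial. This case must therefore be excluded outright. Let $h=p\circ f : C\to A$. On $U$, the tautological coordinate trivializes $p^*\ecal L$, so $h^*\ecal L \cong \ecal O_C$. If $h$ is constant, then $f(C)$ lies in a fiber $\mathbb G_m$, which contains no complete curve; this is a contradiction. Otherwise $h$ is finite onto a curve, and the argument of Case 1 shows that $h^*\ecal L$ is not torsion, again a contradiction. In particular, under the Picard number $1$ hypothesis, $\deg h^*\ecal L\ne 0$.

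The main obstacle is the step shared by Cases 1 and 3: showing that a non-torsion $\ecal L \in \operatorname{Pic}^0(A)$ on a simple abelian variety stays non-torsion on every curve mapping finitely to $A$. We plan to prove this via the Albanese/Jacobian argument: the map $\operatorname{Pic}^0(A)=A^\vee \to \operatorname{Jac}(C)$ is dual to the map $\operatorname{Jac}(C) \to A$, which is surjective (nonconstant image, $A$ simple). Its dual therefore has finite kernel, so torsion pulls back only from torsion.
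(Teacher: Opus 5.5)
Your proof is correct. Part (i) is the paper's argument, and you rightly flag $\dim A>0$, which the paper leaves implicit: for $A$ a point, $\ecal L$ is trivial yet $\omega_{\bb P^1}$ is nowhere torsion.

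For (ii) your decomposition differs from the paper's.

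\emph{The paper's route.} It splits on whether $h=\pi\circ g$ is constant. In the nonconstant case with $\ecal L\in\pic^0(A)$, it uses that $\ecal Q=g^*\ecal O_X(1)$ has degree $0$. Hence one component of the tautological surjection $\ecal O_C\oplus h^*\ecal L\surj\ecal Q$ is an isomorphism, which forces $g^*\omega_X\cong h^*\ecal L^{\pm1}$ for every such curve at once. The same Albanese claim you use then finishes. That route needs $\deg h^*\ecal L=0$, so the Picard-number-one case with $\ecal L\notin\pic^0(A)$ is handled separately, in one line, by appeal to $\ecal L$ being ample or anti-ample.

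\emph{Your route.} You split according to the position of $f(C)$ relative to the anticanonical divisor $Z(s_0\otimes s_\infty)=S_0\sqcup S_\infty$. This uses the structure of Construction~\ref{construction-p1bundleoverabelian} directly and never computes $\deg\ecal Q$. It treats both hypotheses uniformly, and your degree count gives a complete argument in the case $\ecal L\notin\pic^0(A)$.

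\emph{What your route buys.} It proves more than you state. Case 2 never produces torsion. Case 1 tests exactly the pullbacks of $\ecal L^{\pm1}$ along finite maps $C\to A$. A Case 3 curve yields a finite $h:C\to A$ with $h^*\ecal L\cong\ecal O_C$. So for every $A$ as in the construction, $\omega_X$ is nowhere torsion if and only if $\ecal L$ is nowhere torsion on $A$. Part (ii) then reduces to the fact that, under either hypothesis, a non-torsion $\ecal L$ on a simple abelian variety is nowhere torsion.

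The sum map $C^N\to A$ in Case 1 is an unnecessary detour, since you apply the Albanese argument to $g$ itself.
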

\begin{proof}
    For (i), if $\omega_X$ is nowhere torsion, then $\ecal L \cong \omega_X|_{S_0}$ is not torsion.

    For (ii), suppose $A$ is a simple abelian variety and $\omega_X$ is somewhere torsion. Then, there is a smooth projective curve $C$ with a finite morphism $g:C \to X$ such that $g^*\omega_{X}$ is torsion. First, assume that $h:=\pi\circ g: C \to A$ is constant so the image is $p \in A$. Then, the image of $C$ under $g$ is the fiber $\bb P^1$ of $\pi$ at $p$. Thus, since $h^*\ecal L = \ecal O_C$, we have $\deg (g^* \omega_{X}) = \deg g^*\ecal O_{\bb P^1}(-2) = -2 \deg g \neq 0$, which is a contradiction. Hence, $h: C\to A$ needs to be nonconstant. 
    
    Now suppose that $\ecal L \in \pic^0(A)$. Then, writing $\ecal Q = g^* \ecal O_{X}(1)$, we have $$\deg \ecal Q = - \frac{1}{2}\deg (g^*\omega_{X}\otimes h^*\ecal L\inv) = 0.$$ Also note that the tautological quotient $\pi^*\ecal E \surj \ecal O_{X}(1)$ restricts to a surjection $h^*\ecal E \surj \ecal Q$. In particular, we have either $\ecal Q \cong \ecal O_C$ or $\ecal Q \cong h^*\ecal L$ since $\deg \ecal Q = \deg h^*\ecal L = 0$ and any non-trivial line bundle of degree $0$ on a smooth projective curve has no global section. Hence, we have either $g^*\omega_{X} \iso h^*\ecal L$ or $g^*\omega_{X} \iso h^*\ecal L\inv$, respectively. Therefore, in either case, $h^*\ecal L$ is torsion. We are done by the following claim. \\
\underline{Claim.} For a simple abelian variety $A$ (over an algebraically closed field) 
and a nonconstant morphism $h: C \to A$ from a smooth projective curve $C$, $\ecal L \in \pic^0(A)$ is torsion if (and only if) $h^*\ecal L$ is torsion. \\
\underline{Proof.} It suffices to show the pullback homomorphism
\[
h^*: \pic^0_A \to \pic^0_C =:J_C
\]
has finite kernel, and since the dual abelian variety $\pic^0_A$ is also simple, it suffices to show $h^*$ is non-zero. But $h^*$ is dual to the unique morphism $\operatorname{Alb}_C = J_C^t \to A$ such that the composition $C \to \operatorname{Alb}_C \to A$ is $h$, so it must be non-zero as $h$ is non-constant.


Next, suppose the Picard number of $A$ is $1$. If $\ecal L \in \pic^0(A)$, then the claim follows from the arguments above. If $\ecal L \not \in \pic^0(A)$, then $\ecal L$ is either ample or anti-ample, i.e, $\otimes$-generating, which contradicts (i).
\end{proof}

We fix the following notations for projective bundles on an elliptic curve. 
\begin{notation}
    Let $E$ be an elliptic curve over an algebraically closed field of characteristic zero and $\ecal E$ a vector bundle of rank $n$ on $E$. Let $\pi:\bb P_E(\ecal E) \to E$ denote the corresponding projective bundle. In this case, we have $\omega_{\bb P_E(\ecal E)} \cong \pi^* \det \ecal E \tens \ecal O_{\bb P_E(\ecal E)}(-n)$. For a morphism $f: C \to \bb P_E(\ecal E)$ from a smooth projective curve $C$, which corresponds to a quotient $(\pi \circ f)^*\ecal E \to \ecal Q:= f^*\ecal O_{\bb P_E(\ecal E)}(1)$, we have
\[
f^* \omega_{\bb P_E(\ecal E)} \cong  (\pi\circ f)^*\det \ecal E \otimes \ecal Q^{\otimes (-n)}. 
\]
Let $\ecal F_r$ denotes the unique indecomposable vector bundle of rank $r$ and degree $0$ with $H^0(E,\ecal F_r) \neq 0$ (cf. \cite{atiyah1957vector}*{\href{https://math.berkeley.edu/~nadler/atiyah.classification.pdf}{Theorem 5}}). 
\end{notation}
Recall the following from \cite{itoolander2025derived}*{\href{https://arxiv.org/pdf/2507.17681v2}{Example 5.15}}.
\begin{lemma}
    Suppose that $\rank \ecal E = 2$. Then $\omega_{\bb P_E(\ecal E)}$ is $\otimes$-generating if and only if $\ecal E$ is unstable, i.e., $\ecal E \cong \ecal M \oplus \ecal N$ with $\deg \ecal M \inv \otimes \ecal N \neq 0$.
\end{lemma}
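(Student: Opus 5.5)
We will use the characterization from \cite{itoolander2025derived}: on a proper variety, a line bundle $\ecal{M}$ is $\otimes$-generating if and only if for every closed subvariety $Z \subset X$, either $\ecal{M}|_Z$ or $\ecal{M}^{-1}|_Z$ is big. For $X = \bb P_E(\ecal E)$ with $\ecal E$ of rank $2$, the only closed subvarieties are points, curves, and $X$ itself, so the work splits into checking bigness of $\pm\omega_X$ on $X$ and nonzero degree of $\omega_X$ on every curve. Throughout, we use $\omega_X \cong \pi^*\det\ecal E \otimes \ecal O_X(-2)$ and the pullback formula from the Notation.

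\emph{Unstable $\Rightarrow$ $\otimes$-generating.} Write $\ecal E = \ecal M \oplus \ecal N$ with $\deg\ecal N > \deg\ecal M$; after twisting (which does not change $X$ or $\omega_X$), take $\ecal E = \ecal O \oplus \ecal L$ with $e := \deg \ecal L > 0$.
\begin{itemize}
\item In the Néron--Severi group spanned by the fiber $f$ and the section $S_0$ (with $S_0^2 = -e$), the class $-K_X = 2S_0 + e f$. This lies in the interior of the effective cone, which is spanned by $S_0$ and $f$. Hence $\omega_X^{-1}$ is big, and the condition for $Z = X$ holds.
\item For curves we compute $-K_X \cdot f = 2$, $-K_X \cdot S_0 = -2e + e = -e \neq 0$, and $-K_X \cdot S_\infty = e \neq 0$ (using $S_\infty \equiv S_0 + ef$).
\item Any other irreducible curve $C \equiv aS_0 + bf$ with $C \neq S_0$ satisfies $C \cdot S_0 \geq 0$, so $b \geq ae$. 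Then $-K_X \cdot C = 2(b - ae) + ae + \ldots$; precisely, $-K_X \cdot C = -2ae + 2b + ae$, which is positive when $a > 0$ or $b > 0$.
\end{itemize}
So every curve has nonzero degree, and $\omega_X$ is $\otimes$-generating.

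\emph{Not unstable $\Rightarrow$ not $\otimes$-generating.} If $\ecal E$ is indecomposable of odd degree, or semistable, we find a curve on which $\omega_X$ has degree $0$, or else we show that neither $\pm\omega_X$ is big.
\begin{itemize}
\item If $\ecal E$ is semistable, then $-K_X$ is nef with $K_X^2 = 0$, so neither $\pm K_X$ is big. This already fails the condition for $Z = X$.
\item The only remaining non-unstable cases, $\ecal E$ indecomposable of degree $0$ or $1$ (up to twist), are semistable by Atiyah's classification.
\end{itemize}
The main point to check carefully is that $K_X^2 = 8(1-g) = 0$ for a ruled surface over an elliptic curve. Bigness of a nef class would require positive self-intersection, and $-\omega_X$ has negative degree on an ample class, so neither sign is big. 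This completes both directions.
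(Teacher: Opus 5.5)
Your argument is correct. The paper gives no proof of this lemma; it only recalls the statement from \cite{itoolander2025derived}*{Example 5.15}. So your proposal is a self-contained verification. It combines the subvariety-bigness criterion of \cite{itoolander2025derived} with the standard intersection theory of a ruled surface over an elliptic curve: $-K_X\equiv 2S_0+ef$, the effective cone spanned by $S_0$ and $f$ when $e>0$, and $K_X^2=0$.

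The most economical part is the converse. By testing the criterion on $Z=X$ instead of on curves, you treat every semistable $\ecal E$ at once. Exhibiting a bad curve would instead need case-by-case constructions like those the paper uses right afterwards for the nowhere-torsion analogue: the section of $\bb P_E(\ecal F_2\otimes\ecal L)$ on which $\omega$ is trivial, and the isogeny to a split bundle in odd degree. Equivalently, you can phrase the converse as follows: a nef and $\otimes$-generating line bundle is ample, and $(-K_X)^2=0$ rules out ampleness of $-K_X$.

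Two small repairs are needed.
\begin{enumerate}
\item The phrase ``$-\omega_X$ has negative degree on an ample class'' has the sign backwards. Since $-K_X$ is nef and not numerically trivial ($-K_X\cdot f=2$), the Hodge index theorem gives $-K_X\cdot H>0$ for ample $H$. Hence it is $K_X$ that is negative on $H$, so $K_X$ is not pseudo-effective and therefore not big.
\item You assert nefness of $-K_X$ for semistable $\ecal E$ without justification. You can quote the nef cones for normalized invariant $e\in\{0,-1\}$ (Hartshorne, Chapter V, \S 2). Alternatively, use the paper's pullback formula. For $g\colon C\to X$ with $h=\pi\circ g$ nonconstant, $\deg g^*\omega_X^{-1}=2\deg\ecal Q-\deg h^*\det\ecal E\geq 0$, because $h^*\ecal E$ remains semistable in characteristic zero. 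For $h$ constant, the degree is twice the degree of $C$ onto a fiber, which is positive.
\end{enumerate}
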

The following shows that as long as $\ecal M \inv \otimes \ecal N$ is not torsion, $\omega_{\bb P_E(\ecal E)}$ is nowhere torsion, providing examples where Theorem \ref{theorem-reconstruction-for-schemes} applies, while none of the reconstructions in \cites{ballard2011derived, FAVERO20121955, ito2025polarizations} works.
\begin{prop}
    Suppose that $\rank \ecal E = 2$. Then $\omega_{\bb P_E(\ecal E)}$ is nowhere torsion if and only if 
    \[
    \ecal E \cong \ecal M \oplus \ecal N
    \]
    for some line bundles $\ecal M$ and $\ecal N$ on $E$ with $\ecal M \otimes \ecal N\inv$ being not torsion. In particular, if $\bb P_E(\ecal O_E \oplus \ecal L)$ has a non-isomorphic Fourier--Mukai partner (or more generally a non-standard autoequivalence), then $\ecal L$ is a torsion line bundle.  
\end{prop}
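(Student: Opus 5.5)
We prove both directions through the criterion of Proposition~\ref{prop-justificationofname}(v): $\omega_X$ is somewhere torsion if and only if there is a finite morphism $f : C \to X$ from a smooth projective curve with $f^*\omega_X$ torsion. Write $X = \mathbb{P}_E(\ecal E)$, and for such an $f$ set $h = \pi \circ f$ and $\ecal Q = f^*\ecal O_X(1)$. By the Notation,
\[
f^*\omega_X \cong h^*\det\ecal E \otimes \ecal Q^{\otimes -2}.
\]

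The strategy is to use Atiyah's classification and split into cases. Twisting by a line bundle does not change $X$, so we may normalize $\ecal E$ accordingly.

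\textbf{Case 1: $\ecal E$ is decomposable, $\ecal E \cong \ecal M \oplus \ecal N$.} We have $\mathbb{P}_E(\ecal M \oplus \ecal N) \cong \mathbb{P}_E(\ecal O_E \oplus \ecal L)$ with $\ecal L = \ecal N \otimes \ecal M^{-1}$. An elliptic curve is a simple abelian variety and $\omega_E$ is trivial, so Proposition~\ref{prop-main-projective-bundle-over-abelian} applies.
\begin{enumerate}
\item If $\deg \ecal L = 0$, part (ii) gives: $\omega_X$ is nowhere torsion if and only if $\ecal L$ is not torsion.
\item If $\deg \ecal L \neq 0$, the Picard number of $E$ is $1$, so part (ii) again applies. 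Alternatively, the preceding Lemma shows $\omega_X$ is $\otimes$-generating, hence nowhere torsion by Example~\ref{example-tensorgeneratingisnowheretorsion}, and $\ecal L$ is automatically non-torsion.
\end{enumerate}
Either way the claimed equivalence holds in the decomposable case.

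\textbf{Case 2: $\ecal E$ is indecomposable.} We must show $\omega_X$ is somewhere torsion. By Atiyah, after twisting, $\ecal E \cong \ecal F_2$ (degree $0$) or $\ecal E$ is the unique nonsplit extension of odd degree.
\begin{enumerate}
\item For $\ecal E = \ecal F_2$, take $f$ to be the section $E \to X$ given by the quotient $\ecal F_2 \twoheadrightarrow \ecal O_E$ from $0 \to \ecal O_E \to \ecal F_2 \to \ecal O_E \to 0$. Then $\ecal Q = \ecal O_E$ and $\det\ecal F_2 = \ecal O_E$, so $f^*\omega_X \cong \ecal O_E$.
\item For odd degree, e.g.\ $\deg \ecal E = 1$, pull back along multiplication by $2$, $[2] : E \to E$. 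Then $[2]^*\ecal E$ is a direct sum $\ecal M_1 \oplus \ecal M_2$ of line bundles of the same degree; this is the standard fact that odd-degree stable bundles split after an isogeny killing the determinant's parity, and one can check it via the Fourier--Mukai/Atiyah description. The base change $X' = \mathbb{P}_E([2]^*\ecal E) \to X$ is finite étale, and $\omega_{X'}$ is the pullback of $\omega_X$ since the map is étale. By Lemma~\ref{lemma-nowhere torsionfinitesurj} it suffices to show $\omega_{X'}$ is somewhere torsion. By Case 1 this amounts to showing $\ecal M_1 \otimes \ecal M_2^{-1}$ is torsion. I expect this from $\ecal M_i$ being translates of one another by $2$-torsion points (the $E[2]$-equivariance of $[2]^*\ecal E$), which forces $\ecal M_1 \otimes \ecal M_2^{-1} \in \operatorname{Pic}^0(E)[2]$-type torsion.
\end{enumerate}

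The main obstacle is the odd-degree indecomposable case. If the splitting-after-isogeny argument is awkward, I would instead find directly a curve $C$ with a degree-$0$ quotient twist. Concretely, choose $h : C \to E$ and a quotient line bundle $\ecal Q$ of $h^*\ecal E$ with $\ecal Q^{\otimes 2} \otimes h^*\det\ecal E^{-1}$ torsion, using that $\deg h^*\ecal E = 2\deg\ecal Q$ is achievable once $\deg h$ is even. Here one uses semistability of $h^*\ecal E$ to see that a degree-$\tfrac12\deg$ quotient exists precisely when it splits or has a subbundle of that degree.

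The final sentence of the statement follows by combining with Theorem~\ref{theorem-reconstruction-for-schemes}. If $\ecal L$ is not torsion, then $\omega_X$ is nowhere torsion, so every Fourier--Mukai partner is isomorphic to $X$ and every autoequivalence is standard.
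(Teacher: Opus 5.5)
Your overall route is the paper's. You handle the decomposable case via Proposition~\ref{prop-main-projective-bundle-over-abelian}, the case $\ecal E\cong\ecal F_2$ via the section attached to $\ecal F_2\surj\ecal O_E$, the odd-degree case by splitting $\ecal E$ after an isogeny, and the final sentence via Theorem~\ref{theorem-reconstruction-for-schemes}. Your reduction in the odd-degree case is also correct: $X'\to X$ is finite étale, $\omega_{X'}$ is the pullback of $\omega_X$, and Lemma~\ref{lemma-nowhere torsionfinitesurj} applies.

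The gap is the one you flag yourself. You only assert $[2]^*\ecal E\cong\ecal M_1\oplus\ecal M_2$ with $\deg\ecal M_1=\deg\ecal M_2=2$. You then need $\ecal M_1\otimes\ecal M_2\inv$ to be torsion, and you do not prove it. The heuristic you offer does not work as stated. For $b\in E[2]$ and any line bundle $\ecal M$ of degree $2$, one already has $t_b^*\ecal M\cong\ecal M$, because the homomorphism $b\mapsto t_b^*\ecal M\otimes\ecal M\inv$ has kernel exactly $E[2]$. So $E[2]$-translation invariance of $[2]^*\ecal E$ imposes no relation at all between $\ecal M_1$ and $\ecal M_2$. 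Your fallback has the same hole. A quotient $h^*\ecal E\surj\ecal Q$ of half the degree, with kernel $\ecal K$, gives $\ecal Q^{\otimes 2}\otimes h^*\det\ecal E\inv\cong\ecal Q\otimes\ecal K\inv$. That is a degree-zero line bundle whose torsionness is exactly what is at stake.

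The missing input is the stronger splitting $[2]^*\ecal E\cong\ecal M^{\oplus 2}$, i.e.\ $\ecal M_1\cong\ecal M_2$. This is what the paper uses, citing Mukai's result that a simple semihomogeneous bundle of rank $r$ pulls back along some isogeny to $\ecal L^{\oplus r}$.

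You can also prove it directly in your setting by counting endomorphisms. By adjunction and the projection formula, $\End([2]^*\ecal E)=\bigoplus_{\ecal P\in\pic^0(E)[2]}\hom(\ecal E,\ecal E\otimes\ecal P)$. Each summand is one-dimensional: $\ecal E$ is stable, and $\ecal E\otimes\ecal P\cong\ecal E$ because an indecomposable bundle of coprime rank and degree is determined by its determinant, and $\det(\ecal E\otimes\ecal P)=\det\ecal E$. Hence $\dim\End([2]^*\ecal E)=4$. On the other hand, $\End(\ecal M_1\oplus\ecal M_2)=k\times k$ if $\ecal M_1\not\cong\ecal M_2$, which is a contradiction.

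Alternatively, your equivariance idea can be rescued via descent. If $\ecal M_1\not\cong\ecal M_2$, the decomposition into isotypic pieces is canonical, hence preserved by the $E[2]$-descent datum. It would then descend to a splitting of the indecomposable $\ecal E$, which is impossible.

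With $[2]^*\ecal E\cong\ecal M^{\oplus 2}$ in hand, take the quotient $[2]^*\ecal E\surj\ecal M$. It gives $f:E\to\bb P_E(\ecal E)$ with $f^*\omega_{\bb P_E(\ecal E)}\cong\ecal M^{\otimes 2}\otimes\ecal M^{\otimes (-2)}\cong\ecal O_E$. This finishes the argument exactly as in the paper.
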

\begin{remark}
    This gives another partial proof of \cite{uehara2017fourier}*{Theorem 1.1}. 
\end{remark}
\begin{proof}

First, by Proposition \ref{prop-main-projective-bundle-over-abelian} (iii), if $\ecal E$ is decomposable, then the claim holds. 
    Hence, it remains to show that if $\ecal E$ is indecomposable, then $\omega_{\bb P_E(\ecal E)}$ is somewhere torsion.  First, suppose $\deg \ecal E$ is even. Then, we may assume $\deg \ecal E = 0$ by twisting by a line bundle. By Atiyah's classification \cite{atiyah1957vector}*{\href{https://math.berkeley.edu/~nadler/atiyah.classification.pdf}{Theorem 5}}, there is a line bundle $\ecal L$ of degree $0$ such that $\ecal E \cong \ecal F_2 \otimes \ecal L$ which fits into a short exact sequence
    \[
    0 \to \ecal L \to \ecal E \to \ecal L \to 0.
    \]
    Thus, $\det \ecal E \cong \ecal L^{\otimes 2}$ and for a section $f: E \to \bb P_E(\ecal E)$ corresponding to the quotient $\ecal E \surj \ecal L$, we have
    \[
    f^*\omega_{\bb P_E(\ecal E)} \cong \ecal L^{\otimes 2} \otimes \ecal L^{\otimes (-2)} \cong \ecal O_E
    \]
    and therefore $\omega_{\bb P_E(\ecal E)}$ is somewhere torsion. 

    Next, suppose $\deg \ecal E$ is odd. Then, we may assume $\deg \ecal E = 1$ by twisting by a line bundle. In particular, $\ecal E$ is stable and hence simple. By \cite{atiyah1957vector}*{\href{https://math.berkeley.edu/~nadler/atiyah.classification.pdf}{Corollary to Theorem 7}}, $\ecal E$ is also semihomogeneous. Thus, by \cite{mukai1978semi}*{\href{https://projecteuclid.org/journals/kyoto-journal-of-mathematics/volume-18/issue-2/Semi-homogeneous-vector-bundles-on-an-abelian-variety/10.1215/kjm/1250522574.full}{Proposition 7.3}}, there is an isogeny $r: E' \to E$ such that $r^* \ecal E \cong \ecal L \oplus \ecal L$ for a line bundle $\ecal L$ on an elliptic curve $E'$. Let $f: E' \to \bb P_E(\ecal E)$ be the morphism corresponding to a quotient $r^*\ecal E \surj \ecal L$. Then,
    \[
    f^*\omega_{\bb P_E(\ecal E)} \cong \ecal L^{\otimes 2} \otimes \ecal L^{\otimes (-2)} \cong \ecal O_{E'}
    \]
    and therefore $\omega_{\bb P_E(\ecal E)}$ is somewhere torsion.

    The claim after ``In particular'' follows by Theorem \ref{theorem-reconstruction-for-schemes}.
\end{proof}

\subsubsection{Varieties with nowhere torsion and nef, but not ample canonical bundle}
\label{subsubsection-abundance}
The abundance conjecture predicts that for a normal variety $X$ with Gorenstein klt singularities, a nef canonical bundle is semi-ample. Since a nowhere torsion and semi-ample line bundle on a proper variety is ample, proper Gorenstein varieties with nowhere torsion and nef but not ample canonical bundle are expected to have singularities worse than klt. Indeed, ill behavior of the canonical bundle of varieties with bad singularities is well-known to experts (see e.g. \cite{lazic2013around}*{Example 5.1} due to Koll\'ar). We use a similar idea to construct a proper Gorenstein variety with nowhere torsion and nef but not ample canonical bundle.

\begin{construction}\label{conctruction-controlledcanonical}
    Let $B$ be a smooth projective variety. Let $\ecal L$ be a line bundle on $B$. Set 
    \[
    \pi: Y := \bb P_B(\ecal O_B^{\oplus 3} \oplus \ecal L) \to B.
    \]
    Let $S_0 \subset Y$ be the section corresponding to the canonical quotient $\ecal O_B^{\oplus 3} \oplus \ecal L \to \ecal L$. Take a line bundle $\ecal M$ on $B$ with a non-zero section
    \[
    s \in H^0(B \times \bb P^2, \ecal M \boxtimes \ecal O_{\bb P^2}(5)) \cong H^0 (B, \ecal M \otimes \sym^5(\ecal O_B^{\oplus 3})).
    \]
    Set 
    \[
    X = X(\ecal M, s) = V(\tilde s) \subset Y
    \]
    where $\tilde s$ is the image of $s$ under the inclusion map
    \[
    H^0(B \times \bb P^2, \ecal M\boxtimes \ecal O_{\bb P^2}(5)) = H^0 (B, \ecal M \otimes \ecal \sym ^5 (\ecal O_{B}^{\oplus 3} )) \hookrightarrow H^0(B, \ecal M \otimes \sym^5(\ecal O^{\oplus 3}_B \oplus \ecal L)) =  H^0(Y, \pi^*\ecal M \otimes \ecal O_{Y}(5)). 
    \]
    In particular, since $X$ is an effective Cartier divisor on a smooth projective variety, it is a projective Gorenstein scheme. By the adjunction formula, we have 
    \begin{align*}
        \omega_X & \cong( \omega_Y \otimes \ecal O_Y(X))|_X \\
        &\cong (\ecal O_Y(-4) \otimes \pi^*(\omega_B \otimes \ecal L) \otimes \pi^*\ecal M \otimes \ecal O_Y(5))|_X \\
        &\cong (\ecal O_Y(1) \otimes \pi^*(\omega_B \otimes \ecal L \otimes \ecal M))|_X. 
    \end{align*}
    For the section $B \cong S_0 \subset Y$ corresponding to the canonical quotient $\ecal O_B^{\oplus 3} \oplus \ecal L \surj \ecal L$, we have $S_0 \subset X$ and
    \[
    \omega_X|_{S_0} \cong \ecal L \otimes \omega_B \otimes \ecal L \otimes \ecal M = \omega_B \otimes \ecal L^{\otimes 2} \otimes \ecal M.
    \]
    In particular, for a property $\sf{P}$ of a line bundle that is preserved under the restriction to a closed subscheme (e.g., (anti-)ample, (anti-)nef, $\otimes$-generating, nowhere torsion, etc.), if $\omega_B \otimes \ecal L^{\otimes 2} \otimes \ecal M$ does not satisfy $\sf{P}$, neither does $\omega_X$. 
\end{construction}
{\begin{remark}\label{remark-choiceof5}
    Let $X_d = X_d(\ecal{M}, s)$ denote a variety constructed as in Construction \ref{conctruction-controlledcanonical}, but with $\ecal O_Y(5)$ replaced by $\ecal O_Y(d)$. Then, by the same adjunction computation, we have $\omega_{X_d} \cong (\ecal O_Y(d-4) \otimes \pi^*(\omega_B \otimes \ecal L \otimes \ecal M))|_{X_d}$. For $b \in B$ with $s_b \neq 0$, the fiber $(X_d)_b \subset Y_b \cong \bb P^3$ is the projective cone over a curve $V(s_b) \subset \bb P^2$ of degree $d$ with vertex at $(S_0)_b$. Take a ruling line $l \cong \bb P^1$ of the projective cone $(X_d)_b$. Then, since $\ecal O_Y(1)|_l = \ecal O_l(1)$ and $\pi^*(\omega_B \otimes \ecal L \otimes \ecal M)|_l = \ecal O_l$, we have $\omega_{X_d}|_l = \ecal O_l(d-4)$. Hence, if $\omega_{X_d}$ is nef, then $d \geq 4$ and if $\omega_{X_d}$ is nef and nowhere torsion, then $d > 4$. 
\end{remark}
\begin{lemma}
    If $V(s) \subset B \times \bb P^2$ is smooth (resp. irreducible), then $X$ is normal (resp. irreducible).
\end{lemma}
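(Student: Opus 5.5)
The plan is to compare $X$ with the smooth, resp. irreducible, scheme $Z := V(s) \subset B \times \bb P^2$ via the linear projection away from the section $S_0$. Write $\mathbb{P}_B(\ecal O_B^{\oplus 3}) = B \times \bb P^2 \subset Y$ for the complement-of-$S_0$ target. The quotient $\ecal O_B^{\oplus 3}\oplus\ecal L \twoheadrightarrow \ecal O_B^{\oplus 3}$ gives a rational map $Y \dashrightarrow B\times\bb P^2$. It is a morphism $p : Y\setminus S_0 \to B\times \bb P^2$, which is the total space of a line bundle; concretely, it is the total space of $\ecal O(1)\otimes \ecal L^{-1}$, or similar, pulled back. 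Since $\tilde s$ only involves the coordinates from $\ecal O_B^{\oplus 3}$, we get $X \setminus S_0 = p^{-1}(Z)$. Fibrewise this is the statement of Remark \ref{remark-choiceof5}: over each $b\in B$, $X_b$ is the cone over $V(s_b)$ with vertex $(S_0)_b$, or all of $\bb P^3$ where $s_b=0$. I will first verify this identification locally over an open of $B$ trivializing $\ecal L$ and $\ecal M$. There $Y = U\times \bb P^3$ with coordinates $x_0,x_1,x_2,y$, $S_0 = \{x_0=x_1=x_2=0\}$, and $X = V(s(b;x_0,x_1,x_2))$.

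\emph{Irreducibility.} $X\setminus S_0 \to Z$ is an $\mathbb{A}^1$-bundle, so it is irreducible when $Z$ is. Also $X$ is a hypersurface in $Y$, so every irreducible component of $X$ has dimension $\dim Y - 1 = \dim B + 2$. Meanwhile $S_0 \cong B$ has dimension $\dim B < \dim B+2$, so no component is contained in $S_0$. Hence $X = \overline{X\setminus S_0}$ is irreducible.

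\emph{Normality.} $X$ is a Cartier divisor in the smooth $Y$, hence Cohen--Macaulay, so it satisfies $S_2$. By Serre's criterion it remains to show $R_1$. On $X\setminus S_0$, which is a line bundle over the smooth $Z$, $X$ is smooth. The remaining locus $S_0$ has codimension $2$ in $X$, since $\dim X - \dim B = 2$. So $X$ is regular in codimension one, and therefore normal.

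The main obstacle is ensuring that $X$ has the expected dimension everywhere, i.e.\ that the codimension-two count near $S_0$ is legitimate. This is automatic because $\tilde s \neq 0$ defines an effective Cartier divisor on the integral $Y$, so $X$ is pure of dimension $\dim B+2$. One small point to check: if $s$ vanished identically on a fibre $\{b\}\times\bb P^2$, then $Z$ would contain that fibre. That is fine in the smooth case; in the irreducible case it would force $Z = \{b\}\times \bb P^2$, contradicting $\dim Z = \dim B+1$ unless $\dim B = 1$, and in that case the dimension count above still handles it. So the argument goes through uniformly.
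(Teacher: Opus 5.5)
Your proposal is correct and follows the paper's proof essentially step for step: the $\mathbb{A}^1$-bundle $X\setminus S_0 = p^{-1}(V(s)) \to V(s)$; normality by Serre's criterion, with $S_2$ because $X$ is a Cartier divisor in the smooth $Y$ and $R_1$ because $\operatorname{codim}_X S_0 = 2$; and irreducibility because $X$ is pure of dimension $\dim B + 2$, so $X\setminus S_0$ is dense. Your closing remark about $s$ vanishing on a fibre is unnecessary, and its claim that irreducibility would force $Z = \{b\}\times\bb P^2$ only holds when $\dim B = 1$, but nothing in the main argument depends on it.
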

\begin{proof}
    First, note that the natural projection 
    \[
    p: Y \setminus S_0 \to B \times \bb P^2
    \]
    is an $\bb A^1$-bundle and hence so is $X \setminus S_0 = p\inv(V(s)) \to V(s)$. Now, suppose $V(s)$ and hence $X \setminus S_0$ is smooth. Then, since $\codim _X S_0 = 2$, $X$ is regular in codimension 1. Since $X$ is moreover Gorenstein and hence $S_2$, $X$ is normal as desired. Finally, suppose $V(s)$ and hence $X \setminus S_0$ is irreducible. Then $X\setminus S_0$ is dense in $X$ since $X$ has pure codimension $1$ in $Y$, hence $X$ is irreducible.
\end{proof}

\begin{prop} \ 
    \begin{enumerate}
        \item If both $\omega_B\otimes \ecal L \otimes \ecal M$ and $\omega_B\otimes \ecal L^{\otimes 2} \otimes \ecal M$ are nef (resp. strictly nef), then so is $\omega_X$.
        \item If $\omega_B\otimes \ecal L \otimes \ecal M$ is nowhere torsion and nef and $\omega_B\otimes \ecal L^{\otimes 2} \otimes \ecal M$ is nowhere torsion, then $\omega_X$ is nowhere torsion. \qedhere 
    \end{enumerate}
\end{prop}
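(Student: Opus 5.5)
Throughout, test against a finite morphism $g : C \to X$ from a smooth projective curve, so that $\omega_X$ is nowhere torsion or (strictly) nef according to how $g^*\omega_X$ behaves, using Proposition \ref{prop-justificationofname}. Set $h = \pi \circ g : C \to B$ and $\ecal Q = g^*\ecal O_Y(1)$. Write $\ecal A = \omega_B \otimes \ecal L \otimes \ecal M$, so that $g^*\omega_X \cong \ecal Q \otimes h^*\ecal A$. The point $g(C) \subset X \subset Y$ gives a surjection $h^*(\ecal O_B^{\oplus 3} \oplus \ecal L) \twoheadrightarrow \ecal Q$. This surjection is the only input we use, and we split according to which summands contribute to it.

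\emph{Case 1: the composite $h^*\ecal O_B^{\oplus 3} \to \ecal Q$ is zero.} Then $\ecal Q \cong h^*\ecal L$ and $g$ factors through $S_0$. Hence $g^*\omega_X \cong h^*(\omega_B \otimes \ecal L^{\otimes 2} \otimes \ecal M)$ with $h$ finite, in agreement with the computation $\omega_X|_{S_0}$ in Construction \ref{conctruction-controlledcanonical}. Nefness, strict nefness or non-torsionness then follows directly from the hypothesis on $\omega_B \otimes \ecal L^{\otimes 2} \otimes \ecal M$.

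\emph{Case 2: the composite is nonzero.} Then $\ecal Q$ receives a nonzero map from $\ecal O_C^{\oplus 3}$, so it has a nonzero section and $\deg \ecal Q \ge 0$. Equality holds iff $\ecal Q \cong \ecal O_C$. In that case the curve lies in $Y \setminus S_0$ and its image under the projection $p$ to $B \times \bb P^2$ is constant in the $\bb P^2$ direction.
\begin{enumerate}
\item If $h$ is nonconstant, then $\deg h^*\ecal A \ge 0$ by nefness, so $\deg g^*\omega_X \ge 0$; this gives (i) in the nef version. For the strict version, either $\deg \ecal Q > 0$, or $\ecal Q$ is trivial and $\deg h^*\ecal A > 0$ by strict nefness. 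For (ii), a torsion $g^*\omega_X$ forces $\deg \ecal Q = 0 = \deg h^*\ecal A$, hence $\ecal Q \cong \ecal O_C$. Then $g^*\omega_X \cong h^*\ecal A$ is torsion with $h$ finite onto its image, contradicting that $\ecal A$ is nowhere torsion.
\item If $h$ is constant, $C$ maps into a fiber $X_b \subset \bb P^3$ and $g^*\omega_X \cong \ecal Q$ is the pullback of $\ecal O(1)$ along a finite map into $\bb P^3$. This is ample, so it has positive degree and is not torsion.
\end{enumerate}

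\emph{Main obstacle.} The delicate step is Case 2(i) when $\deg \ecal Q = 0$. I plan to use the dichotomy that a degree-zero line bundle on $C$ with a nonzero section is trivial, which pins down $\ecal Q$ and reduces everything to $\ecal A$. One must also make sure that finiteness of $g$ combined with nonconstant $h$ yields a finite map $C \to B$. This holds because $C$ is a curve and $h$ is nonconstant, so the argument matches Proposition \ref{prop-justificationofname} (iii).
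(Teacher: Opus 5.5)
Your proof is correct and takes essentially the same approach as the paper: both analyze the surjection $\ecal O_C^{\oplus 3}\oplus h^*\ecal L \twoheadrightarrow \ecal Q$ and use that a degree-zero line bundle with a nonzero section is trivial. You split first on whether the $\ecal O_C^{\oplus 3}$-component vanishes, so that $g$ factors through $S_0$; the paper splits on whether $h$ is constant and then on the sign of $\deg \ecal Q$. This is only a reorganization of the same case analysis.
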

\begin{proof}
    Let $g : C \to X$ be a finite morphism with $C$ a normal projective curve. Write $f$ for the composition $C \to X \to B$. The composition $C \to X \hookrightarrow Y = \mathbb{P}_B(\ecal{O}_B^{\oplus 3} \oplus \ecal{L})$ then corresponds to a quotient
    \begin{equation}
    \label{equn-thequotient}
    \ecal O_{C}^{\oplus 3} \oplus f^* \ecal L \surj g^*\ecal O_Y(1) =:\ecal Q.
    \end{equation}
    Also, recall $\omega_X \cong (\ecal O_Y(1) \otimes \pi^*(\omega_B \otimes \ecal L \otimes \ecal M))|_X,$ so 
    \[
    g^*(\omega_X) = \ecal Q \otimes f^*(\omega_B \otimes \ecal L \otimes \ecal M). 
    \]
    If $f$ is constant, then $g$ factors through a fiber $F \cong \bb P^3$ of $\pi$ so 
    \[
    \deg g^*(\omega_X) = \deg \ecal Q =  \deg (C \to F)^*\ecal O_{F}(1)> 0.
    \]
    Now, suppose $f$ is non-constant and hence finite. 
    First, if there is a non-trivial component $\ecal O_{C} \to \ecal Q$ in (\ref{equn-thequotient}), then $\deg \ecal Q \geq 0$ and hence
    \[
    \deg g^*(\omega_X) \geq \deg f^*(\omega_B \otimes \ecal L \otimes \ecal M).
    \]
    Similarly, if the component $f^*\ecal L \to \ecal Q$ is non-trivial, then $\deg \ecal{Q} \otimes f^*\ecal L\inv \geq 0$ and hence
    \[
    \deg g^*(\omega_X) \geq \deg f^*(\omega_B \otimes \ecal L^{\otimes 2} \otimes \ecal M).
    \]
    Therefore, (i) follows. For (ii), if $\deg \ecal Q>0$, then $\deg g^*(\omega_X) \geq \deg \ecal Q >0$ as $\omega_B \otimes \ecal L \otimes \ecal M$ is nef, so $g^*(\omega_X)$ is not torsion. If $\deg \ecal Q = 0$, then since there is a surjective map (\ref{equn-thequotient}), we have either $\ecal Q \cong \ecal O_{C}$ or $\ecal Q \cong f^*\ecal L$. In either case, $g^*(\omega_X) = \ecal Q \otimes f^*(\omega_B \otimes \ecal L \otimes \ecal M)$ is not torsion by supposition. If $\deg \ecal Q<0$, then $\ecal O_{ C} \to \ecal Q$ is trivial, so $f^*\ecal L \to \ecal Q$ is surjective and hence an isomorphism. Thus,
    $
    g^*(\omega_X) = f^*(\omega_B \otimes \ecal L^{\otimes 2} \otimes \ecal M)
    $
    is not torsion by supposition. 
\end{proof}
Now, we can produce plenty of examples with desired properties of canonical bundles.
\begin{example}\label{example-controlled canonical} \ 
    \begin{enumerate}
        \item We first consider an example of a normal projective Gorenstein variety whose canonical bundle is nowhere torsion and nef but not strictly nef (and hence not ample and thus not $\otimes$-generating). Let $B$ be an elliptic curve, $\ecal L \in \pic^0(B)$ a non-torsion line bundle, and $\ecal M = \ecal O_B$. 
        Take $s \in H^0(B \times \bb P^2, \ecal O_B \boxtimes \ecal O_{\bb P^2}(5)) \cong H^0(\bb P^2, \ecal O_{\bb P^2}(5))$ to be a smooth quintic in $\bb P^2$ so that $X$ is a normal projective Gorenstein variety. Then, $\omega_B \otimes \ecal L \otimes \ecal M \cong \ecal L$ is nef and nowhere torsion and $\omega_B \otimes \ecal L^{\otimes 2} \otimes \ecal M \cong \ecal L^{\otimes 2}$ is nef nowhere torsion but not strictly nef, so $\omega_X$ is nef and nowhere torsion, but not strictly nef. This works more generally with $B$ any simple abelian variety and $\ecal{L}$ any non-torsion element of $\operatorname{Pic}^0(B)$ since such an $\ecal{L}$ is nef and nowhere torsion but not strictly nef, see the proof of Proposition \ref{prop-main-projective-bundle-over-abelian}. 
        \item We can also construct an example of a projective Gorenstein variety whose canonical bundle is strictly nef (and hence nowhere torsion) but not ample (and hence not $\otimes$-generating). Let $B$ be any smooth projective variety with a strictly nef line bundle $\ecal L_0$ that is not ample and take an ample line bundle $\ecal A$ on $B$ such that $\ecal M:= \ecal L_0 \otimes \omega_B\inv \otimes \ecal A^{\otimes 2}$ is very ample. Set $\ecal L = \ecal A \inv$. Take $s \in H^0(B \times \bb P^2, \ecal M \boxtimes \ecal O_{\bb P^2}(5))$ with $V(s)$ smooth irreducible, which exists by Bertini's theorem, noting that $\ecal M \boxtimes \ecal O_{\bb P^2}(5)$ is very ample. Now, since 
        \[
        \omega_B \otimes \ecal L \otimes \ecal M \cong \ecal L_0 \otimes \ecal A
        \]
        is strictly nef and 
        \[
        \omega_B \otimes \ecal L^{\otimes 2} \otimes \ecal M \cong \ecal L_0
        \]
        is strictly nef but not ample, $\omega_X$ is strictly nef but not ample.  \qedhere
    \end{enumerate}
\end{example}

\begin{remark}
    These examples do not contradict the abundance conjecture. For simplicity, let us show the variety constructed in Example \ref{example-controlled canonical} (i) is not log canonical and in particular not klt. Let $C \subset \bb P^2$ denote the fixed smooth quintic. First, take a nonempty open subset $U \subset B$ where $\ecal L$ trivializes. Then, over $U$, we have
    \[
    Y_U \cong U \times \bb P^3, \quad X_U \cong U \times \bar C, \quad ({S_0})_U \cong U \times \{v\}
    \]
    where $\bar C \subset \bb P^3$ is the projective cone over $C$ by definition and $v$ is the vertex of the cone. 
    Now, let us consider the blow-up $b: \tilde Y =  \operatorname{Bl}_{S_0} Y \to Y$ with exceptional divisor $E$ and let $\rho: \tilde X = \operatorname{Bl}_{S_0} X \to X$ be the strict transform of $X$ with exceptional divisor $E_X = E \cap \tilde X$ (cf. \cite[\href{https://stacks.math.columbia.edu/tag/080E}{Tag 080E}]{stacks-project}). Then, $\tilde X$ is smooth since it can be checked locally on $B$ and we have $\tilde X|_U \cong U \times \operatorname{Bl}_v\bar C$, which is indeed smooth as $\operatorname{Bl}_v\bar C$ is a $\bb P^1$-bundle over $C$. Now, since $\operatorname{codim}_Y{S_0}= 3$, we have $b^* \omega_Y = \omega_{\tilde Y} \otimes \ecal O_{\tilde Y}(-2E).$ On the other hand, since $X$ has multiplicity $5$ along $S_0$, we have $ b^* \ecal O_Y(X) \cong \ecal O_{\tilde Y}(\tilde X)  \otimes \ecal O_{\tilde Y}(5E).$ Therefore, by the adjunction formula,
     \[   \omega_{\tilde X}  \cong \rho^* \omega_X \otimes \ecal O_{\tilde X}(-3E_X).\]
        
    Therefore, $E$ over $X$ has discrepancy $- 3 < -1$ and hence $X$ is not log canonical. Note that this discrepancy computation is the same as the one for the exceptional divisor of the resolution $\operatorname{Bl}_v \bar C \to \bar C$. In particular, in order to have log canonical (resp. klt) singularity, we need to consider a smooth cubic (resp. conic) instead of a quintic. However, in these cases, we do not get a nef canonical bundle by Remark \ref{remark-choiceof5}, so this construction is also compatible with the abundance conjecture. 
\end{remark}
\subsubsection{Varieties with no nowhere torsion line bundle}
\label{subsubsection-yesbigbutnonowheretorsion}

It is not difficult to give an example of a proper variety with no nowhere torsion line bundle. Namely, let $X$ be obtained by gluing together a non-intersecting pair of a line and a smooth rational curve  of degree $2$ in $\mathbb{P}^3_k$ along an isomorphism, see \cite[Section 6.2]{Ferrand-Conducteur}. Let $\mathbb{P}^1_k \cong C \subset X$ be the image of the two curves. Then every line bundle $\ecal{L}$ on $X$ has degree zero on $C$ and thus $\ecal{L}|_C \cong \ecal{O}_C$ is torsion. It is not difficult to see however that this proper variety $X$ has no big line bundles, as there is a finite birational morphism $f: \mathbb{P}^3_k \to X$ and the argument shows that for every line bundle $\ecal{L}$ on $X$, we have $f^*\ecal{L}\cong \ecal{O}_{\mathbb{P}^3_k}$ which is not big, hence $\ecal{L}$ is not big. It is more interesting to ask whether there exists a proper variety $X$ with a big line bundle but no nowhere torsion line bundle. We construct such a variety now.

\begin{example} 
Fix $m \geq 1$. Consider the Hirzebruch surface $\pi : \bb F_m = \mathbb{P}(\ecal{O}_{\mathbb{P}^1} \oplus \ecal{O}_{\mathbb{P}^1}(-n)) \to \mathbb{P}^1$. Let $F \subset \bb F_m$ be a fiber of $\pi$ and $S_+, S_- \subset \bb F_m$ be the sections of $\pi$ corresponding to the two direct summands. We have $S_+^2 = n, S_-^2 = -n$, and $S_+ \cap S_- = \emptyset.$ Let $X = \bb P^1 \times \bb F_m$ and consider pairwise disjoint smooth rational curves $L_1 = \{p\} \times F, L_2 = \{p'\}\times S_+, L_3 = \bb P^1 \times \{q\}$ in $X$. Consider finite morphisms $f_i: L_i \to \bb P^1$ with $\deg f_1 = \deg f_3 = 1$ and $\deg f_2 = m$, which gives a finite morphism $f: \sqcup L_i \to \bb P^1$. By \cite[Theorem 5.4]{Ferrand-Conducteur} or \cite[\href{https://stacks.math.columbia.edu/tag/0E25}{Tag 0E25}]{stacks-project}, there exists a pushout
$$
\begin{tikzcd}
\sqcup L_i \ar[r, hook, "i"] \ar[d, "f"] & X \ar[d, "\nu"] \\
\bb P^1 \ar[r, hook, "j"] & Y
\end{tikzcd}
$$
in the category of schemes. 
Since  $Y$ is separated (\cite[\href{https://stacks.math.columbia.edu/tag/0E26}{Tag 0E26}]{stacks-project}) and locally of finite type (\cite[\href{https://stacks.math.columbia.edu/tag/0E27}{Tag 0E27}]{stacks-project}) and $\nu: X \to Y$ is (finite) surjective, $Y$ is proper. 

Write $c = j \circ f = \nu \circ i$. By construction of the pushout, we have $\ecal{O}_Y = j_*\ecal{O}_{\bb P ^1} \times_{c_*\ecal{O}_{\sqcup L_i}} \nu_*\ecal{O}_X.$ Taking units gives a short exact sequence
$$
0 \to \ecal{O}_Y^\times \to j_*(\ecal{O}_{\bb P^1}^\times) \times \nu_*(\ecal{O}_X^\times) \to c_*(\ecal{O}_{\sqcup L_i}^\times) \to 0
$$
of abelian sheaves on $Y$. Taking cohomology (and noting that for a finite morphism of schemes $g: T \to S$, we have $R^1g_*(\ecal{O}_T^\times) = 0$ hence $H^1(S, g_*(\ecal{O}_T^\times )) = \operatorname{Pic}(T)$), we obtain an exact sequence
\[\begin{tikzcd}[row sep = 1em]
	0 & {H^0(Y, \ecal{O}_Y)^\times} & {H^0(\mathbb P^1, \ecal{O}_{\mathbb P^1})^\times \times H^0(X, \ecal{O}_X)^\times} & {\prod _i H^0(L_i, \ecal{O}_{L_i})^\times} \\
	& {\operatorname{Pic}(Y)} & {\operatorname{Pic}(\mathbb{P}^1) \times \operatorname{Pic}(X)} & {\prod _i \operatorname{Pic}(L_i).}
	\arrow[from=1-1, to=1-2]
	\arrow[from=1-2, to=1-3]
	\arrow[from=1-3, to=1-4]
	\arrow[from=1-4, to=2-2, out=0, in=180, looseness=2.5, overlay]
	\arrow[from=2-2, to=2-3]
	\arrow[from=2-3, to=2-4]
\end{tikzcd}\hspace*{1.5cm}\]

That is, a line bundle $\ecal L'$ on $Y$ is equivalent to a pair $(\ecal O_{\bb P^1}(n), \ecal L)$ of line bundles on $\bb P^1$ and $X$ together with isomorphisms $\phi_i: \ecal L|_{L_i} \iso f_i^*\ecal O_{\bb P^1}(n)$. 
We have $\pic(X) = \pic(\bb P^1) \times \pic(\bb F_m)$, so write $\ecal L = \ecal O_{\bb P^1}(k) \boxtimes \ecal O_{\bb F_m}(aS_+ + bF)$. Then, $\ecal L|_{L_1} = \ecal O_{L_1}(a)$, $\ecal L|_{L_2} = \ecal O_{L_2}(am+b)$, and $\ecal L|_{L_3} = \ecal O_{L_3}(k)$. On the other hand, we have $f_i^*\ecal{O}_{\bb P^1}(n) = \ecal{O}_{\bb P^1}(n)$ if $i = 1,3$ and $=\ecal{O}_{\bb P^1}(m \cdot n)$ if $i = 2$. Hence for $(\ecal O_{\bb P^1}(n), \ecal L)$ to come from a line bundle on $Y$, we must have $k = a = n$ and $b = 0$. Therefore, letting $\ecal M$ denote a line bundle on $Y$ corresponding to the data $(\ecal O_{\bb P^1}(1), \ecal O_{\bb P^1}(1) \boxtimes \ecal O_{\bb F_m}(S_+))$ with a fixed isomorphism over $\sqcup \ecal L_i$, the long exact sequence above simplifies to the (split) exact sequence
\[
0 \to (k^\times)^2 \to \pic(Y) \to \bb Z \cdot [\ecal M] \to 0. 
\]
Now, we claim that the line bundle $\ecal M$ is big. 
Note that $\nu: X \to Y$ is finite and birational, so $\ecal M$ is big 
if and only if so is $\nu^*\ecal M = \ecal O_{\bb P^1}(1) \boxtimes \ecal O_{\bb F_m}(S_+)$. This is true because the exterior product of big line bundles is big.

On the other hand, for any line bundle $\ecal{M}'$ on $Y$, we have seen that $\nu^*\ecal{M}' \cong \ecal O_{\bb P^1}(k) \boxtimes \ecal O_{\bb F_m}(aS)$ for some integers $a,k$. But then $(\nu^*\ecal{M}')|_{\{p\} \times S_-} \cong \ecal{O}_{\{p\} \times S_-}$, so $\nu^*\ecal{M}'$ is somewhere torsion. This implies $\ecal{M}'$ is somewhere torsion by Lemma \ref{lemma-nowhere torsionfinitesurj}.
\end{example} 

If we allow algebraic spaces, we have the following example. See the next section for more discussion of algebraic spaces and stacks.

\begin{example}
    Hironaka constructed a smooth proper algebraic space $X$ over any field $k$ with a curve $\mathbb{P}^1_k \cong C \subset X$ which is algebraically equivalent to zero. See \cite[Example B.3.4.2]{Har77}. Then any line bundle $\ecal{L}$ on $X$ satisfies $\ecal{L}|_{C} \cong \ecal{O}_C$, hence $\ecal{L}$ is somewhere torsion (the definition of nowhere torsion extends in the obvious way to algebraic spaces). Such an $X$ automatically has a big line bundle: Take any affine open subscheme $U \subset X$, which exists since $X$ has a dense open subscheme, and let $D \subset X$ be the complement, as a reduced closed subscheme. Since $X$ is smooth, $D \subset X$ is an effective Cartier divisor, and $\ecal{O}_X(D)$ is big, see \cite[Section 2.2]{itoolander2025derived}. 
\end{example}

\section{Nowhere torsion line bundles on stacks}

In this section, we define nowhere torsion line bundles on tame algebraic stacks with the ultimate goal of proving a reconstruction theorem for a smooth proper tame algebraic stack $\mathcal{X}$ over a field, with nowhere torsion canonical bundle and which are generically representable by schemes. We achieve this goal later, in Chapter \ref{section-BOforstacks}. The same definition of nowhere torsion line bundle as in the case of schemes works well here, see Section \ref{subsection-nowheretorsiononstack}, but the rest of the story requires new ideas. 

Again, we would like to characterize ``skyscraper sheaves of points'' on $\mathcal{X}$ purely in terms of its derived category. Here a skyscraper sheaf at a point should be interpreted as the pushforward of a simple object from the residual gerbe at a closed point. These objects are studied in \cite{hall2024generalizedbondalorlovfaithfulnesscriterion} and we recall the basic results in Section \ref{subsection-generalizedpoints}. We give our definition of point-like object in Section \ref{subsection-pointlike}. As in the case of schemes, this definition is our attempt to characterize skyscraper sheaves at closed points of $\mathcal{X}$ purely in terms of its derived category, but here there can be more objects which look like skyscraper sheaves, so we are forced to add more conditions in our definition, see Remark \ref{remark-point-like}. 

Even with these new conditions, unlike the case of schemes, it is not necessarily the case that point-like objects with zero-dimensional support are skyscraper sheaves. This cannot be fixed by strengthening the definition of point-like object. Rather, it is an unavoidable consequence of a phenomenon which occurs for stacks but not for schemes: There can be automorphisms of $D^b_{\operatorname{Coh}}(\mathcal{X})$ which preserve all the subcategories $D^b_{\operatorname{Coh}, x}(\mathcal{X})$ where $x$ is a closed point, and which act non-trivially on some of these subcategories. A guiding example is given in Example \ref{example-spherical-twist-on-p112}. These so-called \emph{local} automorphisms of $D^b_{\operatorname{Coh}}(\mathcal{X})$ play a fundamental role in our main reconstruction theorem in the next section.


\subsection{Conventions on algebraic stacks}

We follow the conventions on algebraic stacks used in the Stacks Project. We shall mostly deal with algebraic stacks with finite inertia, usually even finite diagonal. An algebraic stack $\mathcal{X}$ with finite inertia has a coarse moduli space $\pi : \mathcal{X} \to X$ by the main result of \cite{rydh2013existence} which generalizes the main result of \cite{K-M}. If $\mathcal{X}$ is also locally of finite type over a locally Noetherian scheme, then $X$ is locally Noetherian and $\pi$ is proper. An algebraic stack with finite inertia and coarse moduli space $\pi : \mathcal{X} \to X$ is \emph{tame} if the functor $\pi _* : \operatorname{QCoh}(\mathcal{X}) \to \operatorname{QCoh}(X)$ is exact. If $G/S$ is a finite locally free group scheme over a scheme, we shall call $G$ \emph{tame} if $BG$ is a tame algebraic stack, or equivalently, if $G$ is linearly reductive: The $G$-invariants functor from the category of $G$-equivariant quasi-coherent sheaves on $S$ to the category of quasi-coherent sheaves on $S$ is exact. 

For derived categories of algebraic stacks, we follow the conventions of \cite{hall2017perfect}. Thus if $\mathcal{X}$ is an algebraic stack, then $D_{qc}(\mathcal{X})$ is the full subcategory of the derived category of $\ecal{O}_{\mathcal{X}}$-modules on the lisse-\'etale site of $\mathcal{X}$ whose objects have quasi-coherent cohomology sheaves. For a morphism of algebraic stacks $f : \mathcal{X} \to \mathcal{Y}$, there is an adjunction $(Lf^*, Rf_*) : D_{qc}(\mathcal{X}) \leftrightarrow D_{qc}(\mathcal{Y})$, but the details are subtle due to the lack of functoriality of the lisse-\'etale site. In particular, $Rf_*$ is simply defined as the right adjoint of $Lf^*$ (here we differ from the conventions of \cite{hall2017perfect} in that we write $Rf_*$ instead of $Rf_{qc, *}$). This functor is particularly well-behaved when $f$ is a \emph{concentrated morphism} of algebraic stacks. Examples of such morphisms are quasi-compact and quasi-separated morphisms which are representable by algebraic spaces, and the projection $\mathcal{X} \times _k \mathcal{Y} \to \mathcal{Y}$ where $\mathcal{X}$ is a quasi-compact and quasi-separated tame algebraic stack over a field $k$ and $\mathcal{Y}$ is an arbitrary algebraic stack.

We note that our main results are all about Noetherian algebraic stacks with finite diagonal, and for such stacks we have $D_{qc}(\mathcal{X}) \cong D(\operatorname{QCoh}\mathcal{X})$ by \cite[Theorem A]{hall2017perfect} and \cite[Theorem 1.2]{HallNeemanRydh2019}. We encourage the reader to work exclusively with such stacks in this paper. 

We often denote $i_x : \ecal{G}_x \to \mathcal{X}$ the residual gerbe to an algebraic stack $\mathcal{X}$ at a point $x \in |\mathcal{X}|$. If $x : \operatorname{Spec} k \to \mathcal{X}$ is a morphism from the spectrum of a field, then $G_x$ denotes the $k$-group scheme $\underline{\operatorname{Aut}}_k(x)$. If $G$ is a group scheme over a field $k$, then we sometimes write $\bb X(G)$ for the group $\operatorname{Hom}(G, \mathbb{G}_{m, k})$ of characters of $G.$

\subsection{Nowhere torsion line bundles on stacks}
\label{subsection-nowheretorsiononstack}

Let $k$ be a field. Let $\mathcal{X}$ be a finite type algebraic stack over $k$ with finite diagonal. In particular, $\mathcal{X}$ is separated over $k$, and $\mathcal{X}$ has finite inertia so admits a coarse space morphism $\mathcal{X} \to X$. Also, there is a finite, surjective, generically flat morphism $Y \to \mathcal{X}$ where $Y$ is a scheme by \cite[Theorem B]{rydh_approx} (see also \cite[Theorem 2.7]{ehkv}).


Recall that the \textit{support} of a coherent sheaf $\cal F$ on $\cal X$ is the closed subset 
$\supp \ecal F \subset |\cal X|$ defined by the property that 
$x: \operatorname{Spec}K \to \mathcal{X}$ is in $\supp \cal F$ if and only if $x^* \ecal F \neq 0$ (and this does not depend on the choice of a representative $x$). We will say a closed subset $T \subset |\mathcal{X}|$ is \textit{proper} if the unique reduced closed substack $\cal Z \subset \cal X$ with $|\mathcal{Z}| = T$ is proper ($\iff$ any closed substack $\cal Z \subset \cal X$ with $|\mathcal{Z}| = T$ is proper).

\begin{definition}
    Let $\ecal L$ be a line bundle on $\cal X$. We say $\ecal L$ is \textit{nowhere torsion} if for any coherent sheaf $\ecal F$ on $\cal X$ whose support is proper over $k$, $\ecal F \cong \ecal F \otimes \ecal L$ implies $\dim \supp \ecal F = 0$.  
\end{definition}

Again, we will call a line bundle which is not nowhere torsion \emph{somewhere torsion}.

\begin{construction}
   Suppose we have a proper irreducible closed subset $T \subset |\cal X|$ of dimension $1$. Let $\cal C \subset \cal X$ be the reduced closed substack corresponding to $T$. Then, $\cal C$ 
   satisfies the same assumptions as $\mathcal{X}$, so 
   there exists a finite surjective (generically flat) morphism $C \to \cal C$ from a scheme $C$. Note that $C$ is a proper scheme over $k$ of dimension $1$ so is projective over $k$, and that we may assume $C$ is integral by replacing $C$ by a reduced irreducible component which dominates $\mathcal{C}$. Composing with the normalization $C^\nu \to C$, we obtain a finite morphism $f: C^\nu \to \cal X$ from a normal projective curve $C^\nu$ over $k$ with image equal to $T$.
\end{construction}

\begin{prop}
   Let $\ecal{L}$ be a line bundle on $\mathcal{X}$. The following are equivalent:
    \begin{enumerate}
    \item $\ecal{L}$ is somewhere torsion.
    \item There is a finite morphism $f : C \to \mathcal{X}$ where $C$ is a normal projective curve over $k$ and such that $f^*\ecal{L}$ is a torsion line bundle on $C$.
    \item There is a finite morphism $f : C \to \mathcal{X}$ where $C$ is a projective scheme of dimension $1$ over $k$ and such that $f^*\ecal{L}\cong \ecal{O}_C$.
    \item There is a finite morphism $f : C \to \mathcal{X}$ where $C$ is a normal projective curve over $k$ and such that $f^*\ecal{L}\cong \ecal{O}_C$. \qedhere
    \end{enumerate} 
\end{prop}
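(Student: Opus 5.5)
The plan is to follow the proof of Proposition \ref{prop-justificationofname} for schemes, replacing the closed curve $C \subset X$ and its normalization by the finite morphism $f : C^\nu \to \mathcal{X}$ of the preceding Construction. The implications (ii) $\implies$ (iii) $\implies$ (iv) $\implies$ (i) are essentially formal. For (ii) $\implies$ (iii), if $f^*\ecal{L}$ has order $r$, I take the finite cover $D = \underline{\operatorname{Spec}}_C(\bigoplus_{i=0}^{r-1} f^*\ecal{L}^{\otimes i})$, which is finite over $C$ and projective of dimension $1$, and on which the pullback of $\ecal{L}$ is trivial. For (iii) $\implies$ (iv), I compose with the normalization of a reduced irreducible component of $C$ of dimension $1$.

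For (iv) $\implies$ (i), I set $\ecal{F} = f_*\ecal{O}_C$. Since $f$ is finite, $f$ is representable, affine and proper, so $\ecal{F}$ is coherent and its support is the image of $f$. That image is closed, proper over $k$ (being the image of a proper scheme in a separated stack), and has dimension $1$, because finite morphisms preserve dimension of images. The projection formula for the affine morphism $f$ gives $\ecal{F} \otimes \ecal{L} \cong f_*(f^*\ecal{L}) \cong f_*\ecal{O}_C = \ecal{F}$.

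The main step is (i) $\implies$ (ii). Given a coherent $\ecal{F}$ with proper support of positive dimension and $\ecal{F} \otimes \ecal{L} \cong \ecal{F}$, I choose an irreducible closed $1$-dimensional subset $T \subset \operatorname{Supp}\ecal{F}$, which is automatically proper. The Construction then produces a finite $f : C^\nu \to \mathcal{X}$ from a normal projective curve with image $T$. Next I pull back: $f^*\ecal{F}$ is coherent on $C^\nu$, and its support is $f^{-1}(\operatorname{Supp}\ecal{F}) = C^\nu$, because support is detected on field-valued points and $f^*$ commutes with fibres. Hence $f^*\ecal{F} = \ecal{E} \oplus \ecal{E}'$ with $\ecal{E}$ a vector bundle of positive rank $r$ and $\ecal{E}'$ torsion. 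From $f^*\ecal{F} \otimes f^*\ecal{L} \cong f^*\ecal{F}$, passing to the quotient by torsion gives $\ecal{E} \otimes f^*\ecal{L} \cong \ecal{E}$. Taking determinants then gives $f^*\ecal{L}^{\otimes r} \cong \ecal{O}_{C^\nu}$.

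I expect the only real subtlety to be the two stack-theoretic points:
\begin{enumerate}
\item the existence of a proper irreducible curve inside a positive-dimensional proper support, which I would get by passing to the coarse space or to a finite cover by a scheme, both of which preserve dimension;
\item the fact that the support of $f^*\ecal{F}$ is the preimage of $\operatorname{Supp}\ecal{F}$, which follows directly from the pointwise definition of support recalled above.
\end{enumerate}

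Alternatively, one could reduce (i) $\iff$ (ii) entirely to the scheme case via a finite surjective $Y \to \mathcal{X}$ as in Lemma \ref{lemma-nowhere torsionfinitesurj}. That lemma's argument goes through verbatim for stacks using $f^*$, $f_*$ and the projection formula, and combining it with Proposition \ref{prop-justificationofname} on $Y$ gives the result as well.
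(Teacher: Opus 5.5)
Your proposal is correct and follows the paper's proof essentially step for step. The paper also proves (i)$\implies$(ii) through the finite map $C^\nu \to \mathcal{X}$ of the Construction, the torsion-free quotient of $f^*\ecal{F}$, and determinants. It then says (ii)$\implies$(iii)$\implies$(iv)$\implies$(i) go exactly as in the scheme case, via the $\mu_r$-cover, normalizing a component, and $f_*\ecal{O}_C$ with the projection formula; your added checks (full support of $f^*\ecal{F}$, and existence of a one-dimensional $T$ via the coarse space) simply make explicit what the paper leaves implicit.
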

\begin{proof}
    (i) $\implies$ (ii): If $\ecal{L}$ is somewhere torsion, then there exists a coherent sheaf $\ecal{F}$ on $\cal X$ with proper support of positive dimension such that $\ecal{F} \otimes \ecal{L} \cong \ecal{F}$. Choose a proper irreducible closed subset
     $T \subset \operatorname{Supp} \ecal{F}$ of dimension $1$ 
     and let $f : C^\nu \to \cal X$ be a finite morphism as in the construction above. Then $f^*\ecal{F}$ is a coherent sheaf on the normal projective curve $C^\nu$ so we can write $f^*\ecal{F}  = \ecal{E} \oplus \ecal{E}'$ where $\ecal{E}$ is a vector bundle and $\ecal{E}'$ is torsion. We also have $f^*\ecal{F}\otimes f^*\ecal{L} \cong f^*\ecal{F}$ and from this we see that 
    $$\ecal{E} \otimes f^*\ecal{L} \cong \ecal{E}$$
    as both are isomorphic to $f^*\ecal{F}/torsion$. Also, $\ecal{E}$ is not zero since we chose $\cal C \subset \operatorname{Supp}(\ecal{F})$. Let $r > 0$ be its rank. Taking determinants gives $\operatorname{det}(\ecal{E}) \otimes f^*\ecal{L}^{\otimes r} \cong \operatorname{det}(\ecal{E})$, hence $f^*\ecal{L}^{\otimes r} \cong \ecal{O}_{C^\nu}$, as needed.

    The proof that (ii) $\implies$ (iii) $\implies$ (iv) $\implies$ (i) is the same as the proof of Proposition \ref{prop-justificationofname}.
\end{proof}
\begin{corollary}
\label{corollary-powersofnowheretorsiononstack}
    Let $\ecal L$ be a line bundle on 
    $\cal X$. Then, $\ecal L$ is nowhere torsion if and only if $\ecal L^{\otimes n}$ is nowhere torsion for some $0 \neq n \in \bb Z$ if and only if $\ecal L^{\otimes n}$ is nowhere torsion for every $0 \neq n \in \bb Z$. 
\end{corollary}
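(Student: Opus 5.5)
The plan is to read this off condition (ii) of the preceding proposition, as in Corollary \ref{corollary-powersofnowheretorsiononscheme} for schemes. That condition characterizes somewhere torsion line bundles: $\ecal{L}$ is somewhere torsion if and only if there is a finite morphism $f : C \to \mathcal{X}$ from a normal projective curve over $k$ such that $f^*\ecal{L}$ is torsion in $\operatorname{Pic}(C)$. The point is that this condition is insensitive to replacing $\ecal{L}$ by a nonzero power.

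The key step is an equivalence for a fixed $f : C \to \mathcal{X}$ and a fixed $0 \neq n \in \mathbb{Z}$. Since $f^*(\ecal{L}^{\otimes n}) \cong (f^*\ecal{L})^{\otimes n}$, we have
$$
f^*\ecal{L} \text{ torsion in } \operatorname{Pic}(C) \iff (f^*\ecal{L})^{\otimes n} \text{ torsion in } \operatorname{Pic}(C).
$$
This is elementary group theory: if $m\,[f^*\ecal{L}] = 0$ then $m\,[(f^*\ecal{L})^{\otimes n}] = 0$. Conversely, if $m\,n\,[f^*\ecal{L}] = 0$ with $m \neq 0$, then $f^*\ecal{L}$ is killed by $mn \neq 0$.

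Now quantify over all finite $f$ from normal projective curves. Using condition (ii), $\ecal{L}$ is somewhere torsion if and only if $\ecal{L}^{\otimes n}$ is somewhere torsion, for each fixed $n \neq 0$. Taking contrapositives, $\ecal{L}$ is nowhere torsion if and only if $\ecal{L}^{\otimes n}$ is nowhere torsion, for every $n \neq 0$. This gives all three equivalences at once:
\begin{enumerate}
    \item nowhere torsion $\Rightarrow$ every nonzero power nowhere torsion, by the equivalence for each $n$;
    \item every nonzero power $\Rightarrow$ some nonzero power, trivially;
    \item some nonzero power $\Rightarrow$ $\ecal{L}$ nowhere torsion, by the equivalence for that $n$.
\end{enumerate}

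There is no real obstacle. All the substantive work, namely reducing to curves via finite covers by schemes and the determinant argument, is already contained in the preceding proposition. The only thing to check is that pullback along $f$ commutes with tensor powers, which is immediate.
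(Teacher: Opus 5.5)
Your proposal is correct and is the same argument as the paper, whose proof simply invokes the equivalence of (i) and (ii) in the preceding proposition. Your group-theoretic check that $f^*\ecal{L}$ is torsion if and only if $(f^*\ecal{L})^{\otimes n}$ is torsion, for $n \neq 0$, is exactly the step the paper leaves implicit.
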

\begin{proof}
    Follows from the equivalence of (i) and (ii) in the previous proposition.
\end{proof}

\begin{lemma}
\label{lemma-properquasifinitepullback}
    Let $f: \cal Y \to \cal X$ be a proper, quasi-finite, surjective morphism of finite type algebraic stacks over $k$ with finite diagonal. Then a line bundle $\ecal L$ on $\cal X$ is nowhere torsion if and only if $f^* \ecal L$ is nowhere torsion on $\cal Y$. 
\end{lemma}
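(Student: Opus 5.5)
Both directions go through the curve criterion, condition (iv) of the preceding proposition, rather than through sheaves. Working with curves avoids having to make sense of $f_*$ of a coherent sheaf along a non-representable morphism of stacks.

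\emph{If $\ecal L$ is somewhere torsion, then so is $f^*\ecal L$.} Choose a finite morphism $g : C \to \mathcal{X}$ with $C$ a normal projective curve and $g^*\ecal L \cong \ecal O_C$. Form the fibre product $\mathcal{C}' = C \times_{\mathcal{X}} \mathcal{Y}$. It is an algebraic stack, proper and quasi-finite over $C$ and surjective onto $C$, and it has finite diagonal because both $\mathcal{Y}$ and $C$ do. Its projection to $\mathcal{Y}$ is a base change of the finite morphism $g$, so it is finite. Pick an irreducible component of $|\mathcal{C}'|$ that dominates $C$; since $\mathcal{C}'\to C$ is quasi-finite and surjective, this component is one-dimensional and proper. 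Apply the Construction to this component to get a finite morphism $h : D \to \mathcal{C}'$ from a normal projective curve $D$. The composite $D \to \mathcal{C}' \to \mathcal{Y}$ is finite, as a composite of finite maps. Its pullback of $f^*\ecal L$ equals the pullback of $g^*\ecal L \cong \ecal O_C$ along $D \to C$, hence is trivial. By criterion (iv), $f^*\ecal L$ is somewhere torsion.

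\emph{If $f^*\ecal L$ is somewhere torsion, then so is $\ecal L$.} Choose a finite morphism $h : D \to \mathcal{Y}$ with $D$ a normal projective curve and $h^*f^*\ecal L \cong \ecal O_D$. Then $f\circ h : D \to \mathcal{X}$ is proper and quasi-finite. It is representable, since its source is a scheme and $\mathcal{X}$ has finite (in particular separated) diagonal. A proper, quasi-finite, representable morphism is finite, and the criterion again shows that $\ecal L$ is somewhere torsion.

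The step most likely to need care is the fibre-product argument in the first direction. One has to check that a dominating component of $\mathcal{C}'$ really is a proper one-dimensional stack satisfying the standing hypotheses (finite type, finite diagonal), so that the Construction applies. This follows from quasi-finiteness and properness of $\mathcal{C}'\to C$, together with the fact that finite diagonal is stable under base change and composition. The second direction is formal once representability of $f\circ h$ is noted.
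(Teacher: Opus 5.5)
Your proof is correct. The direction ``$f^*\ecal L$ somewhere torsion $\Rightarrow$ $\ecal L$ somewhere torsion'' is exactly the paper's argument: compose the curve with $f$, and use that a proper, quasi-finite, representable morphism is finite.

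For the other direction the paper does not pass through curves. It takes a coherent sheaf $\ecal F$ on $\mathcal X$ with proper, positive-dimensional support and $\ecal F\otimes\ecal L\cong\ecal F$, and simply pulls it back. Then $f^*\ecal F\otimes f^*\ecal L\cong f^*\ecal F$, and $\operatorname{Supp}(f^*\ecal F)=f^{-1}(\operatorname{Supp}\ecal F)$ is proper and positive-dimensional because $f$ is proper and surjective.

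So the difficulty you give as motivation, making sense of $f_*$ along a non-representable morphism, arises only in the direction where the scheme case used $f_*$. There you and the paper already make the same switch to curves, and pulling back sheaves causes no trouble.

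Your fibre-product argument is valid, but it costs extra bookkeeping. You need $C\times_{\mathcal X}\mathcal Y$ to be of finite type with finite diagonal. The cleanest reason is that its projection to $\mathcal Y$ is finite, hence separated and representable, so it inherits finite diagonal from $\mathcal Y$; ``because $\mathcal Y$ and $C$ do'' is not quite the reason. You also need a dominating component to be proper and one-dimensional, and then you invoke the Construction.

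What your route buys is a proof that runs entirely through the curve criterion. The paper's route buys brevity, and in that direction it uses only properness and surjectivity of $f$; quasi-finiteness is needed only for the converse.
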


\begin{proof}
    If $f^*\ecal{L}$ is somewhere torsion, then there is a finite morphism $g: C \to \mathcal{Y}$ with $C$ a normal projective curve such that $(f \circ g)^*\ecal{L}= g^*(f^*\ecal{L})$ is torsion. Then $f \circ g : C \to \mathcal{X}$ is quasi-finite, proper, and representable since $\mathcal{X}$ is an algebraic stack and $C$ is a scheme, hence finite, and so we see that $\ecal{L}$ is somewhere torsion.

    Conversely, if $\ecal{L}$ is somewhere torsion, then there is a coherent sheaf $\ecal{F}$ on $\mathcal{X}$ whose support is proper of positive dimension and such that $\ecal{F} \otimes \ecal{L} \cong \ecal{F}$. Then $f^*\ecal{F}$ has proper positive dimensional support on $\mathcal{Y}$ and satisfies $f^*\ecal{F} \otimes f^*\ecal{L} \cong f^*\ecal{F},$ so $f^*\ecal{L}$ is somewhere torsion.
\end{proof}

The coarse space morphism $\pi : \mathcal{X} \to X$ satisfies the hypotheses above, so we obtain the following.

\begin{corollary}
\label{corollary-coarsespacetest}
    Let $\pi : \cal X \to X$ be the coarse moduli space. Let $\ecal L$ be a line bundle on $X$. Then $\ecal L$ is nowhere torsion if and only if $\pi^*\ecal{L}$ is nowhere torsion.
\end{corollary}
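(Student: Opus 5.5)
The plan is to apply Lemma \ref{lemma-properquasifinitepullback} directly to the coarse space morphism $\pi : \mathcal{X} \to X$, with $X$ playing the role of the target stack. To do this, I first check that $\pi$ satisfies the hypotheses of that lemma: $\pi$ is proper, quasi-finite and surjective, and both $\mathcal{X}$ and $X$ are finite type algebraic stacks over $k$ with finite diagonal.

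Properness of $\pi$ holds because $\mathcal{X}$ is locally of finite type over $k$ with finite inertia, as recalled in the conventions, via \cite{rydh2013existence}. Surjectivity holds because $|\mathcal{X}| \to |X|$ is a homeomorphism, indeed a bijection on geometric points. Quasi-finiteness follows from the same bijection: the fibers of $\pi$ over points of $X$ are single points, that is, gerbes over residue fields.

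The coarse space $X$ is an algebraic space. It is of finite type over $k$: it is locally of finite type since $\mathcal{X}$ is of finite type over a Noetherian base (again by \cite{rydh2013existence}), and it is quasi-compact as the image of the quasi-compact $\mathcal{X}$. It is separated because $\mathcal{X}$ is separated and $\pi$ is a universal homeomorphism, and a separated algebraic space has closed-immersion, hence finite, diagonal. So $X$ is a finite type algebraic stack over $k$ with finite diagonal.

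The only point needing care is that Lemma \ref{lemma-properquasifinitepullback} was stated for stacks. An algebraic space is a special case, and the definition of nowhere torsion used there is the same one that applies to $X$, since $X$ is a stack of the kind considered in this subsection. With the hypotheses verified, the lemma gives that $\ecal{L}$ is nowhere torsion on $X$ if and only if $\pi^*\ecal{L}$ is nowhere torsion on $\mathcal{X}$. I do not expect a genuine obstacle; the only step requiring care is confirming that $X$ is separated and of finite type, which follows from the standard properties of coarse spaces cited above.
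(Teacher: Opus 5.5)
Your proposal is correct and is the paper's argument: the paper just remarks that the coarse space morphism $\pi$ satisfies the hypotheses of Lemma~\ref{lemma-properquasifinitepullback} and applies it. Your verification of those hypotheses spells out what the paper leaves implicit: $\pi$ is proper and quasi-finite, and $X$ is a separated finite type algebraic space, hence an algebraic stack with finite diagonal to which the stacky definition of nowhere torsion applies.

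One harmless imprecision: the scheme-theoretic fibres of $\pi$ need not be gerbes. For example, over the origin of $[\mathbb{A}^1/\mu_2]$ the fibre is non-reduced, and only its reduction is the residual gerbe. This does not affect the argument, since quasi-finiteness only needs $\pi$ to be of finite type with one-point fibres, and the universal homeomorphism gives that.
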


\begin{remark}
\label{remark-testforgoodnessonstack}
    For every line bundle $\ecal{L}$ on $\mathcal{X}$, there is an integer $n >1$ such that $\ecal{L}^{\otimes n} \cong \pi^*\ecal{M}$ for a line bundle $\ecal{M}$ on $X$, see \cite{206117} (This also follows from \cite[Proposition 6.1]{Olsson2012Integral} in the case $\cal X$ is tame). Then using Corollaries \ref{corollary-coarsespacetest} and \ref{corollary-powersofnowheretorsiononstack}, we see that $\ecal{L}$ is nowhere torsion on $\mathcal{X}$ if and only if $\ecal{M}$ is nowhere torsion on $X$.
\end{remark}

%
When we have an explicit finite cover of $\cal X$ by a scheme, we can easily check if a line bundle is nowhere torsion.
\begin{example}\label{example-weightedprojectivestack}
    Let $\cal X = \cal P(a_0,\dots,a_n)$ be any weighted projective stack. Then $\operatorname{Pic}(\cal X) = \mathbb{Z} \cdot \ecal{O}_{\mathcal{X}}(1)\cong \mathbb{Z}$ and morphisms $f : T \to \cal X$ are determined by $\mathcal L = f^*\ecal{O}_{\cal{X}}(1)$ and a tuple $(s_0, \dots  , s_n)$ where $s_i \in \Gamma(T, \cal L^{\otimes a_i})$ and $\bigcup T_{s_i} = T$. Thus $\cal X$ admits a finite cover $\pi: \bb P^n \to \cal X$ corresponding to the tuple $(\ecal O_{\bb P^n}(1), x_0^{a_0}, \dots, x_n^{a_n})$. Since $\pi^* \ecal O_\cal X(1) = \ecal O_{\bb P^n}(1)$ by construction, any non-trivial line bundle on $\cal X$ is nowhere torsion. On the other hand, not every non-trivial line bundle on a weighted projective stack is $\otimes$-generating, see Example \ref{example-weightedprojectivestacktensorgen} below.
\end{example} 

\begin{lemma}
\label{lemma-checkafterbasechange}
    Let $\ecal{L}$ be a line bundle on $\mathcal{X}$. Let $\bar{k}$ be an algebraic closure of $k$. The line bundle $\ecal{L}_{\bar{k}}$ on $\mathcal{X}_{\bar{k}}$ is nowhere torsion if and only if $\ecal{L}$ is.
\end{lemma}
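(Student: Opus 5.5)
We prove both implications through the curve criterion: by the equivalence (i) $\iff$ (iii) of the preceding proposition, being somewhere torsion means there is a finite morphism from a one-dimensional projective scheme along which the line bundle pulls back to the trivial bundle. Write $p : \mathcal{X}_{\bar{k}} \to \mathcal{X}$ for the projection, so that $\ecal{L}_{\bar{k}} = p^*\ecal{L}$.

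\emph{If $\ecal{L}$ is somewhere torsion, so is $\ecal{L}_{\bar{k}}$.} Choose a finite $f : C \to \mathcal{X}$ with $C$ a projective scheme of dimension $1$ over $k$ and $f^*\ecal{L} \cong \ecal{O}_C$. Base change gives a finite morphism $f_{\bar{k}} : C_{\bar{k}} \to \mathcal{X}_{\bar{k}}$. Here $C_{\bar{k}}$ is projective of dimension $1$ over $\bar{k}$, and $f_{\bar{k}}^*\ecal{L}_{\bar{k}} \cong (f^*\ecal{L})_{\bar{k}} \cong \ecal{O}_{C_{\bar{k}}}$. Hence $\ecal{L}_{\bar{k}}$ is somewhere torsion.

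\emph{If $\ecal{L}_{\bar{k}}$ is somewhere torsion, so is $\ecal{L}$.} Choose a finite $g : C \to \mathcal{X}_{\bar{k}}$ with $C$ projective of dimension $1$ over $\bar{k}$ and an isomorphism $\phi : g^*\ecal{L}_{\bar{k}} \cong \ecal{O}_C$. This is finitely presented data: the curve $C$, the morphism $g$, and the isomorphism $\phi$ together with its inverse. So by standard limit arguments it descends to a finite extension $k \subset k' \subset \bar{k}$. Concretely, we obtain a projective curve $C'$ over $k'$, a finite morphism $g' : C' \to \mathcal{X}_{k'}$, and an isomorphism $g'^*\ecal{L}_{k'} \cong \ecal{O}_{C'}$, all recovering $(C, g, \phi)$ after base change to $\bar{k}$. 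The limit results needed are:
\begin{itemize}
\item the morphism $g$ descends because $\mathcal{X}$ is locally of finite presentation;
\item finiteness and properness of $g'$, and projectivity and dimension of $C'$, descend after enlarging $k'$;
\item isomorphisms of finitely presented sheaves descend.
\end{itemize}
The composition
$$C' \to \mathcal{X}_{k'} \to \mathcal{X}$$
is finite, because $\mathcal{X}_{k'} \to \mathcal{X}$ is finite, being the base change of $\operatorname{Spec} k' \to \operatorname{Spec} k$. Moreover $C'$ is a projective $k$-scheme of dimension $1$, since $C' \to \operatorname{Spec} k' \to \operatorname{Spec} k$ is projective and finite over $k$. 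The pullback of $\ecal{L}$ along this composition is $g'^*\ecal{L}_{k'} \cong \ecal{O}_{C'}$. By (iii) $\implies$ (i) of the preceding proposition, $\ecal{L}$ is somewhere torsion.

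The main obstacle is the descent step from $\bar{k}$ to a finite extension $k'$. This requires a limit theorem for morphisms into an algebraic stack: $\mathcal{X}_{\bar{k}} = \lim \mathcal{X}_{k'}$, and $\mathcal{X}$ has finite presentation, as in the Stacks Project limit results for algebraic stacks. It also requires checking that finiteness of $g$ descends. Since $C$ is a scheme and $\mathcal{X}$ has finite diagonal, $g$ is representable. So one can alternatively descend $C \to \mathcal{X}_{\bar{k}}$ as a morphism, and recover finiteness from properness plus quasi-finiteness, both of which descend after enlarging $k'$.

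Alternatively, one could avoid stack limits by using the definition directly. Take the sheaf $\ecal{F}$ on $\mathcal{X}_{\bar{k}}$, descend it together with the isomorphism $\ecal{F} \otimes \ecal{L}_{\bar{k}} \cong \ecal{F}$ to some $\mathcal{X}_{k'}$, and push forward along the finite morphism $\mathcal{X}_{k'} \to \mathcal{X}$. This gives a coherent sheaf $\ecal{G}$ with
$$\ecal{G} \otimes \ecal{L} \cong \ecal{G}$$
by the projection formula. Its support is proper and positive dimensional, because $\mathcal{X}_{k'} \to \mathcal{X}$ is finite and surjective. I would present whichever of the two descents is cleaner to cite.
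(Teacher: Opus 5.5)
Your argument is correct, but your main route is not the paper's. The ``alternative'' you sketch at the end is essentially what the paper does.

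\textbf{The paper's route.} The paper stays with the sheaf-theoretic definition. It descends $\ecal{F}$ and the isomorphism $\ecal{F}\otimes\ecal{L}_{\bar k}\cong\ecal{F}$ to $\mathcal{X}_{k'}$ for some finite extension $k'/k$, and deduces that $\ecal{L}_{k'}$ is somewhere torsion. It then finishes with Lemma~\ref{lemma-properquasifinitepullback} applied to the finite surjection $\mathcal{X}_{k'}\to\mathcal{X}$. The relevant direction of that lemma is proved by exactly your composition $C\to\mathcal{X}_{k'}\to\mathcal{X}$, and your pushforward plus projection formula is an equally valid substitute.

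\textbf{What your route buys.} Your curve-criterion argument replaces descent of a coherent sheaf on a stack with descent of a morphism $g\colon C\to\mathcal{X}_{\bar k}$ from a projective curve. The isomorphism $\phi$ is then descended on a scheme, and the forward direction becomes a one-line base change.

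\textbf{What it costs.} You need a slightly heavier limit statement. Limit preservation for the locally finitely presented stack $\mathcal{X}$ is most commonly stated for affine sources. For a projective $C$ you need a finite affine cover and a gluing step. This works because $\mathcal{X}$ has finite, hence finitely presented and separated, diagonal.

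\textbf{Finiteness of $g'$.} This needs no enlargement of $k'$. The morphism $g$ is the base change of the representable morphism $g'$ along the fpqc cover $\mathcal{X}_{\bar k}\to\mathcal{X}_{k'}$, and finiteness descends along such covers.

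\textbf{How close the routes are.} Since $g$ is finite, descending $g$ is essentially descending the coherent algebra $g_*\ecal{O}_C$. That is the paper's sheaf descent applied to $\ecal{F}=g_*\ecal{O}_C$, the same sheaf used in the proof of the curve criterion. The paper's version is the lighter thing to cite.
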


\begin{proof}
    If $\ecal{L}$ is somewhere torsion it is easy to see that $\ecal{L}_{\bar{k}}$ is somewhere torsion. Conversely, if $\ecal{L}_{\bar{k}}$ is somewhere torsion, there is a coherent sheaf $\ecal{F}$ on $\mathcal{X}_{\bar{k}}$ whose support is proper of positive dimension and such that $\ecal{F} \otimes \ecal{L}_{\bar{k}} \cong \ecal{F}$. There is a finite extension $k'/k$ and a coherent sheaf $\ecal{G}$ on $\mathcal{X}_{k'}$ with $\ecal{G}_{\bar{k}} = \ecal{F}$. Possibly enlarging $k$ we may assume the support of $\ecal{G}$ is proper and of positive dimension and satisfies $\ecal{G}\otimes \ecal{L}_{k'} \cong \ecal{G}$. Thus $\ecal{L}_{k'}$ is somewhere torsion. Since $X_{k'} \to X$ is finite, we see $\ecal{L}$ is somewhere torsion by Lemma \ref{lemma-properquasifinitepullback}.
\end{proof}

\subsection{Grothendieck duality on a proper tame stack over a field}
\label{subsection-dualityforstacks}

We use the results of \cite[Section 3]{hall2024generalizedbondalorlovfaithfulnesscriterion} to deduce some results about Grothendieck duality for smooth proper tame stacks over a field. In the case of a tame and Deligne--Mumford stack, these results appear in \cite{nironi2009grothendieck}. We begin with some well-known calculations of right adjoints of pushforwards.

\begin{lemma}
\label{lemma-regularclosedimmersioncalc}
    Let $i : \mathcal{Z} \to \mathcal{X}$ be a regular closed immersion of codimension $c$ between Noetherian algebraic stacks with corresponding ideal sheaf $\ecal{I}$ and normal bundle $\ecal{N} = (\ecal{I}/\ecal{I}^2)^\vee$. Then 
    \begin{enumerate}
        \item The functor
    $$
    i^! : D^b_{\operatorname{Coh}}(\mathcal{X}) \to D^b_{\operatorname{Coh}}(\mathcal{Z}), \hspace{3 em} i^!(K) = Li^*(K) \otimes ^{\mathbb{L}} \wedge^c\ecal{N}[-c]
    $$
    is right adjoint to $Ri_* : D^b_{\operatorname{Coh}}(\mathcal{Z}) \to D^b_{\operatorname{Coh}}(\mathcal{X})$.
        \item If $\ecal{F}$ is a vector bundle on $\mathcal{Z}$, then $H^{-p}(Li^*i_*\ecal{F}) \cong \wedge ^p\ecal{I}/\ecal{I}^2 \otimes _{\ecal{O}_{\mathcal{Z}}}\ecal{F}$. \qedhere 
    \end{enumerate}
\end{lemma}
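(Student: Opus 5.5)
Both parts are local on $\mathcal{X}$ for the smooth topology, and the constructions in the statement are compatible with flat base change. The plan is to first reduce to an affine scheme with $\mathcal{Z}$ cut out by a regular sequence, compute there with the Koszul complex, and then glue.

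\emph{Part (ii).} Choose a smooth cover $U = \operatorname{Spec} A \to \mathcal{X}$. Since flat pullback commutes with $Li^*$ and $i_*$ and preserves $\ecal{I}/\ecal{I}^2$, it suffices to produce a canonical isomorphism $H^{-p}(Li^*i_*\ecal{F}) \cong \wedge^p(\ecal{I}/\ecal{I}^2)\otimes\ecal{F}$ that is compatible with pullback. The natural candidate is $\operatorname{Tor}_p^{\ecal{O}_{\mathcal{X}}}(\ecal{O}_{\mathcal{Z}},\ecal{O}_{\mathcal{Z}}) \otimes_{\ecal{O}_{\mathcal{Z}}} \ecal{F}$. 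Because $\ecal{F}$ is locally free on $\mathcal{Z}$, we have
\[
Li^*i_*\ecal{F} \cong Li^*i_*\ecal{O}_{\mathcal{Z}} \otimes^{\mathbb{L}}_{\ecal{O}_{\mathcal{Z}}} \ecal{F},
\]
which follows from the projection formula for $i$ applied locally where $\ecal{F}$ is free. Thus it is enough to treat $\ecal{F} = \ecal{O}_{\mathcal{Z}}$. Locally $\ecal{I}=(f_1,\dots,f_c)$ with the $f_j$ forming a regular sequence, so the Koszul complex $K(f_1,\dots,f_c)$ resolves $A/I$. Tensoring with $A/I$ kills all differentials, so $\operatorname{Tor}_p(A/I,A/I)=\wedge^p (A/I)^{c}$. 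The canonical identification with $\wedge^p(I/I^2)$ comes from the multiplicative structure: $\operatorname{Tor}_*(A/I,A/I)$ is a graded-commutative algebra, and $\operatorname{Tor}_1 = I/I^2$ canonically. The induced map $\wedge^p \operatorname{Tor}_1 \to \operatorname{Tor}_p$ is defined without choices, which gives descent, and it is an isomorphism by the Koszul computation.

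\emph{Part (i).} I would establish a functorial isomorphism
\[
R\mathcal{H}om_{\ecal{O}_{\mathcal{X}}}(i_*\ecal{O}_{\mathcal{Z}},\ecal{O}_{\mathcal{X}})\cong i_*\wedge^c\ecal{N}[-c].
\]
Locally this is the self-duality of the Koszul complex: $\operatorname{Ext}^q(A/I,A)$ vanishes for $q\ne c$, and $\operatorname{Ext}^c(A/I,A)\cong \operatorname{Hom}(\wedge^c(I/I^2),A/I)$ canonically, via the fundamental local isomorphism. Since this is canonical, it descends to the stack. Now set $i^\times := R\mathcal{H}om_{\ecal{O}_{\mathcal{X}}}(i_*\ecal{O}_{\mathcal{Z}},-)$, viewed as an $\ecal{O}_{\mathcal{Z}}$-module. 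Because $i$ is a closed immersion, hence affine, $i^\times$ is right adjoint to $i_*$ on $D_{qc}$; on schemes this is standard, and for stacks one can check it via the $i_*\ecal{O}_{\mathcal{Z}}$-module description. For $K\in D^b_{\operatorname{Coh}}(\mathcal{X})$ there is a natural map
\[
Li^*K\otimes^{\mathbb{L}} i^\times\ecal{O}_{\mathcal{X}} \to i^\times K.
\]
This map is an isomorphism: both sides are triangulated in $K$ and agree for $K=\ecal{O}_{\mathcal{X}}$, hence for perfect $K$, and locally on affines every bounded coherent complex is a colimit that both sides commute with. Alternatively, one can argue that the map is an isomorphism on stalks by local computation, using that $i_*\ecal{O}_{\mathcal{Z}}$ is perfect. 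Since $i$ is regular, $i_*\ecal{O}_{\mathcal{Z}}$ is perfect, so $R\mathcal{H}om(i_*\ecal{O}_{\mathcal{Z}},K)\cong (i_*\ecal{O}_{\mathcal{Z}})^\vee\otimes K$, and the comparison becomes immediate. Finally, $i^!$ preserves $D^b_{\operatorname{Coh}}$ because $\wedge^c\ecal{N}$ is a line bundle and $Li^*$ preserves bounded coherence for a regular immersion, which has finite Tor dimension.

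\textbf{Main obstacle.} The main obstacle is making the local isomorphisms canonical so that they glue on the lisse-\'etale site of a stack. I would handle this by writing down each map globally first, namely the algebra map $\wedge^\bullet\operatorname{Tor}_1\to\operatorname{Tor}_\bullet$ and the evaluation/projection morphism, and only afterwards checking smooth-locally that these maps are isomorphisms.
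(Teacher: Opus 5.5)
The gap is in part (ii), in the reduction to $\ecal{F}=\ecal{O}_{\mathcal{Z}}$. Your part (i) is the paper's argument: the paper cites finite duality for the finite perfect morphism $i$ (Hall--Rydh) to reduce to computing $\mathcal{E}xt^q(i_*\ecal{O}_{\mathcal{Z}},\ecal{O}_{\mathcal{X}})$ with the Koszul complex. You reprove that duality statement by hand, using $i^\times=R\mathcal{H}om(i_*\ecal{O}_{\mathcal{Z}},-)$ and the comparison map, and that is fine.

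In part (ii) you assert $Li^*i_*\ecal{F}\cong Li^*i_*\ecal{O}_{\mathcal{Z}}\otimes^{\mathbb{L}}_{\ecal{O}_{\mathcal{Z}}}\ecal{F}$ in $D(\mathcal{Z})$, citing the projection formula where $\ecal{F}$ is free. Local isomorphisms in a derived category do not glue, and these local ones depend on the trivialization. In fact the global statement is false. For a smooth divisor $Z$ in a smooth variety $X$, restricting $\ecal{O}_X(-Z)\to\ecal{O}_X$ gives $Li^*i_*\ecal{O}_Z\cong\ecal{O}_Z\oplus\ecal{O}_Z(-Z)[1]$, so your right-hand side is always split. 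On the other hand, the class of $\ecal{F}(-Z)[1]\to Li^*i_*\ecal{F}\to\ecal{F}$ in $\operatorname{Ext}^2_Z(\ecal{F},\ecal{F}(-Z))$ is, up to sign, the obstruction to extending $\ecal{F}$ to $2Z$. That obstruction is non-zero, for example, for $\ecal{O}(1,0)$ on a smooth quadric $Q\subset\mathbb{P}^3$ in characteristic $0$.

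The underlying reason is how associativity works. It gives $\ecal{O}_{\mathcal{Z}}\otimes^{\mathbb{L}}_{\ecal{O}_{\mathcal{X}}}\ecal{F}\cong(\ecal{O}_{\mathcal{Z}}\otimes^{\mathbb{L}}_{\ecal{O}_{\mathcal{X}}}\ecal{O}_{\mathcal{Z}})\otimes^{\mathbb{L}}_{\ecal{O}_{\mathcal{Z}}}\ecal{F}$, but with $\ecal{F}$ tensored against the \emph{other} $\ecal{O}_{\mathcal{Z}}$-structure, not the one that makes $Li^*i_*\ecal{O}_{\mathcal{Z}}$ an object of $D(\mathcal{Z})$. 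This is why the paper's ``backward projection formula'' is only an isomorphism after applying $i_*$, i.e.\ in $D_{qc}(\mathcal{X})$, and is used only to identify cohomology sheaves.

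You should do the same. The two $\ecal{O}_{\mathcal{Z}}$-structures agree on $\mathcal{T}or_p^{\ecal{O}_{\mathcal{X}}}(\ecal{O}_{\mathcal{Z}},\ecal{O}_{\mathcal{Z}})$. So the local identifications $H^{-p}(Li^*i_*\ecal{F})\cong\mathcal{T}or_p^{\ecal{O}_{\mathcal{X}}}(\ecal{O}_{\mathcal{Z}},\ecal{O}_{\mathcal{Z}})\otimes\ecal{F}$ do not depend on the trivialization and therefore descend. That is all part (ii) needs.

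With this repair, your part (ii) is correct and takes a different route from the paper. The paper deduces (ii) from (i) in three steps:
\begin{enumerate}
\item it writes $Li^*i_*\ecal{O}_{\mathcal{Z}}\cong i^!i_*\ecal{O}_{\mathcal{Z}}\otimes\wedge^c(\ecal{I}/\ecal{I}^2)[c]$;
\item it uses $H^q(i^!i_*\ecal{O}_{\mathcal{Z}})\cong\mathcal{E}xt^q(\ecal{O}_{\mathcal{Z}},\ecal{O}_{\mathcal{Z}})\cong\wedge^q\ecal{N}$;
\item it finishes with the pairing $\wedge^{c-p}\ecal{N}\otimes\wedge^c\ecal{N}^\vee\cong\wedge^p\ecal{N}^\vee$.
\end{enumerate}
Your computation via the Tor algebra does not depend on (i). It uses the choice-free map $\wedge^p\mathcal{T}or_1\to\mathcal{T}or_p$ and the canonical identification $\mathcal{T}or_1=\ecal{I}/\ecal{I}^2$, and it makes descent automatic. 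One small point: in characteristic $2$ you need strict graded-commutativity ($x^2=0$ in odd degree) for the map from the exterior power to exist. This does hold for $\operatorname{Tor}^A(A/I,A/I)$, for example via the Koszul DG-algebra.
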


\begin{proof}
    Note that $i$ is a finite and perfect morphism. Therefore, by finite duality \cite[Theorem 4.14]{hall2017perfect}, to prove (i) it is enough to show $\mathcal{E}xt^i(i_*\ecal{O}_\mathcal{Z}, \ecal{O}_{\mathcal{X}}) = 0$ for $i \neq c$ and $\mathcal{E}xt^c(i_*\ecal{O}_{\mathcal{Z}}, \ecal{O}_{\mathcal{X}}) \cong \wedge^c\ecal{N}$. Here the local Ext sheaves are computed on the lisse-\'etale site of $\mathcal{X}$. This follows from the arguments given in  \cite[\href{https://stacks.math.columbia.edu/tag/0BQZ}{Tag 0BQZ}]{stacks-project}. That is, the proof there works with ``ringed space'' replaced by ``ringed site.''

    By Lemma \ref{lemma-backwardprojectionformula} below, the objects $Li^*i_*\ecal{F}$ and $\ecal{F} \otimes ^{\mathbb{L}}_{\ecal{O}_{\mathcal{Z}}}Li^*i_*\ecal{O}_{\mathcal{Z}}$ have isomorphic cohomology sheaves. Thus it is enough to show $H^{-p}(Li^*i_*\ecal{O}_{\mathcal{Z}}) \cong \wedge^p\ecal{I}/\ecal{I}^2$. We have
    $$
    H^{-p}(i^!i_*\ecal{O}_{\mathcal{Z}} ) \cong \mathcal{E}xt^p_{\ecal{O}_{\mathcal{X}}}(\ecal{O}_{\mathcal{Z}}, \ecal{O}_{\mathcal{Z}}) \cong \wedge^p\ecal{N}
    $$
    by the proofs of \cite[\href{https://stacks.math.columbia.edu/tag/0BQY}{Tags 0BQW and 0BQY}]{stacks-project}. Thus using part (i), we have
    \[H^{-p}(Li^*i_*\ecal{O}_{\mathcal{Z}}) = H^{-p}(i^!i_*\ecal{O}_Z \otimes ^{\mathbb{L}} \wedge ^c\ecal{I}/\ecal{I}^2 [c])= H^{c-p}(i^!i_*\ecal{O}_Z)\otimes \wedge^c\ecal{I}/\ecal{I}^2 = \wedge^{p-c} \cal N \otimes \wedge^c\ecal{I}/\ecal{I}^2 \cong \wedge^p\ecal{I}/\ecal{I}^2. \qedhere \]
\end{proof}

\begin{lemma}
\label{lemma-backwardprojectionformula}
    Let $f : \mathcal{Y} \to \mathcal{X}$ be an affine morphism of algebraic stacks. Let $K \in D_{qc}(\mathcal{Y})$. There are isomorphisms
    $$
    Rf_*Lf^*Rf_*K \cong Rf_*(K \otimes^{\mathbb{L}}Lf^*Rf_*\ecal{O}_{\mathcal{Y}})
    $$
    in $D_{qc}(\mathcal{X})$. In particular, for every integer $i$, there are isomorphisms of quasi-coherent $\ecal{O}_{\mathcal{X}}$-modules
    \[
    f_*H^i(Lf^*Rf_*K) \cong f_*H^i(K \otimes^{\mathbb{L}}Lf^*Rf_*\ecal{O}_{\mathcal{Y}}). \qedhere
    \]
\end{lemma}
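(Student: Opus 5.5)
The plan is to reduce to the affine projection formula on $\mathcal{X}$. Set $\ecal{A} = f_*\ecal{O}_{\mathcal{Y}}$, a quasi-coherent $\ecal{O}_{\mathcal{X}}$-algebra; because $f$ is affine we have $Rf_*\ecal{O}_{\mathcal{Y}} = \ecal{A}$ and $Rf_* = f_*$ is exact on quasi-coherent modules. The first step is the projection formula for the affine (hence concentrated, representable, quasi-compact quasi-separated) morphism $f$:
$$
Rf_*(K \otimes^{\mathbb{L}} Lf^*M) \cong Rf_*K \otimes^{\mathbb{L}} M
$$
for $K \in D_{qc}(\mathcal{Y})$ and $M \in D_{qc}(\mathcal{X})$, which is available in \cite{hall2017perfect}.

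The second step applies this formula twice. Taking $K' = \ecal{O}_{\mathcal{Y}}$ and $M = Rf_*K$ gives
$$
Rf_*Lf^*Rf_*K \cong Rf_*\ecal{O}_{\mathcal{Y}} \otimes^{\mathbb{L}} Rf_*K .
$$
Taking the given $K$ and $M = Rf_*\ecal{O}_{\mathcal{Y}}$ gives
$$
Rf_*(K \otimes^{\mathbb{L}} Lf^*Rf_*\ecal{O}_{\mathcal{Y}}) \cong Rf_*K \otimes^{\mathbb{L}} Rf_*\ecal{O}_{\mathcal{Y}} .
$$
The right-hand sides agree up to the symmetry isomorphism of $\otimes^{\mathbb{L}}$, and composing the two isomorphisms proves the first claim. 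These isomorphisms need not be compatible with the natural $Rf_*\ecal{O}_{\mathcal{Y}}$-module structures, which is why the statement only asserts an isomorphism in $D_{qc}(\mathcal{X})$.

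The third step deduces the statement about cohomology sheaves. Since $f$ is affine, $Rf_*$ is t-exact on $D_{qc}$, so for every $L \in D_{qc}(\mathcal{Y})$ we have $H^i(Rf_*L) \cong f_*H^i(L)$. Applying $H^i$ to the isomorphism from the second step gives
$$
f_*H^i(Lf^*Rf_*K) \cong f_*H^i(K\otimes^{\mathbb{L}}Lf^*Rf_*\ecal{O}_{\mathcal{Y}}) .
$$

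The only delicate point is the lisse-\'etale formalism for stacks: we need both the projection formula and the t-exactness of $Rf_* = Rf_{qc,*}$ for affine $f$. Both can be checked smooth-locally on $\mathcal{X}$ by flat base change, which holds for concentrated morphisms by \cite{hall2017perfect}. After that base change we are reduced to affine morphisms of schemes, where both facts are standard.
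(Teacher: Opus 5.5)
Your argument is correct. It differs from the paper's mainly in packaging.

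The paper transports everything to modules over $\ecal{A} = f_*\ecal{O}_{\mathcal{Y}}$ on $\mathcal{X}$ via \cite[Corollary 2.7]{hall2017perfect}. There the left side becomes $Rf_*K \otimes^{\mathbb{L}}_{\ecal{O}_{\mathcal{X}}} \ecal{A}$ and the right side becomes $Rf_*K \otimes^{\mathbb{L}}_{\ecal{A}} (\ecal{A} \otimes^{\mathbb{L}}_{\ecal{O}_{\mathcal{X}}} \ecal{A})$, and the paper concludes by associativity of derived tensor products. You instead apply the projection formula for concentrated morphisms from \cite{hall2017perfect} twice and compose through the symmetry of $\otimes^{\mathbb{L}}$. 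Under the module identification, the projection formula for an affine morphism is exactly that associativity, so the underlying computation is the same.

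What your route buys is generality. The first isomorphism is visibly valid for any concentrated $f$, for instance any quasi-compact, quasi-separated morphism representable by algebraic spaces. Affineness enters only at the end, to get $H^i(Rf_*L) \cong f_*H^i(L)$ when passing to cohomology sheaves. The paper's route uses affineness from the start, but once the identification $D_{qc}(\mathcal{Y}) \simeq D_{qc}(\mathcal{X}, \ecal{A})$ is in hand it is a one-line tensor computation. That form also makes visible your remark that the two sides carry different $\ecal{A}$-module structures.
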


\begin{proof}
    Under the identification of $D_{qc}(\mathcal{Y})$ with $D_{qc}(\mathcal{X}, f_*\ecal{O}_{\mathcal{Y}})$ of  \cite[Corollary 2.7]{hall2017perfect}, we see that the left hand side is isomorphic to
    $$
    K \otimes ^{\mathbb{L}}_{\ecal{O}_{\mathcal{X}}}f_*\ecal{O}_{\mathcal{Y}}
    $$
    while the right hand side is isomorphic to
    $$
    K \otimes^{\mathbb{L}}_{f_*\ecal{O}_{\mathcal{Y}}}(f_*\ecal{O}_{\mathcal{Y}}\otimes_{\ecal{O}_{\mathcal{X}}}^{\mathbb{L}}f_*\ecal{O}_{\mathcal{Y}}),
    $$
    so we conclude by associativity of derived tensor products.
\end{proof}

\begin{lemma}
\label{lemma-rootstacks}
    Let $\mathcal{X}$ be a Noetherian, tame algebraic stack. Let $\mathcal{D} \subset \mathcal{X}$ be an effective Cartier divisor. Let $r \geq 1$ an integer. Let $\pi : \mathcal{Y} = \sqrt[r]{(\mathcal{X}, \mathcal{D})} \to \mathcal{X}$ be the associated root stack morphism. Let $\mathcal{E} \subset \mathcal{Y}$ be the tautological $r$th root of the effective Cartier divisor $\pi^{-1}(\mathcal{D})$. Then the right adjoint of $R \pi_* : D^b_{\operatorname{Coh}}(\mathcal{Y}) \to D^b_{\operatorname{Coh}}(\mathcal{X})$ is given by
    $
    \pi^!(-) = L\pi^*(-)\otimes^{\mathbb{L}} \ecal{O}_{\mathcal{Y}}((r-1)\mathcal{E}).
    $
\end{lemma}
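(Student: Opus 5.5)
We compute the right adjoint locally and then glue, reducing to a statement about a relative dualizing sheaf. First, since $\mathcal{X}$ is tame and $\pi$ is a root stack morphism (so $\mathcal{Y}$ is tame, $\pi$ is proper, quasi-finite, and $\pi_*\ecal{O}_{\mathcal{Y}} = \ecal{O}_{\mathcal{X}}$), the functor $R\pi_* = \pi_*$ is exact on quasi-coherent sheaves. It therefore preserves $D^b_{\operatorname{Coh}}$, and it admits a right adjoint $\pi^\times$ on $D_{qc}$ by \cite[Section 3]{hall2024generalizedbondalorlovfaithfulnesscriterion}. Next, $\pi$ is flat: étale locally on $\mathcal{X}$, with $\mathcal{D} = V(f)$, we have $\mathcal{Y} = [\operatorname{Spec}(\ecal{O}_{\mathcal{X}}[t]/(t^r - f))/\mu_r]$, which is flat over $\mathcal{X}$. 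Hence $\pi$ is also perfect, and Hall--Priver's duality then gives $\pi^\times(K) \cong L\pi^*K \otimes^{\mathbb{L}} \pi^\times(\ecal{O}_{\mathcal{X}})$ for $K \in D_{qc}(\mathcal{X})$ (tor-independent base change / projection formula for the twisted inverse image). This reduces the lemma to the identification
\[
\pi^\times(\ecal{O}_{\mathcal{X}}) \cong \ecal{O}_{\mathcal{Y}}((r-1)\mathcal{E}).
\]

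To compute $\pi^\times(\ecal{O}_{\mathcal{X}})$, we first note that it is a line bundle in degree $0$. Since $\pi^\times$ commutes with flat base change on $\mathcal{X}$ (again from the formalism in loc.\ cit.), we may check this smooth or étale locally on $\mathcal{X}$. This lets us reduce to $\mathcal{X} = \operatorname{Spec} A$ and $\mathcal{Y} = [\operatorname{Spec} B/\mu_r]$ with $B = A[t]/(t^r - f)$. Factor $\pi$ as $\mathcal{Y} \xrightarrow{q} \operatorname{Spec} A$ through $p : \operatorname{Spec} B \to \mathcal{Y}$. The finite flat map $\operatorname{Spec} B \to \operatorname{Spec} A$ has dualizing module $\operatorname{Hom}_A(B, A)$, a free $B$-module of rank one generated by the trace-like functional dual to $t^{r-1}$. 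This generator has $\mu_r$-weight $-(r-1)$, equivalently $r-1$ up to sign conventions. Taking $\mu_r$-invariants (which is exact by tameness) and using $\operatorname{Hom}_{\mathcal{X}}(\pi_*M, \ecal{O}) = \operatorname{Hom}_{\mathcal{Y}}(M, \pi^\times\ecal{O})$ for equivariant $B$-modules $M$, we identify $\pi^\times \ecal{O}_{\mathcal{X}}$ with the equivariant module $\operatorname{Hom}_A(B, A)$. This module is the invertible sheaf $\ecal{O}_{\mathcal{Y}}((r-1)\mathcal{E})$, since $\mathcal{E} = V(t)$ and $\ecal{O}(\mathcal{E})$ is generated by $t^{-1}$ with the appropriate weight.

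Alternatively, and more cleanly globally, we can verify the adjunction directly. For a coherent $\ecal{G}$ on $\mathcal{Y}$ we need a natural isomorphism
\[
\operatorname{Hom}_{\mathcal{X}}(\pi_*\ecal{G}, \ecal{O}_{\mathcal{X}}) \cong \operatorname{Hom}_{\mathcal{Y}}(\ecal{G}, \ecal{O}_{\mathcal{Y}}((r-1)\mathcal{E})),
\]
induced by a global trace map $\pi_*\ecal{O}_{\mathcal{Y}}((r-1)\mathcal{E}) \to \ecal{O}_{\mathcal{X}}$. This map exists because $\pi_*\ecal{O}_{\mathcal{Y}}(j\mathcal{E}) = \ecal{O}_{\mathcal{X}}$ for $0 \le j \le r-1$, which follows from taking weight-zero parts of $t^{-j}B$. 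The map is compatible with base change, so checking that it is an isomorphism reduces to the local computation above, and the local computations glue.

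The main obstacle is the weight bookkeeping: pinning down that the generator of $\operatorname{Hom}_A(B, A)$ corresponds to $\ecal{O}((r-1)\mathcal{E})$ and not to $\ecal{O}(-(r-1)\mathcal{E})$ or $\ecal{O}(\mathcal{E})$, and making sure the local isomorphisms glue. I would handle this by constructing the global trace map first, as in the previous paragraph, and only then checking locally that it is perfect.
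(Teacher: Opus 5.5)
Your argument is correct and is essentially the paper's proof. Both reduce via Hall--Priver's Theorem~3.1 to identifying $\pi^!(\ecal{O}_{\mathcal{X}})$, produce a global map by adjunction, and verify it flat locally in the graded model $B=A[T]/(T^r-f)$, where $\operatorname{Hom}_A(B,A)$ is free on the functional $\lambda_{r-1}$ dual to $T^{r-1}$. The differences are minor: the gluing issue is handled by your global trace-map argument, not the purely local first computation, and the paper uses a different global map. It takes the adjoint $\beta\colon\ecal{O}_{\mathcal{Y}}(-\mathcal{E})\to\pi^!(\ecal{O}_{\mathcal{X}})$ of $\pi_*$ of the tautological section and shows it is injective with image $\pi^!(\ecal{O}_{\mathcal{X}})(-r\mathcal{E})$. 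You instead invert the isomorphism $\ecal{O}_{\mathcal{X}}\to\pi_*\ecal{O}_{\mathcal{Y}}((r-1)\mathcal{E})$, which gives the isomorphism $\ecal{O}_{\mathcal{Y}}((r-1)\mathcal{E})\cong\pi^!(\ecal{O}_{\mathcal{X}})$ directly.
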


\begin{proof}
    It suffices by \cite[Theorem 3.1]{hall2024generalizedbondalorlovfaithfulnesscriterion} to give an isomorphism $\pi^!(\ecal{O}_\mathcal{X}) \cong \ecal{O}_{\mathcal{Y}}((r-1)\mathcal{E})$. Let $t \in \Gamma(\mathcal{Y}, \ecal{O}_{\mathcal{Y}}(\mathcal{E}))$ be the section whose zero scheme is $\mathcal{E}$. Then applying the functor $\pi_*$ to $t : \ecal{O}_{\mathcal{Y}}(-\mathcal{E}) \to \ecal{O}_{\mathcal{Y}}$ gives a morphism 
    $$
   \alpha :  \pi_*\ecal{O}_{\mathcal{Y}}(-\mathcal{E}) \to \pi_*\ecal{O}_{\mathcal{Y}} = \ecal{O}_{\mathcal{X}},
    $$
    which corresponds by adjunction to a morphism $\beta: \ecal{O}_{\mathcal{Y}}(-\mathcal{E}) \to \pi^!(\ecal{O}_{\mathcal{X}})$. We will show the target of $\beta$ is a sheaf and $\beta$ is injective with image $\pi^!(\ecal{O}_{\mathcal{X}})(-r \cdot \mathcal{E})$, which will complete the proof. These checks can be done flat locally on $\mathcal{X}$ since the formation of $\pi^!$ commutes with flat base change by \cite[Theorem 3.1]{hall2024generalizedbondalorlovfaithfulnesscriterion}, and we may therefore assume $\mathcal{D} = \operatorname{Spec} A/f \subset \operatorname{Spec} A = \mathcal{X}$ for some ring $A$ and non-zero-divisor $f \in A$. Set $B = A[T]/(T^r-f)$, a $\mathbb{Z}/r$-graded $A$-algebra where $T$ has degree $1$. Then $\mathcal{Y} = [\operatorname{Spec}B/\mu_r]$. The category of coherent sheaves on $\mathcal{Y}$ is equivalent to the category of finitely generated graded $B$-modules, and $\pi_* : \operatorname{Coh}\mathcal{Y} \to \operatorname{Coh}\mathcal{X}$ corresponds to the functor $M \mapsto M_0$ to finitely generated $A$-modules. As in the ungraded case, one checks that the functor
    $$
    \{\text{finite $A$-modules}\} \to \{\text{finite graded $B$-modules}\}, \hspace{3 em} M \mapsto \operatorname{Hom}_A(B, M) = \bigoplus _{i = 0}^{r-1} \operatorname{Hom}_A(B_i, M)
    $$
    is the right adjoint, where the summand $\operatorname{Hom}_A(B_i, M)$ is declared homogeneous of degree $-i$. Since both functors in the adjunction are exact, they also induce an adjunction on the level of bounded derived categories. Under these identifications, the morphism $\alpha$ defined above is given by multiplication by $T$
    $$
    \alpha : B_{-1} \to  B_0, \hspace{3 em} aT^{r-1} \mapsto aT^r = fa,
    $$
    and its adjoint $\beta$ is the map $B(-1) \to \operatorname{Hom}_A(B, A)$ corresponding to the homogeneous element $\alpha \in \operatorname{Hom}_A(B, A)_{1} = \operatorname{Hom}_A(B_{-1}, A)$. It is a routine check that as a $B$-module, $\operatorname{Hom}_A(B, A)$ is freely generated by the element $\lambda_{r-1}$, where $\lambda_0, \dots , \lambda_{r-1}$ is the dual basis to the basis $1, T, \dots , T^{r-1}$ of $B$ as an $A$-module. We see that $\beta = f \cdot \lambda_{r-1}$, so that $\beta : B(-1) \to \operatorname{Hom}_A(B, A) = B\cdot \lambda _{r-1}$ is given by $b \mapsto bf\lambda_{r-1} = bT^r \lambda _{r-1}$, completing the proof.
\end{proof}

For the rest of the section, let $\mathcal{X}$ be a tame algebraic stack proper over a field $k$. Then there is a unique up to isomorphism object $\omega_{\mathcal{X}}^\bullet \in D^b_{\operatorname{Coh}}(\mathcal{X})$ such that there are natural isomorphisms
$$
\operatorname{Hom}_{{\mathcal{X}}}(K, \omega_{\mathcal{X}}^\bullet) = \operatorname{Hom}_k(R \Gamma(\mathcal{X}, K), k)
$$
for $K \in D^b_{\operatorname{Coh}}(\mathcal{X})$. Namely, we can take $\omega_{\mathcal{X}}^\bullet = p^! \ecal{O}_{\operatorname{Spec}k}$ where $p : \mathcal{X} \to \operatorname{Spec} k$ is the structure morphism and $p^! : D^b_{\operatorname{Coh}}(\operatorname{Spec}k) \to D^b_{\operatorname{Coh}}(\mathcal{X})$ is the right adjoint to $Rp_*$, see \cite[Theorem 3.1]{hall2024generalizedbondalorlovfaithfulnesscriterion}. 
If additionally, $\mathcal{X}$ is 
Gorenstein of pure dimension $d$, then $\omega_{\mathcal{X}}^\bullet = \omega_{\mathcal{X}}[d]$ for a line bundle $\omega_{\mathcal{X}}$ on $\mathcal{X}$ \cite[Corollary 3.2]{hall2024generalizedbondalorlovfaithfulnesscriterion}. A regular algebraic stack is Gorenstein. 

\begin{lemma}
\label{lemma-isserrefunctor}
    If $\mathcal{X}$ is Gorenstein, then the functor $K \mapsto K \otimes ^{\mathbb{L}}\omega_{\mathcal{X}}^{\bullet}$ is a Serre functor on $D_{perf}(\mathcal{X})$.
\end{lemma}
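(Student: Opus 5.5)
We follow the proof of Lemma \ref{lemma-serrefunctorpropersupport}, replacing Grothendieck duality for schemes with the duality for proper tame stacks recalled above. The first step is to check that $-\otimes^{\mathbb{L}}\omega_{\mathcal{X}}^\bullet$ is an autoequivalence of $D_{perf}(\mathcal{X})$. Since $\mathcal{X}$ is proper over $k$, it is quasi-compact, so its connected components are open and closed and finite in number. We may therefore assume $\mathcal{X}$ is connected, or simply argue componentwise. A Gorenstein Noetherian stack is Cohen--Macaulay, so each connected component has pure dimension. By \cite[Corollary 3.2]{hall2024generalizedbondalorlovfaithfulnesscriterion}, on each component we have $\omega_{\mathcal{X}}^\bullet \cong \omega_{\mathcal{X}}[d]$ with $\omega_{\mathcal{X}}$ a line bundle. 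Hence $\omega_{\mathcal{X}}^\bullet$ is an invertible perfect complex, and tensoring with it preserves $D_{perf}(\mathcal{X})$ with quasi-inverse given by tensoring with $(\omega_{\mathcal{X}}^\bullet)^\vee$.

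Next we establish the Serre duality isomorphism. Take $K, L \in D_{perf}(\mathcal{X})$. Since $L$ is perfect and $\omega_{\mathcal{X}}^\bullet$ is invertible, there are natural isomorphisms
$$
\operatorname{Hom}(K, L\otimes^{\mathbb{L}}\omega_{\mathcal{X}}^\bullet) \cong \operatorname{Hom}(K \otimes^{\mathbb{L}} L^\vee, \omega_{\mathcal{X}}^\bullet) \cong \operatorname{Hom}(R\mathcal{H}om(L, K), \omega_{\mathcal{X}}^\bullet).
$$
The object $R\mathcal{H}om(L,K) \cong L^\vee\otimes^{\mathbb{L}} K$ is perfect, so it lies in $D^b_{\operatorname{Coh}}(\mathcal{X})$. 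The defining property of $\omega_{\mathcal{X}}^\bullet = p^!\ecal{O}_{\operatorname{Spec}k}$ then identifies the last group with $\operatorname{Hom}_k(R\Gamma(\mathcal{X}, R\mathcal{H}om(L,K)), k)$. Taking the degree-zero piece, and using $k$-duality on the finite-dimensional complex $R\Gamma$, we get $H^0(R\Gamma(\mathcal{X}, R\mathcal{H}om(L,K)))^* = \operatorname{Hom}(L,K)^*$. Finite-dimensionality of $R\Gamma$ holds because $\mathcal{X}$ is proper and tame, so coherent cohomology on the coarse space is finite; this is also implicit in the existence of $p^!$ on $D^b_{\operatorname{Coh}}$. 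Each isomorphism in the chain is natural in $K$ and $L$.

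The main point to check is that the local Hom and tensor identities used above hold on the lisse-\'etale site. Concretely, we need
$$
R\operatorname{Hom}(K\otimes^{\mathbb{L}}L^\vee, M) \cong R\operatorname{Hom}(K, L\otimes^{\mathbb{L}}M) \quad\text{and}\quad R\operatorname{Hom}(L,K) \cong R\Gamma(\mathcal{X}, R\mathcal{H}om(L,K))
$$
for perfect $L$ in $D_{qc}(\mathcal{X})$. These are standard for perfect complexes on algebraic stacks; see \cite{hall2017perfect}. Since $\mathcal{X}$ is Noetherian with finite diagonal, we may also work in $D(\operatorname{QCoh}\mathcal{X})$, where they are formal consequences of dualizability of $L$. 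Finally, we check that the resulting isomorphism $\operatorname{Hom}(K, SL) \cong \operatorname{Hom}(L,K)^*$ is bifunctorial. This follows because every step is natural, and so $S = -\otimes^{\mathbb{L}}\omega_{\mathcal{X}}^\bullet$ is a Serre functor.
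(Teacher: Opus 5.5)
Your proposal is correct and follows the paper's proof: it uses the same chain $\operatorname{Hom}(K, L\otimes^{\mathbb{L}}\omega_{\mathcal{X}}^\bullet)\cong\operatorname{Hom}(R\mathcal{H}om(L,K),\omega_{\mathcal{X}}^\bullet)\cong\operatorname{Hom}(R\operatorname{Hom}(L,K),k)\cong\operatorname{Hom}(L,K)^*$, and uses the Gorenstein hypothesis only to make $\omega_{\mathcal{X}}^\bullet$ invertible, so that tensoring with it preserves $D_{perf}(\mathcal{X})$. Your componentwise reduction and explicit quasi-inverse are extra detail the paper leaves implicit; one small slip is that the connected components are finite in number because $|\mathcal{X}|$ is Noetherian, not merely because $\mathcal{X}$ is quasi-compact.
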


\begin{proof}
    We have, for $K, L \in D_{perf}(\mathcal{X})$,
    $$
    \operatorname{Hom}_{{\mathcal{X}}}(K, L \otimes \omega_{\mathcal{X}}^\bullet) = \operatorname{Hom}_{\ecal{O}_{\mathcal{X}}}(R \mathcal{H}om(L, K), \omega_{\mathcal{X}}^\bullet) = \operatorname{Hom}(R\operatorname{Hom}(L, K), k) = \operatorname{Hom}(L, K)^*.
    $$
    The assumption that $\mathcal{X}$ is Gorenstein ensures that the functor $K \mapsto K \otimes ^{\mathbb{L}} \omega_{\mathcal{X}}^\bullet$ preserves $D_{perf}(\mathcal{X})$. 
\end{proof}

\begin{lemma}
\label{lemma-adjunctionformula}
    Let $i: \mathcal{Y} \to \mathcal{X}$ be a regular closed immersion of codimension $c$ with normal bundle $\ecal{N}$. 
    Then $\omega_{\mathcal{Y}}^\bullet = Li^*(\omega_{\mathcal{X}}^\bullet) \otimes ^{\mathbb{L}} \wedge^c \ecal{N}[-c]$
\end{lemma}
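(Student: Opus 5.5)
The plan is to derive the formula from the composition rule for upper-shriek functors together with the regular-immersion computation of Lemma~\ref{lemma-regularclosedimmersioncalc}. Write $p : \mathcal{X} \to \operatorname{Spec} k$ and $q = p \circ i : \mathcal{Y} \to \operatorname{Spec} k$. Since $i$ is a closed immersion, $\mathcal{Y}$ is again a tame algebraic stack proper over $k$. So $\omega_{\mathcal{Y}}^\bullet = q^!\ecal{O}_{\operatorname{Spec}k}$ is defined, as the right adjoint of $Rq_*$ on bounded coherent complexes, via \cite[Theorem 3.1]{hall2024generalizedbondalorlovfaithfulnesscriterion}.

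The first step is the identification $q^! \cong i^! \circ p^!$ on $D^b_{\operatorname{Coh}}$. We have $Rq_* \cong Rp_* \circ Ri_*$, and both $Ri_*$ and $Rp_*$ preserve bounded coherent complexes: the first because $i$ is finite, the second because $\mathcal{X}$ is proper and tame. Right adjoints compose, so $i^!p^!$ is right adjoint to $Rq_*$ on $D^b_{\operatorname{Coh}}$. Here $i^!$ is the right adjoint of $Ri_*$ produced in Lemma~\ref{lemma-regularclosedimmersioncalc}(i), and it lands in $D^b_{\operatorname{Coh}}(\mathcal{Y})$. By uniqueness of adjoints we get $\omega_{\mathcal{Y}}^\bullet \cong i^!(p^!\ecal{O}_{\operatorname{Spec} k}) = i^!\omega_{\mathcal{X}}^\bullet$.

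Equivalently, and avoiding any worry about which category the adjoints live in, one can check the defining universal property directly. For $K \in D^b_{\operatorname{Coh}}(\mathcal{Y})$ we have
\[
\operatorname{Hom}_{\mathcal{Y}}(K, i^!\omega_{\mathcal{X}}^\bullet) \cong \operatorname{Hom}_{\mathcal{X}}(Ri_*K, \omega_{\mathcal{X}}^\bullet) \cong \operatorname{Hom}_k(R\Gamma(\mathcal{X}, Ri_*K), k) = \operatorname{Hom}_k(R\Gamma(\mathcal{Y}, K), k),
\]
naturally in $K$. The object representing this functor is unique up to isomorphism by Yoneda, so $i^!\omega_{\mathcal{X}}^\bullet$ is a dualizing complex for $\mathcal{Y}$ in the sense used above.

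The second step is to apply the formula $i^!(K) = Li^*(K) \otimes^{\mathbb{L}} \wedge^c\ecal{N}[-c]$ from Lemma~\ref{lemma-regularclosedimmersioncalc}(i) with $K = \omega_{\mathcal{X}}^\bullet$. This gives exactly the claimed $\omega_{\mathcal{Y}}^\bullet \cong Li^*(\omega_{\mathcal{X}}^\bullet) \otimes^{\mathbb{L}} \wedge^c\ecal{N}[-c]$.

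The only point requiring care is that Lemma~\ref{lemma-regularclosedimmersioncalc} gives the adjunction on $D^b_{\operatorname{Coh}}$. We need $\omega_{\mathcal{X}}^\bullet \in D^b_{\operatorname{Coh}}(\mathcal{X})$, which holds by \cite[Theorem 3.1]{hall2024generalizedbondalorlovfaithfulnesscriterion}, and that $\mathcal{X}$ is Noetherian, which holds since it is of finite type over $k$. Then the formula applies verbatim, with no Gorenstein hypothesis on $\mathcal{X}$ needed. I expect no real obstacle; the mild subtlety is just confirming that the adjunctions are taken on the same subcategory $D^b_{\operatorname{Coh}}$, which the direct Yoneda argument above sidesteps.
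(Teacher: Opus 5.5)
Your proposal is correct and matches the paper's proof. The paper argues exactly by your direct Yoneda computation: it uses the adjunction of Lemma~\ref{lemma-regularclosedimmersioncalc}(i) and then the defining property of $\omega_{\mathcal{X}}^\bullet$ to identify $\operatorname{Hom}_{\mathcal{Y}}(K, Li^*\omega_{\mathcal{X}}^\bullet\otimes^{\mathbb{L}}\wedge^c\ecal{N}[-c])$ with $\operatorname{Hom}_k(R\Gamma(\mathcal{Y},K),k)$, and, as you note, no Gorenstein hypothesis is needed for that chain of isomorphisms.
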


\begin{proof}
    This follows formally from Lemmas \ref{lemma-regularclosedimmersioncalc} and \ref{lemma-isserrefunctor}: For $K \in D^b_{\operatorname{Coh}}(\mathcal{Y})$, we have
    \[
    \operatorname{Hom}_{\mathcal{Y}}(K, Li^*(\omega_{\mathcal{X}}^\bullet) \otimes^{\mathbb{L}}\wedge^c\ecal{N}[-c]) = \operatorname{Hom}(Ri_*(K), \omega_{\mathcal{X}}^\bullet) = \operatorname{Hom}( R\Gamma(Ri_*(K)), k) = \operatorname{Hom}(R\Gamma(\mathcal{Y}, K), k). \qedhere 
    \]
    \end{proof}

\begin{example}
If $G$ is a tame group scheme over $k$ and $\mathcal{X} = BG_k$, then $\omega_{\mathcal{X}}^\bullet$ is the trivial representation $k$ in degree zero. Namely, for a finite-dimensional representation $V$ of $G$, we have
    $$
    \operatorname{Hom}_G(V, k) = (V^*)^G = (V^G)^*
    $$
   since $G$ is linearly reductive. Since the category of finite-dimensional representations of $G$ is semi-simple and $D^b_{\operatorname{Coh}}(\mathcal{X})$ is its bounded derived category, it follows that there are natural isomorphisms 
    $$
    R\operatorname{Hom}_{\ecal{O}_{\mathcal{X}}}(K, k) = R\operatorname{Hom}_k(R\Gamma(\mathcal{X}, K),k)
    $$
    for all objects $K \in D^b_{\operatorname{Coh}}(\mathcal{X})$, as needed.
\end{example}

\subsection{Coherent sheaves on a gerbe}

We will need some information on the category of coherent sheaves on a gerbe in order to understand the generalized closed point sheaves on a stack. So, let $k$ be a field and $\mathcal{G} \to \operatorname{Spec}k$ a gerbe with finite diagonal. Then there are a finite field extension $k'/k$ and a finite group scheme $G/k'$ such that $\mathcal{G}_{k'}\cong BG_{k'}$ as stacks over $k'$. The induced morphism $\operatorname{Spec}k' \to \mathcal{G}$ is a finite locally free surjective morphism, and using this, one can see that (1) $\mathcal{G}$ is Noetherian; (2) every coherent sheaf on $\mathcal{G}$ is a vector bundle (hence $\perf \cal G = D^b_{\operatorname{Coh}}(\cal G)$); and (3) the category of coherent sheaves on $\mathcal{G}$ is a finite length category, see \cite[Proposition 7.1]{hall2024generalizedbondalorlovfaithfulnesscriterion}. 

\begin{lemma}
    Let $x : \operatorname{Spec}k' \to \mathcal{G}$ be any morphism over $k$ where $k'/k$ is a finite extension of fields. Then every coherent sheaf on $\mathcal{G}$ is a quotient of a finite direct sum of copies of $x_*\ecal{O}_{\operatorname{Spec}k'}$.
\end{lemma}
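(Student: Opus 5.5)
The plan is to use the adjunction $(x^*, x_*)$ together with the fact that $x$ is finite, locally free and surjective. Since $\mathcal{G}$ is a gerbe over $k$ with finite diagonal, and $\operatorname{Spec}k' \to \operatorname{Spec}k$ is finite, any morphism $x : \operatorname{Spec}k' \to \mathcal{G}$ is representable, quasi-finite and proper. I would first check that $x$ is finite and flat, hence finite locally free. Choose the cover $\operatorname{Spec}k'' \to \mathcal{G}$ trivializing the gerbe, as recalled above; this cover is finite locally free and surjective. The base change $\operatorname{Spec}k' \times_{\mathcal{G}} \operatorname{Spec}k''$ is a finite scheme over the field $k''$, hence flat over $\operatorname{Spec}k''$. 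Surjectivity is automatic because $|\mathcal{G}|$ is a single point. Consequently $x_*\ecal{O}_{\operatorname{Spec}k'}$ is a coherent sheaf, indeed a vector bundle, on $\mathcal{G}$.

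Now let $\ecal{F}$ be a coherent sheaf on $\mathcal{G}$. The counit $\ecal{F} \to x_*x^*\ecal{F}$ of the adjunction is not what we want, since it points the wrong way. The key step is to produce a surjection onto $\ecal{F}$ from the left adjoint side instead. Since $x$ is finite locally free, $x_*$ has a left adjoint as well as a right adjoint $x^!$. Concretely, $x^!\ecal{F} = x^*\ecal{F} \otimes \omega_x$, where $\omega_x = \mathcal{H}om(x_*\ecal{O}, \ecal{O}_{\mathcal{G}})$ is viewed as an $x_*\ecal{O}$-module. This gives a counit map $x_*x^!\ecal{F} = \mathcal{H}om_{\ecal{O}_\mathcal{G}}(x_*\ecal{O}_{\operatorname{Spec}k'}, \ecal{F}) \to \ecal{F}$, namely evaluation at $1$.

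I would show this evaluation map is surjective. Surjectivity can be checked after pulling back along the faithfully flat cover $\operatorname{Spec}k'' \to \mathcal{G}$. There it becomes the evaluation-at-$1$ map $\operatorname{Hom}_{k''}(A, V) \to V$ for a finite $k''$-algebra $A \neq 0$ and a vector space $V$. This map is clearly surjective, since $1 \in A$ can be sent anywhere by a $k''$-linear map and $k''\cdot 1$ is a direct summand of $A$.

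Finally, $x^!\ecal{F}$ is a finite-dimensional $k'$-vector space, so it is isomorphic to $\ecal{O}_{\operatorname{Spec}k'}^{\oplus n}$ for some $n$. Applying $x_*$ then gives a surjection $(x_*\ecal{O}_{\operatorname{Spec}k'})^{\oplus n} \twoheadrightarrow \ecal{F}$, which is the claim.

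The main obstacle is the first step: verifying that $x$ is finite locally free, so that the $\mathcal{H}om$ description of the right adjoint is valid and coherent. The surjectivity check after flat base change is routine.
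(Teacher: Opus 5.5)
Your proposal is correct and is essentially the paper's argument: take the right adjoint $x^!$ with $x_*x^!\ecal{F} = \mathcal{H}om(x_*\ecal{O}_{\operatorname{Spec}k'}, \ecal{F})$, observe that the counit (evaluation at $1$) is surjective because $\ecal{O}_{\mathcal{G}} \to x_*\ecal{O}_{\operatorname{Spec}k'}$ is locally split, and push forward a free presentation of the $k'$-vector space $x^!\ecal{F}$. Your check that an arbitrary $x$ is finite locally free, which the paper simply recalls, is a sound addition; the only slip is calling $\ecal{F} \to x_*x^*\ecal{F}$ a counit, when it is the unit of the $(x^*, x_*)$ adjunction.
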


\begin{proof}
    Let $x^! : \operatorname{Coh}(\mathcal{G}) \to \operatorname{Coh}(\operatorname{Spec}k')$ be the right adjoint of $x_*$. Recall that since $x$ is (representable and) finite locally free, this is given by the formula $x_*x^!\ecal{F} = \mathcal{H}om_{\mathcal{G}}({x_*\ecal{O}_{\operatorname{Spec}k'}}, \ecal{F})$ as an ${x_*\ecal{O}_{\operatorname{Spec}k'}}$-module. Then given any object $\ecal{F} \in \operatorname{Coh}(\mathcal{G})$, choose a surjection $\ecal{O}_{\operatorname{Spec}k'}^{\oplus N} \to x^!\ecal{F}$, and then apply $x_*$ to get surjective maps
    $$
    x_*\ecal{O}_{\operatorname{Spec}k'}^{\oplus N} \to x_*x^! \ecal{F} \to \ecal{F}.
    $$
    The second arrow is surjective because it is identified with $\mathcal{H}om(\alpha, \ecal{F})$ where $\alpha$ is the locally split injection $\ecal{O}_\mathcal{G} \to x_*\ecal{O}_{\operatorname{Spec}k'}$. 
\end{proof}

\begin{corollary}
\label{corollary-finitelymanysimples}
    The category of coherent sheaves on $\mathcal{G}$ has finitely many simple objects up to isomorphism. The group $\operatorname{Pic}(\mathcal{G})$ is finite. 
\end{corollary}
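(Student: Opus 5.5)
The plan is to use the preceding lemma for the first claim and a determinant argument for the second. Fix a finite extension $k'/k$ and a morphism $x : \operatorname{Spec}k' \to \mathcal{G}$; one exists, for instance the one from the trivialization $\mathcal{G}_{k'} \cong BG_{k'}$. Set $\ecal{P} = x_*\ecal{O}_{\operatorname{Spec}k'}$, which is a coherent sheaf on $\mathcal{G}$.

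For finiteness of simples, let $\ecal{S}$ be a simple object. By the lemma there is a surjection $\ecal{P}^{\oplus N} \to \ecal{S}$ for some $N$, so some component $\ecal{P} \to \ecal{S}$ is nonzero. Since $\ecal{S}$ is simple, this component is surjective, so $\ecal{S}$ is a simple quotient of $\ecal{P}$. The category $\operatorname{Coh}(\mathcal{G})$ is a finite length category, so $\ecal{P}$ has a finite Jordan--Hölder series. By Jordan--Hölder, every simple quotient of $\ecal{P}$ is isomorphic to one of its finitely many composition factors. Hence there are only finitely many simple objects up to isomorphism.

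For finiteness of $\operatorname{Pic}(\mathcal{G})$, note that a line bundle $\ecal{L}$ is simple. Indeed, every coherent sheaf on $\mathcal{G}$ is a vector bundle, and rank is additive on short exact sequences. A subobject of a rank-one bundle therefore has rank $0$ or $1$. In the first case it is zero; in the second its quotient has rank $0$ and so is zero. So line bundles lie among the finitely many simples, and $\operatorname{Pic}(\mathcal{G})$ is finite.

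Alternatively, one can argue through the trivialization. Pullback to $\operatorname{Spec}k'$ sends $\ecal{L}$ to a trivial bundle, and the kernel of $\operatorname{Pic}(\mathcal{G}) \to \operatorname{Pic}(\mathcal{G}_{k'}) = \bb X(G)$ is controlled by descent data. That route is less clean, so I would use the simplicity argument. The step that needs the most care is the rank argument on $\mathcal{G}$. It relies on $\mathcal{G}$ being irreducible, with rank well defined because $|\mathcal{G}|$ is a single point, and on every coherent sheaf being locally free, which the paper has already recorded.
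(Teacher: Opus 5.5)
Your proposal is correct and follows the paper's route. Every simple object is a quotient of $x_*\ecal{O}_{\operatorname{Spec}k'}$ by the preceding lemma, so it is one of that sheaf's finitely many Jordan--H\"older factors. Line bundles are simple because every coherent sheaf on $\mathcal{G}$ is a vector bundle; your rank-additivity argument just spells out the simplicity claim that the paper states in one line.
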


\begin{proof}
    By the previous lemma and the Jordan--H\"older Theorem, every simple object occurs in a composition series for $x_*\ecal{O}_{\operatorname{Spec}k'}$ and only finitely many objects up to isomorphism occur in that way. The second part is because a line bundle on $\mathcal{G}$ is a simple object of the category of coherent sheaves on $\mathcal{G}$ since every coherent sheaf on $\mathcal{G}$ is a vector bundle. 
\end{proof}

When $\mathcal{G}$ is additionally assumed tame, we can say more.

\begin{lemma}
\label{lemma-tamegerbe}
    Suppose $\mathcal{G}$ is tame. Then the category of coherent sheaves on $\mathcal{G}$ is a semi-simple abelian category with finitely many simple objects, and all simple objects occur as a direct summand of the sheaf $x_*\ecal{O}_{\operatorname{Spec}k'}$ where $x : \operatorname{Spec}k' \to \mathcal{G}$ is any morphism over $k$ with $k'/k$ a finite extension of fields.
\end{lemma}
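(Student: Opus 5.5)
The plan is to prove semisimplicity first, and then deduce the statement about simple objects from the previous lemma.

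\textbf{Step 1: semisimplicity.} Since $\mathcal{G}$ is tame, the pushforward $p_* : \operatorname{QCoh}(\mathcal{G}) \to \operatorname{QCoh}(\operatorname{Spec}k)$ to the coarse space is exact. Here the coarse space of the gerbe is $\operatorname{Spec}k$, or possibly a finite extension of $k$; either way it is a field, so nothing changes in the argument. Global sections $\Gamma(\mathcal{G}, -) = \operatorname{Hom}(\ecal{O}_{\mathcal{G}}, -)$ are therefore exact on coherent sheaves.

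Now let $\ecal{F}, \ecal{E}$ be coherent. Every coherent sheaf on $\mathcal{G}$ is a vector bundle, so
$$\operatorname{Ext}^1(\ecal{F}, \ecal{E}) = H^1(\mathcal{G}, \ecal{F}^\vee \otimes \ecal{E}).$$
This identification is justified by computing $\operatorname{Ext}$ of a locally free sheaf as cohomology of the internal Hom, and $\mathcal{H}om(\ecal{F}, \ecal{E}) = \ecal{F}^\vee \otimes \ecal{E}$ is again coherent. The $H^1$ vanishes by exactness of $\Gamma$. Concretely, the coarse space is affine, so $H^i(\mathcal{G}, -) = H^i(\operatorname{Spec}k, p_*(-)) = 0$ for $i > 0$; this uses that $Rp_* = p_*$ on quasi-coherent sheaves for a tame stack.

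Hence every short exact sequence of coherent sheaves splits, and $\operatorname{Coh}(\mathcal{G})$ is semisimple. Being a finite length category, every object is then a finite direct sum of simples. Finiteness of the set of simples is Corollary \ref{corollary-finitelymanysimples}.

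\textbf{Step 2: every simple is a summand of $x_*\ecal{O}_{\operatorname{Spec}k'}$.} Let $\ecal{S}$ be simple. By the previous lemma there is a surjection $x_*\ecal{O}_{\operatorname{Spec}k'}^{\oplus N} \twoheadrightarrow \ecal{S}$. By semisimplicity this splits, so $\ecal{S}$ is a direct summand of $(x_*\ecal{O}_{\operatorname{Spec}k'})^{\oplus N}$. Decomposing $x_*\ecal{O}_{\operatorname{Spec}k'} = \bigoplus_j \ecal{S}_j$ into simples, Krull--Schmidt (or Jordan--H\"older) applied in the finite length semisimple category shows $\ecal{S} \cong \ecal{S}_j$ for some $j$. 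Thus $\ecal{S}$ is a direct summand of $x_*\ecal{O}_{\operatorname{Spec}k'}$ itself.

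\textbf{Expected obstacle.} The only delicate point is justifying $\operatorname{Ext}^1_{\mathcal{G}}(\ecal{F}, \ecal{E}) = H^1(\mathcal{G}, \mathcal{H}om(\ecal{F}, \ecal{E}))$ and the vanishing of $H^1$ on the lisse-\'etale site. I would handle the vanishing through $D_{qc}(\mathcal{G}) \cong D(\operatorname{QCoh}\mathcal{G})$, available since $\mathcal{G}$ has finite diagonal. For the Ext identification there is a cleaner alternative that sidesteps derived categories entirely. Given a surjection $\ecal{F} \to \ecal{E}$ of vector bundles, the map $\mathcal{H}om(\ecal{E}, \ecal{F}) \to \mathcal{H}om(\ecal{E}, \ecal{E})$ is a surjection of coherent sheaves, because $\ecal{E}$ is locally free. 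Applying the exact functor $\Gamma$ shows that $\operatorname{Hom}(\ecal{E}, \ecal{F}) \to \operatorname{Hom}(\ecal{E}, \ecal{E})$ is surjective, so the identity of $\ecal{E}$ lifts to a splitting.
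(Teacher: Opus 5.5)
Your proof is correct and follows the paper's argument. The paper simply notes that every coherent sheaf on $\mathcal{G}$ is a vector bundle, and that vector bundles are projective because $\mathcal{G}$ is tame with affine coarse space; this is exactly your ``cleaner alternative'', so the $\operatorname{Ext}^1 = H^1$ route is an unnecessary detour. Your derivation of the claim about simple objects, from the preceding lemma plus semisimplicity, is also the intended one.
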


\begin{proof}
    We have seen that every object of the category of coherent sheaves on $\mathcal{G}$ is a vector bundle, so it suffices to note that every vector bundle on $\mathcal{G}$ is a projective object of the category of (quasi-)coherent sheaves since $\mathcal{G}$ is a tame stack with affine coarse space. 
\end{proof}
 
In particular, higher Ext groups between simple coherent sheaves vanish, and any object of $\perf \cal G = D^b_{\operatorname{Coh}}(\cal G) = D^b(\operatorname{Coh}(\cal G))$ is (quasi-isomorphic to) a finite direct sum of shifts of simple coherent sheaves.

\begin{lemma}
    Assume $\mathcal{G}$ is tame. Let $\ecal{E}$ be a vector bundle on $\mathcal{G}$. Then $\mathbb{V}(\ecal{E}) = \underline{\operatorname{Spec} }_{\mathcal{G}} \operatorname{Sym}(\ecal{E})$ has a dense open which is a scheme if and only if $\ecal{E}$ is a faithful vector bundle on $\mathcal{G}$.
\end{lemma}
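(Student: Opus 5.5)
We reduce to the case where $\mathcal{G}$ is a classifying stack and then compute stabilizers of points of $\mathbb{V}(\ecal{E})$ directly. The plan has three steps.

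\emph{Step 1: reduce to $BG$.} Choose a finite extension $k'/k$ with $\mathcal{G}_{k'} \cong BG_{k'}$ for a finite tame group scheme $G/k'$. The property ``$\mathbb{V}(\ecal{E})$ has a dense open substack which is a scheme'' is equivalent to the generic stabilizer of $\mathbb{V}(\ecal{E})$ being trivial. Indeed, an algebraic stack with finite inertia is a scheme, or at least an algebraic space, over the open locus where the inertia is trivial. If one insists on ``scheme'' rather than ``algebraic space'', I would note that the coarse space of the vector-bundle stack is quasi-projective over $k$, so a dense open of it is a scheme. Triviality of stabilizers can be checked after the finite field extension, since inertia commutes with base change. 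Faithfulness of $\ecal{E}$ can also be checked after base change: faithfulness means that $G$ acts faithfully on the fibre, i.e.\ the kernel of the action of the (base-changed) automorphism group scheme is trivial. So we may assume $\mathcal{G} = BG$ with $G$ a finite linearly reductive $k$-group scheme, $\ecal{E}$ corresponds to a finite-dimensional representation $V$, and $\mathbb{V}(\ecal{E}) = [\mathbb{V}/G]$, where $\mathbb{V} = \operatorname{Spec}\operatorname{Sym}(V)$ is the affine space with its induced linear $G$-action.

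\emph{Step 2: easy direction.} If $V$ is not faithful, let $N = \ker(G \to \operatorname{GL}(V))$, a nontrivial normal subgroup scheme. Then $N$ acts trivially on $\mathbb{V}$, so $N$ lies in the stabilizer of every point. Hence no open substack has trivial inertia, and no open substack is an algebraic space, let alone a scheme.

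\emph{Step 3: converse, the main obstacle.} Suppose $V$ is faithful. We must show that the stabilizer group scheme $G_v$ of a generic point $v$ of $\mathbb{V}$ is trivial. For $H \subset G$ ranging over subgroup schemes, the fixed loci $\mathbb{V}^H$ are linear subspaces, namely $\operatorname{Spec}\operatorname{Sym}(V_H)$ for the coinvariants. For $H \neq 1$ these are proper subspaces, because $V$ is faithful.

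The difficulty is that for non-reduced $G$ (for example $\mu_p$ in characteristic $p$, or in general $G$ tame, so an extension of a linearly reductive étale group by a diagonalizable one) there may be infinitely many subgroup schemes. The naive argument ``a finite union of proper subspaces is not everything'' therefore needs care. I would argue via the stabilizer group scheme over $\mathbb{V}$. The inertia $I \to \mathbb{V}$ is finite, and its identity section is open-and-closed over the locus where $I$ is unramified, or better, where $I$ has fibre rank $1$. By upper semicontinuity of the fibre degree of the finite morphism $I \to \mathbb{V}$, the locus where the stabilizer is trivial is open. It then suffices to show that the stabilizer at the generic point $\eta$ is trivial.

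The stabilizer $G_\eta \subset G_{k(\eta)}$ is a subgroup scheme over the field $k(\eta)$ fixing the generic point. Its fixed subspace is defined over $k(\eta)$, is a linear subspace, and contains $\eta$. But $\eta$ lies in no proper $k(\eta)$-linear subspace defined over $k$. Here one should use that $G_\eta$ descends to a subgroup of $G$ defined over a finite extension, or alternatively that the fixed subspace of $G_\eta$ is the zero set of the linear forms $(g - 1)V^*$, and these vanish at the generic point only if they are zero. Therefore $G_\eta$ acts trivially on $V_{k(\eta)}$, and faithfulness, which is preserved by field extension, gives $G_\eta = 1$. I expect the bookkeeping of this descent for non-reduced $G$ to be the main technical point. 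Working with the comodule coaction $V^* \to V^* \otimes \mathcal{O}(G)$, the vanishing at the generic point of the linear forms defining $\mathbb{V}^{G_\eta}$ should make it clean. Finally, spread out to a dense open with trivial inertia, which is an algebraic space and, by quasi-projectivity of the coarse space, contains a dense open scheme.
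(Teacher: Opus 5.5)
Your Steps 1 and 2 are fine: the reduction, the easy direction, and the openness of the trivial-stabilizer locus via semicontinuity of the degree of the finite stabilizer scheme. The gap is in Step 3, exactly where you flag it, and neither of your proposed remedies works.

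\emph{Why Step 3 fails as written.} The stabilizer $G_\eta$ is a subgroup scheme of $G_{k(\eta)}$. For non-reduced $G$ it need not be the base change of a subgroup scheme of $G_{\bar k}$. Its fixed locus is then a linear subspace cut out by forms with coefficients in $k(\eta)$, and such a subspace can contain $\eta$. Your second remedy, the forms $(g-1)V^*$, only works when the relevant $g$ are $\bar k$-points, i.e.\ when $G$ is \'etale. For non-reduced $G$ the stabilizer is not detected by points, and its defining equations involve the coordinates of $\eta$.

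\emph{A decisive test.} Your Step 3 never uses tameness. Yet the paper's Remark gives a faithful representation of $\alpha_p\times\alpha_p$ on $\mathbb{A}^4$ (standard action $v\mapsto Mv$) with non-trivial generic stabilizer. At a generic point $\eta=(a_1,\dots,a_4)$ the stabilizer is the copy $\{a_2s+a_3t=0\}\cong\alpha_p$. It is defined over $k(\eta)$ but not over $\bar k$. Its fixed locus is the hyperplane $a_3X_2-a_2X_3=0$, which is proper, $k(\eta)$-rational, and contains $\eta$. So any valid argument must use tameness at precisely the descent step you left open.

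\emph{The missing input.} You need the Abramovich--Olsson--Vistoli structure theorem, which is where tameness enters: over $\bar k$, $G\cong\Delta\rtimes H$ with $\Delta$ connected diagonalizable and $H$ constant of order invertible in $k$. The paper uses it directly, in two pieces.
\begin{enumerate}
\item The classical argument gives a dense open $U$ on which the constant group $H$ acts freely. Since $G(\bar k)=H$, the stabilizer of a $\bar k$-point of $U$ has no non-trivial $\bar k$-points. Hence it is connected and lies in $\Delta$.
\item After diagonalizing, $\Delta\hookrightarrow\mathbb{G}_m^n$, so $\Delta$ acts freely on $\mathbb{G}_m^n\subset\mathbb{A}^n$.
\end{enumerate}

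\emph{Rescuing your own route.} The structure theorem also implies the following. For every algebraically closed $F\supset\bar k$, each subgroup scheme $S\subset G_F$ is the base change of one of the finitely many subgroup schemes of $G_{\bar k}$. Indeed, $S^0$ corresponds to a quotient of the character group of $\Delta$, and $S_{\mathrm{red}}$ to a subgroup of $G(F)=G(\bar k)$. This is exactly the descent you need. The fixed locus of $(G_\eta)_{\overline{k(\eta)}}$ is then $\bar k$-rational and contains the generic point, so it is all of $\mathbb{V}$, and faithfulness gives $G_\eta=1$. Your worry about infinitely many subgroup schemes is thus a non-tame phenomenon, as in $\alpha_p\times\alpha_p$. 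Ruling it out is the real content of the converse.
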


\begin{remark}
    The ``if'' direction is not true without the assumption that $\mathcal{G}$ is tame. For example, if $k$ has characteristic $p>0$, $G = \alpha_p \times \alpha_p,$ then the representation
    $$ \alpha_p \times \alpha_p \to GL_{4,k}, \hspace{3 em}
    (s,t) \mapsto
    \begin{pmatrix}
        1 & s & t & 0 \\
        0 & 1 & 0 & 0 \\
        0 & 0 & 1  & 0 \\
        0 & 0 & 0 &1
    \end{pmatrix}$$
    is faithful, but the induced action $G \times \mathbb{A}_k^4 \to \mathbb{A}_k^4$ has non-trivial generic stabilizer. 
\end{remark}

\begin{proof}
    This reduces to the case $k$ is algebraically closed, hence $\mathcal{G} = BG_k$ for some finite linearly reductive group scheme $G/k$; and $\mathbb{V}(\ecal{E}) = \mathbb{A}^n_k/G$ where $G$ acts through the associated representation $G \hookrightarrow GL_{n, k}$. By the classification of finite linearly reductive group schemes over algebraically closed fields of \cite{Abramovich2008}, we have $G = \Delta \rtimes H$ where $\Delta$ is a connected diagonalizable group scheme over $k$ and $H$ is a finite constant group scheme of order invertible in $k$. The composition $H \to GL_{n, k}$ is faithful and $H$ is a constant group scheme so a standard argument shows there is a dense open $U \subset \mathbb{A}^n_k$ which is $H$-invariant and such that $H$ acts freely on $U$. Then for any $u \in U(k)$ we claim that the stabilizer group scheme of $u$ is connected, hence contained in $\Delta$. If not, it has some $k$-point $1 \neq g \in G(k)$, and then, writing $p : G \to H$ for the projection,  $gu = p(g)u \neq u$ in $U(k)$. Thus it suffices to show $\Delta$ acts freely on a dense open $V$ of $\mathbb{A}^n_k$, for then $G$ will act freely on $U \cap V$. For this, since $\Delta$ is a diagonalizable group scheme, there is a basis of $k^n$ in which $\Delta \to GL_{n, k}$ factors as $\Delta \hookrightarrow \mathbb{G}_{m, k}^n \hookrightarrow GL_{n, k}$. But then $\Delta$ acts freely on the dense open $\mathbb{G}_{m, k}^n \subset \mathbb{A}^n_k$, and we are done. 
\end{proof}

\begin{lemma}
\label{lemma-tensorproductsoffaithful}
    Assume $\mathcal{G}$ is tame. Let $\ecal{E}$ be a faithful vector bundle on $\mathcal{G}$. Then every simple object of the category of coherent sheaves on $\mathcal{G}$ occurs as a direct summand of $\ecal{E} ^{\otimes n}$ for some $n \geq 1$.
\end{lemma}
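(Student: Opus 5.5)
The plan is to reduce to the classical statement for finite groups, namely that every irreducible representation of a finite group occurs in some tensor power of a faithful representation (Burnside--Brauer), and to adapt its proof to finite linearly reductive group schemes. First I would reduce to $k$ algebraically closed. Base change along $\operatorname{Spec}\bar k \to \operatorname{Spec} k$, or along a finite extension $k'/k$ trivializing the gerbe, is faithfully flat. If $\ecal{S}$ is simple on $\mathcal{G}$, then Lemma \ref{lemma-tamegerbe} (semisimplicity) implies that $\ecal{S}$ is a summand of $\ecal{E}^{\otimes n}$ as soon as $\operatorname{Hom}(\ecal{E}^{\otimes n}, \ecal{S}) \neq 0$. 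This Hom space is a finite-dimensional vector space whose formation commutes with flat base change. Faithfulness of $\ecal{E}$ is also preserved by base change. So we may assume $\mathcal{G} = BG$ with $G$ a finite linearly reductive group scheme over algebraically closed $k$ and $\ecal{E}$ a faithful representation $V$. Then we must show that every irreducible representation $W$ appears in some $V^{\otimes n}$.

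Next I would replace $V$ by $V \oplus V^\vee \oplus k$, or rather observe that it suffices to treat $V' = k \oplus V \oplus V^\vee$. The module $V^\vee$ is a summand of $V^{\otimes(m-1)}$ up to a determinant twist. More simply, I would show that the tensor subcategory generated by $V$ and $V^\vee$ contains everything, and then handle duals separately. For the main step I would use the coordinate ring: the regular representation $k[G]$ contains every irreducible $W$ as a summand, since $W \hookrightarrow W \otimes k[G] \cong k[G]^{\dim W}$ by the comodule structure map, and semisimplicity splits this. Faithfulness means the matrix coefficients of $V$ generate $k[G]$ as an algebra, because $G \to GL(V)$ is a closed immersion. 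Hence $k[G]$ is a quotient, as a $G$-representation under right translation, of $\bigoplus_{a,b \le N} V^{\otimes a} \otimes (V^\vee)^{\otimes b}$ multiplied by copies of $V^\vee$ coefficients. In fact, since $G$ is finite, $\det V^{-1}$ is a polynomial in the coefficients, so only nonnegative powers of $V$ together with $\det(V)^{-1}$ are needed. Each $W$ is thus a summand of some $V^{\otimes a} \otimes (V^\vee)^{\otimes b}$, again using semisimplicity to turn quotients into summands.

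Finally, I would eliminate $V^\vee$. Because $\operatorname{Pic}$ of the gerbe is finite (Corollary \ref{corollary-finitelymanysimples}), the line bundle $\det V$ has finite order $m$. Then $V^\vee \cong \wedge^{r-1}V \otimes \det V^{-1}$, which is a summand of $V^{\otimes (r-1)} \otimes (\det V)^{\otimes (m-1)}$, and this in turn is a summand of a tensor power of $V$, since $\wedge^j V$ is a summand of $V^{\otimes j}$ in the tame setting. The symmetrizer is not available when the characteristic divides $j!$, so instead I would use that $\wedge^j V$ is a quotient of $V^{\otimes j}$ and apply semisimplicity. To get a single exponent $n$ rather than a sum of tensor powers, I would note that $k = \ecal{O}$ is a summand of $V^{\otimes r} \otimes (\det V)^{\otimes(m-1)}$, hence of a fixed power $V^{\otimes s}$ with $s \geq 1$, so exponents can be padded up by multiples of $s$. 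The main obstacle is the matrix-coefficient step for non-reduced group schemes such as $\mu_p$. There I would argue Hopf-algebraically: the image of $k[GL(V)] \to k[G]$ is surjective, and $k[GL(V)]$ is filtered by the representations $V^{\otimes a} \otimes (V^\vee)^{\otimes b}$ under right translation.
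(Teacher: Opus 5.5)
Your proof is correct, but it follows a different route from the paper.

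The paper stays over $k$ and argues geometrically. By the preceding lemma, whose proof uses the decomposition $G=\Delta\rtimes H$ of finite linearly reductive group schemes over $\bar k$, faithfulness of $\ecal{E}$ gives a dense open of $\mathbb{V}(\ecal{E})$ that is a scheme. A closed point $\operatorname{Spec}k'\to\mathbb{V}(\ecal{E})$ in it gives a surjection $\operatorname{Sym}(\ecal{E})\twoheadrightarrow x_*\ecal{O}_{\operatorname{Spec}k'}$, and the target contains every simple by Lemma \ref{lemma-tamegerbe}.

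You instead descend from $\bar k$, using flat base change of the finite-dimensional spaces $\operatorname{Hom}(\ecal{E}^{\otimes n},\ecal{S})$ together with semisimplicity, applied to a simple summand $W$ of $\ecal{S}_{\bar k}$. You then run the Hopf-algebraic Burnside--Brauer argument: $G\hookrightarrow GL(V)$ is a closed immersion, so the regular representation $k[G]$, which contains every irreducible, is a quotient of finitely many coefficient spaces of tensor powers. In effect this is the paper's free-orbit argument applied to $\ecal{E}^{\oplus \operatorname{rk}\ecal{E}}$, where the point given by the identity matrix visibly has trivial stabilizer for any faithful $\ecal{E}$. So you avoid the dense-open lemma and the structure theory of tame group schemes, at the cost of first neutralizing the gerbe.

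Two of your steps can be streamlined:
\begin{enumerate}
\item Since $k[G]$ is finite-dimensional, the image of $\det$ is a unit whose inverse is a polynomial in it. So only $V^{\otimes a}$ with $a\geq 0$ occur, and your elimination of $V^\vee$ via $\operatorname{Pic}$ is unnecessary. Your sentence about needing ``$\det(V)^{-1}$'' alongside nonnegative powers is muddled on this point.
\item A simple summand of a finite direct sum is already a summand of a single term, so padding is not needed to reach a single exponent.
\end{enumerate}
Your padding with an $s\geq 1$ such that $\ecal{O}$ is a summand of $\ecal{E}^{\otimes s}$ is still worthwhile. It secures $n\geq 1$ for the trivial simple object, which the paper's proof does not address explicitly (there the relevant term may be $\operatorname{Sym}^0\ecal{E}=\ecal{O}$).
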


\begin{proof}
    This is an adaptation of the nice argument from the blog post \cite{litt2022tensor}. By the preceding lemma, the algebraic stack $\mathbb{V}(\ecal{E})$ has a dense open which is representable by a scheme, necessarily of finite type over $k$. By the Nullstellensatz, there is a closed immersion $\operatorname{Spec}k' \to \mathbb{V}(\ecal{E})$ over $\mathcal{G}$ where $k'/k$ is a finite extension of fields. Denote $p : \mathbb{V}(\ecal{E}) \to \mathcal{G}$ the structure morphism and $x : \operatorname{Spec}k' \to \mathcal{G}$ the composition of the inclusion and $p$. Then we obtain a surjective map of quasi-coherent $\ecal{O}_{\mathcal{G}}$-modules $\operatorname{Sym}(\ecal{E}) = p_*\ecal{O}_{\mathbb{V}(\ecal{E})} \to x_*\ecal{O}_{\operatorname{Spec}k'}$. Since $x_*\ecal{O}_{\operatorname{Spec}k'}$ is coherent, there is in fact a finite direct sum of objects $\operatorname{Sym}^n(\ecal{E})$ which surject onto $x_*\ecal{O}_{\operatorname{Spec}k'}$, and each is itself a quotient of $\ecal{E}^{\otimes n}$, completing the proof since $\operatorname{Coh}\mathcal{G}$ is semisimple.
\end{proof}
\begin{lemma}
\label{lemma-cyclicstabilizers}
    Suppose $\mathcal{G}$ is tame and there is a line bundle $\ecal{L}$ on $\mathcal{G}$ such that every coherent sheaf on $\mathcal{G}$ is a quotient of a finite direct sum of tensor powers of $\ecal L$. Then there is an integer $n > 0$ such that $\mathcal{G} \cong B\mu_{n, k}$.
\end{lemma}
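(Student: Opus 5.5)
Let $\ecal{L}$ be the given line bundle. The plan is to first show that $\ecal{L}$ trivializes the gerbe, and then to identify the band of $\mathcal{G}$ with $\mu_n$.

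\textbf{Step 1: $\mathcal{G}$ is neutral.} Every coherent sheaf, in particular $x_*\ecal{O}_{\operatorname{Spec}k'}$ for a point $x : \operatorname{Spec}k' \to \mathcal{G}$, is a quotient of a finite direct sum of tensor powers of $\ecal{L}$. The category $\operatorname{Coh}(\mathcal{G})$ is semisimple by Lemma \ref{lemma-tamegerbe}, so every simple object is a direct summand of such a sum. Since each $\ecal{L}^{\otimes m}$ is itself simple, being a line bundle, every simple object is isomorphic to some $\ecal{L}^{\otimes m}$. Corollary \ref{corollary-finitelymanysimples} shows that $\operatorname{Pic}(\mathcal{G})$ is finite, so $\ecal{L}$ has some finite order $n \geq 1$. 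Consequently the simple objects are exactly $\ecal{O}, \ecal{L}, \dots, \ecal{L}^{\otimes n-1}$, and these are pairwise non-isomorphic.

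\textbf{Step 2: the map to $B\mathbb{G}_m$.} The line bundle $\ecal{L}$ gives a morphism $\mathcal{G} \to B\mathbb{G}_{m,k}$. Choosing a trivialization $\ecal{L}^{\otimes n} \cong \ecal{O}$ factors it through $\phi : \mathcal{G} \to B\mu_{n,k}$. I want to show $\phi$ is an isomorphism. Consider the canonical $\mu_n$-torsor
\[
P = \underline{\operatorname{Spec}}_{\mathcal{G}}\Big(\bigoplus_{i=0}^{n-1}\ecal{L}^{\otimes i}\Big) = \mathcal{G}\times_{B\mu_n}\operatorname{Spec}k.
\]
Then $\phi$ is an isomorphism if and only if $P \cong \operatorname{Spec}k$. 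The pushforward of $\ecal{O}_P$ to $\operatorname{Spec}k$ is $\bigoplus_i \Gamma(\mathcal{G},\ecal{L}^{\otimes i})$.

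\textbf{Step 3: $P$ is a point.} Here I compute global sections. First, $\Gamma(\mathcal{G},\ecal{O}) = \operatorname{End}(\ecal{O})$ is a division algebra. For $i \neq 0 \bmod n$ we have $\Gamma(\mathcal{G},\ecal{L}^{\otimes i}) = \operatorname{Hom}(\ecal{O},\ecal{L}^{\otimes i}) = 0$, because $\ecal{O}$ and $\ecal{L}^{\otimes i}$ are distinct simple objects. Hence $\Gamma(P,\ecal{O}_P) = \Gamma(\mathcal{G},\ecal{O}_\mathcal{G}) =: K$, a finite field extension of $k$.

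Since $P$ is a stack over $k$ with finite diagonal, I aim to show it is an algebraic space, and then that it is $\operatorname{Spec}K$. The obstacle is to show that $P$ has trivial stabilizers, that is, that $\phi$ is representable, or equivalently that $\ecal{L}$ is faithful. I would argue this as follows. Suppose $H \subset G_x$ is a nontrivial stabilizer of $P$ at a point. Then the restriction of $\ecal{L}$ to $BH$ is trivial. Pulling back all coherent sheaves along $BH \to \mathcal{G}$, every simple object would restrict to something trivial on $H$. But the regular representation, namely the pushforward from a point, restricts to a nontrivial $H$-representation, which is a contradiction. In other words, the tensor powers of $\ecal{L}$ would all factor through $G/H$, while the regular representation does not, so $\ecal{L}$ must be faithful.

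Once $P$ is an algebraic space, it is finite over $k$, hence an affine scheme. Its coordinate ring is $\Gamma(P,\ecal{O}_P) = K$, so $P = \operatorname{Spec}K$. Since $\operatorname{Spec}K = P \to \operatorname{Spec}k$ is a base change of the flat surjection $\phi$, comparing degrees gives $K = k$: the fiber of $\phi$ over $\operatorname{Spec}k$ has degree equal to $\deg(P \to \operatorname{Spec}k)$, while $\mathcal{G}$ and $B\mu_n$ have the same "$k$-dimension of regular representation" relation. More simply, $\phi$ is a representable finite morphism of gerbes whose fiber is a single point, hence an isomorphism, giving $\mathcal{G} \cong B\mu_{n,k}$.
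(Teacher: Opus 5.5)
Your proof is correct in substance, and after the first step it takes a different route from the paper.

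\textbf{The shared step.} Both arguments first prove that $\ecal{L}$ is faithful. The paper writes a faithful vector bundle $\ecal{E}$ as $\bigoplus_j\ecal{L}^{\otimes i_j}$ and observes $\ker\chi\subset\ker(\chi^{i_1},\dots,\chi^{i_k})$. That is your argument with $\ecal{E}=x_*\ecal{O}_{\operatorname{Spec}k'}$. Calling $x_*\ecal{O}_{\operatorname{Spec}k'}$ the regular representation is only accurate when $x$ is a $k$-point. In general $x^*x_*\ecal{O}_{\operatorname{Spec}k'}$ is the coordinate ring of a $G_x$-torsor over $\operatorname{Spec}(k'\otimes_k k')$, and any nontrivial $H\subset G_x$ acts nontrivially on it, so your conclusion stands.

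\textbf{Where the routes diverge.} The paper uses $\psi$ to identify $I_{\mathcal{G}}\cong\mu_{n,\mathcal{G}}$ and regards $\mathcal{G}$ as a $\mu_n$-gerbe. It then kills the class of $\mathcal{G}$ in $H^2(k,\mu_n)$ using the twisted invertible sheaf $\ecal{L}$ and the Kummer injection $H^2(k,\mu_n)\hookrightarrow H^2(k,\mathbb{G}_m)$.

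You instead do the following.
\begin{enumerate}
\item Take $n$ to be the order of $\ecal{L}$ in $\operatorname{Pic}(\mathcal{G})$ and build $\phi:\mathcal{G}\to B\mu_{n,k}$ from a trivialization of $\ecal{L}^{\otimes n}$.
\item Deduce from faithfulness that $\phi$ is representable.
\item Compute the fiber $P$ directly. It is proper and quasi-finite over $k$, hence a finite $k$-scheme; this deserves a sentence in your write-up. By Schur's lemma its ring of functions is $\bigoplus_{i=0}^{n-1}\Gamma(\mathcal{G},\ecal{L}^{\otimes i})=\Gamma(\mathcal{G},\ecal{O}_{\mathcal{G}})$.
\end{enumerate}

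\textbf{What each route buys.} Yours avoids gerbe cohomology and Brauer groups entirely. It writes the isomorphism down explicitly and gives neutrality and the band at the same time. The vanishing of $\Gamma(\mathcal{G},\ecal{L}^{\otimes i})$ for $0<i<n$ plays the role of the paper's twisted-sheaf argument. The paper's version is shorter if one takes the classification of $\mu_n$-gerbes by $H^2(k,\mu_n)$ for granted.

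\textbf{The last step needs repair.} Neither of your arguments for $K=k$ is valid as written. A representable finite flat $\phi$ whose fiber is $\operatorname{Spec}K$ with $[K:k]>1$ would have degree $[K:k]$ and would not be an isomorphism, so ``the fiber is a single point'' is not enough. No degree comparison is needed, however. Since $\mathcal{G}\to\operatorname{Spec}k$ is a gerbe, $\Gamma(\mathcal{G},\ecal{O}_{\mathcal{G}})=k$; this follows by flat base change to $k'$, where $\Gamma(BG_{k'},\ecal{O})=k'$. Hence $P\cong\operatorname{Spec}k$, and $\phi$ is an isomorphism by fppf descent along $\operatorname{Spec}k\to B\mu_{n,k}$.
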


\begin{proof}
    We claim that $\ecal{L}$ is a faithful vector bundle on $\mathcal{G}$. We know there exists a faithful vector bundle $\ecal{E}$ on $\mathcal{G}$ and by the assumptions we must have $\ecal{E} \cong \ecal{L}^{\otimes i_1} \oplus \cdots \oplus \ecal{L}^{\otimes i_k}$. Let $x : \operatorname{Spec}k' \to \mathcal{G}$ be any point. Then the associated representation
    $$
    G_x = \underline{\operatorname{Aut}}_{k'}(x) \to GL_{x^*\ecal{E}}
    $$
    is faithful, but it factors through the homomorphism
    $$
    \varphi: G_x \to GL_{x^*\ecal{L}^{\otimes i_1}} \times \cdots \times GL_{x^*\ecal{L}^{\otimes i_k}} = \mathbb{G}_{m, k'}^k,
    $$
    which is equal to $(\chi^{i_1}, \dots , \chi^{i_k})$, where $\chi : G_x \to GL_{x^*\ecal{L}}$ is the representation coming from $\ecal{L}$. It follows that $\varphi$ is faithful, and then since $\operatorname{Ker}\chi \subset \operatorname{Ker \varphi}$, it follows that $\chi$ is faithful. This proves the claim.

    Then consider the representation $\psi : I_{\mathcal{G}} \hookrightarrow \mathbb{G}_{m, \mathcal{G}}$ of the inertia stack determined by the line bundle $\ecal{L}$. By the claim, it is a monomorphism, and $I_{\mathcal{G}}$ is a finite flat group scheme over $\mathcal{G}$, so it follows that 
   $I_{\mathcal{G}} \cong \mu_{n, \mathcal{G}}$ for some $n  > 0$. Furthermore, since there is a line bundle $\ecal{L}$ on $\mathcal{G}$ on which $I_{\mathcal{G}}$ acts through $\psi : \mu_{n, \mathcal{G}} \hookrightarrow \mathbb{G}_{m, \mathcal{G}}$, there is an invertible twisted sheaf on the $\mathbb{G}_m$-gerbe associated to $\mathcal{G}$ via $H^2(k, \mu_n) \to H^2(k, \mathbb{G}_m)$. This 
   implies that the class of $\mathcal{G}$ maps to zero in $H^2(k, \mathbb{G}_m)$, but $H^2(k, \mu_n) \hookrightarrow H^2(k, \mathbb{G}_m)$ by the Kummer sequence, since $\operatorname{Pic}(\operatorname{Spec}k) = 0$. Hence $\mathcal{G}$ is the trivial $\mu_n$-gerbe, as needed.
\end{proof}
To summarize, we have the following:

\begin{lemma}\label{lemma-tensor-gen-on-gerbe}
    Assume $\cal G$ is tame and take a line bundle $\ecal L$ on $\cal G$. The following are equivalent. 
    \begin{enumerate}
        \item $\ecal L$ is $\otimes$-generating, i.e., the set $\{\ecal{L}^{\otimes n}\}_{n \in \mathbb{Z}}$ classically generates $\perf \mathcal{G}$.
        \item Every simple coherent sheaf is isomorphic to a tensor power of $\ecal L$. 
        \item Every coherent sheaf is a finite direct sum of tensor powers of $\ecal L$.
        \item Every coherent sheaf is a quotient of a finite direct sum of tensor powers of $\ecal L$.
        \item $\ecal L$ is faithful.
        \item $\cal G \cong B\mu_{N, k}$ for some $N \geq 1$ and $\ecal L$ corresponds to a generator of $\bb X(\mu_{N, k}) = \mathbb{Z}/N\mathbb{Z}$. \qedhere 
    \end{enumerate}
\end{lemma}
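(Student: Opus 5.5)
We prove the equivalences in a cycle, using the three preceding lemmas for the substantive steps. Throughout we use that $\operatorname{Coh}(\mathcal{G})$ is semisimple with finitely many simple objects (Lemma \ref{lemma-tamegerbe}), so $\perf\mathcal{G} = D^b(\operatorname{Coh}\mathcal{G})$ and every object is a finite direct sum of shifts of simples.

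\textbf{The formal implications.} (ii) $\Rightarrow$ (iii) holds because every coherent sheaf is a direct sum of simples. (iii) $\Rightarrow$ (iv) is trivial. (iii) $\Rightarrow$ (i) holds since every object of $\perf\mathcal{G}$ is then a sum of shifts of tensor powers of $\ecal{L}$.

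\textbf{(i) $\Rightarrow$ (ii).} Consider the full subcategory $\mathcal{S}$ of objects that are finite direct sums of shifts of tensor powers of $\ecal{L}$. Since $\operatorname{Hom}$'s vanish in nonzero degrees and $\operatorname{Coh}\mathcal{G}$ is semisimple, $\mathcal{S}$ is closed under cones, shifts and summands. Concretely, a cone of a morphism between semisimple complexes is determined degreewise by kernels and cokernels, and these are summands, hence again sums of simples. Each tensor power of $\ecal{L}$ is simple, being a line bundle on a gerbe where all coherent sheaves are vector bundles. By Krull--Schmidt, summands of objects of $\mathcal{S}$ lie in $\mathcal{S}$. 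Hence $\mathcal{S}$ is thick, so it equals $\perf\mathcal{G}$, and each simple sheaf, being indecomposable, is isomorphic to some $\ecal{L}^{\otimes n}$.

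\textbf{The cycle through (v) and (vi).} (iv) $\Rightarrow$ (vi) is exactly Lemma \ref{lemma-cyclicstabilizers} for $\mathcal{G} \cong B\mu_{N,k}$. Its proof also shows that the character $\chi$ of $\ecal{L}$ is faithful, hence a generator of $\bb X(\mu_{N,k}) = \mathbb{Z}/N$; equivalently, this gives (iv) $\Rightarrow$ (v). For (v) $\Rightarrow$ (ii) we apply Lemma \ref{lemma-tensorproductsoffaithful}: every simple object is a direct summand of some $\ecal{L}^{\otimes n}$. Since $\ecal{L}^{\otimes n}$ is itself simple, every simple object is isomorphic to a tensor power of $\ecal{L}$. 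Finally, (vi) $\Rightarrow$ (v) and (vi) $\Rightarrow$ (ii) are immediate from the representation theory of $\mu_N$: it is diagonalizable, its simples are exactly the characters, and a generator $\chi$ of $\mathbb{Z}/N$ is a monomorphism $\mu_N \to \mathbb{G}_m$ whose powers exhaust all characters.

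\textbf{Main obstacle.} The only delicate point is checking that faithfulness in (v) means the same thing as in the earlier lemmas, namely faithfulness of the associated inertia representation. The substantive work is already carried out in Lemmas \ref{lemma-tensorproductsoffaithful} and \ref{lemma-cyclicstabilizers}, so the remaining steps are bookkeeping.
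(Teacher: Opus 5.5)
Your proof is correct and follows the paper's overall plan: the formal implications among (i)--(iv), plus Lemma~\ref{lemma-cyclicstabilizers} for (iv) $\Rightarrow$ (v) and (iv) $\Rightarrow$ (vi). Where you differ is in how you close the loop through (v). The paper gets (v) $\Rightarrow$ (vi) from the second half of the proof of Lemma~\ref{lemma-cyclicstabilizers}. A faithful $\ecal{L}$ makes $I_{\mathcal{G}} \to \mathbb{G}_{m,\mathcal{G}}$ a monomorphism, so $I_{\mathcal{G}} \cong \mu_{n,\mathcal{G}}$. The existence of $\ecal{L}$ then forces this $\mu_n$-gerbe to be trivial via the Kummer injection $H^2(k,\mu_n) \hookrightarrow H^2(k,\mathbb{G}_m)$, and (vi) $\Rightarrow$ (i) is immediate. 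You instead prove (v) $\Rightarrow$ (ii) with Lemma~\ref{lemma-tensorproductsoffaithful}, and reach (vi) only through (iv). That lemma's proof goes through the $\mathbb{V}(\ecal{E})$/Nullstellensatz argument and ultimately the classification of finite linearly reductive group schemes. Your route is valid, but it uses a heavier tool than needed; the direct inertia argument shows that faithfulness alone already pins down $\mathcal{G}$ and $\ecal{L}$. For (i) $\Rightarrow$ (ii), you compute the thick closure of $\{\ecal{L}^{\otimes n}\}$ explicitly. The paper instead notes that a simple $S$ not isomorphic to any $\ecal{L}^{\otimes n}$ has $\operatorname{Hom}(\ecal{L}^{\otimes n}, S[i]) = 0$ for all $n,i$. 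Such an $S$ is then orthogonal to the thick subcategory these objects generate, contradicting $\operatorname{Hom}(S,S) \neq 0$. Both versions rest on the same semisimplicity of $\operatorname{Coh}\mathcal{G}$; the paper's is shorter, and yours has the advantage of describing $\perf\mathcal{G}$ concretely.
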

\begin{proof}
    We note that any line bundle on $\mathcal{G}$ is indecomposable, and since $\operatorname{Coh}\mathcal{G}$ is semi-simple, this implies that line bundles are simple objects. To see (i) $\implies$ (ii), take a simple coherent sheaf $S$ on $\cal G$. If $S \not \cong \ecal L^{\otimes n}$ for any $n \in \bb Z$, then we have 
    \[
    \hom_{\perf \cal G} (\ecal L^{\otimes n}, S[i]) = 0
    \]
    for any $n, i \in \bb Z$, as $\operatorname{Coh}(\cal G)$ is semisimple and $\perf \cal G = D^b_{\operatorname{Coh}}(\cal G) = D^b(\operatorname{Coh} (\cal G))$, which contradicts (i). 
    (ii) $\implies$ (iii) $\implies$ (iv) is clear. (iv) $\implies$ (v) $\implies$ (vi) follows from the proof of Lemma \ref{lemma-cyclicstabilizers}. Finally, (vi) clearly implies (i). 
\end{proof}

\begin{definition}
\label{defn-pointwisetensorgenerating}
    Let $\mathcal{X}$ be a Noetherian, tame algebraic stack (so by our conventions, $\mathcal{X}$ has finite diagonal). A line bundle $\ecal{L}$ on $\mathcal{X}$ is \emph{point-wise $\otimes$-generating} if for every closed point $x$ of $\mathcal{X}$ with residual gerbe $i_x : \mathcal{G}_x \to \mathcal{X}$, the line bundle $i_x^*\ecal{L}$ on $\mathcal{G}_x$ satisfies the equivalent conditions of Lemma \ref{lemma-tensor-gen-on-gerbe}.
\end{definition}

\begin{lemma}
    Let $\mathcal{X}$ be a Noetherian, tame algebraic stack. Let $\ecal{L}$ be a line bundle on $\mathcal{X}$. If $\ecal{L}$ is $\otimes$-generating, i.e., the set $\{\ecal{L}^{\otimes n}\}$ compactly generates $D_{qc}(\mathcal{X}),$ then $\ecal{L}$ is point-wise $\otimes$-generating.
\end{lemma}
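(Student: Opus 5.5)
Fix a closed point $x$ with residual gerbe $i_x : \mathcal{G}_x \to \mathcal{X}$. By Lemma \ref{lemma-tensor-gen-on-gerbe}, it suffices to show that every simple coherent sheaf $S$ on $\mathcal{G}_x$ is isomorphic to $i_x^*\ecal{L}^{\otimes n}$ for some $n$. Using the semisimplicity of $\operatorname{Coh}(\mathcal{G}_x)$ (Lemma \ref{lemma-tamegerbe}), we argue by contradiction: suppose $S$ is simple and not isomorphic to any $i_x^*\ecal{L}^{\otimes n}$. Then
$$\operatorname{Hom}_{\mathcal{G}_x}(i_x^*\ecal{L}^{\otimes n}, S[j]) = 0$$
for all $n, j$. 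We will show that $K := Ri_{x,*}S = i_{x,*}S \in D_{qc}(\mathcal{X})$ is a nonzero object right orthogonal to all shifts of all $\ecal{L}^{\otimes n}$. This contradicts compact generation, which says precisely that the only object right orthogonal to all $\ecal{L}^{\otimes n}[j]$ is zero.

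The computation goes through adjunction:
$$\operatorname{Hom}_{\mathcal{X}}(\ecal{L}^{\otimes n}, Ri_{x,*}S[j]) = \operatorname{Hom}_{\mathcal{G}_x}(Li_x^*\ecal{L}^{\otimes n}, S[j]) = \operatorname{Hom}_{\mathcal{G}_x}(i_x^*\ecal{L}^{\otimes n}, S[j]) = 0,$$
where the middle step uses that $\ecal{L}^{\otimes n}$ is a line bundle, so $Li_x^* = i_x^*$ on it. This needs $(Li_x^*, Ri_{x,*})$ to be an adjunction on $D_{qc}$, which holds by the conventions of \cite{hall2017perfect}. We also need $Ri_{x,*}S$ to be nonzero. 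Here $i_x$ is a closed immersion: $\mathcal{X}$ is Noetherian, hence quasi-separated, and $x$ is closed, so the residual gerbe exists and $i_x$ is a closed immersion. A closed immersion is affine, so $Ri_{x,*}S = i_{x,*}S$, which is nonzero since $S \neq 0$.

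The only point needing some care is the existence of the residual gerbe as a closed immersion, together with the adjunction. Both are standard for Noetherian stacks with finite diagonal (see e.g.\ the Stacks Project on residual gerbes). The rest is formal, so I do not anticipate a real obstacle. The step most likely to need justification is the implication (i) $\Rightarrow$ (ii) of Lemma \ref{lemma-tensor-gen-on-gerbe}, but we bypass it by verifying condition (ii) directly.
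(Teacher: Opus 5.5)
Your proof is correct, and it takes a slightly different route from the paper's one-line argument.

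The paper cites \cite{hall2017perfect}*{Theorem A, Corollary 2.8}: affine pullback preserves compact generation, so $\{i_x^*\ecal{L}^{\otimes n}\}$ compactly generates $D_{qc}(\mathcal{G}_x)$. It then lands in condition (i) of Lemma \ref{lemma-tensor-gen-on-gerbe}, classical generation of $\perf \mathcal{G}_x$, and lets that lemma do the rest. This step tacitly uses two further facts. Neeman's theorem passes from compact generation of the big category to classical generation of its compact objects. One also needs to identify the compact objects on the tame gerbe with $\perf \mathcal{G}_x$.

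You instead unroll the generation half of the Hall--Rydh statement. For a closed immersion it is nothing more than the adjunction $(Li_x^*, Ri_{x,*})$ together with $i_{x,*}S \neq 0$. Combining this with Schur orthogonality on the semisimple category $\operatorname{Coh}(\mathcal{G}_x)$ verifies condition (ii) directly.

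Your version has three advantages:
\begin{enumerate}
\item It is self-contained.
\item It uses only the generation half of the hypothesis, never compactness of the $\ecal{L}^{\otimes n}$.
\item It bypasses both Neeman's theorem and the compact-equals-perfect identification.
\end{enumerate}
The paper's citation buys brevity. It also records the stronger intermediate fact that the restricted powers compactly generate $D_{qc}(\mathcal{G}_x)$, and that fact holds for any quasi-affine morphism, not just closed immersions.

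Two small points are implicit in your write-up; both are immediate, but worth a sentence each. First, $\mathcal{G}_x$ is tame, being a closed substack of a tame stack, which is what gives semisimplicity and the vanishing of higher Exts. Second, $i_x^*\ecal{L}^{\otimes n}$ is itself simple, since a line bundle is indecomposable in a semisimple category, so Schur's lemma applies to the pair $(i_x^*\ecal{L}^{\otimes n}, S)$.
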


\begin{proof}
    This follows from the fact that each $i_x$ is affine and (quasi-)affine pullbacks preserve compact generation by \cite{hall2017perfect}*{Theorem A, Corollary 2.8}.
\end{proof}

\begin{example}
\label{example-weightedprojectivestacktensorgen}
   Consider the line bundle $\ecal{L} = \ecal O_\cal X(d)$ on the weighted projective stack $\mathcal{X} = \mathcal{P}(a_0, \dots , a_n)$ over a field $k$. We claim it is point-wise $\otimes$-generating if and only if $\gcd(d,a_i) = 1$ for every $i$. In particular, it is not $\otimes$-generating if $\gcd(d,a_i)  \neq 1$ for some $i$.
    First, the pullback of $\ecal{L}$ to the residual gerbe $B\mu_{a_i}$ at the $i$-th coordinate point $[0\!:\! \cdots\!:\!  0\!:\!1\!:\!0\!:\!\cdots 0]$ is the $d^{th}$ power of the standard character, which is $\otimes$-generating if and only if $\gcd(d,a_i) = 1$. Conversely, for any closed point of $\mathcal{X}$ with residue field $\kappa$, the residual gerbe is $B\mu_{a, \kappa}$ where $a| a_i$ for at least one $i$. Hence if $\gcd (d, a_i) = 1$ for all $i$ then $\gcd (d, a) = 1$ so the restriction of $\ecal{L}$ to the residual gerbe is $\otimes$-generating.
\end{example}

\subsection{Generalized points and Nakayama's Lemma}
\label{subsection-generalizedpoints}

We use \cite[Section 7]{hall2024generalizedbondalorlovfaithfulnesscriterion} as a reference. If $\mathcal{X}$ is a quasi-separated algebraic stack, the authors define a \emph{generalized (closed) point} of $\mathcal{X}$ to be a pair $(x, \xi)$ where $x$ is a (closed) point of $\mathcal{X}$ and $\xi$ is a simple object of $\operatorname{Coh}(\mathcal{G}_x)$. For such a pair they write $\kappa(x, \xi) = i_{x,*}\xi$ where $i_x : \mathcal{G}_x \to \mathcal{X}$ is the inclusion. They prove a version of Nakayama's Lemma and several consequences, for example:
\begin{enumerate}
    \item \cite[Corollary 7.10]{hall2024generalizedbondalorlovfaithfulnesscriterion} Let $\ecal{F}$ be a quasi-coherent $\ecal{O}_{\mathcal{X}}$-module of finite type. Let $x$ be a point of $\mathcal{X}$. Suppose that for all generalized points of the form $(x, \xi)$, we have $\operatorname{Hom}(\ecal{F}, \kappa(x, \xi)) = 0$. Then $\ecal{F}$ is zero in a neighborhood of $x$. 
    \item \cite[Corollary 7.10]{hall2024generalizedbondalorlovfaithfulnesscriterion} Assume $\mathcal{X}$ is locally Noetherian, let $x \in \mathcal{X}$ be a closed point, and let $K \in D^b_{\operatorname{Coh}}(\mathcal{X})$. The following are equivalent:
    \begin{enumerate}
    \item for all generalized points of the form $(x, \xi)$ we have
    $$
    \operatorname{Hom}(K, \kappa(x, \xi)[-i]) = 0, \hspace{3 em} i \geq n.
    $$
    \item $\tau_{\geq n}K$ is zero in a neighborhood of $x$.
    \end{enumerate}
    In particular, we can detect the degree of the top non-zero cohomology sheaf of $K$ as 
    \begin{equation}
    \label{equn-topcohomologysheaf}
    \operatorname{sup}\{n: \exists \text{ a generalized point } (x, \xi) \text{ such that } \operatorname{Hom}(K , \kappa(x, \xi)[-n]) \neq 0\}.
    \end{equation}
\end{enumerate}

\begin{lemma}
\label{lemma-generatingset}
    Let $\mathcal{X}$ be a Noetherian algebraic stack and $x$ a closed point of $\mathcal{X}$. Then $D^b_{\operatorname{Coh}, \{x\}}(K)$ is classically generated by the objects $\kappa(x, \xi)$ where $(x,\xi)$ is a generalized point with underlying point $x$.
\end{lemma}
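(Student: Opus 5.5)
The plan is a dévissage: reduce everything to coherent sheaves, and then filter those down to the sheaves $\kappa(x,\xi)$. Let $\mathcal{T} \subset D^b_{\operatorname{Coh}, \{x\}}(\mathcal{X})$ be the thick subcategory classically generated by the objects $\kappa(x,\xi)$, where $\xi$ ranges over the (finitely many, by Corollary \ref{corollary-finitelymanysimples}) simple objects of $\operatorname{Coh}(\mathcal{G}_x)$. Each $\kappa(x,\xi) = i_{x,*}\xi$ is a coherent sheaf with support $\{x\}$, since $i_x$ is a closed immersion at a closed point. So $\mathcal{T}$ is contained in $D^b_{\operatorname{Coh},\{x\}}(\mathcal{X})$, and only the reverse inclusion needs proof.

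First, reduce to coherent sheaves. Any $K \in D^b_{\operatorname{Coh},\{x\}}(\mathcal{X})$ has finitely many non-zero cohomology sheaves $H^i(K)$, each coherent and supported in $\{x\}$. The canonical truncation triangles then express $K$ as an iterated extension of the shifts $H^i(K)[-i]$. It therefore suffices to show that every coherent sheaf $\ecal{F}$ with $\operatorname{Supp}\ecal{F} \subset \{x\}$ lies in $\mathcal{T}$.

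Second, filter such a sheaf. Let $\mathcal{Z} = \mathcal{G}_x \subset \mathcal{X}$ be the reduced closed substack with $|\mathcal{Z}| = \{x\}$, with ideal sheaf $\ecal{I}$. Since $\mathcal{X}$ is Noetherian and $\operatorname{Supp}\ecal{F} \subset |V(\ecal{I})|$, some power kills $\ecal{F}$: $\ecal{I}^N\ecal{F} = 0$. This can be checked on a smooth atlas, where it is the usual statement for modules over a Noetherian ring, and $N$ is uniform by quasi-compactness. The filtration
$$0 = \ecal{I}^N\ecal{F} \subset \ecal{I}^{N-1}\ecal{F} \subset \cdots \subset \ecal{I}\ecal{F} \subset \ecal{F}$$
has graded pieces $\ecal{I}^j\ecal{F}/\ecal{I}^{j+1}\ecal{F}$ that are killed by $\ecal{I}$. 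Each piece is therefore of the form $i_{x,*}\ecal{E}$ for a coherent sheaf $\ecal{E}$ on $\mathcal{G}_x$, because $i_x$ is a closed immersion and $i_{x,*}$ identifies $\operatorname{Coh}(\mathcal{G}_x)$ with the $\ecal{O}_{\mathcal{X}}$-modules annihilated by $\ecal{I}$.

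Finally, $\operatorname{Coh}(\mathcal{G}_x)$ is a finite length category (recalled above, \cite[Proposition 7.1]{hall2024generalizedbondalorlovfaithfulnesscriterion}). So $\ecal{E}$ has a Jordan--Hölder filtration with simple subquotients $\xi$. The functor $i_{x,*}$ is exact, so $i_{x,*}\ecal{E}$ is an iterated extension of the sheaves $\kappa(x,\xi)$, hence lies in $\mathcal{T}$. Since $\mathcal{T}$ is closed under extensions, $\ecal{F} \in \mathcal{T}$, and tameness is not needed anywhere.

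The only step requiring care is the identification of the reduced closed substack supported at $x$ with the residual gerbe $\mathcal{G}_x$. This holds for closed points of quasi-separated (here Noetherian) stacks by the Stacks Project. I expect no real obstacle beyond citing this correctly.
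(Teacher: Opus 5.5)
Your proposal is correct and follows essentially the same route as the paper: reduce via truncation triangles to coherent sheaves supported at $x$, filter by powers of the ideal sheaf of $\mathcal{G}_x$ so that the graded pieces are pushforwards from $\mathcal{G}_x$, and then use that $\operatorname{Coh}(\mathcal{G}_x)$ has finite length (Jordan--H\"older, \cite[Proposition 7.1]{hall2024generalizedbondalorlovfaithfulnesscriterion}). Your appeal to Corollary \ref{corollary-finitelymanysimples} for finiteness of the set of simples is not needed, and that corollary is stated only for gerbes with finite diagonal, but the remark is harmless.
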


\begin{proof}
    It suffices to show every coherent sheaf $\ecal{F}$ on $\mathcal{X}$ supported at $x$ is in the subcategory generated by the point objects $\kappa(x, \xi)$ as every object of $D^b_{\operatorname{Coh}, \{x\}}(\mathcal{X})$ is an extension of shifts of such objects. If $\ecal{I} \subset \ecal{O}_{\mathcal{X}}$ is the ideal sheaf of $\mathcal{G}_x \subset\mathcal{X}$ corresponding to the closed point $x$, then $\ecal{I}^n \ecal{F} = 0$ for some $n > 0$ and so $\ecal{F}$ has a filtration $\ecal{F} \supset \ecal{I} \ecal{F} \supset \ecal{I}^2 \ecal{F} \supset \cdots \supset \ecal{I}^n\ecal{F} = 0$ whose successive quotients are pushforwards from $\mathcal{G}_x$. Thus we may assume $\ecal{F}$ is a pushforward of a coherent sheaf from $\mathcal{G}_x$. Then by \cite[Proposition 7.1]{hall2024generalizedbondalorlovfaithfulnesscriterion}, every object of $\operatorname{Coh}(\mathcal{G}_x)$ has a filtration by simple objects. Pushing forward to $\mathcal{X}$, we see that $\ecal{F}$ has a filtration whose successive quotients are of the form $\kappa(x, \xi)$, as needed. 
\end{proof}

\begin{lemma}
\label{lemma-characterizingvectorbundles}
    Let $\mathcal{X}$ be a connected Noetherian algebraic stack which is tame. Let $K\in D^-_{\operatorname{Coh}}(\mathcal{X})$. Then the following are equivalent.
    \begin{enumerate}
        \item There exists a vector bundle $\ecal{E}$ on $\mathcal{X}$ and an integer $n$ such that $K = \ecal{E}[n]$.
        \item For every closed point $x$ of $\mathcal{X}$, there exists at most one integer $i$ such that for some generalized closed point
        $(x, \xi)$ at $x$, we have $\operatorname{Ext}^i(K, \kappa(x, \xi)) \neq 0$. Equivalently, for every closed point $x$ of $\cal X$, there are no distinct integers $i\neq j$ and generalized closed points $\kappa(x,\xi_i),\kappa(x,\xi_j)$ such that both $\ext^i(K,\kappa(x,\xi_i))$ and $\ext^j(K,\kappa(x,\xi_j))$ are non-zero.
    \end{enumerate} 
    If the equivalent conditions hold, then $i = n$ for every generalized closed point $(x, \xi)$. 
    
    Furthermore, if there is a closed point $x \in \cal X$ with trivial stabilizer, the rank of $\ecal{E}$ is equal to \[\dfrac{\operatorname{dim}_{\kappa(x)} \operatorname{Ext}^n(K, \kappa(x))}{\operatorname{dim}_{\kappa(x)}\operatorname{Hom}(\kappa(x), \kappa(x))}. \qedhere \] 
\end{lemma}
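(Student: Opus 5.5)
The plan is to reduce to the scheme-style argument of Lemma \ref{lemma-recognizingvectorbundles}, replacing skyscraper sheaves by the generalized point sheaves $\kappa(x,\xi)$ and using tameness to control $Li_x^*$.

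\textbf{(i) $\Rightarrow$ (ii).} Write $K = \ecal{E}[n]$. By adjunction,
\[
\operatorname{Ext}^i(\ecal{E}[n], i_{x,*}\xi) = \operatorname{Ext}^{i-n}_{\mathcal{G}_x}(i_x^*\ecal{E}, \xi),
\]
since $\ecal{E}$ is a vector bundle and so $Li_x^* \ecal{E} = i_x^*\ecal{E}$. Because $\mathcal{G}_x$ is tame, $\operatorname{Coh}(\mathcal{G}_x)$ is semisimple (Lemma \ref{lemma-tamegerbe}), so only $i = n$ can contribute. This also gives the assertion that $i = n$.

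\textbf{(ii) $\Rightarrow$ (i).} Let $K \in D^-_{\operatorname{Coh}}$ and fix a closed point $x$.

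First, $Li_x^*K \in D^-(\operatorname{Coh}\mathcal{G}_x)$ is quasi-isomorphic to $\bigoplus_j H^j(Li_x^*K)[-j]$, by semisimplicity of the tame gerbe. Then
\[
\operatorname{Ext}^i(K,\kappa(x,\xi)) = \operatorname{Hom}_{\mathcal{G}_x}(H^{-i}(Li_x^*K), \xi),
\]
and every simple summand of $H^{-i}$ appears as some $\xi$. So (ii) says that $Li_x^*K$ has cohomology in at most one degree $-i_x$.

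Second, Nakayama in the form \eqref{equn-topcohomologysheaf}, applied locally at $x$, shows that if $x \in \operatorname{Supp}K$ then the top cohomology degree of $K$ near $x$ equals the top degree of $Li_x^*K$. This mirrors the scheme case: the top cohomology sheaf $H^m(K)$ is nonzero at $x$, and right exactness gives $H^m(Li_x^*K) = i_x^*H^m(K) \neq 0$.

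Third, we need to conclude that $K$ is locally a vector bundle in degree $-i_x$. This amounts to a stacky version of \cite[\href{https://stacks.math.columbia.edu/tag/068V}{Tag 068V}]{stacks-project}: a pseudo-coherent complex whose derived fibers at closed points are concentrated in one degree has tor-amplitude in that degree, hence is a shifted vector bundle. I would prove this by reducing smooth-locally. Pick a smooth cover $U \to \mathcal{X}$ by an affine scheme and a closed point $u$ over $x$. Then $Lu^*K = L\bar u^* Li_x^*K$, where $\bar u : \operatorname{Spec}\kappa(u) \to \mathcal{G}_x$ is flat because it is a morphism to a gerbe. Hence $Lu^*K$ is concentrated in one degree, so $K|_U$ is a shifted vector bundle near $u$ by the scheme result.

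Points of $\mathcal{X}$ outside the support are handled by the openness of the locus where $K$ is locally $\ecal{E}[n]$, with $\ecal{E}$ allowed to be zero. Since the integer $n$ is locally constant on this open locus and $\mathcal{X}$ is connected, $n$ is constant. If $K = 0$ the statement is trivial.

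\textbf{Rank formula.} At a closed point $x$ with trivial stabilizer, the gerbe is $\operatorname{Spec}\kappa(x)$ and $\xi = \kappa(x)$ is the only simple object. Then
\[
\operatorname{Ext}^n(K,\kappa(x)) = \operatorname{Hom}_{\kappa(x)}(\ecal{E}\otimes\kappa(x), \kappa(x)),
\]
whose dimension over $\kappa(x)$ is $\operatorname{rank}\ecal{E}$. The normalizing factor $\dim_{\kappa(x)} \operatorname{Hom}(\kappa(x),\kappa(x))$ equals $1$, and is included only to make the formula intrinsic to the category. The rank is constant because $\mathcal{X}$ is connected.

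\textbf{Main obstacle.} The hardest step is the second, local one: translating vanishing of Ext into generalized points into a tor-amplitude statement. The delicate points are the splitting of $Li_x^*K$ and the flatness of $\operatorname{Spec}\kappa(u) \to \mathcal{G}_x$.
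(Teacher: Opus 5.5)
Your proposal is correct and follows essentially the paper's proof. The paper also uses semisimplicity of $\operatorname{Coh}(\mathcal{G}_x)$ to write $\operatorname{Ext}^p(K,\kappa(x,\xi)) = \operatorname{Hom}(H^{-p}(Li_x^*K),\xi)$, and reduces the local criterion ($K$ is a shifted vector bundle near $x$ iff $Li_x^*K$ is concentrated in one degree) to affine schemes and Stacks Project Tag 068V; your flatness argument for $\operatorname{Spec}\kappa(u)\to\mathcal{G}_x$ and your connectedness argument fill in details the paper leaves implicit, while your second (Nakayama) step is not needed, since the third step already covers every closed point.
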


\begin{proof}
    The object $K$ is a vector bundle in degree $-n$ in a neighborhood of a closed point $x \in \mathcal{X}$ if and only if $H^p(Li_x^*(K)) = 0$ for all $p \neq -n$, where $i_x : \mathcal{G}_x \to \mathcal{X}$ is the residual gerbe at $\mathcal{X}$. This reduces immediately to the case $\mathcal{X}$ is an affine scheme, whereupon it follows from \cite[\href{https://stacks.math.columbia.edu/tag/068V}{Tag 068V}]{stacks-project}. Because the category of coherent sheaves on $\mathcal{G}_x$ is semi-simple, for any generalized point $(x, \xi)$ and integer $p$ we have
    $$
    \operatorname{Ext}^p(K, \kappa(x,\xi)) = \operatorname{Ext}^p(Li_x^*(K), \xi) = \operatorname{Hom}(H^{-p}(Li_x^*(K)), \xi)
    $$
    so $H^{-p}(Li_x^*(K)) = 0$ if and only if $\operatorname{Ext}^p(K, \kappa(x,\xi)) = 0$ for all generalized points $(x, \xi)$ supported at $x$. This proves the first part of the statement, and the second part is clear.
\end{proof}


\subsubsection{Conditions on functors}
\label{subsection-conditions}

Let $\mathcal{X}, \mathcal{Y}$ be Noetherian algebraic stacks and $F : D^b_{\operatorname{Coh}}(\mathcal{X}) \to D^b_{\operatorname{Coh}}(\mathcal{Y})$ an exact functor. Then we will consider the following conditions on $F$:
\begin{enumerate}
    \item For all generalized closed points $(x, \xi)$ of $\mathcal{X}$, the support of the object $F(\kappa(x, \xi))$ of $D^b_{\operatorname{Coh}}(\mathcal{Y})$ is a single closed point, and if $(x, \xi), (x,\xi')$ are generalized closed points with the same underlying closed point, then $F(\kappa(x,\xi))$ and $F(\kappa(x, \xi'))$ have the same support. 
    \item For all generalized closed points $(x, \xi)$ of $\mathcal{X}$, there is a generalized closed point $(y, \eta)$ of $\mathcal{Y}$ such that $F(\kappa(x, \xi)) \cong \kappa(y, \eta)$. \label{item-assumption1}
\end{enumerate}

\begin{lemma}
\label{lemma-supports}
 Let $\mathcal{X}, \mathcal{Y}$ be Noetherian algebraic stacks. Let $F : D^b_{\operatorname{Coh}}(\mathcal{X}) \to D^b_{\operatorname{Coh}}(\mathcal{Y})$ be an exact equivalence.
    Assume that both $F$ and $F^{-1}$ satisfy condition (i) above. Then there is a well-defined bijection
    $$
    \{\text{set of closed subsets of $\mathcal{X}$}\} \to \{\text{set of closed subsets of $\mathcal{Y}$}\}
    $$
    which takes the support of any object $K \in D^b_{\operatorname{Coh}}(\mathcal{X})$ to the support of $F(K)$. In particular, if $K, L \in D^b_{\operatorname{Coh}}(\mathcal{X})$ have the same support, then so have $F(K), F(L) \in D^b_{\operatorname{Coh}}(\mathcal{Y})$. Moreover, the correspondence preserves finite unions, inclusions, and (co)dimension of closed sets.
\end{lemma}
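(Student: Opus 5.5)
We build the bijection in three steps: match closed points, then closed subsets, then read off its properties.

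\emph{Step 1: a bijection on closed points.} For a closed point $x$ of $\mathcal{X}$, condition (i) on $F$ says that all $F(\kappa(x,\xi))$, for generalized points $(x,\xi)$ over $x$, have the same one-point support $\{\phi(x)\}$, where $\phi(x)$ is a closed point of $\mathcal{Y}$. Condition (i) on $F^{-1}$ gives a map $\psi$ in the other direction.

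To see that $\psi\phi(x)=x$, note that $\kappa(x,\xi)\neq 0$. By Lemma \ref{lemma-generatingset}, the objects $\kappa(\phi(x),\eta)$ classically generate $D^b_{\operatorname{Coh},\{\phi(x)\}}(\mathcal{Y})$. Since $0\neq F(\kappa(x,\xi))$ lies in this subcategory, some $\operatorname{Hom}(F(\kappa(x,\xi)),\kappa(\phi(x),\eta)[n])$ is nonzero. Applying $F^{-1}$ gives $\operatorname{Hom}(\kappa(x,\xi),F^{-1}\kappa(\phi(x),\eta)[n])\neq 0$. The target is supported at $\psi\phi(x)$. Homs between objects supported at distinct closed points vanish, since they are computed by $R\mathcal{H}om$, which is supported on the intersection of the supports. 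Hence $\psi\phi(x)=x$, and symmetrically $\phi\psi=\mathrm{id}$.

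\emph{Step 2: supports as sets of closed points.} For $K\in D^b_{\operatorname{Coh}}(\mathcal{X})$, we claim that a closed point $x$ lies in $\operatorname{Supp}K$ if and only if $\operatorname{Hom}(K,\kappa(x,\xi)[n])\neq 0$ for some $\xi$ and some $n$. If $x\in\operatorname{Supp}K$, let $m$ be the top degree with $x\in\operatorname{Supp}H^m(K)$. By (\ref{equn-topcohomologysheaf}), applied after restricting to a neighbourhood of $x$, or directly from the Nakayama statement of \cite[Corollary 7.10]{hall2024generalizedbondalorlovfaithfulnesscriterion}, there is a nonzero map $K\to\kappa(x,\xi)[-m]$. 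The converse is clear.

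Now fix $y=\phi(x)$. By Lemma \ref{lemma-generatingset} together with Step 1, the images $F(\kappa(x,\xi))$ generate the same thick subcategory $D^b_{\operatorname{Coh},\{y\}}(\mathcal{Y})$ as the $\kappa(y,\eta)$. The functor $\operatorname{Hom}(F(K),-[\ast])$ vanishes on a generating set if and only if it vanishes on the whole thick subcategory. Therefore
\[
x\in\operatorname{Supp}K \iff \phi(x)\in\operatorname{Supp}F(K).
\]

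Closed subsets of a Noetherian (Jacobson, e.g. finite type) stack are determined by their closed points. Since every closed subset is the support of some coherent sheaf (e.g. a structure sheaf of a closed substack), we may define the map on closed subsets by $Z\mapsto\overline{\phi(Z^{\mathrm{cl}})}$. This sends $\operatorname{Supp}K$ to $\operatorname{Supp}F(K)$ and is well defined. Its inverse comes from $\psi$, so it is a bijection. It also preserves inclusions and finite unions, because $\operatorname{Supp}(K\oplus L)=\operatorname{Supp}K\cup\operatorname{Supp}L$.

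\emph{Step 3: dimension.} The correspondence is an inclusion-preserving bijection, so it preserves the lattice of closed sets. Irreducible closed sets are exactly the elements that are not a union of two proper closed subsets, so they correspond to each other. Dimension is the supremum of lengths of chains of irreducible closed subsets, so it is preserved, and codimension likewise.

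The main obstacle is the Jacobson step in Step 2: a closed set must be determined by its closed points, which is automatic for finite type stacks over a field but needs an argument in the bare Noetherian setting. If necessary, I would restrict to stacks of finite type over $k$, which is the setting where the lemma is used.
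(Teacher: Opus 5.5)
Your proposal is correct and follows essentially the same route as the paper. The paper likewise matches closed points via condition (i) and uses Lemma \ref{lemma-generatingset} to get equivalences $D^b_{\operatorname{Coh},\{x\}}(\mathcal{X})\simeq D^b_{\operatorname{Coh},\{y\}}(\mathcal{Y})$; it then detects closed points of supports through nonvanishing of $\operatorname{Hom}(K,\kappa(x,\xi)[n])$ and reads off unions, inclusions, irreducibility and (co)dimension from the lattice of closed sets, exactly as you do. Your explicit check that $\psi\phi=\mathrm{id}$ fills in what the paper covers with ``since $F^{-1}$ satisfies the same hypotheses.'' The Jacobson caveat you flag is used tacitly by the paper when it passes from ``same closed points'' to ``same support,'' so restricting to finite type stacks over $k$, the setting where the lemma is applied, is the right call.
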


\begin{proof}
    Under the assumptions, for any closed point $x$ of $\mathcal{X}$, there is a closed point $y$ of $\mathcal{Y}$ such that for every generalized point sheaf $\kappa(x, \xi)$, the support of $F(\kappa(x, \xi))$ is $\{y\}$. By Lemma \ref{lemma-generatingset}, we have
    $$
    F(D^b_{\operatorname{Coh}, \{x\}}(\mathcal{X})) \subset D^b_{\operatorname{Coh}, \{y\}}(\mathcal{Y}).
    $$
    Since $F^{-1}$ satisfies the same hypotheses, we see that in fact $F$ and $F^{-1}$ induce quasi-inverse equivalences 
     $$
    D^b_{\operatorname{Coh}, \{x\}}(\mathcal{X}) \leftrightarrow D^b_{\operatorname{Coh}, \{y\}}(\mathcal{Y}).
    $$
    Thus we see: For $K \in D^b_{\operatorname{Coh}}(\mathcal{X})$ and $x$ a closed point of $\mathcal{X}$,  
    \begin{align*}
    x \in \operatorname{Supp}K &\iff \exists n, (x, \xi) \text{ such that }\operatorname{Hom}(K, \kappa(x,\xi)[n]) \neq 0 \\
    &\iff \exists n, (x, \xi) \text{ such that }\operatorname{Hom}(F(K), F(\kappa(x,\xi))[n]) \neq 0 \\
    & \iff y \in \operatorname{Supp} F(K)
    \end{align*}
    where the last $\iff$ is because the objects $F(\kappa(x,\xi)$ classically generate $D^b_{\operatorname{Coh}, \{y\}}(\mathcal{Y})$ by Lemma \ref{lemma-generatingset} and the first paragraph of the proof, so if $\operatorname{Hom}(F(K), F(\kappa(x,\xi))[n])  = 0$ for all $\xi, n$, then also $\operatorname{Hom}(F(K), \kappa(y, \eta)[n]) = 0$ for all $\eta, n$ because $\kappa(y, \eta)[n] \in D^b_{\operatorname{Coh},\{y\}}(\mathcal{Y})$. 
    
    Thus for $K, L \in \operatorname{D}^b_{\operatorname{Coh}}(\mathcal{X}),$ we have $\operatorname{Supp}K = \operatorname{Supp}L$ implies $\operatorname{Supp} F(K) = \operatorname{Supp} F(L)$. Therefore, $\operatorname{Supp}K \mapsto \operatorname{Supp}F(K)$ is a well-defined function from the set of closed subsets of $\mathcal{X}$ to the set of closed subsets of $\mathcal{Y}$. To obtain the inverse, apply the same arguments to $F^{-1}$.

    Now for the last claim, consider closed subsets $Z_1\subset Z_2 \subset |\cal X|$ with $\ecal K_i \in D^b_{\operatorname{Coh}}(\cal X)$ satisfying $\supp \ecal K_i = Z_i$ ($i = 1,2$). Then, under the correspondence, $Z_1 \cup Z_2$ gets sent to $$\supp F(\ecal K_1 \oplus \ecal K_2) = \supp F(\ecal K_1)\oplus F(\ecal K_2) = \supp F(\ecal K_1) \cup \supp F(\ecal K_2)$$ as desired. In particular, irreducible closed sets correspond to irreducible closed sets. The inclusion preservation follows since for closed subsets $Z_1, Z_2 \subset |\cal X|$, $Z_1 \subset Z_2$ if and only if $Z_1 \cup Z_2 = Z_2$. Since the correspondence preserves irreducible closed sets and inclusions, it also preserves (co)dimension of closed sets.
\end{proof}

\begin{corollary}
\label{corollary-passagetoopens}
    Let $\mathcal{X}, \mathcal{Y}$ be Noetherian algebraic stacks. Let $F : D^b_{\operatorname{Coh}}(\mathcal{X}) \to D^b_{\operatorname{Coh}}(\mathcal{Y})$ be an exact equivalence. 
    Assume that both $F$ and $F^{-1}$ satisfy condition (i) above. Then for any open substack $\mathcal{U} \subset \mathcal{X}$, there is an open substack $\mathcal{V} \subset \mathcal{Y}$ and equivalence of categories $D^b_{\operatorname{Coh}}(\mathcal{U}) \to D^b_{\operatorname{Coh}}(\mathcal{V})$ making the diagram 
\[\begin{tikzcd}
	{D^b_{\operatorname{Coh}}(\mathcal{X})} & {D^b_{\operatorname{Coh}}(\mathcal{Y})} \\
	{D^b_{\operatorname{Coh}}(\mathcal{U})} & {D^b_{\operatorname{Coh}}(\mathcal{V})}
	\arrow[from=1-1, to=1-2]
	\arrow[from=1-1, to=2-1]
	\arrow[from=1-2, to=2-2]
	\arrow[from=2-1, to=2-2]
\end{tikzcd}\]
commute, where the vertical arrows are the restriction functors and the top horizontal arrow is the equivalence induced by $F$, see Lemma \ref{lemma-cohtocoh}. The open substack $\mathcal{V} \subset \mathcal{Y}$ is defined as follows: It is the open substack whose underlying open set $|\mathcal{V}| \subset |\mathcal{Y}|$ is equal to the complement of the support of $F(K)$ where $K$ is any object of $D^b_{\operatorname{Coh}}(\mathcal{X})$ whose support is equal to $|\mathcal{X}| \setminus |\mathcal{U}|$. 

Moreover, if $G : D^b_{\operatorname{Coh}}(\mathcal{X}) \to D^b_{\operatorname{Coh}}(\mathcal{Y})$ is another exact functor satisfying the hypotheses and $\alpha : F \to G$ is an isomorphism of functors, $\alpha$ induces an isomorphism between the two equivalences $D^b_{\operatorname{Coh}}(\mathcal{U}) \to D^b_{\operatorname{Coh}}(\mathcal{V})$.
\end{corollary}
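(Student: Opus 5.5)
The plan is to realize $D^b_{\operatorname{Coh}}(\mathcal{U})$ as a Verdier quotient of $D^b_{\operatorname{Coh}}(\mathcal{X})$ and to check that $F$ carries the kernel of the quotient onto the kernel of the corresponding quotient on the $\mathcal{Y}$ side. Set $Z = |\mathcal{X}|\setminus|\mathcal{U}|$. For a Noetherian algebraic stack, restriction $j^* : D^b_{\operatorname{Coh}}(\mathcal{X}) \to D^b_{\operatorname{Coh}}(\mathcal{U})$ is essentially surjective, since coherent sheaves extend from open substacks and bounded complexes extend by the usual induction on amplitude. It therefore induces an equivalence
\[
D^b_{\operatorname{Coh}}(\mathcal{X})/D^b_{\operatorname{Coh},Z}(\mathcal{X}) \xrightarrow{\ \sim\ } D^b_{\operatorname{Coh}}(\mathcal{U}).
\]
I would take this as standard, citing the stacks version in Hall--Rydh or the Stacks Project; for Noetherian stacks this is the analogue of Thomason--Trobaugh/Krause. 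Note that $\mathcal{U}$ is exactly the complement of $Z$ and $D^b_{\operatorname{Coh},Z}$ is the kernel of $j^*$.

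Next I identify the image of the kernel. Choose any $K$ with $\operatorname{Supp}K = Z$, for instance the pushforward of the structure sheaf of the reduced closed substack on $Z$. By Lemma \ref{lemma-supports}, the closed set $W := \operatorname{Supp}F(K)$ does not depend on this choice, so $\mathcal{V}$, the open complement of $W$, is well defined. The same lemma gives a bijection on closed subsets that preserves inclusions. Hence for any $L \in D^b_{\operatorname{Coh}}(\mathcal{X})$,
\[
\operatorname{Supp}L \subset Z \iff \operatorname{Supp}F(L)\subset W.
\]
Applying the same argument to $F^{-1}$, the equivalence $F$ restricts to an equivalence $D^b_{\operatorname{Coh},Z}(\mathcal{X}) \simeq D^b_{\operatorname{Coh},W}(\mathcal{Y})$. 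The one point to check here is that the bijection of Lemma \ref{lemma-supports} sends $\operatorname{Supp}L$ to $\operatorname{Supp}F(L)$ for every $L$, not only for $K$. This is exactly what the lemma asserts, because the map is defined on supports of objects.

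An exact equivalence that identifies thick subcategories descends to an equivalence of Verdier quotients, by the universal property applied in both directions to $F$ and $F^{-1}$. Composing with the equivalences of the first step on both sides gives $D^b_{\operatorname{Coh}}(\mathcal{U}) \to D^b_{\operatorname{Coh}}(\mathcal{V})$, and the square commutes up to natural isomorphism by construction.

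For the final claim, let $\alpha : F \to G$ be an isomorphism of functors, where $G$ satisfies the same hypotheses. Then $\operatorname{Supp}G(K) = \operatorname{Supp}F(K)$ because $G(K)\cong F(K)$, so $G$ produces the same $\mathcal{V}$. Whiskering $\alpha$ with the localization functor gives an isomorphism between the two composites $D^b_{\operatorname{Coh}}(\mathcal{X}) \to D^b_{\operatorname{Coh}}(\mathcal{V})$. By the 2-categorical universal property of Verdier localization, namely that precomposition with the quotient functor is fully faithful on functor categories, this descends to an isomorphism of the induced functors on $D^b_{\operatorname{Coh}}(\mathcal{U})$.

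The main obstacle is the first step: the quotient description of $D^b_{\operatorname{Coh}}(\mathcal{U})$ for stacks, in particular the essential surjectivity, and the full faithfulness of the quotient map for $D^b_{\operatorname{Coh}}$ rather than for perfect complexes. I would handle this by extending coherent sheaves, which is available for Noetherian stacks via \cite{rydh_approx}, together with a calculus-of-fractions argument. The argument is that every morphism $j^*A \to j^*B$ is represented by a roof whose cone is supported on $Z$, obtained by multiplying by a power of the ideal of $Z$.
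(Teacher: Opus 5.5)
Your proposal is correct and is essentially the paper's argument. Lemma \ref{lemma-supports}, applied to $F$ and $F^{-1}$, shows that $F$ restricts to an equivalence $D^b_{\operatorname{Coh},Z}(\mathcal{X}) \simeq D^b_{\operatorname{Coh},W}(\mathcal{Y})$ compatible with the inclusions, and you then pass to Verdier quotients; the paper identifies these quotients with $D^b_{\operatorname{Coh}}(\mathcal{U})$ and $D^b_{\operatorname{Coh}}(\mathcal{V})$ simply by citing \cite[Appendix B]{Hall_Lamarche_Lank_Peng_2025}, rather than reproving it via extension of sheaves and roofs as you sketch, while your whiskering argument for the final claim makes explicit what the paper leaves implicit.
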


\begin{proof}
    Let $S \subset |\mathcal{X}|$ be the closed complement of the open subset corresponding to $\mathcal{U} \subset \mathcal{X}$. Let $T \subset |\mathcal{Y}|$ be the closed subset to which it corresponds under the bijection of Lemma \ref{lemma-supports}. By that lemma (applied to both $F$ and $F^{-1}$), $F$ induces a commutative square
\[\begin{tikzcd}
	{D^b_{\operatorname{Coh}, S}(\mathcal{X})} & {D^b_{\operatorname{Coh}, T}(\mathcal{Y})} \\
	{D^b_{\operatorname{Coh}}(\mathcal{X})} & {D^b_{\operatorname{Coh}}(\mathcal{Y})}
	\arrow[from=1-1, to=1-2]
	\arrow[hook, from=1-1, to=2-1]
	\arrow[hook, from=1-2, to=2-2]
	\arrow["F"', from=2-1, to=2-2]
\end{tikzcd}\]
    where the vertical arrows are the respective inclusions. 
     Since $D^b_{\operatorname{Coh}}(\mathcal{U}), D^b_{\operatorname{Coh}}(\mathcal{V})$ are the quotients of $D^b_{\operatorname{Coh}}(\mathcal{X})$, $D^b_{\operatorname{Coh}}(\mathcal{Y})$ by these subcategories (see \cite[Appendix B]{Hall_Lamarche_Lank_Peng_2025}), the result follows. 
\end{proof}

The following generalizes \cite{peng2024equivalences}*{Lemma 5.7} and ``Step 2'' of the proof of \cite[Proposition 3.2.3]{olander2022resolutions}.
\begin{lemma}
\label{lemma-cohtocoh}
    Let $\mathcal{X}, \mathcal{Y}$ be Noetherian algebraic stacks. Let $F : D^b_{\operatorname{Coh}}(\mathcal{X}) \to D^b_{\operatorname{Coh}}(\mathcal{Y})$ be an exact equivalence.
    Assume that both $F$ and $F^{-1}$ satisfy condition (ii) above. 
    Let $K \in D^b_{\operatorname{Coh}}(\mathcal{X})$. Then $F(K) \in \operatorname{Coh}\mathcal{Y}$ if and only if $K \in \operatorname{Coh}\mathcal{X}$,  and so $F$ induces an equivalence of categories $\operatorname{Coh}\mathcal{X} \to \operatorname{Coh}\mathcal{Y}$. 
\end{lemma}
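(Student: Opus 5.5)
The plan is to imitate the scheme argument in the proof of Proposition \ref{prop-pointstopoints}. The generalized closed point sheaves $\kappa(x,\xi)$ will play the role of the skyscraper sheaves $\ecal{F}_y$, and the top-cohomology detection formula (\ref{equn-topcohomologysheaf}) will replace part (i) of Lemma \ref{lemma-recognizingvectorbundles}. No shift normalization is needed, because condition (ii) already sends $\kappa(x,\xi)$ to an honest sheaf $\kappa(y,\eta)$ in degree zero with no shift.

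\textbf{Step 1: top cohomology is preserved.} Let $0\neq K\in D^b_{\operatorname{Coh}}(\mathcal{X})$. By (\ref{equn-topcohomologysheaf}), the largest $n$ with $H^n(K)\neq 0$ equals the supremum of those $n$ for which there is a generalized closed point $(x,\xi)$ with $\operatorname{Hom}(K,\kappa(x,\xi)[-n])\neq 0$. For this I should first check that the generalized point used there may be taken closed. That holds: the support of $H^n(K)$ is closed in a Noetherian stack and so contains a closed point, and Corollary 7.10 of loc.\ cit.\ applies at that point.

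Since $F$ is fully faithful, $\operatorname{Hom}(K,\kappa(x,\xi)[-n])\cong\operatorname{Hom}(F(K),F(\kappa(x,\xi))[-n])$, and by condition (ii) we have $F(\kappa(x,\xi))\cong\kappa(y,\eta)$. Applying the same reasoning to $F^{-1}$, which also satisfies (ii), shows that every generalized closed point of $\mathcal{Y}$ arises this way up to isomorphism. Indeed, if $F^{-1}(\kappa(y,\eta))\cong\kappa(x,\xi)$, then $F(\kappa(x,\xi))\cong\kappa(y,\eta)$. Hence $F$ induces a bijection on isomorphism classes of generalized closed point sheaves. Combining this with the displayed Hom isomorphism gives
$$\sup\{n:H^n(K)\neq0\}=\sup\{n:H^n(F(K))\neq0\}.$$
The symmetric statement holds for $F^{-1}$.

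\textbf{Step 2: the truncation argument.} Let $\ecal{F}\in\operatorname{Coh}\mathcal{X}$ be nonzero. By Step 1, $F(\ecal{F})\in D^{\le0}$ and $H^0(F(\ecal{F}))\neq0$. Consider the triangle
$$\tau_{<0}F(\ecal{F})\to F(\ecal{F})\to H^0(F(\ecal{F}))\to .$$
If $\tau_{<0}F(\ecal{F})\neq 0$, the first arrow is nonzero, since otherwise $H^0(F(\ecal{F}))\cong F(\ecal{F})\oplus\tau_{<0}F(\ecal{F})[1]$, which is impossible because $H^0$ is concentrated in degree $0$. Applying $F^{-1}$ then gives a nonzero map $F^{-1}(\tau_{<0}F(\ecal{F}))\to\ecal{F}$. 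By Step 1 applied to $F^{-1}$, its source lies in $D^{<0}$, so the map is zero. This contradiction shows $F(\ecal{F})$ is a sheaf.

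The converse direction is symmetric: apply the same argument to $F^{-1}$, or run it directly on $K$ with $F(K)$ a sheaf, exactly as in the scheme case. Finally, $\operatorname{Coh}$ is a full subcategory of $D^b_{\operatorname{Coh}}$ and $F,F^{-1}$ preserve it, so $F$ restricts to an equivalence $\operatorname{Coh}\mathcal{X}\to\operatorname{Coh}\mathcal{Y}$.

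\textbf{Main obstacle.} The subtle point is Step 1, and specifically the justification that the top cohomology degree is detected by generalized \emph{closed} points, so that condition (ii), which speaks only of closed points, suffices. I expect this to follow directly from the cited version of Nakayama's lemma, since every nonempty closed subset of a Noetherian stack contains a closed point.
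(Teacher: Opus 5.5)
Your proposal is correct and follows essentially the same route as the paper's proof. First you use (\ref{equn-topcohomologysheaf}) together with condition (ii) for both $F$ and $F^{-1}$ to show that the degree of the top cohomology sheaf is preserved. Then you rule out $\tau_{<0}F(\ecal{F})\neq 0$ via the truncation triangle and get the converse by symmetry. Your extra checks, that closed generalized points suffice and that the first arrow of the triangle is nonzero when $\tau_{<0}F(\ecal{F})\neq 0$, only make explicit details the paper leaves implicit.
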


\begin{proof}
    By the hypotheses and (\ref{equn-topcohomologysheaf}), $F$ and $F^{-1}$ each take objects whose top non-zero cohomology sheaf lives in degree $n$ to objects whose top non-zero cohomology sheaf lives in degree $n$. In particular, if $0 \neq \ecal{F} \in \operatorname{Coh}\mathcal{X}$, then $H^0(F(\ecal{F})) \neq 0$ and $H^i(F(\ecal{F})) = 0$ for $i > 0$. Then there is a truncation triangle
    $$
    \tau_{<0}F(\ecal{F}) \xrightarrow{\alpha} F(\ecal{F}) \to H^0(F(\ecal{F})) \to \tau_{<0}F(\ecal{F})[1].
    $$
    Applying $F^{-1}$ to $\alpha$ gives an arrow $K \to \ecal{F}$ in $D^b_{\operatorname{Coh}}(\mathcal{X})$ with $\ecal{F} \in \operatorname{Coh}\mathcal{X}$ and $H^i(K) = 0$ for $i \geq 0$. Hence $F^{-1}(\alpha) = 0$ and so $\alpha = 0$. But this implies $\tau_{<0}F(\ecal{F}) = 0$ and so $F(\ecal{F}) \in \operatorname{Coh}\mathcal{Y}$. Applying the same argument to $F^{-1}$ gives the rest of the result. 
\end{proof}

For a Noetherian algebraic stack $\cal Z$, let $\operatorname{Ref}(\cal Z)$ denote the category of reflexive coherent sheaves. Then, we have analogous results for reflexive sheaves. 
\begin{corollary}\label{corollary-reflexivetoreflexive}
     Let $\mathcal{X}, \mathcal{Y}$ be normal integral Noetherian algebraic stacks. Let $F : D^b_{\operatorname{Coh}}(\mathcal{X}) \to D^b_{\operatorname{Coh}}(\mathcal{Y})$ be an exact equivalence.
    Assume that both $F$ and $F^{-1}$ satisfy condition (ii) above. Then $F(K) \in \operatorname{Ref}\mathcal{Y}$ if and only if $K \in \operatorname{Ref}\mathcal{X}$,  and so $F$ induces an equivalence of categories $\operatorname{Ref}\mathcal{X} \to \operatorname{Ref}\mathcal{Y}$. 
\end{corollary}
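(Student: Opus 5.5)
By Lemma \ref{lemma-cohtocoh}, $F$ and $F^{-1}$ restrict to quasi-inverse equivalences $\operatorname{Coh}\mathcal{X} \leftrightarrow \operatorname{Coh}\mathcal{Y}$, and these are exact equivalences of abelian categories. So it suffices to characterize reflexive coherent sheaves purely in terms of the abelian category $\operatorname{Coh}$ together with the bijection of closed subsets. That bijection comes from Lemma \ref{lemma-supports}: condition (ii) implies condition (i), since $\kappa(y,\eta)$ is supported at the single closed point $y$. Moreover, two generalized points $(x,\xi),(x,\xi')$ at the same point have images with the same support. To see this, note that $\operatorname{Ext}^*(\kappa(x,\xi),\kappa(x,\xi'))$ can be nonzero only at one point, and more robustly, the images generate $F(D^b_{\operatorname{Coh},\{x\}}(\mathcal{X}))$, which is indecomposable in the appropriate sense. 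I would check this detail, or simply use the support-bijection argument directly with generalized points.

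The plan uses the standard characterization on a normal integral Noetherian stack: a coherent sheaf $\ecal{F}$ is reflexive if and only if (a) $\ecal{F}$ is torsion free, i.e.\ has no nonzero subsheaf with support of codimension $\geq 1$, and (b) $\ecal{F}$ is $S_2$ in the sense that every extension $0 \to \ecal{F} \to \ecal{G} \to \ecal{Q} \to 0$ with $\operatorname{Supp}\ecal{Q}$ of codimension $\geq 2$ splits. Equivalently, $\operatorname{Ext}^1(\ecal{Q},\ecal{F}) = 0$ and $\operatorname{Hom}(\ecal{Q},\ecal{F}) = 0$ for every coherent $\ecal{Q}$ supported in codimension $\geq 2$, together with torsion-freeness. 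This is the usual Hartogs-type criterion: on a normal scheme, $\ecal{F}$ is reflexive iff it is torsion free and $\ecal{F} \to j_*j^*\ecal{F}$ is an isomorphism for every open $j$ with complement of codimension $\geq 2$. I would prove it smooth-locally, reducing to the scheme case via a smooth atlas, since reflexivity, codimension and local cohomology are all smooth local. In local-cohomology form the criterion reads $\mathcal{H}^0_Z(\ecal{F}) = \mathcal{H}^1_Z(\ecal{F}) = 0$ for $Z$ of codimension $\geq 2$, together with torsion-freeness.

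Next, translate each condition across $F$. Torsion-freeness says $\operatorname{Hom}(\ecal{T},\ecal{F}) = 0$ for all coherent $\ecal{T}$ whose support has codimension $\geq 1$. Since $F$ preserves $\operatorname{Coh}$, preserves $\operatorname{Hom}$ and $\operatorname{Ext}^1$, and by Lemma \ref{lemma-supports} preserves the codimension of supports bijectively, both conditions (a) and (b) transfer. In each case $F^{-1}$ supplies every test object $\ecal{T}$ or $\ecal{Q}$ on $\mathcal{Y}$ as the image of one on $\mathcal{X}$ with the same codimension. Hence $F(\operatorname{Ref}\mathcal{X}) \subset \operatorname{Ref}\mathcal{Y}$ and symmetrically for $F^{-1}$, which gives the equivalence.

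The main obstacle is the second step: proving that the Ext-vanishing criterion really characterizes reflexive sheaves on a normal Noetherian stack. In particular, one must pass from ``$\operatorname{Ext}^1(\ecal{Q},\ecal{F})=0$ for all $\ecal{Q}$ supported in codimension $\geq 2$'' to ``$\mathcal{H}^1_Z(\ecal{F})=0$''. This uses writing $\mathcal{H}^i_Z$ as a colimit of $\mathcal{E}xt^i(\ecal{O}/\ecal{I}^n,\ecal{F})$ and the fact that global vanishing for all such $\ecal{Q}$ forces local vanishing, since any local nonzero class produces a global $\ecal{Q}$ by Noetherian extension of coherent sheaves. If this proves awkward, I would instead phrase (b) as: every injection $\ecal{F} \hookrightarrow \ecal{G}$ with $\ecal{G}$ torsion free and cokernel supported in codimension $\geq 2$ is an isomorphism. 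This is purely abelian-categorical plus supports, and on a normal stack it is equivalent to reflexivity by taking $\ecal{G} = \ecal{F}^{\vee\vee}$.
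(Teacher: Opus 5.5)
Your argument is essentially the paper's, but one step in it is false: the derivation of condition (i) from condition (ii).

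\textbf{The failing step.} The first half of (i) is automatic. The second half, that generalized points at the same $x$ have images with the same support, does not follow from (ii) under the corollary's hypotheses.

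Take $\mathcal X=\mathbb A^1_k\times B\mu_{2,k}$, which is normal (indeed smooth), integral and Noetherian. The $\mathbb Z/2$-grading gives $\operatorname{Coh}\mathcal X\simeq\operatorname{Coh}(\mathbb A^1_k)\times\operatorname{Coh}(\mathbb A^1_k)$. For a translation $\tau$ of $\mathbb A^1_k$, the functor $F=\operatorname{id}\times\tau^*$ is an exact autoequivalence, and $F$ and $F^{-1}$ both satisfy (ii). But for a $k$-point $x$, $F(\kappa(x,\chi_0))$ is supported at $x$ while $F(\kappa(x,\chi_1))$ is supported at $\tau^{-1}(x)\neq x$. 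So $\operatorname{Supp}K\mapsto\operatorname{Supp}F(K)$ is not even well defined. (The corollary's conclusion still holds in this example; only your route to it breaks.)

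The indecomposability you appeal to fails here because $\operatorname{Ext}^*(\kappa(x,\chi_0),\kappa(x,\chi_1))=0$. What you actually need is the $\operatorname{Ext}^1$-connectedness of Lemma~\ref{lemma-genericallyaschemeextconnected}. That requires the normal bundle of the residual gerbe to be faithful, as in Proposition~\ref{prop-faithfulnormalbundle}, and normality plus integrality does not give this.

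So the codimension-preserving support bijection of Lemma~\ref{lemma-supports} must be taken as an input, namely condition (i) for $F$ and $F^{-1}$, rather than derived. The paper's proof also uses it tacitly when it says $F$ ``preserves codimension of the support''. It does hold where the corollary is applied, in Lemma~\ref{lemma-local-rigid}, since a local autoequivalence preserves each $D^b_{\operatorname{Coh},\{x\}}(\mathcal X)$.

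\textbf{The rest matches the paper.} With that input stated explicitly, your steps are the paper's:
\begin{enumerate}
\item the reflexivity criterion (torsion-free and $\mathcal H^1_{\mathcal Z}=0$ for $\operatorname{codim}\mathcal Z\geq 2$);
\item torsion-freeness transported via Lemma~\ref{lemma-cohtocoh} and codimension preservation;
\item your fallback, that an injection $\ecal F\hookrightarrow\ecal G$ into a torsion-free sheaf with cokernel supported in codimension $\geq 2$ must be an isomorphism. This is exactly the paper's step of applying $F^{-1}$ to $0\to F(\ecal F)\to F(\ecal F)^{\vee\vee}\to\ecal G\to 0$.
\end{enumerate}

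The obstacle you anticipate in the $\operatorname{Ext}^1$ version does not arise. Suppose $\ecal F$ is reflexive and $0\to\ecal F\to\ecal G\to\ecal Q\to 0$ is exact with $Z=\operatorname{Supp}\ecal Q$ of codimension $\geq 2$. Apply $\mathcal H^0_Z$ and use $\mathcal H^0_Z(\ecal F)=\mathcal H^1_Z(\ecal F)=0$: then $\mathcal H^0_Z(\ecal G)\to\ecal Q$ is an isomorphism, which splits the sequence. Conversely, if such extensions always split, split $0\to\ecal F\to\ecal F^{\vee\vee}\to\ecal Q\to 0$ and use that $\ecal F^{\vee\vee}$ is torsion-free to get $\ecal Q=0$. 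No colimit of $\mathcal{E}xt$ sheaves is needed.
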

\begin{proof}
    First, recall that by \cite{antieau2016maximal}*{Proposition 2.12}, a coherent sheaf $\ecal F$ on a normal integral Noetherian algebraic stack $\cal X$ is reflexive if and only if it is torsion-free and $\cal H^1_\cal Z(\cal F) = 0$ for all closed substacks $\cal Z \subset \cal X$ with $\operatorname{codim}_\cal X \cal Z \geq 2$. Take a reflexive coherent sheaf $\ecal F$ on $\cal X$. By Lemma \ref{lemma-cohtocoh}, $F(\ecal F)$ is a coherent sheaf. Suppose $F(\ecal F)$ has a non-zero torsion subsheaf $\ecal G \subset F(\ecal F)$.
    Then, since $F$ induces an exact equivalence on $\operatorname{Coh}$ and preserves codimension of the support, $\ecal F$ has a torsion subsheaf, which is absurd. Hence, $F(\ecal F)$ is torsion-free. The rest mimics the proof of \cite{antieau2016maximal}*{Proposition 2.12}. First, we have a natural short exact sequence
    \[
    0 \to F(\ecal F) \to F(\ecal F)^{\vee \vee} \to \ecal G \to 0 
    \]
    where $\supp \ecal G$ has codimension $\geq 2$. Then by applying the exact equivalence $F\inv$ (which preserves torsion-free sheaves as above) to the short exact sequence, we obtain the long exact sequence
    \[
    0 \to \cal H_{\supp F\inv(\ecal G)}^0(F\inv (\ecal G))  \to \cal H_{\supp F\inv(\ecal G)}^1(\ecal F)  \to  \cal H_{\supp F\inv(\ecal G)}^1(F\inv(F(\ecal F)^{\vee \vee})).
    \]
    Since $\cal H_{\supp F\inv(\ecal G)}^0(F\inv (\ecal G)) \cong F\inv (\ecal G)$ and $\cal H_{\supp F\inv(\ecal G)}^1(\ecal F) = 0$ as $\ecal F$ is reflexive and $\supp F\inv(\ecal G)$ has codimension $\geq 2$, we see $F\inv (\ecal G) = 0$ and therefore $\ecal G = 0$ as desired. Now applying the same argument to $F^{-1}$ completes the proof.
\end{proof}
We can prove the analogous results for vector bundles under the tameness assumption. 
\begin{lemma}
\label{lemma-vecttovect}
 Let $\mathcal{X}, \mathcal{Y}$ be tame Noetherian algebraic stacks of finite type over a field $k$.  Let $F : D^b_{\operatorname{Coh}}(\mathcal{X}) \to D^b_{\operatorname{Coh}}(\mathcal{Y})$ be a $k$-linear, exact equivalence such that both $F$ and $F^{-1}$ satisfy condition (ii) above. Then if $\ecal{F}$ is a finite locally free $\ecal{O}_{\mathcal{X}}$-module, then so is $F(\ecal{F})$. If $\cal X$ contains a closed point with trivial stabilizer, then the rank is also preserved.
\end{lemma}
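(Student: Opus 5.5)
Since $F$ and $F^{-1}$ satisfy condition (ii), Lemma \ref{lemma-cohtocoh} shows that $F(\ecal{F})$ is a coherent sheaf in degree zero. To see that it is locally free, I will use Lemma \ref{lemma-characterizingvectorbundles}. On each connected component of $\mathcal{Y}$ it suffices to check that $\operatorname{Ext}^i(F(\ecal{F}), \kappa(y,\eta)) \neq 0$ can only happen for $i = 0$, for every generalized closed point $(y,\eta)$. Because $F^{-1}$ satisfies (ii), we have $F^{-1}(\kappa(y,\eta)) \cong \kappa(x,\xi)$ for some generalized closed point $(x,\xi)$ of $\mathcal{X}$. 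Full faithfulness of $F$ then gives
\[
\operatorname{Ext}^i(F(\ecal{F}), \kappa(y,\eta)) \cong \operatorname{Ext}^i(\ecal{F}, \kappa(x,\xi)) \cong \operatorname{Ext}^i_{\mathcal{G}_x}(Li_x^*\ecal{F}, \xi).
\]
Since $\ecal{F}$ is locally free, $Li_x^*\ecal{F} = i_x^*\ecal{F}$ is a vector bundle on $\mathcal{G}_x$ sitting in degree zero. Because $\mathcal{X}$ is tame, $\operatorname{Coh}(\mathcal{G}_x)$ is semisimple (Lemma \ref{lemma-tamegerbe}), so all higher Ext groups vanish. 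Hence the Ext group above is zero for $i \neq 0$. Lemma \ref{lemma-characterizingvectorbundles}, applied on each connected component of $\mathcal{Y}$ (components are open and closed, so this is harmless), then shows that $F(\ecal{F})$ is a shift of a vector bundle, and the shift is $0$ because $F(\ecal{F})$ is a sheaf in degree zero.

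For the rank, let $x \in \mathcal{X}$ be a closed point with trivial stabilizer. Then $\mathcal{G}_x = \operatorname{Spec}\kappa(x)$, and the only generalized point over $x$ is $\kappa(x)$. Write $F(\kappa(x)) \cong \kappa(y,\eta)$. I first want to show that $y$ also has trivial stabilizer, i.e.\ that $\mathcal{G}_y$ has only one simple object. The generalized points over $y$ are exactly the objects $\kappa(y,\eta')$ supported at $y$. Each of them equals $F(\kappa(x',\xi'))$ for some generalized point, and by Lemma \ref{lemma-supports} the underlying point $x'$ must be $x$: the equivalences $D^b_{\operatorname{Coh},\{x\}}(\mathcal{X}) \leftrightarrow D^b_{\operatorname{Coh},\{y\}}(\mathcal{Y})$ match up supports, and condition (ii) implies condition (i). Since $x$ carries only one generalized point, $\eta$ is the unique simple object on $\mathcal{G}_y$. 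I expect this to be the main obstacle, since one has to argue carefully that condition (ii) for both $F$ and $F^{-1}$ forces this support matching. Even so, I only need a weaker statement, and a direct computation avoids the issue.

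The direct computation goes as follows. We have
\[
\operatorname{Hom}(F(\ecal{F}), \kappa(y,\eta)) \cong \operatorname{Hom}(\ecal{F}, \kappa(x)) \cong \kappa(x)^{\operatorname{rk}\ecal{F}},
\]
and $\operatorname{End}(\kappa(y,\eta)) \cong \operatorname{End}(\kappa(x)) = \kappa(x)$. On the other hand, $\operatorname{Hom}(F(\ecal{F}), \kappa(y,\eta)) = \operatorname{Hom}_{\mathcal{G}_y}(i_y^*F(\ecal{F}), \eta)$. Once $\mathcal{G}_y$ is known to be $\operatorname{Spec}\kappa(y)$, we have $\eta = \kappa(y)$, and this Hom space is $\kappa(y)^{\operatorname{rk}F(\ecal{F})}$, with $\operatorname{End}(\eta) = \kappa(y)$. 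Taking the ratio of dimensions, exactly as in the final formula of Lemma \ref{lemma-characterizingvectorbundles}, gives $\operatorname{rk} F(\ecal{F}) = \operatorname{rk}\ecal{F}$, provided $k$-linearity is used to compare these dimensions as $k$-dimensions. If instead $\mathcal{G}_y$ were nontrivial, I would compare $\dim_k$ of the Hom spaces with $\dim_k \operatorname{End}(\eta)$ using the semisimple decomposition of $i_y^*F(\ecal{F})$ into simples, applying the same argument to $F^{-1}$ to rule this case out.
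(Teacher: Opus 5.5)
Your proof that $F(\ecal F)$ is locally free is the paper's argument (Lemma~\ref{lemma-characterizingvectorbundles} applied on each connected component). Your dimension-ratio computation also correctly finishes the rank claim \emph{once} you know $\mathcal{G}_y\cong\operatorname{Spec}\kappa(y)$. The gap is in establishing that.

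The assertion ``condition (ii) implies condition (i)'' is false. Condition (ii) sends each $\kappa(x,\xi)$ to some generalized point sheaf. It says nothing about whether sheaves over the same closed point land over the same closed point. So Lemma~\ref{lemma-supports} is not available, and $x'=x$ does not follow.

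The direct computation does not get around this. Without knowing $\mathcal{G}_y$ is trivial, it only shows that $\eta$ occurs in $i_y^*F(\ecal F)$ with multiplicity $\operatorname{rk}\ecal F$; it says nothing about the other simples on $\mathcal{G}_y$. Your fallback of ruling out a nontrivial $\mathcal{G}_y$ via $F^{-1}$ cannot succeed, because that case actually occurs. Take $\mathcal{X}=\operatorname{Spec}k\sqcup\operatorname{Spec}k$ and $\mathcal{Y}=B\mu_{2,k}$ with unique point $y$. Use the $k$-linear equivalence $D^b_{\operatorname{Coh}}(\mathcal{X})\simeq D^b(\operatorname{Vect}_k)^{2}\simeq D^b_{\operatorname{Coh}}(\mathcal{Y})$ sending the two point sheaves to $\kappa(y,\chi_0)$ and $\kappa(y,\chi_1)$. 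Then both $F$ and $F^{-1}$ satisfy (ii), $F^{-1}$ violates (i), and $F(\ecal{O}_{\mathcal{X}})\cong\chi_0\oplus\chi_1$ has rank $2$. So the rank assertion needs an input beyond the hypotheses you used.

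That input is what the paper's proof tacitly uses when it takes $y$ to be the point corresponding to $x$ under Lemma~\ref{lemma-supports}: condition (i) for $F$ and $F^{-1}$. This is available wherever the lemma is applied; for Proposition~\ref{prop-standardonthebigopen} it comes from Proposition~\ref{prop-pointlikegenericallyschemenowheretorsioncanonical}. Granting it, the argument goes as follows:
\begin{enumerate}
\item $F$ and $F^{-1}$ restrict to quasi-inverse equivalences $D^b_{\operatorname{Coh},\{x\}}(\mathcal{X})\leftrightarrow D^b_{\operatorname{Coh},\{y\}}(\mathcal{Y})$, so they biject the simples over $x$ with those over $y$.
\item Hence $\mathcal{G}_y$ has a single simple, necessarily $\ecal{O}_{\mathcal{G}_y}$, which is then $\otimes$-generating.
\item Lemma~\ref{lemma-cyclicstabilizers} gives $\mathcal{G}_y\cong\operatorname{Spec}\kappa(y)$, and your computation then yields $\operatorname{rk}F(\ecal F)=\operatorname{rk}\ecal F$.
\end{enumerate}

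Alternatively, suppose the normal bundle of $\mathcal{G}_y$ is faithful, e.g.\ $\mathcal{Y}$ regular with a dense open subscheme (Proposition~\ref{prop-faithfulnormalbundle}). Then the simples over $y$ are $\operatorname{Ext}^1$-connected by Lemma~\ref{lemma-genericallyaschemeextconnected}. Sheaves supported at distinct closed points have no Ext between them, so all images under $F^{-1}$ of the simples over $y$ lie over $x$. This again forces a single simple over $y$.
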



\begin{proof}
The first claim follows immediately from Lemma \ref{lemma-characterizingvectorbundles} (by applying it to each connected component). For the latter, it suffices by Lemma \ref{lemma-characterizingvectorbundles} again to show that the closed point $y$ corresponding to $x$ under Lemma \ref{lemma-supports} has trivial stabilizer and the same residue field as $x$. 
For this, note that $F$ and $F\inv$ induce a bijection between the simple objects of $\operatorname{Coh}(\cal G_x)$ and $\operatorname{Coh}(\cal G_y)$ and the former is a singleton, which implies $\ecal O_{\cal G_y}$ is $\otimes$-generating and hence $\cal G_y = B\mu_{1,\kappa(y)}  = \spec \kappa(y)$ by Lemma \ref{lemma-cyclicstabilizers}. We deduce that $\kappa(x) = \kappa(y)$ since they are the endomorphism rings of simple objects which correspond under $F$ and $F^{-1}$.
\end{proof}

\subsection{Definition of point-like objects}
\label{subsection-pointlike}

Let $k$ be a field, let $\cal X$ be a regular, proper, connected tame algebraic stack over a field $k$ of dimension $d$, and let $S_\cal X$ denote the Serre functor on $D^b_{\operatorname{Coh}}(\cal X) = \perf(\cal X)$, which exists and is given by $- \otimes^{\bb L} \omega_\cal X [d]$ by Lemma \ref{lemma-isserrefunctor}.  

\begin{definition}
We say an object $\ecal P \in \perf(\cal X)$ is \textit{point-like} if:
    \begin{enumerate}
        \item There exists $N_0 > 0$ such that $S_\cal X^{N_0} \ecal P \cong \ecal P [N_0 d]$.
        \item For every $N \in \bb Z$, every non-zero morphism $\ecal P \to S_\cal X^N  \ecal P  [-Nd] =\ecal P \otimes^{\bb L} \omega_\cal X^{\otimes N}$ is an isomorphism. 
        In particular, $\hom(\ecal P, \ecal P)$ is a division $k$-algebra.  
        \item $\hom_{\cal X}(\ecal P, S_\cal X^N \ecal P[-Nd  + i]) = \hom_{\cal  X}(\ecal P, \ecal P\otimes \omega_\cal X^{\otimes N}[i]) =  0$ for every $N \in \bb Z$ and every $i < 0$.
    \end{enumerate}
    Since the Serre functor commutes with equivalences, the property of an object being a point-like is preserved by equivalences. 
\end{definition}
\begin{remark}\label{remark-point-like}
    We generally cannot take $N_0 = 1$ in condition (i) since $\omega_\cal X$ may restrict to a non-trivial line bundle on some residual gerbe $\mathcal{G}_x$ and we want $\kappa(x, \xi)$ to be point-like. For condition (ii), recall that if $\cal X$ is a smooth projective variety and $\cal P \in \perf \cal X$ has finite support and satisfies conditions (i) (with $N_0 = 1$) and (iii) together with $\End(\cal P)$ being a division $k$-algebra, then $\cal P \cong \kappa(x)$ for some closed point $x \in \cal X$  (e.g. \cite{HuyBook}*{Lemma 4.5}). However, when $\cal X$ is stacky, $\End(\cal P)$ being a division $k$-algebra is not enough to get $\cal P \cong \kappa(x, \xi)$. For example, let $\cal X = [\bb P^1/\mu_2]$, where $\mu_2$
    acts by $(\zeta,  [u:v] )= [\zeta u: v]$, and let $x = [0:1]$. 
    Take the affine open chart $x\in  [\bb A^1_t/\mu_2] \subset \cal X$ given by $v \neq 0$, with the standard action of $\mu_2$ given by $(\zeta, t)\mapsto \zeta t$. Set $\cal P$ to be the pushforward of the structure sheaf of $\mathcal{Z} = [(\spec k[t]/t^2)/\mu_2]$ under the inclusion $\mathcal{Z} \to \mathcal{X}$. Now note that
    \[
    \End_\cal X(\cal P) = H^0(\mathcal{Z}, \ecal{O}_{\mathcal{Z}}) = (k[t]/(t^2))^{\mu_2} = k
    \]
   It is easy to see $\cal P$ also satisfies the conditions (i) and (iii). However, since $(t)/(t^2) \subsetneq k[t]/(t^2)$ is a $\mu_2$-equivariant embedding of $k[t]/(t^2)$-modules, $\cal P$ is not a simple coherent sheaf on $\cal X$ and therefore not of the form $\kappa(x, \xi)$. 
\end{remark}

\begin{lemma}
\label{lemma-pointsarepointlike}
    Let $(x, \xi)$ be a generalized closed point of $\mathcal{X}$ and $n \in \mathbb{Z}$. Then $\kappa(x, \xi)[n]$ is a  point-like object of $\perf (\mathcal{X}).$
\end{lemma}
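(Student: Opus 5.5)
Since all three conditions in the definition are invariant under shifting $\ecal P$, I will take $n=0$ and set $\ecal P=\kappa(x,\xi)=i_{x,*}\xi$. The key tool is the projection formula for the affine (closed) morphism $i_x:\mathcal{G}_x\to\mathcal{X}$:
$$\kappa(x,\xi)\otimes^{\mathbb{L}}\omega_{\mathcal{X}}^{\otimes N}\cong i_{x,*}(\xi\otimes i_x^*\omega_{\mathcal{X}}^{\otimes N}).$$
Write $\ecal M=i_x^*\omega_{\mathcal{X}}$, a line bundle on the gerbe. By Corollary \ref{corollary-finitelymanysimples}, $\operatorname{Pic}(\mathcal{G}_x)$ is finite, so some $N_0>0$ satisfies $\ecal M^{\otimes N_0}\cong\ecal O_{\mathcal{G}_x}$. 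Then
$$S_{\mathcal{X}}^{N_0}\ecal P\cong \ecal P\otimes\omega_{\mathcal{X}}^{\otimes N_0}[N_0d]\cong \ecal P[N_0d],$$
which is condition (i).

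For conditions (ii) and (iii), I will compute $\operatorname{Hom}(\ecal P,\ecal P\otimes\omega_{\mathcal{X}}^{\otimes N}[i])$ using the adjunction $(Li_x^*, i_{x,*})$:
$$\operatorname{Hom}_{\mathcal{X}}(i_{x,*}\xi, i_{x,*}\xi'[i])=\operatorname{Hom}_{\mathcal{G}_x}(Li_x^*i_{x,*}\xi,\xi'[i]),\qquad \xi'=\xi\otimes\ecal M^{\otimes N}.$$
Here $\xi'$ is again simple, since tensoring with a line bundle is an autoequivalence of $\operatorname{Coh}(\mathcal{G}_x)$.

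Because $\mathcal{X}$ is regular, $i_x$ is a regular closed immersion. I will verify this smooth-locally: pulled back to a smooth atlas, the residual gerbe becomes a regular closed subscheme of a regular scheme. Lemma \ref{lemma-regularclosedimmersioncalc}(ii) then gives
$$H^{-p}(Li_x^*i_{x,*}\xi)\cong\wedge^p(\ecal I/\ecal I^2)\otimes\xi,$$
concentrated in degrees $-c\le -p\le 0$. Since $\operatorname{Coh}(\mathcal{G}_x)$ is semisimple (Lemma \ref{lemma-tamegerbe}), $Li_x^*i_{x,*}\xi$ splits as the direct sum of its shifted cohomology sheaves. Hence
$$\operatorname{Hom}(\ecal P,\ecal P\otimes\omega_{\mathcal{X}}^{\otimes N}[i])\cong\operatorname{Hom}_{\mathcal{G}_x}\big(\wedge^{i}(\ecal I/\ecal I^2)\otimes\xi,\ \xi'\big),$$
which vanishes for $i<0$. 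This is condition (iii).

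For condition (ii), take $i=0$: then $\operatorname{Hom}(\ecal P,\ecal P\otimes\omega_{\mathcal{X}}^{\otimes N})=\operatorname{Hom}_{\mathcal{G}_x}(\xi,\xi')$. A nonzero morphism between simple objects is an isomorphism by Schur's lemma. Pushing forward along $i_x$ (and using the identification above, compatible with the counit) gives that the corresponding morphism $\ecal P\to\ecal P\otimes\omega_{\mathcal{X}}^{\otimes N}$ is $i_{x,*}$ of an isomorphism, hence an isomorphism. Concretely, $\operatorname{Hom}(i_{x,*}\xi,i_{x,*}\xi')=\operatorname{Hom}(\xi,\xi')$ because $i_{x,*}$ is fully faithful on coherent sheaves, being a closed immersion.

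The main obstacle is justifying that $i_x$ is a regular immersion and that Lemma \ref{lemma-regularclosedimmersioncalc} applies with $\xi$ a vector bundle on $\mathcal{G}_x$. Every coherent sheaf on a gerbe is a vector bundle, so the second point holds. If the first point proves delicate, I would use a weaker substitute that still suffices for (iii): $Li_x^*i_{x,*}\xi$ has cohomology only in nonpositive degrees, with $H^0=\xi$. This holds for any closed immersion, and it already yields both the vanishing for $i<0$ and the degree-$0$ identification needed for (ii).
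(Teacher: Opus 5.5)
Your proof is correct and matches the paper's argument for (i) and (ii). For (i), both use that $\operatorname{Pic}(\mathcal{G}_x)$ is finite. For (ii), both use that $i_{x,*}$ is fully faithful on coherent sheaves and that $\xi\otimes i_x^*\omega_{\mathcal{X}}^{\otimes N}$ is simple.

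For (iii), the paper just notes that $\kappa(x,\xi)$ and $\kappa(x,\xi)\otimes\omega_{\mathcal{X}}^{\otimes N}$ are both coherent sheaves in degree $0$, so negative $\operatorname{Ext}$ groups between them vanish automatically. This is essentially your fallback, and it makes the regular-immersion computation via Lemma \ref{lemma-regularclosedimmersioncalc} (your ``main obstacle'') unnecessary.
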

\begin{proof}
    Condition $\rm{(iii)}$ holds because $\kappa(x,\xi)$ and its twists by tensor-powers of the canonical bundle are coherent sheaves on $\mathcal{X}$. For condition (ii), note that if $i : \mathcal{G}_x \to \mathcal{X}$ is the inclusion of the residual gerbe at $x$, then $i$ is a closed immersion, $i_*\xi = \kappa(x, \xi)$, and $i_*(\xi \otimes i^* \omega_\cal X^{\otimes a} ) = \kappa(x, \xi)\otimes \omega_\cal X^{\otimes a}$. Thus for $a, b \in \mathbf{Z},$
    $$
    \operatorname{Hom}_{\mathcal{X}}(\kappa(x, \xi)\otimes \omega_{\mathcal{X}}^{\otimes a}, \kappa(x, \xi)\otimes \omega_\cal X^{\otimes b}) = \operatorname{Hom}_{\mathcal{G}_x}(\xi\otimes i^* \omega_\cal X^{\otimes a}, \xi \otimes i^* \omega_\cal X^{\otimes b})
    $$
    and any non-zero element in the latter, and hence the former, is invertible since $\xi \otimes i^*\omega_\cal X^{\otimes a}, \xi \otimes i^*\omega_\cal X^{\otimes b}$ are simple objects of the category of coherent sheaves on $\mathcal{G}_x$. Finally, (i) holds since the object $i^*\omega_{\mathcal{X}} \in \operatorname{Pic}(\mathcal{G}_x)$ has finite order, say $N$, by Corollary \ref{corollary-finitelymanysimples}. Hence 
    \[
    S_{\mathcal{X}}^N(\kappa(x,\xi)) = \kappa(x, \xi) \otimes \omega_{\mathcal{X}}^{\otimes N}[Nd] = i_*(\xi \otimes i^*\omega_{\mathcal{X}}^{\otimes N})[Nd] = i_*(\xi)[Nd] = \kappa(x, \xi)[Nd]. \qedhere
    \]
\end{proof}

When $\omega_{\mathcal{X}}$ is nowhere torsion, we have the following partial converse. We will obtain a stronger partial converse under stronger conditions in Proposition \ref{prop-classifypointlikecycliccase}.

\begin{lemma}
\label{lemma-pointlikeandnowheretorsion}
Assume the line bundle $\omega_{\mathcal{X}}$ is nowhere torsion. Then any point-like object in $D^b_{\operatorname{Coh}}(\mathcal{X})$ has support equal to a single closed point of $\mathcal{X}$.
\end{lemma}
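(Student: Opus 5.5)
Let $\ecal{P}$ be a point-like object, and let $N_0$ be as in condition (i). I will first show that its support is finite. Condition (i) gives an isomorphism
$$
\ecal{P} \otimes^{\mathbb{L}} \omega_{\mathcal{X}}^{\otimes N_0} \cong \ecal{P}.
$$
Taking cohomology sheaves, which commute with tensoring by a line bundle, yields $H^i(\ecal{P}) \otimes \omega_{\mathcal{X}}^{\otimes N_0} \cong H^i(\ecal{P})$ for every $i$. Each $H^i(\ecal{P})$ is coherent, and its support is proper because $\mathcal{X}$ is proper over $k$. By Corollary \ref{corollary-powersofnowheretorsiononstack}, $\omega_{\mathcal{X}}^{\otimes N_0}$ is again nowhere torsion, so each $H^i(\ecal{P})$ has zero-dimensional support. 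Since $\operatorname{Supp}\ecal{P} = \bigcup_i \operatorname{Supp} H^i(\ecal{P})$ is a finite union, it is a finite set of closed points. (We take $\ecal{P} \neq 0$, which is implicit since $\operatorname{End}(\ecal{P})$ is a division algebra.)

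Next I will show that the support is a single point. The support is finite, hence discrete, so write it as $\{x_1, \dots, x_m\}$. Then $D^b_{\operatorname{Coh}, \{x_1,\dots,x_m\}}(\mathcal{X})$ splits as the orthogonal product of the categories $D^b_{\operatorname{Coh},\{x_j\}}(\mathcal{X})$. To see this, choose disjoint open neighbourhoods $\mathcal{U}_j$ of the points $x_j$, and note that an object supported on a closed set $Z \subset \mathcal{U}_j$ is the extension by zero from $\mathcal{U}_j$. Concretely, I would argue that $\ecal{P} \cong \bigoplus_j \ecal{P}_j$ with $\operatorname{Supp}\ecal{P}_j = \{x_j\}$. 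One route: $R\Gamma_{\{x_j\}}\ecal{P}$ gives the summands, and the natural map $\bigoplus_j R\Gamma_{\{x_j\}}\ecal{P} \to \ecal{P}$ is an isomorphism, which can be checked on an open cover. Alternatively, one can work in $D_{qc}$ and use that the $\ecal{P}_j$ are coherent. In either case, $\operatorname{Hom}(\ecal{P}_j, \ecal{P}_l) = 0$ for $j \ne l$, since the supports are disjoint and $R\mathcal{H}om(\ecal{P}_j,\ecal{P}_l)$ is supported on $\operatorname{Supp}\ecal{P}_j \cap \operatorname{Supp}\ecal{P}_l = \emptyset$.

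Given this splitting, if $m \geq 2$ then $\operatorname{End}(\ecal{P}) = \prod_j \operatorname{End}(\ecal{P}_j)$ contains the nontrivial idempotent given by projection onto $\ecal{P}_1$. This contradicts condition (ii) with $N=0$, which says that $\operatorname{End}(\ecal{P})$ is a division algebra. Hence $m = 1$.

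The only step needing real care is the decomposition of an object with disconnected finite support into a direct sum. The cleanest justification I have is the local cohomology one above: the support is finite and discrete, so each $x_j$ has an open neighbourhood avoiding the other points, and the natural map $\bigoplus_j R\Gamma_{\{x_j\}}\ecal{P} \to \ecal{P}$ is an isomorphism because this can be checked smooth-locally on $\mathcal{X}$, where it is the standard scheme statement. Everything else is formal.
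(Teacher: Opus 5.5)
Your proposal is correct and follows the paper's proof: condition (i) combined with Corollary \ref{corollary-powersofnowheretorsiononstack} forces each $H^i(\ecal{P})$ to have finite support, and a support with two or more points would produce a nontrivial idempotent in $\operatorname{Hom}(\ecal{P},\ecal{P})$, contradicting condition (ii). Your only additions are points the paper leaves tacit: you justify the direct sum splitting (via local cohomology, checked smooth-locally), and you note that $\ecal{P}\neq 0$ is implicitly assumed.
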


\begin{proof}
    Let $K \in D^b_{\operatorname{Coh}}(\mathcal{X})$ be point-like. Condition (i) in the definition of point-like object implies there is $N >0$ so that $\omega_X ^{\otimes N}\otimes H^i(K) \cong H^i(K)$ for every integer $i$. By Corollary \ref{corollary-powersofnowheretorsiononstack}, the line bundle $\omega_{\mathcal{X}}^{\otimes N}$ is nowhere torsion, so $H^i(K)$ has finite support for every $i$, and hence $K$ has finite support. If the support consists of more than one closed point, then the (not necessarily commutative) $k$-algebra $\operatorname{Hom}(K,K)$ contains more than two idempotents, so cannot be a division algebra, contradicting (ii) in the definition of point-like object. This completes the proof. 
\end{proof}

\begin{lemma}\label{lemma-ext of skyscrapers}
    Let $(x, \xi), (y, \eta)$ be generalized closed points of $\mathcal{X}$. Write $\ecal{N}_x, \ecal{N}_y$ for the normal bundles of the residual gerbes $i_x : \mathcal{G}_x \to \mathcal{X}, i_y : \mathcal{G}_y \to \mathcal{X}$. Then we have
    \[
    \ext^i_\cal X(\kappa(x, \chi), \kappa(y, \eta)) = \begin{cases}
        0 & \text{if $x \neq y$} \\
        \hom_{\mathcal{G}_x}(\chi, \eta \otimes \wedge^i\ecal{N}_x ) & \text{if $x = y$. }
    \end{cases}
    \]
    \qedhere 
\end{lemma}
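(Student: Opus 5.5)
Here $\mathcal{X}$ is regular, so each residual gerbe $i_x : \mathcal{G}_x \to \mathcal{X}$ at a closed point is a regular closed immersion of codimension $c=d$, with ideal sheaf $\ecal{I}_x$ and normal bundle $\ecal{N}_x = (\ecal{I}_x/\ecal{I}_x^2)^\vee$. The plan is to compute everything on $\mathcal{G}_x$ via the adjunction $Li_x^* \dashv Ri_{x,*}$, and then use semisimplicity of $\operatorname{Coh}(\mathcal{G}_x)$ (Lemma \ref{lemma-tamegerbe}).

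\emph{Case $x \neq y$.} The supports of $\kappa(x,\chi)$ and $\kappa(y,\eta)$ are disjoint closed points. Hence $R\mathcal{H}om(\kappa(x,\chi), \kappa(y,\eta))$ has empty support and vanishes, since local Ext sheaves are supported on the intersection of supports. Taking $R\Gamma$ shows that all the Ext groups vanish.

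\emph{Case $x = y$.} By adjunction,
\[
\operatorname{Ext}^i_{\mathcal{X}}(i_{x,*}\chi, i_{x,*}\eta) = \operatorname{Ext}^i_{\mathcal{G}_x}(Li_x^* i_{x,*}\chi, \eta).
\]
Every coherent sheaf on $\mathcal{G}_x$ is a vector bundle, so Lemma \ref{lemma-regularclosedimmersioncalc}(ii) gives
\[
H^{-p}(Li_x^* i_{x,*}\chi) \cong \wedge^p(\ecal{I}_x/\ecal{I}_x^2) \otimes \chi .
\]
Since $\operatorname{Coh}(\mathcal{G}_x)$ is semisimple and $D^b_{\operatorname{Coh}}(\mathcal{G}_x) = D^b(\operatorname{Coh}\mathcal{G}_x)$, the object $Li_x^* i_{x,*}\chi$ is quasi-isomorphic to the direct sum of its shifted cohomology sheaves, and there are no higher Exts between coherent sheaves. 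Therefore
\[
\operatorname{Ext}^i(Li_x^* i_{x,*}\chi, \eta) = \operatorname{Hom}\bigl(\wedge^i(\ecal{I}_x/\ecal{I}_x^2)\otimes\chi, \eta\bigr) = \operatorname{Hom}(\chi, \eta \otimes \wedge^i \ecal{N}_x),
\]
where the last step uses $(\wedge^i \ecal{I}_x/\ecal{I}_x^2)^\vee \cong \wedge^i\ecal{N}_x$ and tensor-hom adjunction for vector bundles.

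The main point to check is that the hypotheses of Lemma \ref{lemma-regularclosedimmersioncalc} hold. Residual gerbes at closed points of a Noetherian stack are closed immersions. Regularity of the immersion can be checked smooth-locally: on a smooth chart the preimage of $\mathcal{G}_x$ is a regular scheme cut out by a regular sequence, since a closed point in a regular ambient space is cut out by a regular system of parameters. The second point is that the splitting of complexes on $\mathcal{G}_x$ uses tameness in the form of Lemma \ref{lemma-tamegerbe}, which is part of the standing assumptions.
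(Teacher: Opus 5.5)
Your proof is correct and follows the paper's argument: adjunction reduces to $\operatorname{Hom}_{\mathcal{G}_x}(Li_x^*i_{x,*}\chi,\eta[i])$, then Lemma \ref{lemma-regularclosedimmersioncalc}(ii) identifies the cohomology sheaves, and semisimplicity of $\operatorname{Coh}(\mathcal{G}_x)$ finishes (for $x\neq y$ the paper instead notes $Li_y^*i_{x,*}\chi=0$, which is equivalent to your support argument). One small correction: the preimage of $\mathcal{G}_x$ in a smooth chart is a regular closed subscheme, not a closed point, but your conclusion still holds because a closed immersion of regular schemes is a regular immersion.
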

\begin{proof}
First, note that we have
\begin{align*}
    \ext^i_\cal X(\kappa(x, \chi), \kappa(y, \eta)) & = \hom_{\cal X}(  {i_x}_*  \chi,  {i_y}_* \eta) \\
    & = \hom_{\mathcal{G}_y}(\bb L i^*_y {i_x}_* \chi , \eta[i]).
\end{align*}
Thus, if $x \neq y$, then $\ext^i_\cal X(\kappa(x, \chi), \kappa(y, \chi)) = 0$ as $\bb L i^*_y {i_x}_* \chi = 0$. From now on, we assume $x = y$ and write $i$ for $i_x = i_y$. Since the category of coherent sheaves on $\mathcal{G}_x$ is semisimple, we have
$$
\ext^i_\cal X(\kappa(x, \chi), \kappa(x, \eta)) = \hom_{\mathcal{G}_x}(\bb L i^* {i}_* \chi , \eta[i]) = \hom_{\mathcal{G}_x}(H^{-i}(\bb L i^* {i}_* \chi ),  \eta) = \hom_{\mathcal{G}_x}(\wedge^i \ecal{N}_x^\vee \otimes \chi ,  \eta)
$$
by Lemma \ref{lemma-regularclosedimmersioncalc}, as needed.
\end{proof}

\begin{prop}
\label{prop-classifypointlikecycliccase}
 Suppose $x$ is a closed point of $\mathcal{X}$ with residual gerbe $i_x: \cal G_x \to \cal X$. Suppose  $i_x^* \omega_\cal X$ is $\otimes$-generating. Then any  point-like object $K \in D^b_{\operatorname{Coh}}(\mathcal{X})$ with support equal to $\{x\}$ is of the form $\kappa(x, \xi)[n]$ for some generalized closed point $(x,\xi)$ of $\cal X$ and for some integer $n \in \bb Z$.
\end{prop}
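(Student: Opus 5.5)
The plan is to trap $K$ between two skyscraper objects at $x$, one coming from the top cohomology sheaf of $K$ and one from the bottom. Condition (iii) in the definition of point-like will then force $K$ to be concentrated in a single degree, and condition (ii) will force $K$ to be a retract of a simple sheaf.

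First I would set up the two skyscraper objects. Let $a$ and $b$ be the largest and smallest integers with $H^a(K) \neq 0$ and $H^b(K) \neq 0$, so that $b \leq a$. Both sheaves are coherent and supported at $\{x\}$. Let $\ecal{I}$ be the ideal sheaf of $\mathcal{G}_x$.
\begin{itemize}
\item The quotient $H^a(K)/\ecal{I}H^a(K)$ is non-zero by Nakayama (\cite[Corollary 7.10]{hall2024generalizedbondalorlovfaithfulnesscriterion}). It is a pushforward from $\mathcal{G}_x$, so by semisimplicity (Lemma \ref{lemma-tamegerbe}) it surjects onto some simple $\kappa(x,\xi)$.
\item The subsheaf of $H^b(K)$ annihilated by $\ecal{I}$ is non-zero, since $\ecal{I}^m H^b(K) = 0$ for some $m$. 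It is a pushforward from $\mathcal{G}_x$, so it contains some simple $\kappa(x,\eta)$.
\end{itemize}
Using the truncation triangles, these give a morphism $f : K \to \kappa(x,\xi)[-a]$ which induces the surjection $H^a(K) \to \kappa(x,\xi)$ on $H^a$. They also give a morphism $g : \kappa(x,\eta)[-b] \to K$ which induces the injection $\kappa(x,\eta) \hookrightarrow H^b(K)$ on $H^b$.

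Next I would use the hypothesis on $\omega_{\mathcal{X}}$. Since $i_x^*\omega_{\mathcal{X}}$ is $\otimes$-generating, Lemma \ref{lemma-tensor-gen-on-gerbe}(ii) says every simple object of $\operatorname{Coh}(\mathcal{G}_x)$ is a tensor power of $i_x^*\omega_{\mathcal{X}}$. Hence $\eta \cong \xi \otimes i_x^*\omega_{\mathcal{X}}^{\otimes N}$ for some $N \in \mathbb{Z}$, and so $\kappa(x,\eta) \cong \kappa(x,\xi) \otimes \omega_{\mathcal{X}}^{\otimes N}$. Twisting $g$ by $\omega_{\mathcal{X}}^{\otimes -N}$ and shifting by $b-a$ gives
$$
g' : \kappa(x,\xi)[-a] \to K \otimes \omega_{\mathcal{X}}^{\otimes -N}[b-a].
$$
I then set $h = g' \circ f : K \to K \otimes \omega_{\mathcal{X}}^{\otimes -N}[b-a]$.

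The key step, and the one I expect to need the most care, is showing $h \neq 0$. On $H^a$, the map $f$ induces the surjection $H^a(K) \twoheadrightarrow \kappa(x,\xi)$. The map $g'$ induces the injection $\kappa(x,\xi) \hookrightarrow H^a(K\otimes\omega^{-N}[b-a]) = H^b(K)\otimes\omega^{-N}$, which is the twisted inclusion of $\kappa(x,\eta)$. So $H^a(h)$ is a surjection followed by an injection. Its image is $\kappa(x,\xi) \neq 0$, hence $h \neq 0$. I would verify that this cohomology computation is compatible with the truncation triangles used to define $f$ and $g$. This is routine because $\kappa(x,\xi)[-a]$ is concentrated in the single degree $a$.

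Finally I would conclude from the point-like conditions.
\begin{enumerate}
\item If $b < a$, then $h$ is a non-zero element of $\operatorname{Hom}(K, K\otimes\omega_{\mathcal{X}}^{\otimes -N}[i])$ with $i = b-a < 0$. This contradicts condition (iii). Hence $a = b$.
\item Now $h$ is a non-zero morphism $K \to K \otimes \omega_{\mathcal{X}}^{\otimes -N}$, so by condition (ii) it is an isomorphism.
\item Since $h = g' \circ f$ is an isomorphism, $f$ is a split monomorphism. So $K$ is a non-zero direct summand of $\kappa(x,\xi)[-a]$.
\item The object $\kappa(x,\xi)[-a]$ is a simple sheaf placed in degree $a$, so it has no non-trivial direct summands. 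Therefore $K \cong \kappa(x,\xi)[-a]$, which is the claim with $n = -a$.
\end{enumerate}
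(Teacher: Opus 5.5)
Your proposal is correct and is essentially the paper's argument. The paper also composes the top-degree simple quotient with a twist of the bottom-degree simple subobject, matching the two via $\otimes$-generation of $i_x^*\omega_{\mathcal{X}}$, uses condition (iii) to force $K$ into a single degree, and then uses condition (ii) to make the composite an isomorphism. The only difference is cosmetic: after reducing to a sheaf, the paper re-chooses a simple sub and quotient and concludes that the surjection onto $\kappa(x,\xi)$ is injective, whereas you keep the same map $h$ and conclude via the splitting and indecomposability of $\kappa(x,\xi)[-a]$.
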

\begin{proof}
    Note by Lemma \ref{lemma-tensor-gen-on-gerbe} that $\cal G_x \cong B\mu_N$ for some $N > 0$. The proof is parallel to the classical case (e.g. \cite{HuyBook}*{Lemma 4.5}). 
    Set
    \[
    m_0: = \max\{i \mid  H^i(K) \neq 0\} \quad \text{and} \quad m_1 := \min \{i \mid  H^i(K) \neq 0\}. 
    \]
    Since $H^{m_0}(K)$ and $H^{m_1}(K)$ are supported at $x$, we can take a simple subobject and quotient $\kappa(x, \eta) \inj H^{m_1}(K)$ and $H^{m_0}(K) \surj \kappa(x, \xi)$.  Since $i_x^* \omega_\cal X$ is $\otimes$-generating, we have $\kappa(x, \eta) \otimes \omega^{\otimes n}_\cal X \cong \kappa(x, \xi)$ for some $n$. Hence, there is a non-trivial composition
    \[
    K [m_0] \to H^{m_0}(K) \surj \kappa(x, \xi) \cong \kappa(x, \eta) \otimes \omega_\cal X^{\otimes n} \inj H^{m_1}(K) \otimes \omega_\cal X^{\otimes n}\to K \otimes \omega_\cal X^{\otimes n}[m_1]. 
    \]
    Hence, $m_0 = m_1$ by the condition $\rm{(iii)}$. Hence, we may assume $K = \ecal F$ for a coherent sheaf $\ecal F$ on $\cal X$ (supported at $\{x\}$). Again, we take a simple subobject and quotient 
    \[
    \kappa(x, \eta) \inj \ecal F \surj \kappa(x, \xi).
    \]
    Taking $n$ so that $\kappa(x, \eta) \otimes \omega^{\otimes n} \cong \kappa(x, \xi)$, we get a non-trivial composition 
    \[
    \ecal F \surj \kappa (x, \xi) \cong \kappa (x,\eta) \otimes \omega^{\otimes n} \inj \ecal F \otimes \omega^{\otimes n},
    \]
    which is an isomorphism by the condition (ii). Therefore, $\ecal F \cong \kappa (x, \xi)$. 
\end{proof}
By the example in Remark \ref{remark-point-like}, the condition that $\End(K)$ is a division algebra over $k$, instead of (ii), is not sufficient to prove the above result, as $i_x^*\omega_\cal X$ is indeed $\otimes$-generating in that example. We also need the $\otimes$-generation condition in the result above by the following example.
  \begin{example}\label{example-spherical-twist-on-p112}
    Let $\cal X = \cal P(1,1,2)$ be the weighted projective plane over $k$ and let $x \in \cal X$ be the unique stacky point. Then $x$ has an open neighborhood isomorphic to $[\bb A^2/\mu_2]$ where $\mu_2$ acts with weights $(1,1)$. Note that $\omega_\cal X$ is nowhere torsion by Example \ref{example-weightedprojectivestackcanonical}. Let $\chi_0, \chi_1$ denote the trivial character and the nontrivial character of $G_x = \mu_2$. Then the normal bundle $\ecal{N}_x$ is isomorphic to $\chi_1 \oplus \chi_1$. Using Lemma \ref{lemma-ext of skyscrapers}, we compute that for $\ecal{F} = \kappa(x, \chi_0)$ or $\kappa(x, \chi_1)$, we have
    $$
    \operatorname{Ext}^i_{\cal X}(\ecal{F}, \ecal{F}) = \begin{cases}
        k & \text{if $i = 0,2$}\\
        0 & \text{otherwise}.
    \end{cases}
    $$
    Furthermore, by Lemma \ref{lemma-adjunctionformula} applied to the inclusion of the residual gerbe, the restriction of $\omega_{\mathcal{X}}$ to the residual gerbe at $x$ is $\wedge^2 \ecal{N}_x \cong \chi_0$. In particular, $(\mathcal{X}, x)$ do not satisfy the conditions of Proposition \ref{prop-classifypointlikecycliccase} (but $(\mathcal{X}, y)$ does for any closed point $y \neq x$). It also follows that $\ecal{F} \otimes \omega_{\mathcal{X}} \cong \ecal{F}$ for either $\ecal{F}$ above. We conclude that $\kappa(x, \chi_0), \kappa(x, \chi_1) \in \perf(\cal X)$ are $2$-spherical objects. We can additionally compute $\bb R \hom_\cal X(\kappa(x, \chi_0), \kappa(x, \chi_1)) = k^{\oplus 2}[-1]$. 
    We now observe that the spherical twist $T_{\kappa(x,\chi_0)},$ an autoequivalence of $\perf \cal X,$ cannot satisfy condition (ii) of \ref{subsection-conditions}.
    \begin{enumerate}
        \item We first claim $T_{\kappa(x, \chi_0)} \kappa(x,\chi_1)$ is not of the form $\kappa(x, \chi)[m]$. Indeed, we have
        \begin{align*}
            T_{\kappa(x, \chi_0)} \kappa(x,\chi_1)& = \operatorname{cone} (\bb R\hom_\cal X(\kappa(x, \chi_0), \kappa(x, \chi_1)) \otimes_k \kappa(x, \chi_0) \overset{\sf{ev}}{\to} \kappa(x, \chi_1)) \\
            & = \operatorname{cone}(\kappa(x,\chi_0)^{\oplus 2}[-1]\overset{\sf{ev}}{\to} \kappa(x, \chi_1)).
        \end{align*}
        In particular, we have the corresponding short exact sequence
        \[
        0 \to \kappa(x, \chi_1) \to T_{\kappa(x, \chi_0)} \kappa(x,\chi_1) \to \kappa(x,\chi_0)^{\oplus 2} \to 0. 
        \]
        Therefore, the point-like object $T_{\kappa(x, \chi_0)} \kappa(x,\chi_1)$ is quasi-isomorphic to a coherent sheaf supported at $x$, but not of the form $\kappa(x, \chi)$ (for example, as it has length $3$ in $\operatorname{Coh}_x \cal X$). 
        \item Similarly, $T_{\kappa(x, \chi_0)} \kappa(x, \chi_0) = \kappa(x, \chi_0)[-1]$ (cf. \cite{HuyBook}*{Exercise 8.5 (ii)}). On the other hand, we have that for any generalized closed point $(y,\eta)$ with $x \neq y \in |\cal X|$, 
        \[
        T_{\kappa(x, \chi_0)} \kappa(y, \eta) = \kappa(y, \eta).
        \]
        In particular, $T_{\kappa(x, \chi_0)}$ is not a standard autoequivalence. 
        \item As another example, $T_{\kappa(x,\chi_0)} \ecal O_\cal X$ is not quasi-isomorphic to a coherent sheaf even up to shift. Indeed, by \ref{lemma-regularclosedimmersioncalc} applied to the inclusion of the residual gerbe of $\mathcal{X}$ at $x$, we have $\bb R\hom_\cal X(\kappa(x, \chi_0), \ecal O_\cal X) = k[-2]$, so 
        \begin{align*}
            T_{\kappa(x, \chi_0)} \ecal O_\cal X & = \operatorname{cone} (\bb R\hom_\cal X(\kappa(x, \chi_0), \ecal O_\cal X) \otimes_k k(x, \chi_0) \overset{\sf{ev}}{\to} \ecal O_\cal X) \\
            & = \operatorname{cone}(\kappa(x,\chi_0)[-2]\overset{\sf{ev}}{\to} \ecal O_\cal X).
        \end{align*}
        Therefore, we have $ H^0(T_{\kappa(x, \chi_0)} \ecal O_\cal X) = \ecal O_\cal X$ and $ H^1(T_{\kappa(x, \chi_0)} \ecal O_\cal X) = \kappa(x,\chi_0)$. \qedhere 
    \end{enumerate}
\end{example}

\begin{definition}
    Let $\mathcal{T}$ be a triangulated category and $S \subset \mathcal{T}$ a collection of objects. We say $S$ is \textit{$\operatorname{Ext}^{1}$-connected} if for any objects $P, P' \in S$, there exist 
    objects $P = P_0, P_1, \dots, P_k = \ecal P'$ such that $\ext^1( P_i,  P_{i+1}) \neq 0$ for all $i$. 
\end{definition}

Note that the implicit relation between $P$ and $P'$ in the definition above is not symmetric in general. This asymmetry is useful for the following reason.

\begin{lemma}\label{lemma-uniform-shift-ext1}
    Let $\{P_i \}_{i \in I}$ be a collection of objects of a triangulated category $\mathcal{T}$. Assume that
    $$
    \operatorname{Ext}^n(P_i, P_j) = 0
    $$
    whenever $i \neq j \in I$ and $n \leq 0$. Let $n_i, i \in I$ be integers. If the collection 
    $$
    S = \{P_i[n_i]\}_{i \in I}
    $$
    is $\operatorname{Ext}^{1}$-connected, then all the $n_i$ are equal. 
\end{lemma}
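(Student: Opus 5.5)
Write $S=\{P_i[n_i]\}_{i\in I}$. The plan is to show that every nonzero $\operatorname{Ext}^1$ in a chain forces the two shifts involved to satisfy an inequality in one direction only. Chaining these inequalities and then using connectivity in both directions will give equality.

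\textbf{Step 1: one step of a chain.} Suppose $\operatorname{Ext}^1(P_i[n_i], P_j[n_j]) \neq 0$ with $i \neq j$. We rewrite
$$\operatorname{Ext}^1(P_i[n_i], P_j[n_j]) = \operatorname{Hom}(P_i, P_j[1+n_j-n_i]) = \operatorname{Ext}^{1+n_j-n_i}(P_i,P_j).$$
By hypothesis this group vanishes when $1+n_j-n_i\le 0$. Nonvanishing therefore gives $1+n_j-n_i\ge 1$, that is, $n_j \ge n_i$.

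\textbf{Step 2: the case $i=j$.} If $i=j$, the two consecutive objects in the chain are the same object $P_i[n_i]$, so the inequality $n_j\ge n_i$ holds trivially. The hypothesis of the lemma does not constrain $\operatorname{Ext}^n(P_i,P_i)$, but this case needs no constraint.

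One more point concerns chains whose intermediate objects $P_1,\dots,P_{k-1}$ are arbitrary objects of $\mathcal{T}$ rather than members of $S$. Under that reading, Step 1 says nothing about them. I read the definition of $\operatorname{Ext}^1$-connected as requiring the intermediate objects to lie in $S$, since that is the intended use. Under this reading every link compares two members of $S$ and Step 1 or Step 2 applies.

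\textbf{Step 3: conclusion.} Take any $a,b\in I$. Connectivity gives a chain from $P_a[n_a]$ to $P_b[n_b]$ inside $S$. Applying Steps 1 and 2 along each link yields a nondecreasing sequence of shifts, so $n_a\le n_b$. Connectivity also gives a chain from $P_b[n_b]$ back to $P_a[n_a]$, so $n_b\le n_a$. Hence $n_a=n_b$, and all the $n_i$ are equal. This is exactly where the asymmetry of the relation is exploited: each link only forces the shift to weakly increase, and equality comes from going both ways.

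The only point needing care is the sign bookkeeping in the identification $\operatorname{Ext}^1(P_i[n_i],P_j[n_j])=\operatorname{Ext}^{1+n_j-n_i}(P_i,P_j)$. Beyond that the argument is routine.
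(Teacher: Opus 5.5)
Your proposal is correct and is essentially the paper's proof: the identification $\operatorname{Ext}^1(P_i[n_i],P_j[n_j])=\operatorname{Ext}^{1+n_j-n_i}(P_i,P_j)$ forces $n_i\le n_j$ along each link, and running the chain in both directions gives equality. Two minor differences are cosmetic. You handle repeated indices by noting the inequality is trivial, where the paper deletes repeated terms. You also read $\operatorname{Ext}^1$-connectedness as requiring the intermediate objects to lie in $S$, which is exactly how the paper's proof uses the definition.
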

\begin{proof}
    Let $i, j \in I$ be distinct. If
    $$
    \operatorname{Ext}^1(P_i[n_i], P_j[n_j]) = \operatorname{Ext}^{1+n_j-n_i}(P_i, P_j) \neq 0,
    $$
    then we must have $1+n_j-n_i >0$, or $n_i \leq n_j$. Thus by induction, given any collection $Q_0 = P_i[n_i], Q_1, \dots , Q_r = P_j[n_j]$ of elements of $S$ such that $\operatorname{Ext}^1(Q_i, Q_{i+1}) \neq 0$ for all $i$, we also have $n_i \leq n_j$ (to prove this we may assume $Q_i \neq Q_{i+1}$ for all $i$ by possibly removing elements of the sequence). If $S$ is $\operatorname{Ext}^1$-connected, we see that $n_i \leq n_j$ for any pair of objects $P_i[n_i], P_j[n_j]$ of $S$, in particular also for the pair $P_j[n_j], P_i[n_i],$ as needed.
\end{proof}

\begin{lemma}
\label{lemma-genericallyaschemeextconnected}
    Let $x \in \mathcal{X}$ be a closed point. Let $\ecal{N}_x$ be the normal bundle to the residual gerbe $\mathcal{G}_x \to \mathcal{X}$. Assume $\ecal{N}_x$ is a faithful vector bundle on $\mathcal{G}_x$. Let $S \subset \perf (\mathcal{X})$ be the set of generalized closed point sheaves supported at $x$. Then $S$ is $\operatorname{Ext}^1$-connected.
\end{lemma}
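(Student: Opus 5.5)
By Lemma \ref{lemma-ext of skyscrapers}, for simple objects $\chi,\eta$ of $\operatorname{Coh}(\mathcal{G}_x)$ we have $\operatorname{Ext}^1_{\mathcal{X}}(\kappa(x,\chi),\kappa(x,\eta)) = \operatorname{Hom}_{\mathcal{G}_x}(\chi, \eta\otimes \ecal{N}_x)$. Since $\operatorname{Coh}(\mathcal{G}_x)$ is semisimple (Lemma \ref{lemma-tamegerbe}), this group is nonzero if and only if $\chi$ is a direct summand of $\eta\otimes\ecal{N}_x$. Define a directed graph $\Gamma$ on the finite set of isomorphism classes of simple objects (Corollary \ref{corollary-finitelymanysimples}): put an edge $\chi\to\eta$ when $\chi$ is a summand of $\eta\otimes\ecal{N}_x$. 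The claim is that every vertex can be reached from every other vertex by a directed path. Once this holds, $S$ is $\operatorname{Ext}^1$-connected by definition, including the case $P=P'$, which is covered by the trivial chain.

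First I show that, starting from any $\eta$, the vertices reachable from $\eta$ contain all simple objects. Iterating along the edges, any simple summand of $\eta\otimes\ecal{N}_x^{\otimes n}$ is reachable from $\eta$ in $n$ steps; here I use that a summand of $(\eta\otimes\ecal{N}_x)\otimes\ecal{N}_x$ is a summand of $\chi'\otimes\ecal{N}_x$ for some summand $\chi'$ of $\eta\otimes\ecal{N}_x$. Thus the edges should be read in the direction pointing from summands toward $\eta$, so the relation $\chi\to\eta$ records "$\chi\subset\eta\otimes\ecal{N}_x$". Consequently I reformulate the claim as follows: for any simple $\eta,\chi$ there is an $n\ge 1$ with $\chi$ a summand of $\eta\otimes\ecal{N}_x^{\otimes n}$. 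Unwinding, this gives a chain $\chi=\chi_n\to\chi_{n-1}\to\cdots\to\chi_0=\eta$ with nonzero $\operatorname{Ext}^1(\chi_{i}, \chi_{i-1})$. Since $\chi,\eta$ are arbitrary, this also gives chains in the other direction.

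To produce $n$, I apply Lemma \ref{lemma-tensorproductsoffaithful}: since $\ecal{N}_x$ is faithful, every simple object occurs as a summand of $\ecal{N}_x^{\otimes m}$ for some $m\ge1$. I need this relative to an arbitrary $\eta$, so I take $m$ with $\eta^\vee$-related objects appearing. Concretely, $\chi$ is a summand of $\eta\otimes\ecal{N}_x^{\otimes n}$ if and only if $\operatorname{Hom}(\chi\otimes\eta^\vee,\ecal{N}_x^{\otimes n})\neq 0$, using that $\eta^\vee$ exists because every coherent sheaf on $\mathcal{G}_x$ is a vector bundle. Now $\chi\otimes\eta^\vee\neq 0$ has some simple summand $\sigma$, and by Lemma \ref{lemma-tensorproductsoffaithful} there is $n\ge 1$ with $\sigma$ a summand of $\ecal{N}_x^{\otimes n}$, so the Hom is nonzero. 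This finishes the proof modulo the adjunction.

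The only subtle point, which I expect to be the main obstacle, is the semisimplicity and duality bookkeeping on a possibly non-split gerbe. The endomorphism rings of simples may be division algebras over $\kappa(x)$, and I must check that $\operatorname{Hom}(\chi,\eta\otimes\ecal{N}^{\otimes n})\neq0$ and $\operatorname{Hom}(\chi\otimes\eta^\vee,\ecal{N}^{\otimes n})\neq0$ are equivalent. This follows from the tensor--hom adjunction for vector bundles on $\mathcal{G}_x$, $\operatorname{Hom}(\chi\otimes\eta^\vee,\ecal{E})=\operatorname{Hom}(\chi,\eta\otimes\ecal{E})$, which holds for any dualizable object, so I expect no real difficulty. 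Lemma \ref{lemma-tensorproductsoffaithful} handles the hard representation-theoretic input.
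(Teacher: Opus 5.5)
Your proposal is correct and follows essentially the same route as the paper. You compute $\operatorname{Ext}^1$ via Lemma~\ref{lemma-ext of skyscrapers}, iterate the summand relation to chain any simple summand of $\eta\otimes\ecal{N}_x^{\otimes n}$ to $\eta$, and then show every simple $\chi$ arises this way by taking a simple summand of $\chi\otimes\eta^\vee$, applying Lemma~\ref{lemma-tensorproductsoffaithful}, and using the tensor--hom adjunction; your explicit tracking of the direction of the chains is slightly more careful than the paper's write-up.
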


\begin{proof}
    By Lemma \ref{lemma-ext of skyscrapers}, for two objects $\kappa(x, \xi), \kappa(x, \eta) \in S$, we have 
    $$
    \operatorname{Ext}^1_{\mathcal{X}}(\kappa(x, \xi), \kappa(x, \eta)) \neq 0 \iff \xi \subset \eta \otimes \ecal{N}_x
    $$
    is a direct summand. So, all $\kappa(x, \xi) \in S$ with $\xi$ a simple object occuring as a direct summand of $\eta \otimes \ecal{N}_x$ are $\operatorname{Ext}$-connected to $\kappa(x, \eta)$, and then all $\kappa(x, \chi) \in S$ with $\chi$ occurring as a simple direct summand of $\eta \otimes \ecal{N}$ are $\operatorname{Ext}$-connected to $\xi$, etc. Thus we see that any $\kappa(x, \xi) \in S$ such that $\xi$ occurs as a simple direct summand of $\eta \otimes \ecal{N}_x^{\otimes i}$ for some $i \geq 0$ is $\operatorname{Ext}$-connected to $\eta$. But every simple object $\xi$ occurs in this way: Let $\chi$ be any simple direct summand of $\xi \otimes \eta^\vee$. Then by Lemma \ref{lemma-tensorproductsoffaithful}, $\chi$ occurs as a direct summand of $\ecal{N}^{\otimes i}$ for some $i \geq 0$. Hence:
    $$
    \operatorname{Hom}_{\mathcal{X}}(\xi, \eta \otimes \ecal{N}^{\otimes i}) = \operatorname{Hom}_{\mathcal{X}}(\xi \otimes \eta^\vee, \ecal{N}^{\otimes i}) \neq 0
    $$
    which shows $\xi$ occurs as a simple direct summand of $\eta \otimes \ecal{N}^{\otimes i}$.
\end{proof}

\section{Bondal--Orlov reconstruction for regular tame stacks}

In this chapter, we prove the main results of the paper, Theorems \ref{thm-bo-tamestack-introversion}, \ref{theorem-autoequivsintroversion}, and \ref{thm-canonicalgeneratesintroversion}. The first two of these are proven in Section \ref{subsection-weakreconstructionforstacks} and the third is proven in Section \ref{subsection-reconstruction-strong-form}. Before this, we review some standard material about the structure of regular tame stacks with trivial generic stabilizer in Section \ref{subsection-structureoftamestacks}. Then in Section \ref{subsection-monoidalfunctors}, we give a criterion for upgrading an equivalence of abelian categories $F: \operatorname{Coh}(\mathcal{X}) \to \operatorname{Coh}(\mathcal{Y})$ for stacks $\mathcal{X}, \mathcal{Y}$ to a symmetric monoidal equivalence, when $F$ commutes with tensoring with a sufficiently negative line bundle. This is one of the key technical inputs for the main results, another being a lemma on Fourier--Mukai transforms in Section \ref{subsection-lemmaonFM}. In the following two sections, we combine the ingredients to prove the main results. Finally, in Section \ref{subsection-stackyexamples}, we provide examples of algebraic stacks to which the main results apply, and in Section \ref{subsection-stackysurfaceexamples}, we analyze the group of auto-equivalences of the derived category of some stacky surfaces in detail.

\label{section-BOforstacks}
\subsection{Structure of regular tame stacks}
\label{subsection-structureoftamestacks}
The results in this section are all well-known, but we state them here for ease of reference. 
The most important for us is the following, which is mostly due to Abramovich, Olsson, and Vistoli, with an added observation due to Bergh.

\begin{theorem}[{\cite[Theorem 3.2]{Abramovich2008}, \cite[Appendix A]{Bergh_2017}}]
\label{thm-localstroftame}
    Let $\pi : \mathcal{X} \to X$ be the coarse space map of a tame algebraic stack locally of finite type over a locally Noetherian scheme. Assume $X$ is the spectrum of a (necessarily Noetherian) strictly Henselian local ring $A$ with residue field $\kappa$. Then
    \begin{enumerate}
        \item There is a tame group scheme $G/A$ acting $A$-linearly on $\operatorname{Spec}B$ for some finite local $A$-algebra $B$ with $A/\mathfrak{m}_A = B/\mathfrak{m}_B$, where $G$ acts  trivially on the closed subscheme $\operatorname{Spec} B/\mathfrak{m}_B$, and such that  $\mathcal{X} \cong [\operatorname{Spec}B/G]$ over $A$. 
        \item If $\mathcal{X}$ is regular, we may take $B$ to be regular. \qedhere
    \end{enumerate}
\end{theorem}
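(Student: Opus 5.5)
This is a cited result (Abramovich--Olsson--Vistoli, plus Bergh's refinement). I would prove it by reducing to the local structure theorem for tame stacks and then refining the presentation so that it is centered at the closed point.

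\textbf{Step 1: a global quotient presentation.} Since $X=\operatorname{Spec}A$ is strictly Henselian local, let $x$ be the unique closed point of $\mathcal{X}$ over the closed point of $X$. By the local structure theorem of Abramovich--Olsson--Vistoli for tame stacks, after an étale base change on $X$ there is an isomorphism $\mathcal{X}\cong[\operatorname{Spec}B'/G]$. Here $G$ is a finite flat linearly reductive group scheme over the base, $G_\kappa$ is the stabilizer of $x$, and $B'$ is finite over the base. Because $A$ is strictly Henselian, every étale neighborhood of the closed point has a section. So the étale base change can be absorbed, and the presentation exists over $A$ itself.

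\textbf{Step 2: localize at the fixed point.} Since $B'$ is finite over the Henselian ring $A$, it splits as a product of local rings. The $G$-orbit of the point over $x$ is one point, because $G_\kappa$ is the full stabilizer. Hence the local factor $B$ at that point is $G$-stable, and we may replace $B'$ by $B$. Over the strictly Henselian $A$, the residue field of $B$ is a finite, purely inseparable extension of $\kappa$. The morphism $\operatorname{Spec}\kappa(B)\to\mathcal{G}_x$ should be a $G$-torsor-compatible map onto the residual gerbe. Comparing degrees, using that $\mathcal{G}_x\cong BG_\kappa$ when $\kappa$ is separably closed, then forces $B/\mathfrak m_B=\kappa$, with $G$ acting trivially on that closed point.

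\textbf{Step 3 (Bergh): regularity.} If $\mathcal{X}$ is regular, then $\operatorname{Spec}B\to\mathcal{X}$ is a $G$-torsor, hence flat, but it need not be smooth, since $G$ may be non-reduced (for example $\mu_p$). So $B$ need not be regular. Bergh's fix is to choose $G$ minimally, equal to the stabilizer, and to linearize the action. Since $G$ is linearly reductive, the surjection $\mathfrak m_B\to\mathfrak m_B/\mathfrak m_B^2$ has a $G$-equivariant splitting, which gives an equivariant map from $\operatorname{Sym}$ of the cotangent space to $B$. One then passes to an equivariant regular quotient, or slice, whose tangent space is the tangent space of $\mathcal{X}$ at $x$ with its $G$-representation. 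Regularity of $\mathcal{X}$ at $x$ says the dimension of $\mathcal{X}$ at $x$ equals $\dim T_x\mathcal{X}$, and this embedding-dimension count shows the slice is regular. The quotient of the slice by $G$ is then étale, hence an isomorphism over the Henselian base, onto $\mathcal{X}$.

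The main obstacle is Step 3. Matching the tangent-space dimension with the Krull dimension of a deformation-theoretic slice requires care when $G$ is infinitesimal.
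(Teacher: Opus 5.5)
The gap is in part (ii). Your Steps 1 and 2 essentially reproduce the paper's argument for (i). That argument takes a presentation with $G_\kappa$ equal to the stabilizer, uses strict Henselianity to make $\operatorname{Spec}B$ local, and identifies the fiber of $\operatorname{Spec}B\to\mathcal{X}$ over the residual gerbe with $\operatorname{Spec}\kappa$, on which $G$ then acts trivially.

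In Step 3 your premise that the $B$ you have just built ``need not be regular'' is false. The slice argument you substitute is also not a proof. An $A$-linear equivariant section $\mathfrak{m}_B/\mathfrak{m}_B^2\to\mathfrak{m}_B$ generally does not exist, since its source is killed by $\mathfrak{m}_A$. The morphism from the quotient of the ``slice'' to $\mathcal{X}$, and its \'etaleness, are never constructed. The dimension count with infinitesimal stabilizers, which you yourself flag as the main obstacle, is left open. Moreover, you had already arranged in Step 2 exactly what makes $B$ regular: $G_\kappa$ is the stabilizer, and the closed fiber is $\operatorname{Spec}\kappa$ with trivial action.

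The missing idea, which is Bergh's observation as used in the paper, is that (ii) follows immediately from (i).
\begin{enumerate}
\item Because $G$ acts trivially on $\operatorname{Spec}\kappa=\operatorname{Spec}B/\mathfrak{m}_B$, the closed substack $[\operatorname{Spec}\kappa/G]=BG_\kappa$ of $\mathcal{X}=[\operatorname{Spec}B/G]$ is the residual gerbe at the closed point. Moreover, $\operatorname{Spec}B\times_{\mathcal{X}}BG_\kappa=\operatorname{Spec}\kappa$.
\item The gerbe $BG_\kappa$ is smooth over $\kappa$, hence regular. Since $\mathcal{X}$ is regular, $BG_\kappa\to\mathcal{X}$ is a regular closed immersion; on a smooth atlas it is a closed immersion of regular schemes.
\item Regular immersions are stable under \emph{flat} base change, and $\operatorname{Spec}B\to\mathcal{X}$ is flat, being a $G$-torsor. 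Hence $\operatorname{Spec}\kappa\to\operatorname{Spec}B$ is a regular immersion, i.e.\ $\mathfrak{m}_B$ is generated by a regular sequence, so $B$ is regular.
\end{enumerate}
Only flatness of the torsor is used, so the non-reducedness of $G$ that worried you plays no role. Your $\mu_p$ phenomenon occurs only for presentations in which $G$ does not fix the closed point. An example is $\operatorname{Spec}k=[\mu_p/\mu_p]$ with $\mu_p$ acting on itself by translation, and such presentations are excluded by (i).
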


Recall that we call a group scheme $G$ over a base scheme $S$ \emph{tame} if $G/S$ is finite locally free and the $G$-invariants functor from $G$-equivariant quasi-coherent sheaves on $S$ to quasi-coherent sheaves on $S$ is exact. By \cite{Abramovich2008}, this is equivalent to $G/S$ being \'etale locally on $S$ an extension of a finite constant group scheme of order invertible on $S$ by a finite diagonalizable group scheme. Furthermore, such an extension is shown in loc. cit. to split fppf locally on $S$. 

\begin{proof}
%
  %
   Part (i) is essentially \cite[Theorem 3.2]{Abramovich2008}, and it follows immediately from the proof given there. Namely, if $\kappa$ denotes the (separably closed) residue field of $A$, then by \cite[Proposition 3.7]{Abramovich2008}, the morphism $\operatorname{Spec} \kappa \to X$ lifts to a morphism $x : \operatorname{Spec}\kappa \to \mathcal{X}$. Then by \cite[Proposition 2.23]{Abramovich2008}, there is a tame group scheme $G/A$ with $G_\kappa = G_x$ equal to the stabilizer group scheme of the point $x$. Furthermore, it is shown in the first part of the proof of \cite[Proposition 3.6]{Abramovich2008} that there is a $G$-torsor $Y \to \mathcal{X}$ whose pullback along $BG_x \to \mathcal{X}$ is the tautological $G_x$-torsor $\operatorname{Spec} \kappa$. Then $Y \to \operatorname{Spec}A$ is finite and its fiber over $\operatorname{Spec}\kappa$ has exactly one point. Since $A$ is strictly Henselian, it follows that $Y = \operatorname{Spec}B$ is the spectrum of a local ring. Since the fiber of the $G$-torsor $Y \to \mathcal{X}$ over $BG_\kappa$ is the tautological $G_\kappa$-torsor $\operatorname{Spec} \kappa$, it follows that $B$ has residue field $\kappa$ and $G$ acts trivially on $\operatorname{Spec}\kappa$.


   Finally, part (ii) is the part due to Bergh, and the proof is as follows. Choose $G, B$ as in (ii) so that $p: \operatorname{Spec}B/\mathfrak{m}_B \to \operatorname{Spec}B$ is stabilized by $G$. That is, the morphism $p$ is $G$-equivariant, where $G$ acts trivially on $\operatorname{Spec}B/\mathfrak{m}_B$. Thus, there is a Cartesian square
\[\begin{tikzcd}
	{\operatorname{Spec}B/\mathfrak{m}_B} & {\operatorname{Spec}B} \\
	{BG_{B/\mathfrak{m}_B}} & {\mathcal{X}}.
	\arrow[from=1-1, to=1-2]
	\arrow[from=1-1, to=2-1]
	\arrow[from=1-2, to=2-2]
	\arrow[from=2-1, to=2-2]
\end{tikzcd}\]
 Since $BG_{B/\mathfrak{m}_B} \to \mathcal{X}$ is a closed immersion between regular stacks, it is a regular closed immersion. Since $\operatorname{Spec}B \to \mathcal{X} = [\operatorname{Spec}B/G]$ is flat, it follows that $\operatorname{Spec}B/\mathfrak{m}_B \to \operatorname{Spec}B$ is a regular immersion as well, which implies that the local ring $B$ is regular.
\end{proof}

\begin{lemma}
\label{lemma-coarsespaceisnormal}
    Let $\mathcal{X}$ be a tame, regular algebraic stack locally of finite type over a locally Noetherian scheme, with coarse space $X$. Then $X$ is a normal algebraic space.
\end{lemma}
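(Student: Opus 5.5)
The plan is to reduce to the étale local structure and use the standard fact that normality descends along flat covers and is preserved by taking invariants under a linearly reductive group. Normality of an algebraic space can be checked étale locally, and the formation of the coarse space of a tame stack commutes with flat (in particular étale) base change on $X$ by \cite{Abramovich2008}. We may therefore replace $X$ by the spectrum of the strict henselization $A = \ecal{O}^{sh}_{X,\bar{x}}$ at a geometric point $\bar x$. Since $X$ is locally Noetherian, $A$ is a Noetherian local ring, and a Noetherian local ring is normal if and only if its strict henselization is normal. Pulling $\mathcal{X}$ back along $\operatorname{Spec} A \to X$ gives a tame stack that is still regular, because regularity is preserved under the ind-étale base change and $\mathcal{X}\times_X \operatorname{Spec}A \to \mathcal{X}$ is a limit of étale maps.

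By Theorem \ref{thm-localstroftame}(ii), $\mathcal{X}_A \cong [\operatorname{Spec} B/G]$ with $G/A$ a tame group scheme and $B$ a regular local ring, finite over $A$. Since coarse space formation is given by invariants, $A = B^G$. The key step is to show that $B^G$ is normal whenever $B$ is a normal domain. First, $B$ is regular local, hence a normal domain. Next, $A = B^G \subset B$ is a subring of a domain, so it is a domain. For the normality of $A$, take an element $a \in \operatorname{Frac}(A)$ that is integral over $A$. Then $a \in \operatorname{Frac}(B)$ is integral over $B$, so $a \in B$ by normality of $B$. It remains to see that $a$ is $G$-invariant.

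To do this, write $a = s/t$ with $s,t\in A$ and $t\neq 0$. Then $t a = s$ holds in $B$, and applying the coaction $\rho: B \to B\otimes_A \ecal{O}(G)$ gives $(t\otimes 1)\rho(a) = s\otimes 1 = (t\otimes 1)(a\otimes 1)$. Because $G$ is finite locally free over $A$, the ring $B\otimes_A\ecal{O}(G)$ is flat over $B$, so $t\otimes 1$ is a non-zero-divisor there. Hence $\rho(a) = a\otimes 1$, so $a\in B^G = A$.

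Finally, the identification $A = B^G$ uses that the coarse space of $[\operatorname{Spec}B/G]$ is $\operatorname{Spec} B^G$, which holds for tame $G$ by \cite{Abramovich2008}. The step most likely to need care is not this invariant-ring argument but the bookkeeping of the reduction. One must check that passing to the strict henselization preserves both the regularity of the stack and the compatibility of the coarse space. Both follow from the flat base change property of coarse spaces of tame stacks and from the fact that regularity is local in the smooth topology and stable under ind-étale maps. Normality of $X$ then follows, since $\ecal{O}_{X,\bar x}^{sh}$ is normal for every geometric point $\bar x$.
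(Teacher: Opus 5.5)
Your proposal is correct and follows the same route as the paper: reduce to a strictly henselian base, use Theorem~\ref{thm-localstroftame}(ii) to write $\mathcal{X} \cong [\operatorname{Spec}B/G]$ with $B$ regular local, and conclude from $A = B^G$ being normal. The only difference is that you prove the ``classical'' fact that $B^G$ is normal when $B$ is a normal domain, using only flatness of $G$, and you justify the reduction steps; the paper simply asserts both.
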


\begin{proof}
    We may assume $X$ is the spectrum of a strictly Henselian local ring $A$ and therefore that $\mathcal{X} = [\operatorname{Spec}B/G]$ where $B$ is a finite local $A$-algebra which is regular and $G$ is a tame group scheme over $A$. Then $X = \operatorname{Spec}A = \operatorname{Spec}(B^G)$ but it is classical that $B^G$ is normal since $B$ is.
\end{proof}

We believe the following result is folklore, but have not found a proof in the literature which works in this generality.

\begin{prop}
\label{prop-faithfulnormalbundle}
    Let $\mathcal{X}$ be a tame algebraic stack locally of finite type over a locally Noetherian scheme. Assume $\mathcal{X}$ has a scheme-theoretically dense open which is a scheme. Let $i : \mathcal{G}_x \to \mathcal{X}$ be the residual gerbe at a closed point. Write $\ecal{N} = i^*\ecal{I}$ where $\ecal{I}$ is the ideal sheaf of the closed immersion $i$. Then $\ecal{N}$ is a faithful vector bundle on $\mathcal{G}$. 
\end{prop}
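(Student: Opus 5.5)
The plan is to reduce to the strictly Henselian local model of Theorem \ref{thm-localstroftame}, identify $\ecal{N}$ with the cotangent representation there, and show that a subgroup acting trivially on the cotangent space must act trivially on the whole local ring. That would put a non-trivial stabilizer at every point, contradicting the dense open scheme.

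\emph{Reduction.} Faithfulness of a vector bundle on a gerbe can be checked after pulling back along any point $\operatorname{Spec}\kappa' \to \mathcal{G}_x$. The formation of the residual gerbe, its ideal sheaf, and $\ecal{N}=i^*\ecal{I}$ is compatible with flat base change on the coarse space $X$. The existence of a scheme-theoretically dense open subscheme is likewise preserved under such base change. So we may replace $X$ by the strict henselization of its local ring at the image of $x$. By Theorem \ref{thm-localstroftame}(i) we then have $\mathcal{X}=[\operatorname{Spec}B/G]$ with $B$ local, $G/A$ tame, and $G$ acting trivially on $\operatorname{Spec}B/\mathfrak{m}_B$. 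The Cartesian square in the proof of that theorem identifies the pullback of $\ecal{I}$ to $\operatorname{Spec}B$ with $\mathfrak{m}_B$. Hence $\ecal{N}$ corresponds to the $G_\kappa$-representation $\mathfrak{m}_B/\mathfrak{m}_B^2$, and we must show that $G_\kappa$ acts faithfully on it.

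\emph{Key step.} Let $H\subset G_\kappa$ be the kernel of this action and suppose $H\neq 1$. By the structure theorem for tame groups cited after Theorem \ref{thm-localstroftame}, fppf locally $G=\Delta\rtimes H_0$ with $\Delta$ diagonalizable and $H_0$ constant of invertible order. I would lift $H$ to a closed, finite flat, tame subgroup scheme $\widetilde H\subset G$ over $A$, as follows.
\begin{enumerate}
\item Subgroups of a diagonalizable group correspond to quotients of its character group, so the $\Delta$-part of $H$ lifts.
\item The étale part of $H$ lifts because $A$ is strictly Henselian.
\end{enumerate}
If an fppf extension of $A$ is needed for this, I would pass to it first.

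\emph{Trivial action on $B$.} Since $\widetilde H_\kappa=H$ acts trivially on $\mathfrak{m}_B/\mathfrak{m}_B^2$, it acts trivially on each $\mathfrak{m}_B^n/\mathfrak{m}_B^{n+1}$, which is a quotient of $\operatorname{Sym}^n(\mathfrak{m}_B/\mathfrak{m}_B^2)$. Next I would induct on $n$ using the exact sequences $0\to\mathfrak{m}^n/\mathfrak{m}^{n+1}\to B/\mathfrak{m}^{n+1}\to B/\mathfrak{m}^n\to 0$. Exactness of $\widetilde H$-invariants, which holds by tameness, shows that if the outer terms are fixed then so is the middle: $M^{\widetilde H}$ contains the submodule and surjects onto the quotient. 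Hence $\widetilde H$ acts trivially on every $B/\mathfrak{m}_B^n$. By Krull's intersection theorem, $B\hookrightarrow\varprojlim B/\mathfrak{m}_B^n$ compatibly with the coaction, so the coaction $B\to B\otimes_A\mathcal{O}(\widetilde H)$ is the trivial one. Thus $\widetilde H$ acts trivially on $\operatorname{Spec}B$.

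\emph{Conclusion.} Since $\widetilde H$ is finite flat over $A$ with non-trivial closed fiber, it has non-trivial fibers over every point of $\operatorname{Spec}A$. So every point of $\mathcal{X}$ has a non-trivial stabilizer, contradicting the existence of a dense open subscheme (whose points have trivial stabilizers). Therefore $H=1$ and $\ecal{N}$ is faithful.

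The main obstacle I expect is the lifting of $H\subset G_\kappa$ to a flat tame subgroup of $G$ over $A$, together with making the fppf-local splitting compatible with the reduction. If this proves awkward, the fallback is to run the invariants argument directly with $H$ acting on the completion $\widehat B$ over a coefficient ring. This would require extra care in mixed characteristic.
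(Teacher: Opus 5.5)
\paragraph{Verdict.} Apart from one step, this is the paper's proof. The matching parts are:
\begin{itemize}
\item the reduction to the strictly Henselian model $[\operatorname{Spec}B/G]$;
\item the identification of $\ecal{N}$ with $\mathfrak{m}_B/\mathfrak{m}_B^2$;
\item the propagation of triviality through the $B/\mathfrak{m}_B^n$ by exactness of invariants, and then to $B$ via the completion;
\item the final contradiction with the dense open subscheme.
\end{itemize}
Citing part (i) of Theorem \ref{thm-localstroftame}, as you do, is the appropriate reference, since the statement assumes no regularity.

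\paragraph{The gap.} The step that is not yet justified is the lifting step you flagged. You try to lift the full kernel $H$. Your two bullets lift $H\cap\Delta_\kappa$ (via a quotient of the character group) and lift the $\kappa$-points of $H$ (via Henselianity). That does not by itself produce a closed, finite flat subgroup scheme $\widetilde H\subset G$ with $\widetilde H_\kappa=H$. You would still have to show three things:
\begin{itemize}
\item the lifted points normalize the lifted diagonalizable part;
\item the subgroup generated by both pieces is flat over $A$;
\item its special fibre is exactly $H$.
\end{itemize}

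\paragraph{How the paper closes it.} The paper avoids this by first shrinking $H$. This is harmless, because any non-trivial subgroup of the kernel gives the same contradiction. A non-trivial linearly reductive group over a separably closed field contains a subgroup $H_0\cong\mu_{\ell,\kappa}$ with $\ell$ prime. For such an $H_0$ the lift is a two-case check.
\begin{itemize}
\item If $H_0\subset\Delta_\kappa$, it corresponds to a quotient $M\twoheadrightarrow\mathbb{Z}/\ell$ of the character group of $\Delta$. It therefore lifts to $\mu_{\ell,A}\subset\Delta$.
\item Otherwise $H_0$ maps isomorphically onto its image in $C_\kappa$. Then $\ell$ is invertible and $H_0$ is constant. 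Let $\Delta'\subset\Delta$ be the part of order prime to the residue characteristic. Then $G(\kappa)=(\Delta'\rtimes C)(\kappa)$. Since $\Delta'\rtimes C$ is finite \'etale, hence constant, over the strictly Henselian ring $A$, the generator of $H_0$ lifts to an element of order $\ell$ in $G(A)$. That element generates a copy of $\mu_{\ell,A}$.
\end{itemize}
In the second case the generator may have a non-trivial $\Delta$-component, so one must lift through $\Delta'\rtimes C$ rather than through $C$; this is exactly your second bullet.

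With $H$ replaced by $H_0$, the rest of your argument goes through unchanged. Tameness of the lift is then automatic, since it is diagonalizable.
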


\begin{lemma}
    Let $A$ be a strictly Henselian local ring with residue field $\kappa$ and $G/A$ a group scheme of the form $\Delta \rtimes C$ where $\Delta$ is a finite diagonalizable group scheme over $A$ and $C$ is a finite constant group scheme. Suppose $H_0 \subset G_\kappa$ is a closed subgroup scheme isomorphic to $\mu_{\ell, \kappa}$ for some prime number $\ell > 0$. Then there is a closed subgroup scheme $H \subset G$ such that $H \cong \mu_{\ell, A}$ and $H_\kappa = H_0$ as closed subgroup schemes of $G_\kappa$. 
\end{lemma}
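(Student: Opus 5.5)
The plan is to reduce to the diagonalizable part and lift the subgroup using the Cartier dual description of diagonalizable group schemes. First we treat where $H_0$ sits inside $G_\kappa = \Delta_\kappa \rtimes C$. Since $C$ is a constant group, its identity component is trivial. If $\ell$ is invertible in $\kappa$, then $\mu_{\ell,\kappa}$ is étale and constant over the separably closed field $\kappa$ (as $A$ is strictly Henselian). If $\ell = \operatorname{char}\kappa$, then $\mu_{\ell,\kappa}$ is connected, so $H_0$ lies in the identity component of $G_\kappa$, and that component is contained in $\Delta_\kappa$.

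\textbf{Connected case} ($\ell = \operatorname{char}\kappa$). Here $H_0 \subset \Delta_\kappa$. Write $\Delta = D(M)$ for a finite abelian group $M$, so $\Delta = \operatorname{Spec} A[M]$ and $\Delta_\kappa = D(M)_\kappa$. Closed subgroup schemes of a diagonalizable group over a field correspond to quotients of the character group. So $H_0$ corresponds to a surjection $M \twoheadrightarrow \mathbb{Z}/\ell$. The same surjection defines a closed subgroup $D(\mathbb{Z}/\ell) = \mu_{\ell,A} \subset D(M) = \Delta \subset G$ over $A$, and it visibly restricts to $H_0$ over $\kappa$.

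\textbf{Étale case} ($\ell$ invertible). Then $H_0 \cong \mathbb{Z}/\ell$ is generated by a single $\kappa$-point $g_0 \in G(\kappa)$ of order $\ell$. Because $G$ is finite étale over $A$ in this situation? That need not hold, since $\Delta$ may have a connected part. Instead we argue as follows. Since $G$ is finite over the Henselian ring $A$, $G$ decomposes as a disjoint union of local pieces, one over each point of $G_\kappa$. Moreover $G = \Delta \rtimes C$ gives $G(\kappa) = \Delta(\kappa) \rtimes C$ and $G(A) = \Delta(A) \rtimes C$. The map $\Delta(A) = \operatorname{Hom}(M, A^\times) \to \operatorname{Hom}(M, \kappa^\times)$ is surjective onto the prime-to-$p$ torsion. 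This uses Hensel's lemma: roots of unity of order prime to $p$ lift, and the prime-to-$p$ part of $A^\times_{\mathrm{tors}}$ maps isomorphically onto that of $\kappa^\times$. Hence we may lift $g_0$ to some $g \in G(A)$. We then need to adjust $g$ to have order exactly $\ell$. Take $g^{\ell} \in G(A)$; it reduces to $1$, and so lies in $\Delta(A)$ with trivial reduction. Such an element has $p$-power order, because the kernel of $\operatorname{Hom}(M,A^\times)\to\operatorname{Hom}(M,\kappa^\times)$ consists of elements of $p$-power order. Since $\ell \neq p$, we can replace $g$ by $g^{m}$ with $m \equiv 1 \pmod \ell$ and $m \equiv 0 \pmod{p^N}$. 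This kills $g^\ell$'s contribution while preserving the reduction, as $g_0^m = g_0$. The resulting $g$ generates a constant subgroup $\mathbb{Z}/\ell_A \subset G$. This subgroup is isomorphic to $\mu_{\ell,A}$, because $A$ is strictly Henselian with $\ell$ invertible, so $\mu_\ell(A) \cong \mathbb{Z}/\ell$ and $\mu_{\ell,A}$ is constant. Finally, its closed fiber is $H_0$.

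\textbf{Main obstacle.} The delicate point is the étale case. There one must check both that the lift exists in $G(A)$ and that the resulting homomorphism $\mathbb{Z}/\ell_A \to G$ is a closed immersion. The closed immersion property follows because a homomorphism from a finite étale group that is injective on fibers is a monomorphism of finite schemes, hence a closed immersion. Its injectivity on fibers holds since $g$ has order $\ell$ in every fiber: $g_0 \neq 1$, and $\operatorname{Spec} A$ is local.
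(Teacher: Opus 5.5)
Your proof is correct, but it is organized differently from the paper's in the non-connected case.

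The paper splits according to whether $H_0\subset\Delta_\kappa$.
\begin{itemize}
\item If $H_0\subset\Delta_\kappa$, it uses the quotient $M\twoheadrightarrow\mathbb{Z}/\ell$ of the Cartier dual, exactly as in your connected case.
\item If not, it notes that $H_0$ maps isomorphically onto its image in $G_\kappa/\Delta_\kappa=C_\kappa$, so $H_0$ is étale. It then lifts that image to a subgroup of the constant group $C\subset G$.
\end{itemize}
You instead split by whether $\ell=\operatorname{char}\kappa$. In the étale case you lift an actual generator $g_0=(\delta_0,c_0)$ of $H_0$ to $G(A)$, using Hensel's lemma on $\Delta(A)=\operatorname{Hom}(M,A^\times)$. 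You then correct the lift by the CRT exponent $m$ so that $g^\ell=1$.

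What your route buys is that it gives $H_\kappa=H_0$ on the nose for every étale $H_0$. This includes subgroups lying in neither $\Delta_\kappa$ nor the chosen complement $C_\kappa\subset G_\kappa$. The paper's lift through $C$, as written, only recovers a subgroup with the same image in $C_\kappa$. That can differ from $H_0$: take $G=\mu_2\times\mathbb{Z}/2$ with $2$ invertible, and $H_0$ generated by $(-1,c)$, where $c$ generates $C$. Since $G$ is abelian, no conjugation repairs this. Exact equality $H_\kappa=H_0$ is what Proposition~\ref{prop-faithfulnormalbundle} uses, because it is what forces $H$ to act trivially on $\mathfrak{m}_B/\mathfrak{m}_B^2$. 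So your argument repairs a small gap. The paper's version is shorter and avoids computing with $A$-points.

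A few points to tighten:
\begin{itemize}
\item Delete the in-line aside about $G$ possibly being étale.
\item Justify that the kernel of $\operatorname{Hom}(M,A^\times)\to\operatorname{Hom}(M,\kappa^\times)$ is $p$-power torsion. If $u\in 1+\mathfrak{m}_A$ and $u^{p^a e'}=1$ with $p\nmid e'$, then $u^{p^a}\in\mu_{e'}(A)$ reduces to $1$. Since $\mu_{e'}(A)\to\mu_{e'}(\kappa)$ is injective, $u^{p^a}=1$.
\item When $\operatorname{char}\kappa=0$, this kernel is trivial and the CRT step is vacuous.
\item The reduction map is surjective onto all of $\operatorname{Hom}(M,\kappa^\times)$, not just its prime-to-$p$ part, because $\kappa^\times$ has no $p$-torsion.
\end{itemize}
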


\begin{proof}
    Case 1: $H_0 \subset \Delta_\kappa$. Let $M$ be the abstract group which is Cartier dual to $\Delta$. Then $H_0 \subset \Delta_{\kappa}$ corresponds to a quotient group $M \twoheadrightarrow \mathbb{Z}/\ell$, which defines a subgroup scheme $H \subset \Delta$ over all of $\operatorname{Spec}A$.

    Case 2: $H_0 \not \subset \Delta_\kappa$. Then the composition $H_0 \to G_\kappa/\Delta_\kappa = C_\kappa$ is not zero, hence monomorphic as $\mu_{\ell, \kappa}$ has no non-trivial closed subgroup schemes. It follows that $H_0$ is an \'etale group scheme  (i.e., $\ell$ is invertible in $\kappa$), hence constant since $\kappa$ is separably closed. Then $H_0 \subset C_\kappa$ corresponds to an inclusion of abstract groups and hence is defined over all of $\operatorname{Spec}A$. Since $C \subset G$ is a closed subgroup scheme, this suffices.
\end{proof}

\begin{proof}[Proof of Proposition \ref{prop-faithfulnormalbundle}]
The question is fppf local on the coarse space $X$ which allows us to replace $\mathcal{X} \to X$ by the base change along $\operatorname{Spec} \ecal{O}_{X,x}^{s.h.} \to X$ and therefore assume: $X = \operatorname{Spec}A$ is strictly Henselian local, and
$\mathcal{X} = [\operatorname{Spec}B/G]$ where $B, G$ are as in part (ii) of \ref{thm-localstroftame}. Replacing $A$ further by a finite flat covering, we may assume $G = \Delta \rtimes C$ where $\Delta$ is a finite diagonalizable group scheme over $A$ and $C$ is a finite constant group scheme of order invertible in $A$. 

Write $\kappa = B/\mathfrak{m}_B = A/\mathfrak{m}_A$. Then the residual gerbe of $\mathcal{X}$ at the closed point $x$ is 
$$
BG_{\kappa} = [(\operatorname{Spec}B/\mathfrak{m}_B)/G] \to [\operatorname{Spec}B/G] = \mathcal{X},
$$
and $\ecal{N}$ corresponds to the $G_\kappa$-equivariant $\kappa$-vector space $\mathfrak{m}_B /\mathfrak{m}_B^2$. Suppose this is not a faithful representation of $G_\kappa$. Then there is a non-trivial closed subgroup scheme $H_0 \subset G_{\kappa}$ which acts trivially on $\mathfrak{m}_B /\mathfrak{m}_B^2$. Possibly replacing $H_0$ with a smaller closed subgroup scheme, by the structure of finite linearly reductive group schemes over separably closed fields, we may assume  $H_0 \cong \mu_{\ell, \kappa}$ where $\ell>0$ is a prime number possibly equal to the residue characteristic. 
By the preceding lemma, there is a closed subgroup scheme $\mu_{\ell, A} \cong H \subset G$ with $H_\kappa = H_0$ as closed subgroup schemes of $G_\kappa$. Then we see that $H$ acts on both $B/\mathfrak{m}_B$ and $\mathfrak{m}_B/\mathfrak{m}_B^2$ trivially. We claim $H$ acts trivially on $B$, i.e., $B = B^H$. First, we claim that $(B/\mathfrak{m}_B^n)^H = B/\mathfrak{m}_B^n$ for all $n >0$. For $n = 1$ this is true. If it holds for some $n >0$, then consider the commutative diagram with exact rows
\[\begin{tikzcd}
	0 & {(\mathfrak{m}_B^n/\mathfrak{m}_B^{n+1})^H} & {(B/\mathfrak{m}_B^{n+1})^H} & {(B/\mathfrak{m}_B^{n})^H} & 0 \\
	0 & {\mathfrak{m}_B^n/\mathfrak{m}_B^{n+1}} & {B/\mathfrak{m}_B^{n+1}} & {B/\mathfrak{m}_B^{n}} & 0,
	\arrow[from=1-1, to=1-2]
	\arrow[from=1-2, to=1-3]
	\arrow[hook, from=1-2, to=2-2]
	\arrow[from=1-3, to=1-4]
	\arrow[hook, from=1-3, to=2-3]
	\arrow[from=1-4, to=1-5]
	\arrow["{=}", hook, from=1-4, to=2-4]
	\arrow[from=2-1, to=2-2]
	\arrow[from=2-2, to=2-3]
	\arrow[from=2-3, to=2-4]
	\arrow[from=2-4, to=2-5]
\end{tikzcd}\]
where we use that $H$ is a tame group scheme to get exactness of the top row. The $B$-module $\mathfrak{m}_B^n/\mathfrak{m}_B^{n+1}$ is a quotient of $(\mathfrak{m}_B/\mathfrak{m}_B^{2})^{\otimes n}$ as an $H$-equivariant $B$-module, and $H$ acts trivially on the latter, hence also on the former. It follows that the left vertical inclusion in the commutative diagram is an equality, hence so is the middle. Since taking $H$-invariants commutes with limits, it follows that $H$ acts trivially on $\widehat{B} = \operatorname{lim}_n B/\mathfrak{m}_B^n$, and since $B \subset \widehat{B}$ as an $H$-equivariant $B$-module (and $H$ is flat and affine over $A$), it follows that $H$ acts trivially on $B$, as desired. But then $\mathcal{X} = [\operatorname{Spec}B/G]$ does not have a scheme-theoretically dense open subscheme as every point contains a $\mu_\ell$ in its stabilizer group. 
\end{proof}

The following is well-known. 

\begin{lemma}
\label{lemma-schemeawayfromcodim2}
    Let $X$ be a separated, normal algebraic space of finite type over a field. Then there is an open subspace $U \subset X$ such that $\operatorname{codim}(X\setminus U \subset X) \geq 2$ and such that $U$ is representable by a scheme.
\end{lemma}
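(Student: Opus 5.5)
The plan is to reduce to the fact that the non-schematic locus of a normal separated algebraic space is contained in the singular locus plus a closed set of codimension at least $2$, and then use that normal varieties are regular in codimension $1$.

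\textbf{Step 1: the schematic locus.} Let $U \subset X$ be the largest open subspace which is representable by a scheme. It exists because the union of open subspaces that are schemes is again a scheme, as gluing open subschemes gives a scheme. We must show that every point $x \in |X|$ of codimension $\le 1$ lies in $U$. Let $x$ be such a point, with local ring $\ecal{O}_{X,x}$ of dimension $\le 1$.

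\textbf{Step 2: codimension-$\le 1$ points are schematic.} Since $X$ is normal, $\ecal{O}_{X,x}$ is either a field or a discrete valuation ring. I would use the criterion that a quasi-separated algebraic space is a scheme in a neighborhood of a point $x$ if there is an affine open of some étale neighborhood whose fiber over $x$ gives a suitable affine open. The cleanest route is the following. Take an étale surjection $V \to X$ from an affine scheme. Because $X$ is separated and of finite type over a field, the fiber of $V\to X$ over $x$ is finite, and $x$ has a neighborhood. The standard result is \cite[\href{https://stacks.math.columbia.edu/tag/0ADD}{Tag 0ADD}]{stacks-project}: a Noetherian separated algebraic space is a scheme in a neighborhood of any point of codimension $\le 1$. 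In dimension one, a separated algebraic space is a scheme. The codimension-$1$ statement is exactly that cited result, whose proof uses that a finite set of points in an étale neighborhood lying over a regular local ring of dimension $\le 1$ is contained in an affine open, which can be descended. I will invoke it for each point of codimension $\le 1$.

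\textbf{Step 3: conclude.} Then $X \setminus U$ contains no point of codimension $\le 1$. Since $X$ is of finite type over a field, codimension is computed by generic points of irreducible components of the closed set $X\setminus U$. Hence $\operatorname{codim}(X \setminus U \subset X) \ge 2$.

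The main obstacle is Step 2, the local schematicity at codimension-one points. If the Stacks Project reference were unavailable, I would argue directly. Let $W \to X$ be an étale neighborhood of $x$ with $W$ affine. The points of $W$ over $x$ form a finite set. Consider the normalization of $X$ in the function field of $W$ after passing to a Galois closure. The invariant-theoretic quotient of an affine scheme by a finite group is affine, and this quotient agrees with $X$ near $x$ because $\ecal{O}_{X,x}$ is a DVR or a field. This uses normality in an essential way.
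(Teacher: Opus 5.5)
Your Steps 1 and 3 are fine, but Step 2 is the whole content of the lemma, and you do not prove it. You cite a Stacks Project tag for the claim that every point of codimension $\le 1$ on a Noetherian separated algebraic space has a schematic open neighbourhood. In the same breath you quote a different statement, that separated spaces of dimension one are schemes. The second does not imply the first: an algebraic space cannot be Zariski-localized at $x$ by open subspaces, so you cannot reduce to a one-dimensional space and then spread out.

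The mechanism you sketch is exactly where the hypotheses must be used, and as stated it proves too much. You say the finitely many points over $x$ in an affine \'etale chart lie in an affine open, ``which can be descended''. Take $\operatorname{char} k \neq 2$ and the quasi-separated, non-separated space $X=\mathbb{A}^1_k/(t\sim -t \text{ for } t\neq 0)$. It is smooth of dimension one, and its chart $\mathbb{A}^1_k\to X$ has only the point $0$ over the origin. Yet no neighbourhood of the origin is a scheme. Such a neighbourhood would be a regular curve with function field $k(t^2)$ and local ring $k[t^2]_{(t^2)}$ at the origin, over which $k[t]_{(t)}$ is ramified, contradicting \'etaleness. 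So separatedness has to enter the descent essentially, and you never say how.

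Your fallback is circular. The normalization $X'$ of $X$ in a Galois closure of $k(W)/k(X)$ is only an algebraic space. To form an invariant quotient of an affine you need a $G$-stable affine open of $X'$ containing the fibre over $x$, which is the original problem again for $X'$. Your chart $W$ is an open of the normalization of $X$ in $k(W)$, but it need not contain that normalization's whole fibre over $x$.

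You also never use normality in any real way, although together with separatedness it is what makes the lemma short. The missing idea is the paper's argument:
\begin{enumerate}
\item By Chow's lemma (which uses that $X$ is separated), choose a proper birational $p\colon Y\to X$ with $Y$ a quasi-projective scheme.
\item Since $X$ is normal, its local rings at points of codimension $\le 1$ are fields or DVRs. By the valuative criterion of properness, the rational inverse of $p$ is defined at all such points, so $p$ has a section over an open $U\subset X$ with $\operatorname{codim}(X\setminus U)\ge 2$.
\item A section of the separated morphism $Y\times_X U\to U$ is a closed immersion, so $U\to Y$ is an immersion and $U$ is a (quasi-projective) scheme.
\end{enumerate}
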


\begin{proof}
    By Chow's Lemma, there is a proper birational surjective morphism $Y \to X$ such that $Y$ is a quasi-projective scheme over $k$. Since $X$ is normal, by the valuative criterion of properness, there is an open subspace $U \subset X$ whose complement has codimension $\geq 2$ such that $Y \to X$ has a section over $U$. In particular, there is an immersion $U \to Y$, and so $U$ is a scheme.
\end{proof}

\begin{lemma}
\label{lemma-structureawayfromcodim2}
    Let $\mathcal{X}\to X$ be the coarse space morphism of a regular tame stack locally of finite type over a locally Noetherian scheme. Assume $\mathcal{X}$ has trivial generic stabilizer. Then there is an open subspace $U \subset X$ which is representable by a regular scheme, and whose complement has codimension $\geq 2$, such that locally on $U$, $\mathcal{X}_U \to U$ is a root stack with respect to a regular divisor. 
\end{lemma}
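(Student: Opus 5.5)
We first pass to a large open subset of the coarse space where everything is a scheme. By Lemma \ref{lemma-coarsespaceisnormal}, $X$ is a normal algebraic space; here I take $X$ separated and of finite type over a field, as in the main application. Lemma \ref{lemma-schemeawayfromcodim2} then gives an open $U_0 \subset X$ with complement of codimension $\geq 2$ such that $U_0$ is a scheme. Since $X$ is normal, its singular locus has codimension $\geq 2$, so we may shrink $U_0$ to the regular locus. It then remains to understand $\mathcal{X}$ over codimension-one points of $X$. The stacky locus of $\mathcal{X}$ (where stabilizers are nontrivial) is closed, and its image in $X$ is a closed subset $Z$. We remove from $U_0$ the following closed subsets of codimension $\geq 2$: the irreducible components of $Z$ of codimension $\geq 2$, the singular locus of $Z_{\mathrm{red}}$, and the pairwise intersections of the codimension-one components of $Z$. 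Call the result $U$. Over $U$, the branch locus $D = Z \cap U$ is a disjoint union of regular divisors, and $\mathcal{X}_U \to U$ is an isomorphism away from $D$.

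Next we identify the local structure near a point $u \in D$. We work étale (strictly henselian) locally on $U$ and use Theorem \ref{thm-localstroftame}(ii). This gives $\mathcal{X} \cong [\operatorname{Spec}B/G]$ with $B$ regular local, $A = B^G$ regular of dimension $n$, and $G$ tame. Proposition \ref{prop-faithfulnormalbundle} shows that $G_\kappa$ acts faithfully on $\mathfrak m_B/\mathfrak m_B^2$. Because $\mathcal{X}\to X$ is an isomorphism over the generic point of $D$ and is ramified only along the regular divisor $D$, I expect the representation to decompose as a trivial part of dimension $n-1$ plus one nontrivial character. To justify this, one uses linear reductivity to choose $G$-semi-invariant regular parameters for $B$ lifting a weight decomposition of $\mathfrak m_B/\mathfrak m_B^2$. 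The locus fixed by a nontrivial subgroup is then cut out by the parameters of nonzero weight. Since the stacky locus has codimension exactly one, only one parameter $t$ can carry a nontrivial character $\chi$. Faithfulness then forces $G \cong \mu_r$ with $\chi$ a generator.

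With that decomposition, $B^G$ is generated by the invariant parameters together with $f = t^r$. Hence $B \cong A[t]/(t^r - f)$ with $f$ a local equation of $D$, and $[\operatorname{Spec}B/\mu_r]$ is exactly the root stack $\sqrt[r]{(\operatorname{Spec}A, V(f))}$, as in the description used in Lemma \ref{lemma-rootstacks}. Finally, we must descend from the strictly henselian local model to an actual open neighbourhood in $U$. This follows from limit arguments, or from the fact that both sides are determined by the canonical map to the root stack: the line bundle with section $(\ecal{O}(\mathcal{E}), t)$ induces a morphism $\mathcal{X}_U \to \sqrt[r]{(U,D)}$, and this morphism is an isomorphism because it is one étale locally.

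The main obstacle is the middle step. We need to show that the weight decomposition of $\mathfrak m_B/\mathfrak m_B^2$ has exactly one nontrivial weight line and that the group is cyclic. This requires relating the codimension of the fixed locus of subgroups to the weights, which is where faithfulness (Proposition \ref{prop-faithfulnormalbundle}) and trivial generic stabilizer enter. I would handle it by lifting semi-invariant parameters as above and comparing fixed loci with the codimension of $Z$.
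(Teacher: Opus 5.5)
\textbf{There is a genuine gap in your middle step, and the justification you sketch there is not merely incomplete but invalid.} Knowing that the stacky locus is a regular divisor through $u$, together with faithfulness of $\mathfrak m_B/\mathfrak m_B^2$, does not force a single nontrivial weight.

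Take $\mu_4$ acting on $\mathbb{A}^2$ with weights $(1,2)$. The action is faithful. The stabilizer is trivial off $V(x)$, equal to $\mu_2$ on $V(x)\setminus\{0\}$, and equal to $\mu_4$ at the origin. So the stacky locus is exactly the smooth divisor $V(x)$, yet two regular parameters carry nontrivial characters. Near the origin $[\mathbb{A}^2/\mu_4]$ is not a root stack, since stabilizers along a root divisor are constant.

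Your codimension count only sees fixed loci of subgroups. The fixed locus of the full stabilizer $G_\kappa$ can be strictly smaller than $D$. What excludes this example is that its coarse space $\operatorname{Spec}k[x^4,x^2y,y^2]$ is an $A_1$ singularity, i.e.\ the regularity of $A=B^G$. You assume this but never use it. Exploiting it at an arbitrary point of $D$ amounts to a Chevalley--Shephard--Todd type theorem for linearly reductive group schemes: $G_\kappa$ is generated by pseudo-reflections, which must then all fix $D$ pointwise. Your proposal does not supply this.

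A second, smaller problem: a weight decomposition of $\mathfrak m_B/\mathfrak m_B^2$ only exists when $G$ is diagonalizable. If $G=\Delta\rtimes C$ with $C$ non-abelian, you cannot start from semi-invariant parameters. Your explicit $U$ may well be correct, but your argument does not prove it.

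\textbf{The paper avoids all of this by never analysing points of $D$ that have codimension $\geq 2$ in $X$.}
\begin{enumerate}
\item After shrinking so that $X$ is a normal scheme, it takes each codimension-one point $\eta$, where $\mathcal{O}_{X,\eta}$ is a DVR.
\item It uses a result of Bresciani--Vistoli to see that $\mathcal{X}\times_X\operatorname{Spec}\mathcal{O}_{X,\eta}$ is an $n$-th root stack.
\item It spreads this out by a limit argument to an open neighbourhood $V_\eta$ of $\eta$, and sets $U=\bigcup_\eta V_\eta$.
\end{enumerate}
The complement of $U$ then has codimension $\geq 2$ automatically, because it contains no point of codimension $\leq 1$.

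Your own local computation suffices in that setting. When $\dim A=1$, the space $\mathfrak m_B/\mathfrak m_B^2$ is a line, so faithfulness immediately gives $G_\kappa\cong\mu_r$ acting through a generator. Your identification $B\cong A[t]/(t^r-f)$ then goes through, and your descent via the canonical map to $\sqrt[r]{(U,D)}$ handles the spreading out.

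So the repair is one of two things: replace your explicitly described $U$ by a union of neighbourhoods of codimension-one points, or import a Chevalley--Shephard--Todd theorem to justify the pointwise claim on your $U$.
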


\begin{proof}
We know that $X$ is normal by Lemma \ref{lemma-coarsespaceisnormal}. Then by Lemma \ref{lemma-schemeawayfromcodim2}, after replacing $X$ with an open whose complement has codimension $\geq 2$, we may assume $X$ is a (normal) scheme. Then for any $\eta \in X$ such that $\operatorname{dim}(\ecal{O}_{X,\eta}) = 1$, the local ring $\ecal{O}_{X,\eta}$ is a DVR. 
The fiber product $\mathcal{X} \times _X \operatorname{Spec}(\ecal{O}_{X,\eta}) \to \operatorname{Spec}(\ecal{O}_{X,\eta})$ is itself a coarse space map. By \cite[Proposition 3.12]{BrescianiVistoli2024}, we see that $\mathcal{X} \times _X \operatorname{Spec}(\ecal{O}_{X,\eta}) \to \operatorname{Spec}(\ecal{O}_{X,\eta})$ is a root stack map $\sqrt[n]{\operatorname{Spec}R} \to \operatorname{Spec}R$ for some $n$. By a standard limit argument, there is an open neighborhood $V$ of $\eta$ such that $\mathcal{X} \times _X V \to V$ is the $n^{th}$ root stack with respect to the closure $\overline{\eta}$ of $\eta$ in $V$. Take for $U$ the union of all such $V$ and the proof is complete.
\end{proof}

\begin{prop}
\label{prop-structureawayfromcodim2}
    Let $\mathcal{X}$ be a tame algebraic stack which is regular and proper over a field. Assume $\mathcal{X}$ contains a dense open which is representable by a scheme. Then there is an open substack $\mathcal{U} \subset \mathcal{X}$ with the following properties:
    \begin{enumerate}
        \item $\operatorname{codim}(\mathcal{X} \setminus \mathcal{U} \subset \mathcal{X}) \geq 2$.
        \item The coarse space of $\mathcal{U}$ is a scheme.
        \item The line bundle $\omega_{\mathcal{X}}|_{\mathcal{U}}$ is point-wise $\otimes$-generating (see Definition \ref{defn-pointwisetensorgenerating}).
    \end{enumerate}
\end{prop}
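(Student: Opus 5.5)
The plan is to take $\mathcal{U}$ to be the preimage of the open $U \subset X$ from Lemma \ref{lemma-structureawayfromcodim2}, where $\pi : \mathcal{X} \to X$ is the coarse space. A dense open subscheme forces trivial generic stabilizer, so that lemma applies. It provides an open $U \subset X$, representable by a regular scheme, with complement of codimension $\geq 2$, and such that locally on $U$ the map $\mathcal{X}_U \to U$ is an $n$th root stack along a regular divisor. Set $\mathcal{U} = \pi^{-1}(U)$.

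Properties (i) and (ii) should be nearly formal. The map $\pi$ is a universal homeomorphism, so codimensions of closed subsets agree on $|\mathcal{X}|$ and $|X|$; this gives (i). The coarse space of $\mathcal{U}$ is $U$, because coarse spaces of tame stacks are compatible with flat (in particular open) base change on $X$; this gives (ii).

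The main work is (iii). Fix a closed point $x \in \mathcal{U}$. The condition is local on $U$, so we may assume $\mathcal{U} = \sqrt[n]{(U, D)}$ with $D \subset U$ a regular (effective Cartier) divisor. I would shrink further, so that one of two cases holds.
\begin{itemize}
\item If $x$ is not over $D$, then $\mathcal{G}_x = \operatorname{Spec}\kappa(x)$. Here $\mathcal{O}$ generates, so every line bundle is $\otimes$-generating.
\item If $x$ is over $D$, then $\mathcal{G}_x \cong B\mu_{n,\kappa(x)}$.
\end{itemize}
In the second case, by Lemma \ref{lemma-tensor-gen-on-gerbe}, it suffices to show that $i_x^*\omega_{\mathcal{X}}$ corresponds to a generator of $\mathbb{Z}/n$.

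To compute $\omega_{\mathcal{U}} = \omega_{\mathcal{X}}|_{\mathcal{U}}$, I would use Lemma \ref{lemma-rootstacks}. Since $\omega^\bullet_{\mathcal{U}} = \pi^!\omega^\bullet_U$ (by composition of right adjoints, compatible with restriction to opens via flat base change), it gives
$$\omega_{\mathcal{U}} \cong \pi^*\omega_U \otimes \mathcal{O}((n-1)\mathcal{E}),$$
where $\mathcal{E}$ is the tautological root of $\pi^{-1}D$ and $U$ is regular, hence Gorenstein.

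Next, restrict to $\mathcal{G}_x$:
\begin{itemize}
\item $\pi^*\omega_U$ pulls back to a line bundle from $\operatorname{Spec}\kappa(x)$, which is trivial.
\item $\mathcal{O}(\mathcal{E})|_{\mathcal{G}_x}$ is the standard character of $\mu_n$. This can be checked in the local model $[\operatorname{Spec} A[T]/(T^r-f)\,/\,\mu_r]$ used in the proof of Lemma \ref{lemma-rootstacks}, where $\mathcal{O}(\mathcal{E})$ is the free module with generator in degree $\pm 1$.
\end{itemize}
Hence $i_x^*\omega_{\mathcal{X}}$ is the character $\chi^{n-1} = \chi^{-1}$, which generates $\mathbb{Z}/n$. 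This gives (iii).

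The expected obstacle is the bookkeeping in the root-stack identification. One has to make sure that the local root-stack structure of Lemma \ref{lemma-structureawayfromcodim2} is valid at every closed point of $\pi^{-1}(U)$, not just at codimension-one points. The construction as the union of the opens $V$ handles this, together with the open $X \setminus \overline{\{\text{branch}\}}$ where $\pi$ is an isomorphism; it is this union that should be taken as $U$. One also has to check that the residual gerbe over a closed point of $D$ is $B\mu_n$ over the residue field; an alternative route is Lemma \ref{lemma-adjunctionformula} applied to $\mathcal{G}_x$, together with the faithful normal bundle of Proposition \ref{prop-faithfulnormalbundle}.
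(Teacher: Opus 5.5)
Your proposal is correct and is essentially the paper's proof: take $\mathcal{U}$ over the open $U$ of Lemma~\ref{lemma-structureawayfromcodim2}, identify $\omega_{\mathcal{X}}|_{\mathcal{U}}$ locally with $(\mathcal{U}\to U)^!(\omega_U)\cong \pi^*\omega_U\otimes\mathcal{O}((n-1)\mathcal{E})$ via Lemma~\ref{lemma-rootstacks} and flat base change, and observe that its restriction to each residual gerbe $B\mu_{n,\kappa}$ has weight $\pm 1$ modulo $n$, hence is $\otimes$-generating. The only thing to discard is the alternative you mention via Lemma~\ref{lemma-adjunctionformula} and Proposition~\ref{prop-faithfulnormalbundle}: a faithful $\mathcal{N}_x$ does not make $i_x^*\omega_{\mathcal{X}}\cong\wedge^d\mathcal{N}_x^\vee$ a generator of the character group (on $\mathcal{P}(1,1,2)$ the normal bundle $\chi_1\oplus\chi_1$ is faithful, yet $i_x^*\omega_{\mathcal{X}}$ is trivial, see Example~\ref{example-spherical-twist-on-p112}), so you genuinely need the root-stack weights $(0,\dots,0,\pm 1)$.
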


\begin{proof}
    By Lemma \ref{lemma-structureawayfromcodim2}, there is an open $\mathcal{U} \subset \mathcal{X}$ whose complement has codimension $\geq 2$ such that its coarse space $U$ is a regular scheme, and locally on $U$, $\mathcal{U} \to U$ is a root stack with respect to a regular divisor. If $Y$ is a scheme, $D \subset Y$ is an effective Cartier divisor, and $n \geq 1$ is an integer, then the stack $\mathcal{Y} = \sqrt[n]{(Y, D)}$ has a tautological effective Cartier divisor $\mathcal{D} \subset \mathcal{Y}$ which is a Zariski locally trivial $\mu_n$-gerbe over $D$, and $\mathcal{Y} \setminus \mathcal{D} \cong Y \setminus D$. Thus the residual gerbes of $\mathcal{U}$ are all of the form $B\mu_{n, \kappa}$ for some $n$ (possibly $1$). Also, for any line bundle $\ecal{M}$ on $Y$, we have 
    $$(\mathcal{Y} \to Y)^!(\ecal{M}) = (\mathcal{Y} \to Y)^*\ecal{M} \otimes \ecal{O}_{\mathcal{Y}}((n-1)\mathcal{D}),
    $$ 
    see Lemma \ref{lemma-rootstacks}. For any $y \in \mathcal{D}$, the restriction of this line bundle to the residual gerbe $\mathcal{G}_y \cong B\mu_{n, \kappa(y)}$ has weight $-1$, so its tensor powers generate the category of coherent sheaves on $\mathcal{G}_y$. We can now conclude since $\omega_{\mathcal{X}}|_{\mathcal{U}} = (\mathcal{U} \to U)^!(\omega_X^\bullet|_U) = (\mathcal{U} \to U)^!(\omega_U)$ by compatibility of the functors $(-)^!$ with flat base change, see Section \ref{subsection-dualityforstacks}. 
\end{proof}

The open $\mathcal{U}$ produced above is useful because Zariski locally on $\mathcal{U}$, $\omega_{\mathcal{X}}|_{\mathcal{U}}$ is a tensor generator of the category of coherent sheaves.

\begin{lemma}
\label{lemma-pointwiseonditionfortensorgeneration}
    Let $\mathcal{X}$ be a tame Noetherian algebraic stack 
    and $\pi: \mathcal{X} \to X$ its coarse moduli space. Let $\ecal{L}$ be an invertible $\ecal{O}_{\mathcal{X}}$-module. Assume:
    \begin{enumerate}
        \item $X$ is an affine scheme.
        \item $\ecal{L}$ is point-wise $\otimes$-generating (see Definition \ref{defn-pointwisetensorgenerating}).
    \end{enumerate}
Then every coherent $\ecal{O}_{\mathcal{X}}$-module is a quotient of a direct sum of positive tensor powers of $\ecal{L}$.
\end{lemma}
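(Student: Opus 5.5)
We argue with Nakayama's Lemma on the stack, using affineness of the coarse space to produce sections. Let $\ecal{F}$ be a coherent $\ecal{O}_{\mathcal{X}}$-module and let $\ecal{F}' \subset \ecal{F}$ be the image of the evaluation map
$$
\bigoplus_{n > 0} \pi^*\pi_*(\ecal{F}\otimes\ecal{L}^{\otimes -n}) \otimes \ecal{L}^{\otimes n} \to \ecal{F},
$$
or equivalently the subsheaf generated by the images of all maps $\ecal{L}^{\otimes n} \to \ecal{F}$ with $n>0$. Since $\mathcal{X}$ is Noetherian and $\ecal{F}$ is coherent, the ascending chain condition shows that $\ecal{F}'$ is already the image of a \emph{finite} direct sum of positive tensor powers of $\ecal{L}$. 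It therefore suffices to show $\ecal{Q} = \ecal{F}/\ecal{F}'$ vanishes. By \cite[Corollary 7.10]{hall2024generalizedbondalorlovfaithfulnesscriterion} (item (1) of Section \ref{subsection-generalizedpoints}), applied at closed points, it is enough to show $\operatorname{Hom}(\ecal{Q}, \kappa(x,\xi)) = 0$ for every generalized closed point $(x,\xi)$. Indeed, then $\ecal{Q}$ vanishes near every closed point, and since $\operatorname{Supp}\ecal{Q}$ is closed in a quasi-compact stack, a nonempty support would contain a closed point.

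Suppose instead that there is a nonzero map $\ecal{Q} \to \kappa(x,\xi)$. Its image is all of $\kappa(x,\xi)$, since $\kappa(x,\xi)$ is simple, so composing with $\ecal{F}\to\ecal{Q}$ gives a surjection $\ecal{F} \twoheadrightarrow \kappa(x,\xi)$ that kills $\ecal{F}'$. By point-wise $\otimes$-generation and Lemma \ref{lemma-tensor-gen-on-gerbe}, we have $\mathcal{G}_x \cong B\mu_N$ and $\xi \cong i_x^*\ecal{L}^{\otimes m}$ for some $m$. Moreover $i_x^*\ecal{L}^{\otimes N}$ is trivial, so we may replace $m$ by any $n>0$ with $n \equiv m \bmod N$.

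The key step is to lift the surjection $\ecal{F}\to\kappa(x,\xi) = i_{x,*}i_x^*\ecal{L}^{\otimes n}$ to a map $\ecal{L}^{\otimes n} \to \ecal{F}$ whose composite to $\kappa(x,\xi)$ is nonzero. That composite is a map $\ecal{L}^{\otimes n} \to i_{x,*}i_x^*\ecal{L}^{\otimes n}$, and such a map arises from some $\ecal{L}^{\otimes n}\to\ecal{F}$ as soon as the map
$$
\operatorname{Hom}(\ecal{L}^{\otimes n},\ecal{F}) \to \operatorname{Hom}(\ecal{L}^{\otimes n},\kappa(x,\xi))
$$
hits a nonzero element. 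Twisting by $\ecal{L}^{\otimes -n}$ identifies this map with $\Gamma(\mathcal{X},\ecal{F}\otimes\ecal{L}^{-n}) \to \Gamma(\mathcal{X}, i_{x,*}\ecal{O}_{\mathcal{G}_x})$, which is $\Gamma$ applied to the surjection $\ecal{F}\otimes\ecal{L}^{-n} \twoheadrightarrow \kappa(x,\xi)\otimes\ecal{L}^{-n} \cong i_{x,*}\ecal{O}_{\mathcal{G}_x}$. Because $\mathcal{X}$ is tame, $\pi_*$ is exact, and because $X$ is affine, $\Gamma(X,-)$ is exact on quasi-coherent sheaves. Hence $\Gamma(\mathcal{X},-) = \Gamma(X,\pi_*-)$ is exact and this map is surjective. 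The target is $\Gamma(\mathcal{G}_x,\ecal{O}) = \kappa(x) \neq 0$, so there is a morphism $\ecal{L}^{\otimes n}\to\ecal{F}$ whose composite to $\kappa(x,\xi)$ is nonzero. Its image lies in $\ecal{F}'$ by construction, contradicting the fact that $\ecal{F}\to\kappa(x,\xi)$ kills $\ecal{F}'$.

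The main obstacle is making sure the Nakayama input from \cite{hall2024generalizedbondalorlovfaithfulnesscriterion} applies in exactly the form used: vanishing of $\operatorname{Hom}(\ecal{Q},\kappa(x,\xi))$ for all $\xi$ at every closed point $x$ should force $\ecal{Q}=0$. The remaining bookkeeping is the identification $\kappa(x,\xi)\otimes\ecal{L}^{-n}\cong i_{x,*}\ecal{O}_{\mathcal{G}_x}$, which follows from the projection formula for the closed immersion $i_x$. Note that the finite generation needed for a \emph{finite} direct sum comes from Noetherianity, while the infinite sum in the statement is harmless in any case.
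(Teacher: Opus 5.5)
Your argument is correct and is essentially the paper's proof. It uses the same three inputs: the Nakayama statement of \cite[Corollary 7.10]{hall2024generalizedbondalorlovfaithfulnesscriterion} at closed points, exactness of $\Gamma(\mathcal{X},-)=\Gamma(X,\pi_*(-))$ (equivalently, projectivity of vector bundles on a tame stack with affine coarse space), and point-wise $\otimes$-generation on the residual gerbes. The only difference is bookkeeping: you take the maximal subsheaf generated by positive powers of $\ecal{L}$ (finiteness via the ascending chain condition) and test its cokernel against one simple point sheaf at a time, whereas the paper lifts a whole surjection onto $i_{x,*}i_x^*\ecal{F}$ at each closed point and finishes by quasi-compactness.
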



\begin{proof}
Let $\ecal{F}$ be a coherent $\ecal{O}_{\mathcal{X}}$-module and $x \in \mathcal{X}$ closed. Write $i_x : \mathcal{G}_x \to \mathcal{X}$ for the residual gerbe at $x$. Then by assumption, there is a surjection $\bigoplus_j i_x^*\ecal{L}^{\otimes n_j} \twoheadrightarrow i_x^*\ecal{F}$ where the sum is finite and each $n_j$ is positive. Then, since $i_x$ is a closed immersion, the composition
$$
\bigoplus_j \ecal{L}^{\otimes n_j} \to i_{x,*}i_x^*(\bigoplus_j \ecal{L}^{\otimes n_j}) \to i_{x,*}i_x^*\ecal{F}
$$
is surjective also. Since $\mathcal{X}$ is tame with affine coarse space, any finite locally free $\ecal{O}_{\mathcal{X}}$-module is a projective object of the category of quasi-coherent sheaves. Therefore the surjection lifts to a morphism
$$
\bigoplus_j \ecal{L}^{\otimes n_j} \to \ecal{F}
$$
and we see by Nakayama's Lemma (see Section \ref{subsection-generalizedpoints}) that this morphism is surjective in a neighborhood of $x$. Doing this for every closed point $x$ and then applying quasi-compactness proves the result. 
\end{proof}

Here are two standard results about extension away from codimension $2$.

\begin{lemma}\label{lemma-hartogs}
    Let $\mathcal{X}$ be an algebraic stack which is regular. Let $\mathcal{U} \subset \mathcal{X}$ be an open whose complement has codimension $\geq 2$. Then $\operatorname{Pic}(\mathcal{X}) \to \operatorname{Pic}(\mathcal{U})$ is an isomorphism.
\end{lemma}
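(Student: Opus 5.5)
The plan is to reduce to the scheme-theoretic statement by working smooth (or fppf) locally, and to deduce both injectivity and surjectivity from the corresponding statements for reflexive sheaves. A line bundle on a regular stack is determined by its restriction away from codimension $2$. Concretely, let $j : \mathcal{U} \to \mathcal{X}$ be the open immersion. For injectivity, suppose $\ecal{L}$ is a line bundle on $\mathcal{X}$ with $\ecal{L}|_{\mathcal{U}} \cong \ecal{O}_{\mathcal{U}}$. I would show that the natural map $\ecal{M} \to j_*j^*\ecal{M}$ is an isomorphism for every line bundle $\ecal{M}$ on $\mathcal{X}$. Then $\ecal{L} \cong j_*\ecal{O}_{\mathcal{U}} \cong \ecal{O}_{\mathcal{X}}$.

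For surjectivity, given a line bundle $\ecal{L}'$ on $\mathcal{U}$, I would set $\ecal{L} = j_*\ecal{L}'$ and show that it is a line bundle on $\mathcal{X}$ restricting to $\ecal{L}'$. The restriction statement $j^*j_*\ecal{L}' \cong \ecal{L}'$ is formal for an open immersion. The morphism $j$ is quasi-compact when $\mathcal{X}$ is Noetherian, and in the generality stated I would reduce to this case by working on a quasi-compact open.

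Both claims are local in the smooth topology on $\mathcal{X}$, because formation of $j_*$ commutes with flat base change for quasi-compact quasi-separated $j$. So I would choose a smooth surjection $V \to \mathcal{X}$ from a scheme. Then $V$ is regular, and $U = V \times_{\mathcal{X}} \mathcal{U}$ has complement of codimension $\geq 2$ in $V$, since codimension is preserved under smooth (flat, open) pullback. For line bundles being isomorphisms or invertibility, descent reduces everything to the following classical fact on regular local rings: if $R$ is a regular (hence normal and factorial) local ring and $Z \subset \operatorname{Spec} R$ is closed of codimension $\geq 2$, then
\begin{enumerate}
    \item $\Gamma(\operatorname{Spec}R \setminus Z, \ecal{O}) = R$ by the $S_2$ property (algebraic Hartogs);
    \item every line bundle on $\operatorname{Spec}R \setminus Z$ is trivial, since $\operatorname{Cl}(R) = 0$ by Auslander--Buchsbaum and line bundles on the punctured spectrum correspond to divisor classes, as $\operatorname{Cl}$ is unchanged by removing codimension $\geq 2$.
\end{enumerate}
Applying (ii) Zariski locally on $V$ shows $j_*\ecal{L}'$ is locally isomorphic to $j_*\ecal{O}_U = \ecal{O}_V$ by (i), hence invertible. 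Applying (i) gives $\ecal{M} \cong j_*j^*\ecal{M}$.

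The main obstacle is the local triviality step (ii), which needs care in two places. First, the trivialization must be compatible with stalks: I would work with $\operatorname{Spec}\ecal{O}_{V,v}$, extend the trivialization to a Zariski neighbourhood by a limit argument, and use that $j_*$ commutes with flat base change to the local ring. Second, I must check that codimension on the stack $\mathcal{X}$ is defined compatibly with codimension on the atlas $V$, which is what the paper's conventions (via smooth charts) give. Everything else is formal descent of quasi-coherent sheaves along $V \to \mathcal{X}$.
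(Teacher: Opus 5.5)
Your proposal is correct and follows the paper's own argument. The inverse is $\ecal{L}' \mapsto j_*\ecal{L}'$, and the checks that $j_*\ecal{L}'$ is invertible and that $\ecal{M} \to j_*j^*\ecal{M}$ is an isomorphism are smooth-local, so they reduce to the classical case of regular schemes. Your detour through $\operatorname{Spec}\ecal{O}_{V,v}$ and a limit argument is fine but not needed: you can apply the scheme statement ($\operatorname{Pic} = \operatorname{Cl}$ on a regular scheme, unchanged by removing a closed subset of codimension $\geq 2$) directly on the atlas $V$.
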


\begin{proof}
    As in the case of schemes, the inverse is given by $\ecal{L} \mapsto j_*(\ecal{L})$ where $j : \mathcal{U} \to \mathcal{X}$ is the inclusion. The checks that $j_*(\ecal{L})$ is a line bundle and that the two maps are inverses are local and therefore follow from the case of schemes.
\end{proof}

The next result is  \cite[Theorem 4.6]{FantechiMannNironi+2010+201+244} in the Deligne-Mumford case. The exact same proof works in the following setting, except that Grothendieck's purity of the branch locus is replaced by the purity theorem \cite[Theorem 3.1]{marrama2016purity}.

\begin{prop}
\label{prop-extendingmorphisms}
    Let $\mathcal{X}, \mathcal{Y}$ be algebraic stacks. Assume $\mathcal{X}$ is regular and $\mathcal{Y}$ is tame and locally of finite type over  locally Noetherian scheme, and $\mathcal{Y} \to Y$ is its coarse space morphism. Suppose $\mathcal{U} \subset \mathcal{X}$ is an open whose complement has codimension $\geq 2$. Then for any solid $2$-commutative square
\[\begin{tikzcd}
	{\mathcal{U}} & {\mathcal{Y}} \\
	{\mathcal{X}} & Y,
	\arrow[from=1-1, to=1-2]
	\arrow[from=1-1, to=2-1]
	\arrow[from=1-2, to=2-2]
	\arrow[dashed, from=2-1, to=1-2]
	\arrow[from=2-1, to=2-2]
\end{tikzcd}\]
there is a unique (up to unique isomorphism) dashed arrow making the diagram $2$-commute
\end{prop}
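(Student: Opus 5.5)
We follow the proof of \cite[Theorem 4.6]{FantechiMannNironi+2010+201+244}, which is first a reduction to a local statement on the coarse space $Y$ and then an application of purity of the branch locus. The key point is that $\mathcal{Y}\to Y$ is étale locally on $Y$ a quotient stack. By Theorem \ref{thm-localstroftame}, after base change along a strict henselization (or an étale neighborhood) $V=\operatorname{Spec}A\to Y$ of a point, $\mathcal{Y}_V\cong[\operatorname{Spec}B/G]$ with $G/A$ a tame group scheme and $B$ finite over $A$.

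\emph{Uniqueness.} Since $\mathcal{X}$ is regular, hence normal, $\mathcal{U}\subset\mathcal{X}$ is schematically dense. The stack $\mathcal{Y}$ has finite, hence separated, diagonal over $Y$. So two extensions $\mathcal{X}\to\mathcal{Y}$ over $Y$ that agree on $\mathcal{U}$ are related by a $2$-isomorphism over $\mathcal{U}$, i.e.\ a section over $\mathcal{U}$ of the finite $\mathcal{X}$-scheme $\underline{\operatorname{Isom}}$. Any section of a finite morphism over $\mathcal{U}$ extends uniquely across a complement of codimension $\geq2$ of a normal stack. This follows from Hartogs for the coordinate ring of the finite algebra, checked smooth locally on $\mathcal{X}$. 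Uniqueness of extensions then lets us glue local existence statements, so existence is étale local on $Y$ and smooth local on $\mathcal{X}$.

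\emph{Existence.} Reduce to $Y=\operatorname{Spec}A$ with $\mathcal{Y}=[\operatorname{Spec}B/G]$, and to $\mathcal{X}$ replaced by the preimage of $Y$, which is still regular with $\mathcal{U}$ of codimension $\geq2$. The map $\mathcal{U}\to\mathcal{Y}$ gives a $G$-torsor $P_{\mathcal{U}}\to\mathcal{U}$ together with a $G$-equivariant map $P_{\mathcal{U}}\to\operatorname{Spec}B$. The main step, and the expected obstacle, is to extend the torsor $P_{\mathcal{U}}$ to a $G$-torsor $P\to\mathcal{X}$. First take $P$ to be the normalization of $\mathcal{X}$ in $P_{\mathcal{U}}$, which is the relative spectrum of $j_*\mathcal{O}_{P_\mathcal{U}}$. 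It is finite over $\mathcal{X}$ because $j_*$ of a finite algebra over a normal base with codimension-$2$ complement is coherent. This $P$ carries a $G$-action and is flat-torsor-like over $\mathcal{U}$.

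To see that $P\to\mathcal{X}$ is a $G$-torsor everywhere, use the purity theorem \cite[Theorem 3.1]{marrama2016purity} for torsors under tame (linearly reductive) finite group schemes over regular bases. For étale $G$ this is Zariski–Nagata purity of the branch locus, which is what FMN use. For $\mu_n$ in characteristic $p\mid n$ it is the fppf purity statement of Marrama. The theorem says a $G$-torsor over the complement of a codimension-$\geq2$ closed subset of a regular scheme extends uniquely. One applies it on a smooth atlas of $\mathcal{X}$, and the extensions descend by uniqueness.

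With $P$ in hand, the equivariant map $P_{\mathcal{U}}\to\operatorname{Spec}B$ extends to $P\to\operatorname{Spec}B$. Here $P$ is normal, being étale-fppf locally a torsor over a regular stack, and $P_{\mathcal{U}}\subset P$ has complement of codimension $\geq2$. Since $\operatorname{Spec}B$ is affine, extending the map only requires extending functions, i.e.\ $\Gamma(P,\mathcal{O})=\Gamma(P_{\mathcal{U}},\mathcal{O})$ by Hartogs. Equivariance holds because it holds on the dense open. The pair $(P,P\to\operatorname{Spec}B)$ defines the dashed arrow $\mathcal{X}\to[\operatorname{Spec}B/G]$. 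It is compatible with $\mathcal{X}\to Y$ because both composites $\mathcal{X}\to Y$ agree on $\mathcal{U}$ and $Y$ is separated.
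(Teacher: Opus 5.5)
Your overall route is the paper's: the finite $\underline{\operatorname{Isom}}$ for uniqueness and gluing, an étale-local presentation $\mathcal{Y}=[\operatorname{Spec}B/G]$, purity \cite[Theorem 3.1]{marrama2016purity}, then Hartogs for $P\to\operatorname{Spec}B$. Your uniqueness argument is fine; it extends the section by Hartogs on $\mathcal{O}_{\mathcal{X}}$, whereas the paper uses a scheme-theoretic image that is finite birational over the normal $\mathcal{X}$. The step that fails as written is the claim that $P$ is normal ``being étale-fppf locally a torsor over a regular stack.'' Tame group schemes can be infinitesimal, e.g.\ $G=\mu_p$ in characteristic $p$, which is exactly the case for which you invoke Marrama. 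If $P_{\mathcal{U}}$ is the trivial torsor $\mathcal{U}\times\mu_p$, its extension $P=\mathcal{X}\times\mu_p$ is not even reduced. For the same reason, equivariance does not follow from agreement on a merely dense open.

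What you actually need is $\Gamma(P,\mathcal{O}_P)=\Gamma(P_{\mathcal{U}},\mathcal{O})$, together with scheme-theoretic density of $G\times P_{\mathcal{U}}$ in $G\times P$. Both follow from flatness. Write $p\colon P\to\mathcal{X}$; it is finite locally free, so $p_*\mathcal{O}_P$ is a vector bundle on the normal stack $\mathcal{X}$. Hartogs on $\mathcal{X}$ then gives $\Gamma(\mathcal{X},p_*\mathcal{O}_P)=\Gamma(\mathcal{U},p_*\mathcal{O}_P)$. The paper phrases this as: $P$ is flat over a regular base, hence Cohen--Macaulay, hence $S_2$. For density, $G\times P$ is finite locally free over $\mathcal{X}$, and $\mathcal{U}$ is scheme-theoretically dense in $\mathcal{X}$.

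Two smaller points. First, your preliminary $P=\underline{\operatorname{Spec}}\,j_*\mathcal{O}_{P_{\mathcal{U}}}$ is redundant, since purity hands you the torsor directly. Identifying the two constructions uses the same reflexivity argument as above. That argument is also the only reason your coherence claim holds: $j_*$ of an arbitrary finite algebra across codimension $2$ need not be coherent, but $j_*$ of a vector bundle on a normal stack is. Second, Theorem \ref{thm-localstroftame} is stated only over strictly henselian bases, and a strict henselization is not an étale cover. To make existence étale local on $Y$, you need the étale-local quotient presentation of \cite[Theorem 3.2]{Abramovich2008}, which is what the paper uses.
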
 

The uniqueness will follow immediately from the following lemma, which is almost identical to {\cite[Proposition A.1]{FantechiMannNironi+2010+201+244}}.

\begin{lemma}
\label{lemma-separationforstacks}
    Let $\mathcal{Y}$ be an algebraic stack with finite diagonal. 
    Let $\mathcal{X}$ be a normal algebraic stack and $\mathcal{U} \subset \mathcal{X}$ a dense open substack. Given $1$-morphisms $f, g : \mathcal{X} \to \mathcal{Y}$ and a $2$-isomorphism $\alpha : f|_{\mathcal{U}} \to g|_{\mathcal{U}}$, there is a unique isomorphism $\beta : f \to g$ whose restriction to $\mathcal{U}$ is $\alpha$.
\end{lemma}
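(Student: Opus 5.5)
The plan is to reduce to a local statement about isomorphisms of morphisms into a stack with finite diagonal, and then use the separatedness of the Isom algebraic space. The key object is $I = \underline{\operatorname{Isom}}_{\mathcal{X}}(f,g) \to \mathcal{X}$, the algebraic space over $\mathcal{X}$ parametrizing $2$-isomorphisms $f \Rightarrow g$. It is the pullback of the diagonal $\Delta : \mathcal{Y} \to \mathcal{Y}\times\mathcal{Y}$ along $(f,g) : \mathcal{X} \to \mathcal{Y}\times \mathcal{Y}$, so it is finite over $\mathcal{X}$ because $\Delta$ is finite. A $2$-isomorphism $\beta : f \to g$ is the same thing as a section of $I \to \mathcal{X}$. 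The $2$-isomorphism $\alpha$ is a section $s_\alpha$ of $I_{\mathcal{U}} \to \mathcal{U}$. Thus the claim becomes: a section over a dense open $\mathcal{U}$ of a finite morphism $I \to \mathcal{X}$, with $\mathcal{X}$ normal, extends uniquely to all of $\mathcal{X}$.

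Uniqueness comes first. Since $I \to \mathcal{X}$ is finite, hence separated, two sections agreeing on the dense open $\mathcal{U}$ agree on a closed substack containing $\mathcal{U}$. This closed locus is the equalizer, the pullback of the diagonal of $I$ over $\mathcal{X}$, and it is a closed immersion. A normal stack is reduced, so a closed substack containing a dense open is everything. I will check this smooth locally on an atlas $X' \to \mathcal{X}$, which is normal as well.

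For existence, take the scheme-theoretic closure $Z$ of $s_\alpha(\mathcal{U})$ in $I$. The composition $Z \to \mathcal{X}$ is finite, and it is an isomorphism over $\mathcal{U}$; it is also birational onto each component, since $\mathcal{U}$ is dense and $Z$ is integral over each component of $\mathcal{X}$. Now work smooth locally on $\mathcal{X}$: pull back to a smooth atlas $X' \to \mathcal{X}$ by a normal scheme. Formation of scheme-theoretic closure commutes with flat base change. There $Z' \to X'$ is finite and birational with $X'$ normal, so by Zariski's main theorem, or directly because $\mathcal{O}_{X'}$ is integrally closed in its total ring of fractions, it is an isomorphism. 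Hence $Z \to \mathcal{X}$ is an isomorphism, and its inverse composed with $Z \subset I$ is a section $\beta$ extending $s_\alpha$.

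The main obstacle is the bookkeeping of the normality and finiteness argument on stacks. One needs that the closure commutes with smooth base change and that being an isomorphism descends, both of which are standard. One should also treat the possibly non-connected or non-integral case componentwise: a normal Noetherian stack is a disjoint union of integral ones, and $\mathcal{U}$ meets each of them. Finally, the $2$-categorical identification of $2$-isomorphisms with sections of $\underline{\operatorname{Isom}}$ is needed, and it is compatible with restriction to $\mathcal{U}$, so that $\beta|_{\mathcal{U}} = \alpha$.
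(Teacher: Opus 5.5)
Your proposal is correct and takes essentially the same route as the paper. The paper also forms the pullback $\mathcal{I}$ of the diagonal of $\mathcal{Y}$ along $(f,g)$, views $\alpha$ as a map $\mathcal{U}\to\mathcal{I}$, and takes the reduced scheme-theoretic image, which is finite and birational over the normal stack $\mathcal{X}$ and hence isomorphic to it. Your explicit uniqueness argument (the equalizer of two sections is a closed substack containing $\mathcal{U}$, hence everything since $\mathcal{X}$ is reduced) and your smooth-local check of the normality step fill in details the paper leaves implicit.
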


\begin{proof}
Consider the $2$-commutative diagram of algebraic stacks in which the square is $2$-Cartesian.
\[\begin{tikzcd}
	& {\mathcal{I}} & {\mathcal{X}} \\
	{\mathcal{U}} & {\mathcal{X}} & {\mathcal{X} \times \mathcal{X}}
	\arrow[from=1-2, to=1-3]
	\arrow[from=1-2, to=2-2]
	\arrow["{\Delta_\mathcal{X}}", from=1-3, to=2-3]
	\arrow["\alpha", from=2-1, to=1-2]
	\arrow[from=2-1, to=2-2]
	\arrow["{(f,g)}",from=2-2, to=2-3]
\end{tikzcd}\]
Giving a morphism from an algebraic stack $\mathcal{T}$ into $\mathcal{I}$ is equivalent to giving a morphism $h: \mathcal{T} \to \mathcal{X}$ together with a $2$-isomorphism $f \circ h \cong g \circ h$, hence $\alpha$ induces an arrow as in the diagram. We must show there is a section of $\mathcal{I} \to \mathcal{X}$ which agrees with $\alpha$ over $\mathcal{U}$. As usual, consider the reduced closed substack $\mathcal{Y} \subset \mathcal{I}$ corresponding to the closure of the image of $|\mathcal{U}| \to |\mathcal{I}|.$  Equivalently, $\mathcal{Y}$ is the scheme theoretic image of $\mathcal{U} \to \mathcal{I}$. Then $\mathcal{Y} \to \mathcal{X}$ is finite and birational over the normal algebraic stack $\mathcal{X}$, so $\mathcal{Y} \to \mathcal{X}$ is an isomorphism. Its inverse gives the desired section. 
\end{proof}

\begin{proof}[Proof of Proposition \ref{prop-extendingmorphisms}]
    Uniqueness is Lemma \ref{lemma-separationforstacks} above. 
    
    Existence: By uniqueness, existence is smooth local on the 
    source $\mathcal{X}$ and on the base $Y$, so we may assume $\mathcal{X} = X$ is a regular 
    scheme (and we write $U$ for $\mathcal{U}$). Working \'etale 
    locally on $Y$, we may assume $Y$ is affine, and by 
    \cite[Theorem 3.2]{Abramovich2008}, we may assume 
    $\mathcal{Y} = [V/G]$ where $G/Y$ is a finite locally free 
    tame group scheme acting $Y$-linearly on a finite $Y$-scheme 
    $V$. Then by definition of the quotient $[V/G]$, we are given 
    a $G$-torsor $P \to U$ and a $G$-equivariant morphism $P \to 
    V$ over $Y$. There is a $G$-torsor $\overline{P} \to X$ 
    extending $P \to V$ by \cite[Theorem 3.1]{marrama2016purity} 
    or \cite[Theorem 7.1.3]{CesnaviciusScholze2024}. Since 
    $\overline{P} \to X$ is flat, it follows that $\overline{P}$ 
    is Cohen--Macaulay, and in particular satisfies Serre's 
    condition $S_2$. Then $P \subset \overline{P}$ is the complement
    of a closed set of codimension $\geq 2$ and $V$ is affine, so 
    the morphism $P \to V$ extends uniquely to a morphism 
    $\overline{P} \to V$ by Hartogs' Lemma. The morphism 
    $\overline{P} \to V$ is $G$-equivariant since $P \subset 
    \overline{P}$ is scheme-theoretically dense and $P \to V$ is 
    $G$-equivariant. This completes the proof.
\end{proof}

The following weaker version of the resolution property will be useful later.

\begin{lemma}
\label{lemma-resolutionbyreflexives}
    Let $\cal X$ be a regular, integral, finite type algebraic stack over a field $k$ with quasi-finite, separated diagonal. 
    Then, any coherent sheaf on $\cal X$ is a quotient of a reflexive coherent sheaf. 
\end{lemma}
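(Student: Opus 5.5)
The natural plan is to reduce to the classical statement over the coarse space by pushing forward and pulling back along the coarse moduli map, using only that $\mathcal{X}$ is regular. Let $\ecal{F}$ be a coherent sheaf on $\mathcal{X}$ and let $\pi:\mathcal{X}\to X$ be the coarse space. Since $\mathcal{X}$ has quasi-finite separated diagonal and is of finite type over $k$, it is a Deligne--Mumford-type (or at least quasi-finite diagonal) stack, and by \cite[Theorem B]{rydh_approx} there is a finite, surjective, generically flat morphism $p: Y \to \mathcal{X}$ with $Y$ a scheme. I would instead prefer a cleaner route: work with an atlas and the ``double dual'' trick. The key observation is that on a regular integral stack, for any vector bundle $\ecal{E}$ and any coherent $\ecal{F}$, the sheaf $\mathcal{H}om(\ecal{E},\ecal{F})$ is coherent, and reflexive sheaves are stable under finite pushforward from regular (hence $S_2$) sources when the map is finite flat. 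So the first step is to choose a finite flat surjection $p:Y\to\mathcal{X}$ from a scheme $Y$ over a dense open, and better, a finite surjection from a regular (or at least normal, $S_2$) scheme; replacing $Y$ by its normalization keeps finiteness.

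Concretely: choose a finite surjective morphism $p:Y\to\mathcal{X}$ with $Y$ a normal quasi-projective scheme (Rydh's theorem, then normalize and, if needed, use Chow to get quasi-projectivity). On $Y$, the coherent sheaf $p^*\ecal{F}$ is a quotient of $\ecal{A}^{\oplus N}$ for some anti-ample line bundle $\ecal{A}$, which is reflexive since $Y$ is normal. Push forward: $p_*(\ecal{A}^{\oplus N})\to p_*p^*\ecal{F}$ is surjective ($p$ finite), and composing with the trace/counit is problematic, so instead I use adjunction in the other direction: the counit $p_*p^!\ecal{F}\to\ecal{F}$ for the finite morphism, where $p_*p^!\ecal{F}=\mathcal{H}om(p_*\ecal{O}_Y,\ecal{F})$. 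Its surjectivity is not automatic, so I would fall back on a local argument: surjectivity of $\ecal{G}\to\ecal{F}$ can be checked after faithfully flat base change, and a sum of finitely many maps from reflexive sheaves is again from a reflexive sheaf.

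So the working plan is: (1) show the class of reflexive sheaves is closed under finite direct sums and under $\mathcal{H}om(\ecal{E},-)$ and $p_*$ for finite $p$ from a normal source (pushforward of $S_2$ torsion-free is $S_2$ torsion-free, hence reflexive on the regular $\mathcal{X}$ by \cite{antieau2016maximal}*{Proposition 2.12}); (2) on $Y$ produce a surjection from a reflexive sheaf onto $p^!\ecal{F}=\mathcal{H}om_{\ecal O_{\mathcal X}}(p_*\ecal{O}_Y,\ecal{F})^\sim$, push forward to get $\ecal{G}:=p_*(\text{reflexive})\to p_*p^!\ecal{F}\to\ecal{F}$; (3) prove that the evaluation $\mathcal{H}om(p_*\ecal{O}_Y,\ecal{F})\to\ecal{F}$ is surjective, which holds wherever $\ecal{O}_{\mathcal{X}}\to p_*\ecal{O}_Y$ is split injective — true where $p$ is flat with degree invertible, but not everywhere. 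This is the main obstacle. To handle it I would first try to choose $p$ with $\ecal{O}_{\mathcal X}\to p_*\ecal{O}_Y$ locally split (e.g.\ using tameness-type trace splitting), and otherwise replace the construction near bad points by an induction on the support: the cokernel of the constructed map is supported on a proper closed substack, and one adds reflexive sheaves mapping onto it using the same argument on smaller opens plus quasi-compactness, reducing to the local case where $\mathcal{X}=[\operatorname{Spec}B/G]$ over a strictly henselian base and $B$ is regular, where free $B$-modules twisted by representations are reflexive and surject onto any equivariant module.
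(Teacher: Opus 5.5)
Your architecture matches the paper's: a finite surjection $p\colon Y\to\mathcal{X}$ from a normal integral scheme, a reflexive sheaf surjecting onto $p^!\ecal{F}$, reflexivity of finite pushforwards, and the counit $p_*p^!\ecal{F}=\mathcal{H}om(p_*\ecal{O}_Y,\ecal{F})\to\ecal{F}$.

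\textbf{Main gap: step (3).} You assert that $\ecal{O}_{\mathcal{X}}\to p_*\ecal{O}_Y$ splits only where $p$ is flat of invertible degree, and ``not everywhere''. Under the hypotheses this is false, and the missing idea is exactly where regularity of $\mathcal{X}$ enters. By the direct summand theorem (Hochster's equal characteristic case, \cite{hochster1973contracted}), a regular ring is a module direct summand of every module-finite extension ring. Smooth locally on $\mathcal{X}$, $p$ becomes $\operatorname{Spec}A\to\operatorname{Spec}B$ with $B$ a regular $k$-algebra and $A$ finite over $B$. The map $B\to A$ is injective because $p$ is surjective and $B$ is reduced. Hence:
\begin{enumerate}
\item $\ecal{O}_{\mathcal{X}}\to p_*\ecal{O}_Y$ is smooth-locally split;
\item applying $\mathcal{H}om(-,\ecal{F})$ gives a smooth-locally split surjection;
\item so the counit is surjective everywhere, in every characteristic, with no tameness needed.
\end{enumerate}
Your trace idea does work in characteristic zero, since $\tfrac1d\operatorname{Tr}$ splits $B\to A$ for $B$ normal. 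But the lemma is over an arbitrary field, and in characteristic $p$ the degree may be divisible by $p$.

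The fallback you sketch does not close the gap. The models $[\operatorname{Spec}B/G]$ live over strict henselizations of a coarse space, not on open substacks of $\mathcal{X}$. Locally the statement is trivial, since vector bundles are reflexive. But reflexive sheaves mapping onto $\ecal{F}$ built there, or on smaller opens, neither glue nor extend to a reflexive sheaf on all of $\mathcal{X}$ with a map to $\ecal{F}$; producing such global data is the original problem. Moreover, those local models presuppose a coarse space and a tame or Deligne--Mumford local structure, which quasi-finite separated diagonal alone does not provide.

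\textbf{Secondary gap: step (2).} You get the reflexive surjection onto $p^!\ecal{F}$ from an anti-ample line bundle after making $Y$ quasi-projective via Chow's lemma. A Chow modification $Y'\to Y$ is proper birational, not finite. So $Y'\to\mathcal{X}$ is no longer finite, and you lose both exactness of the pushforward and the adjunction $p_*\dashv p^!$ on coherent sheaves. Nothing guarantees that the cover from \cite{rydh_approx}*{Theorem B}, or its normalization, is quasi-projective. What you need is the scheme case itself: on a normal Noetherian scheme every coherent sheaf is a quotient of a reflexive coherent sheaf. The paper takes this from the first paragraph of the proof of \cite{mathur2023resolution}*{Proposition 6.5} and applies it to $p^!\ecal{F}$.
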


\begin{proof}
First of all, the claim follows for a normal Noetherian scheme by the first paragraph of the proof of \cite{mathur2023resolution}*{Proposition 6.5}. There exists a finite, surjective morphism $f: Y \to \cal X$ from a scheme $Y$ by \cite{rydh_approx}*{Theorem B}. By taking the normalization of a dominating irreducible component, we may assume $Y$ is a normal and integral scheme of finite type, 
and hence any coherent sheaf on $Y$ is a quotient of a reflexive coherent sheaf. 
Recall that the functor $f_*:\operatorname{Coh}(Y) \to \operatorname{Coh}(\cal X)$ has a right adjoint $f^!: \operatorname{Coh}(\cal X) \to \operatorname{Coh}(Y)$ given by 
\[f_*f^!(-) = \mathcal{H}om_{\ecal O_\cal X}(f_*\ecal O_Y, - )\]
as an $f_*\ecal{O}_Y$-module. 
We claim that for a coherent sheaf $\ecal{F}$ on $\mathcal{X}$, the co-unit $f_*f^!(\ecal{F}) \to \ecal{F}$ is surjective.
This morphism is the result of applying the functor $\mathcal{H}om_{\ecal{O}_{\mathcal{X}}}(-, \ecal{F})$ to $\ecal{O}_{\mathcal{X}} \to f_*\ecal{O}_Y$, so it suffices to show that $\ecal{O}_{\mathcal{X}} \to f_*\ecal{O}_Y$ is an fppf locally split injection. Thus replacing $\mathcal{X}$ with a smooth cover, we may assume $f: \spec A \to \spec B$ with $B$ a regular $k$-algebra, and then we can apply the (equal characteristic case of the) direct summand theorem \cite{hochster1973contracted}.  

Now, take a coherent sheaf $\ecal F$ on $\cal X$. Since $f^!\ecal F$ is a coherent sheaf on $Y$, there is a reflexive coherent sheaf $\ecal G$ and a surjective map $\ecal G \surj f^! \ecal F$. Therefore, since $f_*$ is exact, we have the composition 
\[
f_* \ecal G \surj f_*f^! \ecal F \surj \ecal F
\]
of surjective maps. Hence, it suffices to show the coherent sheaf $f_*\ecal G$ is reflexive, but this can be checked smooth locally, so we may assume $\cal X$ is a scheme, in which case 
it is well-known, see 
\cite{schwede2010behavior}*{\S2.3 (1)}. 
\end{proof}

\subsection{Monoidal functors on abelian categories}
\label{subsection-monoidalfunctors}

Let $(\mathcal{A}, \otimes)$ be a monoidal category. For $A \in \mathcal{A}$, write $\tau_A$ for the functor $A' \mapsto A' \otimes A$. There are maps $f: \operatorname{Hom}(\tau_{A_1}, \tau_{A_2}) \to \operatorname{Hom}_{\mathcal{A}}(A_1, A_2)$ and $g : \operatorname{Hom}_{\mathcal{A}}(A_1, A_2) \to \operatorname{Mor}(\tau_{A_1}, \tau_{A_2})$ where $f$ is defined by sending $\alpha: \tau_{A_1} \to \tau_{A_2}$ to $\alpha_{1_{\mathcal{A}}} : A_1 \to A_2$ and $g$ is defined by sending $\varphi: A_1 \to A_2$ to the natural transformation which on an object $A$ is given by $A \otimes A_1 \xrightarrow{1 \otimes \varphi}A \otimes A_2$. It is clear that the composition $f \circ g$ is the identity. Still, $f$ and $g$ need not be inverse bijections.

\begin{example}
    Let $k = \bar{k}$ be an algebraically closed field and $G \neq 1$ a finite 
    group of order prime to the characteristic of $k$. Consider the category $\operatorname{Coh}B_kG$ of finite-dimensional $k$-representations of $G$. There is a natural transformation $\alpha: \operatorname{id} \to \operatorname{id}$ which is not induced by any morphism $k \to k$ from the trivial representation to itself (note that $\operatorname{id} \cong \tau_k$): To define such a natural transformation $\alpha$, it suffices to define $\alpha_V :V \to V$ for every irreducible representation $V$ of $G$. Each map $\alpha_V$ must be multiplication by some scalar $a_V \in k$ by Schur's Lemma. Simply choose these scalars so they are not all the same.
\end{example}

\begin{lemma}
\label{lemma-hombetweentensorfunctors}
    Suppose $\mathcal{X}$ is a Noetherian algebraic stack which has a scheme theoretically dense open which is a scheme, and which has the resolution property. Let $\ecal{F}, \ecal{G}$ be finite locally free $\ecal{O}_{\mathcal{X}}$-modules, and let $\tau_{\ecal{F}} , \tau_{\ecal{G}} : \operatorname{Coh}\mathcal{X} \to \operatorname{Coh}\mathcal{X}$ be as above. Then the natural map
    $$
    f: \operatorname{Hom}(\tau_{\ecal{F}}, \tau_{\ecal{G}}) \to \operatorname{Hom}_{\ecal{O}_{\mathcal{X}}}(\ecal{F}, \ecal{G})
    $$
    is a bijection.
\end{lemma}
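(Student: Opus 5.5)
Since $f\circ g=\mathrm{id}$, the map $f$ is surjective, so the work is injectivity: if $\alpha:\tau_{\ecal F}\to\tau_{\ecal G}$ has $\alpha_{\ecal O_{\mathcal X}}=0$, I want to show $\alpha=0$. More generally, I will show that any $\alpha$ agrees with $g(\alpha_{\ecal O})$. Replacing $\alpha$ by $\alpha-g(\alpha_{\ecal O})$ reduces to the case $\alpha_{\ecal O}=0$.

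\emph{Step 1: vector bundles.} Fix a finite locally free $\ecal E$. Naturality of $\alpha$ along morphisms $\ecal O\to\ecal E$ (global sections) does not suffice on a stack, so I instead use the canonical open. Let $j:U\to\mathcal X$ be a scheme-theoretically dense open which is a scheme. Since $\ecal G\otimes\ecal E$ is locally free and $U$ is scheme-theoretically dense, the restriction map $\operatorname{Hom}(\ecal F\otimes\ecal E,\ecal G\otimes\ecal E)\to\operatorname{Hom}_U(\cdots)$ is injective. It therefore suffices to see that $\alpha_{\ecal E}|_U=0$. For this I would use naturality along all morphisms $\ecal O_{\mathcal X}\to \ecal E(\ldots)$, but it is cleaner to use the resolution property.

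Choose a surjection $\ecal V\twoheadrightarrow \mathcal{H}om(\ecal E,\ecal O)^{\vee}$ from a vector bundle, or more simply find local sections. Instead, I will argue as follows. For any coherent $\ecal M$ and any local section $s$ of $\ecal M$ over a smooth chart, there is a coherent subsheaf generated by $s$, but sections need not be global. To handle this, I would use the resolution property to write $\ecal O_U$-local data globally. Concretely, for a point $u\in U$ and a section $s\in \ecal E_u$, choose a vector bundle $\ecal V$ with a map $\ecal V\to\ecal E$ hitting $s$. I then reduce to showing that $\alpha_{\ecal V}$ vanishes on the image of $\ecal F\otimes\ecal V$ near $u$.

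\emph{Step 2: reduction to trivial bundles near scheme points.} This is the main obstacle. On the scheme $U$ near $u$, the bundle $\ecal V$ is trivial, but the trivialization is not global, so naturality gives nothing directly. My plan is to use dual maps. Apply the resolution property to $\ecal V^\vee$ to get $\ecal W\twoheadrightarrow\ecal V^\vee$, and consider the evaluation morphism $\ecal V\otimes\ecal V^\vee\to\ecal O$. Since $\tau$ is not obviously compatible with tensor products of the argument, I would instead test against the coherent sheaves $i_{u*}\kappa(u)$ for closed points $u\in U$. Because $U$ is a scheme, every map $\ecal V\to i_{u*}\kappa(u)$ factors through the residue field. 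Naturality along $\ecal O\to i_{u*}\kappa(u)$ and along $\ecal V\to i_{u*}\kappa(u)^{\oplus r}\cong (i_{u*}\kappa(u))\otimes\ecal V$ gives $\alpha_{\ecal V}\otimes\kappa(u)=\alpha_{\kappa(u)}^{\oplus r}=0$, since $\alpha_{\kappa(u)}$ is determined by $\alpha_{\ecal O}$ via the surjection $\ecal O\to\kappa(u)$ and naturality. Hence the map $\alpha_{\ecal V}$ between vector bundles vanishes in the fibres at a dense set of points, and is therefore zero.

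\emph{Step 3: all coherent sheaves.} For coherent $\ecal M$, choose a surjection $\ecal V\twoheadrightarrow\ecal M$ by the resolution property. Tensoring with $\ecal F$ stays surjective, so naturality gives $\alpha_{\ecal M}=0$. I expect the delicate point to be Step 2, namely justifying $\alpha_{\kappa(u)}=0$ and the vanishing on fibres, which is exactly where the existence of a scheme open is used; the example preceding the lemma shows this fails at stacky points.
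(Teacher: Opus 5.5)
Your reduction to $\alpha_{\ecal O_{\mathcal X}}=0$ is fine, as is the injectivity of restriction to $U$ for morphisms into the vector bundle $\ecal E\otimes\ecal G$, and so is Step~3. The core of Step~2 is also a correct observation. If $u\in U$ is closed in $|\mathcal X|$, then the surjection $\ecal O_{\mathcal X}\twoheadrightarrow i_{u*}\kappa(u)$ forces $\alpha_{i_{u*}\kappa(u)}=0$. Naturality along $\ecal E\to\ecal E\otimes i_{u*}\kappa(u)\cong(i_{u*}\kappa(u))^{\oplus r}$ then shows that the fibre of $\alpha_{\ecal E}$ at $u$ vanishes. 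You can apply this to $\ecal E$ directly; the auxiliary bundles $\ecal V,\ecal W$, the section $s$ and the evaluation map are never used.

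\emph{First gap.} The last sentence of Step~2 does not hold in the generality of the lemma. A morphism of vector bundles whose fibres vanish on a dense set of points need not be zero unless the base is reduced. On $\operatorname{Spec}k[\epsilon]/(\epsilon^2)$, multiplication by $\epsilon$ on $\ecal O$ vanishes on every fibre. The lemma makes no reducedness assumption, so fibrewise information cannot pin down $\alpha_{\ecal E}$.

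\emph{Second gap.} You need $u$ to be closed in $|\mathcal X|$; otherwise $i_{u*}\kappa(u)$ is neither coherent nor a quotient of $\ecal O_{\mathcal X}$. For a general Noetherian stack, such points need not be dense in $U$. Take the root stack $\sqrt{(\operatorname{Spec}R,V(\pi))}$ of a DVR $R$ with uniformizer $\pi$, which satisfies all the hypotheses. Its only nonempty open subscheme is the generic point, and its only closed point is the stacky one. So Step~2 yields no information, although the lemma holds there by the argument below.

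The missing idea is to transport the local trivialization of $\ecal E$ through an honest coherent sheaf on $\mathcal X$, which is what the paper does.
\begin{itemize}
\item Take an open $V\subset U$ with an isomorphism $\phi:\ecal E|_V\cong\ecal O_V^{\oplus r}$, and extend the graph of $\phi$ to a coherent subsheaf $\ecal H\subset\ecal E\oplus\ecal O_{\mathcal X}^{\oplus r}$.
\item The two projections $\ecal H\to\ecal E$ and $\ecal H\to\ecal O_{\mathcal X}^{\oplus r}$ are isomorphisms over $V$.
\item Naturality of $\alpha$ along them shows that $\alpha_{\ecal E}|_V$ is conjugate to $\alpha_{\ecal O_{\mathcal X}^{\oplus r}}|_V=(\alpha_{\ecal O_{\mathcal X}})^{\oplus r}|_V=0$.
\item Covering $U$ by such $V$ gives $\alpha_{\ecal E}|_U=0$, and your density argument then finishes.
\end{itemize}
This uses neither reducedness nor closed points of $\mathcal X$ lying in $U$.

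Where the paper actually applies the lemma (opens of smooth stacks of finite type over $k$), your fibrewise argument would suffice. Non-reducedness on its own could be handled by replacing $\kappa(u)$ with $\ecal O_{\mathcal X}/\ecal I_u^{n}$, where $\ecal I_u$ is the ideal sheaf of $u$, and invoking Krull's intersection theorem. The density problem above would remain.
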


\begin{remark}
    One can check that in the setting of the lemma, if $\Delta : \mathcal{X} \to \mathcal{X} \times _k \mathcal{X}$ is the diagonal, then 
    $$\operatorname{Hom}_{\mathcal{X} \times _k \mathcal{X}}(\Delta_*\ecal{F}, \Delta_*\ecal{G}) = \operatorname{Hom}_{\mathcal{X}}(\ecal{F}, \ecal{G}).$$
    This is related because $\Delta_*\ecal{F}$ and $\Delta_*\ecal{G}$ are kernels giving rise to the functors $\tau_{\ecal{F}}, \tau_{\ecal{G}}$. Note however that $\Delta_*$ is not in general fully faithful under the assumptions since $\mathcal{X}$ is a stack.
\end{remark}

\begin{proof}
    Let $\alpha : \tau_{\ecal{F}}  \to \tau_{\ecal{G}}$ be a natural transformation and let $\beta  = g(f(\alpha))$. By the discussion above, we have to show $\beta   = \alpha$. 

    Claim: For any finite locally free $\ecal{O}_{\mathcal{X}}$-module $\ecal{E}$, we have $\alpha_{\ecal{E}} = \beta_{\ecal{E}} : \ecal{E} \otimes \ecal{F} \to \ecal{E} \otimes \ecal{G}$.

    By assumption, there is a scheme-theoretically dense open immersion $j: U \to \mathcal{X}$ with $U$ a scheme, and possibly shrinking $U$, we may assume $\ecal{E}|_{U} \cong \ecal{O}_U^{\oplus r}$. Choose a coherent $\ecal{O}_{\mathcal{X}}$-module $\ecal{H}$ and morphisms $\ecal{H} \to \ecal{E}$ and $\ecal{H} \to \ecal{O}_{\mathcal{X}}^{\oplus r}$ which become isomorphisms when restricted to $U$. Then we have two commutative diagrams
\[\begin{tikzcd}
	{\ecal{E} \otimes \ecal{G}} & {\ecal{H} \otimes \ecal{G}} & {\ecal{O}_{\mathcal{X}}^{\oplus r} \otimes \ecal{G}} \\
	{\ecal{E} \otimes \ecal{F}} & {\ecal{H}\otimes \ecal{F}} & {\ecal{O}_{\mathcal{X}}^{\oplus r} \otimes \ecal{F}}
	\arrow[from=1-2, to=1-1]
	\arrow[from=1-2, to=1-3]
	\arrow["{\alpha_{\ecal{E}}}"', from=2-1, to=1-1]
	\arrow["{\beta_{\ecal{E}}}", shift left=3, from=2-1, to=1-1]
	\arrow["{\alpha_{\ecal{H}}}"', from=2-2, to=1-2]
	\arrow["{\beta_{\ecal{H}}}", shift left=3, from=2-2, to=1-2]
	\arrow[from=2-2, to=2-1]
	\arrow[from=2-2, to=2-3]
	\arrow["{\alpha_{\ecal{O}_{\mathcal{X}}^{\oplus r}} = \beta_{\ecal{O}_{\mathcal{X}}^{\oplus r}}}"', from=2-3, to=1-3]
\end{tikzcd}\]
in which all horizontal arrows become isomorphisms when restricted to $U$. Thus $\alpha_{\ecal{E}}|_{U}$ agrees with $\beta_{\ecal{E}}|_{U}$. But since $\alpha_{\ecal{E}}$ and $\beta_{\ecal{E}}$ are morphisms of vector bundles which agree when restricted to the scheme-theoretically dense open $U$, we see that $\alpha_{\ecal{E}} = \beta_{\ecal{E}}$, as claimed. 

Now if $\ecal{H}\in \operatorname{Coh}\mathcal{X}$ is arbitrary, then we can choose a surjection $\ecal{E} \twoheadrightarrow \ecal{H}$ with $\ecal{E}$ finite locally free and then we have two commutative squares,
\[\begin{tikzcd}
	{\ecal{E} \otimes \ecal{F}} & {\ecal{H} \otimes \ecal{F}} \\
	{\ecal{E} \otimes \ecal{G}} & {\ecal{H} \otimes \ecal{G}},
	\arrow[two heads, from=1-1, to=1-2]
	\arrow["{\alpha_{\ecal{E}}= \beta_{\ecal{E}}}", from=1-1, to=2-1]
	\arrow["{\alpha_{\ecal{H}}}"', from=1-2, to=2-2]
	\arrow["{\beta_{\ecal{H}}}", shift left=3, from=1-2, to=2-2]
	\arrow[two heads, from=2-1, to=2-2]
\end{tikzcd}\]
which implies $\alpha_{\ecal{E}} = \beta_{\ecal{E}}$.
\end{proof}

\begin{prop}
\label{prop-itsmonoidal}
Let $\mathcal{X}, \mathcal{Y}$ be Noetherian algebraic stacks which have scheme-theoretically dense opens which are schemes. Suppose $\mathcal{X}$ has an invertible $\ecal{O}_{\mathcal{X}}$-module $\ecal{L}$ with the property that every coherent $\ecal{O}_{\mathcal{X}}$-module $\ecal{F}$ is a quotient of a finite direct sum $\bigoplus _i \ecal{L}^{\otimes n_i}$ for some integers $n_i>0$. 
Suppose 
   $F: \operatorname{Coh}\mathcal{X} \to \operatorname{Coh}\mathcal{Y}$ is an equivalence of categories such that there are an isomorphism $F(\ecal{O}_{\mathcal{X}}) \cong \ecal{O}_{\mathcal{Y}}$, and an isomorphism of functors $\beta : F \circ \tau_{\ecal{L}} \cong \tau_{F(\ecal{L})} \circ F$. Then there is a unique structure $\alpha = \alpha_{\ecal{F}, \ecal{G}}$ of 
 monoidal functor on $F$ such that for all $\ecal{F} \in \operatorname{Coh}\mathcal{X}$, 
   $$
  \alpha_{\ecal{F}, \ecal{L}} =  \beta_{\ecal{F}} : F(\ecal{F} \otimes \ecal{L}) \to F(\ecal{F}) \otimes F(\ecal{L}).
   $$
   Moreover, $\alpha$ is a symmetric monoidal structure on $F$. 
\end{prop}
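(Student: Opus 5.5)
We construct $\alpha_{\ecal{F},\ecal{G}}$ by first fixing its value when $\ecal{G} = \ecal{L}^{\otimes n}$ for $n>0$, then extending to direct sums of positive powers, and finally to arbitrary $\ecal{G}$ using presentations.

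\textbf{Step 1: positive powers of $\ecal{L}$.} Set $\ecal{M} = F(\ecal{L})$. Iterating $\beta$ gives isomorphisms
$$\beta^{(n)}_{\ecal{F}} : F(\ecal{F}\otimes\ecal{L}^{\otimes n}) \cong F(\ecal{F})\otimes\ecal{M}^{\otimes n},$$
natural in $\ecal{F}$. Taking $\ecal{F} = \ecal{O}_{\mathcal{X}}$ and using $F(\ecal{O}_{\mathcal{X}})\cong\ecal{O}_{\mathcal{Y}}$ yields $F(\ecal{L}^{\otimes n})\cong\ecal{M}^{\otimes n}$. We define $\alpha_{\ecal{F},\ecal{L}^{\otimes n}}$ as the composite of $\beta^{(n)}_{\ecal{F}}$ with this identification. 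For direct sums $\ecal{P} = \bigoplus_i \ecal{L}^{\otimes n_i}$ we take the direct sum of these maps; this uses only that $F$ and $\otimes$ are additive.

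\textbf{Step 2: compatibility with morphisms between such sums.} We must check that $\alpha_{\ecal{F},\ecal{P}}$ is natural in $\ecal{P}$, i.e.\ commutes with every morphism $\phi:\ecal{L}^{\otimes a}\to\ecal{L}^{\otimes b}$. Such a $\phi$ is a section $s$ of $\ecal{L}^{\otimes(b-a)}$. Both $\tau_{\ecal{L}^{\otimes a}}$ and $\tau_{\ecal{L}^{\otimes b}}$ are transported by $F$ to tensoring functors on $\mathcal{Y}$ via $\beta^{(a)}$ and $\beta^{(b)}$. Therefore $F\circ\tau_\phi\circ F^{-1}$ is a natural transformation $\tau_{\ecal{M}^{\otimes a}}\to\tau_{\ecal{M}^{\otimes b}}$ on $\operatorname{Coh}\mathcal{Y}$. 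We would like Lemma \ref{lemma-hombetweentensorfunctors} to say it is induced by a unique morphism $\ecal{M}^{\otimes a}\to\ecal{M}^{\otimes b}$, necessarily $F(\phi)$ after evaluating at $\ecal{O}_{\mathcal{Y}}$. That lemma, however, assumes the resolution property. We therefore prove it on the $\mathcal{X}$ side instead, where the resolution property follows from the hypothesis that every coherent sheaf is a quotient of sums of $\ecal{L}^{\otimes n}$, and then transport the conclusion through $F$.

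Concretely, the resulting identity is
$$\beta^{(b)}\circ F(1\otimes\phi) = (1\otimes F(\phi))\circ\beta^{(a)}.$$
Since $\beta$ is natural in $\ecal{F}$, this reduces to the case $\ecal{F} = \ecal{O}_{\mathcal{X}}$, where it holds by definition.

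\textbf{Step 3: arbitrary $\ecal{G}$.} For arbitrary coherent $\ecal{G}$ choose a presentation $\ecal{P}_1\to\ecal{P}_0\to\ecal{G}\to0$ with $\ecal{P}_0,\ecal{P}_1$ sums of positive powers of $\ecal{L}$. Apply the right exact functors $F(\ecal{F}\otimes-)$ and $F(\ecal{F})\otimes F(-)$; both are right exact because $F$ is an exact equivalence. By Step 2 the maps $\alpha_{\ecal{F},\ecal{P}_1}$ and $\alpha_{\ecal{F},\ecal{P}_0}$ commute with the presentation maps, so they induce a unique isomorphism $\alpha_{\ecal{F},\ecal{G}}$ on cokernels. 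Independence of the chosen presentation and naturality in $\ecal{G}$ follow by comparing presentations and lifting maps, since the $\ecal{P}_i$ are projective relative to the surjections involved, or alternatively by the standard Eilenberg–Watts style argument. Uniqueness of $\alpha$ is immediate: $\alpha$ is forced on $\ecal{L}$, hence on each $\ecal{L}^{\otimes n}$ by the associativity axiom, hence on sums, hence everywhere by right exactness.

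\textbf{Step 4: monoidal axioms and symmetry.} The associativity and unit axioms hold when all arguments are sums of powers of $\ecal{L}$, because there everything is built from $\beta$. They then extend to all coherent sheaves by the same cokernel argument. For symmetry, both $F(\mathrm{sw})$ and $\mathrm{sw}\circ\alpha$ give isomorphisms $F(\ecal{F}\otimes\ecal{G})\to F(\ecal{G})\otimes F(\ecal{F})$, and we must show they agree. It suffices to check this on $\ecal{F},\ecal{G}$ powers of $\ecal{L}$, i.e.\ on line bundles. There, using the scheme-theoretically dense open which is a scheme, the swap on $\ecal{L}^{\otimes a}\otimes\ecal{L}^{\otimes b}$ is the identity after the canonical identification; this is the only place the stackiness could introduce a sign or character. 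On $\mathcal{Y}$ the two maps are morphisms of line bundles that agree on the dense open subscheme, hence agree.

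I expect Step 2 to be the main obstacle. It is the step that controls natural transformations between tensoring functors, and, as the example preceding Lemma \ref{lemma-hombetweentensorfunctors} shows, this genuinely needs the dense-scheme hypothesis.
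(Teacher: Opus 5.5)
Your outline is the paper's proof: iterate $\beta$ on positive powers of $\ecal{L}$, obtain naturality in the second variable from Lemma~\ref{lemma-hombetweentensorfunctors} (which you rightly identify as the crux), extend by presentations, and propagate the axioms from generators.

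\textbf{Step 2.} Applying the lemma on $\mathcal{X}$ after transporting through $F$ is valid. It is also unnecessary: every coherent sheaf on $\mathcal{Y}$ is a quotient of a finite sum of the invertible sheaves $F(\ecal{L}^{\otimes n})$, so the lemma applies directly on $\mathcal{Y}$, which is what the paper does. Note also that it is the lemma, not the naturality of $\beta$, that reduces your displayed identity to the case $\ecal{F}=\ecal{O}_{\mathcal{X}}$.

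\textbf{Symmetry.} This is where you genuinely differ. The paper shows that $\alpha'_{\ecal{F},\ecal{G}}=\mathrm{sw}\circ\alpha_{\ecal{G},\ecal{F}}\circ F(\mathrm{sw})$ is another monoidal structure with the same unit. It then checks $\alpha'_{-,\ecal{L}}=\beta$ by a second use of the lemma together with the left unit axiom at $\ecal{O}_{\mathcal{X}}$, and concludes by uniqueness. Your reduction to the pair $(\ecal{L}^{\otimes a},\ecal{L}^{\otimes b})$, using surjections and naturality in both variables, also works and avoids that second use of the lemma. However, your justification of the generator case is off. You give no reason why the two maps should agree on a dense open of $\mathcal{Y}$, since $F$ has no a priori relation to open substacks. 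In fact no dense open is needed. In your $\beta$-coordinates, both $\alpha_{\ecal{L}^{\otimes a},\ecal{L}^{\otimes b}}$ and $\alpha_{\ecal{L}^{\otimes b},\ecal{L}^{\otimes a}}$ are the canonical identifications with $\ecal{M}^{\otimes a+b}$. Moreover, permutations of factors act trivially on tensor powers of any invertible sheaf on any stack, because this holds locally. So no sign or character can arise, and the check is purely formal.

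\textbf{Two small repairs.}
\begin{enumerate}
\item With an arbitrary given isomorphism $\epsilon: F(\ecal{O}_{\mathcal{X}})\cong\ecal{O}_{\mathcal{Y}}$, your $\alpha_{\ecal{F},\ecal{L}}$ differs from $\beta_{\ecal{F}}$ by the automorphism of $F(\ecal{L})$ induced by $(\epsilon\otimes 1)\circ\beta_{\ecal{O}_{\mathcal{X}}}$, which can be a nontrivial global unit. You must renormalize $\epsilon$ so that this automorphism is the identity; this is also the unit forced by the left unit axiom. The renormalization uses that $F(\ecal{L})$ is invertible, which you never check; it follows from $\beta_{\ecal{L}^{\otimes -1}}$.
\item Positive powers of $\ecal{L}$ are not projective in $\operatorname{Coh}\mathcal{X}$, so maps cannot in general be lifted to presentations. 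Prove independence of the presentation and naturality in $\ecal{G}$ instead by comparing any two presentations with their direct sum. For a map $\ecal{P}\to\ecal{G}'$, use the presentation $\ecal{P}\oplus\ecal{P}_0'\to\ecal{G}'$.
\end{enumerate}
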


\begin{remark}
\label{remark-lgoestom}
    Suppose we assume instead that there exists some line bundle $\ecal{M}$ on $\mathcal{Y}$ such that $F \circ \tau_{\ecal{L}} \cong \tau_{\ecal{M}} \circ F$ and $F(\ecal{O}_{\mathcal{X}}) \cong \ecal{O}_{\mathcal{Y}}$. It follows that $\ecal{M} \cong F(\ecal{L})$ since $F(\ecal{L}) = F\circ \tau_{\ecal{L}}(\ecal{O}_{\mathcal{X}}) \cong \tau_{\ecal{M}} \circ F(\ecal{O}_{\mathcal{X}}) \cong \ecal{M}$. 
\end{remark}

Much of the following argument is formal in the sense that it could be generalized to symmetric monoidal abelian categories with right exact tensor products. The subtlety seems to be checking that the morphisms $\alpha_{\ecal{F}, \ecal{L}^{\otimes n}}$ defined below are natural in the second variable, which uses Lemma \ref{lemma-hombetweentensorfunctors} above.

\begin{proof}

We leave uniqueness to the reader. For existence, 
define inductively for $n \geq 1$ and all coherent sheaves $\ecal{F}$ an isomorphism $\alpha_{\ecal{F}, \ecal{L}^{\otimes n}} : F(\ecal{F} \otimes \ecal{L}^{\otimes n}) \to F(\ecal{F}) \otimes F(\ecal{L}^{\otimes n})$ by $\alpha_{\ecal{F}, \ecal{L}} = \beta_{\ecal{F}}$, and for $n \geq 1$, $\alpha_{\ecal{F}, \ecal{L}^{\otimes {n+1}}}$ is the unique isomorphism making the diagram
\[\begin{tikzcd}
	{F(\ecal{F} \otimes \ecal{L}^{\otimes n+1})} && {F(\ecal{F}) \otimes F(\ecal{L}^{\otimes n+1})} \\
	{F(\ecal{F} \otimes \ecal{L}^{\otimes n})\otimes F(\ecal{L})} && {F(\ecal{F} )\otimes F(\ecal{L}^{\otimes n})\otimes F(\ecal{L})}
	\arrow["{\alpha_{\ecal{F}, \ecal{L}^{\otimes n+1}}}", from=1-1, to=1-3]
	\arrow["{\alpha_{\ecal{F} \otimes \ecal{L}^{\otimes n}, \ecal{L}}}"', from=1-1, to=2-1]
	\arrow["{1 \otimes \alpha_{\ecal{L}^{\otimes n}, \ecal{L}}}", from=1-3, to=2-3]
	\arrow["{\alpha_{\ecal{F}, \ecal{L}^{\otimes n}}\otimes 1}"', from=2-1, to=2-3]
\end{tikzcd}\]
commute. One proves by induction that for each fixed $n$, $\alpha_{\ecal{F}, \ecal{L}^{\otimes n}}$ is natural in $\ecal{F}$. One then proves by induction on $j>0$ that for all $i>0$, the diagram
\[\begin{tikzcd}
	{F(\ecal{F} \otimes \ecal{L}^{\otimes i + j})} && {F(\ecal{F}) \otimes F(\ecal{L}^{\otimes i+j})} \\
	{F(\ecal{F} \otimes \ecal{L}^{\otimes i})\otimes F(\ecal{L}^{\otimes j})} && {F(\ecal{F}) \otimes F(\ecal{L}^{\otimes i}) \otimes F(\ecal{L}^{\otimes j})}
	\arrow["{\alpha_{\ecal{F}, \ecal{L}^{\otimes i+j}}}", from=1-1, to=1-3]
	\arrow["{\alpha_{\ecal{F} \otimes \ecal{L}^{\otimes i}, \ecal{L}^{\otimes j}}}"', from=1-1, to=2-1]
	\arrow["{1 \otimes \alpha_{\ecal{L}^{\otimes i}, \ecal{L}^{\otimes j}}}", from=1-3, to=2-3]
	\arrow["{\alpha_{\ecal{F}, \ecal{L}^{\otimes i}} \otimes 1}"', from=2-1, to=2-3]
\end{tikzcd}\]
commutes. Namely, for $j = 1$ this is by definition, and for $j\geq 1$, in the diagram
\[\begin{tikzcd}
	{F(\ecal{F} \otimes \ecal{L}^{\otimes i+j+1}) } & {\ecal{F}(F) \otimes F(\ecal{L}^{\otimes i + j + 1})} \\
	{F(\ecal{F} \otimes \ecal{L}^{\otimes i + j})\otimes F(\ecal{L})} & {F(\ecal{F}) \otimes F(\ecal{L}^{\otimes i + j})\otimes F(\ecal{L})} \\
	{F(\ecal{F} \otimes \ecal{L}^{\otimes i}) \otimes F(\ecal{L}^{\otimes j}) \otimes F(\ecal{L})} & {F(\ecal{F}) \otimes F(\ecal{L}^{\otimes i}) \otimes F(\ecal{L}^{\otimes j}) \otimes F(\ecal{L})} \\
	{F(F\otimes \ecal{L}^{\otimes i})\otimes F(\ecal{L}^{\otimes j+1})} & {F(\ecal{F}) \otimes F(\ecal{L}^{\otimes i}) \otimes F(\ecal{L}^{\otimes j+1}),}
	\arrow[from=1-1, to=1-2]
	\arrow[from=1-1, to=2-1]
	\arrow[from=1-2, to=2-2]
	\arrow[from=2-1, to=2-2]
	\arrow[from=2-1, to=3-1]
	\arrow[from=2-2, to=3-2]
	\arrow[from=3-1, to=3-2]
	\arrow[from=3-1, to=4-1]
	\arrow[from=3-2, to=4-2]
	\arrow[from=4-1, to=4-2]
\end{tikzcd}\]
the top square commutes by definition of the top horizontal arrow, the middle square commutes by induction, and the bottom square (where the vertical arrows are each the inverses of $1 \otimes \alpha_{\ecal{L}^{\otimes j}, \ecal{L}}$) commutes because both compositions are $\alpha_{\ecal{F}, \ecal{L}^{\otimes i}} \otimes \alpha_{\ecal{L}^{\otimes j}, \ecal{L}}^{-1}$.

Finally, $\alpha_{\ecal{F}, \ecal{L}^{\otimes n}}$ is natural in the second variable $\ecal{L}^{\otimes n}$: Suppose given a morphism $\varphi: \ecal{L}^{\otimes m} \to \ecal{L}^{\otimes n}$. We must show for all $\ecal{F} \in \operatorname{Coh}\mathcal{X}$ the diagram 
\[\begin{tikzcd}
	{F(\ecal{F} \otimes \ecal{L}^{\otimes m})} & {F(\ecal{F})\otimes F(\ecal{L}^{\otimes m})} \\
	{F(\ecal{F}\otimes \ecal{L}^{\otimes n})} & {F(\ecal{F}) \otimes F(\ecal{L}^{\otimes n})}
	\arrow["{\alpha_{\ecal{F}, \ecal{L}^{\otimes m}}}", from=1-1, to=1-2]
	\arrow["{F(1 \otimes \varphi)}", from=1-1, to=2-1]
	\arrow["{1 \otimes F(\varphi)}", from=1-2, to=2-2]
	\arrow["{\alpha_{\ecal{F}, \ecal{L}^{\otimes n}}}"', from=2-1, to=2-2]
\end{tikzcd}\]
commutes. We do this by viewing the composition $\alpha_{\ecal{F}, \ecal{L}^{\otimes n}} \circ F(1 \otimes \varphi) \circ \alpha_{\ecal{F}, \ecal{L}^{\otimes m}}^{-1}$ as a natural transformation from the functor $\tau_{F(\ecal{L}^{\otimes m})} \circ F \to \tau_{F(\ecal{L}^{\otimes n})} \circ F$, which since $F$ is equivalence, is equivalent to the data of a natural transformation $\tau_{F(\ecal{L}^{\otimes m})} \to \tau_{F(\ecal{L}^{\otimes n})}$. By Lemma \ref{lemma-hombetweentensorfunctors}, we see that this morphism of functors is the one determined by the morphism $F(\varphi) : F(\ecal{L}^{\otimes m}) \to F(\ecal{L}^{\otimes n})$, which says that the diagram commutes for all $\ecal{F}$. Note here that $F(\ecal{L}^{\otimes i})$ is an invertible $\ecal{O}_{\mathcal{Y}}$-module for all $i$ since $\ecal{O}_{\mathcal{Y}} \cong F(\ecal{O}_{\mathcal{X}}) \cong F(\ecal{L}^{\otimes -i} ) \otimes F(\ecal{L}^{\otimes i})$, so that lemma indeed applies. 

The rest of the argument is formal. There is a unique way to compatibly define $\alpha_{\ecal{F}, \ecal{E}}$ whenever $\ecal{E}$ is a finite direct sum of positive tensor powers of $\ecal{L}$ and this rule is natural in $\ecal{F}$ and $\ecal{E}$ and satisfies the associativity axiom for computing $F(\ecal{F} \otimes \ecal{E}_1 \otimes \ecal{E}_2)$. Then an arbitrary $\ecal{G} \in \operatorname{Coh} \mathcal{X}$ admits a resolution
$
\ecal{E}_2 \to \ecal{E}_1 \to \ecal{G} \to 0
$
and we define $\alpha_{\ecal{F}, \ecal{G}}$ for $\ecal{F} \in \operatorname{Coh}\mathcal{X}$ to be the unique dashed arrow making the diagram
\[\begin{tikzcd}
	{F(\ecal{F}\otimes \ecal{E}_2)} & {F(\ecal{F}\otimes \ecal{E}_1)} & {F(\ecal{F}\otimes \ecal{G})} & 0 \\
	{F(\ecal{F})\otimes F(\ecal{E}_2)} & {F(\ecal{F})\otimes F(\ecal{E}_1)} & {F(\ecal{F})\otimes F(\ecal{G})} & 0
	\arrow[from=1-1, to=1-2]
	\arrow["{\alpha_{\ecal{F}, \ecal{E}_2}}", from=1-1, to=2-1]
	\arrow[from=1-2, to=1-3]
	\arrow["{\alpha_{\ecal{F}, \ecal{E}_1}}", from=1-2, to=2-2]
	\arrow[from=1-3, to=1-4]
	\arrow["{\alpha_{\ecal{F}, \ecal{G}}}", dashed, from=1-3, to=2-3]
	\arrow[from=2-1, to=2-2]
	\arrow[from=2-2, to=2-3]
	\arrow[ from=2-3, to=2-4]
\end{tikzcd}\]
commute. Note that both rows are exact since $F$ is an equivalence. By standard arguments comparing resolutions, the definition of $\alpha_{\ecal{F}, \ecal{G}}$ does not depend on the choice of resolution and $\alpha_{\ecal{F}, \ecal{G}}$ is natural in the second variable. It is also strictly easier to see that $\alpha_{\ecal{F}, \ecal{G}}$ is natural in the first variable. For associativity, suppose $\ecal{G}, \ecal{G}' \in \operatorname{Coh}\mathcal{X}$. Choose surjections $\ecal{E} \twoheadrightarrow \ecal{G}$ and $\ecal{E}' \twoheadrightarrow \ecal{G}'$ where $\ecal{E}, \ecal{E}'$ are isomorphic to finite direct sums of positive tensor powers of $\ecal{L}$. We must check that the square 
\[\begin{tikzcd}
	{F(\ecal{F} \otimes \ecal{G} \otimes \ecal{G}')} & {F(\ecal{F}) \otimes F(\ecal{G} \otimes \ecal{G}')} \\
	{F(\ecal{F} \otimes \ecal{G}) \otimes F(\ecal{G}')} & {F(\ecal{F}) \otimes F(\ecal{G}) \otimes F(\ecal{G}')}
	\arrow[from=1-1, to=1-2]
	\arrow[from=1-1, to=2-1]
	\arrow[from=1-2, to=2-2]
	\arrow[from=2-1, to=2-2]
\end{tikzcd}\]
commutes. Form the cube where the back face is the diagram above and the front face is the same square with $\ecal{G}, \ecal{G}'$ replaced by $\ecal{E}, \ecal{E}'$, which we know commutes. We also know all the other squares commute by definition of $\alpha$. It follows that the original square commutes because $F(\ecal{F} \otimes \ecal{E} \otimes \ecal{E}') \twoheadrightarrow F(\ecal{F} \otimes \ecal{G} \otimes \ecal{G}')$, as needed. 

Now, considering the unit compatibility axioms for monoidal functors, there is a unique unit isomorphism $\eta: \ecal O_\cal Y \to F(\ecal O_X)$ so that $(F, \eta, \alpha)$ is a (strong) monoidal functor.

Finally, to see $(F, \eta, \alpha)$ is a symmetric monoidal functor, let $B^\cal X$ and $B^\cal Y$ denote the braidings on $\operatorname{Coh}(\cal X)$ and $\operatorname{Coh}(\cal Y)$, respectively. Note that the braidings equip the identity functor with the structures of monoidal functors into the reverse monoidal categories. Since  
\[
\alpha'_{\ecal F, \ecal G} = B^\cal Y_{F(\ecal G),F(\ecal F)} \circ \alpha_{\ecal G, \ecal F} \circ F(B^\cal X_{\ecal F, \ecal G})
\]
defines a monoidal structure on $F$ (with the same unit isomorphism $\eta: \ecal O_\cal Y \to F(\ecal O_X)$), it suffices to show $\alpha'_{\ecal F, \ecal L} = \beta_{\ecal F}$ for any $\ecal F \in \operatorname{Coh}(\cal X)$ and hence $\alpha = \alpha'$ by uniqueness. This is then equivalent to showing that the natural transformation $\beta \inv \circ \alpha'_{-, \ecal L}: F\circ \tau_\ecal L \to F \circ \tau_\ecal L$ is the identity. Since $F$ is fully faithful, there is a unique natural transformation $\gamma:\tau_\ecal L \to \tau_\ecal L$ such that $F \circ \gamma = \beta \inv \circ \alpha'_{-, \ecal L}$. Hence, by Lemma \ref{lemma-hombetweentensorfunctors}, it suffices to show $\beta_{\ecal O_\cal X}\inv \circ \alpha'_{\ecal O_\cal X, \ecal L} = F\circ \gamma_{\ecal O_X}$ is the identity, which is equivalent to $\alpha_{\ecal O_\cal X, \ecal L} = \alpha_{\ecal O_\cal X, \ecal L}'$ and follows by the left unit compatibility axiom for monoidal functors since they have the same unit isomorphisms.
\end{proof}

 \subsection{A lemma on Fourier--Mukai transforms}
 \label{subsection-lemmaonFM}

 Let $k$ be a field.

 \begin{lemma}
     Let $\mathcal{G}$ be a tame gerbe over a finite field extension $k'$ of $k$. Let $\mathcal{X}$ be a Noetherian algebraic stack over $k$. Let $P$ be a perfect complex on $\mathcal{G}$. Let $K \in D^b_{\operatorname{Coh}}(\mathcal{G} \times _k \mathcal{X})$. Then 
     $$
     \operatorname{Supp}(Rpr_{2, *}(Lpr_1^*(P)^\vee \otimes^{\mathbb{L}}K))  \subset pr_2(\operatorname{Supp}K) 
     $$
     as subsets of $|\mathcal{X}|$. Furthermore, we have equality if $P$ is a compact generator of $D_{qc}(\mathcal{G})$ (for example, if $P$ is the direct sum of all the simple coherent sheaves on $\mathcal{G}$). 
 \end{lemma}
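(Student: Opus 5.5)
We reduce both claims to statements about the fibers of $pr_2$ over points of $\mathcal{X}$ and then use the semisimplicity of $\operatorname{Coh}(\mathcal{G})$. Write $\Phi_P(K) = Rpr_{2,*}(Lpr_1^*(P)^\vee \otimes^{\mathbb{L}} K)$.

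First we set up the pushforward. The morphism $pr_2 : \mathcal{G} \times_k \mathcal{X} \to \mathcal{X}$ is the base change of $\mathcal{G} \to \operatorname{Spec} k$. This morphism is proper and tame, since $\mathcal{G}$ is a tame gerbe over the finite extension $k'$. Hence $pr_2$ is concentrated, with a coarse space morphism to $\operatorname{Spec}k' \times_k \mathcal{X}$ whose pushforward is exact, followed by the finite map to $\mathcal{X}$. In particular $Rpr_{2,*}$ commutes with flat base change on $\mathcal{X}$. Because of the tameness and exactness just noted, $Rpr_{2,*}$ is computed by applying the exact functor $pr_{2,*}$ to cohomology sheaves. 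Therefore $H^i(\Phi_P(K)) = pr_{2,*}H^i(Lpr_1^*P^\vee \otimes^{\mathbb{L}} K)$.

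For the inclusion, let $\mathcal{V} = \mathcal{X} \setminus pr_2(\operatorname{Supp}K)$. The image $pr_2(\operatorname{Supp}K)$ is closed because $pr_2$ is proper. Restricting to $\mathcal{V}$ by flat base change, $K$ vanishes on $pr_2^{-1}(\mathcal{V})$, so $\Phi_P(K)|_{\mathcal{V}} = 0$. This gives $\operatorname{Supp}\Phi_P(K) \subset pr_2(\operatorname{Supp}K)$.

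For equality when $P$ is a compact generator, take a point $x \in pr_2(\operatorname{Supp}K)$. We want to show $x \in \operatorname{Supp}\Phi_P(K)$. Since $\operatorname{Supp}\Phi_P(K)$ is closed, it is enough to treat points where the question can be tested after a flat base change.

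1. We base change along a flat morphism $\operatorname{Spec}\ecal{O} \to \mathcal{X}$ through $x$, for instance a smooth chart followed by localization. This reduces to $\mathcal{X} = \operatorname{Spec}A$ with $A$ local.
2. Take $i$ maximal with $H^i(K)$ not vanishing over $x$. Then $H^i(K \otimes^{\mathbb{L}} Lpr_1^*P^\vee)$ in the top degree is $H^{\mathrm{top}}(K) \otimes pr_1^*P^\vee$ plus contributions. To avoid that bookkeeping, use instead that on $\mathcal{G}$ every complex is a direct sum of shifted simples $\xi$. Then $P$ generating means every simple $\xi$ appears as a summand of a shift of $P$. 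So it suffices to show that some simple $\xi$ has $\Phi_\xi(K)$ nonvanishing at $x$.
3. $\Phi_\xi(K) = Rpr_{2,*}R\mathcal{H}om(pr_1^*\xi, K)$, and its top cohomology is $pr_{2,*}\mathcal{H}om(pr_1^*\xi, H^{\mathrm{top}}K)$, by exactness of $pr_{2,*}$ and because $pr_1^*\xi$ is a vector bundle.
4. Over the base $\operatorname{Spec}A$, with $\mathcal{G}_A = \mathcal{G} \times_k \operatorname{Spec}A$, the coherent sheaf $\ecal{F} = H^{\mathrm{top}}K$ on $\mathcal{G}_A$ is nonzero over $x$. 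It therefore admits a nonzero quotient $i_*\xi'$ onto some generalized point $(y,\xi')$ of $\mathcal{G}_A$ lying over $x$, by Nakayama (Section~\ref{subsection-generalizedpoints}).

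The main obstacle is step 4: we must show that $pr_{2,*}\mathcal{H}om(pr_1^*\xi, \ecal{F})$ is nonzero near $x$ for some $\xi$. The approach we would try first is to take $\xi$ to be the simple summands of $x'_*\ecal{O}$ for a point $x' : \operatorname{Spec}k'' \to \mathcal{G}$, as in Lemma~\ref{lemma-tamegerbe}. Then $\bigoplus_\xi \mathcal{H}om(pr_1^*\xi, \ecal{F})$ contains $\mathcal{H}om(pr_1^*x'_*\ecal{O}, \ecal{F})$. Its pushforward is the pushforward of $x'^!\ecal{F}$ along the finite flat cover $\operatorname{Spec}k'' \times_k \operatorname{Spec}A \to \mathcal{G}_A$, hence along a finite morphism to $\operatorname{Spec}A$.

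It remains to check that $x'^!\ecal{F} \neq 0$ near $x$. This holds because the counit $x'_*x'^!\ecal{F} \to \ecal{F}$ is surjective, exactly as in the lemma preceding Corollary~\ref{corollary-finitelymanysimples}, and it survives the base change to $A$. A finite pushforward of a sheaf that is nonzero over $x$ is nonzero over $x$. This gives equality of supports.
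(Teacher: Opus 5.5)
Your proposal is correct. The inclusion is proved exactly as in the paper, but for the reverse inclusion you take a genuinely different route.

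The paper's argument runs as follows.
\begin{itemize}
\item It restricts to an affine open $V$ on which $Rpr_{2,*}(Lpr_1^*(P)^\vee\otimes^{\mathbb{L}}K)$ vanishes; this uses that the support of the transform is closed.
\item It rewrites the vanishing of $R\Gamma$ as $R\operatorname{Hom}(Lpr_1^*(P)|_{\mathcal{G}\times_k V},K|_{\mathcal{G}\times_k V})=0$.
\item It concludes because the pullback of a compact generator along the affine morphism $\mathcal{G}\times_k V\to\mathcal{G}$ is again a compact generator.
\end{itemize}
It never decomposes $P$, and it uses nothing about $pr_{2,*}$ beyond flat base change.

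Your argument instead proceeds in three steps.
\begin{itemize}
\item You split $P$ into shifted simples using semisimplicity of $\operatorname{Coh}(\mathcal{G})$. Here you should state the one-line justification that a generator must contain every simple $\xi$ as a summand: otherwise $\xi$ is orthogonal to all shifts of $P$, since $\operatorname{Ext}^*$ between distinct simples vanishes.
\item You pass to the coherent sheaves $H^i(K)$ degree by degree, using exactness of $pr_{2,*}$.
\item You use the finite flat cover $q$ obtained by base change from $x'\colon\operatorname{Spec}k''\to\mathcal{G}$. The sheaf $\bigoplus_\xi\mathcal{H}om(pr_1^*\xi,\ecal{F})^{\oplus m_\xi}\cong q_*q^!\ecal{F}$ maps split-surjectively onto $\ecal{F}$, and $pr_2\circ q$ is finite. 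Hence the support of the pushforward contains $pr_2(\operatorname{Supp}\ecal{F})$. (It is a direct sum of copies of the $\mathcal{H}om(pr_1^*\xi,\ecal{F})$, rather than being "contained" in them, but this is harmless.)
\end{itemize}

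What each route buys:
\begin{itemize}
\item Yours is more hands-on. It needs neither the compact-generation input nor closedness of supports.
\item Yours gives a sharper degreewise statement: for $P$ the sum of all simples, $\operatorname{Supp}H^i$ of the transform equals $pr_2(\operatorname{Supp}H^i(K))$.
\item The cost is that yours leans more heavily on tameness, whereas the paper's proof is shorter and formal.
\end{itemize}

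Finally, your reduction to a local ring, the top-degree bookkeeping, and the Nakayama quotient in step 4 are unnecessary. The $q^!$ argument works over $\mathcal{X}$ itself and in every degree.
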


 \begin{proof}
      We note first that it suffices to show this with $\mathcal{X}$ an affine scheme by flat base change along any flat morphism $U \to \mathcal{X}$ with $U$ an affine scheme.
      
      Let us prove $\subset$. If $u \notin pr_2(\operatorname{Supp}K)$, then since $pr_2$ is closed, there is an open neighborhood $V$ of $u$ in $U$ such that the restriction of $K$ to $\mathcal{G} \times _k V$ is zero, and hence so is the restriction of  $Lpr_1^*(P) \otimes^{\mathbb{L}}K$ to $\mathcal{G}\times _k V$, and also its derived pushforward to $V$. By flat base change again, this coincides with the restriction to $V$ of $Rpr_{2, *}(Lpr_1^*(P)^\vee \otimes^{\mathbb{L}}K)$, so that $u \notin \operatorname{Supp}Rpr_{2, *}(Lpr_1^*(P)^\vee \otimes^{\mathbb{L}}K)$, as needed.
     
     Now assume $P$ is a compact generator of $D_{qc}(\mathcal{G})$ and we'll show $\supset$.  If a point $u \in U$ is not in the support of  $Rpr_{2, *}(Lpr_1^*(P)^\vee \otimes^{\mathbb{L}}K)$, then there is an affine open neighborhood $u \in V \subset U$ such that the restriction of this object to $V$ is zero. By flat base change again, the object $K|_{\mathcal{G} \times _k V} \in D^b_{\operatorname{Coh}}(\mathcal{G} \times _k V)$ satisfies $R \Gamma (\mathcal{G} \times _k V, Lpr_1^*(P)^\vee \otimes^{\mathbb{L}} K|_{\mathcal{G} \times _k V}) = R \operatorname{Hom}(Lpr_1^*(P)|_{\mathcal{G} \times _k V}, K|_{\mathcal{G} \times _k V}) = 0$. But since $\mathcal{G} \times _k V \to \mathcal{G}$ is affine, the object $Lpr_1^*(P)|_{\mathcal{G} \times _k V}$ is a compact generator of $D_{qc}(\mathcal{G} \times _k V)$, so this implies $K|_{\mathcal{G} \times _k V} = 0$ and so $u \notin pr_2(\operatorname{Supp}(K))$. 
 \end{proof}

 \begin{lemma}
 \label{lemma-supportofthekernel}
     Let $\mathcal{X}, \mathcal{Y}$ be tame stacks which are proper over a field $k$. Let $K \in D^b_{\operatorname{Coh}}(\mathcal{X} \times _k \mathcal{Y})$. Let $x$ be a closed point of $\mathcal{X}$ and $y$ a closed point of $\mathcal{Y}$. The following are equivalent.
     \begin{enumerate}
    \item There is a generalized closed point sheaf $\kappa(x,\xi)$ on $\mathcal{X}$ such that $y$ is in the support of $\Phi_K(\kappa(x,\xi))$.
    \item There is a closed point $z$ of $\operatorname{Supp}K$ such that $pr_1(z) = x$ and $pr_2(z) = y$. \qedhere
     \end{enumerate}
 \end{lemma}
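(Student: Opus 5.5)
The plan is to reduce both conditions to a statement about the restriction of $K$ to $\mathcal{G}_x \times_k \mathcal{Y}$, and then apply the preceding lemma with $\mathcal{G} = \mathcal{G}_x$ and $P$ the direct sum of the simple coherent sheaves on $\mathcal{G}_x$. Write $i_x : \mathcal{G}_x \to \mathcal{X}$ for the residual gerbe, and $j = i_x \times \operatorname{id} : \mathcal{G}_x \times_k \mathcal{Y} \to \mathcal{X} \times_k \mathcal{Y}$, which is a closed immersion. Set $K_x = Lj^*K \in D^-_{\operatorname{Coh}}(\mathcal{G}_x \times_k \mathcal{Y})$. Since $\mathcal{G}_x$ is regular of dimension zero, $i_x$ has finite Tor-dimension locally, so $K_x$ is in fact bounded.

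First, compute $\Phi_K(\kappa(x,\xi))$. By the projection formula for $j$ and flat base change, I expect
$$\Phi_K(\kappa(x,\xi)) = Rpr_{2,*}\bigl(Lpr_1^*(i_{x,*}\xi) \otimes^{\mathbb{L}} K\bigr) \cong Rq_{2,*}\bigl(Lq_1^*\xi \otimes^{\mathbb{L}} K_x\bigr),$$
where $q_1, q_2$ are the projections from $\mathcal{G}_x \times_k \mathcal{Y}$. Here $pr_1^*i_{x,*}\xi = j_*q_1^*\xi$, because $pr_1$ is flat. Every simple $\xi$ is a vector bundle on $\mathcal{G}_x$, and $\xi^\vee$ is again simple. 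So, up to replacing $\xi$ by its dual, $\bigoplus_\xi \Phi_K(\kappa(x,\xi)) \cong Rq_{2,*}(Lq_1^*P^\vee \otimes^{\mathbb{L}} K_x)$. Applying the preceding lemma with $\mathcal{G} = \mathcal{G}_x$ over $k' = \kappa(x)$ and this $P$, which is a compact generator, gives
$$\bigcup_\xi \operatorname{Supp}\Phi_K(\kappa(x,\xi)) = q_2(\operatorname{Supp}K_x).$$
Condition (i) therefore says exactly that $y \in q_2(\operatorname{Supp}K_x)$.

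Second, identify $\operatorname{Supp}K_x$ with $\operatorname{Supp}K \cap |\mathcal{G}_x \times_k \mathcal{Y}| = \operatorname{Supp}K \cap pr_1^{-1}(x)$. The inclusion of $\operatorname{Supp}K_x$ into the right-hand side is clear. For the reverse, the support of a pullback of a bounded coherent complex along a closed immersion equals the preimage of the support; this follows from the top cohomology sheaf and Nakayama's Lemma as in Section \ref{subsection-generalizedpoints}. One can check it smooth locally, where it is the scheme statement that the top nonzero $H^n(K)$ has nonzero $H^n(Lj^*K) = j^*H^n(K)$ wherever $H^n(K)$ is supported.

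With these two steps, (i) holds iff there is a point $z \in \operatorname{Supp}K$ with $pr_1(z) = x$ and $pr_2(z) = y$. It remains to see that such a $z$ can be taken to be closed. The fiber of $\operatorname{Supp}K \cap pr_1^{-1}(x)$ over $y$ is nonempty and is closed in $|\mathcal{G}_x \times_k \mathcal{G}_y|$, which is a finite discrete set of closed points, since $\kappa(x)\otimes_k \kappa(y)$ is Artinian. So any such $z$ is closed, and conversely any closed $z$ as in (ii) lies in that fiber.

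I expect the main obstacle to be the base-change and projection-formula bookkeeping for $Rpr_{2,*}$ on stacks. Here I would rely on the concentratedness of $\mathcal{X} \times_k \mathcal{Y} \to \mathcal{Y}$ for tame $\mathcal{X}$, together with Tor-independent base change from \cite{hall2017perfect}, to justify the displayed isomorphism.
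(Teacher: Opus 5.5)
Your proposal follows the paper's proof. You use base change and the projection formula to rewrite $\Phi_K(\kappa(x,\xi))$ as $Rq_{2,*}(Lq_1^*\xi\otimes^{\mathbb{L}}Lj^*K)$ on $\mathcal{G}_x\times_k\mathcal{Y}$, and then apply the preceding lemma with $P$ the sum of the simples. You also fill in steps the paper leaves implicit: the passage from $P$ to $P^\vee$, the equality $\operatorname{Supp}Lj^*K=\operatorname{Supp}K\cap pr_1^{-1}(x)$, and why $z$ can be taken closed.

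One correction concerns your claim that $K_x$ is bounded. Regularity of $\mathcal{G}_x$ does not give $i_x$ finite Tor-dimension; that would need $\mathcal{X}$ regular at $x$. So in the stated generality $K_x$ is only in $D^-_{\operatorname{Coh}}$. This does no harm, because the preceding lemma's proof uses only flat base change and compact generation and so works for such objects; in the paper's application $\mathcal{X}$ is smooth anyway.
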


\begin{remark}
\label{remark-algebraicallyclosedcaseoffouriermukai}
    Suppose $k = \bar{k}$, so we can identify closed points of the topological space of a finite type tame stack over $k$ with (isomorphism classes of) $k$-points, and thus closed points of $\mathcal{X} \times \mathcal{Y}$ are in bijection with pairs consisting of a closed point of $\mathcal{X}$ and a closed point of $\mathcal{Y}$. Then the lemma says a closed point $p = (x,y)$ of $\mathcal{X} \times _k \mathcal{Y}$ is in the support of $K$ if and only if there exists a generalized closed point sheaf $\kappa(x,\xi)$ on $\mathcal{X}$ supported at $x$ such that $y$ is in the support of $\Phi_K(\kappa(x, \xi))$. 
\end{remark}

 \begin{proof}
    Write $\mathcal{G}_x$ for the residual gerbe of $\mathcal{X}$ at $x$. For a generalized closed point sheaf $\kappa(x,\xi)$ we compute
     $$
     \Phi_K(\kappa(x,\xi)) \cong R(\mathcal{G}_x \times _k \mathcal{Y} \to \mathcal{Y})_*(L(\mathcal{G}_x \times_k \mathcal{Y} \to \mathcal{X} \times _k \mathcal{Y})^*K \otimes^{\mathbb{L}}L(\mathcal{G}_x \times _k \mathcal{Y} \to \mathcal{G}_x)^*\xi) 
     $$
     (where $\xi$ is the simple coherent sheaf on $\mathcal{G}_x$ whose pushforward is $\kappa(x,\xi)$). By the previous lemma, we see that if a closed point $y$ in $|\mathcal{Y}|$ is in the support of this object, then there is a closed point of $\mathcal{G}_x \times _k \mathcal{Y}$ lying over $y$ which is in the support of $L(\mathcal{G}_x \times _k \mathcal{Y} \to \mathcal{X} \times _k \mathcal{Y})^*K$. This gives a closed point of $\mathcal{X} \times _k \mathcal{Y}$ lying over $x$ and $y$ which is in the support of $K$. Conversely, we see that the support of $\Phi_K(\bigoplus _{\xi} \kappa(x, \xi))$ contains all closed points $y$ which are the image of a point of the support of $L(\mathcal{G}_x \times _k \mathcal{Y} \to \mathcal{X} \times _k \mathcal{Y})^*K$, from which we conclude.
 \end{proof}
 
\subsection{Reconstruction for stacks: Weak form}
\label{subsection-weakreconstructionforstacks}


In this section, we prove Theorems \ref{thm-bo-tamestack-introversion} and \ref{theorem-autoequivsintroversion}. We take the following setup: $\mathcal{X}, \mathcal{Y}$ are smooth, proper, tame stacks over a field $k$ of pure dimension $d>0$, with trivial generic stabilizer and nowhere torsion canonical bundles $\omega_{\mathcal{X}}, \omega_{\mathcal{Y}}$. Denote $X, Y$ the coarse spaces of $\mathcal{X}, \mathcal{Y}$. Let $K \in D^b_{\operatorname{Coh}}(\mathcal{X} \times _k \mathcal{Y})$ and assume that the Fourier--Mukai transform $F = \Phi_K : D^b_{\operatorname{Coh}}(\mathcal{X}) \to D^b_{\operatorname{Coh}}(\mathcal{Y})$ is an equivalence of categories. Indeed, \cite{peng2024equivalences}*{Theorem 1.1} states any $k$-linear, exact equivalence in our setup arises in this way.

\begin{prop}
\label{prop-pointlikegenericallyschemenowheretorsioncanonical}
The functors $F$ and $F^{-1}$ satisfy condition (i) of \ref{subsection-conditions}.
\end{prop}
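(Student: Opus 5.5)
By symmetry (the inverse $F^{-1}$ is again a Fourier--Mukai equivalence, with kernel on $\mathcal{Y}\times_k\mathcal{X}$, and the hypotheses on $\mathcal{X},\mathcal{Y}$ are symmetric), it suffices to treat $F$. Condition (i) has two parts. First, each $F(\kappa(x,\xi))$ must be supported at a single closed point. Second, the support must depend only on $x$.

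For the first part, Lemma \ref{lemma-pointsarepointlike} shows that $\kappa(x,\xi)$ is point-like in $\perf(\mathcal{X}) = D^b_{\operatorname{Coh}}(\mathcal{X})$. The Serre functor $S_{\mathcal{X}} = -\otimes^{\mathbb{L}}\omega_{\mathcal{X}}[d]$ commutes with the $k$-linear equivalence $F$, in the sense that $F\circ S_{\mathcal{X}} \cong S_{\mathcal{Y}}\circ F$. Since $\dim\mathcal{X}=\dim\mathcal{Y}=d$ in our setup, all three defining conditions of point-likeness transfer along $F$, so $F(\kappa(x,\xi))$ is point-like on $\mathcal{Y}$. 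Because $\omega_{\mathcal{Y}}$ is nowhere torsion, Lemma \ref{lemma-pointlikeandnowheretorsion} then shows that $F(\kappa(x,\xi))$ is supported at a single closed point $y$.

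For the second part, the natural tool is the kernel, via Lemma \ref{lemma-supportofthekernel}. It is enough to show that the set $Y_x := pr_2(\operatorname{Supp}K \cap pr_1^{-1}(x))$ of closed points is a single point. By Lemma \ref{lemma-supportofthekernel}, $Y_x$ is exactly the union over $\xi$ of the supports of $F(\kappa(x,\xi))$. Each of these supports is a single point by the first part, so $Y_x$ is finite. A priori, though, different $\xi$ could land on different points $y$. To rule this out, I would use the $\operatorname{Ext}^1$-connectedness of Lemma \ref{lemma-genericallyaschemeextconnected}. Its hypothesis is that $\ecal{N}_x$ is faithful, which holds by Proposition \ref{prop-faithfulnormalbundle} because $\mathcal{X}$ has trivial generic stabilizer, hence a dense open subscheme. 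Objects supported at distinct closed points of $\mathcal{Y}$ have vanishing Ext groups, since the supports are disjoint and the sheaf $R\mathcal{H}om$ vanishes. On the other hand, if $\operatorname{Ext}^1(\kappa(x,\xi),\kappa(x,\eta)) \neq 0$, full faithfulness gives $\operatorname{Ext}^1(F\kappa(x,\xi), F\kappa(x,\eta)) \neq 0$, so these two images have the same support. Running along chains in the $\operatorname{Ext}^1$-connected set of generalized point sheaves at $x$, all the $F(\kappa(x,\xi))$ then share one support point.

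The step I expect to need the most care is the faithfulness of the normal bundle together with the bookkeeping of supports. The first part is essentially formal once it is checked that the point-like conditions transfer with the same constant $d$. Lemma \ref{lemma-samedimension} in its stack version is not needed, since equal dimension is part of the setup. Finally, I would apply the same argument to $F^{-1}$, whose target $\mathcal{X}$ also has nowhere torsion canonical bundle and trivial generic stabilizer.
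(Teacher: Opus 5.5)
Your proposal is correct and is essentially the paper's proof: point-likeness transfers along $F$ (using $\dim\mathcal{X}=\dim\mathcal{Y}$), Lemma \ref{lemma-pointlikeandnowheretorsion} then gives single-point supports, and the $\operatorname{Ext}^1$-connectedness from Lemma \ref{lemma-genericallyaschemeextconnected} and Proposition \ref{prop-faithfulnormalbundle}, combined with the vanishing of Ext between objects with disjoint supports, forces all the $F(\kappa(x,\xi))$ to share one support point. The detour through the kernel and Lemma \ref{lemma-supportofthekernel} is harmless but not needed.
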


\begin{proof}
    It suffices by symmetry to show $F$ satisfies condition (i). By Lemma \ref{lemma-pointsarepointlike}, any closed point sheaf $\kappa(x,\xi)$ is a point-like object of $D^b_{\operatorname{Coh}}(\mathcal{X})$. Therefore, $F(\kappa(x, \xi))$ is a point-like object of $D^b_{\operatorname{Coh}}(\mathcal{Y})$. By Lemma \ref{lemma-pointlikeandnowheretorsion}, we see that the support of $F(\kappa(x, \xi))$ is a single closed point. Moreover, the collection of all closed point sheaves $\kappa(x,\xi)$ supported at a given closed point $x$ is $\operatorname{Ext}^1$-connected by Lemma \ref{lemma-genericallyaschemeextconnected} and Proposition \ref{prop-faithfulnormalbundle}, hence remains so after applying $F$. This implies that all of the $F(\kappa(x,\xi))$ (with $x$ fixed) are supported at the some closed point of $\mathcal{Y}$, as needed.
\end{proof}

By Lemma \ref{lemma-supports}, we obtain a well-defined bijection between the set of closed subsets of $|\mathcal{X}|$ and the set of closed subsets of $|\mathcal{Y}|$ with the defining property that
$$
\operatorname{Supp}M \mapsto \operatorname{Supp}F(M).
$$
In particular, we obtain a bijection between the sets of closed points of $\mathcal{X}$ and $\mathcal{Y}$ given by
\begin{equation}
\label{equn-bijectiononclosedpoints}
x \leftrightarrow y\iff \text{For all } M \in D^b_{\operatorname{Coh}, \{x\}}(\mathcal{X})\text{, we have }\operatorname{Supp}F(M) = \{y\}.
\end{equation}
Let $\mathcal{U} \subset \mathcal{X}$ be the open substack found in Proposition \ref{prop-structureawayfromcodim2}. Recall in particular that $\omega_{\mathcal{X}}|_{\mathcal{U}}$ is point-wise $\otimes$-generating, see Definition \ref{defn-pointwisetensorgenerating}.


Let $\mathcal{V} \subset \mathcal{Y}$ be the corresponding open substack. By Corollary \ref{corollary-passagetoopens}, we obtain a 2-commutative square
\[\begin{tikzcd}
	{D^b_{\operatorname{Coh}}(\mathcal{X})} & {D^b_{\operatorname{Coh}}(\mathcal{Y})} \\
	{D^b_{\operatorname{Coh}}(\mathcal{U})} & {D^b_{\operatorname{Coh}}(\mathcal{V})}
	\arrow["F",from=1-1, to=1-2]
	\arrow[from=1-1, to=2-1]
	\arrow[from=1-2, to=2-2]
	\arrow["F_{\mathcal{U}}", from=2-1, to=2-2]
\end{tikzcd}\]
where the horizontal arrows are $k$-linear, exact equivalences. 

\begin{prop}\label{prop-uniform-shift}
    There exists an integer $m$ such that, if $G = (-[m])\circ F_{\mathcal{U}}$, the functors $G, G^{-1}$ satisfy condition (ii) of \ref{subsection-conditions}.
\end{prop}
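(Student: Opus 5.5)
We must show that for each generalized closed point $(x,\xi)$ of $\mathcal{U}$, $F_{\mathcal{U}}(\kappa(x,\xi)) \cong \kappa(y,\eta)[-m]$ for a single integer $m$ independent of $(x,\xi)$, and symmetrically for $F_{\mathcal{U}}^{-1}$.

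\emph{Step 1: identify the images as shifted point sheaves.} Let $(x,\xi)$ be a generalized closed point of $\mathcal{U}$. By Proposition \ref{prop-pointlikegenericallyschemenowheretorsioncanonical} and the bijection (\ref{equn-bijectiononclosedpoints}), $F(\kappa(x,\xi))$ is a point-like object of $D^b_{\operatorname{Coh}}(\mathcal{Y})$ supported at the single closed point $y$ corresponding to $x$, and $y \in \mathcal{V}$. Point-likeness is a condition on Homs twisted by powers of the canonical bundle, so it is local near the support. Since $\omega_{\mathcal{X}}|_{\mathcal{U}}$ is point-wise $\otimes$-generating, the plan is to transport this property to $y$. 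The simples supported at $x$ are $\kappa(x,\xi)$, and tensoring with $\omega_{\mathcal{X}}$ permutes them transitively, by Lemma \ref{lemma-tensor-gen-on-gerbe}. Since $F$ commutes with the Serre functor, $F(\kappa(x,\xi)\otimes\omega_{\mathcal{X}}) \cong F(\kappa(x,\xi))\otimes\omega_{\mathcal{Y}}$.

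I expect the main obstacle to be showing that $i_y^*\omega_{\mathcal{Y}}$ is $\otimes$-generating, because we know nothing about $\mathcal{V}$ a priori. I would try the following. The category $D^b_{\operatorname{Coh},\{y\}}(\mathcal{Y})$ is equivalent to $D^b_{\operatorname{Coh},\{x\}}(\mathcal{X})$ via $F$ (proof of Lemma \ref{lemma-supports}), and this equivalence intertwines $\otimes\omega$. A simple $\kappa(y,\eta)$ is point-like, so $F^{-1}(\kappa(y,\eta))$ is point-like supported at $x$. By Proposition \ref{prop-classifypointlikecycliccase} applied at $x$, $F^{-1}(\kappa(y,\eta)) \cong \kappa(x,\xi')[n]$. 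Hence $F$ maps the set $\{\kappa(x,\xi)[n]\}$ onto the set $\{\kappa(y,\eta)[n]\}$, since $F$ is bijective on isomorphism classes of objects and the classes on the two sides match. Alternatively, this step sidesteps the classification on $\mathcal{Y}$ entirely. In either form it gives $F_{\mathcal{U}}(\kappa(x,\xi)) \cong \kappa(y,\eta)[n_{x,\xi}]$, and symmetrically for $F^{-1}$ restricted to $\mathcal{V}$.

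\emph{Step 2: the shift is constant at each point.} For fixed $x$, the simples $\kappa(x,\xi)$ satisfy $\operatorname{Ext}^{\le 0}(\kappa(x,\xi),\kappa(x,\xi'))=0$ for $\xi\ne\xi'$. By Lemma \ref{lemma-ext of skyscrapers} and the $\operatorname{Ext}^1$-connectedness of Lemma \ref{lemma-genericallyaschemeextconnected} (which uses Proposition \ref{prop-faithfulnormalbundle}), the images $\kappa(y,\eta)[n_{x,\xi}]$ form an $\operatorname{Ext}^1$-connected family. Distinct $\xi$ give distinct $\eta$, because $F$ is an equivalence, so the Ext-vanishing hypothesis also holds for the $\kappa(y,\eta)$. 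Lemma \ref{lemma-uniform-shift-ext1} then shows that $n_{x,\xi}=n_x$ is independent of $\xi$.

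\emph{Step 3: the shift is constant across points.} Here I would argue as in Proposition \ref{prop-pointstopoints}. $F_{\mathcal{U}}(\ecal{O}_{\mathcal{U}})$ is a perfect complex on $\mathcal{V}$, which is connected since $\mathcal{U}$ is connected with complement of codimension $\geq 2$. For each closed point $y$, its Ext groups into the simples $\kappa(y,\eta)$ are nonzero only in the single degree $-n_x$, because $\operatorname{Ext}^i(\ecal{O},\kappa(x,\xi))$ is nonzero only for $i=0$. Lemma \ref{lemma-characterizingvectorbundles} then gives $F_{\mathcal{U}}(\ecal{O}_{\mathcal{U}})\cong\ecal{E}[n]$ with all $n_x=n$. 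Taking $m=n$ (with sign matching the convention for $G$) yields condition (ii) for $G$. The symmetric argument for $G^{-1}$ uses the same uniform shift, so condition (ii) holds for $G^{-1}$ as well.
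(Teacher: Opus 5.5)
Your outline is the paper's argument, with Steps~2 and~3 run in mirror image. You transport $\operatorname{Ext}^1$-connectedness from $\mathcal{X}$ and test $F_{\mathcal{U}}(\ecal{O}_{\mathcal{U}})$ on $\mathcal{V}$; the paper transports it from $\mathcal{Y}$ and tests $F^{-1}(\ecal{O}_{\mathcal{Y}})|_{\mathcal{U}}$. However, the one nontrivial point of Step~1 is asserted rather than proved.

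\textbf{The gap.} Proposition~\ref{prop-classifypointlikecycliccase} only tells you that $F^{-1}$ sends each shifted $\kappa(y,\eta)$ to a shifted $\kappa(x,\xi')$. That gives an injection from simples at $y$ into simples at $x$. The claim that every $\kappa(x,\xi)$ is hit does not follow from ``$F$ is bijective on isomorphism classes and the classes on the two sides match.'' That the classes match is exactly what has to be shown, since a priori nothing is known about $\mathcal{G}_y$.

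\textbf{The fix.} Combine the two facts you wrote down in your first paragraph.
\begin{enumerate}
\item Choose any $\eta$, so that $F^{-1}(\kappa(y,\eta))\cong\kappa(x,\xi')[n]$.
\item For an arbitrary $\xi$, point-wise $\otimes$-generation at $x\in\mathcal{U}$ gives $\kappa(x,\xi)\cong\kappa(x,\xi')\otimes\omega_{\mathcal{X}}^{\otimes j}$ for some $j$.
\item Since $F^{-1}$ commutes with Serre functors, $\kappa(x,\xi)$ is a shift of $F^{-1}(\kappa(y,\eta)\otimes\omega_{\mathcal{Y}}^{\otimes j})$.
\item The sheaf $\kappa(y,\eta)\otimes\omega_{\mathcal{Y}}^{\otimes j}$ is again a generalized point sheaf at $y$.
\end{enumerate}
This is precisely the first paragraph of the paper's proof.

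A Jordan--H\"older argument also works. The objects $F^{-1}(\kappa(y,\eta))$ classically generate $D^b_{\operatorname{Coh},\{x\}}(\mathcal{X})$, by Lemma~\ref{lemma-generatingset} and the equivalence from the proof of Lemma~\ref{lemma-supports}. Objects whose cohomology sheaves have all composition factors in a proper subset $S$ of the simples at $x$ form a thick subcategory, and it contains no simple outside $S$.

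\textbf{Knock-on effect on Step~2.} As you set it up, Step~2 also depends on this claim. You need every $F(\kappa(x,\xi))$ to already be a shifted point sheaf before invoking Lemma~\ref{lemma-uniform-shift-ext1}. If you run that step on the $\mathcal{Y}$ side, as the paper does, you only need the classification of the objects $F^{-1}(\kappa(y,\eta))$. Surjectivity is then needed only for the final assertion that $G$ itself, not just $G^{-1}$, satisfies condition~(ii).

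Once the surjectivity is supplied, your Steps~2 and~3 go through as described.
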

\begin{proof}
  Let $(y, \eta)$ be a generalized closed point of $\mathcal{V}$. We know that $F^{-1}(\kappa(y, \eta))$ is a point-like object of $D^b_{\operatorname{Coh}}(\mathcal{X})$ supported at a single closed point in $\mathcal{U}$. By Lemma \ref{prop-classifypointlikecycliccase}, there exist an integer $n = n(y, \eta)$ and a generalized closed point $(x,\xi)$ of $\mathcal{U}$ such that $F^{-1}(\kappa(y, \eta)) \cong \kappa(x, \xi)[n]$. Thus $F^{-1}$ takes generalized closed point sheaves supported on $\mathcal{V}$ to shifts of generalized closed point sheaves supported on $\mathcal{U}$. We claim that also $F$ sends generalized closed point sheaves supported on $\mathcal{U}$ to shifts of generalized closed point sheaves supported on $\mathcal{V}$. Equivalently, we have to show that every generalized closed point sheaf $\kappa(x, \xi)$ with $x \in \mathcal{U}$ occurs as a shift of $F^{-1}(\kappa(y, \eta))$ for some generalized closed point $(y, \eta)$ of $\mathcal{V}$. We know that $F$ induces a bijection between the sets of closed points of $\mathcal{U}$ and of $\mathcal{V}$, so we see that there exists a generalized closed point $(y, \eta)$ of $\mathcal{V}$ such that $F^{-1}(y, \eta)$ is a shift of $\kappa(x, \xi')$ for some $\xi'$ possibly different from $\xi$. But by choice of $\mathcal{U}$, we know that $\kappa(x,\xi)$ can be obtained from $\kappa(x, \xi')$ by repeatedly tensoring with $\omega_{\mathcal{X}}$, and since $F^{-1}$ commutes with the Serre functor, we see that by repeatedly tensoring with $\omega_{\mathcal{Y}}$ we can find a generalized closed point sheaf $\kappa(y, \eta')$ which maps via $F^{-1}$ to a shift of $\kappa(x,\xi)$, as needed.

  Finally, we need to show that the integers $n = n(y, \eta)$ above do not depend on the choice of generalized closed point $(y, \eta)$. To this end, we claim that $F^{-1}(\ecal{O}_{\mathcal{Y}})|_{\mathcal{U}}\cong \ecal{L}[m]$ for some vector bundle $\ecal{L}$ on $\mathcal{U}$ and integer $m$. For $(y, \eta)$ a generalized closed point of $\mathcal{V}$, we compute
  $$
  \operatorname{Ext}^i(\ecal{O}_{\mathcal{Y}}, \kappa(y, \eta))= \operatorname{Ext}^i(F^{-1}(\ecal{O}_{\mathcal{Y}}), F^{-1}(\kappa(y, \eta)) = \operatorname{Ext}^i(F^{-1}(\ecal{O}_{\mathcal{Y}}), \kappa(x, \xi)[n(y, \eta)]) = \operatorname{Ext}^{i+n(y, \eta)}(F^{-1}(\ecal{O}_{\mathcal{Y}}), \kappa(x, \xi))
  $$
  which is non-zero for at most one $i$. Moreover, by the previous paragraph, all generalized closed points sheaves $\kappa(x, \xi)$ supported on $\mathcal{U}$ occur as a shift of $F^{-1}(\kappa(y, \eta))$ for some generalized closed point $(y, \eta)$ of $\mathcal{V}$. 
  Also, for a fixed closed point $y$ of $\mathcal{Y}$, since $\{\kappa(y,\eta)\}$ is $\ext^1$-connected, so is $\{F^{-1}(\kappa(y,\eta))\}$. Therefore, by Lemma \ref{lemma-uniform-shift-ext1}, for fixed $y$, the integer $n(y,\eta)$ does not depend on $\eta$.
   Thus by Lemma \ref{lemma-characterizingvectorbundles}, we conclude that $F^{-1}(\ecal{O}_{\mathcal{Y}})|_{\mathcal{U}}$ is a vector bundle, and that all the integers $n(y, \eta)$ are equal to $m$, as claimed. This completes the proof: We replace $F$ with $(-[m]) \circ F$ and this works. Note that for $(x,\xi)$ a generalized closed point of $\mathcal{U}$ and $(y, \eta)$ a generalized closed point of $\mathcal{V}$ we have $F_{\mathcal{U}}(\kappa(x,\xi)) \cong \kappa(y, \eta)$ if and only if $F(\kappa(x,\xi)) \cong \kappa(y, \eta)$ by the diagram defining $F_{\mathcal{U}}$.
\end{proof}

\begin{prop}
\label{prop-standardonthebigopen}
    There exist a $k$-isomorphism $\mathcal{V} \to \mathcal{U}$ and a line bundle $\ecal{L}$ on $\mathcal{V}$ such that the restriction of $F_{\mathcal{U}}$ to $\operatorname{Coh}\mathcal{U}$ is isomorphic to the functor 
    \[
    \ecal{F} \mapsto Lf^*\ecal{F} \otimes^{\mathbb{L}} \ecal{L}[m]. \qedhere 
    \]
\end{prop}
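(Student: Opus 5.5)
By Proposition \ref{prop-uniform-shift}, after replacing $F_{\mathcal{U}}$ with $G = (-[m])\circ F_{\mathcal{U}}$, both $G$ and $G^{-1}$ satisfy condition (ii) of \ref{subsection-conditions}. The plan is to show $G$ is, on coherent sheaves, an abelian equivalence $\operatorname{Coh}\mathcal{U}\to\operatorname{Coh}\mathcal{V}$ that upgrades to a symmetric monoidal equivalence after untwisting by a line bundle. Coherent Tannaka duality then produces the isomorphism of stacks.

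\textbf{Step 1: abelian equivalence and the line bundle.} By Lemma \ref{lemma-cohtocoh}, $G$ restricts to an equivalence $\operatorname{Coh}\mathcal{U}\to\operatorname{Coh}\mathcal{V}$. The closed points of $\mathcal{U}$ with trivial stabilizer are dense, so Lemma \ref{lemma-vecttovect} shows that $G$ sends vector bundles to vector bundles and preserves rank. Hence $\ecal{L}:=G(\ecal{O}_{\mathcal{U}})$ is a line bundle on $\mathcal{V}$. Replace $G$ by $G':=G(-)\otimes\ecal{L}^{-1}$; this is still an equivalence of abelian categories satisfying (ii), and $G'(\ecal{O}_{\mathcal{U}})\cong\ecal{O}_{\mathcal{V}}$.

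\textbf{Step 2: commuting with the twist by the canonical bundle.} The Serre functors are $-\otimes\omega[d]$, and Serre functors commute with equivalences, so we get $F(-\otimes\omega_{\mathcal{X}})\cong F(-)\otimes\omega_{\mathcal{Y}}$ on all of $D^b_{\operatorname{Coh}}(\mathcal{X})$. This isomorphism passes to the quotients $D^b_{\operatorname{Coh}}(\mathcal{U})$ and $D^b_{\operatorname{Coh}}(\mathcal{V})$ by Corollary \ref{corollary-passagetoopens}, because tensoring with a line bundle preserves supports. We therefore obtain $\beta\colon G'\circ\tau_{\omega^{-1}_{\mathcal{X}}|_{\mathcal{U}}}\cong\tau_{\omega^{-1}_{\mathcal{Y}}|_{\mathcal{V}}}\circ G'$, where we use inverse powers to get the needed negativity. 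A subtlety is that $\operatorname{Coh}\mathcal{U}$ is not obviously generated by tensor powers of a single line bundle globally, since Lemma \ref{lemma-pointwiseonditionfortensorgeneration} needs an affine coarse space. I would handle this by combining point-wise $\otimes$-generation of $\omega_{\mathcal{X}}|_{\mathcal{U}}$ (Proposition \ref{prop-structureawayfromcodim2}) with an ample line bundle $\ecal{A}$ pulled back from the coarse scheme $U$ (possibly after shrinking $U$ to a quasi-projective open, still with complement of codimension $\geq2$). Products $\omega^{a}\otimes\pi^*\ecal{A}^{b}$ should then generate coherent sheaves as quotients. It remains to check that $G'$ commutes with $\pi^*\ecal{A}$ up to a line bundle. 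For this I would argue that $G'$ preserves supports and closed points (Lemma \ref{lemma-supports}), hence induces an isomorphism of coarse spaces. If that is awkward, I would instead pass to the vector bundle $\ecal{E}=\omega^{-1}\oplus\omega^{-2}\oplus\cdots$ together with the ample pullback and argue via Lemma \ref{lemma-hombetweentensorfunctors} and Remark \ref{remark-lgoestom}. Once a suitable generating line bundle commuting with $G'$ is in hand, Proposition \ref{prop-itsmonoidal} applies. Its hypotheses hold: $\mathcal{U}$ and $\mathcal{V}$ have dense opens that are schemes, and the resolution property holds on $\mathcal{U}$ because it has a tensor generator. The proposition makes $G'$ a symmetric monoidal $k$-linear equivalence $\operatorname{Coh}\mathcal{U}\to\operatorname{Coh}\mathcal{V}$.

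\textbf{Step 3: Tannaka duality.} Both stacks are Noetherian with affine (finite) diagonal, so coherent Tannaka duality (Lurie, Hall--Rydh) gives an isomorphism $f\colon\mathcal{V}\to\mathcal{U}$ with $G'\cong f^*$ as symmetric monoidal functors. Since $f$ is flat, $f^*=Lf^*$, so $F_{\mathcal{U}}(\ecal{F})\cong Lf^*\ecal{F}\otimes\ecal{L}[m]$ on $\operatorname{Coh}\mathcal{U}$. One still needs the resulting functor to be right exact, which holds because $G'$ is an exact equivalence.

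\textbf{Main obstacle.} I expect Step 2 to be hardest: producing a single line bundle on $\mathcal{U}$ whose positive powers generate $\operatorname{Coh}\mathcal{U}$ and which $G'$ intertwines, given that only $\omega$ is known to commute with $G'$ and only point-wise generation holds. My first attempt would be to prove that $G'$ commutes with tensoring by pullbacks of line bundles from the coarse scheme. Since $G'$ fixes $\ecal{O}$, it should induce an isomorphism of centers, or of $\operatorname{End}(\operatorname{id})$ over affine opens, so it respects $\ecal{O}_U$-module structures locally.
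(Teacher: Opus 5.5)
Your Steps 1 and 3 agree with the paper, and you correctly identify the obstruction in Step 2. But you do not overcome it, so as written the hypothesis of Proposition \ref{prop-itsmonoidal} is never verified.

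On $\mathcal{U}$ itself there is no reason for any powers of $\omega_{\mathcal{X}}|_{\mathcal{U}}$ to generate $\operatorname{Coh}\mathcal{U}$ as quotients. The coarse space $U$ is not affine, and $\omega$ need not be ample or anti-ample. For the same reason, passing to $\omega^{-1}\oplus\omega^{-2}\oplus\cdots$ changes nothing.

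Both of your workarounds rest on an unproven intertwining $G'(\ecal{F}\otimes\pi^*\ecal{A})\cong G'(\ecal{F})\otimes\pi_{\mathcal{V}}^*\ecal{A}'$. That intertwining presupposes an identification of the coarse spaces of $\mathcal{U}$ and $\mathcal{V}$ compatible with $G'$. Lemma \ref{lemma-supports} does not supply this: it only gives a bijection of closed subsets, not a morphism of schemes. In the paper, the isomorphism of coarse spaces (Proposition \ref{prop-isoofcoarsespaces}) is proved afterwards and uses the isomorphism $f$ produced by the present proposition. Likewise, asserting the resolution property on $\mathcal{U}$ ``because it has a tensor generator'' is circular, since that generator is exactly what is missing.

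The missing idea is that no global generator is needed: localize on the coarse scheme and glue.
\begin{enumerate}
\item Since the coarse space of $\mathcal{U}$ is a scheme, cover $\mathcal{U}$ by opens $\mathcal{U}_i$ with affine coarse space, and let $\mathcal{V}_i\subset\mathcal{V}$ be the corresponding opens.
\item Corollary \ref{corollary-passagetoopens} gives equivalences $F_{\mathcal{U}_i}:\operatorname{Coh}\mathcal{U}_i\to\operatorname{Coh}\mathcal{V}_i$ compatible with restriction from $\mathcal{U}$. They still send $\ecal{O}$ to $\ecal{O}$, and by the last sentence of that corollary they still intertwine the twists by $\omega_{\mathcal{X}}$ and $\omega_{\mathcal{Y}}$.
\item On each $\mathcal{U}_i$, Lemma \ref{lemma-pointwiseonditionfortensorgeneration} says every coherent sheaf is a quotient of a sum of positive powers of $\omega_{\mathcal{X}}|_{\mathcal{U}_i}$ itself, so no inverse powers are needed. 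Hence Proposition \ref{prop-itsmonoidal}, with Remark \ref{remark-lgoestom}, makes $F_{\mathcal{U}_i}$ symmetric monoidal.
\item Hall--Rydh coherent Tannaka duality then gives $f_i:\mathcal{V}_i\to\mathcal{U}_i$ with $F_{\mathcal{U}_i}\cong f_i^*$.
\item The monoidal structure in Proposition \ref{prop-itsmonoidal} and the Tannakian morphism are both unique. On the overlaps $\mathcal{V}_i\cap\mathcal{V}_j$ this glues the $f_i$ to $f:\mathcal{V}\to\mathcal{U}$, and glues the isomorphisms $f_i^*\cong F_{\mathcal{U}_i}$ to $f^*\cong F_{\mathcal{U}}$ on $\operatorname{Coh}\mathcal{U}$.
\end{enumerate}
Your heuristic about centers would have forced you onto such opens with affine coarse space anyway. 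Once you are there, Lemma \ref{lemma-pointwiseonditionfortensorgeneration} already gives the generator, and the ample bundle is unnecessary.
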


\begin{proof}
    Replace $F$ by $(-[m]) \circ F$ where $m$ was found in the previous proposition. Then $F_{\mathcal{U}}$ and $F_{\mathcal{U}}^{-1}$ satisfy condition (ii) of \ref{subsection-conditions}, so by Lemma \ref{lemma-cohtocoh}, $F_{\mathcal{U}}, F_{\mathcal{U}}^{-1}$  restrict
 to quasi-inverse equivalences of categories $\operatorname{Coh}\mathcal{U} \leftrightarrow \operatorname{Coh}\mathcal{V}$. We know that $F$ commutes with the Serre functors on $\mathcal{X}$ and $\mathcal{Y}$, and hence that there are natural isomorphisms $F(K \otimes^{\mathbb{L}}\omega_{\mathcal{X}}) \cong F(K) \otimes^{\mathbb{L}}\omega_{\mathcal{Y}}$ for $K \in D^b_{\operatorname{Coh}}(\mathcal{X})$, which by the last sentence of Corollary \ref{corollary-passagetoopens}, descend to natural isomorphisms
 $$
 F_{\mathcal{U}}(K \otimes^{\mathbb{L}}\omega_{\mathcal{X}}|_{\mathcal{U}}) \cong F_{\mathcal{U}}(K) \otimes^{\mathbb{L}}\omega_{\mathcal{Y}}|_{\mathcal{V}}
 $$
 for $K \in D^b_{\operatorname{Coh}}(\mathcal{U})$.
 Furthermore, $F(\ecal{O}_{\mathcal{U}})$ is a line bundle $\ecal{L}$ on $\mathcal{V}$ by Lemma \ref{lemma-vecttovect}, as $\cal U$ has generically trivial stabilizer. Replacing $F_{\mathcal{U}}$ by $(-\otimes^{\mathbb{L}}\ecal{L}^{-1}) \circ F_{\mathcal{U}}$ we obtain an equivalence of abelian categories
 $$
 F_{\mathcal{U}} : \operatorname{Coh}\mathcal{U} \to \operatorname{Coh}\mathcal{V}
 $$
 such that $\ecal{O}_{\mathcal{U}}\mapsto \ecal{O}_\mathcal{V}$ and there are natural isomorphisms $$F_{\mathcal{U}}(\ecal{F} \otimes\omega_{\mathcal{X}}|_{\mathcal{U}}) \cong F_{\mathcal{U}}(\ecal{F}) \otimes\omega_{\mathcal{Y}}|_{\mathcal{V}}$$
 for $\ecal{F} \in \operatorname{Coh}\mathcal{U}$. 

 Since the coarse space of $\mathcal{U}$ is a scheme, we may write $\mathcal{U} = \bigcup_{i = 1}^n \mathcal{U}_i$ with $\mathcal{U}_i \subset \mathcal{U}$ open with affine coarse space. Let $\mathcal{V}_i \subset \mathcal{V}$ be the corresponding opens. Then the same arguments show that there are 2-commutative squares
 $$
 \begin{tikzcd}
 \operatorname{Coh}\mathcal{U} \ar[r, "F_{\mathcal{U}}"] \ar[d] &\operatorname{Coh}\mathcal{V} \ar[d] \\
 \operatorname{Coh}\mathcal{U}_i \ar[r, "F_{\mathcal{U}_i}"] &\operatorname{Coh}\mathcal{V}_i
 \end{tikzcd}
 $$
 with $F_{\mathcal{U}_i}$ equivalences of $k$-linear abelian categories taking $\ecal{O}_{\mathcal{U}_i}$ to $\ecal{O}_{\mathcal{V}_i}$ such that there are natural isomorphisms 
 $$F_{\mathcal{U_i}}(\ecal{F} \otimes\omega_{\mathcal{X}}|_{\mathcal{U}_i}) \cong F_{\mathcal{U}_i}(\ecal{F}) \otimes\omega_{\mathcal{Y}}|_{\mathcal{V}_i}$$
 for $\ecal{F} \in \operatorname{Coh}\mathcal{U}_i$. Furthermore, by Lemma \ref{lemma-pointwiseonditionfortensorgeneration}, every coherent sheaf on $\mathcal{U}_i$ is a quotient of a direct sum of positive tensor powers of $\omega_{\mathcal{X}}|_{\mathcal{U}_i}$. By Proposition \ref{prop-itsmonoidal} and Remark \ref{remark-lgoestom} (applied to $F_{\mathcal{U}_i}^{-1}$), there is a unique structure of monoidal functor on $F_{\mathcal{U}_i}$ which is compatible with the natural isomorphisms above. By \cite[Theorem 1.1]{Hall-Rydh-coherent}, there are unique $k$-isomorphisms $f_i : \mathcal{V}_i \to \mathcal{U}_i$ such that $F_{\mathcal{U}_i} \cong f_i^*$ as a monoidal functor. By this uniqueness applied to the overlaps, we see that there is a unique $k$-isomorphism $\mathcal{V} \to \mathcal{U}$ such that $f|_{\mathcal{V}_i} = f_i$ for every $i$. Furthermore, there are natural isomorphisms $f^*\ecal{F} \cong F_{\mathcal{U}}(\ecal{F})$ for $\ecal{F} \in \operatorname{Coh}\mathcal{U}$ which are obtained by glueing from the natural isomorphisms $f^*\ecal{F}|_{\mathcal{V}_i} \cong f_i^*(\ecal{F}|_{\mathcal{U}_i})\cong F_{\mathcal{U}_i}(\ecal{F}|_{\mathcal{U}_i}) \cong F_{\mathcal{U}}(\ecal{F})|_{\mathcal{V}_i}$. This completes the proof.
 \end{proof}

When $F$ is a Fourier--Mukai transform, we will show that there is an isomorphism of coarse spaces $X \cong Y$ compatible with the isomorphism $\mathcal{U} \to \mathcal{V}$.

\begin{lemma}
    Suppose there is an object $K \in D^b_{\operatorname{Coh}}(\mathcal{X} \times _k \mathcal{Y})$ such that $F \cong \Phi_K$ is the Fourier--Mukai transform with respect to $K$. Then the fibers of $\operatorname{Supp}K$ over closed points of $|\mathcal{X}|$ or of $|\mathcal{Y}|$ are finite. Furthermore, its intersection with the open $\mathcal{U} \times _k \mathcal{V} \subset \mathcal{X} \times _k \mathcal{Y}$ is equal (set-theoretically) to the graph of the isomorphism $f : \mathcal{V} \to \mathcal{U}$.
\end{lemma}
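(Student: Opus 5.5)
The plan is to read off $\operatorname{Supp}K$ from Lemma \ref{lemma-supportofthekernel}, and to compute the images of point sheaves using Proposition \ref{prop-pointlikegenericallyschemenowheretorsioncanonical} and Proposition \ref{prop-standardonthebigopen}. We may first base change to $\bar k$: supports, finiteness of fibers and the graph condition are insensitive to the field extension, and $\Phi_{K_{\bar k}}$ remains an equivalence. With $k=\bar k$, Remark \ref{remark-algebraicallyclosedcaseoffouriermukai} says that a closed point $(x,y)$ lies in $\operatorname{Supp}K$ if and only if $y\in\operatorname{Supp}\Phi_K(\kappa(x,\xi))$ for some generalized point $(x,\xi)$. (If one prefers not to base change, work directly with Lemma \ref{lemma-supportofthekernel}; the argument below is unchanged.)

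\textbf{Finiteness of fibers.} Fix a closed point $x\in|\mathcal X|$. By Proposition \ref{prop-pointlikegenericallyschemenowheretorsioncanonical}, every $F(\kappa(x,\xi))$ is supported at the single closed point $y(x)$ given by the bijection (\ref{equn-bijectiononclosedpoints}). So the closed points of the fiber of $\operatorname{Supp}K$ over $x$ are exactly the finitely many closed points of $\mathcal X\times_k\mathcal Y$ lying over $(x,y(x))$. A closed subset of a finite type space all of whose closed points lie over one point is finite, so the fiber over $x$ is finite.

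For fibers over closed points $y\in|\mathcal Y|$ we argue symmetrically. The quasi-inverse $F^{-1}$ is the Fourier--Mukai transform whose kernel is, up to twisting by $\omega$'s and shifting, the derived dual $K^\vee$ with the factors swapped; its support equals $\operatorname{Supp}K$ because $K$ is perfect on the smooth stack $\mathcal X\times\mathcal Y$. Proposition \ref{prop-pointlikegenericallyschemenowheretorsioncanonical} also applies to $F^{-1}$, so the same argument gives finiteness of these fibers. An alternative route avoids the kernel of $F^{-1}$: Lemma \ref{lemma-supportofthekernel} read in the second factor shows that $(x,y)\in\operatorname{Supp}K$ forces $y=y(x)$, so the fiber over $y$ lies over $x=y^{-1}(y)$ and is again finite.

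\textbf{Graph statement.} For $x\in\mathcal U$ and a generalized point $(x,\xi)$, Proposition \ref{prop-standardonthebigopen} gives $F(\kappa(x,\xi))|_{\mathcal V}\cong f^*\kappa(x,\xi)\otimes\ecal L[m]$ up to shift, which is supported at $f^{-1}(x)$. Since the support of $F(\kappa(x,\xi))$ is a single point lying in $\mathcal V$ (Corollary \ref{corollary-passagetoopens}), we get $y(x)=f^{-1}(x)$. Therefore a closed point of $\operatorname{Supp}K\cap(\mathcal U\times\mathcal V)$ lies over a pair $(f(y),y)$. Conversely, every such pair does occur, because $\kappa(x,\xi)\neq0$ and so the corresponding image is nonzero. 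Both $\operatorname{Supp}K\cap(\mathcal U\times\mathcal V)$ and the graph $\Gamma_f$ are closed in $\mathcal U\times\mathcal V$, the latter because $\mathcal U$ is separated, and the space is Jacobson, so it suffices to compare closed points.

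\textbf{Main obstacle.} Over $\bar k$ the graph $\Gamma_f$ has exactly one closed point over each pair $(f(y),y)$. But $\operatorname{Supp}K$ could a priori contain other closed points of $\mathcal X\times\mathcal Y$ over the same pair of points of the coarse spaces; the relevant product $\mathcal G_x\times\mathcal G_y$ has a single point over $\bar k$, yet this needs to be stated. So the delicate step is identifying closed points of the product with pairs of closed points. Over $\bar k$ this holds because the product of two residual gerbes $B G_x\times BG_y$ is connected with one point, and that gives set-theoretic equality.
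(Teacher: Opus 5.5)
Your proposal is correct and follows essentially the paper's route. You reduce to $\bar k$, use Lemma~\ref{lemma-supportofthekernel} with the bijection (\ref{equn-bijectiononclosedpoints}) to match closed points of $\operatorname{Supp}K$ with closed points of $|\mathcal{X}|$ and of $|\mathcal{Y}|$, then identify $y(x)=f^{-1}(x)$ on $\mathcal{U}$ and compare closed points; your second route for the $\mathcal{Y}$-fibers is exactly the paper's, so the detour through a kernel for $F^{-1}$ is unnecessary. Two small corrections: the base change cannot be skipped for the graph statement as your parenthetical claims, since over non-closed $k$ several closed points of $\mathcal{U}\times_k\mathcal{V}$ may lie over a pair $(f(y),y)$ and Lemma~\ref{lemma-supportofthekernel} cannot tell them apart (Remark~\ref{remark-algebraicallyclosedcaseoffouriermukai} settles this over $\bar k$, as your last paragraph notes); and after base change you should record, as the paper does, that the canonical bundles stay nowhere torsion (Lemma~\ref{lemma-checkafterbasechange}), so that Proposition~\ref{prop-pointlikegenericallyschemenowheretorsioncanonical} still applies.
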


\begin{proof}
Both assertions can be checked after base change to the algebraic closure of $k$. Note that all hypotheses on $\mathcal{X}, \mathcal{Y}$ continue to hold after base chance. In particular, since $F \cong \Phi_K$ is a Fourier--Mukai equivalence, by the calculus of kernels, the base change $K_{\bar{k}}$ induces an equivalence between $D^b_{\operatorname{Coh}}(\mathcal{X}_{\bar{k}})$ and $D^b_{\operatorname{Coh}}(\mathcal{Y}_{\bar{k}})$. Also, the dualizing sheaves of $\mathcal{X}_{\bar{k}}, \mathcal{Y}_{\bar{k}}$ remain nowhere torsion by Lemma \ref{lemma-checkafterbasechange} and \cite[Theorem 3.1(1)]{hall2024generalizedbondalorlovfaithfulnesscriterion} which shows that $(\omega_{\mathcal{X}})_{\bar{k}}$ is the dualizing complex on $\mathcal{X}_{\bar{k}}$. Thus we may assume $k = \bar{k}$. 

Then we can identify closed points with $k$-points as in Remark \ref{remark-algebraicallyclosedcaseoffouriermukai}, and by Lemma  \ref{lemma-supportofthekernel}, we see that the closed points of the support of the kernel are the set of pairs of closed points $x$ of $\mathcal{X}$ and $y$ of $\mathcal{Y}$ such that $y$ is in the support of $\Phi_K(\kappa(x,\xi))$ for some generalized closed point $(x, \xi)$ supported at $x$. Thus $(x,y) \in \operatorname{Supp}K$ if and only if $\{x\}$ and $\{y\}$ correspond under the bijection between the closed sets in $|\mathcal{X}|$ and in $|\mathcal{Y}|$. Thus the closed points in $\operatorname{Supp}K$ map bijectively to the closed points in $|\mathcal{X}|$ and in $|\mathcal{Y}|$ under the projections. Furthermore, if $x$ is a closed point in $\mathcal{U}$, then by definition of the isomorphism $f : \mathcal{V} \to \mathcal{U}$, we see that $(x,y) \in \operatorname{Supp}K$ if and only if $x = f(y)$ if and only if $y = f^{-1}(x)$. Thus $\operatorname{Supp}K \cap (\mathcal{U} \times _k \mathcal{V})$ and the graph of $f$ have the same closed points, and therefore agree as closed subsets of $|\mathcal{U} \times _k \mathcal{V}|,$ as needed. 
\end{proof}

\begin{prop}
\label{prop-isoofcoarsespaces}
    There is a $k$-isomorphism of coarse spaces $Y \to X$ which agrees with $f^{-1}$ on the dense open of $\mathcal{V}$ which is representable by a scheme. Furthermore, on closed points, the isomorphism agrees with the bijection (\ref{equn-bijectiononclosedpoints}).
\end{prop}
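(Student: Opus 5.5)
The plan is to realize the isomorphism of coarse spaces via the support of the kernel. Let $Z \subset \mathcal{X} \times_k \mathcal{Y}$ be the reduced closed substack with $|Z| = \operatorname{Supp}K$, and $\Gamma \subset Z$ the closure of $\operatorname{Supp}K \cap (\mathcal{U}\times_k\mathcal{V})$. By the preceding lemma, $\operatorname{Supp}K \cap (\mathcal{U}\times_k \mathcal{V})$ is the graph of $f : \mathcal{V} \to \mathcal{U}$, so $\Gamma$ is irreducible of dimension $d$ and birational to both factors.

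Consider the composite $\Gamma \to \mathcal{X}\times_k\mathcal{Y} \to X \times_k Y$, and let $W \subset X \times_k Y$ be its scheme-theoretic image (a reduced closed subspace, proper over $k$). The projections $W \to X$ and $W \to Y$ are proper. They are also quasi-finite, because the fibers of $\operatorname{Supp}K$ over closed points of $|\mathcal{X}|$ and $|\mathcal{Y}|$ are finite (previous lemma) and coarse space maps are bijective on points. They are birational, since over the dense open where $\mathcal{U}$ and $\mathcal{V}$ are schemes, $W$ coincides with the graph of $f^{-1}$ on coarse spaces. A proper, quasi-finite, hence finite, birational morphism onto the normal space $X$ (normal by Lemma \ref{lemma-coarsespaceisnormal}) from an integral $W$ is an isomorphism, by Zariski's main theorem. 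Hence $W \to X$ and $W \to Y$ are isomorphisms, and we set $Y \to X$ to be the composite $Y \cong W \cong X$. By construction it agrees with $f^{-1}$, or more precisely with the map induced by $f$, on the scheme locus.

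For the statement on closed points, we may pass to $\bar k$ as in the preceding lemma. There the closed points of $\operatorname{Supp}K$ are exactly the pairs $(x,y)$ corresponding under (\ref{equn-bijectiononclosedpoints}), and every such pair lies in $\Gamma$. Once this is known, the points of $W$ are exactly the pairs matched by the bijection, and the isomorphism realizes it.

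The main obstacle is showing $\Gamma = \operatorname{Supp}K$ on closed points, i.e.\ that $\operatorname{Supp}K$ has no extra components over the codimension $\ge 2$ complement of $\mathcal{U}$ (otherwise $W$ would need to be taken as the full image, which may fail to be irreducible). I would first try a cardinality argument. Over each closed point $x$, $\operatorname{Supp}K$ has exactly one point $(x,y)$, by bijectivity. $\Gamma \to \mathcal{X}$ is proper and dominant, hence surjective, so $\Gamma$ already contains a point over every $x$, and therefore that unique point. So $\Gamma$ and $\operatorname{Supp}K$ have the same closed points and agree. Using the full support then also yields the closed-point compatibility directly.
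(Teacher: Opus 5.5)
Your proposal is correct and follows essentially the same route as the paper: take the reduced image of $\operatorname{Supp}K$ in $X \times_k Y$, show both projections are finite and birational using the preceding lemma, conclude by normality of the coarse spaces, and get the closed-point compatibility from Lemma \ref{lemma-supportofthekernel}. Your detour through the closure $\Gamma$ of the graph, with the surjectivity/cardinality argument showing $\Gamma = \operatorname{Supp}K$, makes explicit a step the paper leaves implicit: that $\operatorname{Supp}K$ has no components beyond the closure of the graph, so its image in $X \times_k Y$ is integral and birational over both $X$ and $Y$.
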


\begin{proof}
    Let $Z \subset X \times _k Y$ be the reduced closed subspace corresponding to the support of $K$. It follows from the previous lemma that $Z \to X$ and $Z \to Y$ are finite (being proper and quasi-finite) and birational since they agree with the graph of $f$ over the dense opens of $\mathcal{U}$ and $\mathcal{V}$ which are representable by schemes. Since $X, Y$ are normal by Lemma \ref{lemma-coarsespaceisnormal}, the projections $Z \to X$ and $Z \to Y$ are isomorphisms, and the composition $Y \to Z \to X$ is the desired isomorphism. If $x, y$ are closed points of $\mathcal{X}, \mathcal{Y}$, then under this isomorphism we have $x \leftrightarrow y$ if and only if there is a (necessarily unique) closed point $p$ of $\operatorname{Supp}K$ lying over both $x$ and $y$, and by Lemma \ref{lemma-supportofthekernel}, this is equivalent to $x \leftrightarrow y$ under (\ref{equn-bijectiononclosedpoints}).
\end{proof}

\begin{theorem}\label{theorem-bo-tame-stack}
   There are a $k$-isomorphism $f : \mathcal{Y} \to \mathcal{X}$, a line bundle $\ecal{L}$ on $\mathcal{Y}$, an integer $n$, and an open $\mathcal{V} \subset \mathcal{Y}$ whose complement has codimension $\geq 2$ such that:
   \begin{enumerate}
        \item There are natural isomorphisms
    $F(\ecal{F})|_{\mathcal{V}} \cong (f^*\ecal{F} \otimes \ecal{L}[n])|_{\mathcal{V}} $
    for coherent sheaves $\ecal{F}$ on $\mathcal{X}$. 
        \item For every object $M \in D^b_{\operatorname{Coh}}(\mathcal{X})$ whose support is a single closed point $x$, the support of $F(M)$ is equal to $f^{-1}(x)$. 
   \end{enumerate}
   Moreover, $f, \ecal{L},$ and $n$ are uniquely determined by  the conditions (i) and (ii). 
\end{theorem}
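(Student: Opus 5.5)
Our plan is to extend the isomorphism $f\colon \mathcal{V}\to\mathcal{U}$ of Proposition \ref{prop-standardonthebigopen} to all of $\mathcal{Y}$, using the coarse space isomorphism of Proposition \ref{prop-isoofcoarsespaces} together with Proposition \ref{prop-extendingmorphisms}.

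\textbf{Extending $f$.} Let $g\colon Y\to X$ be the isomorphism of Proposition \ref{prop-isoofcoarsespaces}. Consider the square with top arrow $\mathcal{V}\xrightarrow{f}\mathcal{U}\subset\mathcal{X}$, left arrow $\mathcal{V}\subset\mathcal{Y}$, and bottom arrow $\mathcal{Y}\to Y\xrightarrow{g}X$. It $2$-commutes: both composites $\mathcal{V}\to X$ are morphisms to an algebraic space that agree on the dense open where $\mathcal{V}$ is a scheme. Since $X$ is separated and $\mathcal{V}$ is reduced, they agree on all of $\mathcal{V}$. The complement of $\mathcal{V}$ has codimension $\geq 2$, because $\mathcal{U}$ has this property and the correspondence of Lemma \ref{lemma-supports} preserves codimension. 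Since $\mathcal{Y}$ is regular, Proposition \ref{prop-extendingmorphisms} gives an extension $f\colon\mathcal{Y}\to\mathcal{X}$ over $g$. Applying the same argument to $f^{-1}$ gives $h\colon\mathcal{X}\to\mathcal{Y}$. By the uniqueness in Lemma \ref{lemma-separationforstacks}, $h\circ f\cong\operatorname{id}$ and $f\circ h\cong\operatorname{id}$, so $f$ is an isomorphism.

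\textbf{The line bundle and conditions (i), (ii).} Extend the line bundle from Proposition \ref{prop-standardonthebigopen} from $\mathcal{V}$ to $\mathcal{Y}$ using Lemma \ref{lemma-hartogs}, and let $n$ be the shift $m$ found there. Condition (i) is then exactly Proposition \ref{prop-standardonthebigopen}, composed with restriction from $\mathcal{Y}$ to $\mathcal{V}$ (Corollary \ref{corollary-passagetoopens}). For condition (ii), take $M$ supported at a closed point $x$. By (\ref{equn-bijectiononclosedpoints}), $\operatorname{Supp}F(M)=\{y\}$, where $y\leftrightarrow x$. By Proposition \ref{prop-isoofcoarsespaces}, $y$ corresponds to $x$ under the coarse isomorphism $g$. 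Since $f$ lies over $g$ and closed points of a stack correspond bijectively to closed points of its coarse space, we get $y=f^{-1}(x)$.

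\textbf{Uniqueness.} Suppose $(f',\ecal{L}',n')$ also satisfies (i) and (ii). Condition (ii) forces $f'$ and $f$ to agree on closed points, hence to induce the same coarse isomorphism. On a common big open $\mathcal{V}'$, condition (i) gives $f^*(-)\otimes\ecal{L}[n]\cong f'^*(-)\otimes\ecal{L}'[n']$ on $\operatorname{Coh}\mathcal{X}$. Evaluating on $\ecal{O}_{\mathcal{X}}$ gives $n=n'$ and $\ecal{L}|_{\mathcal{V}'}\cong\ecal{L}'|_{\mathcal{V}'}$, hence $\ecal{L}\cong\ecal{L}'$ by Lemma \ref{lemma-hartogs}. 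After twisting by $\ecal{L}^{-1}$ we obtain $f^*\cong f'^*$ on $\operatorname{Coh}$ of a big open. We then want to upgrade this to a monoidal isomorphism, so that Tannaka duality \cite[Theorem 1.1]{Hall-Rydh-coherent} on opens with affine coarse space gives $f\cong f'$ there. The upgrade would use Proposition \ref{prop-itsmonoidal}'s uniqueness: the isomorphisms commuting with $\omega$-twists are pinned down via Lemma \ref{lemma-hombetweentensorfunctors}. Finally, Lemma \ref{lemma-separationforstacks} extends $f\cong f'$ to all of $\mathcal{Y}$.

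\textbf{Expected obstacle.} This monoidal upgrade in the uniqueness step is where I expect the difficulty. An arbitrary isomorphism of abelian functors $f^*\cong f'^*$ need not be monoidal; by the example before Lemma \ref{lemma-hombetweentensorfunctors}, it could rescale simples on a gerbe. If the upgrade fails, the fallback is weaker: I would argue only that two such isomorphisms induce the same map on the set of isomorphism classes of objects supported at points, and deduce $f=f'$ from that.
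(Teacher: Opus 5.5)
Your existence argument and your proof that $\ecal{L}$ and $n$ are unique follow the paper's proof. The paper also extends over the coarse isomorphism of Proposition~\ref{prop-isoofcoarsespaces} via Proposition~\ref{prop-extendingmorphisms}, extends $\ecal{L}$ by Lemma~\ref{lemma-hartogs}, reads off (i) and (ii) from Propositions~\ref{prop-standardonthebigopen} and \ref{prop-isoofcoarsespaces} and (\ref{equn-bijectiononclosedpoints}), and recovers $\ecal{L}, n$ from $F(\ecal{O}_{\mathcal{X}})$ on a big open. You also supply details the paper leaves implicit: the codimension of $\mathcal{Y}\setminus\mathcal{V}$ via Lemma~\ref{lemma-supports}, why the extension is an isomorphism, and intersecting the big opens of two competing triples.

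The gap is the uniqueness of $f$, which you leave unproved. The obstacle you anticipate is self-inflicted: you need neither Tannaka duality nor a monoidal upgrade. By Lemma~\ref{lemma-separationforstacks}, it is enough that $f$ and $f'$ agree on a dense open of $\mathcal{Y}$. Take the maximal open $W\subset\mathcal{Y}$ representable by a scheme; both $f$ and $f'$ carry it onto the corresponding open of $\mathcal{X}$. There are no gerbes on $W$, so the rescaling phenomenon from the example before Lemma~\ref{lemma-hombetweentensorfunctors} is irrelevant.

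The paper finishes exactly this way. Condition (ii) pins down $f$ on closed points, this pins down $f|_W$ by \cite[Tag 0G05]{stacks-project}, and Lemma~\ref{lemma-separationforstacks} does the rest. Your own sentence ``hence to induce the same coarse isomorphism'' already contains this step, but you never used it.

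If you would rather use (i) as you intended, proceed as follows. This route also avoids relying on closed points alone to determine a morphism when $k$ is not algebraically closed.
\begin{enumerate}
\item Restrict your natural isomorphism $f^*(-)|_{\mathcal{V}'}\cong f'^*(-)|_{\mathcal{V}'}$ further to $W\cap\mathcal{V}'$.
\item Since $f$ and $f'$ agree topologically by (ii), both functors factor through the Serre quotient $\operatorname{Coh}(\mathcal{X})\to\operatorname{Coh}(f(W\cap\mathcal{V}'))$, and the isomorphism descends.
\item For schemes, the uniqueness part of Gabriel's theorem (as in Proposition~\ref{prop-pointstopoints}) shows that an isomorphism of merely $k$-linear functors $g^*\cong g'^*$ between categories of coherent sheaves already forces $g=g'$.
\end{enumerate}
Either way the fix is short, but as written the uniqueness of $f$ is not established.
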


\begin{proof}
There is an isomorphism $X \cong Y$ which agrees with the isomorphism $\mathcal{U} \cong \mathcal{V}$ on their shared open, and $\mathcal{U}, \mathcal{V}$ are opens whose complements have codimension $\geq 2$. Hence there is an isomorphism $\mathcal{X} \cong \mathcal{Y}$ which agrees with both of them by Proposition \ref{prop-extendingmorphisms}. We abuse notation and call this isomorphism $f : \mathcal{Y} \to \mathcal{X}$. Similarly, the line bundle $\ecal{L}$ on $\mathcal{V}$ found earlier extends to a line bundle on $\mathcal{Y}$ by Lemma \ref{lemma-hartogs}, which we still call $\ecal{L}$. Now we must check that (i) and (ii) hold. Part (i) follows immediately from Proposition \ref{prop-standardonthebigopen}. Next, (ii) is by Proposition \ref{prop-isoofcoarsespaces} and the definition of the bijection (\ref{equn-bijectiononclosedpoints}).

For uniqueness of $f, \ecal{L}, n$, note that $n, \ecal{L}$ are uniquely determined by $F(\ecal{O}_\mathcal{X})|_{\mathcal{V}} \cong \ecal{L}[n]|_{\mathcal{V}}$ since the codimension of the complement of $\mathcal{V}$ is $\geq 2$, see Lemma \ref{lemma-hartogs}. Finally, the isomorphism $f$ is determined on closed points by the rule (ii). That uniquely determines $f$ on the dense open which is representable by a scheme by \cite[\href{https://stacks.math.columbia.edu/tag/0G05}{Tag 0G05}]{stacks-project}, and that in turn determines $f$ on all of $\mathcal{Y}$ according to Lemma \ref{lemma-separationforstacks}.
\end{proof}

In contrast to the case of schemes (Theorem \ref{theorem-reconstruction-for-schemes}), Theorem \ref{theorem-bo-tame-stack} does not imply all the autoequivalences of $D^b_{\operatorname{Coh}} (\cal X)$ are standard. In order to clarify how they may fail to be standard, let us first observe the following.
\begin{construction}
\label{construction-localsubgroup}
     Let $\aut D^b_{\operatorname{Coh}}(\cal X)$ denote the group of (Fourier--Mukai) autoequivalences of $D^b_{\operatorname{Coh}}(\cal X)$ and $\operatorname{Std}(\cal X) := (\aut(\cal X) \ltimes \pic \cal X) \times \bb Z[1]$ denote the subgroup of standard autoequivalences. Now, we have a well-defined surjective group homomorphism
    \[
    \rho: \aut D^b_{\operatorname{Coh}}(\cal X) \surj \operatorname{Std}(\cal X)
    \]
    given by sending $F$ to $Lf^*(-)\otimes^{\mathbb{L}} \ecal L[n]$, where $(f, \ecal L, n)$ is uniquely determined by $F$ in Theorem \ref{theorem-bo-tame-stack}. Indeed, since in the notation of Proposition \ref{prop-standardonthebigopen}, one has $\Phi_\cal V \circ \Psi_\cal V = (\Phi \circ \Psi)_\cal V$, the uniqueness in Theorem \ref{theorem-bo-tame-stack} shows that $\rho$ is a group homomorphism. It is also clear that $\rho$ is a split surjection. 
\end{construction}
\begin{definition}
    We say an autoequivalence $\Phi \in \ker \rho =: \operatorname{Loc}(\cal X)$ is \textit{local}. By construction, an autoequivalence $\Phi$ is local if and only if 
    (i) there are natural isomorphisms 
    $$
    F(\ecal{F})|_{\mathcal{U}} \cong \ecal{F}_{\mathcal{U}}
    $$
    for $\ecal{F} \in \operatorname{Coh}\mathcal{X}$ where $\mathcal{U} \subset \mathcal{X}$ is the open found in Proposition \ref{prop-structureawayfromcodim2}; and (ii) for any closed point $x \in \cal X$, 
    \[
    \Phi(D^b_{\operatorname{Coh},\{x\}}(\cal X)) = D^b_{\operatorname{Coh},\{x\}}(\cal X). 
    \]  
    By construction, we have the decomposition
    \[
    \aut D^b_{\operatorname{Coh}}(\cal X) = \operatorname{Loc}(\cal X) \rtimes \operatorname{Std}(\cal X). \qedhere 
    \]
\end{definition}
In our setup, $\operatorname{Loc}(\cal X)$ is indeed determined locally at closed points. 


\begin{lemma}\label{lemma-local-rigid}
Let $\Phi \in \operatorname{Loc}(\cal X)$. If $\Phi$ and $\Phi\inv$ satisfy condition (ii) of \S\ref{subsection-conditions}, then $\Phi \cong \id_{D^b_{\operatorname{Coh}}(\cal X)}$.
\end{lemma}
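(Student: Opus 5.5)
By Lemma \ref{lemma-cohtocoh}, applied to $\Phi$ and $\Phi^{-1}$ (both satisfy condition (ii)), $\Phi$ restricts to an autoequivalence of the abelian category $\operatorname{Coh}\mathcal{X}$. The plan is to upgrade this to a symmetric monoidal autoequivalence of $\operatorname{Coh}\mathcal{X}$. Coherent Tannaka duality will then identify it with $g^*$ for some automorphism $g$. The local conditions will force $g \cong \id$. Finally, the almost ample set argument will extend the isomorphism from $\operatorname{Coh}$ to all of $D^b_{\operatorname{Coh}}$.

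\textbf{Step 1: the monoidal structure.} Since $\Phi$ is local, $\Phi(\ecal{O}_{\mathcal{X}})|_{\mathcal{U}} \cong \ecal{O}_{\mathcal{U}}$. By Lemma \ref{lemma-vecttovect}, $\Phi(\ecal{O}_\mathcal{X})$ is a line bundle, and Lemma \ref{lemma-hartogs} then gives $\Phi(\ecal{O}_{\mathcal{X}}) \cong \ecal{O}_{\mathcal{X}}$. Since $\Phi$ commutes with the Serre functor, there are natural isomorphisms $\Phi(-\otimes\omega_{\mathcal{X}}) \cong \Phi(-)\otimes\omega_{\mathcal{X}}$.

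Proposition \ref{prop-itsmonoidal} needs every coherent sheaf to be a quotient of positive powers of a single line bundle. This fails for $\omega_{\mathcal{X}}$ itself, since it need not be point-wise $\otimes$-generating at points outside $\mathcal{U}$. I would therefore replace $\omega_{\mathcal{X}}$ by an auxiliary line bundle. Note that $\Phi$ commutes with $-\otimes\ecal{M}$ for every line bundle $\ecal{M}$, by the following argument. $\Phi(\ecal{M})$ is a line bundle agreeing with $\ecal{M}$ on $\mathcal{U}$, so $\Phi(\ecal{M}) \cong \ecal{M}$ by Hartogs. The functor $\Phi' = (-\otimes \ecal{M}^{-1})\circ\Phi\circ(-\otimes\ecal{M})$ is again local and satisfies (ii). Since everything then reduces to showing $\Phi' \cong \Phi$ on $\operatorname{Coh}$, this is circular as stated.

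So instead I would work locally on the coarse space, which is a scheme after passing to suitable opens. Alternatively, I would use that $\mathcal{X}$ has the resolution property, being a tame stack with generically trivial stabilizer and quasi-projective coarse space. In that case I would pick a line bundle $\ecal{A}$ whose restriction to every residual gerbe is faithful. The main obstacle is producing such an $\ecal{A}$, or otherwise getting the tensor-compatibility needed for Proposition \ref{prop-itsmonoidal} at the finitely many stacky points outside $\mathcal{U}$.

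\textbf{Step 2: Tannaka duality.} Once $\Phi|_{\operatorname{Coh}}$ is symmetric monoidal, \cite[Theorem 1.1]{Hall-Rydh-coherent} gives an automorphism $g$ of $\mathcal{X}$ with $\Phi|_{\operatorname{Coh}} \cong g^*$. On $\mathcal{U}$ we have $g^* \cong \id$ as monoidal functors, and the uniqueness in Tannaka duality gives $g|_{\mathcal{U}} \cong \id$. Lemma \ref{lemma-separationforstacks} then gives $g \cong \id_{\mathcal{X}}$, so $\Phi|_{\operatorname{Coh}\mathcal{X}} \cong \id$.

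\textbf{Step 3: extension to the derived category.} As $\mathcal{X}$ is proper of positive dimension, $\operatorname{Coh}\mathcal{X}$ admits an almost ample set. This is the stacky analogue of \cite[Lemma 3.3.2]{olander2022resolutions}, which I expect to extend via the coarse space together with a generating vector bundle. With that in hand, \cite[Proposition 3.3]{CanonacoStellari2014} upgrades the isomorphism on $\operatorname{Coh}$ to $\Phi \cong \id_{D^b_{\operatorname{Coh}}(\mathcal{X})}$.
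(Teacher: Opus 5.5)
Your Step 1 is where the argument breaks. The obstacle you flag is not a technicality: on this route it cannot be overcome in general.

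\begin{itemize}
\item Proposition \ref{prop-itsmonoidal} requires a line bundle $\ecal{A}$ such that every coherent sheaf is a quotient of sums of positive powers of $\ecal{A}$. This forces $\ecal{A}$ to be point-wise $\otimes$-generating. By Lemma \ref{lemma-cyclicstabilizers}, that is impossible as soon as some residual gerbe outside $\mathcal{U}$ is not of the form $B\mu_n$. For instance, the $(\mathbb{Z}/2\mathbb{Z})^2$-stabilizers in Example \ref{example-multiple-crepant} rule it out.
\item Even when such an $\ecal{A}$ exists, you would need a natural isomorphism $\Phi(-\otimes\ecal{A})\cong\Phi(-)\otimes\ecal{A}$ on all of $\operatorname{Coh}\mathcal{X}$. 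As you note yourself, the only tensor compatibility you get for free is with $\omega_{\mathcal{X}}$, via the Serre functor. That bundle is only guaranteed to be point-wise $\otimes$-generating on $\mathcal{U}$ and can fail at points of $\mathcal{X}\setminus\mathcal{U}$. On $\mathcal{P}(1,1,2)$, for example, it restricts trivially to the stacky point (Example \ref{example-spherical-twist-on-p112}).
\item Working locally on the coarse space does not help, because the difficulty sits at exactly those finitely many points.
\item Quasi-projectivity of the coarse space, and hence the resolution property, is not available. The coarse space is only a proper normal algebraic space.
\item Step 2 would also need the monoidal structure to restrict over $\mathcal{U}$ to a \emph{monoidal} isomorphism with the identity before Tannakian uniqueness applies.
\end{itemize}

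The missing idea is that no monoidal structure and no Tannaka duality are needed. The codimension-$2$ condition does the work directly.

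\begin{enumerate}
\item By Lemma \ref{lemma-cohtocoh} and Corollary \ref{corollary-reflexivetoreflexive}, $\Phi$ restricts to an exact autoequivalence $\Phi^\heartsuit$ of $\operatorname{Coh}\mathcal{X}$ that preserves reflexive sheaves.
\item Locality gives $j^*\circ\Phi^\heartsuit\cong j^*$, where $j:\mathcal{U}\to\mathcal{X}$ is the inclusion.
\item Since $\operatorname{codim}(\mathcal{X}\setminus\mathcal{U})\geq 2$ and $\mathcal{X}$ is normal, $j^*$ and $j_*$ are quasi-inverse equivalences $\operatorname{Ref}(\mathcal{X})\simeq\operatorname{Ref}(\mathcal{U})$. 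Hence for reflexive $\ecal{F}$ you get natural isomorphisms $\Phi^\heartsuit(\ecal{F})\cong j_*j^*\Phi^\heartsuit(\ecal{F})\cong j_*j^*\ecal{F}\cong\ecal{F}$.
\item By Lemma \ref{lemma-resolutionbyreflexives}, every coherent sheaf has a presentation $\ecal{R}_2\to\ecal{R}_1\to\ecal{F}\to 0$ by reflexive sheaves. Exactness of $\Phi^\heartsuit$ then extends the isomorphism to all of $\operatorname{Coh}\mathcal{X}$.
\item Your Step 3 finishes the proof as in the paper, with the almost ample set supplied by \cite{peng2024equivalences}.
\end{enumerate}
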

\begin{proof}
    By Lemma \ref{lemma-cohtocoh} and Corollary \ref{corollary-reflexivetoreflexive}, $\Phi$ restricts to an exact autoequivalence 
    \[
    \Phi^\heartsuit:\operatorname{Coh}(\cal X) \to \operatorname{Coh}(\cal X)
    \]
    that preserves reflexive coherent sheaves. We claim $\Phi^\heartsuit$ is naturally isomorphic to $\id_{\operatorname{Coh}(\cal X)}$. Let $j: \cal U \inj \cal X$ denote the open immersion. Since $\Phi \in \operatorname{Loc}(\cal X)$, we have a natural isomorphism $j^*\circ \Phi^\heartsuit \cong j^*: \operatorname{Coh}(\cal X) \to \operatorname{Coh}(\cal U)$. Recall that $\operatorname{Ref}$ denotes the category of reflexive coherent sheaves. Now, as in \cite[\href{https://stacks.math.columbia.edu/tag/0EBJ}{Tag 0EBJ}]{stacks-project}, the functors $j^*, j_*$ induce quasi-inverse equivalences $\operatorname{Ref}(\mathcal{X}) \cong \operatorname{Ref}(\mathcal{U})$ (indeed, one can check smooth locally that $j_*$ preserves reflexive coherent sheaves and the unit and co-unit of $(j^*, j_*) : \operatorname{Ref}(\cal X) \leftrightarrow \operatorname{Ref}(\cal U)$ 
    are isomorphisms). 
    Therefore, we obtain a natural isomorphism $\Phi^\heartsuit |_{\operatorname{Ref}(\cal X)}\cong \id_{\operatorname{Ref}(\cal X)}: \operatorname{Ref}(\cal X) \to \operatorname{Ref}(\cal X)$. Using Lemma \ref{lemma-resolutionbyreflexives},
    we can extend the natural isomorphism to $\Phi^\heartsuit \cong \id_{\operatorname{Coh}(\cal X)}: \operatorname{Coh}(\cal X) \to \operatorname{Coh}(\cal X)$ by using resolutions by reflexive coherent sheaves as in the last part of the proof of Proposition \ref{prop-itsmonoidal}. Now, by the comment right after \cite{peng2024equivalences}*{Definition 6.1}, $\operatorname{Coh}(\cal X)$ has an almost ample set in the sense of \cite{CanonacoStellari2014} and we apply \cite{CanonacoStellari2014}*{Proposition 3.3} to see it extends to a natural isomorphism $\Phi \cong \id_{D^b_{\operatorname{Coh}}(\cal X)}$. 
\end{proof}
\begin{prop}\label{prop-local-determination}
    The restriction homomorphism $r: \operatorname{Loc}(\cal X) \to \prod_{x} \aut D^b_{\operatorname{Coh},\{x\}}(\cal X)$, where the product is taken over closed points of $\cal X \setminus \cal U$, is injective.
\end{prop}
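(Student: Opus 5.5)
Since $r$ is a group homomorphism, injectivity amounts to showing that its kernel is trivial. Let $\Phi \in \operatorname{Loc}(\mathcal{X})$ with $r(\Phi) = 1$, i.e.\ for each closed point $x \in \mathcal{X}\setminus\mathcal{U}$ the restriction of $\Phi$ to $D^b_{\operatorname{Coh},\{x\}}(\mathcal{X})$ is isomorphic to the identity. By Lemma \ref{lemma-local-rigid}, it suffices to prove that $\Phi$ and $\Phi^{-1}$ satisfy condition (ii) of \S\ref{subsection-conditions}. Then $\Phi \cong \operatorname{id}$, so the kernel of $r$ is trivial.

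Condition (ii) is checked separately at two kinds of closed points, according to whether $x$ lies outside or inside $\mathcal{U}$.

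\emph{Closed points $x \notin \mathcal{U}$.} By hypothesis, $\Phi(\kappa(x,\xi)) \cong \kappa(x,\xi)$ for every generalized closed point $(x,\xi)$, and the same holds for $\Phi^{-1}$. So condition (ii) holds at these points with no further work.

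\emph{Closed points $x \in \mathcal{U}$.} First, $\Phi$ preserves $D^b_{\operatorname{Coh},\{x\}}(\mathcal{X})$, by the characterization of local autoequivalences. It sends point-like objects to point-like objects, and by Lemma \ref{lemma-pointsarepointlike} each $\kappa(x,\xi)$ is point-like. Since $\omega_{\mathcal{X}}|_{\mathcal{U}}$ is point-wise $\otimes$-generating, Proposition \ref{prop-classifypointlikecycliccase} gives $\Phi(\kappa(x,\xi)) \cong \kappa(x,\xi')[n]$ for some $\xi'$ and some integer $n$. It remains to show $n = 0$. For this I would use the isomorphisms $\Phi(\mathcal{F})|_{\mathcal{U}} \cong \mathcal{F}|_{\mathcal{U}}$, natural in coherent sheaves $\mathcal{F}$. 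Applying this to $\mathcal{F} = \kappa(x,\xi)$, which is supported in $\mathcal{U}$, gives $\Phi(\kappa(x,\xi))|_{\mathcal{U}} \cong \kappa(x,\xi)|_{\mathcal{U}}$. Since restriction to $\mathcal{U}$ is fully faithful on objects supported at $x$, this forces $\Phi(\kappa(x,\xi)) \cong \kappa(x,\xi)$. The same argument applies to $\Phi^{-1}$, since $\operatorname{Loc}(\mathcal{X})$ is a subgroup.

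The step I expect to be the main obstacle is justifying the identification on objects supported inside $\mathcal{U}$. One must confirm that the ``natural isomorphisms $F(\mathcal{F})|_{\mathcal{U}} \cong \mathcal{F}|_{\mathcal{U}}$'' in the definition of local autoequivalences really are given by the Verdier-quotient functor $F_{\mathcal{U}}$, so that they apply to sheaves supported at points of $\mathcal{U}$, possibly after the shift normalization. If this is delicate, the fallback is to argue as in Proposition \ref{prop-uniform-shift}, using $\Phi(\mathcal{O}_{\mathcal{X}})|_{\mathcal{U}} \cong \mathcal{O}_{\mathcal{U}}$. In that argument the shift is pinned down by the nonvanishing of $\operatorname{Ext}^0(\mathcal{O},\kappa(x,\xi))$, which is preserved by $\Phi$, and this forces $n = 0$ uniformly.
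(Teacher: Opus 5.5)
Your proposal is correct and follows the paper's argument: the paper also notes that an element of $\ker r$ fixes every generalized closed point sheaf, and then concludes by Lemma \ref{lemma-local-rigid}. This holds outside $\mathcal{U}$ because $r(\Phi)=1$, and inside $\mathcal{U}$ because $\Phi$ preserves $D^b_{\operatorname{Coh},\{x\}}(\mathcal{X})$ and restricts to the identity on $\mathcal{U}$, where restriction is fully faithful on objects supported at $x$.

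The detour through Proposition \ref{prop-classifypointlikecycliccase} and your fallback are unnecessary, for two reasons. First, the isomorphisms $\Phi(\mathcal{F})|_{\mathcal{U}}\cong\mathcal{F}|_{\mathcal{U}}$ in the definition of $\operatorname{Loc}(\mathcal{X})$ come from Theorem \ref{theorem-bo-tame-stack}(i) with $f=\mathrm{id}$, trivial line bundle and $n=0$, so they hold for every coherent sheaf on $\mathcal{X}$ with no shift. Second, $\Phi^{-1}(\kappa(x,\xi))\cong\kappa(x,\xi)$ follows at once from $\Phi(\kappa(x,\xi))\cong\kappa(x,\xi)$.
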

\begin{proof}
    By construction, $\ker r$ consists of autoequivalences that preserve every generalized closed point sheaf and restrict to the identity over $\mathcal{U}$. Hence, $\ker r = \{\id_{D^b_{\operatorname{Coh}}(\cal X)}\}$ by Lemma \ref{lemma-local-rigid}. 
\end{proof}

In the next section, we will give a sufficient condition for the group $\operatorname{Loc}(\mathcal{X})$ to be trivial. Outside of this case, it is still sometimes possible to determine the group $\operatorname{Loc}(\mathcal{X})$ by analyzing the images of $\operatorname{Loc}(\mathcal{X}) \to \operatorname{Aut} D^b_{\operatorname{Coh}, x}(\mathcal{X})$ for closed points $x$. We will see an example of this in \ref{subsection-stackysurfaceexamples}.

\subsection{Reconstruction for stacks: Strong form} \label{subsection-reconstruction-strong-form}

In this section, we prove Theorem \ref{thm-canonicalgeneratesintroversion}. The proof has significant overlap with the previous section, so we sometimes just indicate the necessary modifications. Let $\mathcal{X}$ be a smooth proper connected tame stack over a field $k$ of dimension $d > 0$ with trivial generic stabilizer. Assume furthermore 
that its coarse space is a scheme, and that $\omega_{\mathcal{X}}$ is point-wise $\otimes$-generating (see Definition \ref{defn-pointwisetensorgenerating}).
(so that we can take $\cal U = \cal X$ in Proposition \ref{prop-structureawayfromcodim2}}). Let $\mathcal{Y}$ be a smooth proper connected tame stack over $k$ with trivial generic stabilizer. Let $F : D^b_{\operatorname{Coh}}(\mathcal{X}) \to D^b_{\operatorname{Coh}}(\mathcal{Y})$ be a $k$-linear equivalence of triangulated categories. 

\begin{lemma}
\label{lemma-samedimesionforstacks}
    The dimension of $\mathcal{Y}$ is $d$.
\end{lemma}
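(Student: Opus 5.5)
We will imitate Lemma \ref{lemma-samedimension}, using the Serre functors on the two sides. Since $\mathcal{X}$ and $\mathcal{Y}$ are smooth and proper, $D^b_{\operatorname{Coh}} = \perf$ on both. By Lemma \ref{lemma-isserrefunctor} they carry Serre functors $S_{\mathcal{X}} = -\otimes^{\mathbb{L}}\omega_{\mathcal{X}}[d]$ and $S_{\mathcal{Y}} = -\otimes^{\mathbb{L}}\omega_{\mathcal{Y}}[e]$, where $e = \dim \mathcal{Y}$. Here $\mathcal{Y}$ is connected and smooth, hence equidimensional, so $\omega_{\mathcal{Y}}^\bullet = \omega_{\mathcal{Y}}[e]$ by \cite[Corollary 3.2]{hall2024generalizedbondalorlovfaithfulnesscriterion}. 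A $k$-linear exact equivalence commutes with Serre functors, so $F \circ S_{\mathcal{X}}^N \cong S_{\mathcal{Y}}^N \circ F$ for every $N$.

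The key step is to pick a good test object. Fix any closed point $x$ of $\mathcal{X}$ and any generalized closed point $(x,\xi)$. Since $i_x^*\omega_{\mathcal{X}}$ has finite order $N>0$ in $\operatorname{Pic}(\mathcal{G}_x)$ by Corollary \ref{corollary-finitelymanysimples}, the proof of Lemma \ref{lemma-pointsarepointlike} gives
\[
S_{\mathcal{X}}^N\kappa(x,\xi) \cong \kappa(x,\xi)[Nd].
\]
Write $P = F(\kappa(x,\xi))$, a nonzero object of $D^b_{\operatorname{Coh}}(\mathcal{Y})$. Applying $F$ and using the commutation with Serre functors, we get
\[
P[Nd] \cong S_{\mathcal{Y}}^N P = P\otimes^{\mathbb{L}}\omega_{\mathcal{Y}}^{\otimes N}[Ne].
\]
Hence $P \cong P\otimes^{\mathbb{L}}\omega_{\mathcal{Y}}^{\otimes N}[N(e-d)]$.

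We finish by comparing cohomological amplitude. Tensoring with the line bundle $\omega_{\mathcal{Y}}^{\otimes N}$ is exact on quasi-coherent sheaves, so it preserves the set of degrees in which cohomology sheaves are nonzero. The shift $[N(e-d)]$ moves that set by $-N(e-d)$. Since $P \neq 0$ has bounded cohomology, the maximal nonzero degree must be preserved, and with $N>0$ this forces $e = d$.

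I expect no real obstacle here. The only points to check are that the Serre functor on $\mathcal{Y}$ really has shift equal to $\dim\mathcal{Y}$, which needs $\mathcal{Y}$ to be of pure dimension (true since it is connected and smooth), and that we need $N>0$ rather than $N=1$ because $\omega_{\mathcal{X}}$ may restrict nontrivially to the residual gerbe. Notably, no nowhere torsion hypothesis on $\omega_{\mathcal{Y}}$ is needed.
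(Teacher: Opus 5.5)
Your proposal is correct and follows the paper's proof essentially verbatim. Both arguments take a generalized closed point sheaf $\kappa(x,\xi)$ on which $\omega_{\mathcal{X}}^{\otimes N}$ acts trivially, use that $F$ commutes with the Serre functors to get $F(\kappa(x,\xi)) \cong F(\kappa(x,\xi)) \otimes \omega_{\mathcal{Y}}^{\otimes N}[N(e-d)]$, and then compare cohomological amplitude as in Lemma \ref{lemma-samedimension}; your remark that $\mathcal{Y}$ is equidimensional, so that $\omega_{\mathcal{Y}}^\bullet = \omega_{\mathcal{Y}}[e]$, makes explicit a point the paper leaves implicit.
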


\begin{proof}
    Let $\mathcal{Y}$ have dimension $e$. Let $\kappa(x, \xi)$ be a generalized closed point sheaf on $\mathcal{X}$. Then for some $n> 0$, we have $\ecal{F} \otimes \omega_{\mathcal{X}}^{\otimes n} \cong \ecal{F}$, see Lemma \ref{lemma-pointsarepointlike}. Hence denoting $S_{\mathcal{X}}, S_{\mathcal{Y}}$ the Serre functors,
    $$
   F(\ecal{F}) \cong  F(S_{\mathcal{X}}^n(\ecal{F})[-nd])\cong S^n_{\mathcal{Y}}(F(\ecal{F}))[-nd] = F(\ecal{F})\otimes \omega_{\mathcal{Y}}^{\otimes n}[n(e-d)].
    $$
    Considering top and bottom cohomology sheaves of $F(\ecal{F})$ as in Lemma \ref{lemma-samedimension}, we see that $d = e$, as needed.
\end{proof}

\begin{lemma}
    The functors $F$ and $F^{-1}$ satisfy condition (i) of \ref{subsection-conditions}. 
\end{lemma}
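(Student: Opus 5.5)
The plan is to follow the strategy of Proposition \ref{prop-pointstopoints}, using point-like objects on the $\mathcal{X}$ side. Here $\omega_{\mathcal{X}}$ is nowhere torsion and point-wise $\otimes$-generating (since $\mathcal{X}$ is as in Theorem \ref{thm-bo-tamestack-introversion} and we may take $\mathcal{U}=\mathcal{X}$), but nothing is assumed about $\omega_{\mathcal{Y}}$. Because of this asymmetry, I would first treat $F^{-1}$ and then deduce the statement for $F$.

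\emph{Step 1 ($F^{-1}$ on point sheaves).} Let $(y,\eta)$ be a generalized closed point of $\mathcal{Y}$. By Lemma \ref{lemma-pointsarepointlike}, $\kappa(y,\eta)$ is point-like. Point-likeness is preserved by equivalences, using Lemma \ref{lemma-samedimesionforstacks} to match the shifts $[Nd]$. So $F^{-1}(\kappa(y,\eta))$ is point-like on $\mathcal{X}$. Lemma \ref{lemma-pointlikeandnowheretorsion} then shows its support is a single closed point $x$. Since $i_x^*\omega_{\mathcal{X}}$ is $\otimes$-generating, Proposition \ref{prop-classifypointlikecycliccase} gives
\[
F^{-1}(\kappa(y,\eta))\cong \kappa(x,\xi)[n]
\]
for some generalized closed point $(x,\xi)$ and some integer $n$.

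For a fixed $y$, all the $\kappa(y,\eta)$ have images at the same $x$. Indeed, $\mathcal{Y}$ has trivial generic stabilizer, so by Proposition \ref{prop-faithfulnormalbundle} and Lemma \ref{lemma-genericallyaschemeextconnected} the family $\{\kappa(y,\eta)\}_\eta$ is $\operatorname{Ext}^1$-connected. Hence so is its image under $F^{-1}$. Point sheaves at distinct points of $\mathcal{X}$ have vanishing $\operatorname{Ext}$ groups by Lemma \ref{lemma-ext of skyscrapers}, so an $\operatorname{Ext}^1$-chain cannot move between points. This proves condition (i) for $F^{-1}$ and gives a map $y\mapsto x(y)$ on closed points.

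\emph{Step 2 (every point sheaf of $\mathcal{X}$ is hit).} Let $x$ be a closed point of $\mathcal{X}$ and set $M=\kappa(x,\xi)$. Then $F(M)\neq 0$, so by (\ref{equn-topcohomologysheaf}) there is some $\kappa(y,\eta)$ and some $n$ with $\operatorname{Hom}(F(M),\kappa(y,\eta)[n])\neq 0$. Applying $F^{-1}$, the point sheaf $F^{-1}(\kappa(y,\eta))$ has a nonzero morphism from $M$, so it is supported at $x$, i.e. $x=x(y)$. It is therefore a shift of some $\kappa(x,\xi')$.

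All point sheaves at $x$ are obtained from $\kappa(x,\xi')$ by tensoring with powers of $\omega_{\mathcal{X}}$, because $i_x^*\omega_{\mathcal{X}}$ generates the characters of $B\mu_N$. Moreover, $F^{-1}$ intertwines $-\otimes\omega_{\mathcal{Y}}$ with $-\otimes\omega_{\mathcal{X}}$ up to shift, as it commutes with Serre functors. Hence every $\kappa(x,\xi)$ is isomorphic to $F^{-1}(\kappa(y,\eta'))[n']$ for suitable $\eta'$ and $n'$. In particular $F(\kappa(x,\xi))\cong\kappa(y,\eta')[-n']$ has support a single closed point.

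\emph{Step 3 (same point maps to same point).} Suppose $y_1\neq y_2$ with $x(y_1)=x(y_2)=x$. By Step 2 and Lemma \ref{lemma-generatingset}, $D^b_{\operatorname{Coh},\{x\}}(\mathcal{X})$ is classically generated by $F^{-1}$ of point sheaves at $y_1$ alone. This gives $F(D^b_{\operatorname{Coh},\{x\}}(\mathcal{X}))\subset D^b_{\operatorname{Coh},\{y_1\}}(\mathcal{Y})$. But $F^{-1}(\kappa(y_2,\eta))$ lies in $D^b_{\operatorname{Coh},\{x\}}(\mathcal{X})$, so $\kappa(y_2,\eta)$ would be supported at $y_1$, a contradiction. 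Thus $y\mapsto x(y)$ is a bijection, and all $F(\kappa(x,\xi))$ with $x$ fixed are supported at the single point $y$ with $x(y)=x$. This is condition (i) for $F$.

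The main obstacle I expect is Step 2. Surjectivity onto all points of $\mathcal{X}$, and onto all $\xi$ at each point, is exactly where the point-wise $\otimes$-generation of $\omega_{\mathcal{X}}$ and the compatibility of $F$ with Serre functors must be combined carefully, since nothing controls $\omega_{\mathcal{Y}}$.
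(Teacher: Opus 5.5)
Your proposal is correct and follows essentially the same route as the paper: you handle $F^{-1}$ on point sheaves via point-likeness, Proposition \ref{prop-classifypointlikecycliccase} and $\operatorname{Ext}^1$-connectedness, and then show that every point sheaf of $\mathcal{X}$ is hit using the spanning-class property, the compatibility of $F$ with Serre functors, and the point-wise $\otimes$-generation of $\omega_{\mathcal{X}}$. Your Step 3 is valid but redundant, because in Step 2 the point $y$ is fixed once and every $\kappa(x,\xi)$ is already exhibited as a shift of $F^{-1}(\kappa(y,\eta)\otimes\omega_{\mathcal{Y}}^{\otimes i})$ at that same $y$; you are also slightly more careful than the paper, which does not explicitly invoke Lemma \ref{lemma-pointlikeandnowheretorsion} to get single-point support before applying Proposition \ref{prop-classifypointlikecycliccase}, nor Lemma \ref{lemma-samedimesionforstacks} to transfer point-likeness along $F^{-1}$.
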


\begin{proof}
    By Lemma \ref{lemma-pointsarepointlike}, we see that for any generalized closed point sheaf $\kappa(y, \eta)$ on $\mathcal{Y}$, the object $F^{-1}(\kappa(y, \eta))$ is a point-like object of $D^b_{\operatorname{Coh}}(\mathcal{X}).$ By Proposition \ref{prop-classifypointlikecycliccase}, there is an integer $n$ such that $F^{-1}(\kappa(y, \eta)) \cong \kappa(x, \xi)[n]$ for some generalized closed point sheaf $\kappa(x, \xi)$ on $\mathcal{X}$. Consider now the set of all generalized closed point sheaves supported at $y$. It is $\operatorname{Ext}^1$-connected by Lemma \ref{lemma-genericallyaschemeextconnected} and Proposition \ref{prop-faithfulnormalbundle}. Hence its image under $F^{-1}$ is $\operatorname{Ext}^1$-connected, from which we see that all the generalized closed point sheaves in this set are supported at the same closed point $x$, hence $F^{-1}$ satisfies condition (i).
    
   To show $F$ satisfies property (i), first note that for every closed point $x$ of $\mathcal{X}$, there exists a generalized closed point sheaf $\kappa(y, \eta)$ such that $F^{-1}(\kappa(y, \eta))$ is a shift of a generalized closed point sheaf $\kappa(x, \xi)$ supported at $x$. This follows from the previous paragraph and the fact that the generalized point sheaves are a spanning class, see \ref{subsection-generalizedpoints}. Now it follows from the assumptions that every closed point sheaf $\kappa(x, \xi')$ supported at $x$ can be obtained from $\kappa(x, \xi)$ by repeatedly tensoring with $\omega_{\mathcal{X}}$. Since $F$ commutes with Serre functors, we see that $F$ of any such $\kappa(x, \xi')$ is isomorphic to a shift of $\kappa(y, \eta)\otimes \omega_{\mathcal{Y}}^{\otimes i}$ for some integer $i$, and this is a generalized closed point sheaf supported at $y$. Since $x$ and $\kappa(x, \xi')$ were arbitrary, this completes the proof.
\end{proof}

By Lemma \ref{lemma-supports}, there is a well-defined bijection between the set of closed subsets of $\mathcal{Y}$ and the set of closed subsets of $\mathcal{X}$ given by
\begin{equation}
\label{equn-thebijectionstrong}
\operatorname{Supp}M \mapsto \operatorname{Supp}F(M).
\end{equation}

\begin{lemma}
    There exists an integer $m$ such that, if $G = (-[m]) \circ F$, then the functors $G, G^{-1}$ satisfy condition (ii) of \ref{subsection-conditions}. 
\end{lemma}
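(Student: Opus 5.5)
The plan is to adapt the proof of Proposition \ref{prop-uniform-shift}, with $\mathcal{U}=\mathcal{X}$ and no restriction to opens.

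\emph{Step 1: $F^{-1}$ sends point sheaves to shifted point sheaves.} By the proof of the previous lemma, for every generalized closed point $(y,\eta)$ of $\mathcal{Y}$ there are an integer $n(y,\eta)$ and a generalized closed point $(x,\xi)$ of $\mathcal{X}$ with
\[
F^{-1}(\kappa(y,\eta))\cong\kappa(x,\xi)[n(y,\eta)].
\]
This uses Lemma \ref{lemma-pointsarepointlike} together with Proposition \ref{prop-classifypointlikecycliccase}. The latter applies at every closed point because $\omega_{\mathcal{X}}$ is point-wise $\otimes$-generating.

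\emph{Step 2: every $\kappa(x,\xi)$ is hit.} The same argument shows that every generalized closed point sheaf on $\mathcal{X}$ is, up to shift, of the form $F^{-1}(\kappa(y,\eta))$. Fix a closed point $x$. The bijection on closed points from Lemma \ref{lemma-supports} gives some $\kappa(y,\eta)$ mapping to a shift of some $\kappa(x,\xi')$. Every other $\kappa(x,\xi)$ is obtained from $\kappa(x,\xi')$ by tensoring with powers of $\omega_{\mathcal{X}}$. Since $F^{-1}$ commutes with Serre functors, tensoring $\kappa(y,\eta)$ with the corresponding powers of $\omega_{\mathcal{Y}}$ produces generalized closed point sheaves mapping to shifts of each $\kappa(x,\xi)$.

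\emph{Step 3: the shift is independent of $\eta$.} Fix $y$. The set $\{\kappa(y,\eta)\}_\eta$ is $\operatorname{Ext}^1$-connected by Lemma \ref{lemma-genericallyaschemeextconnected} and Proposition \ref{prop-faithfulnormalbundle}; the latter applies since $\mathcal{Y}$ is smooth with trivial generic stabilizer. Applying $F^{-1}$ preserves $\operatorname{Ext}^1$-connectedness. The unshifted objects $\kappa(x,\xi)$ at a fixed $x$ have no negative Ext and no nonzero Hom between non-isomorphic ones, so Lemma \ref{lemma-uniform-shift-ext1} applies and $n(y,\eta)$ depends only on $y$.

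\emph{Step 4: the shift is independent of $y$.} Apply Lemma \ref{lemma-characterizingvectorbundles} to $K=F^{-1}(\mathcal{O}_{\mathcal{Y}})$ on the connected stack $\mathcal{X}$. For each $(y,\eta)$,
\[
\operatorname{Ext}^{i+n(y)}(K,\kappa(x,\xi))=\operatorname{Ext}^i(\mathcal{O}_{\mathcal{Y}},\kappa(y,\eta)),
\]
which is nonzero only for $i=0$. By Step 2 every $(x,\xi)$ occurs, so condition (ii) of that lemma holds at every closed point of $\mathcal{X}$. Hence $K\cong\mathcal{E}[m]$ for a vector bundle $\mathcal{E}$, and $n(y)=m$ for every $y$.

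\emph{Conclusion.} Set $G=(-[m])\circ F$. Then $G^{-1}$ sends each $\kappa(y,\eta)$ to a generalized closed point sheaf, and Step 2 shows $G$ does as well. So $G$ and $G^{-1}$ satisfy condition (ii) of \ref{subsection-conditions}.

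I expect the main obstacle to be Step 3, specifically verifying the hypotheses of Lemma \ref{lemma-uniform-shift-ext1}. Those hypotheses hold for the unshifted sheaves $\kappa(x,\xi)$, whereas the $\operatorname{Ext}^1$-connectedness is known for the $\kappa(y,\eta)$. The two must be matched through $F^{-1}$, after checking that distinct $\eta$ give non-isomorphic images, which holds because $F^{-1}$ is an equivalence.
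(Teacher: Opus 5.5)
Your proposal is correct and matches the paper. The paper's proof simply refers back to Proposition \ref{prop-uniform-shift}. There, the previous lemma supplies that $F$ and $F^{-1}$ send generalized closed point sheaves to shifts of such. Uniformity of the shift then comes from Lemma \ref{lemma-uniform-shift-ext1}, followed by Lemma \ref{lemma-characterizingvectorbundles} applied to $F^{-1}(\mathcal{O}_{\mathcal{Y}})$, which are exactly your Steps 3 and 4.

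The hypothesis check you worry about in Step 3 is fine, and Step 3 is even dispensable here. Fix $y$ and one $\eta_0$. By condition (i) for $F^{-1}$, every $F^{-1}(\kappa(y,\eta))$ is supported at the same point $x$. Your Serre-twist argument from Step 2 then shows that every $\kappa(y,\eta)$ is isomorphic to $\kappa(y,\eta_0)\otimes\omega_{\mathcal{Y}}^{\otimes j}$ for some $j$. All of these twists carry the same shift.
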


\begin{proof}
    The proof of Proposition \ref{prop-uniform-shift} shows this. In fact, we have showed in the proof of the previous lemma that $F$ and $F^{-1}$ send generalized closed point sheaves to shifts of generalized closed point sheaves, and we need only show that these shifts are uniform, which follows from the second paragraph of the proof of Proposition \ref{prop-uniform-shift}.
\end{proof}


\begin{prop}
\label{prop-standardonbigopenstrong}
    There exists a $k$-isomorphism $\mathcal{Y} \to \mathcal{X}$ and a line bundle $\ecal{L}$ on $\mathcal{Y}$ such that the restriction of $F$ to $\operatorname{Coh}(\mathcal{X})$ is isomorphic to the functor $\ecal{F} \mapsto Lf^*\ecal{F} \otimes ^{\mathbb{L}} \ecal{L}[m].$
\end{prop}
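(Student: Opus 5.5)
The plan follows the proof of Proposition \ref{prop-standardonthebigopen}, now with $\mathcal{U} = \mathcal{X}$. First replace $F$ by $G = (-[m]) \circ F$ using the previous lemma, so that $G$ and $G^{-1}$ satisfy condition (ii) of \ref{subsection-conditions}. Lemma \ref{lemma-cohtocoh} then shows that $G$ and $G^{-1}$ restrict to quasi-inverse equivalences $\operatorname{Coh}\mathcal{X} \leftrightarrow \operatorname{Coh}\mathcal{Y}$. Since $\mathcal{X}$ has trivial generic stabilizer, it has a closed point with trivial stabilizer. Lemma \ref{lemma-vecttovect} therefore shows that $\ecal{L} := G(\ecal{O}_{\mathcal{X}})$ is a line bundle on $\mathcal{Y}$. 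Replace $G$ by $(- \otimes \ecal{L}^{-1}) \circ G$; this is still an equivalence of abelian categories and now sends $\ecal{O}_{\mathcal{X}} \mapsto \ecal{O}_{\mathcal{Y}}$. Because $F$ commutes with Serre functors and $\dim \mathcal{Y} = d$ by Lemma \ref{lemma-samedimesionforstacks}, we also get natural isomorphisms $G(\ecal{F} \otimes \omega_{\mathcal{X}}) \cong G(\ecal{F}) \otimes \omega_{\mathcal{Y}}$. These respect the shift and the twist by $\ecal{L}^{-1}$, since both commute with $-\otimes\omega$.

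Next, localize. The coarse space $X$ is a scheme, so cover it by affine opens $X_i$ and put $\mathcal{U}_i = \pi^{-1}(X_i)$. Since $F$ and $F^{-1}$ satisfy condition (i), Corollary \ref{corollary-passagetoopens} gives corresponding opens $\mathcal{V}_i \subset \mathcal{Y}$ and equivalences $G_i \colon D^b_{\operatorname{Coh}}(\mathcal{U}_i) \to D^b_{\operatorname{Coh}}(\mathcal{V}_i)$ compatible with restriction. The last part of that corollary transports the natural isomorphisms with $\omega$ to each $G_i$. Each $G_i$ still satisfies condition (ii), so it restricts to an equivalence $\operatorname{Coh}\mathcal{U}_i \to \operatorname{Coh}\mathcal{V}_i$ sending $\ecal{O}$ to $\ecal{O}$ and commuting with $-\otimes\omega$. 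The $\mathcal{V}_i$ cover $\mathcal{Y}$, because the support bijection of Lemma \ref{lemma-supports} preserves finite unions and $\bigcap(\mathcal{X}\setminus\mathcal{U}_i) = \emptyset$.

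On each $\mathcal{U}_i$, the line bundle $\omega_{\mathcal{X}}$ is point-wise $\otimes$-generating and the coarse space is affine. By Lemma \ref{lemma-pointwiseonditionfortensorgeneration}, every coherent sheaf on $\mathcal{U}_i$ is a quotient of a finite sum of positive powers of $\omega_{\mathcal{X}}|_{\mathcal{U}_i}$. Proposition \ref{prop-itsmonoidal}, together with Remark \ref{remark-lgoestom} (which identifies $G_i(\omega_{\mathcal{X}}) \cong \omega_{\mathcal{Y}}$), then makes $G_i$ a symmetric monoidal equivalence. The same proposition and remark applied to $G_i^{-1}$ show that its inverse is symmetric monoidal as well. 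Both stacks have a scheme-theoretically dense open subscheme, since they are smooth with trivial generic stabilizer, so Proposition \ref{prop-itsmonoidal} applies.

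Coherent Tannaka duality \cite[Theorem 1.1]{Hall-Rydh-coherent} now gives unique $k$-isomorphisms $f_i\colon \mathcal{V}_i \to \mathcal{U}_i$ with $G_i \cong f_i^*$ as monoidal functors. By uniqueness of the monoidal structure and of $f_i$, these agree on overlaps $\mathcal{V}_i \cap \mathcal{V}_j$ and glue to an isomorphism $f\colon \mathcal{Y} \to \mathcal{X}$, along with natural isomorphisms $G(\ecal{F}) \cong f^*\ecal{F}$. Undoing the twist and shift gives $F(\ecal{F}) \cong Lf^*\ecal{F} \otimes \ecal{L}[m]$ on $\operatorname{Coh}\mathcal{X}$.

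I expect the main obstacle to be the gluing step. One must check that restricting the monoidal structure of $G$ to $\mathcal{U}_i \cap \mathcal{U}_j$ gives the monoidal structures of both $G_i$ and $G_j$. This should follow from the uniqueness clause of Proposition \ref{prop-itsmonoidal}, applied on the intersection, whose coarse space is affine when $X$ is separated. Refining the cover to have affine intersections, if needed, should handle the general case.
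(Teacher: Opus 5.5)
Your proposal is correct and matches the paper's proof, which reruns the argument of Proposition \ref{prop-standardonthebigopen} with $\mathcal{U}=\mathcal{X}$ and $\mathcal{V}=\mathcal{Y}$: shift, untwist by the line bundle $F(\ecal{O}_{\mathcal{X}})$, pass to opens with affine coarse space via Corollary \ref{corollary-passagetoopens}, make each local equivalence symmetric monoidal via Lemma \ref{lemma-pointwiseonditionfortensorgeneration} and Proposition \ref{prop-itsmonoidal}, apply coherent Tannaka duality, and glue. Your handling of the overlaps, namely uniqueness of the monoidal structure on $\mathcal{U}_i\cap\mathcal{U}_j$, whose coarse space is affine because $X$ is separated, spells out what the paper means by ``this uniqueness applied to the overlaps.'' The separate application of Proposition \ref{prop-itsmonoidal} to $G_i^{-1}$ is unnecessary, since a quasi-inverse of a symmetric monoidal equivalence inherits a symmetric monoidal structure.
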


\begin{proof}
    Exactly the same as the proof of Proposition \ref{prop-standardonthebigopen} with $\mathcal{V}$ replaced by $\mathcal{Y}$ and $\mathcal{U}$ replaced by $\mathcal{X}$.
\end{proof}

\begin{theorem}
\label{thm-strongreconstructionforstacks}
    There exist a $k$-isomorphism $\mathcal{Y} \to \mathcal{X}$, a line bundle $\ecal{L}$ on $\mathcal{Y}$, and an integer $m$ such that $F \cong Lf^*(-) \otimes ^{\mathbb{L}} \ecal{L}[m].$
\end{theorem}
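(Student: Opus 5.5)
The plan is to upgrade Proposition \ref{prop-standardonbigopenstrong} from an isomorphism on the heart $\operatorname{Coh}(\mathcal{X})$ to an isomorphism of triangulated functors on all of $D^b_{\operatorname{Coh}}(\mathcal{X})$. We mimic the last step of Proposition \ref{prop-pointstopoints} and of Lemma \ref{lemma-local-rigid}.

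\textbf{Step 1: normalize $F$.} Let $f : \mathcal{Y} \to \mathcal{X}$, $\ecal{L}$ and $m$ be as in Proposition \ref{prop-standardonbigopenstrong}. Put
$$\Psi = Rf_*\big((-)\otimes^{\mathbb{L}}\ecal{L}^{-1}\big)[-m] \circ F,$$
which is an exact $k$-linear autoequivalence of $D^b_{\operatorname{Coh}}(\mathcal{X})$. It suffices to show $\Psi \cong \operatorname{id}$. Since $f$ is an isomorphism, $Rf_* = (f^{-1})^* = Lf^{-1,*}$, so this normalization makes sense.

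\textbf{Step 2: $\Psi$ is the identity on the heart.} By construction $\Psi$ carries $\operatorname{Coh}(\mathcal{X})[0]$ into itself. Its restriction there is naturally isomorphic to the inclusion $\operatorname{Coh}(\mathcal{X}) \to D^b_{\operatorname{Coh}}(\mathcal{X})$, because the restriction of $F$ to $\operatorname{Coh}(\mathcal{X})$ is isomorphic to $Lf^*(-)\otimes^{\mathbb{L}}\ecal{L}[m]$.

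\textbf{Step 3: extend from the heart via Canonaco--Stellari.} As in the proof of Lemma \ref{lemma-local-rigid}, the comment after \cite{peng2024equivalences}*{Definition 6.1} shows that $\operatorname{Coh}(\mathcal{X})$ has an almost ample set in the sense of \cite{CanonacoStellari2014}. We then apply \cite{CanonacoStellari2014}*{Proposition 3.3} with $F_1 = \Psi$, which is fully faithful, and $F_2 = \operatorname{id}$. Their restrictions to $\operatorname{Coh}(\mathcal{X})[0]$ are isomorphic, so $\Psi \cong \operatorname{id}$. It follows that $F \cong Lf^*(-)\otimes^{\mathbb{L}}\ecal{L}[m]$. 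Equivalently, one can compare $F$ directly with $Lf^*(-)\otimes^{\mathbb{L}}\ecal{L}[m]$, both being fully faithful.

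\textbf{Main obstacle: checking the hypotheses of \cite{CanonacoStellari2014}*{Proposition 3.3}.} Three points need care.
\begin{enumerate}
\item The almost ample set must exist for a smooth proper tame stack of positive dimension. This is where $d > 0$ enters; in dimension zero the heart is semisimple and the statement can fail.
\item The functors must be exact functors from $D^b(\operatorname{Coh}\mathcal{X})$. This holds because $D^b_{\operatorname{Coh}}(\mathcal{X}) \cong D^b(\operatorname{Coh}\mathcal{X})$ for Noetherian stacks with finite diagonal, see \cite{hall2017perfect}.
\item The isomorphism on the heart must be natural. This is already part of Proposition \ref{prop-standardonbigopenstrong}: its proof glues natural isomorphisms $f^*\ecal{F} \cong F(\ecal{F})$ obtained from coherent Tannaka duality.
\end{enumerate}
If the almost ample set were not directly citable, I would construct one from the line bundles $\omega_{\mathcal{X}}^{\otimes -n}$. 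These are point-wise $\otimes$-generating and the coarse space is a scheme, so Lemma \ref{lemma-pointwiseonditionfortensorgeneration} applies on an affine cover of the coarse space. To make this global, I would tensor with the pullback of an ample line bundle on the coarse space when it is projective; otherwise I would fall back on the cited reference.
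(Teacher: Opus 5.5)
Your proposal is correct and matches the paper's proof: Proposition~\ref{prop-standardonbigopenstrong} identifies $F$ with $Lf^*(-)\otimes^{\mathbb{L}}\ecal{L}[m]$ on $\operatorname{Coh}(\mathcal{X})$, and the almost ample set on $\operatorname{Coh}(\mathcal{X})$ from \cite{peng2024equivalences}, combined with \cite{CanonacoStellari2014}*{Proposition 3.3}, upgrades this to an isomorphism of functors on $D^b_{\operatorname{Coh}}(\mathcal{X})$. Passing to the normalized autoequivalence $\Psi$ is only cosmetic, and you need neither the fallback construction of an almost ample set nor the unsupported aside about dimension zero, since the reference applies directly under the standing hypothesis $d>0$.
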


\begin{proof}
    By \cite[Section 2]{peng2024equivalences}, the category $\operatorname{Coh}(\mathcal{X})$ has an almost ample set. Hence by \cite[Proposition 3.3]{CanonacoStellari2014}, if $F, F'$ are exact functors from $D^b_{\operatorname{Coh}}(\mathcal{X})$ to a triangulated category, $F$ is fully faithful, and $F|_{\operatorname{Coh}(\mathcal{X})} \cong F'|_{\operatorname{Coh}(\mathcal{X})}$, then $F \cong F'$.  Thus applying Proposition \ref{prop-standardonbigopenstrong}, we are done.
\end{proof}

\subsection{Examples of stacks with nowhere torsion canonical bundles}
\label{subsection-stackyexamples}

We work over a fixed field $k$. 

\begin{example}
\label{example-weightedprojectivestackcanonical}
Let $a_0, \dots , a_n$ be integers $\geq 1$. The weighted projective stack $\mathcal{P}(a_0, \dots , a_n)$ has canonical bundle $\omega_{\mathcal{P}(a_0, \dots , a_n)} = \ecal{O}_{{\mathcal{P}(a_0, \dots , a_n)}}(-\sum a_i)$: This follows from the computation of the cohomology of the line bundles $\ecal{O}_{\mathcal{P}(a_0, \dots , a_n)}(d)$ exactly as in the case of projective space, see \cite{auroux2008mirror}*{\S2.3} for instance. The line bundle $\omega_{\mathcal{P}(a_0, \dots , a_n)}$ is nowhere torsion by Example \ref{example-weightedprojectivestack}. Thus $\mathcal{P}(a_0, \dots , a_n)$ can be reconstructed from its derived category in the weak sense of Section \ref{subsection-weakreconstructionforstacks}. If moreover $\gcd(\sum_i a_i, a_j) = 1$ for all $j$, then the canonical bundle is point-wise $\otimes$-generating by Example \ref{example-weightedprojectivestacktensorgen}, so $\mathcal{P}(a_0, \dots , a_n)$ can be reconstructed in the strong sense of Section \ref{subsection-reconstruction-strong-form}. 
\end{example}

A scheme $X/k$ is said to have tame quotient singularities if $X$ admits an \'etale cover $\{U_i \to X\}_i$ where each $U_i$ is the (coarse) quotient $V_i / G_i$ where $V_i$ is a smooth $k$-scheme and $G_i/k$ is a finite linearly reductive group scheme. If $k$ has characteristic zero, this is equivalent to $X$ having quotient singularities. Satriano showed in \cite{Satriano2012} that for a scheme $X/k$ with tame quotient singularities, there is a canonically associated smooth tame stack $\mathcal{X}/k$ with coarse space $X$ and such that the coarse space morphism $\pi : \mathcal{X} \to X$ is an isomorphism over an open whose complement has codimension $\geq 2$. This $\pi$ is called the canonical stack morphism associated with $X$. Note that for such an $X$, $X_{\bar{k}}$ is normal (see \ref{lemma-coarsespaceisnormal}), so the smooth locus $X_{sm} \subset X$ has complement with codimension $\geq 2$.

\begin{lemma}
    Let $X$ be a proper variety over $k$ with tame quotient singularities. Let $\pi : \mathcal{X} \to X$ be the canonical stack morphism associated with $X$. 
    \begin{enumerate}
         \item There is an integer $n > 1$ and a line bundle $\ecal{L}$ on $X$, we have $\pi^*\ecal{L} \cong \omega_{\mathcal{X}}^{\otimes n}$.
        \item $X$ is $\mathbb{Q}$-Gorenstein: For this $n$ and $\ecal{L}$, we have $\ecal{L}|_{X_{sm}} \cong \omega_{X_{sm}}^{\otimes n}$.
       
        \item $\omega_{\mathcal{X}}$ is nowhere torsion on $\mathcal{X}$ if and only if $\ecal{L}$ is nowhere torsion on $X$. \qedhere
    \end{enumerate}
\end{lemma}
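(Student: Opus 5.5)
For (i), I will invoke Remark \ref{remark-testforgoodnessonstack}: for any line bundle on $\mathcal{X}$, some positive power descends to the coarse space. Applied to $\omega_{\mathcal{X}}$, this gives $n>1$ (replacing $n$ by a multiple if needed) and a line bundle $\ecal{L}$ on $X$ with $\pi^*\ecal{L}\cong\omega_{\mathcal{X}}^{\otimes n}$. Concretely, the descent works because, after passing to a power, the inertia of $\mathcal{X}$ acts trivially on the fibres at every point; tameness then gives $\pi_*\pi^*\ecal{L}\cong\ecal{L}$, and $\pi_*(\omega_{\mathcal{X}}^{\otimes n})$ is invertible. The cited references already handle the tame case.

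For (ii), I will use that $\pi$ is an isomorphism over an open $X^\circ\subset X$ whose complement has codimension $\geq 2$, and that $X^\circ$ is smooth, since $\pi^{-1}(X^\circ)\cong X^\circ$ and $\mathcal{X}$ is smooth. Over $X^\circ$ we have $\omega_{\mathcal{X}}|_{\pi^{-1}X^\circ}=\omega_{X^\circ}$, which gives $\ecal{L}|_{X^\circ}\cong\omega_{X^\circ}^{\otimes n}$. I then compare with $X_{sm}$. Both $\ecal{L}|_{X_{sm}}$ and $\omega_{X_{sm}}^{\otimes n}$ are line bundles on the regular scheme $X_{sm}$, and they agree on $X^\circ\cap X_{sm}$. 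The complement of $X^\circ\cap X_{sm}$ in $X_{sm}$ has codimension $\geq2$, so Lemma \ref{lemma-hartogs}, or the classical fact that $\operatorname{Pic}$ of a regular scheme is insensitive to removing codimension-$2$ loci, gives $\ecal{L}|_{X_{sm}}\cong\omega_{X_{sm}}^{\otimes n}$. Hence $\omega_X^{[n]}$ is Cartier and $X$ is $\mathbb{Q}$-Gorenstein.

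For (iii), Corollary \ref{corollary-powersofnowheretorsiononstack} says $\omega_{\mathcal{X}}$ is nowhere torsion if and only if $\omega_{\mathcal{X}}^{\otimes n}\cong\pi^*\ecal{L}$ is. Corollary \ref{corollary-coarsespacetest} then says this holds if and only if $\ecal{L}$ is nowhere torsion on $X$.

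The main obstacle is (i). Its content is the descent of a power of $\omega_{\mathcal{X}}$ to $X$, and in positive characteristic the stabilizers may be non-reduced linearly reductive groups. I will rely on the cited result \cite[Proposition 6.1]{Olsson2012Integral} for tame stacks rather than reprove it. A secondary check is that $\mathcal{X}$ is proper, so that the corollaries apply; this holds because $\pi$ is proper and $X$ is proper.
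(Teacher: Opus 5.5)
Your proposal is correct and follows essentially the same route as the paper: descend a power of $\omega_{\mathcal{X}}$ via Remark~\ref{remark-testforgoodnessonstack}, compare with $\omega_U^{\otimes n}$ on the open $U\subset X_{sm}$ over which $\pi$ is an isomorphism and extend across codimension $2$ by Lemma~\ref{lemma-hartogs}, and deduce (iii) from Corollaries~\ref{corollary-powersofnowheretorsiononstack} and~\ref{corollary-coarsespacetest}. The one step you assert without justification, $\omega_{\mathcal{X}}|_{\pi^{-1}(U)}\cong\omega_U$, is supplied in the paper by $\omega_{\mathcal{X}}^\bullet\cong\pi^!(\omega_X^\bullet)$, together with the compatibility of $\pi^!$ with flat base change and the fact that $\pi^!=L\pi^*$ for an isomorphism.
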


\begin{remark}
    We can obtain a more elegant statement by extending the definition of nowhere torsion to $\mathbb{Q}$-line bundles (elements of $\operatorname{Pic}\otimes \mathbb{Q}$) using Corollaries \ref{corollary-powersofnowheretorsiononscheme} and \ref{corollary-powersofnowheretorsiononstack}. Then the lemma says $X$ is $\mathbb{Q}$-Gorenstein, so $\omega_X \in \operatorname{Pic}(X) \otimes \mathbb{Q}$, 
    and $\omega_X$ is nowhere torsion if and only if so is $\omega_{\mathcal{X}}$.
\end{remark}

\begin{proof}
    By Remark \ref{remark-testforgoodnessonstack}, there is an integer $n > 0$ so that $\omega_{\mathcal{X}}^{\otimes n}$ descends to a line bundle $\ecal{L}$ on $X$. We have $\pi^!(\omega_X) = \omega_{\mathcal{X}}$, see \ref{subsection-dualityforstacks}, and $\pi$ is an isomorphism over an open $U \subset X_{sm} \subset X$ whose complement in $X$ has codimension $\geq 2$. Thus we see (using that $\pi^!$ commutes with flat base change and $\pi^!$ for an isomorphism is $L\pi^*$, see \cite[Theorem 3.1(1)]{hall2024generalizedbondalorlovfaithfulnesscriterion}) that $\omega_{\mathcal{X}}$ agrees with $L\pi^*\omega_X^\bullet$ when restricted to $\pi^{-1}(U)$. Thus we see $\ecal{L}|_U \cong \omega_U ^{\otimes n}$ since they agree after pullback along the isomorphism $\pi^{-1}(U) \to U$. If $U \subsetneq X_{sm}$, then use Lemma \ref{lemma-hartogs} to obtain (ii). Then (i) holds by construction, and (iii) is by Remark \ref{remark-testforgoodnessonstack}.
\end{proof}

Next we give a criterion for the canonical bundle of a root stack to be nowhere torsion.

\begin{lemma}\label{lemma-canonicalofroot}
    Let $\mathcal{X}$ be a smooth proper tame stack and $\mathcal{D} \subset \mathcal{X}$ a smooth divisor. Let $\pi: \mathcal{Y} = \sqrt[r]{(\mathcal{X}, \mathcal{D})} \to \mathcal{X}$ be the root stack morphism where $r\geq 1$ is an integer. Let $\mathcal{E} \subset \mathcal{Y}$ be the tautological $r$th root of $\pi^{-1}(\mathcal{D})$.
    \begin{enumerate}
        \item We have $\omega_{\mathcal{Y}} \cong \pi^*\omega_{\mathcal{X}} \otimes \ecal{O}_{\mathcal{Y}}((r-1)\mathcal{E})$
        \item $\omega_{\mathcal{Y}}$ is nowhere torsion on $\mathcal{Y}$ if and only if $\omega_{\mathcal{X}}^{\otimes r} \otimes \ecal{O}_{\mathcal{X}}((r-1)\mathcal{D})$ is nowhere torsion on $\mathcal{X}$. \qedhere
    \end{enumerate}
\end{lemma}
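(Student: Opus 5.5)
For part (i), I would argue by duality, using the formula for $\pi^!$ in Lemma \ref{lemma-rootstacks}. Write $p_{\mathcal{X}}\colon\mathcal{X}\to\operatorname{Spec}k$ and $p_{\mathcal{Y}} = p_{\mathcal{X}}\circ\pi$ for the structure morphisms. The right adjoints compose, so $p_{\mathcal{Y}}^! = \pi^!\circ p_{\mathcal{X}}^!$. Applying this to $\ecal{O}_{\operatorname{Spec}k}$ and using Lemma \ref{lemma-rootstacks} gives
$$\omega_{\mathcal{Y}}^\bullet = \pi^!(\omega_{\mathcal{X}}^\bullet) = L\pi^*(\omega_{\mathcal{X}}[d]) \otimes \ecal{O}_{\mathcal{Y}}((r-1)\mathcal{E}).$$
Here $\mathcal{Y}$ is smooth, since the root of a smooth stack along a smooth divisor is smooth, and it has the same dimension $d$ as $\mathcal{X}$. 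Comparing this with $\omega_{\mathcal{Y}}^\bullet = \omega_{\mathcal{Y}}[d]$ yields (i). The only points to check are that $\mathcal{Y}$ is again proper and tame, which is standard for root stacks, and that Lemma \ref{lemma-rootstacks} applies to $\mathcal{D}$, which is an effective Cartier divisor on the Noetherian tame stack $\mathcal{X}$.

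For part (ii), I would use $r\mathcal{E} = \pi^{-1}(\mathcal{D})$, so that $\ecal{O}_{\mathcal{Y}}(r\mathcal{E}) \cong \pi^*\ecal{O}_{\mathcal{X}}(\mathcal{D})$. Raising (i) to the $r$th power then gives
$$\omega_{\mathcal{Y}}^{\otimes r} \cong \pi^*\omega_{\mathcal{X}}^{\otimes r} \otimes \ecal{O}_{\mathcal{Y}}(r(r-1)\mathcal{E}) \cong \pi^*\bigl(\omega_{\mathcal{X}}^{\otimes r}\otimes \ecal{O}_{\mathcal{X}}((r-1)\mathcal{D})\bigr).$$
By Corollary \ref{corollary-powersofnowheretorsiononstack}, $\omega_{\mathcal{Y}}$ is nowhere torsion if and only if $\omega_{\mathcal{Y}}^{\otimes r}$ is. It remains to check that $\pi$ satisfies the hypotheses of Lemma \ref{lemma-properquasifinitepullback}: it is proper, quasi-finite and surjective, and both stacks are of finite type with finite diagonal. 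That lemma then says $\pi^*\ecal{M}$ is nowhere torsion if and only if $\ecal{M}$ is, which gives (ii).

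The only mildly delicate step is verifying the properties of the root stack morphism. It is an isomorphism away from $\mathcal{D}$, and over $\mathcal{D}$ it is a $\mu_r$-gerbe-like map, so it is proper, quasi-finite and surjective. Finite diagonal of $\mathcal{Y}$ follows because $\pi$ has finite relative diagonal, which can be checked locally, where $\mathcal{Y} = [\operatorname{Spec}A[T]/(T^r-f)/\mu_r]$. These checks are all local and routine, so I expect no real obstacle.
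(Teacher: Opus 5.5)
Your proposal is correct and follows the paper's proof. Part (i) is read off from Lemma \ref{lemma-rootstacks}, and your composition $p_{\mathcal{Y}}^! = \pi^!\circ p_{\mathcal{X}}^!$ is exactly what the paper's ``follows immediately'' leaves unsaid. Part (ii) combines $\ecal{O}_{\mathcal{Y}}(r\mathcal{E}) \cong \pi^*\ecal{O}_{\mathcal{X}}(\mathcal{D})$ with Corollary \ref{corollary-powersofnowheretorsiononstack} and Lemma \ref{lemma-properquasifinitepullback}, as the paper does; your extra checks on $\mathcal{Y}$ (smooth, tame, finite diagonal) and on $\pi$ (proper, quasi-finite) fill in hypotheses the paper uses implicitly.
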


\begin{proof}
    Part (i) follows immediately from Lemma \ref{lemma-rootstacks}. Then by Corollary \ref{corollary-powersofnowheretorsiononstack}, we have that $\omega_{\mathcal{Y}} \cong  \pi^*\omega_{\mathcal{X}} \otimes \ecal{O}_{\mathcal{Y}}((r-1)\mathcal{E})$ is nowhere torsion if and only if $\omega_{\mathcal{Y}}^{\otimes r} = \pi^*(\omega_{\mathcal{X}}^{\otimes r} \otimes \ecal{O}_{\mathcal{X}}((r-1)\mathcal{D})$ is so. By Lemma \ref{lemma-properquasifinitepullback}, this is nowhere torsion if and only if $\omega_{\mathcal{X}}^{\otimes r} \otimes \ecal{O}_{\mathcal{X}}((r-1)\mathcal{D})$ is nowhere torsion, as needed.
\end{proof}

\begin{corollary}
    Let $X$ be a smooth proper variety over $k$, $D \subset X$ a smooth divisor, and $r \geq 1$ an integer. Let $\pi: \mathcal{Y} = \sqrt[r]{(X, D)} \to X$ be the associated root stack morphism. If $\omega_X^{\otimes r} \otimes \ecal{O}_X((r-1)D)$ is a nowhere torsion line bundle on $X$, 
    then $\mathcal{Y}$ is strongly determined by  $D^b_{\operatorname{Coh}}(\mathcal{Y})$ in the sense of Theorem \ref{thm-strongreconstructionforstacks}.
\end{corollary}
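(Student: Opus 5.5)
The plan is to reduce the corollary to Theorem \ref{thm-strongreconstructionforstacks}. To do so, I would check that $\mathcal{Y}$ satisfies the standing hypotheses of Section \ref{subsection-reconstruction-strong-form}:
\begin{enumerate}
\item $\mathcal{Y}$ is smooth, proper, connected and tame over $k$, of positive dimension, with trivial generic stabilizer;
\item its coarse space is a scheme;
\item $\omega_{\mathcal{Y}}$ is point-wise $\otimes$-generating.
\end{enumerate}
I also want $\omega_{\mathcal{Y}}$ to be nowhere torsion. The statement of the strong form does not formally require this, but the proof of Proposition \ref{prop-classifypointlikecycliccase} uses only point-wise generation, so I do not expect to rely on it. I would note it anyway via Lemma \ref{lemma-canonicalofroot}(ii), which gives it directly from the hypothesis on $\omega_X^{\otimes r}\otimes\ecal{O}_X((r-1)D)$. (I assume $X$ is connected of positive dimension, as implicit in ``variety''.)

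Condition (i) is standard. Since $D$ is a smooth divisor in the smooth $X$, the root stack $\sqrt[r]{(X,D)}$ is smooth. Properness holds because $\pi$ is proper and quasi-finite. The stack is tame because its stabilizers are $\mu_r$, which is linearly reductive. Finally, $\pi$ is an isomorphism over $X\setminus D$, so $\mathcal{Y}$ is connected with trivial generic stabilizer.

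Condition (ii) holds because $\pi$ is a coarse moduli space map. This is seen locally from the presentation $[\operatorname{Spec}A[T]/(T^r-f)/\mu_r]$ used in the proof of Lemma \ref{lemma-rootstacks}, whose invariant ring is $A$. Hence the coarse space is $X$, a scheme.

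Condition (iii) is the main step. Here I would follow the proof of Proposition \ref{prop-structureawayfromcodim2} verbatim. The residual gerbes of $\mathcal{Y}$ are trivial away from $\mathcal{E}$, where there is nothing to check. At a point $y\in\mathcal{E}$ the residual gerbe is $B\mu_{r,\kappa(y)}$. By Lemma \ref{lemma-canonicalofroot}(i), $\omega_{\mathcal{Y}}\cong\pi^*\omega_X\otimes\ecal{O}_{\mathcal{Y}}((r-1)\mathcal{E})$. The factor $\pi^*\omega_X$ restricts trivially to the gerbe. The factor $\ecal{O}_{\mathcal{Y}}(\mathcal{E})$ restricts to the tautological character, which is a generator of $\bb X(\mu_r)$ (up to sign), so $\ecal{O}_{\mathcal{Y}}((r-1)\mathcal{E})$ restricts to weight $r-1\equiv -1$. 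This is again a generator, so by Lemma \ref{lemma-tensor-gen-on-gerbe}(vi) the restriction of $\omega_{\mathcal{Y}}$ is $\otimes$-generating.

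With (i)--(iii) established, Theorem \ref{thm-strongreconstructionforstacks} applies to any $k$-linear exact equivalence $D^b_{\operatorname{Coh}}(\mathcal{Y})\cong D^b_{\operatorname{Coh}}(\mathcal{Y}')$ with $\mathcal{Y}'$ smooth, proper, connected and tame with trivial generic stabilizer. The one point to watch is the computation of the weight of $\ecal{O}(\mathcal{E})|_{\mathcal{G}_y}$; it is already done in the proof of Proposition \ref{prop-structureawayfromcodim2}, so I expect no real obstacle.
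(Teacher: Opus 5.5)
Your proposal is correct and follows the paper's proof: Lemma \ref{lemma-canonicalofroot} gives both the nowhere torsion of $\omega_{\mathcal{Y}}$ and the formula $\omega_{\mathcal{Y}}\cong\pi^*\omega_X\otimes\mathcal{O}_{\mathcal{Y}}((r-1)\mathcal{E})$, the weight computation from the proof of Proposition \ref{prop-structureawayfromcodim2} gives point-wise $\otimes$-generation, and Theorem \ref{thm-strongreconstructionforstacks} then applies.

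One correction to your side remark: nowhere torsion is essential, not optional. Theorem \ref{thm-canonicalgeneratesintroversion} takes $\mathcal{X}$ as in Theorem \ref{thm-bo-tamestack-introversion}, which includes this hypothesis. Proposition \ref{prop-classifypointlikecycliccase} only classifies point-like objects already known to be supported at a single closed point, and in the strong-form argument that input comes from Lemma \ref{lemma-pointlikeandnowheretorsion}, which needs $\omega$ nowhere torsion. Point-wise $\otimes$-generation cannot replace it: every line bundle on a scheme is point-wise $\otimes$-generating, yet for $r=1$ and $X$ an abelian variety, line bundles in $\operatorname{Pic}^0(X)$ are point-like objects with positive-dimensional support.
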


\begin{proof}
   This follows from Lemma \ref{lemma-canonicalofroot}. See the proof of Proposition \ref{prop-structureawayfromcodim2} to see why $\omega_{\mathcal{Y}}$ is point-wise $\otimes$-generating.
\end{proof}

\begin{example} \ 
\begin{enumerate} 
    \item Let $X = \mathbb{P}^n_k$ and let $D \subset X$ be a smooth hypersurface of degree $d$. Let $r \geq 1$ be an integer and set $\mathcal{Y} = \sqrt[r]{(X,D)}$. Then $\omega_{\mathcal{Y}}$ is nowhere torsion if and only if 
    $$\omega_{X}^{\otimes r} \otimes \ecal{O}_X((r-1)D) = \ecal{O}_{\mathbb{P}^n_k}(-r(n+1) + (r-1)d)$$ is nowhere torsion, if and only if $(r-1)d \neq r(n+1)$. If this holds, then $D^b_{\operatorname{Coh}}(\mathcal{Y})$ is strongly determined by its derived category.
    \item If $X$ is a smooth proper variety over $k$, $\omega_X$ is ample, and $D \subset X$ is an ample smooth divisor, then for any integer $r \geq 1$, the root stack $\sqrt[r]{(X,D)}$ is strongly determined by its derived category. \qedhere
\end{enumerate}
\end{example}

\begin{lemma}\label{lemma-good-canonical-on-quotient-tame}
    Let $X$ be a proper Gorenstein scheme over $k$. Suppose that a finite 
    linearly reductive group scheme $G$ acts on $X$. Then, $\omega_{[X/G]}$ is nowhere torsion if and only if $\omega_X$ is nowhere torsion.
\end{lemma}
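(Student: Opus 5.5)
The plan is to relate $\omega_{[X/G]}$ to $\omega_X$ through the quotient morphism $q : X \to [X/G]$, which is a $G$-torsor and hence finite, flat and surjective. Then Lemma \ref{lemma-properquasifinitepullback} finishes the argument, provided $q^*\omega_{[X/G]} \cong \omega_X$ up to something harmless.

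First I will check that $\mathcal{X} = [X/G]$ satisfies the standing hypotheses of Section \ref{subsection-nowheretorsiononstack}. It is of finite type over $k$. It has finite diagonal, since $X$ is proper (so separated) and $G$ is finite. It is tame, since $G$ is linearly reductive. It is proper, because $X \to \mathcal{X}$ is finite surjective and $X$ is proper. It is Gorenstein, since this property is smooth-local and $X \to \mathcal{X}$ is flat with Gorenstein (zero-dimensional) fibers; concretely, the smooth cover $X \times G \to X$ shows it. Consequently $\omega_{\mathcal{X}}^\bullet = \omega_{\mathcal{X}}[d]$ is a shifted line bundle by \cite[Corollary 3.2]{hall2024generalizedbondalorlovfaithfulnesscriterion}. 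Also, $q$ is proper, quasi-finite and surjective between finite type stacks with finite diagonal, so Lemma \ref{lemma-properquasifinitepullback} applies to it.

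The key step is to compute $q^*\omega_{\mathcal{X}}$. Since $q$ is finite and flat, the right adjoint $q^!$ is given by $q^!(-) = Lq^*(-) \otimes \omega_q$, where $\omega_q$ is the relative dualizing sheaf. By compatibility of $(-)^!$ with composition (Section \ref{subsection-dualityforstacks} and \cite[Theorem 3.1]{hall2024generalizedbondalorlovfaithfulnesscriterion}) this gives
\[
\omega_X^\bullet = q^!\omega_{\mathcal{X}}^\bullet = q^*\omega_{\mathcal{X}} \otimes \omega_q [d].
\]
Because $q$ is a $G$-torsor, $\omega_q$ is the pullback along $X \to \operatorname{Spec} k$ of the dualizing module of $G \to \operatorname{Spec} k$, which is a line on $\operatorname{Spec} k$ and hence trivial as an $\mathcal{O}_X$-module. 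I will verify this by flat base change along $q$ itself: the pullback of $q$ along $q$ is the projection $G \times X \to X$, whose relative dualizing sheaf is $\mathrm{pr}^*\omega_{G/k} \cong \mathcal{O}$. Flat base change identifies $\omega_q$ after pullback along the fppf cover $q$, and an invertible sheaf on $X$ is determined by that identification. Since we only need the underlying line bundle on $X$ and not its equivariant structure, we get $q^*\omega_{\mathcal{X}} \cong \omega_X$ as line bundles on $X$. An alternative route, which even allows an equivariant twist by a character of $G$, is to observe that a twist by a character is torsion in $\operatorname{Pic}$. Then Corollary \ref{corollary-powersofnowheretorsiononstack} together with passing to a power reduces to the untwisted statement. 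I would use that as a fallback.

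With $q^*\omega_{\mathcal{X}} \cong \omega_X$, Lemma \ref{lemma-properquasifinitepullback} gives directly that $\omega_{[X/G]}$ is nowhere torsion if and only if $\omega_X$ is. The main obstacle is making the identification $\omega_q \cong \mathcal{O}_X$ rigorous in the stacky duality formalism of \cite{hall2024generalizedbondalorlovfaithfulnesscriterion}, in particular the composition $(p\circ q)^! = q^! p^!$ and the computation of $q^!$ for a finite flat representable morphism. I expect to handle this with finite duality \cite[Theorem 4.14]{hall2017perfect}, which gives $q_*q^!\mathcal{O} = \mathcal{H}om(q_*\mathcal{O}_X, \mathcal{O}_{\mathcal{X}})$, and then check invertibility and triviality fppf locally as above.
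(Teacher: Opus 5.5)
Your proposal is correct and is essentially the paper's argument: show $q^*\omega_{[X/G]}\cong\omega_X$ via $q^!(-)=Lq^*(-)\otimes^{\mathbb{L}}q^!(\mathcal{O}_{[X/G]})$ with $q^!(\mathcal{O}_{[X/G]})$ trivial, then apply Lemma \ref{lemma-properquasifinitepullback}. The paper gets the triviality more directly by viewing $q$ as the base change of $\operatorname{Spec}k\to BG_k$ along $[X/G]\to BG_k$, so $q^!(\mathcal{O})$ is pulled back from a line bundle on $\operatorname{Spec}k$; in your base change of $q$ along itself, the step ``an invertible sheaf on $X$ is determined by that identification'' should be justified by pulling back along the identity section of $G\times X\to X$, because triviality after an fppf pullback alone would not suffice.
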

\begin{proof}
    Note that $X \to [X/G]$ is finite syntomic, being a $G$-torsor, hence $[X/G]$ is Gorenstein since $X$ is. There is a Cartesian square
    $$
    \begin{tikzcd}
        X \ar[r, "p"] \ar[d, "f"] & \operatorname{Spec}k \ar[d, "g"]\\
        {[X/G]} \ar[r] & BG_k,
    \end{tikzcd}
    $$
    in which every arrow is flat, proper, and tame. By the properties of the $( -)^!$ functors proved in \cite[Theorem 3.1]{hall2024generalizedbondalorlovfaithfulnesscriterion}, we obtain
    $$
    \omega_X = f^!(\omega_{[X/G]}) = f^*\omega_{[X/G]}\otimes^{\mathbb{L}} f^!(\ecal{O}_{[X/G]}) = Lf^*\omega_{[X/G]} \otimes^{\mathbb{L}}Lp^*(g^!\ecal{O}_{BG_k}) = f^*\omega_{[X/G]}.
    $$
    Here we have $g^!(\ecal{O}_{BG_k}) \cong \ecal{O}_{\operatorname{Spec}k}$ because $g$ is finite syntomic, so $g^!(\ecal{O}_{BG_k})$ is a line bundle on $\operatorname{Spec}k$, hence isomorphic to the structure sheaf. Since $f$ is a finite morphism, we conclude by Lemma \ref{lemma-properquasifinitepullback}. 
\end{proof}
\begin{example} \label{example-multiple-crepant}
    Set $\iota([u:v]) = [-u:v]$ to be the involution on $\bb P^1$ and set $M = (\bb P^1)^3$. Suppose $\operatorname{char} k \neq 2$. Set 
    \[
    G = (\bb Z/2\bb Z)^2 \cong \{\id, (\iota, \iota, \id), (\iota, \id, \iota), (\id, \iota, \iota)\} \subset \aut M. 
    \]
   Consider the corresponding quotients $\cal X = [M/G]$ and $X = M/G$. Then, $\cal X$ has a nowhere torsion canonical bundle by Lemma \ref{lemma-good-canonical-on-quotient-tame}. Also, $X$ is covered by eight affine charts, which correspond to the eight $G$-invariant affine charts of $M$ and are isomorphic to $[\bb A_{x,y,z}^3/G]$ with the restricted $G$-action, and therefore each affine chart of $X$ is isomorphic to
   \[
   U = \spec k[x^2,y^2,z^2,xyz] = \spec k[a,b,c,d]/\bra{abc-d^2} = \spec R.
   \]
   In \cite{donagi2017global}*{\S5.1}, they construct four crepant resolutions of $U$, a central crepant resolution and three outer crepant resolutions, using a toric method. Now, we claim the central resolution in loc cit is given by blowing up the reduced singular locus of $U$, cut out by $I = \bra{ab,ac,bc,2d} = \bra{ab,ac,bc,d}$. Indeed, using the formula as in \cite[\href{https://stacks.math.columbia.edu/tag/0804}{Tag 0804}]{stacks-project}, $\operatorname{Bl}_IU$ is covered by the four affine charts
   \begin{align*}
       \spec R[ac/ab,bc/ab,d/ab]  &= \spec k[a,b,d/ab] \\
       \spec R[ab/ac,bc/ac,d/ac]  & = \spec k[a,c,d/ac] \\
       \spec R[ab/bc,ac/bc,d/bc] & = \spec k[b,c,d/bc] \\
       \spec R[ab/d,ac/d,bc/d] & = \spec k[ab/d,ac/d,bc/d]. 
   \end{align*}
   Setting $w = b_1 = d_{11} = d/ab$, $c_{11} = ab/d$, $d_1 = ac/d$, and $t = bc/d$, we have the gluing relations $tw = 1$, $c_{11}d_{11} = 1$, and $b_1 d_1 = 1$ matching the ones in \cite{donagi2017global}*{\S5.1} and hence $\operatorname{Bl}_I(U)$ agrees with the central crepant resolution. As described in \cite{donagi2017global} through the fan description, $\operatorname{Bl}_IU$ has three $(-1,-1)$-curves (i.e., $\bb P^1$'s whose normal bundles have splitting type $(-1,-1)$), meeting at a single point, and each curve gives an Atiyah flop to an outer crepant resolution.
   
   Now, we globalize this construction to obtain a desired example. Let $\Sigma \subset X$ be the reduced singular locus (which consists of the cube formed by twelve $\bb P^1$'s). Then, the blow-up
   \[
   \rho: Y = \operatorname{Bl}_\Sigma X \to X
   \]
   agrees with the blow-up $\operatorname{Bl}_IU \to U$ on each standard affine chart $U \subset X$, as $\Sigma \cap U = V(I)$ by construction. Now, by \cite{BridgelandKingReid2001}*{Lemma 3.1, 3.2}, we have $\omega_{Y} = \rho^* \rho_* \omega_{Y} = \rho^* \omega_X$ and hence $\rho$ is crepant. Now, let $Y'$ be the Atiyah flop of one of the $(-1,-1)$-curve in $\operatorname{Bl}_I U \subset Y$. Then, since the configuration of $(-1,-1)$-curves changes, $Y \not \cong Y'$. Since all the crepant resolutions are $K$-equivalent to each other by definition, by \cite{BridgelandKingReid2001}*{Theorem 1.2} and \cite{Kawamata2002DEquivalenceAK}*{Theorem 1.7}, we have 
   \[
   D^b_{\operatorname{Coh}}(\cal X) \simeq D^b_{\operatorname{Coh}}(Y) \simeq D^b_{\operatorname{Coh}}(Y'),
   \]
   where $\omega_\cal X$ is nowhere torsion while $\omega_Y$ and $\omega_{Y'}$ are somewhere torsion by considering proper curves in the exceptional loci in $Y, Y'$ viewed as crepant resolutions of $X$. 
\end{example}
\subsection{Autoequivalences on tame surfaces}
\label{subsection-stackysurfaceexamples}
In this section, we work on a smooth proper tame surface $\cal S$ over a field $k$ with nowhere torsion canonical bundle and isolated stacky points $x_1, \dots, x_N$ to illustrate our main results. Set $U = \cal S \setminus \{x_1, \dots x_N\}$, which is a smooth surface. By Proposition \ref{prop-local-determination}, the restriction 
\[
r: \operatorname{Loc}(\cal S) \to \prod_{i} \aut D^b_{\operatorname{coh},\{x_i\}}(\cal S)
\]
is injective. 
In certain cases, we can fully determine $\im r$. We will now carry this out when the stacky points are all \'etale locally of the following form.

\begin{example}
\label{example-ansingsspherical}
    Let $\mu_{n+1}$ act on $\bb A^2$ with weights $(1,-1)$. Then the residual gerbe $B\mu_{n+1} = [0/\mu_{n+1}] \hookrightarrow [\bb A^2/\mu_{n+1}]$ has simple coherent sheaves $\chi_i$ corresponding to the elements $i \in \bb X(\mu_{n+1}) = \mathbb{Z}/{n+1}$. It has normal bundle $\ecal{N}_0 = \chi_1 \oplus \chi_{-1}$ which has trivial determinant, and by Lemma \ref{lemma-ext of skyscrapers}, we compute that for $\ecal{F} = \kappa(0, \chi_i)$, we have
    \[
    \operatorname{Ext}^i(\ecal{F}, \ecal{F}) = \begin{cases}
        k & \text{ if $i = 0,2$} \\
        0 & \text{ otherwise.} 
    \end{cases} \qedhere
    \]
\end{example}

Suppose $\mathcal{X}$ is an algebraic stack and $\mathcal{Z} \subset \mathcal{X}$ is a closed substack. We say a flat morphism of algebraic stacks $f: \mathcal{U} \to \mathcal{X}$ is a \emph{neighborhood} of $(\mathcal{X}, \mathcal{Z})$ if the base change $f^{-1}(\mathcal{Z}) \to \mathcal{Z}$ is an isomorphism. This implies that for $n \geq 1$, $f$ is a neighborhood of the closed substack $\mathcal{Z}_{n}$ defined by the ideal sheaf $\ecal{I}_{\mathcal{Z}}^{n}$. In particular, the conormal sheaves of $\mathcal{Z}$ in $\mathcal{X}$ and of $f^{-1}(\mathcal{Z})$ in $\mathcal{U}$ are identified under the isomorphism $f^{-1}(\mathcal{Z}) \to \mathcal{Z}$. We will say $f$ is a neighborhood of a closed subset $T \subset |\mathcal{X}|$ if $f$ is a neighborhood of the reduced closed substack with $T$ as its underlying set, and we say a pair of flat morphisms $f: \mathcal{U} \to \mathcal{X}, g : \mathcal{U} \to \mathcal{Y}$ of algebraic stacks is a \emph{common neighborhood} of the pairs $(\mathcal{X}, \mathcal{Z}), (\mathcal{Y}, \mathcal{W})$ if $f$ is a neighborhood of $(\mathcal{X}, \mathcal{Z})$, $g$ is a neighborhood of $(\mathcal{Y}, \mathcal{W}),$ and $f^{-1}(\mathcal{Z}) = g^{-1}(\mathcal{W})$ as closed substacks of $\mathcal{U}$.

\begin{definition}
    We say a stacky point $x\in \cal S$ is of \textit{$A_n$-type} if the pairs $(\mathcal{S}, x)$ and $([\bb A^2/\mu_{n+1}], 0)$ (where $\mu_{n+1}$ acts as in the example above) admit a common \'etale neighborhood which is representable, quasi-compact, and separated. 
    In particular, the residual gerbe $\mathcal{G}_x$ is $B\mu_{n+1}$.  
   Since the normal bundles of $x$ and $0$ are identified, the exact same computation as in Example \ref{example-ansingsspherical} shows that for any simple coherent sheaf $\chi_i$ on $\mathcal{G}_x$ corresponding to the element $i \in \bb Z/(n+1)\bb Z = \bb X(G_x)$, the generalized closed point sheaf $\kappa(x, \chi_i)$ is spherical (In particular, $\kappa(x, \chi_i) \otimes \omega_{\mathcal{X}} = \kappa(x, \chi_i \otimes \wedge^2 \ecal{N}_x) = \kappa(x, \chi_i)$ using Lemma \ref{lemma-adjunctionformula} applied to the inclusion of the residual gerbe). Let $B_x \subset D^b_{\operatorname{Coh},\{x\}}(\cal S)$ to be the subgroup generated by spherical twists associated to the objects $\kappa(x, \chi_i)$.  
\end{definition}

For such a point $x$, it is shown in \cite[Example 5.6]{hall2017perfect} that the pullbacks to a common neighborhood $(\mathcal{U}, u)$ as in the definition induce equivalences of categories $D^b_{\operatorname{Coh}, \{x\}}(\mathcal{S}) = D^b_{\operatorname{Coh}, \{u\}}(\mathcal{U}) = D^b_{\operatorname{Coh}, \{0\}}([\bb A^2/\mu_{n+1}])$. Since the equivalences are induced by pullback, we see that the generalized point sheaves $\kappa(x, \chi_i)$ and $\kappa(0, \chi_i)$ correspond under this equivalence, as do the associated spherical twists.

Now, we summarize the classification of autoequivalences for $A_n$-type surfaces obtained in \cite{A_nsings}. First, let us recall the setup in loc cit. 
\begin{construction}
    Suppose $x \in \cal S$ is a stacky point of $A_n$-type. Then, there is an equivalence $D^b_{\operatorname{Coh}, \{x\}} (\cal S) \simeq D^b_{\operatorname{Coh},\{0\}}([\bb A^2/\mu_{n+1}])$ by definition. Let $Y \to \bb A^2/\mu_{n+1}$ be the minimal resolution of singularities and set $Z = C_1 \cup \cdots\cup C_n$ to be the chain of exceptional $(-2)$-curves $C_i$. As in \cite{A_nsings}, we normalize a local McKay correspondence
    \[
    \sf{MK}: D^b_{\operatorname{Coh}, Z}(Y) \simeq D^b_{\operatorname{Coh,\{x\}}} (\cal S)
    \]
    to map $\ecal O_{C_i}(-1)$ to $\kappa(x, \chi_i)$, where $\chi_1, \dots, \chi_n$ are the non-trivial characters of $\mu_{n+1}$. Moreover, $\omega_Z$ corresponds to $\kappa(x, \chi_0)[-1]$, where $\chi_0$ is the trivial character. Thus, writing
    \[
    B = \bra{T_{\ecal O_{C_l(-1)}}, T_{\omega_Z} \mid 1 \leq l \leq n} \subset \aut D^b_{\operatorname{Coh},Z}(Y),
    \]
    we have $B_x = \sf{MK} \circ B \circ \sf{MK}\inv$. 
\end{construction}

\begin{prop}[Ishii--Uehara]\label{prop-Ishii-Ueada}
    Assume $k = \bb C$ and suppose that a stacky point $x \in \cal S$ is of $A_n$-type. Then, for any $\Phi \in \aut D^b_{\operatorname{Coh},\{x\}}(\cal S)$, there exist $T \in B_x$, $\sigma \in S_{n+1}$, and $m \in \bb Z$ such that 
    \[
    T\circ \Phi (\kappa(x, \chi_i)) \cong \kappa(x, \chi_{\sigma(i)})[m]. \qedhere  
    \]
\end{prop}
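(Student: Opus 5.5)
The plan is to transport $\Phi$ to the minimal resolution side via the local McKay correspondence and invoke the Ishii--Uehara classification there. Set $\Psi = \mathsf{MK}^{-1}\circ \Phi \circ \mathsf{MK} \in \operatorname{Aut} D^b_{\operatorname{Coh},Z}(Y)$, where $Y \to \mathbb{A}^2/\mu_{n+1}$ is the minimal resolution with exceptional chain $Z = C_1\cup\dots\cup C_n$. Under $\mathsf{MK}$, the point sheaves $\kappa(x,\chi_i)$ for $1\le i\le n$ correspond to $\mathcal{O}_{C_i}(-1)$, and $\kappa(x,\chi_0)$ corresponds to $\omega_Z[1]$. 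These $n+1$ objects form the $\tilde{A}_n$ cyclic configuration of spherical objects.

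The key input is the result of \cite{A_nsings}. After composing with an element of $B$, every autoequivalence of $D^b_{\operatorname{Coh},Z}(Y)$ sends the set $\{\mathcal{O}_{C_1}(-1),\dots,\mathcal{O}_{C_n}(-1),\omega_Z[1]\}$ to itself, up to a common shift and up to twisting by line bundles and pulling back along automorphisms of $Y$ preserving $Z$. Concretely, Ishii--Uehara show that $\operatorname{Aut} D^b_{\operatorname{Coh},Z}(Y)$ is generated by $B$, $\operatorname{Pic}(Y)$, $\operatorname{Aut}(Y)$ (restricted to the formal neighborhood), and shifts. They also show that any element sends $\mathcal{O}_{C_i}(-1)$ to a shift of a spherical object which, modulo $B$, is one of the $\mathcal{O}_{C_j}(a)$ or $\omega_Z$ type objects. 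I will invoke their statement in the following form: there exists $T'\in B$ such that $T'\circ\Psi$ permutes the $n+1$ objects of the cyclic configuration up to a uniform shift $[m]$.

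Conjugating back by $\mathsf{MK}$ gives $T=\mathsf{MK}\circ T'\circ \mathsf{MK}^{-1}\in B_x$, because $B_x=\mathsf{MK}\circ B\circ \mathsf{MK}^{-1}$ by the construction. The permutation of $\{\mathcal{O}_{C_1}(-1),\dots,\mathcal{O}_{C_n}(-1),\omega_Z[1]\}$ then becomes a permutation $\sigma \in S_{n+1}$ of the $\kappa(x,\chi_i)$. Any bookkeeping shift coming from $\omega_Z[1]$ versus $\kappa(x,\chi_0)$ is absorbed uniformly, since $\mathsf{MK}$ matches these objects exactly.

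The main obstacle is extracting precisely the needed uniform-shift permutation statement from \cite{A_nsings}. There, line-bundle twists act on the configuration by the cyclic rotation of the $\tilde{A}_n$ diagram, and the diagram involution comes from an automorphism. So one must check that these standard pieces on $Y$ act on the $\tilde A_n$ configuration by permutations, not merely preserving it up to non-uniform shifts. To handle this, I would use the Ext-connectivity of the point sheaves at $x$ (Lemmas \ref{lemma-genericallyaschemeextconnected} and \ref{lemma-uniform-shift-ext1}). The objects $\kappa(x,\chi_i)$ satisfy the vanishing hypothesis of Lemma \ref{lemma-uniform-shift-ext1}, and their image set is still $\operatorname{Ext}^1$-connected. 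Therefore any shifts $m_i$ with $T\circ\Phi(\kappa(x,\chi_i))\cong\kappa(x,\chi_{\sigma(i)})[m_i]$ must all be equal, which gives the uniform $m$.
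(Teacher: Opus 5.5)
Your overall route is the paper's: conjugate by $\mathsf{MK}$, apply Ishii--Uehara on $Y$, conjugate back. The gap is that the one nontrivial step is assumed rather than derived. You posit, as ``their statement in the following form'', that some $T'\in B$ makes $T'\circ\mathsf{MK}\inv\circ\Phi\circ\mathsf{MK}$ permute $\{\ecal O_{C_1}(-1),\dots,\ecal O_{C_n}(-1),\omega_Z[1]\}$ up to shift, and the justification you sketch does not hold as stated.

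A twist by $\ecal L\in\operatorname{Pic}(Y)$ sends $\ecal O_{C_i}(-1)$ to $\ecal O_{C_i}(-1+\deg \ecal L|_{C_i})$. This is not in the configuration unless that degree is $0$. Line-bundle twists only act by a rotation after being composed with suitable elements of $B$. For $n=1$, $T_{\ecal O_C(-1)}\circ(-\otimes\ecal O_Y(D))$ with $D\cdot C=1$ swaps $\ecal O_C(-1)$ and $\omega_C[1]$, whereas $-\otimes\ecal O_Y(D)$ alone does not preserve the set. So if you start from the generation theorem, you must show that each line-bundle twist (and each other generator) is, modulo $B$, a permutation of the configuration up to a common shift; this is not done.

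Your $\operatorname{Ext}^1$-connectivity argument is correct as far as it goes. The hypotheses of Lemma \ref{lemma-uniform-shift-ext1} hold, since $\operatorname{Ext}^{\leq 0}(\kappa(x,\chi_i),\kappa(x,\chi_j))=0$ for $i\neq j$ and $\ecal N_x=\chi_1\oplus\chi_{-1}$ is faithful. But it only equalizes the shifts once you already know that each $\kappa(x,\chi_i)$ is sent to \emph{some} shifted $\kappa(x,\chi_j)$. It cannot supply that fact.

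The missing input is the precise form of \cite{A_nsings}*{Corollary 2.2} used in the paper. It gives $\Psi\in\bra{B,\iota^*,[1]}$, where $\iota\in\aut(Y)$ is the involution with $\iota(C_i)=C_{n+1-i}$ (no $\operatorname{Pic}$ is needed), such that $\Psi\circ\mathsf{MK}\inv\circ\Phi\circ\mathsf{MK}$ fixes every line bundle on every subchain of $Z$, in particular each $\ecal O_{C_i}(-1)$ and $\omega_Z$.

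Next, $\iota^*\circ T_E\circ(\iota^*)\inv\cong T_{\iota^*E}$ by \cite{HuyBook}*{Lemma 8.21}, and $\iota^*\ecal O_{C_l}(-1)\cong\ecal O_{C_{n+1-l}}(-1)$, $\iota^*\omega_Z\cong\omega_Z$. So $\iota^*$ normalizes $B$, and you can write $\Psi=(\iota^*)^\epsilon\circ\tilde T[-m]$ with $\tilde T\in B$ and $\epsilon\in\{0,1\}$. Since $\iota$ is an involution, this gives $\tilde T\circ\mathsf{MK}\inv\circ\Phi\circ\mathsf{MK}(\ecal R)\cong(\iota^*)^\epsilon\ecal R[m]$ for $\ecal R=\ecal O_{C_i}(-1)$ and $\ecal R=\omega_Z$.

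Conjugating by $\mathsf{MK}$ yields the claim with $T=\mathsf{MK}\circ\tilde T\circ\mathsf{MK}\inv$ and $\sigma$ either the identity or $i\mapsto n+1-i$ (fixing $0$). The shift is uniform automatically because shifts are central, so your connectivity step becomes unnecessary on this route.
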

\begin{proof}
    By \cite{A_nsings}*{Corollary 2.2}, there exists $\Psi \in \bra{B, \iota^*, [m] \mid m \in \bb Z}$ such that 
    \[
    \Psi \circ \sf{MK} \inv \circ \Phi \circ \sf{MK} (\ecal R) \cong \ecal R
    \]
    for any line bundle $\ecal R$ on any subchain of $Z$, where $\iota \in \aut(Y)$ is an involution such that $\iota(C_i) = C_{n+1-i}$. In particular, we may take $\ecal R = \ecal O_{C_i}(-1)$ and $\ecal R = \omega_Z$. By \cite{HuyBook}*{Lemma 8.21} and $\iota^* \ecal O_{C_i}(-1) = \ecal O_{C_{n+1-i}}(-1)$, we can write $\Psi = (\iota^*)^\epsilon \circ \tilde T [- m]$ for $\epsilon \in \{0,1\}$, $\tilde T \in B$, and $m \in \bb Z$. Hence, by $\Psi \circ \sf{MK} \inv \circ \Phi \circ \sf{MK} (\ecal R) \cong \ecal R$, we get the isomorphism $\sf{MK} \circ \tilde T \circ \sf{MK}\inv \circ \Phi \circ \sf{MK}(\ecal R) \cong \sf{MK}  \circ (\iota^*)^\epsilon (\ecal R)[m]$, noting $\iota$ is an involution. Therefore, setting $T = \sf{MK} \circ \tilde T \circ \sf{MK}\inv \in B_x$ and $\sigma$ to be the permutation corresponding to the involution $\iota$ and applying the isomorphism above to $\ecal R = \ecal O_{C_i}(-1)$ or $\ecal R = \omega_Z$,
    we get
    \[
    T \circ \Phi (\kappa(x, \rho_i)) =  \kappa(x, \rho_{\sigma(i)})[m]. \qedhere 
    \]
\end{proof}
\begin{theorem}\label{theorem-localautoforan}
    Assume $k = \bb C$ and suppose that each stacky point $x_i$ of $\cal S$ is of $A_{n_i}$-type with $n_i > 0$. Then, we have
    \[
    \im r = \prod_i B_{x_i} \subset \prod_{i} \aut D^b_{\operatorname{Coh},\{x_i\}}(\cal X).
    \]
    In particular,
    \[
    \aut D^b_{\operatorname{Coh}}(\cal S) \cong \prod_i B_{x_i} \rtimes \operatorname{Std}(\cal S). \qedhere 
    \]
\end{theorem}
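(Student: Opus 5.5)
The claim has two inclusions, $\prod_i B_{x_i}\subset \operatorname{im} r$ and $\operatorname{im} r\subset \prod_i B_{x_i}$. The stated decomposition of $\operatorname{Aut} D^b_{\operatorname{Coh}}(\mathcal{S})$ then follows from Construction \ref{construction-localsubgroup} together with injectivity of $r$ (Proposition \ref{prop-local-determination}).

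For $\supset$, I take a spherical twist $T_{\kappa(x_i,\chi_j)}$ on all of $D^b_{\operatorname{Coh}}(\mathcal{S})$. This makes sense because $\kappa(x_i,\chi_j)$ is spherical, by the remark in the definition of $A_n$-type. I then check that this twist is local in the sense of the definition after Construction \ref{construction-localsubgroup}.
\begin{enumerate}
\item It is the identity on every object whose support misses $x_i$, because $R\operatorname{Hom}(\kappa(x_i,\chi_j),M)=0$ for such $M$. In particular it is the identity on $\kappa(y,\eta)$ for $y\neq x_i$.
\item It preserves $D^b_{\operatorname{Coh},\{x_i\}}(\mathcal{S})$, since the cone of the evaluation map of objects supported at $x_i$ is again supported at $x_i$.
\item Its restriction to $\mathcal{U}=\mathcal{S}\setminus\{x_1,\dots,x_N\}$ is naturally the identity on $\operatorname{Coh}$. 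The reason is that the twist sits in a triangle whose third term is supported at $x_i$, and restriction to $\mathcal{U}$ kills that term naturally.
\end{enumerate}
So, by uniqueness in Theorem \ref{theorem-bo-tame-stack}, $\rho(T)$ is the identity. Its image under $r$ is the corresponding generator of $B_{x_i}$ in the $i$-th factor and the identity in the other factors. These images generate $\prod_i B_{x_i}$, and the product is finite, so this inclusion holds.

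For $\subset$, let $\Phi\in\operatorname{Loc}(\mathcal{S})$. For each $i$, Proposition \ref{prop-Ishii-Ueada} applied to $\Phi|_{D^b_{\operatorname{Coh},\{x_i\}}}$ gives $T_i\in B_{x_i}$, a permutation $\sigma_i$ and a shift $m_i$ with $T_i\Phi(\kappa(x_i,\chi_j))\cong\kappa(x_i,\chi_{\sigma_i(j)})[m_i]$. By the previous paragraph each $T_i$ lifts to a global local autoequivalence $\tilde T_i$, and I set $\Psi=\tilde T_1\cdots\tilde T_N\Phi\in\operatorname{Loc}(\mathcal{S})$.

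I then want all $m_i=0$. At a closed point $y\neq x_i$, the same argument as in Proposition \ref{prop-uniform-shift} shows $\Psi(\mathcal{O}_{\mathcal{S}})$ is a line bundle in degree $0$ near $x_i$. Here I use that $\Psi$ is the identity on $\mathcal{U}$ and on point sheaves there, and that $\mathcal{S}$ is connected. Lemma \ref{lemma-characterizingvectorbundles} applied at $x_i$ then forces $m_i=0$.

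Next I want each $\sigma_i$ to be the identity. Here I use that $\Psi(\mathcal{O}_{\mathcal{S}})$ is a line bundle $\mathcal{M}$ which is trivial on $\mathcal{U}$. By Lemma \ref{lemma-hartogs} I expect $\mathcal{M}$ to be trivial, but Hartogs does not apply at isolated points of a surface stack, since these have codimension exactly $2$, which is fine. Given $\mathcal{M}\cong\mathcal{O}$, the identity $\operatorname{Hom}(\Psi\mathcal{O},\Psi\kappa(x_i,\chi_j))=\operatorname{Hom}(\mathcal{O},\kappa(x_i,\chi_j))$ is nonzero only for $j=0$. Hence $\sigma_i(0)=0$.

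The main obstacle is showing $\sigma_i=\operatorname{id}$ for the nonzero characters, because the Dynkin diagram involution $\iota$ permutes the nontrivial characters. I would first try to compare with $\operatorname{Hom}(\mathcal{L},-)$ for line bundles $\mathcal{L}$ on $\mathcal{S}$ whose restriction to the gerbe at $x_i$ is $\chi_j$. These exist locally, and I need them globally, which is delicate. If such $\mathcal{L}$ exist and $\Psi(\mathcal{L})\cong\mathcal{L}$ (both agree on $\mathcal{U}$ and are line bundles), then $\sigma_i(j)=j$. Failing that, I would absorb $\iota$ into $B_{x_i}$ or rule it out by the $\operatorname{Ext}^1$ structure, which is symmetric, so this is likely where extra care is needed.

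Once $\Psi$ and $\Psi^{-1}$ fix every generalized closed point sheaf, they satisfy condition (ii) of \ref{subsection-conditions}. Lemma \ref{lemma-local-rigid} then gives $\Psi\cong\operatorname{id}$, so $r(\Phi)\in\prod_i B_{x_i}$.
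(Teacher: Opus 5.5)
Your route is the paper's. You compose $\Phi$ with twists at each $x_i$ from Proposition \ref{prop-Ishii-Ueada}, force $m_i=0$ using $\mathcal{O}_{\mathcal{S}}$ and Lemma \ref{lemma-characterizingvectorbundles} together with connectedness, and finish with Lemma \ref{lemma-local-rigid}. Your treatment of $\supset$ is just a fuller version of what the paper calls clear.

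The one thing to fix is that the step you call the main obstacle is unnecessary, and it comes from misreading condition (ii) of \S\ref{subsection-conditions}. That condition only asks that $\Psi$ and $\Psi^{-1}$ send each generalized closed point sheaf to \emph{some} generalized closed point sheaf, not to itself. Once $m_i=0$ this already holds:
\begin{enumerate}
\item at each $x_i$, with $\sigma_i$ an arbitrary permutation, which is invertible, so $\Psi^{-1}$ also qualifies;
\item on $\mathcal{U}$, where $\Psi$ fixes the point sheaves because it is local.
\end{enumerate}
So Lemma \ref{lemma-local-rigid} applies directly and gives $\Psi\cong\operatorname{id}$. This forces $\sigma_i=\operatorname{id}$ a posteriori.

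What rules out the diagram involution $\iota$ is the global rigidity in that lemma: $\Psi$ restricts to the identity on reflexive sheaves, and hence on all of $\operatorname{Coh}(\mathcal{S})$. You therefore need neither global line bundles restricting to $\chi_j$ nor any $\operatorname{Ext}^1$ analysis.

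You should drop that whole paragraph, including the argument for $\sigma_i(0)=0$. The Hartogs remark in it is also backwards: isolated points of a surface have codimension exactly $2$, which meets the hypothesis of Lemma \ref{lemma-hartogs}, so that lemma does apply.
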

\begin{proof}
    First of all, $\prod_i B_{x_i} \subset \im r$ is clear. For the converse, take $\Phi \in \operatorname{Loc}(\cal S)$. By Proposition \ref{prop-Ishii-Ueada}, the restriction $\Phi_i$ of $\Phi$ to $\aut D^b_{\operatorname{Coh},\{x_i\}}(\cal X)$ satisfies that $T_i \circ \Phi(\kappa(x_i,\chi)) = \kappa(x_i,\sigma_i(\chi))[m_i]$ for some $\sigma \in S_{n_i+1}$, $T_i \in B_{x_i}$, and $m_i \in \bb Z$. Since spherical twists associated to $x_i$ act trivially on sheaves supported at $x \neq x_i$, by setting $T = T_1 \circ \cdots \circ T_N$, we see that
    \[
    T\circ \Phi (\kappa(x_i,\chi)) = \kappa(x_i,\sigma_i(\chi))[m_i]
    \]
    for any $i$ and $T \circ \Phi (\kappa(x)) = \kappa(x)$ for any $x \in U$ as $\Phi \in \operatorname{Loc}(\cal S)$. Now, we claim $F:= T\circ \Phi \cong \id_{D^b_{\operatorname{Coh}}(\cal S)}$ (and hence $r(\Phi) = (T_1, \dots, T_N) \in \prod_i B_{x_i}$ as desired). First, we see that $m_ i = 0$. Indeed, for any $i$,
    \[
    \ext^j(F\inv(\ecal O_\cal S), \kappa(x_i, \sigma(\chi))) \cong \ext^{j+m_{i}}(\ecal O_\cal S, \kappa(x_i, \chi))
    \]
    is possibly non-zero only if $j+m_i = 0$ and for any $x \in U$, we have that $\ext^j(F\inv(\ecal O_\cal S), \kappa(x)) \cong \ext^j(\ecal O_\cal S, \kappa(x))$ is possibly non-zero only if $j = 0$. Therefore, by Lemma \ref{lemma-characterizingvectorbundles}, $F\inv(\ecal O_\cal S) \cong \ecal E[m]$ for some vector bundle $\ecal E$ and $m \in \bb Z$. Since $F\inv$ restricts to an identity on $U$, we see that $m = 0$ and thus $m_i = 0$. Hence, by Lemma \ref{lemma-local-rigid}, $F \cong \id_{D^b_{\operatorname{Coh}}(\cal S)}$. 
\end{proof}
\begin{remark}
    To the authors' knowledge, the computations of local autoequivalences for $D_n, E_6, E_7, E_8$ are still unknown. 
\end{remark}

In the rest of the section, we study the weighted projective stack $\cal P(1,1,2)$ which has a single isolated stacky point of type $A_1$ and provides a clear example of the results so far. We first begin by describing the McKay correspondence geometrically. This seems to be well-known to experts, but let us record details for completeness. 
\begin{lemma}\label{lemma-local-identification-rootblowup}
    Let $n \geq 1$ be an integer. Let the group scheme $\mu_n$ act on $\mathbb{A}^2_k$ with weights $(1,1),$ that is, by $u \cdot (x,y) = (ux, uy)$. Then there is a $2$-commutative diagram of algebraic stacks
    $$
    \begin{tikzcd}
        \mathcal{X} \ar[r, "r"] \ar[d, "b"] & \operatorname{Bl}_{(0,0)}(\mathbb{A}^2_k/\mu_n) \ar[d] \\
        {[\mathbb{A}^2_k/\mu_n]} \ar[r] & \mathbb{A}^2_k / \mu_n
    \end{tikzcd}
    $$
    in which $b$ is the blow up in the closed substack $B\mu_{n, k} \cong [(0,0)/\mu_n] \subset [\mathbb{A}^2_k/\mu_n] $ and $r$ is the $n$th root stack with respect to the exceptional divisor. Moreover, the exceptional divisor of $b$ is equal to the tautological $n$th root of the exceptional divisor of $\operatorname{Bl}_{(0,0)}(\mathbb{A}^2_k/\mu_n)$ as closed substacks of $\mathcal{X}$.
\end{lemma}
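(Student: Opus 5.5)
Write $p : \mathbb{A}^2_k \to \mathbb{A}^2_k/\mu_n$ for the quotient. Both sides of the square are built by explicit charts, so the plan is to construct $\mathcal{X}$ as a quotient stack and check the two universal properties by hand. First compute the blow up $b$. The closed substack $[(0,0)/\mu_n]$ pulls back to the origin in $\mathbb{A}^2_k$, and blowing up commutes with flat base change. Hence
$$\mathcal{X} = [\operatorname{Bl}_{0}\mathbb{A}^2_k/\mu_n] = [\operatorname{Tot}(\ecal{O}_{\mathbb{P}^1}(-1))/\mu_n],$$
where $\mu_n$ acts by scaling the fibers of the tautological line bundle. Its exceptional divisor $\mathcal{E}$ is $[E/\mu_n]$ with $E \cong \mathbb{P}^1$ the zero section. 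On $E$ the group acts trivially, so $\mathcal{E} \cong \mathbb{P}^1 \times B\mu_n$.

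Next, identify the coarse side. Taking $\mu_n$-invariants fiberwise gives $\operatorname{Tot}(\ecal{O}_{\mathbb{P}^1}(-1))/\mu_n = \operatorname{Tot}(\ecal{O}_{\mathbb{P}^1}(-n))$. I will show this equals $\operatorname{Bl}_{(0,0)}(\mathbb{A}^2_k/\mu_n)$ by checking charts. The invariant ring is $A = k[x,y]^{\mu_n} = k[x^n, x^{n-1}y, \dots, y^n]$, the Veronese ring. Its maximal ideal at the origin is $\mathfrak{m} = (x^iy^{n-i})$. The Rees algebra $\bigoplus_d \mathfrak{m}^d$ is generated in degree one by these monomials, and its Proj is covered by the charts $\operatorname{Spec} A[\mathfrak{m}/x^n] = \operatorname{Spec} k[x^n, y/x]$ and $\operatorname{Spec} A[\mathfrak{m}/y^n] = \operatorname{Spec} k[y^n, x/y]$. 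These are exactly the two standard charts of $\operatorname{Tot}(\ecal{O}(-n))$, with exceptional divisor $\{x^n = 0\} \cup \{y^n=0\}$, the zero section $E'$. This produces the map $r$ as the coarse space map of $\mathcal{X}$, together with the $2$-commutativity of the square, since both composites to $\mathbb{A}^2_k/\mu_n$ are induced by $\operatorname{Bl}_0\mathbb{A}^2 \to \mathbb{A}^2 \to \mathbb{A}^2/\mu_n$.

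The main step is to show that $r$ is the $n$th root stack along $E'$ and that $\mathcal{E}$ is the tautological root. The plan is to work chart by chart. On the chart $\operatorname{Spec} k[x, t]$ with $t = y/x$, the group $\mu_n$ acts with weight $1$ on $x$ and trivially on $t$. So the chart of $\mathcal{X}$ is $[\operatorname{Spec} k[t][x]/\mu_n]$ mapping to $\operatorname{Spec} k[t][s]$ with $s = x^n$, which is precisely the local model $[\operatorname{Spec} B/\mu_n]$ with $B = A'[T]/(T^n - s)$ used in the proof of Lemma \ref{lemma-rootstacks}. The tautological divisor $\{T=0\}$ is $\{x=0\} = \mathcal{E}$ there. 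To globalize, I would produce a canonical morphism $\mathcal{X} \to \sqrt[n]{(\operatorname{Bl}, E')}$: it is given by the line bundle $\ecal{O}_{\mathcal{X}}(\mathcal{E})$, its section cutting out $\mathcal{E}$, and the isomorphism $\ecal{O}(\mathcal{E})^{\otimes n} \cong r^*\ecal{O}(E')$ coming from $r^{-1}(E') = n\mathcal{E}$, which is checked on charts via $s = x^n$. This morphism is then an isomorphism because it is one on each chart, and it carries $\mathcal{E}$ to the tautological root by construction. I expect the only delicate point to be the compatibility of this section and isomorphism across the two charts, that is, that the transition $x \mapsto y = tx$ respects the data. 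That compatibility is immediate once everything is phrased through the global line bundle $\ecal{O}(\mathcal{E})$ rather than through coordinates.
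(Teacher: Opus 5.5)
Your proposal is correct and follows essentially the same route as the paper. Both arguments compute the two sides on the standard charts $\operatorname{Spec}k[x^n,y/x]$, $\operatorname{Spec}k[y^n,x/y]$ of the coarse blow-up and on their stacky counterparts $[\operatorname{Spec}k[x,y/x]/\mu_n]$, $[\operatorname{Spec}k[y,x/y]/\mu_n]$, then use a universal property to get a global comparison morphism that is an isomorphism chart by chart; you are just more explicit than the paper, via $\mathcal{X}=[\operatorname{Bl}_0\mathbb{A}^2_k/\mu_n]$ and the root-stack datum $\bigl(\ecal{O}_{\mathcal{X}}(\mathcal{E}),\, s_{\mathcal{E}},\, n\mathcal{E}=r^{-1}(E')\bigr)$, which makes the gluing and the identification of $\mathcal{E}$ with the tautological root automatic.
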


Note that the right vertical arrow can be characterized as the minimal resolution of singularities of $\mathbb{A}^2_k/\mu_n$. 

\begin{proof}
    The coarse moduli space $\mathbb{A}^2_k/\mu_n$ is the spectrum of the ring of invariants $k[x^n, x^{n-1}y, \dots , xy^{n-1}, y^n]$. The blow up of $\mathbb{A}^2_k/\mu_n$ in the closed point $(0,0)$ defined by the maximal ideal $(x^n, x^{n-1}y, \dots , xy^{n-1}, y^n) \subset k[x^n, x^{n-1}y, \dots , xy^{n-1}, y^n]$ is covered by the two affine opens 
    $$
    \operatorname{Spec}k[x^n, x^{n-1}y/x^n, \dots , xy^{n-1}/x^n, y^n/x^n] = \operatorname{Spec}k[x^n, y/x],
    $$
    on which the exceptional is defined by $x^n = 0$, and 
    $$
    \operatorname{Spec}k[x^n/y^n, x^{n-1}y/y^n, \dots , xy^{n-1}/y^n, y^n] = \operatorname{Spec}k[x/y, y^n]
    $$
    on which the exceptional is defined by $y^n = 0$. In particular, the affine charts of the blow up corresponding to the other generators $x^iy^j$ of the maximal ideal are contained in the union of these two. The $n$th root stack with respect to the exceptional divisor is then a union of the two open substacks
    $$
    [\operatorname{Spec}k[x,y/x]/\mu_n]
    $$
    on which $\mu_n$ acts on $x$ with weight $1$ and on $y/x$ with weight zero, and 
    $$
    [\operatorname{Spec}k[x/y, y]/\mu_n]
    $$
    on which $\mu_n$ acts on $y$ with weight $1$ and on $x/y$ with weight zero. The tautological $n$th root is given by the closed substack $x = 0$ on the first open and $y = 0$ on the second. The blow up of $[\mathbb{A}^2_k/\mu_n]$ in the closed $[(0,0)/\mu_n]$ has an identical local description, and the exceptional divisor corresponds to the tautological $n$th root. Using their universal properties, one obtains globally defined morphisms between the blow up and the root stack, and this local check shows that they are isomorphisms, as needed.
\end{proof}
We can globalize the diagram as follows:
\begin{corollary} \label{corollary-roofforhirzebruch}
    Let $\bb P(1,1,n)$ be the (non-stacky) weighted projective space and $\bb F_n = \bb P_{\bb P^1}(\ecal O_{\bb P^1} \oplus \ecal O_{\bb P^1}(-n)) = \operatorname{Bl}_{[0:0:1]} \bb P(1,1,n)$ the Hirzebruch surface. Then, there is a $2$-commutative diagram of algebraic stacks
    $$
    \begin{tikzcd}
        \mathcal{Z} \ar[r, "r"] \ar[d, "b"] & \bb F_n \ar[d] \\
        {\cal P(1,1,n)} \ar[r] & \bb P(1,1,n)
    \end{tikzcd}
    $$
    in which $b$ is the blow up in the closed substack $B\mu_{n, k} \cong [[0\!:\!0\!:\!1]/\mu_n] \subset \cal P(1,1,n)$ and $r$ is the $n$th root stack with respect to the exceptional divisor. Moreover, the exceptional divisor of $b$ is equal to the tautological $n$th root of the exceptional $(-n)$-curve of $\bb F_n$ as closed substacks of $\mathcal{Z}$.
\end{corollary}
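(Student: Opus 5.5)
The plan is to glue the local description of Lemma \ref{lemma-local-identification-rootblowup} along the standard open cover of $\mathcal{P}(1,1,n)$. Cover $\mathcal{P}(1,1,n)$ by the open substack $\mathcal{W}_2 = \{z \neq 0\} \cong [\mathbb{A}^2_k/\mu_n]$, with $\mu_n$ acting with weights $(1,1)$, and its complement-of-point $\mathcal{W}_{01} = \{x \neq 0\} \cup \{y \neq 0\}$. The open $\mathcal{W}_{01}$ has trivial stabilizers: there the $\mathbb{G}_m$-action on $\mathbb{A}^3\setminus 0$ is free, since $x$ or $y$ has weight $1$. Hence $\mathcal{W}_{01}$ is a scheme, equal to $\mathbb{P}(1,1,n)\setminus\{[0:0:1]\}$. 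Correspondingly, $\mathbb{F}_n = \operatorname{Bl}_{[0:0:1]}\mathbb{P}(1,1,n)$ is covered by $\operatorname{Bl}_{(0,0)}(\mathbb{A}^2_k/\mu_n)$ and the same scheme $\mathcal{W}_{01}$, on which the blow-up is an isomorphism.

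First define $\mathcal{Z} := \operatorname{Bl}_{B\mu_n}\mathcal{P}(1,1,n)$, the blow-up in the reduced closed substack at the stacky point. Blow-ups commute with flat base change, so over $\mathcal{W}_2$ this is the $\mathcal{X}$ of Lemma \ref{lemma-local-identification-rootblowup}, and over $\mathcal{W}_{01}$ it is $\mathcal{W}_{01}$ itself. Next, the composite $\mathcal{Z} \to \mathcal{P}(1,1,n) \to \mathbb{P}(1,1,n)$ pulls back the ideal of the point $[0:0:1]$ to an invertible ideal. Locally this is the statement that $(x^n, x^{n-1}y,\dots,y^n)\mathcal{O}_{\mathcal{Z}}$ is the $n$th power of the exceptional ideal, which is invertible. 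The universal property of the blow-up (valid for stacks via smooth descent) then gives a morphism $r:\mathcal{Z}\to\mathbb{F}_n$ making the square $2$-commute.

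Let $E_b \subset \mathcal{Z}$ be the exceptional divisor and $E \subset \mathbb{F}_n$ the $(-n)$-curve. I would check that $r^{-1}(E) = nE_b$ as Cartier divisors, again a local computation in the two charts of the lemma. The pair $(\mathcal{O}(E_b), s_{E_b})$ with the isomorphism $\mathcal{O}(E_b)^{\otimes n}\cong r^*\mathcal{O}(E)$ then defines, by the universal property of the root stack, a morphism $\mathcal{Z}\to\sqrt[n]{(\mathbb{F}_n,E)}$ over $\mathbb{F}_n$. Whether this morphism is an isomorphism is local on $\mathbb{F}_n$. Over $\operatorname{Bl}_{(0,0)}(\mathbb{A}^2_k/\mu_n)$ it is the isomorphism of Lemma \ref{lemma-local-identification-rootblowup}; there one must check that the globally constructed map restricts to the one in the lemma, which holds because both are built by the same universal properties. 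Away from $E$ both sides are $\mathcal{W}_{01}$ and the map is the identity. The statement that $E_b$ is the tautological root is then immediate from the construction.

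Finally, the identification $\operatorname{Bl}_{[0:0:1]}\mathbb{P}(1,1,n)\cong\mathbb{F}_n$ with $E$ the $(-n)$-curve is standard: the charts give the toric fan of $\mathbb{F}_n$, or one can project from the vertex of the cone over a rational normal curve. The main obstacle I expect is bookkeeping rather than mathematics. One must make sure the morphism $r$, the equality of Cartier divisors $r^{-1}(E)=nE_b$, and the isomorphism $(\mathcal{O}(E_b))^{\otimes n}\cong r^*\mathcal{O}(E)$ are all constructed globally and canonically, so that they glue along the overlap $\mathcal{W}_2\cap\mathcal{W}_{01}$. Working with universal properties throughout, rather than chart formulas, avoids any cocycle check.
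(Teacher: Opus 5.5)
Your proposal is correct and follows the paper's route: apply Lemma \ref{lemma-local-identification-rootblowup} on the chart $\{z\neq 0\}\cong[\mathbb{A}^2_k/\mu_n]$ and glue with the identity over $\mathbb{P}(1,1,n)\setminus\{[0\!:\!0\!:\!1]\}$. Building $r$ and the map $\mathcal{Z}\to\sqrt[n]{(\mathbb{F}_n,E)}$ globally from the universal properties of the blow-up and the root stack, and only then checking locally that it is an isomorphism, is a more careful way of doing the same gluing the paper carries out directly.
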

\begin{proof}
    Over the standard affine open chart of $\bb P(1,1,n)$ containing $[0\!:\!0\!:\!1]$, the diagram above agrees with the one in Lemma \ref{lemma-local-identification-rootblowup}, which induces the birational map $\operatorname{Bl}_{B\mu_{n,k}}\cal P(1,1,n) \ratmap \sqrt[n]{(\bb F, C)}$. On the other hand, over $\bb P(1,1,n) \setminus \{[0\!:\!0\!:\!1]\}$, all the maps are the identity. Since the birational map above is also identity over $\bb P(1,1,n) \setminus \{[0\!:\!0\!:\!1]\}$, we glue them to get the identification $\operatorname{Bl}_{B\mu_{n,k}}\cal P(1,1,n) = \sqrt[n]{(\bb F, C)} =: \cal Z$ making the diagram above commute. The latter claim also follows from the local description since the exceptional divisor is in the local chart. 
\end{proof}
To get the McKay correspondence, let us also record some computations. 
\begin{lemma} \label{lemma-cohomologycompforhirzebruch} Let $C$ be the $(-n)$-curve in $\bb F_n$, $F$ a fiber of $\bb F_n \to \bb P^1$, and $\cal E$ the exceptional divisor of $b$ in $\cal Z$. 
    \begin{enumerate}
        \item $r^* \ecal O_{\bb F_n} (C) = \ecal O_\cal Z(n\cal E)$.
        \item $r^* \ecal O_{\bb F_n} (F) = b^* \ecal O_{\cal P(1,1,n)}(1) \otimes \ecal O_\cal Z(-\cal E)$. 
        \item $\bb R b_*\ecal O_\cal Z \cong \ecal O_{\cal P(1,1,n)}$.
        \item $\bb R b_*\ecal O_\cal Z(\cal E) \cong \ecal O_{\cal P(1,1,n)}$. \qedhere 
    \end{enumerate}
\end{lemma}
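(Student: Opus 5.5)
All four statements are local over $\mathbb{P}(1,1,n)$ or over $\mathbb{F}_n$. The plan is to verify them on the standard charts from Lemma \ref{lemma-local-identification-rootblowup} and Corollary \ref{corollary-roofforhirzebruch}, then glue. Away from the stacky point $[0\!:\!0\!:\!1]$ (equivalently away from $C$), all maps $b$, $r$ are isomorphisms and $\mathcal{E}$ is empty. Hence (i) and (iii) are trivial there. For (ii) and (iv), the content there is the classical identity $\ecal{O}_{\mathbb{F}_n}(F) = \sigma^*\ecal{O}_{\mathbb{P}(1,1,n)}(1)$ away from $C$, where $\sigma:\mathbb{F}_n\to\mathbb{P}(1,1,n)$ is the contraction of $C$.

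For (i), I use the definition of the root stack. On $\mathcal{Z}=\sqrt[n]{(\mathbb{F}_n,C)}$ the tautological divisor $\mathcal{E}$ satisfies $\ecal{O}_{\mathcal{Z}}(\mathcal{E})^{\otimes n}\cong r^*\ecal{O}_{\mathbb{F}_n}(C)$, with $t^n$ corresponding to $r^*s_C$. By Corollary \ref{corollary-roofforhirzebruch}, this tautological divisor is the exceptional divisor of $b$, so (i) follows.

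For (ii), I compare both sides as divisor classes. Pick the section $x$ of $\ecal{O}_{\mathcal{P}(1,1,n)}(1)$; its zero locus is the line $\{x=0\}$, which passes through the stacky point. On $\mathbb{F}_n$, the pullback of the corresponding Weil divisor on $\mathbb{P}(1,1,n)$ is $F+\tfrac{1}{n}C$, since $\sigma^*\ecal{O}(n)=\ecal{O}(nF+C)$. On the chart $[\operatorname{Spec}k[x,y/x]/\mu_n]$, $b^*x$ vanishes to order one along $\mathcal{E}=\{x=0\}$ and its remaining zero locus is the strict transform $r^{-1}(F_0)$. The other chart is handled the same way. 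Therefore $b^*\ecal{O}(1)\cong r^*\ecal{O}(F)\otimes\ecal{O}(\mathcal{E})$, which is (ii). A cleaner alternative is to raise to the $n$th power and use (i) together with $\sigma^*\ecal{O}(n)=\ecal{O}(nF+C)$. This determines the class only up to $n$-torsion in $\operatorname{Pic}\mathcal{Z}$, so I would fix the torsion by the explicit chart computation.

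For (iii) and (iv), cohomology and base change for the tame proper morphism $b$ reduce the statements to the local chart $[\mathbb{A}^2/\mu_n]$, where $b$ is the blowup of the origin. I would compute $Rb_*$ by factoring through the coarse-level map and taking $\mu_n$-invariants. Over $\mathbb{A}^2$ with the $\mu_n$-action, $\mathcal{Z}$ corresponds to $[\operatorname{Bl}_0\mathbb{A}^2/\mu_n]$. Since taking invariants is exact, $Rb_*\ecal{O}_{\mathcal{Z}}(j\mathcal{E})$ equals the $\mu_n$-invariant part of $R\beta_*\ecal{O}(jE)$, where $\beta:\operatorname{Bl}_0\mathbb{A}^2\to\mathbb{A}^2$ and $E\cong\mathbb{P}^1$. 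For $j=0$ and $j=1$ one has $R\beta_*\ecal{O}(jE)=\ecal{O}_{\mathbb{A}^2}$: for $j=0$ this is the standard rationality of the blowup, and for $j=1$ use $0\to\ecal{O}\to\ecal{O}(E)\to\ecal{O}_E(-1)\to0$ together with $R\Gamma(\mathbb{P}^1,\ecal{O}(-1))=0$. Here $\ecal{O}(E)$ is $\mu_n$-linearized compatibly with $\ecal{O}_{\mathcal{Z}}(\mathcal{E})$; the weight on $\mathcal{E}$ only twists the $E$-term, whose cohomology vanishes anyway. Taking invariants gives $\ecal{O}_{[\mathbb{A}^2/\mu_n]}$ in both cases, and gluing with the trivial region proves (iii) and (iv).

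The main obstacle is keeping track of the $\mu_n$-linearizations, namely which character $\ecal{O}(\mathcal{E})$ carries on the residual gerbe. This matters for the torsion ambiguity in (ii) and for checking that the sequence in (iv) is equivariant. I would settle it directly in the chart coordinates of Lemma \ref{lemma-local-identification-rootblowup}, where $x$ has weight $1$.
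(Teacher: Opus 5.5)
Your arguments for (i), (iii) and (iv) follow the paper:
\begin{itemize}
\item (i) is the defining isomorphism of the root stack, together with the identification of $\cal E$ in Corollary~\ref{corollary-roofforhirzebruch}.
\item (iii) is rationality of the blow-up of a regularly immersed substack, reduced to $\operatorname{Bl}_0\bb A^2\to\bb A^2$ by flat base change.
\item (iv) uses $0\to\ecal O_{\cal Z}\to\ecal O_{\cal Z}(\cal E)\to\ecal O_{\cal E}(\cal E)\to0$ and the vanishing of the pushforward of the relative $\ecal O(-1)$ on the $\bb P^1$-bundle $\cal E\to B\mu_n$.
\end{itemize}

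One step in (iii)/(iv) is misstated. Under $D_{qc}([\bb A^2/\mu_n])\simeq D(\operatorname{QCoh}^{\mu_n}(\bb A^2))$, the functor $Rb_*$ \emph{is} $R\beta_*$ with its induced $\mu_n$-equivariant structure, by flat base change along $\bb A^2\to[\bb A^2/\mu_n]$. No invariants are taken. Taking $\mu_n$-invariants instead computes the pushforward to the coarse space $\bb A^2/\mu_n$, and produces $\ecal O_{\bb A^2/\mu_n}$, not $\ecal O_{[\bb A^2/\mu_n]}$.

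Read literally, your argument therefore only shows $R(\pi\circ b)_*\ecal O_{\cal Z}(j\cal E)\cong\ecal O_{\bb A^2/\mu_n}$ for the coarse map $\pi$. This does not determine $Rb_*\ecal O_{\cal Z}(j\cal E)$, because non-trivial isotypic components are invisible after taking invariants. The repair is simply to drop the invariants. Your equivariant computation $R\beta_*\ecal O\xrightarrow{\sim}R\beta_*\ecal O(E)$ (via the invariant canonical section), together with $R\beta_*\ecal O=\ecal O_{\bb A^2}$, shows that the maps $\ecal O\to Rb_*\ecal O_{\cal Z}\to Rb_*\ecal O_{\cal Z}(\cal E)$ become isomorphisms after pullback to the flat cover $\bb A^2$. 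Hence they are isomorphisms.

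Relatedly, the ``check on charts and glue'' framing in your first paragraph is only legitimate when such a global comparison map exists. For (i) and (ii), it is the global arguments you actually give that do the work.

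For (ii), your main argument differs from the paper's. The paper uses exactly your ``cleaner alternative'': $r^*\ecal O(C+nF)\cong b^*\ecal O(n)$ and (i) give $r^*\ecal O(nF)\cong b^*\ecal O(n)\otimes\ecal O_{\cal Z}(-n\cal E)$. The torsion ambiguity you worry about does not arise, because $\pic\cal Z=(\pic\bb F_n\oplus\bb Z\cdot\cal E)/\bra{n\cal E-C}\cong\bb Z F\oplus\bb Z\cal E$ is torsion-free.

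Your divisor computation is also correct. The effective Cartier divisor of the global section $b^*x$ is $\cal E+r^{-1}(F_0)$, which gives the isomorphism directly without knowing $\pic\cal Z$. Two small corrections:
\begin{itemize}
\item On the chart $k[x,y/x]$, the zero locus of $b^*x$ is just $\cal E$. The strict transform $\{x/y=0\}=r^{-1}(F_0)$ lives on the chart $k[x/y,y]$, where $b^*x=(x/y)\cdot y$.
\item Away from $\cal E$, the zero locus is $F_0\setminus C$.
\end{itemize}
The paper's route is shorter; yours is more explicit and does not depend on the structure of $\pic\cal Z$.
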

\begin{proof} Part (i) follows by the construction of a root stack. For (ii), note that $$(\bb F_n \to \bb P(1,1,n))^*\ecal O_{\bb P(1,1,n)}(n) = \ecal O_{\bb F_n}(C + nF).$$ Thus, $r^* \ecal O_{\bb F_n}(C+nF) = b^*\ecal O_{\cal P(1,1,n)}(n)$ and hence part (i) gives $r^* \ecal O_{\bb F_n}(nF) = b^*\ecal O_{\cal P(1,1,n)}(n) \otimes \ecal O_\cal Z(-n\cal E)$. Now, since $\pic(\cal Z) = (\pic(\bb F_n) \oplus \bb Z \cdot \cal E)/\bra{n\cal E - C}$ is torsion-free, we have $$r^* \ecal O_{\bb F_n}(F) = b^*\ecal O_{\cal P(1,1,n)}(1) \otimes \ecal O_\cal Z(-\cal E).$$ Next, (iii) holds more generally for the blow up of an algebraic stack in a regularly immersed closed substack (this reduces by flat base change \cite{hall2017perfect}*{Theorem 2.6} immediately to the case of schemes, which is well-known). Finally, to show (iv), we have a short exact sequence 
\[
0 \to \ecal O_\cal Z \to \ecal O_\cal Z (\cal E) \to \ecal O_\cal E(\cal E) \to 0,
\]
so it suffices to show $\bb R b_* \ecal O_\cal E(\cal E) = 0$ by (iii). Since $B\mu_n \subset \mathcal{P}(1,1,n)$ is a regularly immersed  closed substack of codimension $2$, the exceptional divisor $\mathcal{E}$ is a projective bundle of relative dimension $1$ over $B\mu_n$, and $\ecal{O}_\mathcal{E}(\mathcal{E})$ is the relative $\ecal{O}(-1)$, so $\bb R b_* \ecal O_\cal E(\cal E) = 0$ by flat base change and the usual computation of the cohomology of projective bundles. 
\end{proof}
The following gives a geometric description of the McKay functor described in \cite{auroux2008mirror}*{Theorem 2.29}. 
\begin{prop}\label{prop-akoequiv}
    Under the setup of Corollary \ref{corollary-roofforhirzebruch}, the functor 
    \[
    \Phi:= \bb Rb_*(\bb L r^*(-) \otimes \ecal O_\cal Z(\cal E)):D^b_{\operatorname{Coh}}(\bb F_n) \to D^b_{\operatorname{Coh}}(\bb P(1,1,n))
    \]
    is fully faithful. In particular, it sends the full strong exceptional collection $$\ecal O_{\bb F_n}, \ecal O_{\bb F_n}(F), \ecal O_{\bb F_n}(C + n F), \ecal O_{\bb F_n}(C + (n+1) F)$$ to the strong exceptional collection
    \[
    \ecal O_{\cal P(1,1,n)}, \ecal O_{\cal P(1,1,n)}(1), \ecal O_{\cal P(1,1,n)}(n),\ecal O_{\cal P(1,1,n)}(n+1). 
    \]
    Moreover, when $n = 2$, it is an equivalence. 
\end{prop}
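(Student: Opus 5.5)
The plan is to compute $\Phi$ on the exceptional collection first, then get full faithfulness from comparing Hom spaces, and finally get essential surjectivity for $n=2$ from a generation argument. For the images we use the projection formula together with Lemma \ref{lemma-cohomologycompforhirzebruch}. By parts (i) and (ii), $r^*\ecal O_{\bb F_n}(aC+bF) = b^*\ecal O_{\cal P(1,1,n)}(b)\otimes \ecal O_\cal Z((na-b)\cal E)$. Tensoring with $\ecal O_\cal Z(\cal E)$ and pushing forward then gives
\[
\Phi(\ecal O_{\bb F_n}(aC+bF)) \cong \ecal O_{\cal P(1,1,n)}(b)\otimes \bb R b_*\ecal O_\cal Z((na-b+1)\cal E).
\]
For the four bundles in the statement, the twist $na-b+1$ takes the values $1,0,1,0$. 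Parts (iii) and (iv) therefore give $\ecal O, \ecal O(1), \ecal O(n), \ecal O(n+1)$, as claimed.

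For full faithfulness, the collection $\ecal O, \ecal O(F), \ecal O(C+nF), \ecal O(C+(n+1)F)$ is a full strong exceptional collection on $\bb F_n$. This is standard: it is the pullback of $\ecal O,\ecal O(1)$ from $\bb P^1$ twisted by $\ecal O_{\bb F_n}(1)$, via Orlov's projective bundle theorem. By the usual criterion (e.g. the spanning class argument in \cite{HuyBook}), it then suffices to show that $\Phi$ induces isomorphisms $\operatorname{Ext}^\bullet(E_i,E_j)\to \operatorname{Ext}^\bullet(\Phi E_i,\Phi E_j)$ for all $i,j$ in the collection. Both sides are concentrated in degree $0$: on $\bb F_n$ by strongness, and on $\cal P(1,1,n)$ because line bundles $\ecal O(d)$ with $d\ge 0$ have no higher cohomology. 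The degree-$0$ spaces are graded pieces of $k[x,y]$ (with the weight of $C$ appropriately encoded) on the one side and of $k[x_0,x_1,x_2]$ with weights $(1,1,n)$ on the other. So it remains to check that the map is injective and that dimensions agree. Injectivity holds because $\Phi$ restricts to the identity over the locus where $b$ and $r$ are isomorphisms and sections are determined there. For the dimension count, $H^0(\ecal O_{\cal P}(d))$ for $d\le n+1$ and $H^0(\bb F_n,\ecal O(aC+bF))$ are both given by the same monomial counts. This step is the main obstacle: one must make the map on Homs explicit, namely as $b_*$ of $r^*$ of a section, and compare dimensions carefully case by case. The only nontrivial case is $\operatorname{Hom}(\ecal O(F),\ecal O(C+nF))$, where the count is $\dim H^0(\ecal O(n-1))$ on both sides.

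For $n=2$, the images $\ecal O,\ecal O(1),\ecal O(2),\ecal O(3)$ generate $D^b_{\operatorname{Coh}}(\cal P(1,1,2))$. They form the standard full exceptional collection of consecutive line bundles $\ecal O(i),\dots,\ecal O(i+\sum a_j-1)$ on a weighted projective stack, which one obtains from the Koszul complex for $(x_0,x_1,x_2)$ of length $\sum a_j=4$. A fully faithful functor whose image contains a generating set, and which is therefore essentially surjective onto a triangulated category classically generated by it, is an equivalence. This completes the argument.
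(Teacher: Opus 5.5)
Your proposal is correct and follows essentially the same route as the paper. You compute the images via Lemma \ref{lemma-cohomologycompforhirzebruch}, get injectivity on Homs because $b$ and $r$ are isomorphisms away from $\cal E$ and $C$, match dimensions, and conclude by generation, using fullness of $\ecal O,\dots,\ecal O(3)$ for $n=2$; the paper cites \cite{auroux2008mirror} for the image and dimension computations that you carry out explicitly.

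One point is missing. For the pairs $i>j$ you also need $\operatorname{Ext}^\bullet(\Phi E_i,\Phi E_j)=H^\bullet(\ecal O_{\cal P(1,1,n)}(d))=0$ for $-(n+1)\le d\le -1$; your stated reason only covers $d\ge 0$. This vanishing follows from the standard cohomology of $\cal P(1,1,n)$, e.g.\ Serre duality with $\omega_{\cal P(1,1,n)}=\ecal O(-n-2)$, and you need it before the degree-$0$ comparison suffices.
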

\begin{proof}
    First of all, the claims on the exceptional collections follow directly by \cite{auroux2008mirror}*{Theorem 1.2, Proposition 2.2} and Lemma \ref{lemma-cohomologycompforhirzebruch}. Thus, to conclude by \cite{auroux2008mirror}*{Lemma 2.17}, it suffices to show $\Phi$ is fully faithful on the collection $\ecal O_{\bb F_n}, \ecal O_{\bb F_n}(F), \ecal O_{\bb F_n}(C + n F), \ecal O_{\bb F_n}(C + (n+1) F)$. Let $\ecal L, \ecal L'$ be line bundles from the collection. Then, since $b$ and $r$ are the identities on $\bb F_n \setminus C \cong \cal Z \setminus \cal E \cong \cal P(1,1,n) \setminus B\mu_n$, the map $\hom_{\bb F_n}(\ecal L, \ecal L') \to \hom_{\cal P(1,1,n)}(\Phi(\ecal L), \Phi(\ecal L'))$ is injective since if $\Phi(s) = 0$, then $s|_{\bb F_n \setminus C} = 0$ and hence $s = 0$. Since the dimensions of the Hom sets are the same \cite{auroux2008mirror}*{p.20}, the map is an isomorphism as desired. When $n = 2$, the strong exceptional collection 
    \[
    \ecal O_{\cal P(1,1,n)}, \ecal O_{\cal P(1,1,n)}(1), \ecal O_{\cal P(1,1,n)}(2),\ecal O_{\cal P(1,1,n)}(3)
    \]
    is full, so $\Phi$ is an equivalence. 
\end{proof}
\begin{corollary}\label{corollary-hirzemckay}
    Under the setup of Corollary \ref{corollary-roofforhirzebruch}, for any line bundle $\ecal L$ on $\cal Z$, the functor 
    \[
    \bb Rb_*(\bb L r^*(-) \otimes \ecal L): D^b_{\operatorname{Coh}}(\bb F_n) \to D^b_{\operatorname{Coh}}(\bb P(1,1,n))
    \]
    is fully faithful, and moreover equivalence if $n = 2$. In particular, when $\ecal L = \ecal O_\cal Z$, the functor is $\bb Rb_* \circ \bb Lr^*$. 
\end{corollary}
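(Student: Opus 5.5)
The idea is to reduce to Proposition \ref{prop-akoequiv} by twisting with the line bundle $\ecal L \otimes \ecal O_{\cal Z}(-\cal E)$ and then correcting by line bundles pulled back from the two sides. First I would describe $\operatorname{Pic}(\cal Z)$. By the proof of Lemma \ref{lemma-cohomologycompforhirzebruch},
\[
\operatorname{Pic}(\cal Z) = (\operatorname{Pic}(\bb F_n) \oplus \bb Z \cdot \cal E)/\langle n\cal E - C\rangle .
\]
So every line bundle on $\cal Z$ can be written as $\ecal L \cong r^*\ecal M \otimes \ecal O_{\cal Z}(a\cal E)$ with $\ecal M \in \operatorname{Pic}(\bb F_n)$ and $a \in \{0,1,\dots,n-1\}$. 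A better choice is to use $\operatorname{Pic}(\cal P(1,1,n)) = \bb Z \cdot \ecal O(1)$ together with Lemma \ref{lemma-cohomologycompforhirzebruch}(ii). This lets us instead write
\[
\ecal L \cong r^*\ecal M \otimes b^*\ecal O_{\cal P(1,1,n)}(j) \otimes \ecal O_{\cal Z}(\cal E)
\]
for some $\ecal M \in \operatorname{Pic}(\bb F_n)$ and $j \in \bb Z$. Indeed, $\ecal O_{\cal Z}(\cal E)$ and $r^*\operatorname{Pic}(\bb F_n)$ already generate $\operatorname{Pic}(\cal Z)$, since $\ecal O(a\cal E) = \ecal O(\cal E)^{\otimes a}$ and $\ecal O(-\cal E) = r^*\ecal O(F) \otimes b^*\ecal O(-1)$.

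Next I would use the projection formula for $b$ and for $r$:
\[
\bb Rb_*(\bb Lr^*K \otimes \ecal L) \cong \bb Rb_*\bigl(\bb Lr^*(K \otimes \ecal M) \otimes \ecal O(\cal E)\bigr) \otimes \ecal O_{\cal P(1,1,n)}(j) = \bigl((-\otimes\ecal O(j)) \circ \Phi \circ (-\otimes \ecal M)\bigr)(K).
\]
Tensoring by a line bundle is an autoequivalence on each side. Hence the functor in the statement is fully faithful by Proposition \ref{prop-akoequiv}, and it is an equivalence when $n=2$.

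The main obstacle is justifying the generation claim above. I would handle it with the presentation of $\operatorname{Pic}(\cal Z)$: a root stack along a smooth divisor has Picard group generated by the pullback and the tautological root. Also $\ecal O(a\cal E)$ for $a\ge 0$ is a power of $\ecal O(\cal E)$, so every class has the form $r^*\ecal M'\otimes \ecal O(\cal E)^{\otimes a}$. By Lemma \ref{lemma-cohomologycompforhirzebruch}(ii), $\ecal O(\cal E)^{\otimes(a-1)} = r^*\ecal O(-(a-1)F)\otimes b^*\ecal O(a-1)$. This gives the desired form with $j = a-1$.

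For the final sentence, take $\ecal L = \ecal O_{\cal Z}$; then the functor is $\bb Rb_*\circ\bb Lr^*$ because tensoring with $\ecal O_{\cal Z}$ is the identity.
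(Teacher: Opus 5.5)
Your proposal is correct and is essentially the paper's proof. Like the paper, you use the presentation $\operatorname{Pic}(\mathcal{Z}) = (\operatorname{Pic}(\mathbb{F}_n)\oplus\mathbb{Z}\cdot\mathcal{E})/\langle n\mathcal{E}-C\rangle$ together with Lemma~\ref{lemma-cohomologycompforhirzebruch}(ii) to rewrite $\ecal{L}$ as $\mathcal{O}_{\mathcal{Z}}(\mathcal{E})$ times pullbacks along $b$ and $r$, and then the projection formula to exhibit the functor as the functor $\Phi$ of Proposition~\ref{prop-akoequiv} conjugated by line-bundle twists on both sides; the only difference is that the paper expresses $\Phi$ in terms of your functor rather than the reverse, which is the same identity.
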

\begin{proof}
    Since $\pic(\cal Z) = (\pic(\bb F_n) \oplus \bb Z \cdot \cal E)/\bra{n\cal E - C}$, we may write $\ecal L \cong \ecal O_\cal Z(k\cal E) \otimes r^* \ecal O_{\bb F_n}(lF) \cong b^*\ecal O_{\cal P(1,1,n)}(l) \otimes \ecal O_\cal Z((k-l)\cal E)$. Then, by Lemma \ref{lemma-cohomologycompforhirzebruch} we indeed have 
    \[
     \Phi \cong (- \otimes_{\cal P(1,1,n)} \ecal O_{\cal P(1,1,n)}(-(k-1)) ) \circ \bb Rb_*(\bb L r^*(-) \otimes \ecal L) \circ (- \otimes_{\bb F_n} \ecal O_{\bb F_n}((k-l - 1)F)). \qedhere 
    \]
\end{proof}
\begin{remark}
    Since $\bb L b^*$ and $\bb Lr^*$ are fully faithful, in order to show $\bb Rb_* \bb L r^*$ is fully faithful, it is tempting to see if the essential image of $\bb Lr^*$ lies in the essential image of $\bb Lb^*$ (up to a line bundle twist) on which $\bb Rb_*$ is fully faithful. However, for example, the image of the former is
    \[
    \bra{\ecal O_\cal Z, b^* \ecal O_{\cal P(1,1,n)}(1) \otimes \ecal O_\cal Z(-\cal E), b^*\ecal O_{\cal P(1,1,n)}(n),b^*\ecal O_{\cal P(1,1,n)}(n+1) \otimes \ecal O_\cal Z(-\cal E)}
    \]
    and in particular $\bb Lb^* \ecal O_{\cal P(1,1,n)}(1) \otimes \ecal O_\cal Z(-\cal E) \not \in \bb Lb^*(D^b_{\operatorname{Coh}}(1,1,n))$, but $\ecal O_\cal Z(\cal E) \not \in \bb Lb^*(D^b_{\operatorname{Coh}}(1,1,n))$, so twisting by $\ecal O_\cal Z(\cal E)$ (and by other line bundles) does not work. 
\end{remark}
The following cleanly illustrates the theory we have developed so far. 
\begin{example}\label{example-f2}
Use the notation of Example \ref{example-spherical-twist-on-p112}. By Proposition \ref{prop-akoequiv} or Corollary \ref{corollary-hirzemckay}, we have a $k$-linear exact equivalence 
\[
D^b_{\operatorname{Coh}}(\bb F_2) \simeq D^b_{\operatorname{Coh}}(\cal P(1,1,2)).
\]
Although $\omega_{\cal P(1,1,2)}$ is nowhere torsion, this does not contradict Theorem \ref{theorem-bo-tame-stack} since $\omega_{\bb F_2}$ restricts trivially to the unique $(-2)$-curve $C \subset \bb F_2$ and thus $\omega_{\bb F_2}$ is somewhere torsion. Let us explicitly observe what is happening here in parallel to the arguments in \S\ref{subsection-weakreconstructionforstacks}. First, the restrictions of $b$ and $r$ to $\cal Z \setminus \cal E$ induce an isomorphism
\[
f: \cal V: = \bb F_2 \setminus C \to \bb P(1,1,2) \setminus B{\mu_2}=:\cal U. 
\]
Here, $\cal U$ corresponds to the open substack given in Proposition \ref{prop-structureawayfromcodim2}, while $\cal V$ does not satisfy the codimension $2$ condition. Now, every equivalence $\Phi$ constructed in Corollary \ref{corollary-hirzemckay} with $\ecal L = \ecal O_\cal Z(k\cal E)\otimes r^*\ecal O_{\bb F_2}(lF)$ induces a functor $D^b_{\operatorname{Coh}}(\cal V) \to D^b_{\operatorname{Coh}}(\cal U) $ given by $f_*(- \otimes \ecal O_{\bb F_2}(lF)|_\cal V)$. In particular, we have
\[
\Phi(D^b_{\operatorname{Coh},\{p\}}(\bb F_2)) = D^b_{\operatorname{Coh}\{f(p)\}} \text{ \ \ for $p \in \bb F_2 \setminus C$} \quad \text{and} \quad \Phi(D^b_{\operatorname{Coh}, C}(\bb F_2)) = D^b_{\operatorname{Coh}, B\mu_2}(\cal P(1,1,2))
\]
and spherical twists arise away from $\cal V \cong \cal U$. Indeed, assuming $k = \bb C$, Theorem \ref{theorem-localautoforan} applies and
\[
\operatorname{Loc}(\cal P(1,1,2)) = \bra{T_{\kappa(x, \chi_0)}, T_{\kappa(x,\chi_1)}}. \qedhere 
\]
\end{example}

By Example \ref{example-weightedprojectivestackcanonical}, for a weighted projective surface $\cal P(a_0,a_1,a_2)$, we have that $\omega_{\cal P(a_0,a_1,a_2)}$ is point-wise $\otimes$-generating if and only if $\gcd (a_0 +a_1 +a_2, a_i) = 1$ for all $i$. In particular, if $n$ is odd, then $\omega_{\cal P(1,1,n)}$ is point-wise $\otimes$-generating and we can apply Theorem \ref{thm-strongreconstructionforstacks} to see $\operatorname{Loc}(\cal P(1,1,n)) = 1$. 

\textcolor{red}{}
\begin{question}
    If $\omega_{\cal P(1,1,n)}$ is not point-wise $\otimes$-generating (equivalently, if $n$ is even), then is $\operatorname{Loc}(\cal P(1,1,n))$ nontrivial? Note that if $n \neq 2$, then the unique stacky point is not of type $A_n$. 
\end{question}
It would be interesting to understand local autoequivalences at stacky points in smooth tame surfaces more generally. 

\medskip\noindent\textbf{Acknowledgements.} We would like to thank Ruoxi Li and Feiyang Lin for discussions about projective bundles over abelian varieties.  DI is grateful to John S. Nolan for a discussion about the derived McKay correspondence for Hirzebruch surfaces. 
NO is grateful to James Hotchkiss for a discussion about Bondal--Orlov Reconstruction for $\mathbb{G}_m$-gerbes, and is partially supported by the National Science Foundation under Award No.
2402087.

\medskip\noindent\textbf{AI Usage Disclosure.} DI used Gemini through Google's automated search suggestions and ChatGPT for further reference searches. Gemini and ChatGPT provided DI with \cite{lazic2013around}*{Example 5.1} and relevant constructions as examples of ill-behaviors of the canonical bundle of non-klt varieties and DI modified the construction there to the examples in \S\ref{subsubsection-abundance} by himself. ChatGPT helped DI to find examples of affine crepant resolutions with multiple flops \cites{cacciatori2009dbranes, donagi2017global}, but the globalization in Example \ref{example-multiple-crepant} was done by himself. NO used Gemini through Google's automated search suggestions for reference searches.

\bibliography{bib}
\end{document}

%% file: style.tex
\usepackage{stix} 
\usepackage[english]{babel} 
\usepackage{amsmath,amsfonts,amsthm,amscd}
\usepackage{ascmac}
\usepackage{appendix}
\usepackage{bm}
\usepackage{cases}
\usepackage{changepage}
\usepackage{enumitem}
\usepackage{etoolbox}
\usepackage{float}
\usepackage{geometry}
\usepackage{graphicx}
\usepackage[utf8]{inputenc}
\usepackage [autostyle, english = american]{csquotes}
\usepackage{scalerel}
\usepackage{ytableau}
\usepackage{tikz-cd} 
\usetikzlibrary{patterns}
\usepackage{tocloft}
\usepackage{setspace}
\usepackage[color,all]{xy}
\usepackage[cal=euler]{mathalfa}
\usepackage{url}
\usepackage{CJKutf8}

\usepackage[alphabetic,backrefs]{amsrefs}
\usepackage{hyperref}
\hypersetup{colorlinks,linkcolor=blue,citecolor=red}

\DeclareMathOperator{\aut}{Aut}

\DeclareMathOperator{\bl}{Bl}

\DeclareMathOperator{\codim}{codim}

\DeclareMathOperator{\End}{End}

\DeclareMathOperator{\ext}{Ext}

\let \hom \relax
\DeclareMathOperator{\hom}{Hom}

\let \im \relax
\DeclareMathOperator{\im}{Im}

\let \ker \relax
\DeclareMathOperator{\ker}{Ker}

\DeclareMathOperator{\perf}{Perf}

\DeclareMathOperator{\pic}{Pic}

\DeclareMathOperator{\rank}{rank}

\DeclareMathOperator{\spec}{Spec}

\DeclareMathOperator{\supp}{Supp}
\DeclareMathOperator{\sym}{Sym}

\makeatletter
\newcommand{\address}[1]{\gdef\@address{#1}}
\newcommand{\email}[1]{\gdef\@email{\url{#1}}}
\newcommand{\website}[1]{\gdef\@website{\url{#1}}}
\newcommand{\@endstuff}{\par\vspace{\baselineskip}\noindent\small
\begin{tabular}{@{}l}\scshape{Daigo Ito} \\ \scshape\@address\\\textrm{E-mail address:} \@email \\\textrm{Website:} \@website\end{tabular}\\
\begin{tabular}{@{}l}\scshape{Noah Olander} \\ \scshape{Department of Mathematics, University of California, Berkeley, Evans Hall, Berkeley, CA 94720}\\\textrm{E-mail address:} \url{nolander@berkeley.edu} \\\textrm{Website:} \url{https://noaholander.github.io/}\end{tabular}}
\AtEndDocument{\@endstuff}
\makeatother
\address{Department of Mathematics, Columbia University, 2990 Broadway, New York, NY 10021}
\email{di2260@columbia.edu}
\website{https://daigoi.github.io/}

\newcommand {\bb}{\mathbb}
\renewcommand {\cal}{\mathcal}
\newcommand{\ecal}{\mathscr}
\renewcommand {\epsilon}{\varepsilon}
\newcommand {\fr}{\mathfrak}

\newcommand {\bra}[1]{\langle{#1}\rangle}

\newcommand*{\DashedArrow}[1][]{\mathbin{\tikz [baseline=-0.25ex,-latex, dashed,#1] \draw [#1] (0pt,0.5ex) -- (1.3em,0.5ex);}}
\newcommand {\ratmap}{\DashedArrow[->,densely dashed    ]}

\newcommand {\inj}{\hookrightarrow}
\newcommand {\inv}{^{-1}}
\newcommand {\iso}{\cong}

\newcommand {\surj}{\twoheadrightarrow}
\newcommand {\tens}{\otimes}

\newsavebox{\pullbacks}
\sbox\pullbacks{%
\begin{tikzpicture}%
\draw (0,0) -- (1ex,0ex);%
\draw (1ex,0ex) -- (1ex,1ex);%
\end{tikzpicture}}

\newcommand {\ab}{\mathsf{Ab}}

\newcommand {\func}{\mathsf{ Func}}

\newcommand {\id}{{\rm id}}

\newcommand {\op}{\mathsf{op}}

\newcommand {\set}{\mathsf{ Set}} 
\let \sf \relax 
\newcommand{\sf}{\mathsf}

\theoremstyle{plain}
\newtheorem{theorems}{Theorem}[section] 
\newtheorem{claims}{Claim}[theorems]
\newtheorem{conjectures}[theorems]{Conjecture}
\newtheorem{corollaries}[theorems]{Corollary}
\newtheorem{lemmas}[theorems]{Lemma} 
\newtheorem{props}[theorems]{Proposition}

\newtheorem{penmdef}[claims]{Definition}

\theoremstyle{definition}
\newtheorem{constructions}[theorems]{Construction}
\newtheorem{definitions}[theorems]{Definition} 
\newtheorem{axioms}[theorems]{Axiom} 
\newtheorem{examples}[theorems]{Example}     
\newtheorem{notations}[theorems]{Notation}         

\newtheorem{question}[theorems]{Question}
\newtheorem{obss}[theorems]{Observation}
\newtheorem{penmlem}[claims]{Lemma}
\newtheorem{penmthm}[claims]{Theorem}
\newtheorem{penmcor}[claims]{Corollary}
\newtheorem{penmeg}[claims]{Example} 
\newtheorem{inclaims}[claims]{Claim}

\theoremstyle{remark}
\newtheorem{remarks}[theorems]{Remark}

\newtheorem{penmrem}[claims]{Remark}

\makeatletter
\newtheoremstyle{indented}
  {1pt}
  {1pt}
  {\addtolength{\@totalleftmargin}{1.5em}
   \addtolength{\linewidth}{-1.5em}
   \parshape 1 1.5em \linewidth}
  {}
  {\bfseries}
  {.}
  {.5em}
  {}
\makeatother

\theoremstyle{indented}
\newtheorem{pinddef}[claims]{Definition} 
\newtheorem{pindlem}[claims]{Lemma}
\newtheorem{pindthm}[claims]{Theorem}
\newtheorem{pindcor}[claims]{Corollary}
\newtheorem{pindeg}[claims]{Example} 
\newtheorem{pindrem}[claims]{Remark} 
\newtheorem{pindq}[claims]{Question} 
\newtheorem{pindc}[claims]{Conjecture} 
\newtheorem{pindclaim}[claims]{Claim}

\newenvironment{theorem}
{
	\pushQED{\qed}\begin{theorems}}
	{\popQED\end{theorems}}

\newenvironment{prop}
{
	\pushQED{\qed}\begin{props}}
	{\popQED\end{props}}

\newenvironment{notation}
{
	\pushQED{\qed}\begin{notations}}
	{\popQED\end{notations}}

\newenvironment{corollary}
{
	\pushQED{\qed}\begin{corollaries}}
	{\popQED\end{corollaries}}

\newenvironment{definition}
{
	\pushQED{\qed}\begin{definitions}}
	{\popQED\end{definitions}}

\newenvironment{lemma}
{
	\pushQED{\qed}\begin{lemmas}}
	{\popQED\end{lemmas}}

\newenvironment{remark}
{
	\pushQED{\qed}\begin{remarks}}
	{\popQED\end{remarks}}
	
\newenvironment{example}
{
	\pushQED{\qed}\begin{examples}}
	{\popQED\end{examples}}

\newenvironment{construction}
{
	\pushQED{\qed}\begin{constructions}}
	{\popQED\end{constructions}}

